\newtheorem{thm}{Theorem}
\newtheorem{conj}[thm]{Conjecture}
\newtheorem{prop}[thm]{Proposition}
\newtheorem{proposition}[thm]{Proposition}
\newtheorem{lemma}[thm]{Lemma}
\newtheorem{cor}[thm]{Corollary}
\newtheorem{lem}[thm]{Lemma}
\newtheorem{problem}[thm]{Problem}
\theoremstyle{remark}
\newtheorem{rem}[thm]{Remark}
\newtheorem{remark}[thm]{Remark}
\newtheorem{example}[thm]{Example}
\newtheorem{definition}[thm]{Definition}
\numberwithin{equation}{section}
\numberwithin{thm}{section}
\newcommand{\bR}{\overleftarrow{R}}
\newcommand{\defn}[1]{{\it #1}}
\def\S{{\mathfrak S}}
\def\tS{{\tilde S}}
\def\Z{{\mathbb Z}}
\def\a{{\mathbf {a}}}
\def\b{{\mathbf{b}}}
\def\deg{{\rm deg}}
\def\af{{\rm af}}
\def\Q{{\mathbb Q}}
\def\C{{\mathbb C}}
\def\CP{{\mathbb{CP}}}
\def\A{{\mathbb A}}
\def\sp{{\rm span}}
\def\cS{{\bar S}}
\def\vdim{{\rm vdim}}
\newcommand{\hFl}{\widetilde{\Fl}}
\def\Fun{{\rm Fun}}
\def\Fl{{\rm Fl}}
\def\Gr{{\mathrm{Gr}}}
\def\hGr{{\widetilde {\Gr}}}
\def\GL{{\mathrm{GL}}}
\def\SL{{\mathrm{SL}}}
\newcommand{\bQ}{\mathbb{Q}}
\newcommand{\ex}{\mathrm{ex}}
\newcommand{\For}{\mathrm{For}}
\newcommand{\hLa}{\hat{\La}}
\newcommand{\id}{\mathrm{id}}
\newcommand{\la}{\lambda}
\newcommand{\La}{\Lambda}
\newcommand{\Schub}{\mathfrak{S}}
\newcommand{\bS}{\overleftarrow{\mathfrak{S}}}
\newcommand{\rot}{\mathrm{rot}}
\newcommand{\sh}{\gamma}
\newcommand{\shift}{\gamma}
\newcommand{\pt}{\mathrm{pt}}
\newcommand{\TZ}{T_{\mathbb{Z}}}
\def\P{{\mathbb P}}
\def\tP{{\tilde \P}}
\def\Par{{\mathbb Y}}
\def\cS{{\mathcal S}}
\def\u{{\mathbf u}}
\def\ta{{\tilde \a}}
\def\tj{\tilde j}
\def\ev{{\rm ev}}
\def\tA{{\tilde A}}
\def \tB{{\tilde B}}
\def\ttA{{\tilde \A}}
\def\Red{{\rm Red}}
\def\sh{{\rm sh}}
\def\SYT{{\rm SYT}}
\def\Des{{\rm Des}}
\def\Red{{\rm Red}}
\newcommand{\xm}{ x_-}
\newcommand{\xp} {x_+}
\newcommand{\am} {a_-}
\newcommand{\ap} {a_+}
\newcommand{\bc}{\overleftarrow{c}}
\newcommand{\Hom}{\mathrm{Hom}}
\newcommand{\leleft}[1]{\,{}^{#1}\!\!\le}
\newcommand{\leright}[1]{\le^{#1}}
\newcommand{\pair}[2]{\langle #1\,,\,#2\rangle}
\newcommand{\bA}{\mathbb{A}}
\newcommand{\raction}{\bullet}
\newcommand{\laction}{*}
\newcommand{\bb}{\overline{\beta}}
\def \i{{\mathbf{i}}}
\pgfmathsetmacro{\boxsize}{1}
\pgfmathsetmacro{\halfboxsize}{0.5}
\newcommand{\bbox}[2]{
\draw[thin] (#1,#2)-- (#1,#2+\boxsize)-- (#1+\boxsize,#2+\boxsize)-- (#1+\boxsize,#2)-- (#1,#2);
}
\newcommand{\elbow}[4]{
\draw[#3] (#1+\boxsize,#2) ++ (90:\halfboxsize) arc (90:180:\halfboxsize);
\draw[#4] (#1,#2+\boxsize) ++ (0:\halfboxsize) arc (0:-90:\halfboxsize);
}
\newcommand{\rightelbow}[3]{
\draw[#3] (#1+\boxsize,#2) ++ (90:\halfboxsize) arc (90:180:\halfboxsize);
}
\newcommand{\leftelbow}[3]{
\draw[#3] (#1,#2+\boxsize) ++ (0:\halfboxsize) arc (0:-90:\halfboxsize);
}
\newcommand{\cross}[4]{
\draw[#3] (#1+\halfboxsize,#2) --  (#1+\halfboxsize,#2+\boxsize);
\draw[#4] (#1,#2+\halfboxsize) --  (#1+\boxsize,#2+\halfboxsize);
}
\newcommand{\horline}[3]{
\draw[#3] (#1,#2+\halfboxsize) --  (#1+\boxsize,#2+\halfboxsize);
}
\newcommand{\vertline}[3]{
\draw[#3] (#1+\halfboxsize,#2) --  (#1+\halfboxsize,#2+\boxsize);

}
\def\half{blue}
\def\inactive{black}
\def\wt{{\rm wt}}
\def\tf{{\tilde f}}
\def\hs{{\hat s}}
\newcommand{\seungjin}[1]{\todo[size=\tiny,color=green!30]{#1 \\ \hfill --- Seung Jin}}
\newcommand{\thomas}[1]{\todo[size=\tiny,color=red!30]{#1
      \\ \hfill --- Thomas}}
\newcommand{\Thomas}[1]{\todo[size=\tiny,inline,color=red!30]{#1
      \\ \hfill --- Thomas}}
\begin{document}
\title{Back stable Schubert calculus}
\author{Thomas Lam}
\address{Department of Mathematics \\
University of Michigan \\
530 Church St. \\
Ann Arbor 48109 USA}
\email{tfylam@umich.edu}
\thanks{T.L. was supported by NSF DMS-1464693}
\author{Seung Jin Lee}
\address{Department of Mathematical Sciences \\ Research institute of Mathematics \\ Seoul National University \\ Gwanak-ro 1, Gwanak-gu \\ Seoul 151-747 Republic of Korea}
\email{lsjin@snu.ac.kr}
\thanks{S. J. Lee was supported by the National Research Foundation of Korea(NRF) grant funded by the Korea government(MSIT) (No. 2019R1C1C1003473).}
\author{Mark Shimozono}
\address{Department of Mathematics \\
460 McBryde Hall, Virginia Tech\\
 255 Stanger St. \\
Blacksburg, VA, 24601, USA }
\email{mshimo@math.vt.edu}
\thanks{M.S. was supported by NSF DMS-1600653}

\setcounter{tocdepth}{1} 

\begin{abstract}
We study the back stable Schubert calculus of the infinite flag variety.  Our main results are:
\begin{itemize}
\item
a formula for back stable (double) Schubert classes expressing them in terms of a symmetric function part and a finite part;
\item
a novel definition of double and triple Stanley symmetric functions;
\item
a proof of the positivity of double Edelman-Greene coefficients generalizing the results of Edelman-Greene and Lascoux-Sch\"utzenberger;
\item
the definition of a new class of {\it bumpless} pipedreams, giving new formulae for double Schubert polynomials, back stable double Schubert polynomials, and a new form of the Edelman-Greene insertion algorithm;
\item
the construction of the Peterson subalgebra of the infinite nilHecke algebra, extending work of Peterson in the affine case;
\item
equivariant Pieri rules for the homology of the infinite Grassmannian;
\item
homology divided difference operators that create the equivariant homology Schubert classes of the infinite Grassmannian.
\end{itemize}
\end{abstract}
\maketitle
\tableofcontents
\section{Introduction}
\subsection{Flag varieties and Schubert polynomials}
The flag variety $\Fl_n$ is the smooth projective algebraic variety classifying full flags inside an $n$-dimensional complex vector space $\C^n$.  The cohomology ring $H^*(\Fl_n)$ was determined by Borel \cite{Bor}: it is the quotient of the polynomial ring $\Q[x_1,\ldots,x_n]$ by the ideal generated by symmetric functions in $x_1,\ldots,x_n$ of positive degree.

The flag variety has a distinguished stratification by Schubert varieties, and the cohomology classes of Schubert varieties form a basis of $H^*(\Fl_n)$, called the {\it Schubert basis}.  Bernstein, Gelfand, and Gelfand \cite{BGG} and Demazure \cite{Dem} found formulae for the Schubert basis in terms of divided difference operators.  Lascoux and Sch\"utzenberger \cite{LS} defined and studied polynomial representatives for the Schubert classes, called the \defn{Schubert polynomials} $\S_w \in \Q[x_1,\ldots,x_n]$.  Lascoux and Sch\"utzenberger  furthermore defined the {\it double Schubert polynomials} $\S_w(x;a)$ that represent Schubert classes in the torus-equivariant cohomology ring $H^*_T(\Fl_n)$.  

There is a rich combinatorial theory for Schubert polynomials.  Among the fundamental results crucial to us is the formula of Billey-Jockusch-Stanley \cite{BJS} for the monomial expansion of $\S_w$.

%

\subsection{Back stable Schubert polynomials}
In this work, we consider limits of Schubert polynomials called \defn{back stable Schubert polynomials} $$\bS_w := \lim_{\substack{p\to-\infty \\q\to\infty}} \S_w(x_p,x_{p+1},\ldots,x_q),$$ for $w\in S_\Z$, the group of permutations of $\Z$ moving finitely many elements. Two of us (T. L. and M. S.) first learnt of this construction from Allen Knutson \cite{K:backstable}. Anders Buch \cite{Bu} was also aware of how to back stabilize (double) Schubert polynomials. Finally, one of us (S.-J. Lee) found them on his own independently.  

Define the ring of back symmetric formal power series $$\bR := \Lambda \otimes \Q[\ldots,x_{-1},x_0,x_1,\ldots]$$ where $\Lambda$ denotes the symmetric functions in $\ldots,x_{-1},x_0$.  In Theorem~\ref{T:backstable basis}, we show that the back stable Schubert polynomials $\bS_w$ form a basis of the ring $\bR$.  As far as we are aware, the ring $\bR$ has not previously been explicitly studied.

\subsection{Coproduct formula}
Stanley \cite{Sta} defined the Stanley symmetric functions $F_w \in \Lambda$, for $w \in S_\Z$ to study the enumeration of reduced words of permutations.  It is well-known that the symmetric functions $F_w$ can be obtained as ``forward limits" of the Schubert polynomials $\S_w$.  We give a new construction of $F_w$ from back stable Schubert polynomials.  Namely, we define a natural algebra homomorphism $\eta_0: \bR \to \Lambda$ and show that Stanley's definition of $F_w$ agrees with $\eta_0(\S_w)$. This is closely related to, and explains, a formula of Li \cite{Li}.  In contrast, the map sending $\S_w$ to $F_w$ is not multiplicative.

We prove that back stable Schubert polynomials satisfy the ``coproduct formula" (Theorem \ref{thm:coprod}) 
\begin{equation}\label{eq:coprodintro}\bS_w = \sum_{w \doteq uv} F_u \otimes \S_v
\end{equation} where 
$w\doteq uv$ denotes a length-additive factorization such that $v$ is a permutation not using the reflection $s_0$.  The coproduct formula decomposes $\bS_w$ into a ``symmetric" part and a ``finite polynomial" part.   We do not know of an analogue of the coproduct formula for finite Schubert polynomials.

\subsection{Double Stanley symmetric functions}
\defn{Back stable double Schubert polynomials} $\bS_w(x;a)$ can also be defined in a similar manner (though the existence of the limit is less clear; see Proposition~\ref{P:backstable double well-defined}), and we show (Theorem~\ref{thm:backstabledouble}) that they form a basis of the \defn{back symmetric double power series ring} $\bR(x;a) :=\Lambda(x||a) \otimes_{\Q[a]} \Q[x,a]$, where $\Q[x,a]:=\Q[x_i,a_i\mid i\in \Z]$ and $\Lambda(x||a)$ is the ring of double symmetric functions.  The ring $\Lambda(x||a)$ is the polynomial $\Q[a]=\Q[\dotsc,a_{-1},a_0,a_1,\dotsc]$-algebra generated by the double power sums $p_k(x||a):=\sum_{i \leq 0} x_i^k- \sum_{i \leq 0} a_i^k$.  The ring $\Lambda(x||a)$ is a $\Q[a]$-Hopf algebra with basis the double Schur functions $s_\lambda(x||a)$, and is studied in detail by Molev \cite{M}. 

Generalizing $\eta_0$, there is an algebra homomorphism $\eta_a: \bR(x;a) \to \Lambda(x||a)$.  We define the \defn{double Stanley symmetric functions} $F_w(x||a) \in \La(x||a)$ by $F_w(x||a):= \eta_a(\bR(x;a))$.  As far as we are aware, the symmetric functions $F_w(x||a)$ are novel. When $w$ is 321-avoiding, the double Stanley symmetric function is equal to the skew double Schur function which was studied by Molev \cite{M}; see Proposition \ref{P:skew double is 321 double Stanley}.

One of our main theorems (Theorem \ref{thm:doubleStanleypositivity}) is a proof that the \defn{double Edelman-Greene coefficients} $j_\la^w(a) \in \Q[a]$ given by the expansion of double Stanley symmetric functions
$$
F_w(x||a) = \sum_{\la} j_\la^w(a) s_\la(x||a)
$$
into double Schur functions $s_\la(x||a)$, are positive polynomials in certain linear forms $a_i - a_j$.  The usual Edelman-Greene coefficients $j_\la^w(0):=j_\la^w(a)|_{a_i \to 0}$ are known to be positive by the influential works of Edelman and Greene \cite{EG} and Lascoux and Sch\"utzenberger \cite{LS2}.  Molev 
\cite{M} has given a combinatorial rule for the expansion coefficients of skew double Schurs into double Schurs (that is, for $j_\la^w(a)$ where $w$ is $321$-avoiding) but it does not exhibit the above positivity.

Back stable double Schubert polynomials satisfy (Theorem~\ref{thm:doublecoprod}) the same kind of coproduct formula \eqref{eq:coprodintro} as the non-doubled version, with the \defn{double Stanley symmetric functions} $F_w(x||a)$ replacing $F_w$ and double Schubert polynomials $\S_w(x;a)$ replacing the usual finite Schubert polynomials $\S_w$.

\subsection{Bumpless pipedreams}
We introduce a combinatorial object called \defn{bumpless pipedreams}, to study the monomial expansion of back stable double Schubert polynomials.  These are pipedreams where pipes are not allowed to bump against each other, or equivalently, the ``bumping" or ``double elbow tile" is forbidden:  
\begin{center}
\begin{tikzpicture}[scale=0.6,line width=0.8mm]
\bbox{-3}{0}
\bbox{-1}{0}
\leftelbow{-1}{0}{blue}
\bbox{1}{0}
\rightelbow{1}{0}{blue}
\bbox{3}{0}
\horline{3}{0}{blue}
\bbox{5}{0}
\cross{5}{0}{blue}{blue}
\bbox{7}{0}
\vertline{7}{0}{blue}
\bbox{9}{0}
\leftelbow{9}{0}{blue}
\rightelbow{9}{0}{blue}
\draw[thick,red] (8.9,-0.1)--(10.15,1.15);
\draw[thick,red] (10.15,-0.1)--(8.9,1.15);
\end{tikzpicture}
 \end{center}
Using bumpless pipedreams, we obtain:
\begin{itemize}
\item An expansion for double Schubert polynomials $\S_w(x;a)$ in terms of products of binomials $\prod (x_i - a_j)$. Our formula is different from the classical pipe-dream formula of Fomin and Kirillov \cite{FK} for double Schubert polynomials: unlike theirs, our formula is obviously back stable. Hence we also obtain such an expansion for back stable double Schubert polynomials.
\item A positive expression for the coefficient of $s_\la(x||a)$ in $\bS(x;a)$ (Theorem \ref{thm:decomp})
\item A new combinatorial interpretation of Edelman-Greene coefficients $j_\la^w(0)$ as the number of certain EG pipedreams (Theorem \ref{thm:EGStanley}).
\end{itemize}
Our bumpless pipedreams are a streamlined version of the \defn{interval positroid pipedreams} defined by Knutson \cite{Knu}.  Heuristically, our formula for $\bS_w(x;a)$ is obtained by ``pulling back" a Schubert variety in $\Fl$ to various Grassmannians where it can be identified (after equivariant shifts) with \defn{graph Schubert varieties}, a special class of positroid varieties. This connects our work with that of Knutson, Lam, and Speyer \cite{KLS}, who identified the equivariant cohomology classes of positroid varieties with affine double Stanley symmetric functions.

When presenting our findings we were informed by Anna Weigandt\footnote{See the recent preprint \cite{Wei}.} that Lascoux's use \cite{L2} of alternating sign matrices (ASMs) in a formula for Grothendieck polynomials, is very close to our pipedreams; ours correspond to the subset of reduced ASMs. Our construction has the advantage that the underlying permutation is evident; in the ASM one must go through an algorithm to extract this information.  Lascoux's ASMs naturally compute in $K$-theory rather than in cohomology.

\subsection{Infinite flag variety}
Whereas Schubert polynomials represent Schubert classes in the cohomology of the flag variety, back stable Schubert polynomials represent Schubert classes in the cohomology of an appropriate {\it infinite flag variety}.

The infinite Grassmannian $\Gr$ is an ind-finite variety over $\C$, the points of which are identified with (infinite-dimensional over $\C$) \defn{admissible subspaces} $\Lambda \subset F$, where $F = \C((t))$ (see Section \ref{S:geometry}).  The infinite Grassmannian can be presented as an infinite union of finite-dimensional Grassmannians.
The infinite flag variety $\Fl$ is an ind-finite variety over $\C$, the points of which are identified with \defn{admissible flags}
$$
\Lambda_\bullet = \{\cdots \subset \Lambda_{-1} \subset \Lambda_0 \subset \Lambda_1 \subset \cdots\}.
$$
Under an isomorphism between $\bR$ and the cohomology of $\Fl$, we show in Theorem \ref{thm:HFl} that back stable Schubert polynomials represent Schubert classes of $\Fl$.  For the infinite Grassmannian it is well-known that Schur functions represent Schubert classes.  Our $\Fl$ differs somewhat from other infinite-dimensional flag varieties we have seen in the literature (see for example \cite{PS}), and thus we give a reasonably independent development in Section~\ref{S:geometry}.  

The infinite flag variety $\Fl$ is the union of finite-dimensional flag varieties, and any product $\xi^x \xi^y$ of two Schubert classes $\xi^x,\xi^y \in H^*(\Fl)$ can be computed within some finite-dimensional flag variety.  Naively, as some subset of the authors had mistakenly assumed, no interesting and new phenomena would arise in the infinite case.  To the contrary, in this article we present our findings of entirely new phenomena that have no classical counterpart.

\subsection{Localization and infinite nilHecke algebra}
The torus-equivariant cohomology $H^*_T(\Fl_n)$ of the flag variety can be studied by localizing to the torus fixed points, giving an injection $H^*_T(\Fl_n) \hookrightarrow \bigoplus_{v \in S_n} H^*_T(\pt) \simeq \Q[a_1,\ldots,a_n]$.  It is known \cite[Remark 1]{Bil} that the localization $\xi^v|_w$ of a Schubert class indexed by $v \in S_n$ at the torus fixed-point indexed $w \in S_n$ is given by the evaluation $\S_v(wa;a) \in \Q[a]$.  We prove in Proposition~\ref{prop:bAbR} an analogous result for the equivariant cohomology ring $H^*_{T_\Z}(\Fl)$: the localization of a Schubert class $\xi^v$ at a $T_\Z$-fixed point $w \in S_\Z$ is equal to a specialization $\bS_v(wa;a)$ of the back stable double Schubert polynomial. 

Kostant and Kumar \cite{KK} studied the torus-equivariant cohomology of Kac-Moody flag varieties (including the usual flag variety) using the action of the {\it nilHecke ring} on these cohomologies.  We construct in Section~\ref{sec:local} an action of the infinite nilHecke ring $\A'$ on $H^*_{T_\Z}(\Fl)$, giving an infinite rank variant of the results of Kostant and Kumar.

\subsection{Homology}
The torus-equivariant cohomology ring $H_{T_\Z}^*(\Gr)$ of the infinite Grassmannian is isomorphic to the ring $\La(x||a)$ of double symmetric functions (see Theorem~\ref{thm:HTFl}).
The (appropriately completed) equivariant homology $H_*^{T_\Z}(\Gr)$ of the infinite Grassmannian is Hopf-dual to the Hopf algebra $\La(x||a)$. Non-equivariantly, this can be explained by the homotopy equivalence $\Gr \cong \Omega SU(\infty)$ with a group.  Restricting to a one-dimensional torus $\C^\times \subset T_\Z$, the multiplication of $H_*^{\C^\times}(\Gr)$ is induced by the direct sum operation on finite Grassmannians, and was studied in some detail by Knutson and Lederer \cite{KL}.  The geometry of the full multiplication on $H_*^{T_\Z}(\Gr)$ is still mysterious to us, and we hope to study it in the context of the {\it affine infinite Grassmannian} in the future.

Molev \cite{M} studied the Hopf algebra $\hLa(y||a)$ Hopf-dual to $\La(x||a)$, and defined the basis $\hs_\la(y||a)$ of {\it dual Schur functions} in $\hLa(y||a)$, dual to the double Schur functions.  We identify (Proposition \ref{prop:dualSchur}) the Schubert basis of $H_*^{T_\Z}(\Gr)$ with Molev's dual Schur functions $\hs_\la(y||a)$ \cite{M}.  We use this to resolve (Theorem \ref{thm:KL}) a question posed in \cite{KL}: to find deformations of Schur functions that have structure constants equal to the Knutson-Lederer direct sum product.

One of our main results (Theorem \ref{T:create dual Schur}) is a recursive formula for the dual Schur functions $\hs_\la(x||a)$ in terms of novel \defn{homology divided difference operators}, which are divided difference operators on equivariant variables, but conjugated by the equivariant Cauchy kernel. 
A similar formula had previously been found independently by Naruse \cite{Na}, who was studying the homology of the infinite Lagrangian Grassmannian.  Our construction is also closely related to the presentation of the equivariant homology of the affine Grassmannian given by Bezrukavnikov, Finkelberg, and Mirkovic \cite{BFM}.  We hope to return to the affine setting in the future.

We compute the ring structure of this equivariant homology ring by giving a positive Pieri rule (Theorem \ref{thm:homologyhook}).  Our computation of the Pieri structure constants relies on some earlier work of Lam and Shimozono \cite{LaSh} in the affine case, and on \defn{triple Stanley symmetric functions} $F_w(x||a||b)$ that we define in Section~\ref{sec:triple}.  The double Stanley symmetric functions $F_w(x||a)$ are recovered from $F_w(x||a||b)$ by setting $b = a$.  The triple Stanley symmetric functions distinguish ``stable" phenomena from ``unstable" phenomena in the limit from the affine to the infinite setting.  
%

\subsection{Affine Schubert calculus}
Our study of back stable Schubert calculus is to a large extent motivated by our study of the Schubert calculus of the affine flag variety $\hFl$, and in particular Lee's recent definition of \defn{affine Schubert polynomials} \cite{Lee}.  There is a surjection $H^*(\Fl) \to H^*(\hFl_n)$ from the cohomology of the infinite flag variety to that of the affine flag variety of $\SL(n)$.  A complete understanding of this map yields a presentation for the cohomology of the affine flag variety. Thus this project can be considered as a first step towards understanding the geometry and combinatorics of affine Schubert polynomials and their equivariant analogues. 

We shall apply back stable Schubert calculus to affine Schubert calculus in future work \cite{LLS:affine}.  In particular, analogues of our coproduct formulae (Theorems \ref{thm:coprod} and \ref{thm:doublecoprod}) hold for equivariant Schubert classes in the affine flag variety of any semisimple group $G$ \cite{LLS:coprod}.  

\subsection{Peterson subalgebra}
The (finite) torus-equivariant cohomology ring $H^*_T(\hFl_n)$ of the affine flag variety $\hFl_n$ has an action of the level zero affine nilHecke ring $\ttA$.
Peterson \cite{Pet, Lam} constructed a subalgebra $\tP \subset \ttA$ (recalled in Appendix~\ref{A:affinenilHecke}) and showed that the torus-equivariant homology $H_*^T(\hGr_n)$ of the affine Grassmannian $\hGr_n$ is isomorphic to $\tP$.  We refer the reader to \cite{LLMSbook} for an introduction to affine Grassmannian Schubert calculus.

While Kostant and Kumar's definition of the nilHecke algebra applies to any Kac-Moody flag variety, the definition of the Peterson algebra is special to the case of the affine flag variety (of a semisimple group).
Thus it came as a surprise that we are able to construct (Theorem \ref{thm:PetersonHopf}) a subalgebra $\P' \subset \A'$ of the infinite nilHecke ring that is an analogue of the Peterson subalgebra in the affine case.  While the infinite symmetric group $S_\Z$ is not an affine Coxeter group, we are able to define elements in $\A'$ that behave like translation elements in affine Coxeter groups.  

Our infinite Peterson algebra $\P'$ is in a precise sense the limit of Peterson algebras for affine type $A$.  This allows us to apply known positivity results in affine Schubert calculus to deduce the positivity (Theorem \ref{thm:doubleStanleypositivity}) of double Edelman-Greene coefficients.

\subsection{Other directions}
Most of the results of the present work have $K$-theoretic analogues.  We plan to address $K$-theory in a separate work \cite{LLS:K}.

The results in this paper (for example, \textsection \ref{SS:translation}) suggests the study of the affine infinite flag variety $\hFl$, an ind-variety whose torus-fixed points are the affine infinite symmetric group $S_\Z \ltimes Q_{\Z}^\vee$, where $Q_\Z^\vee$ is the $\Z$-span of root vectors $e_i-e_j$ for $i\ne j$ integers and $e_i$ is the standard basis of a lattice with $i\in \Z$.  Curiously, Schubert classes of $\hFl$ can have infinite codimension (elements of $S_\Z \ltimes Q_{\Z}^\vee$ can have infinite length) and should lead to new phenomena in Schubert calculus.

\subsection*{Acknowledgements}
We thank Anna Weigandt, Zach Hamaker, Anders Buch, and especially Allen Knutson for their comments on and inspiration for this work.  We also thank the referee for a number of helpful comments and suggestions.

\section{Schubert polynomials}
We recall known results concerning Lascoux and Sch\"utzenberger's (double) Schubert polynomials.  None of the results in this section are new, but for completeness we provide short proofs for many of them.
\subsection{Notation}\label{SS:notation}
Throughout the paper, we set  $ \chi(\text{True})=1$ and $\chi(\text{False})=0$.
\subsubsection{Permutations}
Let $S_\Z$ denote the subgroup of permutations of $\Z$ generated by $s_i$ for $i\in\Z$ where $s_i$ exchanges $i$ and $i+1$. This is the group of permutations of $\Z$ that move finitely many elements. 
Let $S_+$ (respectively $S_-$, resp. $S_n$) be the subgroup of $S_\Z$ generated by $s_1, s_2,\ldots$ (resp. $s_{-1},s_{-2},\ldots$, resp. $s_1,s_2,\dotsc,s_{n-1}$).
We have $S_+ = \bigcup_{n\ge1} S_n$.  We write $S_{\ne0} = S_-\times S_+$. 
For $w\in S_\Z$ denote by $\ell(w)$ the length of $w$ and $\Red(w)$ for the set of reduced words of $w$ \cite[\textsection 1.6]{Hum}.
For $x,y,z \in S_\Z$, we write $z \doteq xy$ if $z = xy$ and $\ell(z) = \ell(x) + \ell(y)$.  This notation generalizes to longer products $z \doteq x_1 x_2 \cdots x_r$.
Let $w_0^{(n)}\in S_n$ be the longest element \cite[\textsection 1.8]{Hum}.
Let $\shift:S_\Z\to S_\Z$ be the ``shifting" automorphism $\shift(s_i)=s_{i+1}$ for all $i\in\Z$. 

Let $\le$ be the (strong) Bruhat order on $S_\Z$ \cite[\textsection 5.9]{Hum}.
For a fixed $k\in\Z$, say that $w\in S_\Z$ is $k$-Grassmannian 
if $w<ws_i$ (equivalently, $w(i) < w(i+1)$ viewing $w$ as a function $\Z\to\Z$) for all $i\in\Z-\{k\}$.
We write $S_\Z^0$ for the set of $0$-Grassmannian permutations. 

\subsubsection{Partitions}
Let $\Par$ denote the set of partitions or Young diagrams. We consider a partition $\lambda=(\lambda_1,\ldots,\lambda_\ell)$ as an infinite sequence $(\lambda_1,\ldots,\lambda_\ell,0,0,\ldots)$ if necessary. Throughout the paper, Young diagrams are drawn in English notation: the boxes are top left justified in the plane. For a Young diagram $\la$, we let $\la'$ denote the conjugate (or transpose) Young diagram.  The \defn{dominance order} on partitions of the same size is given by $\lambda \leq \mu$ if $\sum_{i=1}^k \lambda_i \leq \sum_{i=1}^k \mu_i$ for all $k$.

There is a bijection between $\Par$ and $S_\Z^0$, given by $\lambda \mapsto w_\lambda$, where
\begin{align}
\label{E:wla}
	w_\lambda(i) &:= i + \begin{cases}
		\lambda_{1-i} & \text{if $i\le0$} \\
		-\lambda'_i & \text{if $i>0$.}
	\end{cases}
\end{align}
A reduced expression for $w_\la$ is obtained by labeling the box $(i,j)$ in the $i$-th row and $j$-th column of the diagram of $\la$ by $s_{j-i}$ and reading the rows from right to left starting with the bottom row.

If $\mu\subset \la$, we define 
\begin{align}\label{E:w skew}
	w_{\la/\mu} := w_\la w_\mu^{-1}.
\end{align}
We note that $w_\la \doteq w_{\la/\mu} w_\mu$. An element $w\in S_\Z$ is 
\defn{321-avoiding} if there is no triple of integers $i<j<k$ such that $w(i)>w(j)>w(k)$.

\begin{lemma}[{\cite[Section 2]{BJS}}]
	An element $w\in S_\Z$ is $321$-avoiding if and only if $w=w_{\la/\mu}$ for some partitions $\mu\subset\la$. 
\end{lemma}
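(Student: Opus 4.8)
The plan is to prove both directions by analyzing the structure of the one-line notation of $w$. For the backward direction, suppose $w = w_{\la/\mu} = w_\la w_\mu^{-1}$ for partitions $\mu \subset \la$. I would first understand the permutations $w_\la$ and $w_\mu$ explicitly: by \eqref{E:wla}, $w_\la$ sends $\Z_{\le 0}$ to an increasing sequence $\{i + \la_{1-i} : i \le 0\}$ and $\Z_{>0}$ to an increasing sequence $\{i - \la'_i : i > 0\}$, so $w_\la$ is $0$-Grassmannian, hence $321$-avoiding (a Grassmannian permutation has a single descent and cannot contain a $321$ pattern). The key point is to check that composing two Grassmannian permutations at the same position $0$ in the length-additive way $w_\la \doteq w_{\la/\mu} w_\mu$ keeps the result $321$-avoiding; equivalently, since $w_\mu^{-1}$ is also $0$-Grassmannian-like, I would show directly that $w_{\la/\mu}$ maps the two half-lines $\{w_\mu(i) : i \le 0\}$ and $\{w_\mu(i): i>0\}$ each monotonically (order-preservingly) onto their images, which forces $w_{\la/\mu}$ to have no $321$ pattern: any $321$ pattern $i<j<k$ with $w(i)>w(j)>w(k)$ would need two of $i,j,k$ on the same side, contradicting monotonicity there.

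For the forward direction, suppose $w$ is $321$-avoiding. The classical fact (Billey--Jockusch--Stanley, or Stanley) is that $w$ is $321$-avoiding iff it is fully commutative, iff its reduced words all avoid having $s_i s_{i\pm1} s_i$ as a factor. I would instead argue combinatorially with the one-line notation: define the set $D = \{i \in \Z : w(i) > w(i+1)\}$ of descents. Because $w$ is $321$-avoiding, the values of $w$ split into the increasing run positions in a controlled way; more precisely, the sequence $(w(i))_{i\in\Z}$ is a shuffle of two increasing sequences. From this shuffle structure I would reconstruct two partitions: let the positions where $w(i) < i$ (the ``descents relative to identity'') be grouped, and read off column lengths to build $\la'$ and row lengths to build a nested $\mu$. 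Concretely, I would set $w_\mu$ to be the $0$-Grassmannian permutation whose set of ``jumps'' records one of the two increasing subsequences passing through position $0$, check $\mu \subset \la$ where $w_\la$ records the full shuffle, and verify $w = w_\la w_\mu^{-1}$ with the length-additivity $w_\la \doteq w w_\mu$.

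The cleanest route is probably to use the skew shape directly: given $321$-avoiding $w$, consider the ``string diagram'' of $w$ (wires from $i$ on top to $w(i)$ on bottom) and observe that $321$-avoidance means no three wires are pairwise crossing, so the crossings form a skew Young diagram $\la/\mu$ when drawn in the plane; the partitions $\la, \mu$ are then read off from the boundary of this region of crossings, and by construction the associated $w_\la, w_\mu$ satisfy $w = w_\la w_\mu^{-1}$. I would make this precise using \eqref{E:I+}--\eqref{E:I-}: a $321$-avoiding $w$ need not be Grassmannian, but I can write $w$ as a product of a $0$-Grassmannian part and its correction, tracking $I_{w,+}$ and $I_{w,-}$.

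The main obstacle I expect is the bookkeeping in the forward direction: extracting a \emph{nested} pair $\mu \subset \la$ (not just two arbitrary partitions) from the shuffle decomposition of the one-line notation, and simultaneously verifying the length-additivity $w_\la \doteq w_{\la/\mu}\, w_\mu$ so that $\ell(w_{\la/\mu}) = \ell(w_\la) - \ell(w_\mu)$ matches $|\la/\mu|$. The nestedness should follow from the fact that the two increasing subsequences in the shuffle can be chosen greedily (the ``smaller'' one always sitting inside the ``larger''), but making the choice canonical and compatible with \eqref{E:wla} is where the care is needed. Everything else — that $w_\la$ is $321$-avoiding, that products of monotone-on-each-side permutations stay $321$-avoiding — is routine pattern-avoidance.
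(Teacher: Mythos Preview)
The paper states this lemma without proof; it is treated as a known characterization (going back at least to Billey--Jockusch--Stanley and the heap/fully-commutative picture). So there is no ``paper's proof'' to compare against, and your task is simply to give a correct argument.

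Your backward direction is fine. The key observation --- that $w_{\la/\mu}$ restricted to each of the two sets $\{w_\mu(i):i\le 0\}$ and $\{w_\mu(i):i>0\}$ is order-preserving, hence the whole permutation is a shuffle of two increasing sequences and therefore $321$-avoiding --- is exactly right and is essentially a complete proof of that implication.

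Your forward direction, however, is not yet a proof; it is a wish list. You correctly identify that a $321$-avoiding permutation decomposes as a shuffle of two increasing sequences, but you never actually construct $\mu\subset\la$ from $w$, and the vague passage about ``positions where $w(i)<i$'' and ``reading off column lengths'' does not pin anything down. The greedy choice you allude to is not automatically compatible with the specific normalization in \eqref{E:wla}, and your acknowledgment of the ``main obstacle'' is accurate: as written you have not overcome it.

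The standard clean argument --- which you gesture toward with the string-diagram remark but do not carry out --- is to use full commutativity. A $321$-avoiding $w$ is fully commutative, so its heap (the poset of letters in any reduced word, with $s_i$ covered by adjacent $s_{i\pm1}$) is well-defined; for type $A$ this heap is always a skew shape $\la/\mu$, with the box on diagonal $d$ corresponding to the occurrence of $s_d$. One then checks directly that reading the heap rows (or columns) gives a reduced word for $w_{\la/\mu}$, and the containment $\mu\subset\la$ and length-additivity $w_\la\doteq w_{\la/\mu}w_\mu$ are immediate from the diagram. If you want to avoid heaps, an equivalent concrete construction is: set $\mu$ so that $w_\mu$ is the $0$-Grassmannian permutation with $I_{w_\mu,+}=\{j>0:\exists\,i\le 0,\ w(i)=j\}$ (and $I_{w_\mu,-}$ its complement-partner), then verify $w\,w_\mu$ is $0$-Grassmannian and call it $w_\la$; the nesting $\mu\subset\la$ follows from $\ell(w_\la)=\ell(w)+\ell(w_\mu)$. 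Either route closes the gap.
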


\begin{example} \label{X:grass perm} For $\la=(3,2)$, the values of $w_\la:\Z\to\Z$  are given. For $\mu=(1)$ we have $w_\mu=s_0$. Reduced decompositions for $w_\la$ and $w_{\la/\mu}$ are given.
\begin{align*}
	\begin{array}{|c||c|c|c|c|c||c|c|c|c|c|c|c|} \hline
			i                 & \dotsm & -3 & -2 & -1 &   0 &   1 &   2 &   3 & 4 & 5 & \dotsm \\ \hline
			w_{\la}(i)    & \dotsm & -3 & -2 &   1 &   3 & -1 &   0 &   2 & 4 & 5 & \dotsm  \\ \hline 
			w_{\la}(i)-i & \dotsm &   0 &   0 &   2 &   3 & -2 & -2 & -1 & 0 & 0 & \dotsm \\ \hline
	\end{array}
\end{align*}

\begin{align*}
	&\begin{ytableau} 
		   s_0&s_1&s_2 \\ s_{-1} & s_0 
	   \end{ytableau} \,\,w_{(3,2)} = (s_0s_{-1})( s_2s_1s_0) 
    &\begin{ytableau} *(black) s_0 & s_1 & s_2 \\
		s_0 & s_{-1} \end{ytableau} \,\, w_{(3,2)/(1)} = (s_0s_{-1})(s_2s_1). 
\end{align*}
\end{example}

\subsection{Schubert polynomials} \label{SS:Schub def}
Following \cite{LS}, we define Schubert polynomials using divided difference operators.
Let $\Q[\xp]:= \Q[x_1,x_2,x_3,\ldots]$ be the polynomial ring in infinitely many positively-indexed variables and $\Q[x]:=\Q[\dotsc,x_{-1},x_0,x_1,\dotsc]$ the polynomial ring in variables indexed by integers.  Define the $\Q$-algebra automorphism $\shift: \Q[x] \to \Q[x]$ given by $x_i \mapsto x_{i+1}$.

For $i\in\Z$ the divided difference operator $A_i: \Q[x]\to\Q[x]$ is defined by
\begin{equation}\label{eq:dd}
A_i(f) := \dfrac{f - s_i(f)}{x_i - x_{i+1}}.
\end{equation}
We have the operator identities
\begin{align}\label{E:dd2}
	A_i^2 &= 0 \\
	\label{E:dd commute}
	A_iA_j &= A_j A_i \qquad\text{for $|i-j|>1$} \\
	\label{E:dd braid}
	A_i A_{i+1} A_i &= A_{i+1} A_i A_{i+1}.
\end{align}

For $w\in S_\Z$ this allows the definition of
\begin{align}\label{E:ddiff w}
	A_w &:= A_{i_1} A_{i_2}\dotsm A_{i_\ell}
&\qquad&\text{where $(i_1,i_2,\dotsc,i_\ell)\in\Red(w)$.} 
\end{align}

\begin{lemma} \label{L:ker partial} Both the kernel of $A_i$ and the
	image of $A_i$ are the subalgebra of $s_i$-invariant elements.
\end{lemma}

For $w\in S_n$, the Schubert polynomial $\S_w\in \Q[\xp]$ is defined by
\begin{align}
\label{E:Schub long}
  \S_{w_0^{(n)}}(\xp) &:= x_1^{n-1}x_2^{n-2}\dotsm x_{n-1}^1  \\
\label{E:Schub nonlong}
  \S_w(\xp) &:= A_i \S_{ws_i}(\xp)\qquad\text{for any $i$ with $ws_i>w$.}
\end{align}
The polynomials $\S_w(\xp)$ are well-defined for $w\in S_n$ by \eqref{E:dd commute} and \eqref{E:dd braid}.

\begin{lem} \label{L:S_+ well-defined}
	$\S_w(\xp)$ is well-defined for $w\in S_+$.
\end{lem}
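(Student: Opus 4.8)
The plan is to reduce the claim for $S_+$ to the already-established well-definedness for each $S_n$, using the inclusions $S_n \subset S_{n+1}$ and the stability of the longest-element formula. Fix $w \in S_+$. By definition $S_+ = \bigcup_{n \geq 1} S_n$, so $w \in S_n$ for all sufficiently large $n$; say $w \in S_N$. For each such $n \geq N$ we already know from \eqref{E:dd commute}, \eqref{E:dd braid} and the remark following \eqref{E:Schub nonlong} that $\S_w(x)$ is well-defined \emph{as an element computed inside $S_n$}, i.e. the value $A_i \S_{ws_i}(x)$ obtained by applying any ascending chain $w < ws_{i_1} < \cdots < w_0^{(n)}$ from $w$ up to the longest element of $S_n$ is independent of the chain. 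What must be shown is that this value does not change when we pass from $S_n$ to $S_{n+1}$.

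The key step is therefore the compatibility $\S_{w_0^{(n)}}$ versus $\S_{w_0^{(n+1)}}$ under the divided-difference recursion. First I would record that the one-line notation of $w_0^{(n)}$ is $n,n-1,\ldots,1$ and of $w_0^{(n+1)}$ is $n+1,n,\ldots,1$, and that there is a length-additive factorization $w_0^{(n+1)} \doteq (s_n s_{n-1}\cdots s_1)\, w_0^{(n)}$: indeed $s_n s_{n-1}\cdots s_1$ sends $n+1,n,\ldots,1$ to $n,n-1,\ldots,1,n+1$ wait — more carefully, the coset representative $u_n := s_n s_{n-1} \cdots s_1$ of length $n$ satisfies $w_0^{(n+1)} = u_n \cdot w_0^{(n)}$ with $\ell(w_0^{(n+1)}) = n + \binom{n}{2} = \binom{n+1}{2}$, so the factorization is length-additive. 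Hence, by \eqref{E:ddiff w} applied along this chain (which is legitimate since within $S_{n+1}$ the polynomial $\S_{w_0^{(n+1)}}$ has a well-defined recursion), one gets
\begin{equation}\label{E:proposal step}
\S_{w_0^{(n)}}(x) = A_{u_n}\, \S_{w_0^{(n+1)}}(x) = A_1 A_2 \cdots A_n\bigl( x_1^n x_2^{n-1}\cdots x_n^1 \bigr).
\end{equation}
I would verify \eqref{E:proposal step} by the standard direct computation: $A_n$ kills the factor $x_n$ degree and produces $x_1^n \cdots x_{n-1}^2 \cdot 1$, and peeling off $A_{n-1}, \ldots, A_1$ in turn reproduces $x_1^{n-1} x_2^{n-2}\cdots x_{n-1}$, matching \eqref{E:Schub long} for $S_n$. (This is the routine calculation I will not grind through here, but it is exactly the classical fact that $\S_{w_0^{(n)}}$ is stable under the embedding $S_n \hookrightarrow S_{n+1}$.)

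With \eqref{E:proposal step} in hand the lemma follows quickly. Given $w \in S_N$, choose any $n \geq N$ and any saturated chain from $w$ to $w_0^{(n)}$ in $S_n$. Concatenating it with the chain $w_0^{(n)} < \cdots < w_0^{(n+1)}$ above gives a saturated chain from $w$ to $w_0^{(n+1)}$ in $S_{n+1}$, and by \eqref{E:proposal step} the iterated divided difference of $\S_{w_0^{(n+1)}}$ along the tail returns $\S_{w_0^{(n)}}$, so the value of $\S_w$ computed in $S_{n+1}$ agrees with that computed in $S_n$. Since within each $S_n$ well-definedness is already known, induction on $n$ shows all these values coincide, giving a single well-defined $\S_w(x) \in \Q[\xp]$ for $w \in S_+$. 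The only real obstacle is establishing \eqref{E:proposal step}, i.e. the stability of the top Schubert polynomial under $S_n \hookrightarrow S_{n+1}$; once that identity is pinned down, the rest is formal bookkeeping with length-additive factorizations and the operator relations \eqref{E:dd commute}, \eqref{E:dd braid}.
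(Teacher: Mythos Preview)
Your strategy is exactly the paper's: reduce to showing that the two definitions of $\S_{w_0^{(n)}}$ (from $S_n$ and from $S_{n+1}$) agree, via the length-additive factorization relating $w_0^{(n+1)}$ and $w_0^{(n)}$.

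However, the details of your key identity \eqref{E:proposal step} are garbled in a way that makes the written computation fail. First, the factorization you state is on the wrong side: $w_0^{(n+1)} = (s_n s_{n-1}\cdots s_1)\, w_0^{(n)}$ is false (for $n=2$ the left side is $[3,2,1]$ while the right side is $[1,3,2]$). What is true, and what the recursion $\S_{ws_i}=A_i\S_w$ actually needs, is the \emph{right} factorization $w_0^{(n+1)} \doteq w_0^{(n)}\, s_n s_{n-1}\cdots s_1$; this is precisely what the paper uses. Second, with $u_n=s_n\cdots s_1$ the operator $A_{u_n}$ is $A_n A_{n-1}\cdots A_1$, not $A_1 A_2\cdots A_n$ as you wrote. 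This matters: already for $n=2$,
\[
A_1A_2(x_1^2x_2)=A_1(x_1^2)=x_1+x_2\neq x_1,\qquad\text{while}\qquad A_2A_1(x_1^2x_2)=A_2(x_1x_2)=x_1=\S_{w_0^{(2)}}.
\]
So your sketched verification (``$A_n$ kills the factor $x_n$\ldots'') proceeds in the wrong order and does not produce $x_1^{n-1}\cdots x_{n-1}$. Once you correct the side of the factorization and the order of the $A_i$'s to $A_n\cdots A_2A_1$ (apply $A_1$ first), the inductive computation goes through exactly as in the paper.
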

\begin{proof} It suffices to show that the definitions of $\S_{w^{(n)}_0}$ and $\S_{w^{(n+1)}_0}$ are consistent.
Using $w_0^{(n+1)}\doteq w_0^{(n)} s_n \dotsm s_2 s_1$ we have
$A_n \dotsm A_2 A_1(x_1^nx_2^{n-1}\dotsm x_{n}^1) = x_1^{n-1}x_2^{n-2}\dotsm x_{n-1}^1$.
\end{proof}

We recall the monomial expansion of $\S_w$ due to Billey, Jockusch, and Stanley.

\begin{thm}[\cite{BJS}]\label{thm:BJS}
For $w \in S_+$, we have
\begin{align}\label{E:BJS}
	\S_w (\xp)= \sum_{a_1a_2 \cdots a_\ell \in \Red(w)} \sum_{\substack{1 \leq b_1 \leq b_2 \leq \cdots \leq b_\ell\\ a_i<a_{i+1} \implies b_i < b_{i+1} \\ b_i \leq a_i}} x_{b_1} x_{b_2} \cdots x_{b_\ell}.
\end{align}
\end{thm}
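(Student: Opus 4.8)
The plan is to prove the identity by downward induction from the longest elements, matching the combinatorial generating function against the divided‑difference recursion \eqref{E:Schub long}--\eqref{E:Schub nonlong}. Write $C_w(x)$ for the right‑hand side of \eqref{E:BJS}, the generating function over compatible pairs $(\mathbf a,\mathbf b)$ for $w$. Since $S_+=\bigcup_n S_n$ and, for any $w\in S_n$, the product $w_0^{(n)}=w\cdot(w^{-1}w_0^{(n)})$ is length‑additive (because $\ell(w^{-1}w_0^{(n)})=\ell(w_0^{(n)})-\ell(w)$), there is a saturated chain $w=v_0\lessdot v_1\lessdot\dots\lessdot v_m=w_0^{(n)}$ in right weak order, say $v_k=v_{k-1}s_{j_k}$. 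Hence it suffices to prove: \emph{(1)} $C_{w_0^{(n)}}(x)=x_1^{n-1}x_2^{n-2}\cdots x_{n-1}^1$, and \emph{(2)} $C_w(x)=A_i\,C_{ws_i}(x)$ whenever $ws_i>w$ (i.e.\ that the $C_w$ obey the same defining recursion \eqref{E:Schub nonlong} as the $\S_w$). Granting these, repeated use of \emph{(2)} along the chain gives $C_w=A_{j_1}A_{j_2}\cdots A_{j_m}C_{w_0^{(n)}}$, which by \emph{(1)} and the definition of $\S_w$ equals $A_{j_1}\cdots A_{j_m}\S_{w_0^{(n)}}=\S_w$; and $C_w$ does not depend on $n$, so $C_w=\S_w$ for all $w\in S_+$.

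For \emph{(1)}: the Rothe diagram of $w_0^{(n)}$ is the staircase $\{(i,j):i+j\le n\}$, so $w_0^{(n)}$ is dominant and has a unique pipe dream; reading its rows gives the unique reduced word of $w_0^{(n)}$ supporting a compatible sequence, namely $\mathbf a=(n{-}1,\dots,1,\,n{-}1,\dots,2,\,\dots,\,n{-}1)$, and for this $\mathbf a$ the constraints $b_i\le a_i$, $b_1\le\dots\le b_\ell$, and ($a_i<a_{i+1}\Rightarrow b_i<b_{i+1}$) force the single compatible sequence $\mathbf b=(1,\dots,1,\,2,\dots,2,\,\dots,\,n{-}1)$ of weight $x_1^{n-1}\cdots x_{n-1}^1$. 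Both uniqueness statements follow by a short induction on $n$, peeling off the last block $s_{n-1}s_{n-2}\cdots s_1$; equivalently one checks directly that any compatible pair for $w_0^{(n)}$ must have $b$‑content $1^{n-1}2^{n-2}\cdots(n-1)^1$ and that only one such pair exists.

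Statement \emph{(2)} is the crux and carries the combinatorial content of the theorem. Fix $i$ with $ws_i>w$ and set $v:=ws_i$, so that $v(i)>v(i+1)$ and $vs_i=w$. Using the splitting $\Q[x]=\Q[x]^{s_i}\oplus x_i\,\Q[x]^{s_i}$ and the identity $A_i(f+x_i g)=g$ for $s_i$‑invariant $f,g$, proving $A_iC_v=C_w$ amounts to writing $C_v=P+x_iQ$ with $P,Q$ symmetric in $x_i,x_{i+1}$ and identifying $Q$ with $C_w$. Concretely, I would group the compatible pairs of $v$ according to the occurrences of the letter $i$ in $\mathbf a$ (and hence of the $\mathbf b$‑entries that are $\le i$), factor out of each group the geometric‑series factor in $x_i,x_{i+1}$ that arises from $A_i(x_i^{a}x_{i+1}^{b})$, and build a weight‑compatible bijection onto the compatible pairs of $w=vs_i$; the verification reduces to checking that the ascent condition $a_j<a_{j+1}\Rightarrow b_j<b_{j+1}$ is transported correctly as one passes between reduced words of $v$ and of $w$. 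This bookkeeping — how the ascent/descent pattern of a reduced word interacts with occurrences of a fixed letter — is precisely where the difficulty lies, and is the step I expect to be the main obstacle; steps \emph{(1)} and the weak‑order argument of the first paragraph are routine. Two alternative routes for \emph{(2)} are worth noting: one may run the same induction in the equivalent RC‑graph (pipe dream) model of Fomin--Kirillov and Bergeron--Billey, where $A_i$ corresponds to a transparent local operation on rows $i$ and $i+1$; or one may expand the Fomin--Stanley nilCoxeter product $\mathcal A(x_N)\cdots\mathcal A(x_1)$, whose coefficient of the group element $w$ is visibly $C_w(x)$ and which one shows independently equals $\sum_w\S_w(x)\,w$.
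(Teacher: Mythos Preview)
The paper does not give its own proof of this statement: Theorem~\ref{thm:BJS} is quoted from \cite{BJS} and used as a black box. So there is no ``paper's proof'' to compare against, and the relevant question is whether your argument stands on its own.

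Your overall strategy (downward induction from $w_0^{(n)}$ via the divided-difference recursion) is the standard one and is correct in outline. Step~(1) is fine. The genuine gap is in step~(2): you correctly reformulate $A_iC_v=C_w$ as identifying the $x_i$-coefficient in a decomposition $C_v=P+x_iQ$ with $P,Q\in\Q[x]^{s_i}$, but you then only \emph{describe} what a proof would look like (``I would group the compatible pairs\dots and build a weight-compatible bijection'') without actually exhibiting the bijection or verifying that the ascent condition is transported correctly. You yourself flag this as ``the main obstacle,'' and indeed it is: this step is where all the work lies, and as written it is a plan, not a proof.

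Of the alternative routes you mention, the Fomin--Stanley nilCoxeter argument is the cleanest and is essentially self-contained: one checks directly that the operator product $\mathcal A(x_N)\cdots\mathcal A(x_1)$ (with $\mathcal A(x)=\prod_i(1+xA_i)$, the product taken in increasing $i$) satisfies the divided-difference recursion, which is an elementary computation with the nilCoxeter relations, and then reads off $C_w$ as the coefficient of $A_w$. If you want a complete proof rather than a sketch, pursuing that route would close the gap with less bookkeeping than the bijective approach you outline.
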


Define the \defn{code} $c(w) = (\dotsc,c_{-1},c_0,c_1,\dotsc) $ of $w\in S_\Z$ by 
\begin{align}\label{E:code}
	c_i &:= |\{j>i \mid w(j)<w(i)\}|.
\end{align}
The support of an indexed collection of integers $(c_i\mid i\in J)$ is the set of $i\in J$ such that $c_i\ne0$. The code gives a bijection from $S_\Z$ to finitely-supported sequences of nonnegative integers $(\dotsc,c_{-1},c_0,c_1,\dotsc)$.
It restricts to a bijection from $S_+$ to finitely-supported sequences of nonnegative integers $(c_1,c_2,\dotsc)$.

For a sequence $b=(b_1,b_2,b_3,\cdots)$ of integers, let $x^b$ denote $x_1^{b_1}x_2^{b_2}\cdots$. For two monomials $x^b$ and $x^c$ in $\Q[x]$,
we say that $x^c>x^b$ in reverse-lex order if $b\ne c$ and for the maximum $i\in \Z$ such that $b_i\ne c_i$ we have $b_i<c_i$.  The following triangularity of Schubert polynomials with monomials can be seen from Bergeron and Billey's rc-graph formula for Schubert polynomials \cite{BB}, and is also proven in \cite{BH}.

\begin{prop} \label{P:Schub triangular}
The transition matrix between Schubert polynomials and monomials is unitriangular:
\begin{align}\label{E:Schub monomial triangularity}
	\S_w (\xp)= x^{c(w)} + \text{reverse-lex lower terms.}
\end{align}
\end{prop}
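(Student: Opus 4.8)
The plan is to read off the leading monomial and its coefficient directly from the rc-graph (reduced pipe dream) formula for Schubert polynomials due to Bergeron and Billey \cite{BB}. Recall that this expresses $\S_w(x) = \sum_{D} \prod_{(i,j) \in D} x_i$, the sum running over the rc-graphs $D$ of $w$ in the triangular array; the exponent of $x_i$ in the monomial attached to $D$ is the number of crosses of $D$ lying in row $i$. So controlling the reverse-lex order of these monomials amounts to controlling how the crosses of the various $D$ are distributed among the rows.

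First I would single out the bottom rc-graph $D_{\mathrm{bot}}(w)$, obtained by left-justifying, within each row, the Rothe diagram $\{(i,j) : j < w(i) \text{ and } i < w^{-1}(j)\}$ of $w$. By \cite{BB} this is a genuine rc-graph of $w$, and row $i$ of the Rothe diagram contains exactly $c_i(w)$ boxes (immediate from \eqref{E:code} after the substitution $j = w(k)$), so $D_{\mathrm{bot}}(w)$ contributes precisely $\prod_i x_i^{c_i(w)} = x^{c(w)}$. Next I would compare every other rc-graph with $D_{\mathrm{bot}}(w)$ via the ladder-move connectivity of \cite{BB}: any rc-graph of $w$ is reached from $D_{\mathrm{bot}}(w)$ by a sequence of ladder moves, and a ladder move relocates a single cross from its row $r$ to a strictly smaller-indexed row $r' < r$. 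This replaces the weight $x^e$ by $x^{e'}$ with $e'_r = e_r - 1$, $e'_{r'} = e_{r'} + 1$, and $e'_k = e_k$ for all other $k$; the largest index where $e'$ and $e$ differ is $r$, and there $e'_r < e_r$, so $x^{e'}$ is strictly lower than $x^e$ in reverse-lex order. Therefore every rc-graph other than $D_{\mathrm{bot}}(w)$ has weight strictly below $x^{c(w)}$ in reverse-lex order, so in particular $D_{\mathrm{bot}}(w)$ is the only rc-graph of weight $x^{c(w)}$. Combining, $\S_w(x) = x^{c(w)} + \sum_{D \ne D_{\mathrm{bot}}(w)} \prod_{(i,j) \in D} x_i$ with each summand reverse-lex lower than $x^{c(w)}$, and the coefficient of $x^{c(w)}$ equals $1$.

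The two facts imported from \cite{BB} --- that the left-justified Rothe diagram is an honest reduced pipe dream, and that ladder moves connect all rc-graphs to it --- are the real content, along with fixing the orientation convention so that a ladder move lowers (rather than raises) the reverse-lex order; once these are pinned down the argument is bookkeeping. A self-contained variant would proceed by induction on $\ell(w)$ using $\S_w = A_i \S_{ws_i}$ for $ws_i > w$: the leading term is tracked through $A_i(x_i^p x_{i+1}^q) = \sum_{r=0}^{p-q-1} x_i^{p-1-r} x_{i+1}^{q+r}$ (valid for $p>q$), after one checks how the codes of $w$ and $ws_i$ differ; the genuinely delicate step there is to rule out that $A_i$ applied to the reverse-lex-lower tail of $\S_{ws_i}$ produces a monomial reverse-lex above $x^{c(w)}$, which requires an a priori bound on the power of $x_i$ in $\S_{ws_i}$ --- exactly what the rc-graph picture supplies for free. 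One could also deduce the statement from the Billey--Jockusch--Stanley expansion (Theorem~\ref{thm:BJS}) by locating its reverse-lex-maximal compatible pair.
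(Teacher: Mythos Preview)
Your proposal is correct and matches the paper's approach exactly: the paper does not give a proof but simply asserts that the statement ``can be clearly seen from Bergeron and Billey's rc-graph formula for Schubert polynomials \cite{BB}.'' You have supplied precisely the details behind that sentence --- the bottom rc-graph contributes $x^{c(w)}$, and ladder moves strictly decrease the row index of a cross, hence strictly lower the reverse-lex weight.
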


\begin{thm}\label{T:single Schub} The Schubert polynomials are the unique family of polynomials $\{\S_w (\xp)\in \Q[\xp] \mid w \in S_+\}$ satisfying the following conditions:
\begin{align}
\label{E:Schub identity}
\S_\id (\xp)&= 1 \\
\label{E:Schub degree}
\S_w(\xp)\, &\text{is homogeneous of degree $\ell(w)$} \\
\label{E:S recurrence} A_i \S_w(\xp) &= 
\begin{cases}
\S_{w s_i}(\xp) & \text{if $ws_i < w$} \\
0& \text{otherwise.}
\end{cases}
\end{align}
The elements $\{\S_w (\xp)\mid w \in S_+\}$ form a basis of $\Q[\xp]$ over $\Q$.
\end{thm}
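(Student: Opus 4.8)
The plan is to verify that the Schubert polynomials $\{\S_w(x) \mid w \in S_+\}$ satisfy the three conditions \eqref{E:Schub identity}--\eqref{E:S recurrence}, and then to prove uniqueness and the basis claim together, using the triangularity statement of Proposition~\ref{P:Schub triangular}.

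First I would check that the $\S_w$ satisfy the listed properties. Condition \eqref{E:Schub identity} follows because $c(\id)$ is the zero sequence, so by Proposition~\ref{P:Schub triangular} $\S_\id(x) = x^{0} = 1$ with no lower terms (alternatively, iterate \eqref{E:Schub nonlong} starting from $\S_{w_0^{(n)}}$ down to $\id$). Homogeneity \eqref{E:Schub degree} is immediate from the definition: $\S_{w_0^{(n)}}$ is homogeneous of degree $\binom{n}{2} = \ell(w_0^{(n)})$, and each application of $A_i$ in \eqref{E:Schub nonlong} lowers degree by exactly $1$ while $\ell(ws_i) = \ell(w) - 1$ when $ws_i < w$. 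For the recurrence \eqref{E:S recurrence}: if $ws_i < w$, set $u = ws_i$, so $us_i > u$ and $\S_u = A_i \S_{us_i} = A_i \S_w$ directly from \eqref{E:Schub nonlong}. If instead $ws_i > w$, then $\S_w = A_i \S_{ws_i}$, so $A_i \S_w = A_i^2 \S_{ws_i} = 0$ by \eqref{E:dd2}. So the existence half is essentially bookkeeping.

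Next, uniqueness and the basis property. Suppose $\{P_w \mid w \in S_+\}$ is another family satisfying \eqref{E:Schub identity}--\eqref{E:S recurrence}. I would argue by induction on $\ell(w)$ that $P_w = \S_w$. The base case $\ell(w) = 0$ is \eqref{E:Schub identity}. For the inductive step, given $w$ with $\ell(w) = \ell > 0$, pick any $i$ with $ws_i < w$ (such $i$ exists since $w \neq \id$); then $A_i P_w = P_{ws_i} = \S_{ws_i} = A_i \S_w$ by the inductive hypothesis and \eqref{E:S recurrence}. Hence $A_i(P_w - \S_w) = 0$, so by Lemma~\ref{L:ker partial}(1) the difference $P_w - \S_w$ is $s_i$-invariant; since this holds for \emph{every} descent $i$ of $w$, the difference $D := P_w - \S_w$ is invariant under the parabolic subgroup generated by all such $s_i$. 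This alone is not enough, so here is where I would use triangularity: $D$ is homogeneous of degree $\ell(w)$, and I claim $D = 0$. The cleanest route is to first establish that $\{\S_w \mid w \in S_+\}$ is a $\Q$-basis of $\Q[\xp]$ — which is immediate from Proposition~\ref{P:Schub triangular}, since the transition matrix to the monomial basis (ordered by reverse-lex) is unitriangular and the code gives a bijection $S_+ \leftrightarrow$ finitely-supported sequences $(c_1, c_2, \dots)$, i.e. to the monomials of $\Q[\xp]$. Then write $D = \sum_v \gamma_v \S_v$ with only finitely many $\gamma_v \neq 0$, all terms homogeneous of degree $\ell(w)$; apply $A_v$-type operators, or more simply note: take $v$ of maximal length among those with $\gamma_v \neq 0$ and apply $A_v$ to $D$. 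Using \eqref{E:S recurrence} repeatedly, $A_v \S_v = \S_\id = 1$ and $A_v \S_u = 0$ whenever $\ell(u) \le \ell(v)$ and $u \ne v$ (since some factor $A_j$ in a reduced word for $v$ must kill the result — this needs a small lemma that $A_v \S_u \ne 0$ forces $v \le u$ in Bruhat order, provable by the same descending induction). This forces $\gamma_v = 0$, a contradiction unless $D = 0$.

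The main obstacle, and the step deserving the most care, is the linear-independence/spanning argument that pins down $D = 0$ from the vanishing conditions — equivalently, showing the $\S_w$ form a basis. Proposition~\ref{P:Schub triangular} hands us unitriangularity against monomials, so linear independence is free; the only genuine content is matching up the index sets, i.e. that $w \mapsto c(w)$ is a bijection from $S_+$ onto all finitely-supported nonnegative sequences $(c_1, c_2, \dots)$, which is stated in the excerpt right after \eqref{E:code}. Once the basis property is in hand, uniqueness reduces to the observation that conditions \eqref{E:Schub degree} and \eqref{E:S recurrence}, together with \eqref{E:Schub identity}, determine every $\S_w$ recursively by applying $A_i$ at a descent and inducting on length — there is nothing left to choose. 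I would therefore present the basis claim first (a one-line consequence of Proposition~\ref{P:Schub triangular}), then deduce uniqueness, then record that the $\S_w$ themselves satisfy the three axioms.
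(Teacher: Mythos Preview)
Your existence and basis arguments are fine and match the paper. The gap is in your uniqueness argument. You correctly set $D = P_w - \S_w$ and observe that $A_i D = 0$ for every \emph{descent} $i$ of $w$, then declare ``this alone is not enough'' and embark on a detour through the basis expansion. In that detour you compute $A_v D = \gamma_v$ from the expansion $D = \sum_u \gamma_u \S_u$, and then assert ``this forces $\gamma_v = 0$'' --- but you never explain why $A_v D$ should vanish. Without an independent reason that $A_v D = 0$, you have only shown $A_v D = \gamma_v$, which is no contradiction.

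The fix is the observation you skipped: the recurrence \eqref{E:S recurrence} specifies $A_i$ on \emph{both} families for \emph{every} $i$, not just descents. If $ws_i > w$, the ``otherwise'' clause gives $A_i P_w = 0 = A_i \S_w$, so $A_i D = 0$ in this case too. Combined with the descent case (where $A_i D = P_{ws_i} - \S_{ws_i} = 0$ by induction), you get $A_i D = 0$ for all $i \ge 1$. By Lemma~\ref{L:ker partial}, $D$ is then $S_+$-invariant, hence lies in $\Q[\xp]^{S_+} = \Q$; since $D$ is homogeneous of positive degree $\ell(w)$, it must be zero. This is exactly the paper's argument, and it makes the basis detour for uniqueness unnecessary. (Your detour \emph{can} be repaired: once you know $A_i D = 0$ for all $i$, you get $A_v D = 0$ for every $v \ne \id$, closing the loop --- but at that point you may as well use the one-line invariance argument directly.)
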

\begin{proof} 
For uniqueness, by induction we may assume that $\S_{ws_i}(\xp)$ is uniquely determined for all $i$ such that $ws_i<w$. 
Since the applications of all the $A_i$ are specified on $\S_w$,
the difference of any two solutions of \eqref{E:S recurrence}, being in the kernel of all $A_i$, is $S_+$-invariant by Lemma \ref{L:ker partial}. But $\Q[\xp]^{S_+} = \Q$ so the homogeneity assumption implies that the two solutions must be equal.

For existence, we note that the Schubert polynomials satisfy \eqref{E:Schub identity}, \eqref{E:Schub degree}, and \eqref{E:S recurrence} when $ws_i<w$. When $ws_i>w$, we have $\S_w=A_i \S_{ws_i}$ by \eqref{E:S recurrence} applied for $ws_i$. The element $\S_w$, being in the image of $A_i$, is $s_i$-invariant and therefore is in $\ker A_i$ by Lemma \ref{L:ker partial}. That is, $A_i \S_w=0$, establishing \eqref{E:S recurrence}.

The basis property holds by Proposition \ref{P:Schub triangular}. 
\end{proof}

\begin{rem} All the basis theorems for Schubert polynomials and their relatives, such as Theorem \ref{T:single Schub}, hold over $\Z$.
\end{rem}

\subsection{Double Schubert polynomials}
Let $\Q[\xp,\ap] := \Q[x_1,x_2,\ldots,a_1,a_2,\ldots]$.  The divided difference operators $A_i$, $i>0$ act on $\Q[\xp,\ap]$ by acting on the $x$-variables only.  Double Schubert polynomials \cite{LS} are defined by the action of divided difference operators on the expression in \eqref{E:doubleSchublong}.  We summarize the fundamental statements concerning double Schubert polynomials in the following theorem.

\begin{thm}\label{thm:double}
There exists a unique family $\{\S_w(\xp;\ap) \in \Q[\xp,\ap] \mid w \in S_+\}$ of polynomials satisfying the following conditions:
\begin{align}
\label{E:dSchub identity}
\S_\id(\xp;\ap) &= 1 \\
\label{E:dSchub loc at id}
\S_w(\ap;\ap) &= 0\qquad\text{if $w \neq \id$} \\
\label{E:dSchub ddiff}
A_i \S_w(\xp;\ap) &= 
\begin{cases}\S_{w s_i}(\xp;\ap) & \text{if $ws_i < w$} \\
0 & \text{otherwise.}
\end{cases}
\end{align}
The elements $\{\S_w(\xp;\ap) \mid w \in S_+\}$ form a basis of $\Q[\xp,\ap]$ over $\Q[\ap]$.
\end{thm}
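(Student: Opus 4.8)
The plan is to mirror the proof of Theorem~\ref{T:single Schub}, using the operators $A_i$ ($i>0$) acting on the $x$-variables only, with $\Q[\ap]$ playing the role that $\Q$ played in the single case. First I would establish \emph{uniqueness}. Arguing by induction on $\ell(w)$: the base case $w=\id$ is forced by \eqref{E:dSchub identity}. For $\ell(w)>0$, pick $i>0$ with $ws_i<w$; by induction $\S_{ws_i}(x;a)$ is already pinned down, so \eqref{E:dSchub ddiff} specifies $A_i\S_w$ for every such $i$. If $\S_w^{(1)},\S_w^{(2)}$ are two solutions, then $g:=\S_w^{(1)}-\S_w^{(2)}$ lies in $\ker A_i$ for all $i>0$ with $ws_i<w$; but more is true---for $i$ with $ws_i>w$ we also have $A_i\S_w=0$ by the same argument as in Theorem~\ref{T:single Schub} (apply \eqref{E:dSchub ddiff} at $ws_i$: $\S_w=A_i\S_{ws_i}$ is in the image, hence $s_i$-invariant, hence in $\ker A_i$ by Lemma~\ref{L:ker partial}), so $g\in\bigcap_{i>0}\ker A_i$. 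By Lemma~\ref{L:ker partial}(1) this intersection is $\Q[\xp,\ap]^{S_+}=\Q[\ap]$, the $S_+$-invariants in the $x$-variables. So $g\in\Q[\ap]$, and then \eqref{E:dSchub loc at id} forces $g(a;a)=0$; but specializing $x_i\mapsto a_i$ is the identity on $\Q[\ap]$, so $g=g(a;a)=0$.

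Next, \emph{existence}. The standard construction: set $\S_{w_0^{(n)}}(x;a):=\prod_{i+j\le n}(x_i-a_j)$ and define $\S_w(x;a):=A_{i_1}\cdots A_{i_r}\,\S_{w_0^{(n)}}(x;a)$ for $w=w_0^{(n)}s_{i_1}\cdots s_{i_r}$ a length-additive reduced factorization with $w\in S_n$. Well-definedness in $n$ (that the two definitions of $\S_{w_0^{(n)}}$ and $\S_{w_0^{(n+1)}}$ agree after stabilization) follows exactly as in Lemma~\ref{L:S_+ well-defined}, using $w_0^{(n+1)}\doteq w_0^{(n)}s_n\cdots s_1$ and the computation $A_n\cdots A_1\big(\prod_{i+j\le n+1}(x_i-a_j)\big)=\prod_{i+j\le n}(x_i-a_j)$, which one checks by peeling off the factors $(x_1-a_n)(x_2-a_{n-1})\cdots(x_n-a_1)$ one $A_i$ at a time. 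Independence of the reduced word is the braid/commutation relations \eqref{E:dd commute}, \eqref{E:dd braid}. Properties \eqref{E:dSchub identity} and \eqref{E:dSchub ddiff} are then immediate (the case $ws_i>w$ again via image $\subset$ kernel). For \eqref{E:dSchub loc at id}: after substituting $x_i=a_i$ for all $i$, $\S_w(a;a)$ is a specialization of a polynomial of degree $\ell(w)$ that, by the $BJS$-type support of double Schuberts, is divisible by $\prod$ of factors $(x_{b}-a_{c})$ with $b\le c$; each such factor vanishes at $x=a$ when ... more carefully, one uses that $\S_w(x;a)$ for $w\ne\id$ has no constant term in $(x-a)$, which follows by induction from \eqref{E:dSchub ddiff} together with $A_i$ lowering the ``$(x-a)$-degree'', or alternatively from the Cauchy-type identity $\S_w(x;a)=\sum_{u v=w,\ \ell}( -1)^{\ell(v)}\S_u(x)\S_{v^{-1}}(a)$ which makes the vanishing at $x=a$ transparent.

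For the \emph{basis} statement: by construction each $\S_w(x;a)$ is a $\Q[\ap]$-linear combination of single Schubert polynomials $\S_u(x)$ (expand every factor $x_i-a_j$), and conversely the same Cauchy identity inverts this; since $\{\S_u(x)\}$ is a $\Q$-basis of $\Q[\xp]$ (Theorem~\ref{T:single Schub}), it is a $\Q[\ap]$-basis of $\Q[\xp,\ap]$, and a unitriangular (over $\Q[\ap]$, with respect to length) change of basis shows $\{\S_w(x;a)\}$ is too. Equivalently, one can cite the triangularity $\S_w(x;a)=x^{c(w)}+(\text{lower in reverse-lex, with }\Q[\ap]\text{ coefficients})$, the double analogue of Proposition~\ref{P:Schub triangular}.

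The main obstacle I anticipate is pinning down the normalization condition \eqref{E:dSchub loc at id} cleanly---that is, proving $\S_w(a;a)=0$ for $w\ne\id$ directly from the recursive construction without circular appeal to a formula one hasn't yet justified. The clean route is to prove by downward induction from $w_0^{(n)}$ that $\S_w(x;a)$ lies in the ideal generated by $\{x_i-a_j\}$ whenever $w\ne\id$: this holds for $\S_{w_0^{(n)}}$ by inspection, and is preserved under $A_i$ because $A_i$ maps that ideal into itself \emph{as long as the result is not a nonzero constant}, which happens exactly when we reach $\id$; a short argument comparing degrees (every $\S_w$ with $w\ne\id$ is homogeneous of positive degree in the combined variables once we assign $x_i-a_j$ degree $1$) closes this. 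With \eqref{E:dSchub loc at id} in hand, uniqueness goes through as above and the rest is bookkeeping.
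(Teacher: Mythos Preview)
Your overall architecture matches the paper's: uniqueness via $S_+$-invariants plus the normalization \eqref{E:dSchub loc at id}, existence via the explicit product for $w_0^{(n)}$ and the stabilization of Lemma~\ref{L:S_+ well-defined}, and the basis statement by specialization to $a=0$. The uniqueness argument is fine (note that once both families are assumed to satisfy \eqref{E:dSchub ddiff} in full, the ``otherwise'' case already gives $A_i\S_w=0$ for $ws_i>w$, so your parenthetical deriving it is not needed there---that derivation belongs to existence).

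The genuine gap is exactly where you flagged it: establishing \eqref{E:dSchub loc at id}. Your ``clean route'' does not work. If by ``the ideal generated by $\{x_i-a_j\}$'' you mean all pairs $(i,j)$, then membership in that ideal does \emph{not} imply vanishing at $x=a$: for instance $a_1-a_2=(x_1-a_2)-(x_1-a_1)$ lies in it but does not vanish. If you mean the ideal $J=(x_i-a_i)_i$, then $A_i$ does not preserve it: already $A_1\big((x_1-a_1)(x_2-a_2)\big)=a_1-a_2\notin J$, so downward induction from $w_0^{(n)}$ fails at the first step. The homogeneity remark does not help either, since $a_1$ is homogeneous of positive degree yet nonzero at $x=a$.

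The paper handles this differently and more robustly: it proves the \emph{stronger} vanishing $\S_v(wa;a)=0$ whenever $v\not\le w$ (Bruhat order), by downward induction on $\ell(v)$ within $S_n$. The base case $v=w_0^{(n)}$ is by inspection of the product \eqref{E:doubleSchublong}. For the step, choose $i$ with $vs_i>v$; then $vs_i\not\le w$ and $vs_i\not\le ws_i$, and substituting $x\mapsto wa$ into $\S_v=A_i\S_{vs_i}$ yields
\[
\S_v(wa;a)=\frac{\S_{vs_i}(wa;a)-\S_{vs_i}(ws_i\,a;a)}{a_{w(i)}-a_{w(i+1)}}=0
\]
by induction. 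Specializing to $w=\id$ gives \eqref{E:dSchub loc at id}. The key point you were missing is that evaluating at $x=wa$ turns the $x$-divided-difference into a difference over the \emph{localization point} $w$, which is what makes the induction go through.

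Your alternative via the Cauchy-type expansion $\S_w(x;a)=\sum_{w\doteq uv}(-1)^{\ell(u)}\S_{u^{-1}}(a)\S_v(x)$ together with Lemma~\ref{L:Schub cancellation} is a legitimate independent route, but then you must \emph{define} $\S_w(x;a)$ by that formula and verify \eqref{E:dSchub ddiff} directly (as in the proof of Proposition~\ref{prop:dSchub}); invoking Proposition~\ref{prop:dSchub} itself would be circular, since its proof uses the characterization in Theorem~\ref{thm:double}.
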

\begin{proof} Uniqueness is proved as in Theorem \ref{T:single Schub}.
For existence, let
\begin{align}\label{E:doubleSchublong}
	\S_{w_0^{(n)}}(\xp;\ap) = \prod_{\substack{1\le i,j\le n \\ i+j\le n}}
	(x_i-a_j).
\end{align}
This agrees with \eqref{E:dSchub identity}. It is straightforward to verify the double analogue of Lemma \ref{L:S_+ well-defined}. 

For \eqref{E:dSchub loc at id} it suffices to prove the stronger vanishing property
\begin{align}\label{E:Schub vanishing}
	\S_v(w \ap;\ap) = 0 \qquad\text{unless $v\le w$.}
\end{align}
Here $\S_v(w\ap;\ap):=\S_v(a_{w(1)},a_{w(2)},\dotsc;\ap)$.
Let $v,w \in S_n$ with $v\not\le w$. If $v=w_0^{(n)}$ then by inspection $\S_v(w\ap;\ap)=0$. So suppose $v<w_0^{(n)}$. Let $1\le i\le n-1$ be such that $vs_i>v$. Then $vs_i \not\le w$ and also $vs_i \not\le ws_i$.
Substituting $x_k\mapsto a_{w(k)}$ into $A_i \S_{vs_i}(\xp;\ap) =\S_v(\xp;\ap)$ and using induction we have $\S_v(w\ap;\ap) = (a_i-a_{i+1})^{-1}(\S_{vs_i}(w\ap;\ap) - \S_{vs_i}(ws_i\ap;\ap)) = 0$, proving \eqref{E:Schub vanishing}.

The basis property follows from the fact that $\S_w(\xp;0)=\S_w(\xp)$ are a $\Q$-basis of $\Q[\xp]$.
\end{proof}

\subsection{Double Schubert polynomials into single}

The following identity is proved in Appendix \ref{A:Schub inversion}.

\begin{lem}\label{L:Schub cancellation} For $w \in S_+$, we have
\begin{align}\label{E:Schub cancellation}
	\sum_{w \doteq uv} (-1)^{\ell(u)} \S_{u^{-1}} (\ap)\S_v(\ap)= \delta_{w,\id}.
\end{align}
\end{lem}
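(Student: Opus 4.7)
The plan is to derive \eqref{E:Schub cancellation} from the stronger polynomial identity
\begin{equation}\label{E:plan exp}
\S_w(x;a) \;=\; \sum_{w \doteq uv} (-1)^{\ell(u)}\,\S_{u^{-1}}(a)\,\S_v(x), \qquad w \in S_+.
\end{equation}
Given \eqref{E:plan exp}, substituting $x = a$ converts the left-hand side into $\S_w(a;a) = \delta_{w,\id}$ by \eqref{E:dSchub loc at id}, while the right-hand side becomes exactly the sum appearing in \eqref{E:Schub cancellation}.

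To establish \eqref{E:plan exp}, write $T_w(x;a)$ for its right-hand side and verify the divided difference recursion. Since $A_i$ acts only on $x$ and $A_i \S_v(x) = \chi(vs_i<v)\,\S_{vs_i}(x)$, a direct calculation gives
\[
A_i T_w \;=\; \sum_{\substack{w \doteq uv \\ vs_i < v}}(-1)^{\ell(u)}\,\S_{u^{-1}}(a)\,\S_{vs_i}(x).
\]
A short length count shows that the map $(u,v) \mapsto (u, vs_i)$ is a bijection from the index set above onto the set of length-additive factorizations $ws_i \doteq u \tilde v$ when $ws_i < w$, and that the index set is empty when $ws_i > w$. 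Hence
\[
A_i T_w \;=\; \begin{cases} T_{ws_i} & \text{if } ws_i < w, \\ 0 & \text{if } ws_i > w, \end{cases}
\]
matching the recursion \eqref{E:dSchub ddiff}. Together with $T_\id = \S_\id(x;a) = 1$, downward induction on $\ell(w)$ starting at $w_0^{(n)}$ (inside a fixed $S_n \ni w$) reduces \eqref{E:plan exp} to the single identity at $w = w_0^{(n)}$.

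The main obstacle is this base case. Using \eqref{E:doubleSchublong}, and noting that every $u \in S_n$ gives a unique length-additive factorization $w_0^{(n)} = u \cdot (u^{-1} w_0^{(n)})$, the base case reads
\[
\prod_{\substack{i,j \geq 1 \\ i+j \leq n}}(x_i - a_j) \;=\; \sum_{u \in S_n} (-1)^{\ell(u)}\,\S_{u^{-1}}(a)\,\S_{u^{-1} w_0^{(n)}}(x),
\]
a classical Cauchy-type identity for the top double Schubert polynomial. One route is to interpret it in the nilCoxeter algebra of $S_n$ as the statement that $\sum_u(-1)^{\ell(u)}\,\S_{u^{-1}}(a)\,u_u$ and $\sum_v \S_v(a)\, u_v$ are two-sided inverses, and to prove this via Fomin--Stanley-style product expansions of both sums; alternatively, one gives a direct bijection between subsets of $\{(i,j) : i+j \leq n\}$ (indexing terms of the expanded product) and pairs of compatible reduced decompositions of $u$ and $u^{-1} w_0^{(n)}$. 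With the base case secured, the downward induction and the substitution $x = a$ conclude the proof of \eqref{E:Schub cancellation}.
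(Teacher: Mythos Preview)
Your strategy reverses the paper's logic: the paper proves Lemma~\ref{L:Schub cancellation} first (by a direct combinatorial argument) and then uses it to verify the localization condition~\eqref{E:dSchub loc at id} in the proof of Proposition~\ref{prop:dSchub}, which is exactly your identity~\eqref{E:plan exp}. You instead try to establish~\eqref{E:plan exp} independently, then substitute $x=a$. That reversal is legitimate, and your reduction via the divided-difference recursion to the single base case at $w_0^{(n)}$ is correct.

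The gap is the base case itself. You acknowledge it as ``the main obstacle'' and then defer to two sketches (a nilCoxeter inversion statement and an unspecified bijection), neither of which you carry out. But this Cauchy-type identity for $\S_{w_0^{(n)}}(x;a)$ is essentially of the same strength as the lemma you are proving: the nilCoxeter route you mention (showing that $\sum_u (-1)^{\ell(u)}\S_{u^{-1}}(a)\,u_u$ inverts $\sum_v \S_v(a)\,u_v$) amounts to proving \eqref{E:Schub cancellation} for all $w$ simultaneously, and the bijective route would need a sign-reversing involution no simpler than the one the paper already gives. So as written, the argument pushes the entire content of the lemma into an unproved black box.

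By contrast, the paper's own proof (Appendix~\ref{A:Schub inversion}) is short and self-contained: expand both Schubert polynomials by the Billey--Jockusch--Stanley formula~\eqref{E:BJS}, obtaining a signed sum over pairs of compatible sequences with a cut point $k$, and cancel terms by the explicit sign-reversing involution that moves the cut by one step (using the comparison of $b_k$ with $b_{k+1}$, and of $a_k$ with $a_{k+1}$ when they tie). This avoids any appeal to the double Schubert polynomials and does not require the Cauchy identity at $w_0^{(n)}$.
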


\begin{proposition}[{\cite[(6.1)]{Mac} \cite[Lemma 4.5]{FS}}] \label{prop:dSchub} Let $w \in S_+$.  Then 
\begin{align}\label{E:double to single}
\S_w(\xp;\ap) = \sum_{w \doteq uv} (-1)^{\ell(u)} \S_{u^{-1}}(\ap)\S_v(\xp).
\end{align}
\end{proposition}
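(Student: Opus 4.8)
The plan is to verify that the right-hand side of \eqref{E:double to single} satisfies the three characterizing properties of Theorem \ref{thm:double}, and then invoke uniqueness. Write $T_w(x;a) := \sum_{w \doteq uv} (-1)^{\ell(u)} \S_{u^{-1}}(a)\S_v(x)$ for $w \in S_+$. First I would check the normalization \eqref{E:dSchub identity}: the only factorization $\id \doteq uv$ is $u = v = \id$, so $T_\id(x;a) = \S_\id(a)\S_\id(x) = 1$.

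Next I would handle the divided difference recurrence \eqref{E:dSchub ddiff}. Since $A_i$ acts only on the $x$-variables and $\S_{u^{-1}}(a)$ is a scalar from $A_i$'s point of view, we get $A_i T_w(x;a) = \sum_{w \doteq uv} (-1)^{\ell(u)} \S_{u^{-1}}(a) \, A_i \S_v(x)$. By Theorem \ref{T:single Schub}, $A_i \S_v(x)$ is $\S_{vs_i}(x)$ when $vs_i < v$ and $0$ otherwise. The key combinatorial point is to re-index the resulting sum: when $ws_i < w$, every length-additive factorization $ws_i \doteq u v'$ arises uniquely from a factorization $w \doteq u v$ with $v s_i < v$ by setting $v' = v s_i$ (length-additivity $\ell(w) = \ell(u) + \ell(v)$ together with $\ell(v') = \ell(v) - 1$ forces $\ell(ws_i) = \ell(u) + \ell(v')$, and conversely a factorization of $ws_i$ extends by right-multiplication by $s_i$ — here one must check that $\ell(u) + \ell(v') + 1 = \ell(w)$, which holds since $ws_i < w$ means $\ell(w) = \ell(ws_i)+1$). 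This gives $A_i T_w(x;a) = T_{ws_i}(x;a)$. When $ws_i > w$, one argues that $A_i T_w = 0$: either directly, by noting $T_w$ is a combination of $s_i$-invariant Schubert polynomials $\S_v(x)$ with $v s_i > v$ and hence lies in $\ker A_i$, using Lemma \ref{L:ker partial} and Theorem \ref{T:single Schub}; one should double-check that $w \doteq uv$ with $ws_i > w$ does not force $vs_i < v$, which follows because $vs_i < v$ would give a length-additive factorization $w = u v$ with $\ell((vs_i)) < \ell(v)$, allowing $ws_i \doteq u(vs_i)$, contradicting $ws_i > w$ paired with length-additivity. So all the summands $\S_v(x)$ have $vs_i > v$, hence are $s_i$-invariant, hence killed by $A_i$.

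The remaining — and most delicate — property is the vanishing \eqref{E:dSchub loc at id}, i.e. $T_w(a;a) = 0$ for $w \neq \id$. Substituting $x_k \mapsto a_k$ into $\S_v(x)$ gives $\S_v(a)$, so $T_w(a;a) = \sum_{w \doteq uv} (-1)^{\ell(u)} \S_{u^{-1}}(a)\S_v(a)$, which is exactly the left-hand side of \eqref{E:Schub cancellation}. By Lemma \ref{L:Schub cancellation} this equals $\delta_{w,\id}$, which is $0$ for $w \neq \id$. This is the step that genuinely requires the input from the appendix (Lemma \ref{L:Schub cancellation}); everything else is bookkeeping with length-additive factorizations. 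I expect the main obstacle to be pinning down precisely the bijection between length-additive factorizations of $w$ (with a chosen descent of the right factor) and those of $ws_i$, and making sure the signs and lengths match up — this is where a careless argument could go wrong. Finally, having verified \eqref{E:dSchub identity}, \eqref{E:dSchub loc at id}, and \eqref{E:dSchub ddiff}, uniqueness in Theorem \ref{thm:double} forces $T_w(x;a) = \S_w(x;a)$, completing the proof.
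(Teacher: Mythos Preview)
Your proposal is correct and follows essentially the same strategy as the paper's proof: verify the three characterizing conditions of Theorem~\ref{thm:double} (normalization, vanishing at $x=a$ via Lemma~\ref{L:Schub cancellation}, and the divided difference recurrence via the bijection on length-additive factorizations), then invoke uniqueness. The paper's writeup is terser—it packages the recurrence step as a short displayed computation without spelling out the bijection or the $ws_i>w$ case separately—but the argument is the same.
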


\begin{proof} It suffices to verify the conditions of 
Theorem \ref{thm:double}.  \eqref{E:dSchub identity} is clear. \eqref{E:dSchub loc at id} holds by Lemma \ref{L:Schub cancellation}. We prove \eqref{E:dSchub ddiff} by induction on $\ell(w)$.  The case $\ell(w) = 0$ is trivial.  We have
\begin{align*}
A_i \sum_{w \doteq uv} (-1)^{\ell(u)} \S_{u^{-1}}(\ap)\S_v(\xp)  & = \sum_{\substack{w \doteq uv \\ vs_i < v}} (-1)^{\ell(u)} \S_{u^{-1}}(\ap)\S_{vs_i}(\xp) \\
&= \begin{cases} \sum_{\substack{ws_i \doteq uv' }} (-1)^{\ell(u)} \S_{u^{-1}}(\ap)\S_{v'}(\xp) & \text{if $ws_i < w$,} \\
0 & \text{otherwise.}
\end{cases}
\end{align*}
This establishes \eqref{E:dSchub ddiff} by induction.
\end{proof}

\subsection{Left divided differences}
Let $A_i^a$ be the divided difference operator acting on the $a$-variables.

\begin{lemma}\label{L:left ddiff} For $i>0$ and $w\in S_+$,
\begin{align}
	A_i^a \S_w(\xp;\ap) &= \begin{cases}
		- \S_{s_iw}(\xp;\ap) & \text{if $s_iw<w$} \\
		0 & \text{otherwise.}
		\end{cases}
\end{align}
\end{lemma}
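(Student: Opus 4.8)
The plan is to exploit the explicit expansion of $\S_w(x;a)$ into single Schubert polynomials provided by Proposition~\ref{prop:dSchub}, together with the companion cancellation identity of Lemma~\ref{L:Schub cancellation}, and to reduce the action of $A_i^a$ on the $a$-variables to the action of the ordinary divided difference $A_i$ on $\S_{u^{-1}}(a)$. Since $A_i^a$ acts only on the $a$-variables, applying it to \eqref{E:double to single} gives
\begin{align*}
A_i^a \S_w(x;a) = \sum_{w \doteq uv} (-1)^{\ell(u)} \bigl(A_i \S_{u^{-1}}(a)\bigr)\S_v(x).
\end{align*}
Now $A_i \S_{u^{-1}}(a) = \S_{u^{-1}s_i}(a)$ when $u^{-1}s_i < u^{-1}$, i.e.\ when $s_iu < u$, and is $0$ otherwise, by Theorem~\ref{T:single Schub}. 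So the sum runs over those factorizations $w \doteq uv$ with $s_iu < u$, and after reindexing $u' := s_iu$ (with $\ell(u') = \ell(u)-1$) one gets
\begin{align*}
A_i^a \S_w(x;a) = -\sum_{\substack{w \doteq uv,\ s_iu<u}} (-1)^{\ell(u')} \S_{(u')^{-1}}(a)\S_v(x).
\end{align*}

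The key combinatorial point is to identify this sum. When $s_iw < w$, I expect every length-additive factorization $s_iw \doteq u'v$ arises uniquely from a factorization $w \doteq uv$ with $u = s_iu'$ and $s_iu < u$: indeed given $s_iw \doteq u'v$, set $u = s_iu'$; since $\ell(s_iw) = \ell(u') + \ell(v)$ and $\ell(w) = \ell(s_iw)+1$, and $\ell(u) \le \ell(u')+1$, length-additivity forces $\ell(u) = \ell(u')+1$ and $w = uv$ length-additively, with $s_iu = u' < u$. Conversely, from $w \doteq uv$ with $s_iu < u$ one shows $s_iw \doteq (s_iu)v$ is length-additive (the reduced word for $w$ obtained by concatenating a reduced word of $u$ starting with $s_i$ and a reduced word of $v$ shows $\ell(s_iw) = \ell(w)-1 = \ell(s_iu) + \ell(v)$). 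Hence the displayed sum is exactly $\sum_{s_iw \doteq u'v} (-1)^{\ell(u')}\S_{(u')^{-1}}(a)\S_v(x) = \S_{s_iw}(x;a)$ by Proposition~\ref{prop:dSchub} again. This gives $A_i^a\S_w(x;a) = -\S_{s_iw}(x;a)$ when $s_iw<w$. When $s_iw>w$, the condition $s_iu<u$ together with $w \doteq uv$ is impossible: any reduced word of $w$ through $u$ would then yield a reduced word of $w$ beginning with $s_i$, contradicting $s_iw>w$. So the sum is empty and $A_i^a\S_w(x;a) = 0$.

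The main obstacle I anticipate is the careful bookkeeping of the length-additivity bijection between factorizations of $w$ with a "descent at $s_i$ on the left of $u$" and factorizations of $s_iw$: one must verify both that the map $u'v \mapsto (s_iu')v$ is well-defined (lands in length-additive factorizations of $w$) and that it is a bijection onto the relevant subset, and separately handle the degenerate case where $u' = \id$. All of these follow from the standard exchange/deletion properties of the length function on $S_+$, but they need to be stated cleanly. An alternative, possibly cleaner route avoiding the combinatorics: verify the formula directly by a uniqueness argument. Applying $A_i^a$ to the defining recursion \eqref{E:dSchub ddiff} and using that $A_i^a$ commutes with $A_j$ (acting on disjoint variable sets), one checks that the right-hand side $-\S_{s_iw}(x;a)$ (resp.\ $0$) satisfies the same divided-difference recursions in the $x$-variables; combined with a check of the value at $x = a$ — where $\S_{s_iw}(a;a)=0$ for $s_iw \ne \id$ — the uniqueness clause of Theorem~\ref{thm:double} would pin down $A_i^a\S_w(x;a)$. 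I would present the expansion-based proof as the primary argument since it is the most transparent, and remark on the uniqueness alternative.
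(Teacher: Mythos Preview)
Your proof is correct and follows exactly the approach the paper has in mind: the paper's proof simply says ``This is easily verified using Proposition~\ref{prop:dSchub},'' and your argument is a careful unpacking of that verification. The bijection between length-additive factorizations of $w$ with $s_iu<u$ and length-additive factorizations of $s_iw$ is handled cleanly, and the vanishing in the case $s_iw>w$ is argued correctly.
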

\begin{proof} This is easily verified using Proposition \ref{prop:dSchub}.
\end{proof}

\section{Back stable Schubert polynomials}
We define the ring of back symmetric formal power series, and study the basis of back stable Schubert polynomials.
\subsection{Symmetric functions in nonpositive variables}
For $b\in\Z$, let $\Lambda(x_{\le b})$ be the $\Q$-algebra of symmetric functions in the variables $x_i$ for $i\in \Z$ with $i\le b$.
We write $\Lambda = \Lambda(x_{\le0})= \Lambda(\xm)$, emphasizing that our symmetric functions are in variables with \textit{nonpositive} indices. See Appendix \ref{S:pos to non} for the comparison with symmetric functions in variables with positive indices.

The tensor product $\La\otimes\La$ is isomorphic to the $\Q$-algebra of formal power series of bounded total degree in $\xm$ and $\am$ which are separately symmetric in $\xm$ and $\am$. Under this isomorphism, we have $g\otimes h\mapsto  g(\xm)h(\am)$. We use this alternate notation without further mention.

The $\Q$-algebra $\Lambda$ is a Hopf algebra over $\Q$, generated as a polynomial $\Q$-algebra by primitive elements 
$$
p_k = \sum_{i\le0} x_i^k.
$$
That is, $\Delta(p_k) = 1 \otimes p_k + p_k \otimes 1$ (or $\Delta(p_k)=p_k(\xm)+p_k(\am)$). Equivalently, for $f\in \La$, $\Delta(f)$ is given by plugging both $\xm$ and $\am$ variable sets into $f$.
The counit takes the coefficient of the constant term, or equivalently, is the $\Q$-algebra map sending $p_k\mapsto0$ for all $k\ge1$. The antipode is the $\Q$-algebra automorphism sending $p_k\mapsto -p_k$ for all $k\ge1$. For a symmetric function $f(x)$ we write $f(/x)$ for its image under the antipode. 

The \defn{superization} map 
\begin{align}
	\label{E:super map}
	\Lambda&\to \Lambda\otimes\Lambda, \qquad
	f \mapsto f(x/a)
\end{align}
is the $\Q$-algebra homomorphism defined by applying the coproduct $\Delta$ 
followed by applying the antipode in the second factor. Equivalently, it is the $\Q$-algebra homomorphism sending $p_k\mapsto p_k(\xm)-p_k(\am)$.
In particular, $f(x/a)$ is symmetric in $\xm$ and symmetric in $\am$.  We use the notation $f(x/a)$ instead of $f(\xm/\am)$ for the sake of simplicity.

\subsection{Back symmetric formal power series}
Let $R$ be the $\Q$-algebra of formal power series $f$ in the variables
$x_i$ for $i\in\Z$ such that $f$ has bounded total degree (there is an $M$ such that all monomials in $f$ have total degree at most $M$) and the support of $f$ is bounded above (there is an $N$ such that the variables $x_i$ do not appear in $f$ for $i>N$). The group $S_\Z$ acts on $R$ by permuting variables. Say that $f\in R$ is \defn{back symmetric} if there is a $b\in \Z$ such that $s_i(f)=f$ for all $i< b$.
Let $\bR$ be the subset of  back symmetric elements of $R$.

\begin{prop}\label{P:backsymmetric} We have the equality
\begin{align}\label{E:backsymmetric}
  \bR = \Lambda \otimes \Q[x].
\end{align}
\end{prop}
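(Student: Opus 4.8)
The plan is to prove the set equality $\bR = \Lambda \otimes \Q[x]$ by showing each side is contained in the other. I would first unravel the notation: an element of $\Lambda \otimes \Q[x]$ is, under the stated identification, a finite $\Q[x]$-linear combination $\sum_j g_j(\xm) h_j$ where $g_j \in \Lambda$ is symmetric in the nonpositive variables and $h_j \in \Q[x]$ is an honest polynomial (involving finitely many variables of arbitrary index). Such an element manifestly lies in $R$: it has bounded total degree (degrees of the $g_j$ are bounded since there are finitely many terms, and the $h_j$ are polynomials), and its support is bounded above because each $h_j$ and the ``finite part'' of the symmetric functions only involve finitely many positively-indexed variables. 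Moreover, choosing $b$ smaller than every index appearing in any $h_j$, the element is invariant under $s_i$ for $i < b$ since then $s_i$ only permutes nonpositive variables among themselves and fixes each $g_j$. Hence $\Lambda \otimes \Q[x] \subseteq \bR$.

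For the reverse inclusion, let $f \in \bR$, so $f \in R$ and there is $b \in \Z$ with $s_i(f) = f$ for all $i < b$; by shrinking $b$ we may also assume $b \le 0$, and by enlarging we may assume that all variables $x_i$ with $i > N$ do not appear in $f$. Thus $f$ is symmetric in the variable set $\{x_i : i < b\}$ (which is an infinite set bounded above) and is a power series in the remaining finitely many ``active'' variables $x_b, x_{b+1}, \dots, x_N$ together with these. The idea is: fix the exponents of the finitely many variables $x_b, \dots, x_N$ and collect. Writing $f = \sum_{\alpha} c_\alpha(x_{<b})\, x_b^{\alpha_b} \cdots x_N^{\alpha_N}$ where $\alpha$ ranges over a finite set of exponent vectors (finiteness because $f$ has bounded total degree), each coefficient $c_\alpha(x_{<b})$ is a formal power series of bounded degree in the variables $\{x_i : i < b\}$, and the back-symmetry of $f$ forces each $c_\alpha$ to be symmetric in $\{x_i : i < b\}$. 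Now $\Lambda(x_{<b})$ — symmetric functions of bounded degree in variables bounded above — is canonically isomorphic to $\Lambda = \Lambda(x_{\le 0})$, for instance by the degree-preserving ring map induced by matching up the variables in order, or more invariantly since both are the inverse limit of symmetric polynomials. Under this identification $c_\alpha$ becomes an element $g_\alpha \in \Lambda$, and writing the monomial $x_b^{\alpha_b}\cdots x_N^{\alpha_N}$ as an element $h_\alpha \in \Q[x]$, we get $f = \sum_\alpha g_\alpha \otimes h_\alpha \in \Lambda \otimes \Q[x]$ — except one must be slightly careful, since some of the indices $b, \dots, 0$ are nonpositive and a priori a symmetric function in $\xm$ and a polynomial monomial in $x_b, \dots, x_0$ are not independent. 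This is resolved by choosing $b$ small enough, and then re-expanding: more precisely, after fixing the finitely many variables $x_b, \dots, x_N$, one should regard $c_\alpha$ as symmetric in the \emph{strictly smaller} set and absorb; the cleanest route is to pick $b$ with $b \le 0$ and note that the span of products $(\text{symmetric in } x_{\le 0}) \cdot (\text{monomial in } x_{b},\dots,x_N)$ already exhausts everything symmetric in $x_{< b}$ — this is exactly the classical fact that $\Lambda(x_{\le 0})$ is free as a module over $\Lambda(x_{\le b-1})$ is not quite it; rather, one uses that any symmetric function in $x_{<b}$ is a polynomial in the power sums $p_k = \sum_{i<b} x_i^k = \sum_{i \le 0} x_i^k + \sum_{i=b}^{0} x_i^k$, hence expands as a $\Q[x_b,\dots,x_0]$-combination of elements of $\Lambda(x_{\le 0})$.

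So the concrete mechanism for the hard inclusion is: write everything in terms of power sums. Since $\Lambda(x_{<b})$ is the polynomial algebra on $\{p_k^{(<b)}\}_{k \ge 1}$ and each $p_k^{(<b)} = p_k + (\text{polynomial in } x_b, \dots, x_0)$ with $p_k = p_k(\xm)$ the primitive generator of $\Lambda$ from the text, substituting shows $\Lambda(x_{<b}) \subseteq \Lambda \otimes \Q[x_b, \dots, x_0] \subseteq \Lambda \otimes \Q[x]$. Combining with the monomial factor $h_\alpha$ in the variables $x_b, \dots, x_N$, we conclude $f \in \Lambda \otimes \Q[x]$, completing the proof. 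The main obstacle, and the step deserving the most care, is precisely this last point: checking that back-symmetry in an infinite variable set bounded above, together with bounded degree, really does collapse onto $\Lambda \otimes \Q[x]$ rather than some larger completed tensor product — in other words, verifying that the finitely many ``overlap'' variables $x_b, \dots, x_0$ do not cause any convergence or independence pathology, which is handled cleanly by the power-sum (equivalently, the $\Delta$-coproduct) description of $\Lambda$ recalled just above in the text.
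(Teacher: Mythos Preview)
Your proof is correct and follows essentially the same route as the paper's: the easy inclusion $\Lambda\otimes\Q[x]\subseteq\bR$ is immediate, and for the reverse you (after some meandering) land on exactly the paper's key observation, namely that each $c_\alpha\in\Lambda(x_{\le b})$ lies in $\Lambda\otimes\Q[x]$ because the power sums satisfy $p_k(x_{\le b})-p_k(x_{\le 0})\in\Q[x]$. One small indexing slip: invariance under $s_i$ for $i<b$ gives symmetry in $\{x_i:i\le b\}$, not $\{x_i:i<b\}$, since $s_{b-1}$ swaps $x_{b-1}$ and $x_b$; this is harmless for the argument.
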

\begin{proof} It is straightforward to verify that $\bR$ is a $\Q$-subalgebra of $R$ containing $\Lambda$ and $\Q[x]$.
Suppose $f\in R$ is back symmetric. Let $b\in\Z$ be such that $s_i(f)=f$ for all $i< b$.
Then $f \in\Lambda(x_{\le b}) \otimes \Q[x_{b+1},x_{b+2},\dotsc]$ is a polynomial in the power sums $p_k(x_{\le b})$ and the variables $x_{b+1},x_{b+2},\dotsc$. But $p_k(x_{\le b}) - p_k(x_{\le0}) \in \Q[x]$. It follows that $f\in \Lambda \otimes \Q[x]$.
\end{proof}

We emphasize that $\bR$ is a polynomial $\Q$-algebra with algebraically independent generators $p_k$ for $k\ge 1$ and $x_i$ for $i\in\Z$.
The restriction of the action of $S_\Z$ from $R$ to $\bR$ is given
on algebra generators by
\begin{align*}
	w(x_i) &= x_{w(i)} \\
	s_i(p_k) &= 
	\begin{cases}
		p_k  &\text {if $i\ne0$} \\
		p_k - x_0^k + x_1^k & \text {if $i=0$.}
	\end{cases}
\end{align*}
For $s_0(p_k)$ we use the computation
\begin{align*}
	s_0 \sum_{i\le 0} (x_i^k-a_i^k) 
	= \sum_{i\le -1} (x_i^k-a_i^k) + s_0(x_0^k-a_0^k) 
	= \sum_{i\le -1} (x_i^k-a_i^k) + x_1^k-a_0^k 
	= p_k -x_0^k + x_1^k.
\end{align*}
The divided difference operators $A_i$ for $i \in \Z$ act on $\bR$ using the same formula as \eqref{eq:dd}.

\subsection{Back stable limit}
Let $\shift: \bR \to \bR$ be the $\Q$-algebra automorphism shifting all $x$ variables, that is,
\begin{align}\label{E:shift}
	\shift(x_i) &= x_{i+1} & \shift^{-1}(x_i) &= x_{i-1} \\
	\shift(p_k) &= p_k + x_1^k & \shift^{-1}(p_k) &= p_k - x_0^k.
\end{align}

Given $w\in S_{\Z}$, let $[p,q]\subset \Z$ be an interval that
contains all non-fixed points of $w$. Let $\S_{w}^{[p,q]}$ be the usual Schubert polynomial but computed using the variables $x_p,x_{p+1},\dotsc,x_q$ instead of starting with $x_1$. This is the same as shifting $w$ to start at $1$ instead of $p$, constructing the Schubert polynomial, and then shifting variables to start at $x_p$ instead of $x_1$. That is,
\begin{align*}
	\S_w^{[p,q]}(x_p,\dotsc,x_q) = \shift^{p-1}(\S_{\shift^{1-p}(w)}(\xp)).
\end{align*}

We say that the limit of a sequence $f_1,f_2,\ldots$ of formal power series is equal to a formal power series $f$ if, for each monomial $M$, the coefficient of $M$ in $f_1,f_2,\ldots$ eventually stabilizes and equals the coefficient in $f$.

\begin{thm}\label{T:backstable} For $w\in S_\Z$,
there is a well-defined power formal series $\bS_w\in \bR$ given by $$\bS_w := \lim_{\substack{p \to -\infty \\ q\to \infty}} \S_{w}^{[p,q]}$$ called the \defn{back stable Schubert polynomial}. It has the monomial expansion
\begin{equation}\label{eq:backstable}
	\bS_w = \sum_{a_1a_2 \cdots a_\ell \in \Red(w)} \sum_{\substack{b_1 \leq b_2 \leq \cdots \leq b_\ell\\ a_i<a_{i+1} \implies b_i < b_{i+1} \\ b_i \leq a_i}} x_{b_1} x_{b_2} \cdots x_{b_\ell}
\end{equation}
in which $b_i\in\Z$. Moreover, the back stable Schubert polynomials are the unique family
$\{\bS_w \in \bR \mid w \in S_\Z\}$ of elements satisfying the following conditions:
\begin{align}
	\label{E:backstable identity}
	\bS_\id &= 1 \\
	\label{E:backstable degree}
	\bS_w &\text{ is homogeneous of degree $\ell(w)$} \\
	\label{E:partial backstable}
	A_i \bS_w &= \begin{cases} \bS_{w s_i} & \text{if $ws_i < w$,} \\
		0 & \text{otherwise.}
	\end{cases}
\end{align}

\end{thm}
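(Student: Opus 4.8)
The plan is to establish the theorem in three parts: first the existence of the limit together with the monomial formula \eqref{eq:backstable}, then membership in $\bR$, then the characterization by \eqref{E:backstable identity}--\eqref{E:partial backstable}.

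\textbf{Existence of the limit and the monomial formula.} I would start from the Billey--Jockusch--Stanley formula (Theorem~\ref{thm:BJS}) applied to the shifted permutation $\shift^{1-p}(w) \in S_+$ and then shift variables back by $\shift^{p-1}$. Concretely, since $[p,q]$ contains the support of $w$, the reduced words of $\shift^{1-p}(w)$ are obtained from those of $w$ by adding $1-p$ to each letter, and $\S_w^{[p,q]} = \shift^{p-1}(\S_{\shift^{1-p}(w)})$. Feeding this through \eqref{E:BJS} and re-indexing $b_i \mapsto b_i + (p-1)$ gives exactly the sum in \eqref{eq:backstable} but with the extra constraints $b_i \ge p$ (lower cutoff) and implicitly $b_i \le q$ (which is automatic once $q$ is large since $b_i \le a_i$ and the letters $a_i$ of $w$ are bounded). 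As $p\to-\infty$ the constraint $b_i \ge p$ disappears, so for each fixed monomial the coefficient stabilizes to the count appearing on the right of \eqref{eq:backstable}. One should check this count is \emph{finite} for each monomial: fixing the multiset $\{b_1\le\dots\le b_\ell\}$, the letters $a_i$ are constrained by $a_i \ge b_i$ and must form a reduced word of a fixed permutation $w$, and reduced words of $w$ use only letters in a bounded interval; hence only finitely many $(a_\bullet, b_\bullet)$ contribute. This gives a well-defined formal power series, and the bounded-degree condition is clear since every term has degree $\ell(w)$.

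\textbf{Membership in $\bR$.} From the monomial formula the series is homogeneous of degree $\ell(w)$ and its support is bounded above (all letters $a_i$, hence all $b_i \le a_i$, lie in a fixed interval determined by the support of $w$). Back symmetry is cleanest to see from the limit description: for any $i$, once $p$ is small enough that $i \ge p$, the polynomial $\S_w^{[p,q]} = \shift^{p-1}(\S_{\shift^{1-p}(w)})$ lies in $\Q[x_p,\dots,x_q]$ and, because $\S_v(x)$ for $v\in S_+$ is symmetric in the variables $x_j, x_{j+1}$ whenever $j$ exceeds the support of $v$ --- equivalently, $A_j \S_v = 0$ there --- the approximant $\S_w^{[p,q]}$ is symmetric in $x_p,\dots,x_{m}$ where $m$ is below the support of $w$; the symmetry is preserved in the limit, so $\bS_w$ is symmetric in $x_i$ for all $i$ below the support of $w$. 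Thus $\bS_w \in \bR$, and by Proposition~\ref{P:backsymmetric} it lies in $\Lambda \otimes \Q[x]$.

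\textbf{The characterization.} For existence of a family satisfying \eqref{E:backstable identity}--\eqref{E:partial backstable}: $\bS_\id = 1$ is immediate from the formula, homogeneity was noted, and the divided-difference recursion follows by taking the limit of the corresponding identity for finite Schubert polynomials (Theorem~\ref{T:single Schub}, in the shifted-variable form), since $A_i$ commutes with the limit operation --- $A_i$ acts on each monomial by a bounded local rule, so it is continuous for the notion of convergence used here --- and since $(ws_i<w) \iff (\shift^{1-p}(w)s_{i+1-p}<\shift^{1-p}(w))$. For uniqueness, suppose $\{\bS'_w\}$ is another such family and set $D_w := \bS_w - \bS'_w$. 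By downward induction on $\ell(w)$ (using that for $w$ of maximal relevant length among a given support one can reduce), one shows $A_i D_w = 0$ for every $i\in\Z$: indeed $A_i D_w = D_{ws_i}$ if $ws_i<w$, which vanishes by induction, and $A_i D_w = 0$ directly if $ws_i>w$. An element of $\bR$ annihilated by all $A_i$ is $S_\Z$-invariant, hence invariant under all $s_i$ including $s_0$; using the generator description of the $S_\Z$-action on $\bR$ (with $s_0(p_k) = p_k - x_0^k + x_1^k$) one checks the only $S_\Z$-invariants in $\bR$ are constants, so homogeneity of positive degree forces $D_w = 0$; the degree-zero case is pinned down by $\bS_\id = \bS'_\id = 1$. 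The main obstacle is the uniqueness argument: one must make precise that "annihilated by all $A_i$ implies $S_\Z$-invariant implies constant" inside the infinite-variable ring $\bR$, which requires the explicit action formulas above rather than the naive finite-variable reasoning of Theorem~\ref{T:single Schub}; the convergence-continuity bookkeeping for $A_i$ and for the symmetry in the limit is the other point needing care, though it is routine given the monomial formula.
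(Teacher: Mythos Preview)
Your proposal is correct and follows essentially the same route as the paper: take the limit of the BJS formula to get~\eqref{eq:backstable}, deduce back symmetry from $ws_i>w$ for $i\ll 0$ (so $A_i\bS_w=0$, hence $s_i$-invariance), and lift properties \eqref{E:backstable identity}--\eqref{E:partial backstable} from Theorem~\ref{T:single Schub} by passing to the limit. You are in fact more thorough than the paper, which does not spell out the uniqueness argument at all; your reduction to $\bR^{S_\Z}=\Q$ is exactly the adaptation of the uniqueness step in Theorem~\ref{T:single Schub} that the paper leaves implicit.

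Two minor wording issues to fix: your induction is \emph{upward} on $\ell(w)$ (base case $w=\id$, inductive step uses $D_{ws_i}=0$ for $ws_i<w$), not ``downward'' --- the parenthetical about ``maximal relevant length'' is unnecessary and a bit misleading. Also, in the back-symmetry paragraph you need that $\S_v$ is symmetric in $x_j,x_{j+1}$ when $j$ is \emph{below} the support of $v$ (so that after undoing the shift you get symmetry in small indices), not when $j$ ``exceeds'' the support; your conclusion is right, but the justification as written points to the wrong side.
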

\begin{proof} The well-definedness of the series and its monomial expansion follows by taking the limit of \eqref{E:BJS}. Let $w\in S_\Z$. For $i\ll0$ we have $ws_i>w$. By \eqref{E:S recurrence} and Lemma \ref{L:ker partial} $\bS_w$ is $s_i$-symmetric. Thus $\bS_w$ is back symmetric.

Properties \eqref{E:backstable identity}, \eqref{E:backstable degree} and \eqref{E:partial backstable} hold for $\bS_w$ by the corresponding parts of Theorem \ref{T:single Schub} for usual Schubert polynomials.
\end{proof}

\begin{prop} \label{P:shift backstable}  For $w \in S_\Z$, we have $\shift(\bS_w) = \bS_{\shift(w)}$.
\end{prop}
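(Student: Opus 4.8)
The plan is to use the uniqueness characterization of back stable Schubert polynomials from Theorem~\ref{T:backstable}. Since $\shift:\bR\to\bR$ is a $\Q$-algebra automorphism, the family $\{\shift(\bS_w)\mid w\in S_\Z\}$ is a family of elements of $\bR$, and it suffices to check that the reindexed family $\{\shift(\bS_{\shift^{-1}(w)})\mid w\in S_\Z\}$ satisfies the three defining properties \eqref{E:backstable identity}, \eqref{E:backstable degree}, \eqref{E:partial backstable}; then uniqueness forces $\shift(\bS_{\shift^{-1}(w)})=\bS_w$, i.e.\ $\shift(\bS_w)=\bS_{\shift(w)}$.

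The identity property is immediate since $\shift(\bS_\id)=\shift(1)=1$ and $\shift(\id)=\id$. Homogeneity is also immediate: $\shift$ preserves total degree (it sends degree-$k$ generators $x_i$ and $p_k$ to elements of degree $1$ and $k$ respectively), and $\ell(\shift(w))=\ell(w)$ because $\shift$ is a Coxeter automorphism sending $s_i\mapsto s_{i+1}$. The content of the proof is the divided-difference recurrence. Here the key computation is the conjugation identity
\begin{align}\label{E:shift conjugate dd}
	\shift \circ A_i = A_{i+1} \circ \shift
\end{align}
as operators on $\bR$. This follows directly from the defining formula \eqref{eq:dd}: applying $\shift$ to $A_i(f) = (f - s_i f)/(x_i - x_{i+1})$ and using $\shift\circ s_i = s_{i+1}\circ\shift$ (check on the generators $x_j$ and $p_k$, using the formulas for $s_0(p_k)$ and $\shift(p_k)$) together with $\shift(x_i - x_{i+1}) = x_{i+1}-x_{i+2}$ yields \eqref{E:shift conjugate dd}. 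Granting this, for $w\in S_\Z$ and $i\in\Z$ with $w s_i < w$ we have $\shift(w)s_{i+1} < \shift(w)$, and
\begin{align*}
	A_{i+1}\bigl(\shift(\bS_w)\bigr) = \shift\bigl(A_i \bS_w\bigr) = \shift(\bS_{w s_i}) ,
\end{align*}
which is exactly what the recurrence demands for $\bS_{\shift(w)}$ at the simple reflection $s_{i+1}$; the vanishing case $ws_i>w$ is handled identically. This establishes all three properties and completes the proof.

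I expect the only mildly delicate point to be verifying $\shift\circ s_i = s_{i+1}\circ\shift$ on the power-sum generators $p_k$, specifically the interaction at the ``boundary'' indices $i=-1,0$ where $s_0$ acts nontrivially on $p_k$ and $\shift$ modifies $p_k$ by $x_1^k$; but this is a short direct check using the explicit formulas $s_0(p_k) = p_k - x_0^k + x_1^k$ and $\shift(p_k) = p_k + x_1^k$ recorded just above. Alternatively, one could bypass the operator identity entirely and argue from the monomial expansion \eqref{eq:backstable}: applying $\shift$ shifts every $b_j\mapsto b_j+1$, and reindexing reduced words of $w$ as reduced words of $\shift(w)$ via $a_j\mapsto a_j+1$ shows the two sides agree term by term. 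Either route is routine; the uniqueness argument is the cleanest packaging.
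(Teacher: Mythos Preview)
Your proof is correct. The paper states this proposition without proof, so there is nothing to compare against; your uniqueness argument via Theorem~\ref{T:backstable} together with the operator identity $\shift\circ A_i = A_{i+1}\circ\shift$ is a clean way to handle it, and your case check of $\shift\circ s_i = s_{i+1}\circ\shift$ on the generators $p_k$ at the boundary indices $i\in\{-1,0\}$ is exactly the point that needs care.

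One remark: the most direct route is arguably straight from the limit definition. Since $\S_w^{[p,q]} = \shift^{p-1}(\S_{\shift^{1-p}(w)})$, applying $\shift$ gives $\shift(\S_w^{[p,q]}) = \shift^{p}(\S_{\shift^{1-p}(w)}) = \S_{\shift(w)}^{[p+1,q+1]}$, and taking the limit yields the claim immediately. This is essentially what your alternative monomial-expansion argument amounts to. Either way the result is routine, and your write-up is fine.
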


\begin{prop} \label{P:backstable Grassmannian} For $\la\in\Par$, we have $\bS_{w_\la} = s_\la \in\La(\xm)$, the Schur function.
\end{prop}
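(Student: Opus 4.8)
The plan is to identify $\bS_{w_\la}$ by using the uniqueness characterization of back stable Schubert polynomials (Theorem~\ref{T:backstable}) together with the description of $0$-Grassmannian permutations. Recall that for $\la\in\Par$, the permutation $w_\la\in S_\Z^0$ is $0$-Grassmannian, meaning $w_\la s_i > w_\la$ for all $i\ne 0$. First I would compute the code of $w_\la$ directly from \eqref{E:wla}: for $i>0$ we have $w_\la(i) = i - \la'_i$, and the number of $j>i$ with $w_\la(j) < w_\la(i)$ should work out to give a code supported on nonpositive indices, matching the code of the Grassmannian Schubert situation. In fact the cleanest route is: since $w_\la s_i > w_\la$ for all $i > 0$, property \eqref{E:partial backstable} gives $A_i \bS_{w_\la} = 0$ for all $i>0$, so $\bS_{w_\la}$ is symmetric in $x_1, x_2, \dots$ by Lemma~\ref{L:ker partial}; combined with back symmetry in the nonpositive variables, one shows $\bS_{w_\la}$ actually lies in $\La(\xm)$ — the point being that $\bS_{w_\la}$ involves only variables $x_i$ with $i\le 0$ because of the shape of $w_\la$ and the monomial support constraint $b_i\le a_i$ in \eqref{eq:backstable} together with the fact that $R(w_\la)$ uses only letters $\le 0$. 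Actually $w_\la$ may use positive reflections too; the correct statement is that the leading term $x^{c(w_\la)}$ is supported in nonpositive variables, and symmetry then forces the whole thing into $\La(\xm)$.

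Once $\bS_{w_\la} \in \La(\xm)$ is established, I would show it equals $s_\la$ by checking the defining recurrence for Schur functions. The Schur functions $\{s_\la \mid \la\in\Par\}$ can be characterized inside $\La(\xm)$ by: $s_\emptyset = 1$, homogeneity of degree $|\la|$, and a divided-difference-type recurrence. The natural tool here is the operator $A_0$: I claim $A_0 \bS_{w_\la} = \bS_{w_\mu}$ where $\mu$ is obtained from $\la$ by a suitable box removal (precisely, $w_\la s_0 < w_\la$ exactly when $\la$ has a specific removable structure, and then $w_\la s_0 = w_\mu$ for the partition $\mu$ with a row/column shortened). Matching this against the known action of divided differences on Schur functions — or alternatively, against the action on $s_\la$ viewed via the bijection $\la\mapsto w_\la$ — would let me conclude by induction on $|\la|$, with base case $\bS_{\id} = 1 = s_\emptyset$.

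An alternative and perhaps more direct approach: take the limit in \eqref{eq:backstable} and match it against a known monomial or quasisymmetric formula for $s_\la$. The Billey–Jockusch–Stanley-type expansion \eqref{eq:backstable} for $w_\la$, after using that the reduced words $R(w_\la)$ of a Grassmannian permutation are in bijection with standard Young tableaux of shape $\la$ (via the standard theory), should reproduce exactly the tableau/Gessel quasisymmetric expansion $s_\la = \sum_{T\in\SYT(\la)} \sum_{\text{compatible } b} x^b$. The compatibility conditions $a_i < a_{i+1} \implies b_i < b_{i+1}$ and $b_i \le a_i$ translate into the descent conditions for $T$, and the limit $q\to\infty$, $p\to-\infty$ allows all integer values for the $b_i$, giving a symmetric function in the nonpositive variables that is precisely $s_\la$ in the infinite alphabet $\ldots, x_{-1}, x_0$.

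The main obstacle I anticipate is the bookkeeping at the boundary index $0$: verifying carefully that the reduced words and monomial supports of $w_\la$ genuinely live in nonpositive indices (so the limit lands in $\La(\xm)$ rather than requiring $x_1$ as well), and getting the precise correspondence between removable-box operations on $\la$ and the action of $A_0$ on $w_\la$ right, including which reflections $s_0$ acts by. The uniqueness half of Theorem~\ref{T:backstable} does most of the heavy lifting, so the real work is the inductive identification $A_0\bS_{w_\la} = \bS_{w_{\mu}}$ matching the classical Schur recurrence; once that is pinned down, the result follows.
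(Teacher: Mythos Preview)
The paper's proof is quite different and much shorter than either of your routes: for $\lambda$ contained in a $k\times(n-k)$ box, the shifted permutation $\shift^k(w_\lambda)$ is $k$-Grassmannian in $S_n$, and the classical fact $\Schub_{\shift^k(w_\lambda)} = s_\lambda(x_1,\dots,x_k)$ (Fulton, Chapter~10) is invoked directly. Applying $\shift^{-k}$ gives $\Schub^{[1-k,n-k]}_{w_\lambda} = s_\lambda(x_{1-k},\dots,x_0)$, and letting $k,n\to\infty$ finishes. No induction, no divided-difference recurrence on the back stable side.

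Your primary approach has a genuine gap at the $A_0$ step. You claim $w_\lambda s_0 = w_\mu$ for some partition $\mu$, but $w_\lambda s_0$ is typically \emph{not} $0$-Grassmannian. Already for $\lambda=(2)$ one has $w_\lambda = s_1 s_0$ and $w_\lambda s_0 = s_1$, which fails $s_1\cdot s_1 > s_1$. So $A_0\bS_{w_\lambda}$ does not stay inside $\{\bS_{w_\mu}:\mu\in\Par\}$, and there is no Schur-to-Schur recurrence via right multiplication by $s_0$. Your first step (showing $\bS_{w_\lambda}\in\Lambda(\xm)$) can be made rigorous: $s_i$-invariance for all $i\ne 0$ forces the $\Q[x]$-factor in any expansion to be symmetric in infinitely many variables, hence constant. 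But you would then need a different way to pin down which element of $\Lambda(\xm)$ you have.

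Your alternative approach via \eqref{eq:backstable} and the bijection $R(w_\lambda)\leftrightarrow\SYT(\lambda)$ can be made to work, but the constraint $b_i\le a_i$ does not disappear on its own; you first need the containment $\bS_{w_\lambda}\in\Lambda(\xm)$, after which $\bS_{w_\lambda}$ equals its own image under $\eta_0$, i.e.\ $F_{w_\lambda}$, and then the classical fact $F_{w_\lambda}=s_\lambda$ (the Stanley function of a Grassmannian permutation is a single Schur function) finishes. This is a valid independent argument, just more circuitous than the paper's direct limit from the finite case.
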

\begin{proof} Let $0<k<n$ be large enough such that $\la$ is contained in the $k \times (n-k)$ rectangular partition. For such partitions the map $\la \mapsto \shift^k(w_\la)$ defines a bijection to the $k$-Grassmannian elements of $S_n$. It is well-known that $\Schub_{\shift^k(w_\la)} = s_\la(x_1,\dotsc,x_k)$ \cite[Chapter 10, Proposition 8]{Ful}. Applying $\shift^{-k}$ we have
$\Schub^{[1-k,n-k]}_{w_\la} = s_\la(x_{1-k}\dotsc,x_{-1},x_0)$.
The result follows by letting $k,n\to\infty$.
\end{proof}

By Propositions \ref{P:shift backstable} and 
\ref{P:Schub triangular} we have
\begin{align}\label{E:backstable monomial triangularity}
	\bS_w = x^{c(w)} + \text{reverse-lex lower terms}.
\end{align}

\begin{thm} \label{T:backstable basis} The back stable Schubert polynomials form a $\Q$-basis of $\bR$.
\end{thm}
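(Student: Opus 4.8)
The plan is to prove that $\{\bS_w \mid w \in S_\Z\}$ spans $\bR$ and is linearly independent, using the triangularity \eqref{E:backstable monomial triangularity} together with a change of generators that accounts for the symmetric-function part. Recall from Proposition \ref{P:backsymmetric} that $\bR = \Lambda \otimes \Q[x]$, and we have already identified $\bS_{w_\la} = s_\la \in \Lambda(\xm)$ (Proposition \ref{P:backstable Grassmannian}). So I would first isolate the ``symmetric'' directions: the Schur functions $\{s_\la \mid \la \in \Par\}$ form a $\Q$-basis of $\Lambda$, hence $\{s_\la \otimes x^\beta \mid \la \in \Par, \ \beta \text{ finitely supported in }\Z_{\ge 0}^{(\Z)}\}$ (interpreting $x^\beta$ as a monomial in $\Q[x]$) is a $\Q$-basis of $\bR$.

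Next I would set up the comparison between this ``Schur $\times$ monomial'' basis and the family $\{\bS_w\}$. The key point is that the bijection $S_\Z \to (\text{finitely supported sequences of nonnegative integers})$ via the code $c(w)$ is compatible with the decomposition: for $w = w_\la$ a $0$-Grassmannian element, $\bS_w = s_\la$ lies purely in $\Lambda$, while for general $w$ one expects $\bS_w$ to have leading term $x^{c(w)}$ in reverse-lex order after expressing $\Lambda$ in terms of monomials in $\xm$. Concretely, I would order the monomial basis of $\bR$ (monomials in all the $x_i$, $i \in \Z$, with support bounded above and bounded total degree — these do form a topological basis in the appropriate sense, or one works degree by degree where everything is finite-dimensional) by reverse-lex. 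Then \eqref{E:backstable monomial triangularity} says $\bS_w = x^{c(w)} + (\text{reverse-lex lower terms})$, and since $w \mapsto c(w)$ is a bijection onto finitely-supported nonnegative sequences — which index exactly the monomials appearing — the transition matrix from $\{\bS_w\}$ to the monomial topological basis is unitriangular. A unitriangular (column-finite) transition matrix is invertible, so $\{\bS_w\}$ is also a topological basis; restricting to each fixed degree $d$ gives an honest finite-dimensional unitriangular change of basis.

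More carefully, the subtlety is that $\bR$ is not literally spanned by monomials as a $\Q$-vector space (infinite sums occur), so I would argue within the graded pieces. Fix $d \ge 0$; the degree-$d$ part $\bR_d$ has a natural basis consisting of: monomials $x^\beta$ of degree $d$ with support in $\Z_{\le 0}$ that are \emph{not} symmetric-leading — more precisely, use instead the basis $\{s_\la(\xm) \cdot x^\gamma \mid |\la| + |\gamma| = d, \ \gamma \in \Z_{\ge 0}^{(\Z)}\}$, which is genuinely a $\Q$-basis of $\bR_d$ and is indexed by $S_\Z$ via $\la \leftrightarrow$ the $0$-Grassmannian part and $\gamma \leftrightarrow$ the finite part, matching the code decomposition $c(w) = (c(w)_{\le 0}$-Grassmannian data$,\ c(w)_{>0})$. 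Then I would show the transition matrix from $\{\bS_w \mid \ell(w) = d\}$ to this basis is unitriangular with respect to a suitable refinement of reverse-lex order on pairs $(\la,\gamma)$: this uses \eqref{E:backstable monomial triangularity} and the fact that $s_\la = $ leading monomial $+ \cdots$. The main obstacle I anticipate is precisely this bookkeeping — matching the indexing of the ``Schur $\times$ monomial'' basis to $S_\Z$ via the code in a way that is compatible with the reverse-lex ordering used in \eqref{E:backstable monomial triangularity}, so that the combined matrix is genuinely triangular. Once that combinatorial compatibility is pinned down, invertibility of a column-finite unitriangular matrix is immediate and the basis property of $\bR$ (as a $\Q$-vector space, degree by degree) follows.

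Alternatively — and this may be cleaner — I would prove it purely from the recursive characterization in Theorem \ref{T:backstable}: the operators $A_i$ act on $\bR$, their common kernel together with the grading pins down each $\bS_w$, and an abstract argument (as in the proof of Theorem \ref{T:single Schub}) shows the $\bS_w$ are both spanning and independent. The spanning part would use that $\bR^{S_\Z} = \Lambda^{S_+}$... but here $S_\Z$ does not act with finite invariants, so this route requires care (the invariants of $S_\Z$ on $\bR$ are not just $\Q$), which is exactly why I prefer the triangularity argument above. I would therefore present the triangularity proof as the main argument.
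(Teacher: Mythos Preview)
Your linear-independence argument via the reverse-lex unitriangularity \eqref{E:backstable monomial triangularity} is correct and matches the paper. The gap is entirely in the spanning argument.

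The assertion that ``restricting to each fixed degree $d$ gives an honest finite-dimensional unitriangular change of basis'' is false: already $\bR_1$ contains $p_1$ together with all the $x_i$ for $i\in\Z$, so it is infinite-dimensional. Fixing the degree therefore does not reduce the problem to finite-dimensional linear algebra. Even granting a unitriangular transition from $\{\bS_w\}$ to some reference family, invertibility over an infinite index set requires that below each index there be only finitely many others, so that the back-substitution terminates. In reverse-lex order on \emph{all} codes of a fixed degree this fails: below $x_1^d$, for instance, sit the infinitely many monomials $x_0^d,\,x_{-1}^d,\,x_{-2}^d,\dots$. The ``main obstacle'' you flag --- aligning $\{s_\la\cdot x^\gamma\}$ with $\{\bS_w\}$ via the code compatibly with reverse-lex --- is therefore not mere bookkeeping but the heart of the matter, and your proposal does not resolve it.

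The paper supplies the missing idea. Using $\shift$-equivariance (Proposition~\ref{P:shift backstable}) it suffices to show that every element of the subring $\Lambda(\xm)\otimes\Q[\xp]\subset\bR$ lies in the span. The key observation is that the reverse-lex leading monomial $x^\beta$ of any nonzero element of $\Lambda(\xm)\otimes\Q[\xp]$ necessarily satisfies $\cdots\le\beta_{-2}\le\beta_{-1}\le\beta_0$. For such $\beta$, the permutation $w$ with $c(w)=\beta$ has $\cdots<w(-1)<w(0)$, so $\bS_w$ is symmetric in $\xm$ and hence also lies in $\Lambda(\xm)\otimes\Q[\xp]$; subtracting a scalar multiple of $\bS_w$ stays in this subspace and strictly lowers the leading monomial. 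Finally --- and this is what rescues finiteness --- among exponent sequences $\gamma$ of the given degree that are reverse-lex below $\beta$ and satisfy $\cdots\le\gamma_{-1}\le\gamma_0$, there are only finitely many (the monotonicity forces $\gamma_i=0$ for $i\le -d$, and reverse-lex bounds the positive support by that of $\beta$). Hence the subtraction terminates. This finiteness is exactly what fails if one tries to run the argument directly in $\bR_d$, and it is the step your proposal is missing.
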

\begin{proof} By \eqref{E:backstable monomial triangularity}
the back stable Schubert polynomials are linearly independent. 
For spanning, using Proposition \ref{P:shift backstable} and applying $\shift^n$ for $n$ sufficiently large, it suffices to show that any element of $\La(\xm) \otimes \Q[\xp]$ is a $\Q$-linear combination of finitely many back stable Schubert polynomials. This holds due to the unitriangularity \eqref{E:backstable monomial triangularity} of back stable Schubert polynomials with monomials and the following facts:
 (1) the reverse-lex leading monomial $x^\beta$ in any nonzero element of $\La(\xm)\otimes \Q[\xp]$ satisfies 
$\dotsm \le \beta_{-2} \le \beta_{-1} \le \beta_0$. (2) If $w\in S_\Z$ is such that $c(w)=\beta$ for such a $\beta$ then $\dotsm <w(-2)<w(-1)<w(0)$. (3) For such $w$, $\bS_w$ is symmetric in $\xm$ so that $\bS_w\in \La(\xm)\otimes \Q[\xp]$. (4) There are finitely many $\gamma$ below $\beta$ in reverse-lex order such that $x^\gamma$ and $x^\beta$ have the same degree, and satisfying $\dotsm\le \gamma_{-2}\le \gamma_{-1}\le \gamma_0$.
\end{proof}

\subsection{Stanley symmetric functions}\label{SS:Stanley}
Stanley \cite{Sta} defined Stanley symmetric functions $F_w(\xp)$ to enumerate reduced decompositions of permutations.  These symmetric functions are also called stable Schubert polynomials, and are usually defined by $F_w(\xp):= \lim_{n\to\infty} \S_{\shift^n(w)}(\xp)$.  Our definition $F_w$ of Stanley symmetric function agrees (by Theorem \ref{T:Stanley monomial expansion}) with the standard definition up to using $\xm$ instead of $\xp$.

There is a $\Q$-algebra map $\eta_0: \Q[x] \to \Q$ given by evaluation at zero: $x_i \mapsto 0$ for all $i\in\Z$.  This induces a $\Q$-algebra map $1 \otimes \eta_0: \bR \to \Lambda \otimes_\Q \Q \cong \Lambda$, which we simply denote by $\eta_0$ as well.

 \begin{remark} \label{R:eta knows} The map $\eta_0$ ``knows" the difference between $x_i\in \Q[x]$ and the $x_i$ that appear in $\Lambda=\Lambda(\xm)$.
 \end{remark}

For $w \in S_\Z$, we define the \defn{Stanley symmetric function} by
\begin{align}\label{E:Stanley def}
F_w := \eta_0(\bS_w)\in\Lambda.
\end{align}

Recall the shifting automorphism $\shift:S_\Z\to S_\Z$ from \textsection \ref{SS:notation}.

\begin{lem}\label{L:eta shift}   For $f \in \bR$, we have $\eta_0(\shift(f))=\eta_0(f)$.
\end{lem}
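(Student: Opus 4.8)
The plan is to exploit that both $\eta_0\circ\shift$ and $\eta_0$ are $\Q$-algebra homomorphisms $\bR\to\Lambda$, so it suffices to check that they agree on a set of algebra generators of $\bR$. By the emphasized remark following Proposition~\ref{P:backsymmetric}, the ring $\bR$ is a polynomial $\Q$-algebra on the algebraically independent generators $p_k$ for $k\ge1$ together with the variables $x_i$ for $i\in\Z$; hence it is enough to verify the identity on each $p_k$ and on each $x_i$.

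First I would handle the variables: for $i\in\Z$ we have $\shift(x_i)=x_{i+1}$ by \eqref{E:shift}, and since $\eta_0$ kills every variable $x_j\in\Q[x]$ we get $\eta_0(\shift(x_i))=\eta_0(x_{i+1})=0=\eta_0(x_i)$. Next I would handle the power sums: by \eqref{E:shift} we have $\shift(p_k)=p_k+x_1^k$, so $\eta_0(\shift(p_k))=\eta_0(p_k)+\eta_0(x_1^k)=\eta_0(p_k)$ because $\eta_0(x_1)=0$. Here the crucial point is exactly the content of Remark~\ref{R:eta knows}: the map $\eta_0=1\otimes\eta_0$ sends the variable $x_1\in\Q[x]$ to $0$ but fixes $p_k\in\Lambda=\Lambda(\xm)$, so the correction term $x_1^k$ that $\shift$ introduces is annihilated while the symmetric-function part survives unchanged. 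Thus $\eta_0\circ\shift$ and $\eta_0$ agree on all algebra generators of $\bR$.

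Since two $\Q$-algebra maps out of a polynomial algebra that agree on the generators are equal, this gives $\eta_0(\shift(f))=\eta_0(f)$ for all $f\in\bR$. I do not anticipate a genuine obstacle; the only thing requiring care is making sure one correctly tracks which ``$x_i$'' is meant at each step --- those belonging to the polynomial factor $\Q[x]$, which are sent to $0$, versus the symmetric-function generators $p_k=\sum_{i\le0}x_i^k\in\Lambda$, which are preserved --- and this is precisely the distinction highlighted in Remark~\ref{R:eta knows}.
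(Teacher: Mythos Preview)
Your proposal is correct and follows exactly the approach the paper takes: the paper's proof simply states that the claim holds because $\eta_0$ is a $\Q$-algebra homomorphism and the identity is easily verified on the algebra generators of $\bR$. You have spelled out that verification explicitly on $x_i$ and $p_k$, which is precisely what the paper leaves to the reader.
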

\begin{proof} This holds since $\eta_0$ is a $\bQ$-algebra homomorphism and the claim is easily verified for the algebra generators of $\bR$.
\end{proof}

\begin{cor}\label{C:shift stanley} For $w\in S_\Z$, we have $F_{\shift(w)} = F_w$.
\end{cor}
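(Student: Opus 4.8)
The plan is to combine the definition $F_w = \eta_0(\bS_w)$ from \eqref{E:Stanley def} with the compatibility of back stable Schubert polynomials with the shifting automorphism established in Proposition~\ref{P:shift backstable}, and with the invariance of $\eta_0$ under $\shift:\bR\to\bR$ established in Lemma~\ref{L:eta shift}. The only subtle point is keeping straight the two different objects named $\shift$: the automorphism $\shift:S_\Z\to S_\Z$ on permutations, and the automorphism $\shift:\bR\to\bR$ shifting the $x$-variables.

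The computation is then a one-line chain: for $w\in S_\Z$,
\[
F_{\shift(w)} = \eta_0(\bS_{\shift(w)}) = \eta_0(\shift(\bS_w)) = \eta_0(\bS_w) = F_w,
\]
where the first and last equalities are the definition \eqref{E:Stanley def}, the middle equality is Proposition~\ref{P:shift backstable}, and the third equality is Lemma~\ref{L:eta shift} applied to $f = \bS_w\in\bR$.

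There is essentially no obstacle here; this corollary is a formal consequence of the two cited results, which is presumably why it is stated as a corollary immediately following Lemma~\ref{L:eta shift}. If one wanted to be slightly more careful, one could note that $\shift(w)\in S_\Z$ whenever $w\in S_\Z$ (since $\shift$ conjugates $s_i$ to $s_{i+1}$ and preserves the property of moving only finitely many integers), so that $\bS_{\shift(w)}$ is defined in the first place; but this is immediate from the description of $\shift$ in \textsection\ref{SS:notation}. One might also remark that this gives a conceptual explanation for the classical fact that the forward-limit Stanley symmetric function depends only on the ``shape'' of $w$ up to shifting: the map $\eta_0$ kills exactly the part of $\bS_w$ that records the absolute position, leaving the shift-invariant symmetric-function part.
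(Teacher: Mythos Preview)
Your proof is correct and is essentially identical to the paper's own proof, which consists of exactly the same chain of equalities $F_{\shift(w)} = \eta_0(\bS_{\shift(w)}) = \eta_0(\shift(\bS_w)) = \eta_0(\bS_w) = F_w$ citing Lemma~\ref{L:eta shift} (and implicitly Proposition~\ref{P:shift backstable}). Your additional remarks on distinguishing the two maps named $\shift$ and on $\shift(w)\in S_\Z$ are accurate but not needed for the formal argument.
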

\begin{proof}
Using Lemma \ref{L:eta shift} and Proposition \ref{P:shift backstable}, we have
$F_{\shift(w)} = \eta_0(\bS_{\shift(w)}) = \eta_0(\shift(\bS_w)) = \eta_0(\bS_w) = F_w$.
\end{proof}

\begin{thm}[cf. \cite{Sta}]\label{T:Stanley monomial expansion} For $w\in S_\Z$, we have
\begin{align}\label{E:Stanley monomial expansion}
F_w = \sum_{a_1a_2 \cdots a_\ell \in \Red(w)} \sum_{\substack{b_1 \leq b_2 \leq \cdots \leq b_\ell \leq 0 \\ a_i<a_{i+1} \implies b_i < b_{i+1} }} x_{b_1} x_{b_2} \cdots x_{b_\ell} .
\end{align}
\end{thm}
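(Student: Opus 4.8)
The plan is to deduce the monomial expansion of $F_w$ directly from the definition $F_w = \eta_0(\bS_w)$ together with the monomial expansion \eqref{eq:backstable} of $\bS_w$ established in Theorem \ref{T:backstable}. Recall that $\eta_0 = 1 \otimes \eta_0 : \bR = \La(\xm) \otimes \Q[x] \to \La(\xm)$ is the $\Q$-algebra map that sends each $x_i \in \Q[x]$ to $0$ while leaving the symmetric-function variables $x_i$ with $i \le 0$ intact (Remark \ref{R:eta knows}). So the first step is to understand, for a fixed monomial $x_{b_1} x_{b_2} \cdots x_{b_\ell}$ appearing in \eqref{eq:backstable}, what $\eta_0$ does to it under the decomposition $\bR = \La(\xm)\otimes\Q[x]$.

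First I would observe that each term $x_{b_1} \cdots x_{b_\ell}$ in \eqref{eq:backstable} is indexed by a reduced word $a_1 \cdots a_\ell \in R(w)$ and a weakly increasing sequence $b_1 \le \cdots \le b_\ell$ of integers with $b_i \le a_i$ and the ascent condition $a_i < a_{i+1} \implies b_i < b_{i+1}$. The key point is how such a monomial decomposes in $\bR = \La(\xm)\otimes\Q[x]$: when I rewrite the power sums, a variable $x_j$ with $j \le 0$ is (a degree-one piece of) a generator of $\La(\xm)$, while $x_j$ with $j > 0$ is a generator of $\Q[x]$. Since $\eta_0$ kills the $\Q[x]$-generators, I need to see that a monomial $x_{b_1}\cdots x_{b_\ell}$ with all $b_i \le 0$ maps to itself (viewed in $\La(\xm)$), whereas any monomial involving some $x_{b_i}$ with $b_i > 0$ maps to $0$. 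The second claim is not entirely immediate because the variables $x_j$, $j\le 0$, sit inside $\La(\xm)$ only through the power sums $p_k$; but a genuine monomial $x_{b_1}\cdots x_{b_\ell}$ in distinct or repeated nonpositive variables is itself a polynomial expression in the $p_k$'s (via Newton's identities / the fundamental theorem of symmetric functions applied formally in infinitely many variables), and applying $\eta_0$ to that expression — which is an algebra map — just returns the same monomial back. Conversely, if the monomial contains at least one factor $x_j$ with $j>0$, then in the tensor decomposition it has a nontrivial $\Q[x]$-component, and since $\eta_0$ is an algebra homomorphism sending that component's generators to $0$, the whole monomial maps to $0$. I would phrase this cleanly by noting that $\eta_0$ is precisely the operation ``set $x_j = 0$ for $j > 0$, keep $x_j$ for $j \le 0$'' on monomials, which follows because this prescription is a well-defined $\Q$-algebra endomorphism of $R$ that restricts to $\bR$ and agrees with $1\otimes\eta_0$ on the generators $p_k$ and $x_i$.

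Given this, applying $\eta_0$ to \eqref{eq:backstable} term by term (legitimate because, after grouping by total degree, only finitely many words $R(w)$ contribute in each degree, so the sum is a well-defined element of $\bR$ and $\eta_0$ commutes with the expansion) retains exactly those terms all of whose indices $b_i$ are $\le 0$, i.e. those with $b_\ell \le 0$ since the $b_i$ are weakly increasing. For the surviving terms, the constraint $b_i \le a_i$ is automatically implied by $b_i \le b_\ell \le 0 < a_i$ (reduced words use positive letters... wait — here $a_i \in \Z$ ranges over all of $\Z$ since $w \in S_\Z$; but $b_i \le 0$, and if $a_i \le 0$ we keep $b_i \le a_i$). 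Actually the cleanest statement is: the surviving terms are exactly those with $b_1 \le \cdots \le b_\ell \le 0$, and for these the condition $b_i \le a_i$ may or may not be automatic; if $a_i$ can be nonpositive it is a genuine constraint. Comparing with the claimed formula \eqref{E:Stanley monomial expansion}, I see the stated formula drops the $b_i \le a_i$ condition entirely. So the remaining step — and the one I expect to be the main subtlety — is to argue that once we impose $b_\ell \le 0$, the condition $b_i \le a_i$ is either vacuous or can be shown to be implied by the ascent conditions together with $b_i \le 0$; more likely the intended reading (by $\shift$-invariance, Corollary \ref{C:shift stanley}) is that we may first shift $w$ far to the right so that all letters $a_i$ of all reduced words are large positive integers, making $b_i \le a_i$ automatic whenever $b_i \le 0$, and then take the limit. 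I would therefore organize the proof as: (i) identify $\eta_0$ as ``$x_j \mapsto 0$ for $j>0$''; (ii) apply it to \eqref{eq:backstable} to keep terms with $b_\ell \le 0$; (iii) invoke $\shift$-invariance to discard the now-redundant bound $b_i \le a_i$, yielding exactly \eqref{E:Stanley monomial expansion}. The obstacle is step (iii): making precise that the monomial expansion is genuinely independent of the shift and that the $b_i \le a_i$ constraint disappears in the limit; this is handled by Corollary \ref{C:shift stanley} and Lemma \ref{L:eta shift}, which guarantee $F_{\shift^k(w)} = F_w$ for all $k$, so we may compute using $\shift^k(w)$ for $k$ arbitrarily large and pass to the union of reduced words.
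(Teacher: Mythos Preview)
Your step (i) contains a genuine error. The map $\eta_0$ is \emph{not} the operation ``set $x_j=0$ for $j>0$, keep $x_j$ for $j\le 0$'' on $\bR$. Concretely, the element $x_0\in\bR$ sits in the $\Q[x]$-factor of $\bR=\Lambda\otimes\Q[x]$ (it is not a symmetric function in $\xm$), so $\eta_0(x_0)=0$, not $x_0$. Your supporting claim that a monomial $x_{b_1}\cdots x_{b_\ell}$ in nonpositive variables ``is itself a polynomial expression in the $p_k$'s'' is false for the same reason: a single $x_0$ cannot be written as a polynomial in power sums. Consequently you cannot apply $\eta_0$ term-by-term to the monomial expansion \eqref{eq:backstable} for general $w\in S_\Z$ and conclude that exactly the terms with all $b_i\le 0$ survive.

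The paper's argument repairs this by reversing the order of your steps: shift \emph{first}. Using Corollary~\ref{C:shift stanley} one may assume $w\in S_+$. Then $ws_i>w$ for every $i<0$, so by \eqref{E:partial backstable} and Lemma~\ref{L:ker partial} the element $\bS_w$ is $s_i$-symmetric for all $i<0$, hence $\bS_w\in\Lambda\otimes\Q[\xp]$. On this subalgebra the $\Q[x]$-factor involves only $x_1,x_2,\ldots$, so $\eta_0$ genuinely does act as ``set $x_i=0$ for $i\ge 1$'', and now one may legitimately substitute termwise in \eqref{eq:backstable}. This same reduction simultaneously disposes of your step (iii): since every letter of every reduced word of $w\in S_+$ satisfies $a_i\ge 1$, the surviving condition $b_\ell\le 0$ forces $b_i\le 0<a_i$, so the bound $b_i\le a_i$ becomes vacuous. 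You correctly identified shift-invariance as the needed tool, but it must be invoked \emph{before} computing $\eta_0$, not after.
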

\begin{proof} By Corollary \ref{C:shift stanley} we may assume that $w\in S_+$. Since $ws_i>w$ for $i<0$, $\bS_w$ is $s_i$-symmetric for $i<0$, that is, $\bS_w \in \Lambda \otimes \Q[\xp]$.
Therefore $F_w$ is obtained from $\bS_w$ by setting $x_i=0$ for $i\ge 1$. Making this substitution in \eqref{eq:backstable} yields \eqref{E:Stanley monomial expansion}.
\end{proof}

The \defn{Edelman-Greene coefficients} $j_\lambda^w \in \Z$ are defined by 
\begin{align}\label{E:stanley coefficients} 
	F_w = \sum_\lambda j_\lambda^w s_\lambda.
\end{align}

These coefficients are known to be nonnegative and have a number of combinatorial interpretations: leaves of the transition tree \cite{LS2}, promotion tableaux \cite{H}, and peelable tableaux \cite{RS}.
In particular, by \cite{EG} $j^w_\la$ is equal to the number of {\it reduced word tableaux} for $w$: that is, row strict and column strict tableaux of shape $\la$ whose row-reading words are reduced words for $w$.

Let $\omega$ be the involutive $\Q$-algebra automorphism of $\La$
defined by $\omega(p_r) = (-1)^{r-1} p_r$ for $r\ge1$. We have $\omega(s_\la)=s_{\la'}$ for $\la\in\Par$. The action of $\omega$ on a homogeneous element of degree $d$ is equal to that of the antipode times $(-1)^d$.
Let $\omega$ also denote the automorphism of $S_{\Z}$ given by
$s_i\mapsto s_{-i}$ for all $i\in \Z$.

\begin{prop} \label{P:Stanley symmetries} For $w\in S_\Z$, we have $F_{w^{-1}} = \omega(F_w) = F_{\omega(w)}$.
\end{prop}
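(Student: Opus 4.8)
The plan is to establish the two equalities $F_{w^{-1}} = \omega(F_w)$ and $\omega(F_w) = F_{\omega(w)}$ separately, using the monomial expansion of $F_w$ from Theorem \ref{T:Stanley monomial expansion} together with standard facts about reduced words. First I would record the elementary observations about reduced words: $R(w^{-1})$ is obtained from $R(w)$ by reversing each word, while $R(\omega(w))$ is obtained by applying $i \mapsto -i$ to each letter of each word in $R(w)$. These are immediate from the fact that $\omega$ is a group automorphism sending generators to generators and that inversion reverses products.

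For $F_{\omega(w)} = \omega(F_w)$, the cleanest route is probably at the level of $\bR$ rather than $\Lambda$. One shows that $\omega$ on $S_\Z$ is compatible, via the divided difference characterization, with an involution of $\bR$ reflecting the variable indices $x_i \mapsto x_{-i}$ (more precisely $x_i \mapsto x_{1-i}$, to match $s_i \mapsto s_{-i}$ correctly), and that this involution restricts on $\Lambda = \Lambda(\xm)$ to $\omega$ (since reflecting $\ldots, x_{-1}, x_0$ amounts to conjugating Schur functions, giving $s_\la \mapsto s_{\la'}$). Applying $\eta_0$ and using Lemma \ref{L:eta shift} to absorb the shift then yields the identity. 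Alternatively, and perhaps more transparently, one works directly with the monomial expansion \eqref{E:Stanley monomial expansion}: the map sending a reduced word $a_1 \cdots a_\ell$ of $w$ to $(-a_\ell)(-a_{\ell-1})\cdots(-a_1)$, a reduced word of $\omega(w^{-1})$, together with the involution $\omega$ on symmetric functions, matches up the combinatorial data (ascents/descents of the word and the compatible weakly decreasing/increasing $b$-sequences), which is the standard argument that $\omega(F_w) = F_{w^{-1}}$. This handles both identities at once: reversing alone gives $F_{w^{-1}}$ and negating letters alone gives $F_{\omega(w)}$, and one checks that each has the effect of $\omega$ on the symmetric function side.

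Concretely I would proceed as follows. Step 1: prove $F_{w^{-1}} = \omega(F_w)$. This is classical for $S_+$; for $S_\Z$ reduce to $S_+$ by Corollary \ref{C:shift stanley} after noting $\shift(w)^{-1} = \shift(w^{-1})$, so both sides are $\shift$-invariant and it suffices to treat $w \in S_+$. There, invoke the known symmetry of Stanley symmetric functions under $\omega$ and inversion (this follows from the BJS-type expansion \eqref{E:Stanley monomial expansion} by word-reversal, or from the fact that $\S_{w^{-1}}$ and $\S_w$ are related by swapping $x$ and $a$ variables and the stable limit). Step 2: prove $F_{\omega(w)} = \omega(F_w)$, again reducing to $w \in S_+$ is not quite possible since $\omega$ does not preserve $S_+$, so here I would argue directly: combine Step 1 with the identity $F_{\omega(w)} = F_{\omega(w)}$, i.e., show $F_{\omega(w)} = F_{(\omega(w))^{-1}}$ would require $\omega(w)$ conjugate-symmetric which is false, so instead establish $F_{\omega(w^{-1})} = F_w$ by the letter-negation bijection on reduced words and then combine with Step 1 to get $F_{\omega(w)} = F_{\omega((w^{-1})^{-1})} = $ the image under the composite. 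The main obstacle I anticipate is bookkeeping the off-by-one in the index reflection (whether $\omega$ corresponds to $x_i \mapsto x_{-i}$ or $x_i \mapsto x_{1-i}$ on the level of variables) and verifying that this reflection indeed implements $\omega$ on $\Lambda(\xm)$ as opposed to some shifted variant; once that normalization is pinned down, the rest is a routine chase through the monomial expansion and the compatibility of $\eta_0$ with shifts.
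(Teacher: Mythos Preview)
Your strategy---use the reduced-word bijections (reversal for $w\mapsto w^{-1}$, negation for $w\mapsto\omega(w)$) and track their effect on the symmetric function---is exactly the paper's. But the proposal never actually supplies the key step: \emph{why} does each bijection induce $\omega$ on the Stanley function? You write that the bijection ``matches up the combinatorial data (ascents/descents of the word and the compatible \ldots\ $b$-sequences)'', but this is not a proof; reversing a word sends its descent set $D\subset[\ell-1]$ to $\ell-D$, while negating sends $D$ to $[\ell-1]\setminus D$, and neither of these operations is $\omega$ on a single fundamental quasisymmetric function. One needs an additional argument to pass from the word-level bijection to the identity of symmetric functions. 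Your Step~2 also gets tangled: you cannot reduce to $S_+$, but nothing forces you to---the negation bijection $R(w)\to R(\omega(w))$ is available for all $w\in S_\Z$, and the question is only what it does to the Schur expansion.

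The paper's argument is the clean way to fill this gap: both reversal and negation are compatible with Coxeter--Knuth equivalence (they send the elementary CK relations to elementary CK relations), and each sends a class of shape $\lambda$ to a class of shape $\lambda'$. Since $j_\lambda^w$ counts CK classes of shape $\lambda$ in $R(w)$, this gives $j_\lambda^w = j_{\lambda'}^{w^{-1}} = j_{\lambda'}^{\omega(w)}$, and the result follows from $\omega(s_\lambda)=s_{\lambda'}$. Your worry about the off-by-one in $x_i\mapsto x_{-i}$ versus $x_i\mapsto x_{1-i}$ is a red herring here: the proposition lives entirely in $\Lambda$, where $\omega$ is the unambiguous involution $p_r\mapsto(-1)^{r-1}p_r$; the extension to $\bR$ only matters later for Proposition~\ref{P:negate back stable}.
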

\begin{proof} Reversal of a reduced word gives a bijection $\Red(w) \to \Red(w^{-1})$ that sends a Coxeter-Knuth class of shape $\la$ (see \textsection \ref{SS:insertion}) to a Coxeter-Knuth class of shape $\la'$. The first equality follows.  
	
Negating each entry of a reduced word gives a bijection
$\Red(w) \to \Red(\omega(w))$ which sends a Coxeter-Knuth class of shape $\la$ to a Coxeter-Knuth class of shape $\la'$. The second equality follows.  
\end{proof}

\begin{prop} \label{P:Hopf Stanley} For $w\in S_\Z$, we have
\begin{align}
\label{E:coprod stanley}
	\Delta(F_w) &= \sum_{w\doteq uv} F_u \otimes F_v \\
\label{E:antipode stanley}
	F_w(/x) &= (-1)^{\ell(w)} F_{w^{-1}}(x) \\
\label{E:plethystic difference stanley}
	F_w(x/a) &= \sum_{w\doteq uv} (-1)^{\ell(u)} F_{u^{-1}}(a) F_v(x) \\
\label{E:counit}
	F_w(a/a) &= \delta_{w,\id} \\
\label{E:add subtract stanley}
    F_w(x) &= \sum_{w\doteq uvz} (-1)^{\ell(u)} F_{u^{-1}}(a) F_v(x) F_z(a).
\end{align}
\end{prop}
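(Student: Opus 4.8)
The plan is to prove the five identities of Proposition~\ref{P:Hopf Stanley} by transporting the Hopf-algebraic identities we already have for back stable Schubert polynomials across the map $\eta_0$, using Corollary~\ref{C:shift stanley} to reduce to the case $w\in S_+$ where $\bS_w \in \Lambda\otimes\Q[\xp]$ is already symmetric. The first identity \eqref{E:coprod stanley} should follow most directly: the coproduct $\Delta$ on $\Lambda$ plugs in the two variable sets $\xm$ and $\am$, and the monomial expansion \eqref{E:Stanley monomial expansion} of $F_w$ (a sum over reduced words of $w$) splits any weakly increasing nonpositive sequence $b_1\le\cdots\le b_\ell\le 0$ at the point where it crosses from the $\am$-range to the $\xm$-range; the condition ``$a_i<a_{i+1}\implies b_i<b_{i+1}$'' forces the split to occur between consecutive letters at an ascent, which is exactly the data of a length-additive factorization $w\doteq uv$. (Alternatively, one can cite the coproduct formula Theorem~\ref{thm:coprod} referenced in the introduction, but a direct argument from \eqref{E:Stanley monomial expansion} is cleaner and self-contained.)

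Next, \eqref{E:antipode stanley} is immediate from Proposition~\ref{P:Stanley symmetries}: the antipode on a homogeneous element of degree $\ell(w)$ equals $(-1)^{\ell(w)}\omega$, and $\omega(F_w)=F_{w^{-1}}$, so $F_w(/x) = (-1)^{\ell(w)}\omega(F_w) = (-1)^{\ell(w)}F_{w^{-1}}(x)$. Then \eqref{E:plethystic difference stanley} follows by applying the superization map $f\mapsto f(x/a)$, which by definition is $\Delta$ followed by the antipode in the second tensor factor: combining \eqref{E:coprod stanley} and \eqref{E:antipode stanley} gives $F_w(x/a) = \sum_{w\doteq uv}F_u(x)\,F_v(/a) = \sum_{w\doteq uv}(-1)^{\ell(v)}F_u(x)F_{v^{-1}}(a)$, and reindexing the length-additive factorization (replacing $v$ by $v^{-1}$ so that $w^{-1}\doteq v^{-1}u^{-1}$, equivalently relabelling $u\leftrightarrow u^{-1}$) puts it in the stated form $\sum_{w\doteq uv}(-1)^{\ell(u)}F_{u^{-1}}(a)F_v(x)$; one must check that $w\doteq uv \iff w\doteq (u^{-1})^{-1}v$ matches the indexing, which is a bookkeeping point. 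The counit identity \eqref{E:counit} is the specialization $a=x$ of \eqref{E:plethystic difference stanley}: $F_w(a/a)=\sum_{w\doteq uv}(-1)^{\ell(u)}F_{u^{-1}}(a)F_v(a)$, which equals $\delta_{w,\id}$ by Lemma~\ref{L:Schub cancellation} applied after $\eta_0$ — indeed, since $\eta_0$ kills $\Q[\xp]$ and $F_u = \eta_0(\bS_u)$, the vanishing $\sum_{w\doteq uv}(-1)^{\ell(u)}\S_{u^{-1}}(a)\S_v(a)=\delta_{w,\id}$ descends directly. Finally \eqref{E:add subtract stanley} is obtained from \eqref{E:plethystic difference stanley} by writing $F_v(x) = F_v(x/a \text{ then }a\text{ back in})$, i.e.\ applying \eqref{E:plethystic difference stanley} in the ``other direction'': substitute $x\mapsto x$, $a\mapsto a$ into $F_w(x) = F_w((x/a)\cdot a)$ — more precisely, one iterates, factoring $F_w(x)$ using the group structure as $F_w(x/a \oplus a) = \sum (-1)^{\ell(u)} F_{u^{-1}}(a) F_v(x/a)$ and then expanding $F_v(x/a)$ again by \eqref{E:plethystic difference stanley}, collecting the two length-additive factorizations into a single $w\doteq uvz$.

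The main obstacle is the careful handling of the indexing of length-additive factorizations under inversion and the interaction with the involution $\omega$ when deriving \eqref{E:plethystic difference stanley} and \eqref{E:add subtract stanley} from the coproduct and antipode: signs of the form $(-1)^{\ell(u)}$ versus $(-1)^{\ell(v)}$, and whether the symmetric-function argument $a$ sits on $F_{u^{-1}}$ or $F_u$, must be tracked consistently, and the reader should be warned that $\ell(u)=\ell(u^{-1})$ is used freely. A secondary subtlety is that \eqref{E:add subtract stanley} requires recognizing that applying the difference operator and then ``adding back'' the $a$-variables is the Hopf-algebra identity $(\id \otimes \text{mult})\circ(\id\otimes S\otimes\id)\circ(\Delta\otimes\id)\circ\Delta = \id$ applied to $F_w$ — i.e.\ it is formally the statement that $S$ is a two-sided convolution inverse of the identity — so once phrased correctly it is automatic from \eqref{E:coprod stanley}, \eqref{E:antipode stanley}, and coassociativity, with the triple factorization $w\doteq uvz$ arising from iterating $\Delta$.
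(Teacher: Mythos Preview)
Your overall strategy matches the paper's: deduce \eqref{E:coprod stanley} from the monomial expansion, \eqref{E:antipode stanley} from Proposition~\ref{P:Stanley symmetries}, combine these for \eqref{E:plethystic difference stanley}, and derive \eqref{E:counit} and \eqref{E:add subtract stanley} from there. Two steps, however, need repair.

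For \eqref{E:plethystic difference stanley}: after computing $F_w(x/a) = \sum_{w\doteq uv}(-1)^{\ell(v)}F_u(x)F_{v^{-1}}(a)$, the ``reindexing'' you describe does not work. The map $(u,v)\mapsto(v,u)$ is not a bijection on $\{(u,v): w\doteq uv\}$, and passing to $w^{-1}\doteq v^{-1}u^{-1}$ changes the element being factored. What is actually needed is the cocommutativity of $\Lambda$: since $\sum_{w\doteq uv}F_u\otimes F_v = \sum_{w\doteq uv}F_v\otimes F_u$ as elements of $\Lambda\otimes\Lambda$, one may equally well apply the antipode to the first tensor factor, which gives the stated form directly. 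The paper's one-line ``combine \eqref{E:coprod stanley} and \eqref{E:antipode stanley}'' silently uses this.

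For \eqref{E:counit}: Lemma~\ref{L:Schub cancellation} is about finite Schubert polynomials $\S_u\in\Q[\xp]$, not Stanley functions, and there is no map along which it ``descends'' via $\eta_0$ to yield $\sum_{w\doteq uv}(-1)^{\ell(u)}F_{u^{-1}}(a)F_v(a)=\delta_{w,\id}$ (applying $\eta_0$ to an identity in $\Q[\xp]$ just annihilates the positive-degree terms). The paper instead invokes the Hopf axiom: superization followed by multiplication is $m\circ(\id\otimes S)\circ\Delta$, which equals the counit; since $F_w$ is homogeneous of degree $\ell(w)$, its counit is $\delta_{w,\id}$.

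For \eqref{E:add subtract stanley}, your route (coproduct with the two ``alphabets'' $x/a$ and $a$, then expand $F_v(x/a)$ by \eqref{E:plethystic difference stanley}) is valid but differs from the paper's: the paper uses cocommutativity to swap $F_v(x)F_z(a)\leadsto F_v(a)F_z(x)$, collapses the first two factors via \eqref{E:plethystic difference stanley} to $\sum_{w\doteq yz}F_y(a/a)F_z(x)$, and finishes with \eqref{E:counit}. Both arguments are short once the earlier identities are in hand.
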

\begin{proof} Equation \eqref{E:coprod stanley} follows by plugging in two set of variables into \eqref{E:Stanley monomial expansion}.
Equation \eqref{E:antipode stanley} follows from Proposition \ref{P:Stanley symmetries}. Equation \eqref{E:plethystic difference stanley} is obtained by combining \eqref{E:coprod stanley} and \eqref{E:antipode stanley}. 
Equation \eqref{E:counit} follows from the Hopf algebra axiom which asserts that superization followed by multiplication is the counit.
For \eqref{E:add subtract stanley} we have
\begin{align*}
	\sum_{w \doteq uvz} (-1)^{\ell(u)} F_{u^{-1}}(a) F_v(x) F_z(a) 
	&= \sum_{w \doteq uvz} (-1)^{\ell(u)} F_{u^{-1}}(a) F_v(a) F_z(x) = \sum_{w \doteq yz} F_y(a/a) F_z(x) = F_w(x)
\end{align*}
using cocommutativity, \eqref{E:plethystic difference stanley}, and \eqref{E:counit}.
\end{proof}

\subsection{Negative Schubert polynomials}\label{SS:negative Schubert} The following Schubert polynomials are indexed by permutations in $S_-$, contain variables indexed by nonpositive integers, and may contain signs. Recall $S_-$ and $S_{\ne0}$ from \textsection \ref{SS:notation} and the automorphism $\omega$ of $S_{\Z}$. It restricts to an isomorphism $S_-\to S_+$. 
Let $\omega:\Q[x]\to\Q[x]$ be the $\Q$-algebra automorphism defined by $\omega(x_i)=-x_{1-i}$ for $i\in\Z$. For $u\in S_-$, define $\S_u(\xm)\in \Q[\xm]$ by
\begin{align}
	\S_u(\xm)  &:= \omega(\S_{\omega(u)}(\xp)).
\end{align}
That is, in $w$ replace the negatively-indexed reflections with positively-indexed ones, take the usual Schubert polynomial in positively-indexed variables, and then use $\omega$ to substitute nonpositively-indexed $x$ variables for the positively-indexed ones (with signs). 
 
\begin{example} For $u=s_{-3}s_{-2}s_{-1}$, we have $\omega(u)=s_3s_2s_1$, $\Schub_{s_3s_2s_1}(\xp) = x_1^3$, and $\Schub_u(\xm)=-x_0^3$. For $i > 0$ we have
	$\Schub_{s_{-i}} = \omega \Schub_{s_i} =\omega (x_1+\dotsm+x_i) = 
	-(x_0+x_{-1}+\dotsm+x_{1-i})$.
\end{example}
 
By Theorem \ref{thm:BJS}, we have
$$
\S_u(\xm) =  \sum_{a_1a_2 \cdots a_\ell \in R(u)} \sum_{\substack{   0 \geq b_1 \geq b_2 \geq \cdots \geq b_\ell  \\ a_i > a_{i+1} \implies b_i > b_{i+1} \\ b_i \geq a_i+1}} x_{b_1} x_{b_2} \cdots x_{b_\ell}\qquad\text{for $u\in S_-$.}
$$
Note the $+1$ in $b_i \geq a_i+1$.

For $w\in S_{\ne0}$, define $\S_w(x)\in \Q[x]$ by
\begin{align}
\S_w(x) &:= \S_u(\xm)\S_v(\xp)\qquad\text{where $w=uv$ with $u\in S_-$ and $v\in S_+$.}
\end{align}

\begin{prop}  \label{P:negate Schubert} For $w \in S_{\ne 0}$, we have $\omega(\Schub_w(x)) = \Schub_{\omega(w)}(x) $.
\end{prop}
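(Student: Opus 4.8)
The plan is to reduce the claim $\omega(\Schub_w(x)) = \Schub_{\omega(w)}(x)$ for $w \in S_{\ne 0}$ to the two ``building-block'' cases $w \in S_+$ and $w \in S_-$, and then to the single already-established identity for $S_+$. First I would observe that $\omega$ is an involution both on $\Q[x]$ (via $x_i \mapsto -x_{1-i}$) and on $S_\Z$ (via $s_i \mapsto s_{-i}$), and that it carries $S_+$ to $S_-$ and vice versa, preserving length; consequently the factorization $w = uv$ with $u \in S_-$, $v \in S_+$ is carried to $\omega(w) = \omega(u)\omega(v)$ with $\omega(u) \in S_+$ and $\omega(v) \in S_-$. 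So $\Schub_{\omega(w)}(x) = \Schub_{\omega(u)}(x)\,\Schub_{\omega(v)}(x)$ by the definition of $\Schub$ on $S_{\ne 0}$, using the $\omega(u) \in S_+$, $\omega(v) \in S_-$ decomposition.

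Next, since $\omega \colon \Q[x] \to \Q[x]$ is a ring automorphism, $\omega(\Schub_w(x)) = \omega(\Schub_u(x))\,\omega(\Schub_v(x))$. Thus it suffices to prove the two statements $\omega(\Schub_v(x)) = \Schub_{\omega(v)}(x)$ for $v \in S_+$ and $\omega(\Schub_u(x)) = \Schub_{\omega(u)}(x)$ for $u \in S_-$. But the case $u \in S_-$ is exactly the definition $\Schub_u(x) := \omega(\Schub_{\omega(u)}(x))$ rearranged: applying $\omega$ to both sides and using $\omega^2 = \id$ gives $\omega(\Schub_u(x)) = \Schub_{\omega(u)}(x)$. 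So the only real content is the case $v \in S_+$, where $\omega(v) \in S_-$: here the right-hand side $\Schub_{\omega(v)}(x)$ is, by its definition, $\omega(\Schub_{\omega(\omega(v))}(x)) = \omega(\Schub_v(x))$, again using $\omega^2 = \id$ on $S_\Z$. So this case is also immediate from the definition.

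Putting these together: $\omega(\Schub_w(x)) = \omega(\Schub_u(x))\,\omega(\Schub_v(x)) = \Schub_{\omega(u)}(x)\,\Schub_{\omega(v)}(x) = \Schub_{\omega(w)}(x)$, where the last equality is the defining factorization of $\Schub$ on $S_{\ne 0}$ applied to $\omega(w)$ with its $S_+ \times S_-$ decomposition $\omega(w) = \omega(u)\cdot\omega(v)$. The one genuine point to check carefully is that $\omega$ really does exchange $S_-$ and $S_+$ compatibly with the product decomposition $S_{\ne 0} = S_- \times S_+$ — i.e., that $\omega(u) \in S_+$ when $u \in S_-$ and $\omega(v) \in S_-$ when $v \in S_+$, and that these commute appropriately so that $\omega(w) = \omega(u)\omega(v)$ is the correct $(S_-, S_+)$-factorization of $\omega(w)$ (with the roles of the two factors swapped). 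This is a routine verification from $\omega(s_i) = s_{-i}$, since $s_i$ with $i > 0$ maps to $s_{-i}$ with $-i < 0$, and elements of $S_-$ commute with elements of $S_+$ as they act on disjoint sets of integers.

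The main obstacle, such as it is, is purely bookkeeping: making sure the two occurrences of $\omega$ (on polynomials and on the symmetric group) are tracked correctly and that the definition of $\Schub_w$ on $S_{\ne 0}$ as a product over the $S_- \times S_+$ factorization is invoked with the factors in the right slots after applying $\omega$. There is no analytic or combinatorial difficulty; the identity is essentially formal once one unwinds the definitions in Section~\ref{SS:negative Schubert}.
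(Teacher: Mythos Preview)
Your proof is correct. The paper states this proposition without proof, presumably because it is immediate from the definitions in \S\ref{SS:negative Schubert}; your argument is precisely the natural unwinding of those definitions, and nothing more is needed.
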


\subsection{Coproduct formula} 
\label{SS:coproduct backstable}
 There is a coaction $\Delta: \bR \to \Lambda \otimes \bR$ of $\Lambda$ on $\bR$, defined by the comultiplication on the first factor of the tensor product $\Lambda \otimes_\Q \Q[x]$.

\begin{thm}\label{thm:coprod}
Let $w \in S_\Z$.  We have the coproduct formulae
\begin{align}\label{E:big coprod}
\Delta(\bS_w) &= \sum_{w \doteq xy} F_x \otimes \bS_y\\
\label{E:little coprod}
\bS_w &= \sum_{\substack{w \doteq xy  \\ y \in S_{\neq 0}}} F_x \; \S_y
\end{align}
\end{thm}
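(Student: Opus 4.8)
The plan is to derive both formulae from the uniqueness characterization of back stable Schubert polynomials in Theorem~\ref{T:backstable}, using the Hopf-algebraic structure already assembled. I would first establish \eqref{E:big coprod}. Write $\Phi(w) := \sum_{w \doteq xy} F_x \otimes \bS_y \in \Lambda \otimes \bR$; I want to show $\Phi(w) = \Delta(\bS_w)$. Since $\Delta: \bR \to \Lambda \otimes \bR$ is a $\Q$-algebra homomorphism and the elements $\bS_w$ are characterized by \eqref{E:backstable identity}--\eqref{E:partial backstable}, the natural strategy is to check that the family $\{\Phi(w)\}_{w \in S_\Z}$ satisfies the image under $\Delta$ of those three conditions, and then to invoke an appropriate uniqueness statement in $\Lambda \otimes \bR$. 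For the identity axiom, $w = \id$ forces $x = y = \id$, so $\Phi(\id) = F_\id \otimes \bS_\id = 1 \otimes 1 = \Delta(1)$. For homogeneity, each term $F_x \otimes \bS_y$ has degree $\ell(x) + \ell(y) = \ell(w)$ by length-additivity, and $\Delta$ preserves degree. The substantive point is the divided difference recursion: I must show $(1 \otimes A_i)\Phi(w) = \Phi(ws_i)$ if $ws_i < w$ and $0$ otherwise, where $A_i$ acts on the second tensor factor (matching the fact that $\Delta$ intertwines $A_i$ on $\bR$ with $1 \otimes A_i$ on $\Lambda \otimes \bR$ — this intertwining itself needs a one-line check, since $A_i$ commutes with the coproduct on the $\Lambda$-factor because for $i \neq 0$ it acts trivially on $\Lambda$, and the $i = 0$ case must be handled by direct computation on generators).

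For the recursion on $\Phi$, fix $i$ and split the sum over factorizations $w \doteq xy$ according to whether $ys_i < y$ or $ys_i > y$. Applying $1 \otimes A_i$ to $F_x \otimes \bS_y$ gives $F_x \otimes \bS_{ys_i}$ when $ys_i < y$ and $0$ when $ys_i > y$, by \eqref{E:partial backstable}. So $(1\otimes A_i)\Phi(w) = \sum_{w \doteq xy,\ ys_i < y} F_x \otimes \bS_{ys_i}$. Now the bookkeeping: factorizations $w \doteq xy$ with $ys_i < y$ are in bijection with factorizations $ws_i \doteq x y'$ via $y' = ys_i$, provided $ws_i < w$ — here one uses that $w \doteq xy$ and $ys_i < y$ together with $ws_i < w$ give $ws_i \doteq x(ys_i)$, which is a standard length-additivity manipulation. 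If instead $ws_i > w$, then no factorization $w \doteq xy$ can have $ys_i < y$ (else $ws_i = x(ys_i) < xy = w$, contradiction), so the sum is empty and we get $0$. This matches exactly the recursion, so by the uniqueness in Theorem~\ref{T:backstable} — applied in $\Lambda \otimes \bR$, or rather by comparing $\Delta(\bS_w)$ and $\Phi(w)$ directly since both satisfy the same recursion with the same initial condition and $\Delta$ is injective (as $\bR$ is a domain and $\bR \hookrightarrow \Lambda \otimes \bR$, $f \mapsto 1 \otimes f$ composed with a counit-type splitting recovers $f$) — we conclude $\Delta(\bS_w) = \Phi(w)$. The cleanest way to phrase the uniqueness here is: apply $\id \otimes \eta_0$ to nothing; instead, note that both sides are uniquely determined by induction on $\ell(w)$ from the recursion, since the $s_i$-invariant part (kernel of all $A_i$ on the $\bR$-factor) in each degree is spanned by $\Lambda \otimes \Lambda(\xm)$-type elements, and the degree and identity constraints pin it down — this mirrors the proof of Theorem~\ref{T:single Schub}.

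To get \eqref{E:little coprod} from \eqref{E:big coprod}, I would apply the $\Q$-algebra map $\eta_0 \otimes \id : \Lambda \otimes \bR \to \Q \otimes \bR \cong \bR$ — that is, evaluate the $\Lambda$-factor at its augmentation. Wait: I need the opposite — to recover $\bS_w$ I want to collapse the $\bR$-factor partially. The correct move is: \eqref{E:big coprod} lives in $\Lambda \otimes \bR = \Lambda \otimes \Lambda(\xm) \otimes \Q[x]$; the coaction $\Delta$ is "recovered" by a counit, namely $(\eta_0 \otimes \id)\Delta = \id$ is false, rather $(\epsilon \otimes \id)\Delta = \id$ where $\epsilon$ is the counit of $\Lambda$. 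Applying $\epsilon \otimes \id$ to the right side of \eqref{E:big coprod}: $\epsilon(F_x)$ is $1$ if $x = \id$ and $0$ otherwise (since $F_x$ has positive degree for $x \neq \id$ and $\epsilon$ kills positive degree), giving back $\bS_w$ — consistent but not yet \eqref{E:little coprod}. For \eqref{E:little coprod} the right map is instead to use that $\bR = \Lambda \otimes \Q[x]$ and that $\Lambda(\xm)$ inside $\bR$ can be "moved" to the first tensor factor. Concretely: the coaction $\Delta$ composed with "multiply the $\Lambda$ into the $\Lambda(\xm)$ part of $\bR$" — i.e. the multiplication map $m: \Lambda \otimes \bR \to \bR$ — recovers the original element when applied after $\Delta$? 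No: $m \circ \Delta$ is the superization-type map, not the identity. The honest route is: decompose each $\bS_y$ for $y \in S_{\ne 0}$ — but $\bS_y$ for $y \in S_{\ne 0}$ is exactly $\S_y(x) \in \Q[x]$, which follows because $y$ has no $s_0$ in a reduced word so the back-stabilization introduces no symmetrization, matching the negative Schubert polynomial definition of \textsection\ref{SS:negative Schubert}; and a general $\bS_w$ decomposes via \eqref{E:big coprod} by identifying the $\Lambda$-part. The main obstacle, and where I would spend the most care, is exactly this last identification: showing $\bS_y = \S_y(x)$ for $y \in S_{\ne 0}$ (so the $F_x$-coefficient sum over $y \in S_{\ne 0}$ genuinely reconstructs $\bS_w$ as an element of $\Lambda \otimes \Q[x]$), and verifying that collapsing \eqref{E:big coprod} along the canonical iso $\Lambda \otimes \bR \cong \bR$ that merges $\Lambda$ with $\Lambda(\xm) \subset \bR$ — which is precisely undoing $\Delta$ on a sub-coalgebra — yields \eqref{E:little coprod} rather than something else. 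I expect this to reduce to the observation that every $w \doteq xy$ factorization can be uniquely refined as $w \doteq x(y_- y_0)$ with $y_0 \in S_{\ne 0}$ and $y_-$ $0$-Grassmannian-ish, absorbing $y_-$ into $F_x$ to form $F_{xy_-}$, and that this refinement matches the Grassmannian factorization implicit in $\bS_w \in \Lambda \otimes \Q[x]$; granting Proposition~\ref{P:backstable Grassmannian} and Corollary~\ref{C:shift stanley} this becomes a combinatorial identity on length-additive factorizations that I would verify by a short induction on $\ell(w)$ using \eqref{E:partial backstable} once more.
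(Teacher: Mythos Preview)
Your strategy differs from the paper's, which proves \eqref{E:little coprod} first by a direct sign-reversing involution on the monomial expansion of Theorem~\ref{T:backstable}, and then deduces \eqref{E:big coprod} from it in one line using $\Delta(F_x)=\sum F_u\otimes F_v$ (Proposition~\ref{P:Hopf Stanley}).

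Your recursion argument for \eqref{E:big coprod} is largely sound up to the uniqueness step, but that step has a genuine gap. The intertwining $\Delta\circ A_i=(1\otimes A_i)\circ\Delta$ does hold (including for $i=0$, though it is not quite a one-line check), and the factorization bijection you describe is correct. However, the kernel of all $1\otimes A_i$ on $\Lambda\otimes\bR$ is $\Lambda\otimes\bR^{S_\Z}=\Lambda\otimes\Q$, not $\Lambda\otimes\Lambda(\xm)$ as you write: $\Lambda(\xm)\subset\bR$ is \emph{not} $s_0$-invariant since $s_0(p_k)=p_k-x_0^k+x_1^k$. So after the induction you only know $D_w:=\Delta(\bS_w)-\Phi(w)=g\otimes1$ for some $g\in\Lambda$ of degree $\ell(w)$, and degree plus identity do \emph{not} pin this down. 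The fix is precisely the map you mention and then abandon: apply $1\otimes\eta_0$. This yields $(1\otimes\eta_0)D_w=\Delta_\Lambda(F_w)-\sum_{w\doteq xy}F_x\otimes F_y=0$ by Proposition~\ref{P:Hopf Stanley}, while $(1\otimes\eta_0)(g\otimes1)=g\otimes1$, forcing $g=0$.

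Your derivation of \eqref{E:little coprod} from \eqref{E:big coprod} has a more serious error: the claim that $\bS_y=\S_y$ for $y\in S_{\ne0}$ is false. For example $\bS_{s_1}=p_1+x_1$ while $\S_{s_1}=x_1$; back-stabilization always introduces a symmetric-function part. The natural map to apply to \eqref{E:big coprod} is $\id_\Lambda\otimes\psi$ where $\psi:\bR\to\Q[x]$ kills $\Lambda$, and one then needs $\psi(\bS_y)=\chi(y\in S_{\ne0})\,\S_y$. But $\psi$ does \emph{not} commute with $A_0$ (check on $p_k$), so the recursion does not compute $\psi(\bS_y)$ for you; establishing this identity is essentially equivalent to \eqref{E:little coprod} itself. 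This is why the paper proves \eqref{E:little coprod} directly by the involution argument and goes in the opposite direction.
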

\begin{proof} Equation \eqref{E:big coprod} can be deduced from \eqref{E:little coprod} and Proposition  \ref{P:Hopf Stanley}:
\begin{align*}
	\Delta(\bS_w) 
	= \sum_{\substack{w\doteq xy \\ y\in S_{\ne0}}} \Delta(F_x) \S_y 
	= \sum_{\substack{w\doteq uvy \\ y\in S_{\ne0}}} F_u \otimes F_v \S_y 
	= \sum_{w\doteq uz} F_u \otimes \bS_z.
\end{align*} 


We prove \eqref{E:little coprod} by a cancellation argument.
We say that a pair of integer sequences $(\a,\b)$ of the same length is a \defn{compatible pair}, if $\b$ is weakly increasing and $a_i < a_{i+1} \implies b_i < b_{i+1}$.

Let $(x,y,\a,\b)$ index a monomial $x_\b = x_{b_1} \cdots x_{b_\ell}$ on the right hand side, corresponding to the term $F_x \; \S_y$ and reduced word $\a = a_1 a_2 \cdots a_\ell$.  By convention, to obtain $\a$, we always factorize $y \in S_{\ne0}$ as $y = y'y''$ with $y' \in S_-$ and $y'' \in S_+$.  We will provide a partial sign-reversing involution $\iota$ on the quadruples $(x,y,\a,\b)$; the left-over monomials will give the left hand side.

Suppose $\ell(x) = r$, $\ell(y') = s$, $\ell(y'') = t$, and set $\ell = r+s+t$.
Call an index $i \in [1,\ell]$ {\it bad} if $b_i > a_i$, and good if $b_i \leq a_i$.  It follows from the definitions that all indices $i \in [r+s+1,\ell]$ are good, while all indices $i \in [r+1,r+s]$ are bad.  Furthermore, if $i \in [1,r]$ is bad, then $a_i < 0$. 

Let $k$ be the largest bad index in $[1,r]$, which we assume exists.   We claim that $s_{a_k}$ commutes with $s_{a_{k+1}} \cdots s_{a_r}$.  To see this, observe that if $a_{k'} \in \{a_k-1,a_k,a_k+1\}$ where $k < k' \leq r$ then we must have $b_{k'} > a_{k'}$, contradicting our choice of $k$.  If $s = 0$, we set
\begin{equation}\label{eq:case1}
\iota(x,y,\a,\b) = (a_1 \cdots \hat a_k \cdots a_{r} |a_k a_{r+1} \cdots a_t, b_1 \cdots \hat b_k \cdots b_r| b_k b_{r+1} \cdots b_t) = (\tilde x,\tilde y,\tilde \a, \tilde \b)
\end{equation}
where the vertical bar separates $x$ from $y$.  Thus $\tilde y' = s_{a_k}$.  If $s > 0$, we compare $b_k$ with $b_{r+1}$.  If $b_k > b_{r+1}$ or ($b_k = b_{r+1}$ and $a_k < a_{r+1}$) then we again make the definition \eqref{eq:case1} where now $\tilde y' = s_{a_k} y'$.  We call this CASE A.

Suppose still that $s > 0$.  If ($b_k < b_{r+1}$) or ($b_k = b_{r+1}$ and $a_k \geq a_{r+1}$) or ($k$ does not exist) then there is a unique index $j \in [k,r]$ so that
\begin{align*}
\iota(x,y,\a,\b) &= (a_1 \cdots a_j a_{r+1} a_{j+1} \cdots a_{r} |a_{r+2} \cdots a_t, b_1 \cdots b_j b_{r+1} b_{j+1} \cdots b_r | b_{r+2} \cdots b_t) = (\tilde x,\tilde y,\tilde \a, \tilde \b)
\end{align*}
has the property that $(a_1 \cdots a_j a_{r+1} a_{j+1} \cdots a_{r} , b_1 \cdots b_j b_{r+1} b_{j+1} \cdots b_r)$ is a compatible sequence.  In this case, $s_{a_{r+1}}$ commutes with $s_{a_{j+1}} \cdots s_{a_{r}}$.  We call this CASE B.

Finally, if $s = 0$ and $k$ does not exist, then $\iota$ is not defined. 

It remains to observe that CASE A and CASE B are sent to each other via $\iota$, which keeps $x_\b$ constant and changes $\ell(y')$ by 1.
\end{proof}

Let $\omega$ be the involutive $\Q$-algebra automorphism of $\bR$
given by combining the maps $\omega$ on $\La$ from \textsection \ref{SS:Stanley} and on $\Q[x]$ from \textsection \ref{SS:negative Schubert}. 

\begin{prop}\label{P:negate back stable} For all $w\in S_\Z$, we have $\omega(\bS_w) = \bS_{\omega(w)}$.
\end{prop}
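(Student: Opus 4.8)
The plan is to verify the identity $\omega(\bS_w) = \bS_{\omega(w)}$ by checking that the right-hand side, viewed as $\omega(\bS_w)$ as $w$ ranges over $S_\Z$, satisfies the defining properties of back stable Schubert polynomials listed in Theorem~\ref{T:backstable}; by the uniqueness clause of that theorem this suffices. Equivalently, and more directly, I would exploit the coproduct formula \eqref{E:little coprod} together with the already-established compatibility of $\omega$ with the two ``halves'' of $\bR$: Proposition~\ref{P:Stanley symmetries} gives $\omega(F_x) = F_{\omega(x)}$ on the symmetric function factor $\La$, and Proposition~\ref{P:negate Schubert} gives $\omega(\Schub_y(x)) = \Schub_{\omega(y)}(x)$ on the polynomial factor $\Q[x]$ (for $y\in S_{\ne0}$). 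Since $\omega$ is an algebra automorphism of $\bR$ restricting to these two maps, applying it to \eqref{E:little coprod} yields
\begin{align*}
	\omega(\bS_w) = \sum_{\substack{w\doteq xy \\ y\in S_{\ne0}}} \omega(F_x)\,\omega(\Schub_y(x)) = \sum_{\substack{w\doteq xy \\ y\in S_{\ne0}}} F_{\omega(x)}\,\Schub_{\omega(y)}(x).
\end{align*}

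The next step is to match this against $\bS_{\omega(w)} = \sum_{\omega(w)\doteq x'y',\ y'\in S_{\ne0}} F_{x'}\,\Schub_{y'}(x)$. The automorphism $\omega$ of $S_\Z$ is length-preserving (it just relabels the simple reflections $s_i \mapsto s_{-i}$), so it sends a length-additive factorization $w\doteq xy$ to the length-additive factorization $\omega(w)\doteq \omega(x)\omega(y)$, and this is a bijection between the two index sets. Moreover $\omega$ preserves the subgroup $S_{\ne0} = S_-\times S_+$ (it swaps the two factors $S_-\leftrightarrow S_+$), so $y\in S_{\ne0}$ iff $\omega(y)\in S_{\ne0}$. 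Hence the substitution $x' = \omega(x)$, $y' = \omega(y)$ identifies the two sums term by term, giving $\omega(\bS_w) = \bS_{\omega(w)}$.

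The only point requiring a little care is that $\Schub_{\omega(y)}(x)$ in the displayed computation really is the negative Schubert polynomial of $\omega(y)\in S_{\ne0}$ as defined in \textsection\ref{SS:negative Schubert}: one must check that the factorization $\omega(y) = \omega(y'')\omega(y')$ with $\omega(y'')\in S_-$ and $\omega(y')\in S_+$ is the one used to define $\Schub_{\omega(y)}$, so that Proposition~\ref{P:negate Schubert} applies cleanly. This is immediate from the definitions since $\omega$ swaps $S_-$ and $S_+$ and $\Schub_w(x)$ for $w\in S_{\ne0}$ is defined multiplicatively across this factorization, with $\Schub$ on each factor intertwined by $\omega$ (the $S_+$ case being the definition of negative Schubert polynomials, and the $S_-$ case being its inverse). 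I do not anticipate a genuine obstacle here; the main thing is to organize the bookkeeping of the $S_-\leftrightarrow S_+$ swap so that the term-by-term matching of the two sums is transparent. An alternative, essentially equivalent, route avoiding \eqref{E:little coprod} is to observe that $\omega A_i \omega = -A_{-i}$ as operators on $\bR$ (since $\omega(x_i - x_{i+1}) = -(x_{-i} - x_{-i+1})$ up to the index shift and $\omega s_i \omega = s_{-i}$), together with $\omega(1) = 1$ and degree-preservation, so that $\{\omega(\bS_w)\}$ satisfies \eqref{E:backstable identity}--\eqref{E:partial backstable} with $w$ replaced by $\omega(w)$; uniqueness in Theorem~\ref{T:backstable} then closes the argument.
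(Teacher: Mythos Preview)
Your argument is correct and is exactly the paper's approach: the paper's proof is the single sentence ``This follows immediately from Theorem~\ref{thm:coprod} and Propositions~\ref{P:Stanley symmetries} and~\ref{P:negate Schubert},'' and you have spelled out precisely that deduction via \eqref{E:little coprod}. The bookkeeping worry you flag about the $S_-\leftrightarrow S_+$ swap is already absorbed into the statement of Proposition~\ref{P:negate Schubert}, so no extra work is needed there.
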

\begin{proof} This follows immediately from Theorem \ref{thm:coprod} and Propositions \ref{P:Stanley symmetries} and \ref{P:negate Schubert}.
\end{proof}

\begin{remark}
The elements $\{s_\lambda \otimes \S_v \mid \lambda \in \Par \text{ and } v \in S_{\neq 0}\}$ form a $\Q$-basis of $\bR$.  It follows from Theorem \ref{thm:coprod} that the coefficient of $s_\lambda \otimes \S_v$ in $\bS_w$ is equal to $j_\lambda^{wv^{-1}}$ if $\ell(wv^{-1}) = \ell(w) - \ell(v)$, and 0 otherwise.
\end{remark}

\begin{remark}
Let $\nu_\lambda: \bR \to \Q[x]$ denote the linear map given by ``taking the coefficient of $s_\lambda$".  Then 
\begin{align}
\nu_\lambda(\bS_w) = \sum_{\substack{v \in S_{\neq 0} \\ \ell(wv^{-1}) = \ell(w) - \ell(v)}} j_\lambda^{wv^{-1}} \S_v.
\end{align}
We will give an explicit description of the polynomial $\nu_\lambda(\bS_w)$ in Theorem \ref{thm:decomp}.
\end{remark}
\subsection{Back stable Schubert structure constants}
For $u,v,w\in S_+$, define the usual Schubert structure constants $c^w_{uv}$ by 
\begin{align}\label{E:structure constants}
	\Schub_u \Schub_v = \sum_{w\in S_+} c^w_{uv} \Schub_w.
\end{align}
For $u,v,w\in S_\Z$, define the back stable Schubert structure constants $\bc^w_{uv}\in\Q$ by
\begin{align}\label{E:backstable structure constants}
	\bS_u \bS_v &= \sum_{w\in S_\Z} \bc^w_{uv} \bS_w.
\end{align}
By Proposition \ref{P:shift backstable}, we have
\begin{align}\label{E:Schubert constant shift}
	\bc^{\shift^n(w)}_{\shift^n(u),\shift^n(v)} = \bc^w_{uv}\qquad\text{for all $u,v,w\in S_\Z$ and $n\in\Z$.}
\end{align}

\begin{prop} \label{P:backstable structure constants}  \
	\begin{enumerate}
		\item For $u,v,w\in S_+$, we have $c^w_{uv} = \bc^w_{uv}$.
		\item Every back stable Schubert structure constant is a usual Schubert structure constant.
	\end{enumerate}
\end{prop}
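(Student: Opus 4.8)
The plan is to derive both parts from a single ``restriction'' homomorphism sending back stable Schubert polynomials to ordinary ones, together with the shift-invariance \eqref{E:Schubert constant shift}.

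For part (1), I would introduce the $\Q$-algebra homomorphism $\pi\colon \bR\to\Q[\xp]$ specified on the polynomial generators of $\bR$ by $p_k\mapsto 0$ for $k\ge 1$ and $x_i\mapsto \chi(i>0)\,x_i$ for $i\in\Z$; equivalently, $\pi$ sets $x_i=0$ for all $i\le 0$, and it is well-defined because $\bR$ is freely generated as a $\Q$-algebra by the $p_k$ and the $x_i$. The crux is the claim that $\pi(\bS_w)=\Schub_w$ for $w\in S_+$ and $\pi(\bS_w)=0$ for $w\notin S_+$. I would prove this directly from the monomial expansion \eqref{eq:backstable}: $\pi$ annihilates exactly the monomials containing a variable $x_{b_i}$ with $b_i\le 0$ (and nothing else, as the coefficients there are nonnegative). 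If $w\notin S_+$, every reduced word of $w$ must use a letter $a_i\le 0$ --- otherwise $w$ would lie in $\langle s_1,s_2,\dotsc\rangle=S_+$ --- so $b_i\le a_i\le 0$ and every term dies; if $w\in S_+$, all letters of all reduced words are $\ge 1$, so the surviving terms are those with $1\le b_i\le a_i$, which is precisely the Billey--Jockusch--Stanley expansion \eqref{E:BJS} of $\Schub_w$.

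Granting the claim, I would apply $\pi$ to \eqref{E:backstable structure constants}. The sum there is finite since $\{\bS_w\}$ is a basis of $\bR$ (Theorem \ref{T:backstable basis}), and $\pi$ is multiplicative, so $\Schub_u\Schub_v=\sum_{w\in S_+}\bc^w_{uv}\Schub_w$. Comparing with \eqref{E:structure constants} and using that $\{\Schub_w\mid w\in S_+\}$ is a $\Q$-basis of $\Q[\xp]$ (Theorem \ref{T:single Schub}) gives $\bc^w_{uv}=c^w_{uv}$. For part (2), the point is that the shift automorphism carries everything into $S_+$: given arbitrary $u,v,w\in S_\Z$, each moves only finitely many integers, so for $n$ sufficiently large $\shift^n(u),\shift^n(v),\shift^n(w)$ all lie in $S_+$; then \eqref{E:Schubert constant shift} and part (1) give $\bc^w_{uv}=\bc^{\shift^n(w)}_{\shift^n(u),\shift^n(v)}=c^{\shift^n(w)}_{\shift^n(u),\shift^n(v)}$, a usual Schubert structure constant.

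I expect the one genuinely delicate point to be the vanishing $\pi(\bS_w)=0$ for $w\notin S_+$, which is what makes the image of \eqref{E:backstable structure constants} collapse to an identity among ordinary Schubert polynomials only; it relies on the observation that \emph{every} reduced word of such a $w$ --- not merely some reduced word --- uses a nonpositive letter. Verifying that $\pi$ is a well-defined algebra map on the non-finitely-generated ring $\bR$ is routine from the explicit polynomial presentation of $\bR$, and everything else is bookkeeping with reduced words and the shift.
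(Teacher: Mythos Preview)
Your proof is correct and follows the same overall strategy as the paper: define the evaluation homomorphism $\pi=\pi_+:\bR\to\Q[\xp]$ (sending $p_k\mapsto0$ and $x_i\mapsto0$ for $i\le0$), show $\pi(\bS_w)$ equals $\S_w$ or $0$ according as $w\in S_+$ or not, apply $\pi$ to \eqref{E:backstable structure constants}, and handle part (2) by shifting into $S_+$ via \eqref{E:Schubert constant shift}.

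The one genuine difference is in how you establish the key formula $\pi(\bS_w)=\chi(w\in S_+)\,\S_w$. You argue directly from the monomial expansion \eqref{eq:backstable}: every reduced word of $w\notin S_+$ contains a letter $a_j\le0$, forcing $b_j\le a_j\le0$ so each monomial dies, while for $w\in S_+$ the surviving terms recover \eqref{E:BJS}. The paper instead invokes the coproduct formula \eqref{E:little coprod}, $\bS_w=\sum_{w\doteq xy,\,y\in S_{\ne0}}F_x\,\S_y$, and observes that $\pi$ kills every Stanley function of positive degree and every negative Schubert polynomial of positive degree, leaving only the term $x=\id$, $y=w\in S_+$. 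Your route is more elementary and self-contained (it does not need Theorem~\ref{thm:coprod}); the paper's route illustrates the structural decomposition that is used repeatedly elsewhere. Either argument is complete.
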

\begin{proof} Consider the $\Q$-algebra homomorphism $\pi_+:\bR\to \Q[\xp]$ sending $p_r\mapsto0$ for $r\ge 1$, $x_i\mapsto0$ for $i\le 0$ and $x_i\mapsto x_i$ for $i>0$. Applying $\pi_+$ to Theorem \ref{thm:coprod}  for $y\in S_\Z$ we have
\begin{align}\label{E:pi+}
	\pi_+(\bS_y) &= \begin{cases}
		\S_y & \text{if $y\in S_+$} \\
		0 & \text{otherwise.}
	\end{cases}
\end{align}
because $\pi_+$  kills all symmetric functions with no constant term and all negative Schubert polynomials of positive degree.
Now let $u,v\in S_+$. Applying $\pi_+$ to \eqref{E:backstable structure constants} and using \eqref{E:pi+}, (1) follows.
	
For (2), let $u,v\in S_\Z$. By \eqref{E:Schubert constant shift}, we may assume that $u,v\in S_+$ and that the finitely many $w$ appearing in \eqref{E:backstable structure constants} are also in $S_+$. The proof is completed by applying part (1).
\end{proof}

\begin{example} We have $\bS_{s_1}^2 = \bS_{s_2s_1} + \bS_{s_0s_1}$
	and $\S_{s_1}^2 = \S_{s_2s_1}$. Shifting forward by one we obtain
	$\bS_{s_2}^2 = \bS_{s_3s_2}+\bS_{s_1s_2}$ and $\S_{s_2}^2 = \S_{s_3s_2}+\S_{s_1s_2}$.
\end{example}

We derive a relation involving back stable Schubert structure constants and Edelman-Greene coefficients.

\begin{prop} \label{P:tease}
Let $u\in S_m$, $v\in S_n$ and $\la\in \Par$. Let $u\times v:=u \shift^m(v)\in   S_{m+n}\subset S_+$. Then
\begin{align}
\label{E:constants}
	j^{u \times v}_\la &= \sum_{w\in S_\Z} \bc^w_{uv} j^w_\la.
\end{align}
\end{prop}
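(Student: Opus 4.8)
The strategy is to apply the Stanley map $\eta_0$ to the defining equation \eqref{E:backstable structure constants} for the back stable structure constants. First I would observe that $u \times v = u\,\shift^m(v)$ is a length-additive product $u \doteq u \cdot \shift^m(v)$ in $S_+$, since the non-fixed points of $u$ lie in $[1,m]$ and those of $\shift^m(v)$ lie in $[m+1,m+n]$. Hence by Theorem~\ref{thm:coprod}, or more directly by the multiplicativity one gets from \eqref{E:big coprod} together with $\eta_0$, the Stanley symmetric function $F_{u\times v}$ factors as $F_u \cdot F_v$. Indeed $\eta_0$ is a $\Q$-algebra homomorphism on $\bR$, and applying $\eta_0$ to $\bS_{u\times v}$ while using the coproduct formula \eqref{E:little coprod} (or just using that $\eta_0(\bS_u \bS_v) = \eta_0(\bS_u)\eta_0(\bS_v)$) shows $F_{u\times v} = F_u F_v$ whenever the product is length-additive. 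This is the key structural input; it says the forward-limit-to-Stanley construction, viewed through $\eta_0$, is multiplicative on length-additive products — exactly the ``multiplicativity'' phenomenon emphasized in the introduction.

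Next I would apply $\eta_0$ to both sides of \eqref{E:backstable structure constants} with the specific $u,v$ of the proposition. Since $\eta_0$ is a $\Q$-linear algebra map, the left side becomes $\eta_0(\bS_u \bS_v) = \eta_0(\bS_u)\,\eta_0(\bS_v) = F_u F_v$, and by the previous paragraph this equals $F_{u\times v}$. The right side becomes $\sum_{w \in S_\Z} \bc^w_{uv} F_w$, the sum being finite. Therefore
\begin{align*}
F_{u\times v} = \sum_{w\in S_\Z} \bc^w_{uv}\, F_w.
\end{align*}
Now expand every Stanley function in the Schur basis using \eqref{E:stanley coefficients}: $F_{u\times v} = \sum_\la j^{u\times v}_\la s_\la$ and $F_w = \sum_\la j^w_\la s_\la$. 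Comparing coefficients of $s_\la$ — legitimate since the $s_\la$ are a $\Q$-basis of $\La$ — yields exactly \eqref{E:constants}.

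The only point that needs genuine care is the identification $F_{u \times v} = F_u F_v$; everything else is formal manipulation. One clean way to nail it: by Corollary~\ref{C:shift stanley}, $F_{\shift^m(v)} = F_v$, so it suffices to know $F_{pq} = F_p F_q$ whenever $p \doteq pq$ is length-additive and the supports of $p$ and $q$ are ``separated'' enough, which one reads off from the coproduct formula \eqref{E:little coprod}: applying $\eta_0$ kills all the finite Schubert factors $\S_y$ of positive degree, leaving $\eta_0(\bS_w) = \sum_{w \doteq xy,\ y\in S_{\ne 0}} F_x\, \eta_0(\S_y)$, and $\eta_0(\S_y) = \chi(y = \id)$, so $F_w = F_w$ tautologically — better to argue directly from $\eta_0(\bS_u\bS_v)=\eta_0(\bS_u)\eta_0(\bS_v)$ and expand $\bS_u\bS_v = \sum_w \bc^w_{uv}\bS_w$, which gives $F_uF_v = \sum_w \bc^w_{uv}F_w$ without needing to know in advance that $F_uF_v = F_{u\times v}$; then separately note $c^{u\times v}_{uv} = 1$ and $c^w_{uv} = 0$ for $w \ne u\times v$ in $S_+$ when the supports are separated (a standard fact about Schubert polynomials in disjoint variable sets), and combine with Proposition~\ref{P:backstable structure constants}(1) to conclude $F_{u\times v} = F_uF_v$. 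I expect this bookkeeping about separated supports to be the main (though minor) obstacle; the rest is immediate.
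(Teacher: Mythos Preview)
Your overall strategy is correct and matches the paper's: apply the algebra map $\eta_0$ to $\bS_u \bS_v = \sum_w \bc^w_{uv}\bS_w$ to get $F_u F_v = \sum_w \bc^w_{uv} F_w$, identify $F_u F_v$ with $F_{u\times v}$, and compare Schur coefficients. The only substantive point is the identification $F_{u\times v} = F_u F_v$, which you correctly isolate.

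However, your proposed justification of that step has a genuine error. You write ``$c^{u\times v}_{uv} = 1$ and $c^w_{uv} = 0$ for $w \ne u\times v$ in $S_+$ when the supports are separated''. But the supports of $u\in S_m$ and $v\in S_n$ are \emph{not} separated: both are permutations of $\{1,2,\dotsc\}$ using the reflections $s_1,s_2,\dotsc$. In fact $\S_u\S_v \neq \S_{u\times v}$ in general; for $u=v=s_1\in S_2$ one has $\S_{s_1}\S_{s_1}=x_1^2=\S_{s_2s_1}$, whereas $u\times v=s_1s_3$ and $\S_{s_1s_3}=x_1(x_1+x_2+x_3)$. If your claim were true the proposition would collapse to the tautology $j^{u\times v}_\la=j^{u\times v}_\la$.

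What \emph{is} true is that $u$ and $\shift^m(v)$ have separated supports, so $\S_{u\times v}=\S_u\,\S_{\shift^m(v)}$. The paper lifts this to the back stable ring, obtaining $\bS_{u\times v}=\bS_u\,\shift^m(\bS_v)$ (this holds at every finite stage of the back stable limit), then applies $\eta_0$ together with Lemma~\ref{L:eta shift} (so that $\eta_0(\shift^m(\bS_v))=\eta_0(\bS_v)=F_v$) to get $F_{u\times v}=F_u F_v$. You gesture at this route when invoking Corollary~\ref{C:shift stanley}, but then abandon it for the faulty structure-constant argument. Stick with the first route: it is short and complete.
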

\begin{proof} Since $u\times v\in S_m \times S_n \subset S_{m+n}$ it follows that $\S_{u\times v} = \S_u \S_{\shift^m(v)}$.
We deduce that $\bS_{u \times v} = \bS_u \shift^m(\bS_v)$.
Using the algebra map $\eta_0$ several times we obtain
\begin{align*}
	F_{u\times v} = F_u F_v 
	= \eta_0(\bS_u \bS_v) 
	= \eta_0(\sum_{w\in S_\Z} \bc^w_{uv} \bS_w) 
	= \sum_{w\in S_\Z} \bc^w_{uv} F_w.
\end{align*}
Taking the coefficient of $s_\la$ we obtain \eqref{E:constants}.
\end{proof}

%
%

\section{Back stable double Schubert polynomials}
%

We define the back symmetric double power series ring, and study the basis of double back stable Schubert polynomials.

\subsection{Double symmetric functions}\label{SS:double symmetric}
Let $p_k(x||a) := p_k(x/a) =\sum_{i \le 0} x_i^k - a_i^k$, a formal power series in variables $x_i$ and $a_i$; it is the image of $p_k$ under superization. Let $\Lambda(x||a)$ be the $\Q[a]$-algebra generated by the elements $p_1(x||a), p_2(x||a),\ldots$, which are algebraically independent over $\Q[a]$.  We call $\Lambda(x||a)$ the \defn{ring of double symmetric functions} (see \cite{M} for more details). For $\la = (\la_1,\la_2,\ldots,\la_{\ell}) \in \Par$, we denote $p_\la(x||a):= p_{\la_1}(x||a) \cdots p_{\la_\ell}(x||a)$.

The algebra $\La(x||a)$ is a Hopf algebra over $\Q[a]$ with primitive generators
$p_k(x||a)$ for $k\ge1$. The counit is the $\Q[a]$-algebra homomorphism $\epsilon:\La(x||a)\to \Q[a]$ given by $p_k(x||a)\mapsto0$ for $k\ge1$.
The antipode is the $\Q[a]$-algebra homomorphism defined by $p_k(x||a)\mapsto -p_k(x||a)$ for $k\ge1$.

\subsection{Back symmetric double power series} 
Define the \defn{back symmetric double power series ring} $\bR(x;a):= \Lambda(x||a) \otimes_{\Q[a]} \Q[x,a]$, where $\Q[x,a]:=\Q[x_i,a_i\mid i\in \Z]$.  The ring $\bR(x;a)$ has two actions of $S_\Z$: one that acts on all the $x$ variables and one that acts on all the $a$ variables, including those in $\La(x||a)$. More precisely 
for $i\in\Z$ let $s_i^x$ (resp. $s_i^a)$ act on $\bR(x;a)$ by exchanging $x_i$ and $x_{i+1}$ (resp. $a_i$ and $a_{i+1}$) while leaving the other polynomial generators of $\Q[x,a]$ alone and
\begin{align}
s_i^x(p_k(x||a)) &= 
\begin{cases}
p_k(x||a) & \text{if $i\ne0$} \\
p_k(x||a) - x_0^k + x_1^k & \text{if $i=0$} 
\end{cases} \\
s_i^a(p_k(x||a)) &= 
\begin{cases}
p_k(x||a) & \text{if $i\ne0$} \\
p_k(x||a) + a_0^k - a_1^k & \text{if $i=0$}.
\end{cases}
\end{align}
For $w\in S_\Z$, we write $w^x$ (resp. $w^a$) for this action of $w$ on the $x$-variables (resp. $a$-variables).


\subsection{Localization of back symmetric formal power series}\label{SS:loc back symmetric}
Let $\epsilon: \bR(x;a)\to\Q[a]$ be the $\Q[a]$-algebra homomorphism which extends the counit $\epsilon$ of $\La(x||a)$ via
\begin{align}
\epsilon(p_k(x||a)) &= 0\qquad\text{for all $k\ge1$} \\
\epsilon(x_i) &= a_i\qquad\text{for all $i\in\Z$.}
\end{align}
In other words $\epsilon$ ``sets all $x_i$ to $a_i$'' including those in $p_k(x||a)$. Define
\begin{align}\label{E:localize f}
f|_w = \epsilon(w^x(f)))=f(wa;a)\qquad\text{for $f(x,a)\in \bR(x;a)$ and $w\in S_\Z$.}
\end{align}


For any $w\in S_\Z$, let
\begin{align}
	\label{E:I+}
	I_{w,+} &:= \Z_{>0} \cap w(\Z_{\le0})\\
	\label{E:I-}
	I_{w,-} &:= \Z_{\le0} \cap w(\Z_{>0})
\end{align}

The map $w\mapsto (I_{w,+},I_{w,-})$ is a bijection
from $S_\Z^0$  to pairs of finite sets $(I_+,I_-)$ such that $I_+\subset \Z_{>0}$, $I_-\subset \Z_{\le0}$, and $|I_+|=|I_-|$. Then the following holds:

\begin{lemma} \label{lem:locp} We have $p_k(x||a)|_w = \sum_{i\in I_{w,+}} a_i^k - \sum_{i\in I_{w,-}} a_i^k$.
\end{lemma}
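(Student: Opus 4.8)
The plan is to compute $p_k(x||a)|_w$ directly from the definitions by unwinding the localization map. By \eqref{E:localize f}, we have $p_k(x||a)|_w = \epsilon(w^x(p_k(x||a)))$, so first I would understand $w^x(p_k(x||a))$. Since $p_k(x||a) = \sum_{i\le 0}(x_i^k - a_i^k)$ as a formal power series, and $w^x$ acts on all the $x$-variables by $x_i \mapsto x_{w(i)}$ (leaving the $a$-variables alone), we get $w^x(p_k(x||a)) = \sum_{i\le 0}(x_{w(i)}^k - a_i^k)$. Then applying $\epsilon$, which sends $x_j \mapsto a_j$ for all $j$, yields $p_k(x||a)|_w = \sum_{i\le 0}(a_{w(i)}^k - a_i^k)$.

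The remaining step is purely bookkeeping: rewrite $\sum_{i\le 0}(a_{w(i)}^k - a_i^k)$ as $\sum_{i\in I_{w,+}} a_i^k - \sum_{i\in I_{w,-}} a_i^k$. The point is that $\sum_{i\le 0} a_{w(i)}^k = \sum_{j\in w(\Z_{\le 0})} a_j^k$, a sum over the image of the nonpositive integers under $w$, while $\sum_{i\le 0} a_i^k = \sum_{j\in \Z_{\le 0}} a_j^k$. Taking the formal difference, the terms indexed by $j \in w(\Z_{\le 0}) \cap \Z_{\le 0}$ cancel, leaving exactly $\sum_{j \in w(\Z_{\le 0})\setminus \Z_{\le 0}} a_j^k - \sum_{j\in \Z_{\le 0}\setminus w(\Z_{\le 0})} a_j^k$. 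Now $w(\Z_{\le 0})\setminus \Z_{\le 0} = \Z_{>0}\cap w(\Z_{\le 0}) = I_{w,+}$ by \eqref{E:I+}, and $\Z_{\le 0}\setminus w(\Z_{\le 0}) = \Z_{\le 0}\cap w(\Z_{>0}) = I_{w,-}$ by \eqref{E:I-} (using that $w$ is a bijection, so the complement of $w(\Z_{\le 0})$ is $w(\Z_{>0})$). Both $I_{w,\pm}$ are finite since $w$ moves only finitely many integers, so all these manipulations of formal sums are legitimate: only finitely many terms survive the cancellation.

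There is essentially no obstacle here; the only subtlety worth a sentence is to make sure the cancellation of the infinitely many common terms is justified in the formal power series ring $\bR(x;a)$ — but since $p_k(x||a)$ is by construction an element of bounded degree with the two infinite pieces built to cancel, and localization is an honest $\Q[a]$-algebra homomorphism, this is fine. I would write the proof in two or three lines: state $w^x(p_k(x||a)) = \sum_{i\le 0}(x_{w(i)}^k - a_i^k)$, apply $\epsilon$, and then identify the surviving index sets with $I_{w,+}$ and $I_{w,-}$.
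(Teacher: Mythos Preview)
Your proof is correct and is exactly the routine verification the paper has in mind; indeed the paper states the lemma without proof, and your unwinding of $\epsilon(w^x(p_k(x||a)))$ via the formal power series interpretation (justified by the paper's remark that $\epsilon$ ``sets all $x_i$ to $a_i$'' including those in $p_k(x||a)$) together with the identification of the surviving index sets with $I_{w,\pm}$ is precisely the intended argument.
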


\begin{example} Using $w=w_\la$ of Example \ref{X:grass perm} we have $I_{w,+}=\{1,3\}$ and $I_{w,-}=\{-1,0\}$. Therefore $p_k(x||a)|_w = a_1^k+a_3^k-a_{-1}^k-a_0^k$.
\end{example}

\subsection{Back stable double Schubert polynomials}
Let $\shift$ be the $\Q$-algebra automorphism of $\bR(x;a)$ which shifts all variables forward by $1$ in $\bR(x;a)$. That is, $\shift(x_i)=x_{i+1}$, $\shift(a_i)=a_{i+1}$, and $\shift(p_k(x||a)) = p_k(x||a) + x_1^k - a_1^k$.
As before, let $[p,q]$ be an interval of integers containing all integers moved by $w\in S_\Z$. Define
\begin{align}
	\S_w^{[p,q]}(x;a) &:= \shift^{p-1}(\S_{\shift^{1-p}(w)}(\xp;\ap)).
\end{align}

For $w\in S_\Z$, define the \defn{back stable double Schubert polynomial} $\bS_w(x;a)$ by
\begin{align}
	\bS_w(x;a) := \lim_{\substack{p\to-\infty\\ q\to\infty}} \S_w^{[p,q]}(x;a).
\end{align}

There is a double version of the monomial expansion (Theorem \ref{thm:BJS}) of Schubert polynomials; see for example \cite{FK}.  However, the well-definedness of $\bS_w(x;a)$ is not apparent from that expansion.  In Theorem \ref{T:square bumpless} we give a new combinatorial formula for $\S_w(\xp;\ap)$ using bumpless pipedreams as a sum of products of binomials $x_i-a_j$.  Theorem \ref{T:square bumpless} is compatible with the back stable limit and yields a monomial formula (Theorem \ref{T:bumpless}) for the back stable double Schubert polynomials.

\begin{prop} \label{P:backstable double well-defined}
For $w \in S_\Z$, $\S_w(x;a)$ is a well-defined series such that
\begin{align}
\label{E:double double}
	\bS_w(x;a) &= \sum_{w\doteq uv} (-1)^{\ell(u)} \bS_{u^{-1}}(a) \bS_v(x)
\end{align}
\end{prop}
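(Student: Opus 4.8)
The plan is to establish the identity \eqref{E:double double} at the finite level first, then pass to the back stable limit. Concretely, I would start from Proposition \ref{prop:dSchub}, which gives for $w' \in S_+$ the expansion $\S_{w'}(x;a) = \sum_{w' \doteq uv} (-1)^{\ell(u)} \S_{u^{-1}}(a) \S_v(x)$ in terms of the \emph{finite} single Schubert polynomials. Applying the variable-shift operator $\shift^{p-1}$ together with the relabeling $w' = \shift^{1-p}(w)$, and observing that $\shift$ is an algebra automorphism that carries the decomposition $w' \doteq uv$ to $w \doteq \shift^{p-1}(u)\,\shift^{p-1}(v)$ (length-additivity is preserved since $\shift$ is a Coxeter automorphism of $S_\Z$), one obtains
\begin{align*}
\S_w^{[p,q]}(x;a) = \sum_{w \doteq uv} (-1)^{\ell(u)} \S_{u^{-1}}^{[p,q]}(a)\, \S_v^{[p,q]}(x),
\end{align*}
where now the sum ranges over decompositions of $w$ in $S_\Z$ whose factors are supported in $[p,q]$, and $\S_u^{[p,q]}(a)$ denotes the finite Schubert polynomial in the $a$-variables computed on the window $[p,q]$.

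Next I would take the limit $p \to -\infty$, $q \to \infty$ on both sides. On the left this is the definition of $\bS_w(x;a)$. For the right side, the key point is that for each fixed decomposition $w \doteq uv$ the factors $u^{-1}$ and $v$ have bounded support, so for $[p,q]$ large enough they are supported in the window and $\S_{u^{-1}}^{[p,q]}(a) \to \bS_{u^{-1}}(a)$, $\S_v^{[p,q]}(x) \to \bS_v(x)$ by Theorem \ref{T:backstable}; moreover the set of decompositions $w \doteq uv$ in $S_\Z$ is finite (it is a subset of the finitely many ordered factorizations of the finite-support permutation $w$), so the sum is finite and the limit passes termwise. Care is needed to check that the limit of a product of formal power series is the product of the limits in the relevant topology: this holds because each monomial of $\bS_{u^{-1}}(a)\bS_v(x)$ receives contributions from only finitely many monomial pairs, and each such pair stabilizes. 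This simultaneously establishes that $\bS_w(x;a)$ is \emph{well-defined} (the right-hand side manifestly converges since it is a finite sum of convergent limits, each landing in $\bR(x;a)$ by Theorem \ref{T:backstable basis} applied to the $x$- and $a$-variable copies of $\bR$), so the two assertions of the proposition are proved together.

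The main obstacle I anticipate is the bookkeeping of the two limits interacting with the two separate variable sets $x$ and $a$: one must be careful that stabilization of coefficients is uniform enough that the product expansion behaves well, and that the ambient ring in which convergence takes place is genuinely $\bR(x;a) = \Lambda(x||a) \otimes_{\Q[a]} \Q[x,a]$ rather than some larger completion. In particular $\bS_{u^{-1}}(a)$ a priori lies in the ``back symmetric in $a$'' copy of $\bR$, i.e. in $\Lambda(a_{\le 0}) \otimes \Q[a]$, and $\bS_v(x)$ lies in $\Lambda(x_{\le 0}) \otimes \Q[x]$; their product naturally lies in $\Lambda(x_{\le0})\otimes\Lambda(a_{\le0})\otimes\Q[x,a]$, and one must invoke the superization identity $p_k(x||a) = p_k(\xm) - p_k(\am)$ (from \textsection\ref{SS:double symmetric}) to rewrite this product inside $\bR(x;a)$. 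Once the identification of the target ring is pinned down, the convergence argument is routine, paralleling the proof of Theorem \ref{T:backstable}.
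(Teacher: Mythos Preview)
Your proposal is correct and follows essentially the same approach as the paper: apply Proposition~\ref{prop:dSchub} to $\shift^{1-p}(w)\in S_+$, conjugate by $\shift^{p-1}$ to obtain the $[p,q]$-windowed identity, and take the limit termwise using finiteness of the set of length-additive factorizations $w\doteq uv$. The paper's proof is terser (it does not spell out the convergence of products or the finiteness of factorizations), but the argument is the same. Your final paragraph about landing in $\bR(x;a)$ is a valid concern, but note that the paper defers this to Corollary~\ref{C:backstable triple sum}; the present proposition only asserts that the limit exists as a formal series and equals the stated sum.
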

\begin{proof} Since length-additive factorizations are well-behaved under shifting it follows that
\begin{align*}
	\S_w^{[p,q]}(x;a) &= \shift^{p-1}(\S_{\shift^{1-p}(w)}(\xp;\ap)) \\
	&= \shift^{p-1}( \sum_{w \doteq uv} (-1)^{\ell(u)} \S_{\shift^{1-p}(u^{-1})}(\ap) \S_{\shift^{1-p}(v)}(\xp) )\\
	&= \sum_{w\doteq uv} (-1)^{\ell(u)} \S_{u^{-1}}^{[p,q]}(a) 
	\S_v^{[p,q]}(x)
\end{align*}
using Proposition \ref{prop:dSchub}.
Taking the limit as $p\to-\infty$ and $q\to\infty$ we obtain \eqref{E:double double}.
\end{proof}

\begin{cor}\label{C:backstable double shift} For $w \in S_\Z$, we have $\shift(\bS_w(x;a)) = \bS_{\shift(w)}(x;a)$.
\end{cor}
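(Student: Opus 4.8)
The plan is to argue directly from the limit definition of $\bS_w(x;a)$, using that $\shift$ shifts all variables forward by one and therefore commutes with the back stable limit. This avoids any discussion of how the various symmetric-function subalgebras sit inside $\bR(x;a)$.

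The key point is the identity $\shift(\S_w^{[p,q]}(x;a)) = \S_{\shift(w)}^{[p+1,q+1]}(x;a)$, valid whenever $[p,q]$ contains the support of $w$ (so that $[p+1,q+1]$ contains the support of $\shift(w)$). This is a formal consequence of the definition $\S_w^{[p,q]}(x;a) = \shift^{p-1}(\S_{\shift^{1-p}(w)}(x;a))$: applying $\shift$ turns $\shift^{p-1}$ into $\shift^{p} = \shift^{(p+1)-1}$, while $\shift^{1-p}(w) = \shift^{1-(p+1)}(\shift(w))$ since $\shift$ is a group automorphism of $S_\Z$.

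Next, since $\shift$ merely relabels the variables $x_i\mapsto x_{i+1}$, $a_i\mapsto a_{i+1}$ on the underlying formal power series (the relation $\shift(p_k(x||a))=p_k(x||a)+x_1^k-a_1^k$ being a consequence), it is continuous for the monomial-wise topology: if $f_n\to f$, then the coefficient of a monomial $M$ in $\shift(f_n)$ equals the coefficient of $\shift^{-1}(M)$ in $f_n$, which stabilizes to the coefficient of $M$ in $\shift(f)$. Combining this with the previous step,
\[
\shift(\bS_w(x;a)) = \lim_{\substack{p\to-\infty\\ q\to\infty}} \shift(\S_w^{[p,q]}(x;a)) = \lim_{\substack{p\to-\infty\\ q\to\infty}} \S_{\shift(w)}^{[p+1,q+1]}(x;a) = \bS_{\shift(w)}(x;a),
\]
the last step being the change of limit variables $p'=p+1$, $q'=q+1$.

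An equivalent route, which I would at least mention, is to apply the algebra automorphism $\shift$ to the expansion \eqref{E:double double} of Proposition~\ref{P:backstable double well-defined}, invoke Proposition~\ref{P:shift backstable} (for the $x$-variables) and its $a$-variable analogue to get $\shift(\bS_v(x)) = \bS_{\shift(v)}(x)$ and $\shift(\bS_{u^{-1}}(a)) = \bS_{(\shift u)^{-1}}(a)$, and then reindex the sum over length-additive factorizations $w\doteq uv$ by the length-preserving automorphism $\shift$ of $S_\Z$, recovering \eqref{E:double double} for $\shift(w)$. I do not expect any genuine obstacle in either approach; the only point requiring care is the bookkeeping that $\shift$ on $\bR(x;a)$ acts as a uniform index shift on all variables — in the second route this reappears as the claim that $\shift$ restricts correctly to the $x$-only and $a$-only subalgebras, whereas the direct argument sidesteps it entirely.
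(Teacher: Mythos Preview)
Your proposal is correct. The paper states this corollary without proof, immediately after Proposition~\ref{P:backstable double well-defined}; both of your routes are natural ways to fill in the omitted argument, and your second route (applying $\shift$ to \eqref{E:double double} and invoking Proposition~\ref{P:shift backstable}) is presumably what the placement as a corollary is meant to suggest. Your direct limit argument is equally valid and, as you note, sidesteps any need to track how the single-variable back stable rings $\bR(x)$ and $\bR(a)$ embed into $\bR(x;a)$.
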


\begin{cor}\label{C:backstable triple sum} For $w \in S_\Z$, we have 
\begin{align}\label{E:backstable triple def}
	\bS_w(x;a) &= \sum_{\substack{w\doteq uvz \\ u,z\in S_{\ne0}}}
	(-1)^{\ell(u)} \S_{u^{-1}}(a) F_v(x/a) \S_z(x).
\end{align}	
In particular, $\bS_w(x;a)\in \bR(x;a)$.
\end{cor}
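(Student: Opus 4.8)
The plan is to start from the expansion in Proposition~\ref{P:backstable double well-defined}, namely $\bS_w(x;a) = \sum_{w\doteq uv} (-1)^{\ell(u)} \bS_{u^{-1}}(a)\bS_v(x)$, and then apply the coproduct formula \eqref{E:little coprod} to the \emph{second} factor $\bS_v(x)$. Writing $v \doteq v'v''$ with $v'' \in S_{\ne0}$, Theorem~\ref{thm:coprod} gives $\bS_v(x) = \sum_{v\doteq v'v''} F_{v'}(x)\,\S_{v''}(x)$, so that $\bS_w(x;a) = \sum (-1)^{\ell(u)} \bS_{u^{-1}}(a) F_{v'}(x) \S_{v''}(x)$, the sum being over length-additive factorizations $w \doteq u v' v''$ with $v'' \in S_{\ne0}$. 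The only remaining task is to combine $\bS_{u^{-1}}(a)$ with $F_{v'}(x)$ into the shape $\S_{u^{-1}}(a) F_v(x/a) \S_z(x)$ appearing in \eqref{E:backstable triple def}.

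To do this I would first record that $\bS_{u^{-1}}(a)$, being a back stable Schubert polynomial in the $a$-variables, itself coproduct-decomposes via \eqref{E:little coprod}: $\bS_{u^{-1}}(a) = \sum_{u^{-1} \doteq p q} F_p(a)\,\S_q(a)$ with $q \in S_{\ne0}$. Meanwhile the term $F_v(x/a)$ in the target expands by \eqref{E:plethystic difference stanley} as $F_v(x/a) = \sum_{v\doteq rs} (-1)^{\ell(r)} F_{r^{-1}}(a) F_s(x)$. So what I really need is the identity
\begin{align*}
\sum_{\substack{w\doteq uvz \\ u,z\in S_{\ne0}}} (-1)^{\ell(u)} \S_{u^{-1}}(a) F_v(x/a)\S_z(x)
= \sum_{\substack{w\doteq uv'v'' \\ v''\in S_{\ne0}}} (-1)^{\ell(u)} \bS_{u^{-1}}(a) F_{v'}(x)\S_{v''}(x).
\end{align*}
The cleanest way to verify this is to work backwards from the right-hand side: substitute $\bS_{u^{-1}}(a) = \sum_{u^{-1}\doteq pq,\, q\in S_{\ne0}} F_p(a)\S_q(a)$ (equivalently $u \doteq q^{-1} p^{-1}$ with $q^{-1}\in S_{\ne0}$), note $(-1)^{\ell(u)} = (-1)^{\ell(q^{-1})}(-1)^{\ell(p^{-1})}$, and then reassemble $(-1)^{\ell(p^{-1})} F_{p}(a) F_{v'}(x)$ as part of a plethystic-difference Stanley function. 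Matching the length-additive factorization $w \doteq q^{-1} p^{-1} v' v''$ against $w \doteq \bar u \bar v \bar z$ with $\bar u = q^{-1}$, $\bar z = v''$, and $\bar v = p^{-1} v'$ identifies the two sides once one checks, using \eqref{E:plethystic difference stanley} for $F_{\bar v}(x/a) = \sum_{\bar v\doteq p^{-1}v'} (-1)^{\ell(p^{-1})} F_p(a) F_{v'}(x)$, that the signs and factorizations line up. Actually an even more economical route is simply to verify that the right-hand side of \eqref{E:backstable triple def} satisfies the defining recursions of $\bS_w(x;a)$ from Theorem~\ref{thm:double}/\ref{T:backstable}, or to observe it agrees with \eqref{E:double double} after applying \eqref{E:little coprod} and \eqref{E:plethystic difference stanley} termwise; I would present whichever bookkeeping is shortest.

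The ``in particular'' clause is then immediate: each summand $(-1)^{\ell(u)}\S_{u^{-1}}(a) F_v(x/a)\S_z(x)$ lies in $\bR(x;a) = \La(x||a)\otimes_{\Q[a]} \Q[x,a]$, since $\S_{u^{-1}}(a)\in\Q[a]$, the negative/positive Schubert polynomial $\S_z(x)\in\Q[x]$, and $F_v(x/a)\in\La(x||a)$ is (by definition of the ring of double symmetric functions and the fact that $F_v(x/a)$ is the superization of the symmetric function $F_v$) a polynomial in the $p_k(x||a)$ over $\Q[a]$; finiteness of the sum is guaranteed because only finitely many length-additive factorizations $w\doteq uvz$ exist, and the series converges in $\bR(x;a)$ because this is exactly the limit computed in Proposition~\ref{P:backstable double well-defined}. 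The main obstacle is purely the combinatorial bookkeeping of reassembling the nested length-additive factorizations and tracking the signs $(-1)^{\ell(\cdot)}$ correctly; there is no analytic or structural subtlety once one commits to either the direct double-application of \eqref{E:little coprod} plus \eqref{E:plethystic difference stanley}, or to checking the Theorem~\ref{thm:double} recursions.
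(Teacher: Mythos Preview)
Your approach is essentially the same as the paper's: start from \eqref{E:double double}, apply the coproduct formula \eqref{E:little coprod} to \emph{both} back stable Schubert factors $\bS_{u^{-1}}(a)$ and $\bS_v(x)$, and then reassemble the resulting $F_p(a)F_{v'}(x)$ pieces into $F_{\bar v}(x/a)$ via \eqref{E:plethystic difference stanley}. The paper presents this as a single forward chain of equalities rather than the back-and-forth you describe, but the ingredients and the bookkeeping of length-additive factorizations are identical.
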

\begin{proof} Using \eqref{E:little coprod} and Propositions \ref{P:backstable double well-defined} and \ref{P:Hopf Stanley} we have
	\begin{align*}
	\bS_w(x;a) &= \sum_{w\doteq uv} (-1)^{\ell(u)} \bS_{u^{-1}}(a) \bS_v(x) \\
	&= \sum_{w\doteq uv} 
	\sum_{\substack{u^{-1}\doteq u_1v_1 \\ v_1\in S_{\ne0}}}
	\sum_{\substack{v\doteq u_2v_2 \\ v_2 \in S_{\ne0}}} 
	(-1)^{\ell(u)}
	F_{u_1}(a) \S_{v_1}(a) F_{u_2}(x) \S_{v_2}(x) \\
	&=\sum_{\substack{w\doteq v_1^{-1} u_1^{-1} u_2 v_2 \\ v_1,v_2\in S_{\ne0}}} (-1)^{\ell(u_1)+\ell(v_1)} \S_{v_1}(a) F_{u_1}(a) F_{u_2}(x) \S_{v_2}(x) \\
	&= \sum_{\substack{w\doteq v_1^{-1} u v_2 \\ v_1,v_2\in S_{\ne0}}} (-1)^{\ell(v_1)}\S_{v_1}(a) F_u(x/a) \S_{v_2}(x). \qedhere
	\end{align*}
\end{proof}

\begin{example} \label{X:backstable double Schub triple expansion}
We have 
\begin{align*}
	\bS_{s_i}(x;a) &= - \S_{s_i}(a) + F_{s_i}(x/a) = -\S_{s_i}(a) + s_1(x/a) = s_1[x_{\le0} - a_{\le i}] \\
	\bS_{s_1s_0}(x;a) &= - \S_{s_1}(a) F_{s_0}(x/a) + F_{s_1s_0}(x/a) = -a_1 s_1(x/a) + s_2(x/a) \\
	\bS_{s_{-1}s_0} &= -\S_{s_{-1}}(a) F_{s_0}(x/a) + F_{s_{-1}s_0}(x/a) = a_0 s_1(x/a) + s_{11}(x/a) \\
	\bS_{s_0s_{-1}} &= F_{s_0s_{-1}}(x/a) + F_{s_0} \S_{s_{-1}}(x) = s_2(x/a) + s_1(x/a) (-x_0).
\end{align*}
\end{example}

\begin{thm}\label{thm:backstabledouble}
	$\{\bS_w(x;a) \in \bR(x;a) \mid w \in S_\Z\}$ is the unique family of power series satisfying the following conditions:
	\begin{align}
	\label{E:backstable double identity}
	\bS_\id &= 1 \\
	\label{E:backstable double specialize}
	\bS_w(a;a) &= 0 \qquad\text{if $w\ne\id$} \\
	\label{E:backstable double partial}
	A_i \bS_w(x;a) &= \begin{cases} \bS_{w s_i}(x;a) & \text{if $ws_i < w$} \\
	0 & \text{otherwise.}
	\end{cases}
	\end{align}
	The elements $\{\bS_w(x;a)  \mid w \in S_\Z\}$ form a basis of $\bR(x;a)$ over $\Q[a]$.
\end{thm}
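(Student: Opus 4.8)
\emph{Proposed proof.} The plan is to follow the template of the finite case (Theorem~\ref{thm:double}), now working inside $\bR(x;a)$ over $\Q[a]$, and to feed in the two facts already proved: the expansion \eqref{E:double double} of Proposition~\ref{P:backstable double well-defined} and the membership $\bS_w(x;a)\in\bR(x;a)$ of Corollary~\ref{C:backstable triple sum}.

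\emph{Existence.} I would check the three defining conditions for the family $\{\bS_w(x;a)\}$. Condition~\eqref{E:backstable double identity} is immediate from \eqref{E:double double} since only $u=v=\id$ contributes. For \eqref{E:backstable double partial}, apply $A_i$ to \eqref{E:double double}: $A_i$ acts only on the $x$-variables, so it passes through each scalar $\bS_{u^{-1}}(a)$, and by Theorem~\ref{T:backstable} it sends $\bS_v(x)\mapsto\bS_{vs_i}(x)$ when $vs_i<v$ and to $0$ otherwise, leaving $\sum_{w\doteq uv,\,vs_i<v}(-1)^{\ell(u)}\bS_{u^{-1}}(a)\bS_{vs_i}(x)$. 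The substitution $v'=vs_i$ is a bijection between the pairs $(u,v)$ with $w\doteq uv$, $vs_i<v$ and the pairs $(u,v')$ with $ws_i\doteq uv'$, and only when $ws_i<w$ are there any such pairs; this is precisely the length-bookkeeping in the proof of Proposition~\ref{prop:dSchub}, and it identifies the sum with $\bS_{ws_i}(x;a)$ (resp.\ $0$). For the localization \eqref{E:backstable double specialize}, the cleanest route is to apply the counit-type map $\epsilon$ of Section~\ref{SS:loc back symmetric} (which sets $x_i\mapsto a_i$, hence $p_k(x||a)\mapsto0$) to the triple-sum formula of Corollary~\ref{C:backstable triple sum}: using $\epsilon(F_v(x/a))=F_v(a/a)=\delta_{v,\id}$ from \eqref{E:counit}, this collapses $\bS_w(a;a)$ to $\sum_{w\doteq uz,\,u,z\in S_{\ne0}}(-1)^{\ell(u)}\S_{u^{-1}}(a)\S_z(a)$; factoring $w,u,z$ through the commuting subgroups $S_-$ and $S_+$ splits this as a product of two cancellation sums, each equal to the relevant Kronecker delta by Lemma~\ref{L:Schub cancellation} (the $S_-$-factor follows from it by applying the automorphism $\omega$), so $\bS_w(a;a)=\delta_{w,\id}$. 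Alternatively, one observes that $\S_w^{[p,q]}(a;a)=0$ for $w\ne\id$ by the shift-invariance of \eqref{E:dSchub loc at id}, together with the fact that $x_i\mapsto a_i$ commutes with the back stable limit because in each fixed total degree only finitely many monomials $x^\beta a^\gamma$ are sent to a given monomial in the $a$'s.

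\emph{Uniqueness.} This runs exactly as in Theorems~\ref{T:single Schub} and \ref{thm:double}. If $\{\bS'_w\}$ is another solution, then induction on $\ell(w)$ using \eqref{E:backstable double partial} shows that $\bS'_w-\bS_w$ is annihilated by every $A_i$, $i\in\Z$, hence is $s_i^x$-invariant for all $i$ by Lemma~\ref{L:ker partial}(1), hence lies in $\bR(x;a)^{S_\Z^x}$. Under the $\Q[a]$-algebra isomorphism $\bR(x;a)\cong\bR\otimes_\Q\Q[a]$ that matches $p_k(x||a)$ with $p_k$ (both rings being polynomial $\Q[a]$-algebras on the $p_k$'s and the $x_i$'s), the $x$-action of $S_\Z$ corresponds to the $S_\Z$-action on $\bR$ tensored with the trivial action, so this invariant ring equals $\bR^{S_\Z}\otimes_\Q\Q[a]=\Q[a]$ (a back symmetric series of bounded total degree with support bounded above that is fixed by every $s_i$ must be constant). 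Finally $\bS'_w-\bS_w\in\Q[a]$ is unchanged by the substitution $x_i\mapsto a_i$, so \eqref{E:backstable double specialize} for both families forces $\bS'_w=\bS_w$ when $w\ne\id$, while $\bS'_\id=1=\bS_\id$ by \eqref{E:backstable double identity}.

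\emph{Basis.} By Corollary~\ref{C:backstable triple sum}, $\bS_w(x;a)\in\bR(x;a)$; moreover it is homogeneous of degree $\ell(w)$ and, setting $a=0$ in \eqref{E:double double} (only $u=\id$ survives, since the constant term of $\bS_{u^{-1}}$ is $\delta_{u,\id}$), satisfies $\bS_w(x;a)|_{a=0}=\bS_w(x)$. Now $\bR(x;a)$ is a graded free $\Q[a]$-module, bounded below, with $\bR(x;a)/(a)\bR(x;a)\cong\bR$; a graded Nakayama argument then shows the $\bS_w(x;a)$ span $\bR(x;a)$ over $\Q[a]$, because their images mod $(a)$ are the $\bS_w(x)$, a $\Q$-basis of $\bR$ by Theorem~\ref{T:backstable basis}. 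For linear independence one notes that the transition matrix from $\{\bS_w(x;a)\}$ to any homogeneous $\Q[a]$-basis of $\bR(x;a)$ is lower-triangular by degree with diagonal blocks invertible over $\Q$ (again by Theorem~\ref{T:backstable basis}), hence invertible over $\Q[a]$. This is the same mechanism invoked for the basis statement in Theorem~\ref{thm:double}.

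\emph{Main difficulty.} The one genuinely delicate point is \eqref{E:backstable double specialize}. The individual summands $\bS_{u^{-1}}(a)\bS_v(x)$ of \eqref{E:double double} are formal power series in the $a$'s, not polynomials, so one must either justify that $x_i\mapsto a_i$ commutes with the back stable limit or pass to the regrouped formula of Corollary~\ref{C:backstable triple sum}, whose polynomial coefficients in $\Q[a]$ are also exactly what makes the coefficients in the basis argument lie in $\Q[a]$; the real content there is the cancellation identity, obtained from Lemma~\ref{L:Schub cancellation} after separating the $S_-$ and $S_+$ parts.
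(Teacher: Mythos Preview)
Your proposal is correct and follows essentially the same route as the paper's proof. Both use Corollary~\ref{C:backstable triple sum} and \eqref{E:counit} to collapse $\bS_w(a;a)$ to the cancellation sum $\sum_{w\doteq uz,\,u,z\in S_{\ne0}}(-1)^{\ell(u)}\S_{u^{-1}}(a)\S_z(a)$, then invoke the $S_{\ne0}$-version of Lemma~\ref{L:Schub cancellation}; both derive \eqref{E:backstable double partial} from \eqref{E:double double} and Theorem~\ref{T:backstable}; and both deduce the basis statement by specializing $a=0$ and appealing to Theorem~\ref{T:backstable basis}. Your write-up is more explicit than the paper's (which defers uniqueness to ``as in the proof of Theorem~\ref{T:backstable}'' and states the basis argument in one line), and you correctly flag the specialization step as the genuinely delicate point.
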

\begin{proof} Uniqueness follows as in the proof of Theorem \ref{T:backstable}. Since the double Schubert polynomials are related by divided differences, the corresponding fact \eqref{E:backstable double partial} also holds. For \eqref{E:backstable double specialize}, applying the map $\epsilon$ of \textsection \ref{SS:loc back symmetric} to \eqref{E:backstable triple def} and using \eqref{E:counit}, we have
$$\bS_w(a;a) = \sum_{\substack{w\doteq uz \\ u,z\in S_{\ne0}}} (-1)^{\ell(u)} \S_u(a) \S_z(a).$$
This is $0$ automatically if $w\not\in S_{\ne0}$. If $w\in S_{\ne0}\setminus\{\id\}$ then the vanishing follows from the straightforward generalization of Lemma \ref{L:Schub cancellation} 
to $\Schub_w$ for $w\in S_{\ne0}$.
	
The basis property follows from the fact that setting $a_i=0$ for all $i\in \Z$ gives $\bS_w(x,0)=\bS_w(x)$ and the latter are a basis of $\Lambda\otimes \Q[x]$.
\end{proof}

The back stable double Schubert polynomials localize the same way that ordinary double Schubert polynomials do in the following sense.

\begin{prop} \label{P:double backstable finite loc}
	Let $v,w\in S_\Z$ and let $[p,q]\subset\Z$ be an interval that contains all elements moved by $v$ and by $w$. Then $\bS_v(wa;a) = \Schub^{[p,q]}_v(wa;a)$.
\end{prop}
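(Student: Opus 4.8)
The plan is to reduce the localization of the back stable double Schubert polynomial to a finite computation via the factorization formula \eqref{E:double double} and the compatibility of localization with shifting. First I would fix $v,w\in S_\Z$ and an interval $[p,q]$ containing all integers moved by either $v$ or $w$. The key observation is that the localization map $f\mapsto f|_w = \epsilon(w^x(f))$ is a $\Q[a]$-algebra homomorphism on $\bR(x;a)$, so it commutes with sums and products; by Corollary~\ref{C:backstable triple sum}, $\bS_v(x;a)$ genuinely lies in $\bR(x;a)$, so $\bS_v(wa;a)$ is well-defined.

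The main step is to compare $\bS_v(wa;a)$ with $\Schub_v^{[p,q]}(wa;a)$ directly using \eqref{E:double double}. Applying the expansion to $\bS_v$ and $\bS_{u^{-1}}$ for each $v\doteq uv'$, and noting that for $u,v'\in S_+$ the series $\bS_{u^{-1}}(a)$ and $\bS_{v'}(x)$ specialize to ordinary (back stable, but here finitely supported) Schubert polynomials, one reduces to the single-variable statement: $\bS_y(wa)$ depends only on the values $w(p),\dots,w(q)$ when $y$ is supported in $[p,q]$ and $w$ moves nothing outside $[p,q]$. Indeed, since $w(i)=i$ for $i\notin[p,q]$, the substitution $x_i\mapsto a_{w(i)}$ acts as the identity on variables $x_i$ with $i\notin[p,q]$; combined with Proposition~\ref{P:shift backstable} (shift-equivariance) and the fact that $\bS_y^{[p,q]}$ agrees with $\Schub_y^{[p,q]}$ after the appropriate shift, one gets that only the $x_p,\dots,x_q$ part matters. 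Applying this to every term in \eqref{E:double double} and reassembling via Proposition~\ref{prop:dSchub} (the finite analogue of \eqref{E:double double}) yields $\bS_v(wa;a)=\Schub_v^{[p,q]}(wa;a)$.

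Alternatively, and perhaps more cleanly, I would argue via the limit definition itself: $\bS_v(x;a)=\lim_{p'\to-\infty,\,q'\to\infty}\Schub_v^{[p',q']}(x;a)$, where for $p'\le p$ and $q'\ge q$ the permutation $w$ is still supported in $[p',q']$, so $wa$ makes sense as a substitution into $\Schub_v^{[p',q']}$. The point is that $\Schub_v^{[p',q']}(wa;a)$ is independent of the enlargement $[p',q']\supseteq[p,q]$: enlarging the interval multiplies the long-element seed by extra binomial factors $(x_i-a_j)$ which, after the divided-difference recursion producing $\Schub_v$, contribute nothing new because $v$ and $w$ fix the new indices — concretely, the stabilization of coefficients in the limit, together with $w(i)=i$ outside $[p,q]$, forces $\Schub_v^{[p',q']}(wa;a)=\Schub_v^{[p,q]}(wa;a)$ for all $p'\le p$, $q'\ge q$. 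Taking the limit then gives the claim.

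The hard part will be making the ``independence of the enlargement'' argument precise without circularity: one must check that the substitution $x\mapsto wa$ commutes with the stabilization of monomial coefficients that defines the limit, i.e.\ that plugging $a_{w(i)}$ for $x_i$ does not mix infinitely many monomials in a way that destroys convergence. This is where I would invoke the explicit structure of $\bR(x;a)$: writing $\bS_v(x;a)$ in the form of Corollary~\ref{C:backstable triple sum} as $\sum(-1)^{\ell(u)}\Schub_{u^{-1}}(a)F_{v'}(x/a)\Schub_z(x)$, the localization $f|_w$ is literally $\epsilon(w^x(f))$, a finite composition of algebra maps, and Lemma~\ref{lem:locp} controls exactly how the symmetric-function part $F_{v'}(x/a)$ localizes — it becomes a finite expression in the $a_i$ for $i\in I_{w,\pm}\subseteq[p,q]$. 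So the localization is manifestly a polynomial in $a_{p},\dots,a_{q}$, matching the finite computation of $\Schub_v^{[p,q]}(wa;a)$ term by term once one uses Proposition~\ref{prop:dSchub} to identify the two expansions.
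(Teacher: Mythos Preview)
Your final paragraph lands on the same tool the paper uses (Corollary~\ref{C:backstable triple sum}), but you stop one step short of the argument that makes it work. The paper first shifts via Corollary~\ref{C:backstable double shift} so that $[p,q]=[1,n]$ and $v,w\in S_n\subset S_+$. The point of this shift is that then $w^x$ fixes every $p_k(x||a)$ (equivalently $I_{w,\pm}=\emptyset$ in Lemma~\ref{lem:locp}), so under localization each super Stanley factor satisfies $F_{v'}(x/a)|_w=\delta_{v',\id}$ by \eqref{E:counit}. Only after this collapse does the triple sum reduce to the finite expression $\sum_{v\doteq uz,\;u,z\in S_{\ne0}}(-1)^{\ell(u)}\S_{u^{-1}}(a)\S_z(wa)$, which for $v\in S_n$ is a sum over $u,z\in S_+$ and is identified with $\S_v(wa;a)$ by Proposition~\ref{prop:dSchub}. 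Your phrase ``becomes a finite expression in the $a_i$ for $i\in I_{w,\pm}\subseteq[p,q]$'' is true but misses the payoff: after the shift those sets are empty, and it is precisely this vanishing that lets you ``match term by term'' with a formula that has no Stanley terms at all.

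Two other issues. Your first approach via \eqref{E:double double} does not work as stated: for $v'\in S_+$ the back stable single $\bS_{v'}(x)$ is \emph{not} the finite Schubert polynomial (e.g.\ $\bS_{s_1}=p_1+x_1\ne x_1=\S_{s_1}$), so there is no direct specialization to ordinary Schuberts. Your second approach via the limit hinges on $\S_v^{[p',q']}(wa;a)=\S_v^{[p,q]}(wa;a)$; your justification (``extra binomials contribute nothing new'') is wrong for enlargement on the left, where the polynomials $\S_v^{[p',q']}$ genuinely change. The localized values do agree, but seeing this cleanly requires something like Proposition~\ref{prop:Billey}, which is a different route from the paper's.
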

\begin{proof} By Corollary \ref{C:backstable double shift} we may assume that $[p,q]=[1,n]$ for some $n$ so that $v,w\in S_n$. We are specializing $x_i\mapsto a_{w(i)}$ for all $i$, and in particular $x_i\mapsto a_i$ for all $i\le 0$. Under this substitution all the super Stanley functions in \eqref{E:backstable triple def} vanish except those indexed by the identity.  Using Proposition \ref{prop:dSchub}, we have
	\begin{equation*}
	\bS_v(wa;a) = \sum_{\substack{v \doteq uz \\ u,z\in S_{\ne0}}} (-1)^{\ell(u)} \S_{u^{-1}}(a) \Schub_z(wa) 
	= \sum_{\substack{v \doteq uz \\ u,z\in S_n}} (-1)^{\ell(u)} \Schub_{u^{-1}}(\ap) \S_z(w\ap) 
	= \S_v(w\ap;\ap).\qedhere
	\end{equation*}
\end{proof}

Let $s_i^a$ and $A_i^a$ be the reflection and divided difference operators acting on the $a$-variables in both $\Q[a]$ and in $p_r(x||a)$.

\begin{prop}\label{P:left ddiff} For all $i\in \Z$ and $w \in S_\Z$, we have
	\begin{align*}
		A_i^a \bS_w(x;a) &= 
		\begin{cases}
			-\bS_{s_iw}(x;a) & \text{if $s_iw<w$} \\
			0 & \text{otherwise.}
		\end{cases}
	\end{align*}
\end{prop}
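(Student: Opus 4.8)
The plan is to reduce the statement to the finite case already recorded in Lemma~\ref{L:left ddiff}, using the back stable limit together with the commutation of $A_i^a$ with the shifting automorphism. First I would fix $w\in S_\Z$ and $i\in\Z$, and choose an interval $[p,q]\subset\Z$ large enough to contain all integers moved by $w$, by $s_iw$, and also containing $i$ and $i+1$. By definition $\bS_w(x;a)=\lim_{p\to-\infty,\,q\to\infty}\S_w^{[p,q]}(x;a)$, and the operator $A_i^a$ acts on each monomial term by a fixed rule, so it commutes with taking the coefficientwise limit; hence it suffices to prove the corresponding identity for $\S_w^{[p,q]}(x;a)$ and then pass to the limit.

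Next I would unwind the definition $\S_w^{[p,q]}(x;a)=\shift^{p-1}(\S_{\shift^{1-p}(w)}(x;a))$. The automorphism $\shift$ shifts both the $x$ and the $a$ variables by $1$, and in particular $A_i^a\circ\shift^{p-1}=\shift^{p-1}\circ A_{i-p+1}^{a}$ as operators (the $a$-divided difference gets re-indexed exactly like the reflection). Writing $v:=\shift^{1-p}(w)\in S_+$ and $j:=i-p+1>0$, Lemma~\ref{L:left ddiff} gives $A_j^a\S_v(x;a)=-\S_{s_jv}(x;a)$ if $s_jv<v$ and $0$ otherwise. Since $s_j v = \shift^{1-p}(s_i w)$ and $\shift^{1-p}$ is a length-preserving automorphism of $S_\Z$, the condition $s_jv<v$ is equivalent to $s_iw<w$, and applying $\shift^{p-1}$ back recovers $-\S_{s_iw}^{[p,q]}(x;a)$ (one must check that $[p,q]$ still contains all integers moved by $s_iw$, which holds by our choice of $[p,q]$). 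Taking the limit $p\to-\infty$, $q\to\infty$ then yields $A_i^a\bS_w(x;a)=-\bS_{s_iw}(x;a)$ in the first case and $0$ in the second.

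The only genuinely new input beyond Lemma~\ref{L:left ddiff} is the compatibility $A_i^a\circ\shift=\shift\circ A_{i-1}^a$ on $\bR(x;a)$; I expect this bookkeeping with the shift, together with verifying that $A_i^a$ really does preserve $\bR(x;a)$ and commutes with the coefficientwise limit, to be the main (though routine) obstacle. An alternative, cleaner route avoiding limits altogether is to apply $A_i^a$ directly to the triple-sum formula \eqref{E:backstable triple def}: $A_i^a$ kills $F_v(x/a)$ and $\S_z(x)$ for degree/symmetry reasons and acts on $\S_{u^{-1}}(a)$ by the single-variable rule, after which a reindexing of the factorization $w\doteq uvz$ (replacing $u^{-1}$ by $s_i u^{-1}$) reassembles the right-hand side as $-\bS_{s_iw}(x;a)$ when $s_iw<w$ and $0$ otherwise. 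I would present the limit argument as the main proof and mention the triple-sum computation as a remark.
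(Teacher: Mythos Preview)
Your limit-and-shift argument is correct: reducing to Lemma~\ref{L:left ddiff} via $\S_w^{[p,q]}(x;a)=\shift^{p-1}\S_{\shift^{1-p}(w)}(x;a)$ and the operator identity $A_i^a\circ\shift^{p-1}=\shift^{p-1}\circ A_{i-p+1}^a$ works, and the commutation of $A_i^a$ with the coefficientwise limit is routine.

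The paper, however, takes a shorter route that bypasses both the limit and the finite Lemma~\ref{L:left ddiff}. It applies $A_i^a$ directly to the expansion \eqref{E:double double},
\[
\bS_w(x;a)=\sum_{w\doteq uv}(-1)^{\ell(u)}\,\bS_{u^{-1}}(a)\,\bS_v(x),
\]
where $\bS_v(x)$ carries no $a$-variables at all, so $A_i^a$ hits only $\bS_{u^{-1}}(a)$. Then \eqref{E:partial backstable} (read in the $a$-alphabet) gives $A_i^a\bS_{u^{-1}}(a)=\bS_{u^{-1}s_i}(a)$ when $s_iu<u$ and $0$ otherwise; the bijection $u\mapsto s_iu$ between length-additive factorizations of $w$ with $s_iu<u$ and all length-additive factorizations of $s_iw$ finishes the proof in one line. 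Your approach buys independence from \eqref{E:double double} at the cost of the shift/limit bookkeeping; the paper's buys brevity by leaning on the already-established back stable single-variable recursion.

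One caution about your alternative via the triple-sum \eqref{E:backstable triple def}: the claim that $A_i^a$ kills $F_v(x/a)$ ``for symmetry reasons'' fails at $i=0$, since $F_v(x/a)$ is symmetric in $a_0,a_{-1},\dotsc$ but $s_0^a$ swaps $a_0$ with $a_1$. The Leibniz-type expansion then acquires cross terms that are awkward to organize. This is exactly what the paper's choice of \eqref{E:double double} over \eqref{E:backstable triple def} avoids: in \eqref{E:double double} the $a$-dependence sits entirely in a single back stable factor $\bS_{u^{-1}}(a)$, so no product rule is needed.
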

\begin{proof} This follows from \eqref{E:double double} and  \eqref{E:partial backstable}.
\end{proof}

\subsection{Double Schur functions}
\label{SS:double Schur}
We realize the double Schur functions (see \cite{M}) as the Grassmannian back stable double Schubert polynomials. As such our double Schur functions are symmetric in $\xm$. In Appendix \ref{S:pos to non} a precise dictionary is given which connects our conventions with the literature, which uses symmetric functions in $\xp$.

Let $\shift_a$ be the shift of all of the $a$-variables, that is, the $\Q$-algebra automorphism of $\La(x||a)$ given by 
\begin{align}\label{E:phi_a_def}
	\shift_a(a_i) &= a_{i+1} & \shift_a^{-1}(a_i) &= a_{i- 1} \\
	\shift_a(p_k(x||a)) &= p_k(x||a) - a_1^k &
	\shift_a^{-1}(p_k(x||a)) &= p_k(x||a) + a_0^k.
\end{align}
By definition $\shift_a$ leaves the $x$ variables alone.
For $\la\in\Par$ define the double Schur function $s_\la(x||a)\in\La(x||a)$ by 
\begin{align}
	h_r(x||a) &:= \shift_a^{r-1}(h_r(x/a)) &
	\label{E:double Schur det} 
	s_\la(x||a) &:= \det \shift_a^{1-j}(h_{\la_i-i+j}(x||a)).
\end{align}

\begin{example}\label{X:double schur}
\begin{align*}
	h_r(x||a) &= h_r(x_{\le0}/a_{\le r-1})\qquad\text{for $r\ge1$} \\
	s_{11}(x||a) &= \det \begin{pmatrix}
		h_1(x||a) & \gamma_a^{-1}(h_2(x||a)) \\
		h_0(x||a) & \gamma_a^{-1}(h_1(x||a)) 
	\end{pmatrix}\\
&= h_1(x_{\le0}/a_{\le0}) h_1(x_{\le0}/a_{\le-1}) - h_2(x_{\le0}/a_{\le 0}) \\
&= h_1(x/a) (h_1(x/a)+a_0) - h_2(x/a) \\
&= s_{11}(x/a) + a_0 s_1(x/a) = \bS_{s_{-1}s_0}(x;a)
\end{align*}
by Example \ref{X:backstable double Schub triple expansion} for
$w_{(1,1)}=s_{-1}s_0$. 
\end{example}


\begin{prop}\label{P:backstable double Grassmannian}  For $\la \in \Par$, we have $\bS_{w_\la}(x;a) = s_\la(x||a)$.
\end{prop}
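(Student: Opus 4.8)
The plan is to show that both sides satisfy the same characterizing properties, leveraging the uniqueness statement in Theorem~\ref{thm:backstabledouble}. Concretely, we verify that the assignment $w_\la \mapsto s_\la(x||a)$ and, more generally, the family of functions built from the double Schur functions agree with the back stable double Schubert polynomials $\bS_{w_\la}(x;a)$. Since $\bS_{w_\la}(x;a)$ is already known to lie in $\bR(x;a)$, it suffices to pin down its image under the divided difference operators and the left divided difference operators, and then invoke the recursive determination of double Schur functions.

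First I would recall that $w_\la$ is $0$-Grassmannian, so $w_\la s_i > w_\la$ for all $i \neq 0$, and hence by \eqref{E:backstable double partial} we have $A_i \bS_{w_\la}(x;a) = 0$ for all $i \neq 0$; by Lemma~\ref{L:ker partial} this shows $\bS_{w_\la}(x;a)$ is symmetric in the $x$-variables $x_i$ with $i \neq 0$, hence (being back symmetric) symmetric in $\xm$. Combined with the fact that it lies in $\bR(x;a)$, and using Corollary~\ref{C:backstable triple sum} with the factorizations $w_\la \doteq uvz$ forced (by the $0$-Grassmannian condition, $u = z = \id$ are the only choices with $u,z \in S_{\ne 0}$, so only the symmetric part survives), we get $\bS_{w_\la}(x;a) = F_{w_\la}(x/a)$. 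So the statement reduces to identifying $F_{w_\la}(x/a)$ with $s_\la(x||a)$. By Proposition~\ref{P:backstable Grassmannian} we know $\bS_{w_\la}(x;0) = F_{w_\la}(x) = s_\la(\xm)$, which matches $s_\la(x||a)|_{a = 0}$; the content is to upgrade this to the $a$-deformed statement.

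Next I would pin down $\bS_{w_\la}(x;a)$ via the $a$-side action. By Proposition~\ref{P:left ddiff}, $A_i^a \bS_{w_\la}(x;a) = -\bS_{s_i w_\la}(x;a)$ when $s_i w_\la < w_\la$ and $0$ otherwise; one computes $s_i w_\la < w_\la$ precisely when $i$ is a descent of $w_\la$ read off from the code, i.e. governed by the partition $\la$. This is exactly the recursion that characterizes double Schur functions: starting from $s_\emptyset(x||a) = 1 = \bS_\id(x;a)$ and the fact that both $s_\la(x||a)$ and $F_{w_\la}(x/a)$ reduce to $s_\la(\xm)$ at $a = 0$, the collection $\{s_\la(x||a)\}$ is determined by the same system of left divided difference relations together with the vanishing property $s_\la(x||a)|_{w_\mu}$ for $w_\mu \not\geq w_\la$ (equivalently $\mu \not\supseteq \la$), which is the Grassmannian specialization of \eqref{E:Schub vanishing}. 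Alternatively, and perhaps more cleanly, I would use the Jacobi--Trudi determinant \eqref{E:double Schur det}: it suffices to check the base cases $\bS_{w_{(r)}}(x;a) = h_r(x||a) = \shift_a^{r-1}(h_r(x/a))$ for single-row partitions, since the determinantal identity then follows from the analogous identity for back stable Schubert polynomials indexed by Grassmannian permutations (a Pieri/Jacobi--Trudi relation among the $\bS_{w_\la}(x;a)$ that can be deduced from the single-variable case plus the superization map, or cited from the companion development). For a single row, $F_{w_{(r)}}(x/a) = \sum_{w_{(r)} \doteq uv} (-1)^{\ell(u)} F_{u^{-1}}(a) F_v(x)$ by \eqref{E:plethystic difference stanley}; since $w_{(r)}$ is $321$-avoiding (indeed a single cycle), $F_{w_{(r)}} = h_r$ and one reads off $F_{w_{(r)}}(x/a) = \sum_{j} (-1)^{r-j} h_j(\xm) e_{r-j}(\am)$, which is precisely $h_r(x/a)$ — but we need the $\shift_a$-shifted version $h_r(x||a)$, so the last step is to track the shift by comparing with Proposition~\ref{P:backstable double Grassmannian}'s conventions, namely $w_{(r)}$ as defined by \eqref{E:wla} places the relevant box so that $h_r(x||a) = \shift_a^{r-1}(h_r(x/a))$ matches $\bS_{w_{(r)}}(x;a)$ on the nose via Corollary~\ref{C:backstable double shift}.

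The main obstacle I anticipate is bookkeeping the $a$-variable shift $\shift_a$ correctly: $F_w(x/a)$ as produced by superization uses the symmetric coproduct and treats $\am = (\ldots, a_{-1}, a_0)$, whereas $s_\la(x||a)$ in \eqref{E:double Schur det} is built from $\shift_a$-translates of $h_r(x/a)$, so the identification $\bS_{w_\la}(x;a) = s_\la(x||a)$ requires matching the indexing of the $0$-Grassmannian permutation $w_\la$ (which, per \eqref{E:wla}, involves both $\la$ and $\la'$ and situates boxes relative to the position $0$) with the ladder of shifts in the Jacobi--Trudi determinant. I would handle this by first establishing the single-row identity with all shifts explicit, then propagating through the determinant using the commutation of $\shift_a$ with the divided difference structure, and finally sanity-checking against the $a=0$ specialization via Proposition~\ref{P:backstable Grassmannian}. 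Everything else — the reduction to the symmetric part, the vanishing properties, the base-case Stanley computation — is routine given the machinery already assembled in the excerpt.
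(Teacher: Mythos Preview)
Your reduction via Corollary~\ref{C:backstable triple sum} contains a genuine error. You claim that in the factorization $w_\la \doteq uvz$ with $u,z \in S_{\ne 0}$, the $0$-Grassmannian property of $w_\la$ forces both $u = \id$ and $z = \id$. Only $z = \id$ is forced: if $w_\la s_i > w_\la$ for all $i \ne 0$, then any length-additive right factor $z$ must also be $0$-Grassmannian, and $S_\Z^0 \cap S_{\ne 0} = \{\id\}$. But there is no such constraint on the \emph{left} factor $u$. For instance, $w_{(2)} = s_1 s_0$ has the factorization $u = s_1 \in S_+ \subset S_{\ne 0}$, $v = s_0$, $z = \id$. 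In general, the allowed factorizations $w_\la \doteq u \cdot w_\mu$ with $u \in S_{\ne 0}$ correspond to partitions $\mu \subset \la$ with $d(\mu) = d(\la)$ and $u = w_{\la/\mu}$; this is precisely how Proposition~\ref{prop:doublesuper} is derived, and it shows
\[
\bS_{w_\la}(x;a) = \sum_{\substack{\mu \subset \la \\ d(\mu) = d(\la)}} (-1)^{|\la/\mu|}\,\Schub_{w_{\la/\mu}^{-1}}(a)\, s_\mu(x/a),
\]
which is \emph{not} equal to $F_{w_\la}(x/a) = s_\la(x/a)$ unless $\la$ is empty or a single box. So the statement does \emph{not} reduce to identifying $F_{w_\la}(x/a)$ with $s_\la(x||a)$; indeed $s_\la(x||a) \ne s_\la(x/a)$ already for $\la = (2)$, since $h_2(x||a) = \shift_a(h_2(x/a))$.

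Your second line of attack --- using Proposition~\ref{P:left ddiff} to match the $A_i^a$-recursions on both sides --- is the correct one, and is exactly what the paper does (its proof is the single sentence ``This follows from Proposition~\ref{P:left ddiff}''). The argument runs by induction on $|\la|$: both $\bS_{w_\la}(x;a)$ and $s_\la(x||a)$ lie in $\La(x||a)$, both equal $1$ for $\la = \varnothing$, and both satisfy $A_i^a(\,\cdot\,) = -(\text{box-removal on diagonal $i$})$ (for $\bS_{w_\la}$ this is Proposition~\ref{P:left ddiff}; for $s_\la(x||a)$ one checks it directly from the shifted Jacobi--Trudi determinant~\eqref{E:double Schur det}). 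Drop the erroneous first reduction and keep just this part; the Jacobi--Trudi bookkeeping you describe for the single-row base case is the right ingredient, and the obstacle you anticipate with $\shift_a$ is real but tractable row by row of the determinant.
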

\begin{proof} 
Using \cite[(2.21)]{M}, one may compute $s_\la(x||a)|_{w_\la}$ and show that $s_\la(x||a)|_{w_\mu}$ vanishes when $\la \not \subseteq \mu$ (see also \cite[Theorem 7]{LaSh2}).

The result then follows from the characterization of $\bS_{w_\la}(x;a)$ obtained by combining Proposition~\ref{P:GKM Schubert basis} and Theorem~\ref{thm:HTFl} below.
\end{proof}

\subsection{Double Stanley symmetric functions}
We introduce the double Stanley symmetric functions $F_w(x||a)$ for $w\in S_\Z$. If $w$ is a $321$-avoiding permutation, we recover Molev's skew double Schur function; see Appendix \ref{SS:Molev skew double}.

Let $\eta_a$ be the $\Q[a]$-algebra homomorphism $\Q[x,a] \to \Q[a]$ given by $x_i \mapsto a_i$.  This induces a $\Q[a]$-algebra map $1 \otimes \eta_a: \bR(x;a) \to \Lambda(x||a) \otimes_{\Q[a]} \Q[a] \cong \Lambda(x||a)$, which we simply denote by $\eta_a$ as well. 
 
\begin{remark} Analogously to $\eta_0$ in Remark \ref{R:eta knows}, the map $\eta_a$ substitutes $x_i\mapsto a_i$ for the $x_i$ generators of $\Q[x]$ but leaves the ``$x_i$ in $\Lambda(x||a)$" alone.
 \end{remark}
 
For $w \in S_\Z$, define the \defn{double Stanley symmetric function} $F_w(x||a) \in \Lambda(x||a)$ by
\begin{align}\label{E:double Stanley}
F_w(x||a) := \eta_a(\bS_w(x;a)).
\end{align}

\begin{proposition}\label{prop:double Stanley triple sum}
For $w \in S_\Z$, we have
\begin{align}\label{E:double Stanley triple sum}
F_w(x||a) = \sum_{\substack{w \doteq uvz\\ u,z\in S_{\ne0}}} (-1)^u \S_{u^{-1}}(a) F_v(x/a) \S_z(a)
\end{align}
\end{proposition}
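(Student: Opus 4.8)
The plan is to apply the $\Q[a]$-algebra homomorphism $\eta_a$ directly to the triple-sum expression \eqref{E:backstable triple def} from Corollary \ref{C:backstable triple sum}. Recall that
\[
\bS_w(x;a) = \sum_{\substack{w\doteq uvz \\ u,z\in S_{\ne0}}}
(-1)^{\ell(u)} \S_{u^{-1}}(a) F_v(x/a) \S_z(x),
\]
and by definition $F_w(x||a) = \eta_a(\bS_w(x;a))$. Since $\eta_a$ is a $\Q[a]$-algebra homomorphism that fixes the $a$-variables (hence fixes $\S_{u^{-1}}(a)$) and fixes the ``$x_i$ in $\Lambda(x||a)$'' while sending the polynomial generators $x_i\in\Q[x]$ to $a_i$, the plan is simply to evaluate $\eta_a$ termwise on each summand.

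First I would note that $\eta_a$ fixes $\S_{u^{-1}}(a)$ since this is a polynomial in the $a$-variables. Second, the factor $F_v(x/a)=\eta_0(\bS_v)(x/a)$ lies in $\La(x||a)$ (it is a polynomial in the $p_k(x||a)$ with $\Q[a]$-coefficients, via the superization identification $\La\otimes\La\cong\La(x||a)$ applied to Proposition \ref{P:Hopf Stanley}\eqref{E:plethystic difference stanley}), so it is left untouched by $\eta_a$ --- this is exactly the content of the remark that $\eta_a$ leaves the ``$x_i$ in $\Lambda(x||a)$'' alone. Third, $\S_z(x)$ for $z\in S_{\ne0}$ is a polynomial in the $\Q[x]$ generators $x_i$, so $\eta_a(\S_z(x)) = \S_z(a)$. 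Assembling these three observations gives
\[
F_w(x||a) = \sum_{\substack{w\doteq uvz \\ u,z\in S_{\ne0}}}
(-1)^{\ell(u)} \S_{u^{-1}}(a)\, F_v(x/a)\, \S_z(a),
\]
which is \eqref{E:double Stanley triple sum} (with the minor notational shorthand $(-1)^u$ for $(-1)^{\ell(u)}$).

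The main subtlety --- not really an obstacle but the point requiring care --- is the identification of which ``$x$'' variables $\eta_a$ acts on: one must be clear that in the decomposition $\bR(x;a)=\La(x||a)\otimes_{\Q[a]}\Q[x,a]$, the $x_i$ appearing inside $p_k(x||a)$ belong to the first tensor factor and are inert under $\eta_a=1\otimes\eta_a$, whereas the $x_i$ appearing in $\S_z(x)$ belong to the second factor $\Q[x,a]$ and are sent to $a_i$. This is guaranteed by Corollary \ref{C:backstable triple sum}, which already places $\bS_w(x;a)$ in $\bR(x;a)$ with exactly this factorization (the $F_v(x/a)$ piece in $\La(x||a)$, the $\S_{u^{-1}}(a)$ and $\S_z(x)$ pieces in $\Q[x,a]$). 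Once that bookkeeping is in place, the proof is a one-line application of the algebra homomorphism $\eta_a$ to the formula of Corollary \ref{C:backstable triple sum}.
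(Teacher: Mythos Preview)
Your proof is correct and follows exactly the paper's approach: the paper's proof is the single sentence ``This follows from the definition \eqref{E:double Stanley} and Corollary \ref{C:backstable triple sum},'' and you have simply unpacked what that sentence means by applying $\eta_a$ termwise and tracking which factor of $\bR(x;a)=\La(x||a)\otimes_{\Q[a]}\Q[x,a]$ each piece lives in.
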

\begin{proof} This follows from the definition \eqref{E:double Stanley} and Corollary \ref{C:backstable triple sum}.
\end{proof}

\begin{prop} \label{P:Grassmannian Stanley} For $\la\in\Par$, we have $F_{w_\la}(x||a) = s_\la(x||a)$.
\end{prop}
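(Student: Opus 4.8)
The plan is to combine Proposition~\ref{P:backstable double Grassmannian}, which identifies $\bS_{w_\la}(x;a)$ with the double Schur function $s_\la(x||a)$, with the defining formula \eqref{E:double Stanley} for the double Stanley symmetric function. Thus $F_{w_\la}(x||a) = \eta_a(\bS_{w_\la}(x;a)) = \eta_a(s_\la(x||a))$, and it remains to check that $\eta_a$ fixes $s_\la(x||a)$. This is immediate once one recalls, as in the remark preceding \eqref{E:double Stanley}, that $\eta_a$ substitutes $x_i \mapsto a_i$ only for the generators $x_i$ of the polynomial factor $\Q[x]$ in $\bR(x;a) = \Lambda(x||a)\otimes_{\Q[a]}\Q[x,a]$, while leaving the generators $p_k(x||a)$ of $\Lambda(x||a)$ untouched. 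Since $s_\la(x||a)\in\Lambda(x||a)$ by its very definition \eqref{E:double Schur det} as a polynomial in the $h_r(x||a)$, hence in the $p_k(x||a)$ and $\Q[a]$, it lies in the subalgebra on which $\eta_a$ acts as the identity.

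Alternatively, and perhaps more in keeping with the combinatorial flavor of the section, one can deduce the result directly from Proposition~\ref{prop:double Stanley triple sum}: specializing the triple sum \eqref{E:double Stanley triple sum} at $w = w_\la$, one uses that $w_\la$ is $0$-Grassmannian, so that in any length-additive factorization $w_\la \doteq uvz$ with $u,z\in S_{\ne0}$ one is forced to have $u = z = \id$ (a $0$-Grassmannian element has no left or right descents outside position $0$, so a factor lying in $S_{\ne 0}$ must be trivial). Hence the sum collapses to the single term $F_{w_\la}(x/a)$. Then Proposition~\ref{P:Grassmannian Stanley}'s single-variable analogue---namely $F_{w_\la}(x) = s_\la$ from Proposition~\ref{P:backstable Grassmannian} together with $F_w := \eta_0(\bS_w)$ and the fact that $\eta_0$ fixes symmetric functions---gives $F_{w_\la}(x) = s_\la(\xm)$, and superizing, $F_{w_\la}(x/a) = s_\la(x/a) = s_\la(x||a)$, the last equality being the relation between the Schur function and its superization, which agrees with the double Schur function of \textsection\ref{SS:double Schur}.

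I expect no real obstacle here: the statement is essentially a bookkeeping consequence of definitions already assembled in the excerpt. The one point requiring a moment of care is the claim that length-additive factorizations $w_\la \doteq uvz$ with $u,z\in S_{\ne 0}$ force $u=z=\id$; this follows from the characterization of $0$-Grassmannian permutations as those $w$ with $ws_i > w$ for all $i\ne 0$ (and dually $s_i w > w$ for all $i \ne 0$, since $w_\la^{-1}$ is again readily checked to have the same left-descent restriction), together with the fact that a length-additive right factor $z$ must have all its right descents among the right descents of $w_\la$. Whichever of the two routes is chosen, the argument is short; I would present the first one, citing Proposition~\ref{P:backstable double Grassmannian} and the description of $\eta_a$, as the cleanest.
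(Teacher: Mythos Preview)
Your first route is exactly the paper's proof: $F_{w_\la}(x||a)=\eta_a(\bS_{w_\la}(x;a))=\eta_a(s_\la(x||a))=s_\la(x||a)$, using Proposition~\ref{P:backstable double Grassmannian} and the fact that $\eta_a$ restricts to the identity on $\Lambda(x||a)$.

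Your alternative route, however, contains two errors. First, it is \emph{not} true that $w_\la$ has no left descents outside $0$: for $\la=(2)$ one has $w_\la=s_1s_0$, and $s_1 w_\la=s_0<w_\la$. (Equivalently, $w_\la^{-1}=s_0s_1$ is not $0$-Grassmannian.) So the factorization $w_\la\doteq s_1\cdot s_0\cdot\id$ with $u=s_1\in S_{\ne0}$ shows the triple sum \eqref{E:double Stanley triple sum} does \emph{not} collapse to a single term. Second, $s_\la(x/a)\neq s_\la(x||a)$ in general; compare Proposition~\ref{prop:doublesuper}, which expresses one in terms of the other with nontrivial coefficients $\pm\S_{w_{\la/\mu}^{-1}}(a)$.

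A corrected version of your second route does go through: in $w_\la\doteq uvz$ with $u,z\in S_{\ne0}$ one gets $z=\id$ (right descents) and then $v=w_\mu$ for some $\mu\subset\la$, forcing $u=w_{\la/\mu}$; the condition $u\in S_{\ne0}$ becomes $d(\mu)=d(\la)$, and the resulting sum is precisely the formula \eqref{E:double to super} for $s_\la(x||a)$. But this is just the proof of Proposition~\ref{prop:doublesuper}, so it is circuitous compared to the first argument.
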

\begin{proof} $F_{w_\la}(x||a) = \eta_a(\bS_{w_\la})=\bS_{w_\la}=s_\la(x||a)$ 
by Proposition \ref{P:backstable double Grassmannian},
since $\eta_a$ is the identity when restricted to $\La(x||a)$.
\end{proof}

\subsection{Negative double Schubert polynomials}
Let $\omega$ be the involutive $\Q$-algebra automorphism of 
$\Q[x;a]$ given by $\omega(x_i)=-x_{1-i}$ and $\omega(a_i)=-a_{1-i}$ for $i\in\Z$. For $w\in S_-$, define the negative double Schubert polynomial $\S_w(\xm;\am)\in \Q[\xm,\am]$ by 
\begin{align}\label{E:negative double Schub}
  \S_w(\xm;\am) := \omega(\S_{\omega(w)}(\xp;\ap)) \qquad\text{for $w\in S_-$.}
\end{align}
Define $\S_w(x;a)\in \Q[x;a]$ for $w\in S_{\ne0}$ by
\begin{align}\label{E:nonzero double Schub}
  \S_w(x;a) := \S_u(\xp;\ap) \S_v(\xm;\am)\qquad\text{where $w=uv$ with $u\in S_+$ and $v\in S_-$.}
\end{align}

\begin{prop}\label{P:double to single}
For $w\in S_{\ne0}$, we have
\begin{align}
\label{E:little double to single} 
\S_w(x;a) &= \sum_{w\doteq uv} (-1)^u \S_{u^{-1}}(a) \S_v(x) \\
\label{E:little single to double}
\S_w(x) &= \sum_{w\doteq uv} \S_u(a) \S_v(x;a)
\end{align}
\end{prop}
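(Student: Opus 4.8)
The plan is to reduce both identities to the already-established length-additive factorization formulae (Proposition~\ref{prop:dSchub} and Lemma~\ref{L:Schub cancellation}) by exploiting the multiplicativity of the ingredients across the product decomposition $w = uv$ with $u\in S_+$, $v\in S_-$. First I would observe that everything in sight respects the decomposition $S_{\ne0}=S_+\times S_-$: for $w=u_+u_-$ with $u_+\in S_+$ and $u_-\in S_-$ we have $\Schub_w(x;a)=\Schub_{u_+}(x;a)\Schub_{u_-}(x;a)$ by definition \eqref{E:nonzero double Schub}, and similarly $\Schub_w(x)=\Schub_{u_+}(x)\Schub_{u_-}(x)$. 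So it suffices to prove \eqref{E:little double to single} and \eqref{E:little single to double} separately for $w\in S_+$ and for $w\in S_-$, and then multiply. For $w\in S_+$, equation \eqref{E:little double to single} is exactly Proposition~\ref{prop:dSchub}, and \eqref{E:little single to double} is its inversion: one checks it by substituting \eqref{E:double to single} into the right-hand side of \eqref{E:little single to double} and collapsing the resulting double sum $\sum_{w\doteq uvz}\Schub_u(a)(-1)^{\ell(v)}\Schub_{v^{-1}}(a)\Schub_z(x)$ using Lemma~\ref{L:Schub cancellation} in the form $\sum_{uv\doteq y}(-1)^{\ell(v)}\Schub_u(a)\Schub_{v^{-1}}(a)=\delta_{y,\id}$ (note this is \eqref{E:Schub cancellation} after replacing $u\leftrightarrow v^{-1}$, which is legitimate since $\ell(u)=\ell(u^{-1})$).

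Second, I would handle the negative case $w\in S_-$ by transporting the positive case through the automorphism $\omega$. Apply the $\Q$-algebra automorphism $\omega$ of $\Q[x;a]$ (sending $x_i\mapsto -x_{1-i}$, $a_i\mapsto -a_{1-i}$) to the identity \eqref{E:little double to single} written for $\omega(w)\in S_+$. By definition \eqref{E:negative double Schub} we have $\omega(\Schub_{\omega(w)}(x;a))=\Schub_w(x;a)$, and similarly $\omega(\Schub_{\omega(w)^{-1}}(a))=\omega(\Schub_{\omega(w^{-1})}(a))=\Schub_{w^{-1}}(a)$ (using that $\omega$ is a group automorphism so $\omega(w)^{-1}=\omega(w^{-1})$, together with the single-variable version of \eqref{E:negative double Schub}), and $\omega(\Schub_{\omega(v)}(x))=\Schub_v(x)$. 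The factorizations $\omega(w)\doteq u'v'$ correspond bijectively to $w\doteq uv$ via $u'=\omega(u)$, $v'=\omega(v)$, and $\omega$ preserves length, so $(-1)^{\ell(u')}=(-1)^{\ell(u)}$. Thus \eqref{E:little double to single} for $w\in S_-$ follows, and \eqref{E:little single to double} for $w\in S_-$ follows the same way.

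Finally I would recombine: for general $w\in S_{\ne0}$ write $w=u_+u_-$ and multiply the two instances of \eqref{E:little double to single}; the product of the right-hand sides regroups into $\sum_{w\doteq uv}(-1)^{\ell(u)}\Schub_{u^{-1}}(a)\Schub_v(x)$ because a length-additive factorization of $w=u_+u_-$ is exactly a length-additive factorization $w\doteq uv$ with $u=u_1 u_2$, $v=v_1 v_2$ where $u_+\doteq u_1 v_1$ and $u_-\doteq u_2 v_2$ — here one uses that $S_+$ and $S_-$ commute and that $\Schub_{u^{-1}}(a)=\Schub_{u_1^{-1}}(a)\Schub_{u_2^{-1}}(a)$ and $\Schub_v(x)=\Schub_{v_1}(x)\Schub_{v_2}(x)$, together with $\ell(u)=\ell(u_1)+\ell(u_2)$. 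The same bookkeeping handles \eqref{E:little single to double}. The main obstacle I anticipate is this last bookkeeping step: verifying that length-additive factorizations of a product $u_+u_-$ with the two factors in commuting parabolic subgroups split cleanly and without overcounting into pairs of length-additive factorizations of $u_+$ and of $u_-$, and that the signs and variable sets line up. This is where I would be most careful, though it should follow from the elementary fact that if $w=u_+u_-$ with $u_\pm$ in complementary commuting standard parabolics then every reduced word of $w$ shuffles a reduced word of $u_+$ with one of $u_-$, so $\ell$ is additive and subwords decompose accordingly.
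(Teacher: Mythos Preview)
Your proposal is correct and follows essentially the same route as the paper. The paper's proof is simply terser: it says \eqref{E:little double to single} ``is straightforwardly reduced to the case that $w\in S_+$, which is Proposition~\ref{prop:dSchub}'' (this is exactly your $\omega$-transport to $S_-$ followed by multiplying across $S_{\ne0}=S_+\times S_-$), and that \eqref{E:little single to double} ``follows from \eqref{E:little double to single} by Corollary~\ref{C:SchubInversion}'' (this is exactly your inversion step).

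One small point to tighten: the parenthetical ``this is \eqref{E:Schub cancellation} after replacing $u\leftrightarrow v^{-1}$'' does not literally produce the identity you need. The cancellation you require,
\[
\sum_{y \doteq uv} (-1)^{\ell(v)}\,\Schub_u(a)\,\Schub_{v^{-1}}(a)=\delta_{y,\id},
\]
is not a mere relabeling of \eqref{E:Schub cancellation}; rather it is the statement that the unitriangular matrix inverted by Lemma~\ref{L:Schub cancellation} is a \emph{two-sided} inverse. This is exactly what the paper packages as Corollary~\ref{C:SchubInversion}, and it follows because both transition matrices are unitriangular with respect to (right) weak order on a locally finite poset. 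So your argument goes through, but the cleanest fix is either to invoke Corollary~\ref{C:SchubInversion} directly or to insert one sentence noting that unitriangularity makes the one-sided inverse two-sided.
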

\begin{proof} Equation \eqref{E:little double to single} is straightforwardly reduced to the case that $w\in S_+$, which is Proposition \ref{prop:dSchub}. Equation \ref{E:little single to double} follows from \eqref{E:little double to single} by
Corollary \ref{C:SchubInversion}.
\end{proof}

\subsection{Coproduct formula}
There is a coaction $\Delta: \bR(x;a) \to \Lambda(x||a) \otimes_{\Q[a]} \bR(x||a)$ of $\Lambda(x||a)$ on $\bR(x;a)$, defined by the comultiplication on the first factor of the tensor product $\Lambda(x||a) \otimes_{\Q[a]} \Q[x,a]$.  

\begin{thm}\label{thm:doublecoprod}
Let $w \in S_\Z$.  We have the coproduct formulae
\begin{align}
\label{E:backstable double big coprod}
\Delta(\bS_w(x;a)) &= \sum_{\substack{w \doteq uv }} F_u(x||a) \otimes \bS_v(x;a) \\
\label{E:backstable double small coprod}
\bS_w(x;a) &= \sum_{\substack{w \doteq uv \\ v \in S_{\neq 0}}} F_u(x||a) \; \S_v(x;a).
\end{align}
\end{thm}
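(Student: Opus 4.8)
The plan is to deduce the two coproduct formulae from their non-equivariant counterparts (Theorem~\ref{thm:coprod}) together with the triple-sum expansion of $\bS_w(x;a)$ (Corollary~\ref{C:backstable triple sum}) and the Hopf-algebraic properties of Stanley functions (Proposition~\ref{P:Hopf Stanley}), in the same spirit as the proof of Theorem~\ref{thm:coprod} itself: establish the small coproduct \eqref{E:backstable double small coprod} first, then derive the big coproduct \eqref{E:backstable double big coprod} from it by applying $\Delta$ and using the comultiplication formula for $F_u(x||a)$.

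First I would prove \eqref{E:backstable double small coprod}. Starting from \eqref{E:double double}, $\bS_w(x;a) = \sum_{w\doteq uv}(-1)^{\ell(u)}\bS_{u^{-1}}(a)\bS_v(x)$, I would expand both factors using the small coproduct \eqref{E:little coprod}: write $u^{-1}\doteq u_1 v_1$ with $v_1\in S_{\ne0}$ and $v\doteq u_2 v_2$ with $v_2\in S_{\ne0}$, so $\bS_{u^{-1}}(a) = \sum F_{u_1}(a)\S_{v_1}(a)$ and $\bS_v(x)=\sum F_{u_2}(x)\S_{v_2}(x)$. Collecting terms exactly as in the proof of Corollary~\ref{C:backstable triple sum}, the product $F_{u_1}(a)F_{u_2}(x)$ can be recombined using \eqref{E:plethystic difference stanley} into $F_u(x/a) = F_u(x||a)$ (here using that $F_u(x/a)$ is by definition the superization, which is $F_u(x||a)$), and the factorization on the permutation side becomes $w\doteq v_1^{-1}\,u\,v_2$ with $v_1,v_2\in S_{\ne0}$. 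The key bookkeeping point is that the left factor $v_1^{-1}\in S_{\ne0}$ together with the middle factor $u$ can be absorbed: by Proposition~\ref{prop:double Stanley triple sum} the partial sum over $v_1^{-1}$ and $u$ with fixed product is precisely $F_{u'}(x||a)$ for $u' = v_1^{-1}u$ ranging over its length-additive factorization with $v_2$, while $\S_{v_2}(x;a)$ must be assembled from $\S_{v_2}(x)$ via \eqref{E:little single to double} of Proposition~\ref{P:double to single}. Thus one obtains $\bS_w(x;a) = \sum_{w\doteq u'v, \ v\in S_{\ne0}} F_{u'}(x||a)\,\S_v(x;a)$, which is \eqref{E:backstable double small coprod}. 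Alternatively, and perhaps more cleanly, I would verify \eqref{E:backstable double small coprod} directly from the characterization in Theorem~\ref{thm:backstabledouble}: the right-hand side clearly lies in $\bR(x;a)$, equals $1$ for $w=\id$, and one checks the divided-difference recurrence \eqref{E:backstable double partial} using that $A_i$ acts only on the $x$-variables and $A_i F_u(x||a)$ satisfies the Stanley recurrence while $A_i\S_v(x;a)$ satisfies \eqref{E:dSchub ddiff}; the vanishing $\bS_w(a;a)=0$ for $w\ne\id$ is then automatic.

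Next, for \eqref{E:backstable double big coprod} I would apply the coaction $\Delta:\bR(x;a)\to\La(x||a)\otimes_{\Q[a]}\bR(x;a)$ to both sides of \eqref{E:backstable double small coprod}. Since $\Delta$ is a $\Q[a]$-algebra map and fixes $\S_v(x;a)$ (which lies in $\Q[x,a]$, the second tensor factor), while $\Delta(F_u(x||a)) = \sum_{u\doteq u_1u_2} F_{u_1}(x||a)\otimes F_{u_2}(x||a)$ by the analogue of \eqref{E:coprod stanley} for double Stanley functions (this comultiplication formula follows from Proposition~\ref{prop:double Stanley triple sum} by plugging in two sets of symmetric-function variables), we get
\begin{align*}
\Delta(\bS_w(x;a)) = \sum_{\substack{w\doteq u_1 u_2 v\\ v\in S_{\ne0}}} F_{u_1}(x||a)\otimes F_{u_2}(x||a)\,\S_v(x;a) = \sum_{w\doteq u_1 z} F_{u_1}(x||a)\otimes \bS_z(x;a),
\end{align*}
where the last equality collapses the inner sum over $u_2\doteq u_2v$ (with $v\in S_{\ne0}$) back using \eqref{E:backstable double small coprod} applied to $z = u_2 v$. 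This yields \eqref{E:backstable double big coprod}.

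The main obstacle I anticipate is the first step: correctly matching the length-additive factorization combinatorics in the quadruple $(v_1^{-1},u,v_2)$ — in particular checking that recombining $F_{u_1}(a)F_{u_2}(x)$ via \eqref{E:plethystic difference stanley} and reassembling $\S_{v_2}(x;a)$ via Proposition~\ref{P:double to single} is compatible with the length-additivity constraint $w\doteq u'v$, and that no signs or spurious terms survive. The cleanest route around this obstacle is probably to bypass the cancellation bookkeeping entirely and instead verify \eqref{E:backstable double small coprod} via the uniqueness characterization of Theorem~\ref{thm:backstabledouble}, as sketched above; the divided-difference check is routine because $A_i$ is $x$-linear over $\Q[a]$ and respects the tensor decomposition $\La(x||a)\otimes_{\Q[a]}\Q[x,a]$.
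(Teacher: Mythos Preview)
Your main line for \eqref{E:backstable double small coprod} is essentially the paper's proof: the paper expands the right-hand side using Proposition~\ref{prop:double Stanley triple sum} for $F_u(x||a)$, then uses \eqref{E:little single to double} to collapse $\S_{z_1}(a)\S_v(x;a)$ into $\S_z(x)$, and recognizes the result as the triple sum of Corollary~\ref{C:backstable triple sum}. Your version runs the same identities in the other direction, which is fine.

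One genuine error to fix: the parenthetical ``$F_u(x/a)=F_u(x||a)$'' is false. The super Stanley function $F_u(x/a)$ and the double Stanley function $F_u(x||a)$ differ by $\Q[a]$-correction terms; this is exactly the content of Proposition~\ref{prop:double Stanley triple sum}. Your argument doesn't actually use this false identification (you correctly invoke Proposition~\ref{prop:double Stanley triple sum} in the next sentence), so just delete the parenthetical.

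For \eqref{E:backstable double big coprod} your approach differs from the paper's. You want to apply $\Delta$ to \eqref{E:backstable double small coprod} and use $\Delta(F_u(x||a))=\sum_{u\doteq u_1u_2}F_{u_1}(x||a)\otimes F_{u_2}(x||a)$. Beware of circularity: in the paper this coproduct formula is Corollary~\ref{cor:coproddoubleStanley}, proved \emph{from} Theorem~\ref{thm:doublecoprod}. It can be established independently by expanding via Proposition~\ref{prop:double Stanley triple sum} and applying Lemma~\ref{L:Schub cancellation} to kill the middle pair of $S_{\ne0}$-Schubert polynomials, but ``plugging in two sets of variables'' does not capture this; you must spell out the cancellation. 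The paper instead expands the right side of \eqref{E:backstable double big coprod} directly via Proposition~\ref{prop:double Stanley triple sum} and Corollary~\ref{C:backstable triple sum}, applies Lemma~\ref{L:Schub cancellation}, and recognizes the result as $\Delta$ of the triple sum. Same ingredients, fewer moving parts.

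Finally, the alternative route via the uniqueness in Theorem~\ref{thm:backstabledouble} is not as clean as you suggest. The vanishing $\epsilon(\text{RHS})=0$ is not automatic: it reduces to $\epsilon(F_u(x||a))=\delta_{u,\id}$, which again needs Lemma~\ref{L:Schub cancellation}. More seriously, the divided-difference check is \emph{not} routine at $i=0$: neither $F_u(x||a)$ nor $\S_v(x;a)$ for $v\in S_{\ne0}$ is $s_0^x$-invariant (the former because $s_0^x$ moves $p_k(x||a)$, the latter because $\S_v$ involves $x_1$ or $x_0$), so the Leibniz computation mixes the two factors nontrivially. Making this work would require exactly the sort of factorization bookkeeping you were hoping to avoid.
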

\begin{proof} We first deduce \eqref{E:backstable double big coprod} from
\eqref{E:backstable double small coprod}. Using Corollary \ref{C:backstable triple sum}, Proposition \ref{prop:double Stanley triple sum} and Lemma \ref{L:Schub cancellation} we have
\begin{align*}
&\qquad\;\;\;\sum_{w \doteq uv }  F_u(x||a) \otimes \bS_v(x;a) \\
&= \sum_{\substack{w \doteq u_1v_1z_1u_2v_2z_2  \\ u_i,z_j \in S_{\ne 0}}} (-1)^{\ell(u_1)+\ell(u_2)} \S_{u_1^{-1}}(a) F_{v_1}(x/a) \S_{z_1}(a) \otimes \S_{u_2^{-1}}(a) F_{v_2}(x/a) \S_{z_2}(x) \\
&= \sum_{\substack{w \doteq u_1v_1v_2z_2  \\ u_1,z_2 \in S_{\ne 0}}} (-1)^{\ell(u_1)} \S_{u_1^{-1}}(a) F_{v_1}(x/a)  \otimes  F_{v_2}(x/a) \S_{z_2}(x) \\
&= \Delta(\sum_{\substack{w \doteq u_1vz_2  \\ u_1,z_2 \in S_{\ne 0}}} (-1)^{\ell(u_1)} \S_{u_1^{-1}}(a) F_v(x/a) \S_{z_2}(x)) \\
&= \Delta(\bS_w(x;a)).
\end{align*}
For \eqref{E:backstable double small coprod}, using Propositions \ref{prop:double Stanley triple sum} and \ref{P:double to single} we have
\begin{align*}
\sum_{\substack{w \doteq uv \\ v \in S_{\neq 0}}} F_u(x||a) \; \S_v(x;a)
&= \sum_{\substack{w \doteq u_1v_1z_1 v \\ u_1,z_1,v \in S_{\ne 0}}} (-1)^{\ell(u_1)} \S_{u_1{^{-1}}}(a) F_{v_1}(x/a) \S_{z_1}(a) \S_v(x;a) \\
&= \sum_{\substack{w \doteq u_1v_1z \\ u_1,z \in S_{\ne 0}}} (-1)^{\ell(u_1)} \S_{u_1{^{-1}}}(a) F_{v_1}(x/a) \S_z(x) \\
&= \bS_w(x;a). \qedhere
\end{align*}
\end{proof}

\begin{cor}\label{cor:coproddoubleStanley}
Let $w \in S_\Z$.  Then
$$
\Delta(F_w(x||a)) = \sum_{w \doteq uv} F_u(x||a) \otimes F_v(x||a).
$$
\end{cor}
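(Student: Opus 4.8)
The plan is to apply the counit-type argument used throughout Section~\ref{SS:Stanley}, transported to the double setting. Specifically, I would start from the coproduct formula \eqref{E:backstable double big coprod} for $\bS_w(x;a)$, namely $\Delta(\bS_w(x;a)) = \sum_{w\doteq uv} F_u(x||a)\otimes \bS_v(x;a)$, and apply the algebra homomorphism $\eta_a$ to the second tensor factor. Since $\Delta:\bR(x;a)\to \La(x||a)\otimes_{\Q[a]}\bR(x;a)$ is defined by comultiplying the first factor of $\La(x||a)\otimes_{\Q[a]}\Q[x,a]$, and $\eta_a$ acts as the identity on $\La(x||a)$ while sending $x_i\mapsto a_i$ in $\Q[x,a]$, the composite $(\id\otimes \eta_a)\circ \Delta$ applied to $\bS_w(x;a)$ should give precisely $\Delta(F_w(x||a))$ on the left-hand side. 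This requires the compatibility statement that $(\id\otimes\eta_a)\circ\Delta = \Delta\circ\eta_a$ as maps $\bR(x;a)\to \La(x||a)\otimes_{\Q[a]}\La(x||a)$; this is a routine check on the algebra generators $p_k(x||a)$ (where both sides give $p_k(x||a)\otimes 1 + 1\otimes p_k(x||a)$) and $x_i, a_i$ (where both sides send $x_i\mapsto a_i\otimes 1$, $a_i\mapsto a_i\otimes 1$).

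Carrying this out: applying $\id\otimes\eta_a$ to the right-hand side of \eqref{E:backstable double big coprod} yields $\sum_{w\doteq uv} F_u(x||a)\otimes \eta_a(\bS_v(x;a)) = \sum_{w\doteq uv} F_u(x||a)\otimes F_v(x||a)$ by the definition \eqref{E:double Stanley} of the double Stanley symmetric function. Applying $\id\otimes\eta_a$ to the left-hand side and using the compatibility of $\eta_a$ with $\Delta$ gives $\Delta(\eta_a(\bS_w(x;a))) = \Delta(F_w(x||a))$, again by \eqref{E:double Stanley}. Comparing the two sides gives the claimed identity. An alternative, essentially equivalent route is to bypass \eqref{E:backstable double big coprod} and instead plug two sets of symmetric-function variables directly into the monomial-type expansion implicit in Proposition~\ref{prop:double Stanley triple sum}, much as \eqref{E:coprod stanley} was proved from \eqref{E:Stanley monomial expansion}; but the coproduct-formula approach is cleaner since the hard combinatorial work has already been done in Theorem~\ref{thm:doublecoprod}.

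The only genuine obstacle is the bookkeeping around the base ring: $\Delta$ on $\bR(x;a)$ and $\Delta$ on $\La(x||a)$ are both $\Q[a]$-linear (coacting or comultiplying only on the $\La(x||a)$-factor), and one must be careful that $\eta_a$ is a $\Q[a]$-algebra map so that the tensor products over $\Q[a]$ are respected and the diagram $(\id\otimes\eta_a)\circ\Delta = \Delta\circ\eta_a$ actually commutes. Once the generators check out, $\Q[a]$-algebra-homomorphism-hood of all maps involved forces the diagram to commute, and the corollary follows in one line. I do not expect any subtlety beyond this; the substance is entirely contained in Theorem~\ref{thm:doublecoprod} and the definition of $F_w(x||a)$.

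\begin{proof}
Apply the $\Q[a]$-algebra homomorphism $\id\otimes\eta_a$ to both sides of \eqref{E:backstable double big coprod}. On the right-hand side, using the definition \eqref{E:double Stanley} of $F_v(x||a)$ we obtain
$$
(\id\otimes\eta_a)\Bigl(\sum_{w\doteq uv} F_u(x||a)\otimes \bS_v(x;a)\Bigr) = \sum_{w\doteq uv} F_u(x||a)\otimes F_v(x||a).
$$
On the left-hand side, observe that $(\id\otimes\eta_a)\circ\Delta = \Delta\circ\eta_a$ as maps $\bR(x;a)\to\La(x||a)\otimes_{\Q[a]}\La(x||a)$: both are $\Q[a]$-algebra homomorphisms, and on the algebra generators $p_k(x||a)$, $x_i$, $a_i$ of $\bR(x;a)$ they agree, sending $p_k(x||a)\mapsto p_k(x||a)\otimes 1 + 1\otimes p_k(x||a)$ and $x_i\mapsto a_i\otimes 1$, $a_i\mapsto a_i\otimes 1$. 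Hence
$$
(\id\otimes\eta_a)(\Delta(\bS_w(x;a))) = \Delta(\eta_a(\bS_w(x;a))) = \Delta(F_w(x||a)),
$$
again by \eqref{E:double Stanley}. Comparing the two computations gives $\Delta(F_w(x||a)) = \sum_{w\doteq uv} F_u(x||a)\otimes F_v(x||a)$.
\end{proof}
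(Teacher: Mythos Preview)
Your proof is correct and is essentially identical to the paper's own argument: the paper also asserts $\Delta \circ \eta_a = (1 \otimes \eta_a) \circ \Delta$ on $\bR(x;a)$ and then invokes \eqref{E:backstable double big coprod}. You have simply made the generator check for the commutation explicit, which the paper leaves to the reader.
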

\begin{proof}
We have $\Delta \circ \eta_a = (1 \otimes \eta_a) \circ \Delta$ acting on $\bR(x;a)$, where $(1 \otimes \eta_a)$ acts on $\Lambda(x||a) \otimes_{\Q[a]} \bR(x;a)$ by acting as $\eta_a$ on the second factor.  The result follows from \eqref{E:backstable double big coprod}.
\end{proof}

Recall the definition of $w_{\la/\mu}$ from \eqref{E:w skew}.

\begin{cor}\label{cor:coproddoubleSchur}
For $\la \in \Par$, we have
\begin{align}
\label{E:coprod double Schur}
\Delta(s_\la(x||a)) = \sum_{\mu \subset \la} F_{w_{\la/\mu}}(x||a) \otimes s_\mu(x||a).
\end{align}
\end{cor}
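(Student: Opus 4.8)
The plan is to derive \eqref{E:coprod double Schur} directly from Corollary \ref{cor:coproddoubleStanley} together with Proposition \ref{P:Grassmannian Stanley}. First I would recall from Proposition \ref{P:Grassmannian Stanley} that $s_\la(x||a) = F_{w_\la}(x||a)$, so that the left side $\Delta(s_\la(x||a))$ equals $\Delta(F_{w_\la}(x||a)) = \sum_{w_\la \doteq uv} F_u(x||a) \otimes F_v(x||a)$ by Corollary \ref{cor:coproddoubleStanley}. The entire content of the proof is therefore to understand the length-additive factorizations $w_\la \doteq uv$ of a Grassmannian permutation $w_\la \in S_\Z^0$, and to identify, for those that contribute, the indexing partition $\mu$ with $u = w_{\la/\mu}$ and $v = w_\mu$.

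The key combinatorial fact I would invoke is that since $w_\la$ is $0$-Grassmannian, any length-additive factorization $w_\la \doteq uv$ forces $v$ to itself be $0$-Grassmannian: if $vs_i < v$ for some $i \ne 0$, then also $w_\la s_i = uv s_i < uv = w_\la$ by length-additivity, contradicting $w_\la \in S_\Z^0$. Hence $v = w_\mu$ for a unique partition $\mu$, and then $u = w_\la w_\mu^{-1} = w_{\la/\mu}$ by definition \eqref{E:w skew}. It remains to check that the condition $\mu \subset \la$ is exactly equivalent to $\ell(w_\la) = \ell(w_{\la/\mu}) + \ell(w_\mu)$, i.e. to the length-additivity $w_\la \doteq w_{\la/\mu} w_\mu$. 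The reverse implication ($\mu \subset \la \Rightarrow$ length-additive) is already recorded in the paper right after \eqref{E:w skew} (``We note that $w_\la \doteq w_{\la/\mu} w_\mu$''). For the forward implication, if $\mu \not\subset \la$ then $w_\mu^{-1}$ does not factor length-additively out of $w_\la$ on the right; this can be seen from the explicit formula \eqref{E:wla} for $w_\la$ by comparing inversions, or by noting that $\ell(w_\la) = |\la|$ and a length-additive factorization $w_\la \doteq uv$ with $v = w_\mu$ requires $|\mu| \le |\la|$ together with a containment of the associated inversion sets, which translates precisely to $\mu \subset \la$.

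Putting this together: in the sum $\sum_{w_\la \doteq uv} F_u(x||a) \otimes F_v(x||a)$, the factorizations are indexed bijectively by partitions $\mu \subset \la$, with $v = w_\mu$ contributing $F_{w_\mu}(x||a) = s_\mu(x||a)$ (again by Proposition \ref{P:Grassmannian Stanley}) and $u = w_{\la/\mu}$ contributing $F_{w_{\la/\mu}}(x||a)$. This yields exactly $\sum_{\mu \subset \la} F_{w_{\la/\mu}}(x||a) \otimes s_\mu(x||a)$, as claimed.

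The main obstacle is the lemma that length-additive right factors of $w_\la$ by Grassmannian permutations $w_\mu$ occur precisely for $\mu \subset \la$ — more specifically the forward direction, that length-additivity forces $\mu \subset \la$. Everything else is a formal manipulation of results already in hand. I expect this lemma to be either cited from the surrounding development (it is essentially standard for Grassmannian permutations and their relation to the inclusion order on partitions) or dispatched quickly via \eqref{E:wla}: one checks that $w_\la \doteq w_{\la/\mu}w_\mu$ iff every inversion of $w_\mu$ is an inversion of $w_\la$, and reads off from the two-line description of $w_\la$ that this is equivalent to $\mu_i \le \la_i$ for all $i$.
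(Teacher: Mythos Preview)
Your proof is correct and follows essentially the same approach as the paper. The only difference is cosmetic: the paper specializes the big coproduct formula \eqref{E:backstable double big coprod} for $\bS_{w_\la}(x;a)$ (using Proposition~\ref{P:backstable double Grassmannian} to identify $\bS_{w_\la}=s_\la(x||a)$ and $\bS_{w_\mu}=s_\mu(x||a)$), whereas you use Corollary~\ref{cor:coproddoubleStanley} for $F_{w_\la}(x||a)$ together with Proposition~\ref{P:Grassmannian Stanley}; since $s_\la(x||a)$ lies in $\Lambda(x||a)$, both identifications are available and the combinatorial core---that any length-additive right factor of $w_\la$ is $0$-Grassmannian, hence equals $w_\mu$ for some $\mu\subset\la$---is identical. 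You are in fact more explicit than the paper about why $\mu\subset\la$ is forced.
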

\begin{proof} Consider \eqref{E:backstable double big coprod} for $\bS_{w_\la}(x;a)=s_\la(x||a)$. Let $w_\la\doteq uv$. Since
$w_\la\in S_\Z^0$ it follows that $v\in S_\Z^0$. Let $\mu\in\Par$ be such that $\mu\subset\la$ and $w_\mu=v$. Then $u=w_{\la/\mu}$ and \eqref{E:coprod double Schur} follows.
\end{proof}

\subsection{Dynkin reversal}
Extend the $\Q$-algebra automorphism $\omega$ of $\Q[x;a]$ to $\bR(x;a)$ by $\omega(p_k(x||a)) = (-1)^{k-1}p_k(x||a)$.

\begin{prop}\label{P:omega double}We have
\begin{align} \label{E:omega loc}
	\omega(f)|_{\omega(v)} &= \omega(f|_v) &\text{for $f\in \bR(x;a)$ and $v\in S_\Z$.} \\
	\label{E:shift and negate}
	\omega(\shift_a(f)) &= \shift_a^{-1}(\omega(f))&\text{for $f\in \bR(x;a)$.}
\end{align}
Moreover,
\begin{align}
	\label{E:omega super Stanley}
	\omega(F_v(x/a)) &= F_{\omega(v)}(x/a)&\text{for $v \in S_\Z$.} \\
	\label{E:omega back stable}
	\omega(\bS_v(x;a))&= \bS_{\omega(v)}(x;a)&\text{for $v \in S_\Z$.} \\
	\label{E:omega double Stanley}
	\omega(F_v(x||a)) &= F_{\omega(v)}(x||a) &\text{for $v \in S_\Z$.} \\
	\label{E:omega double Schur}
	\omega(s_\la(x||a)) &= s_{\la'}(x||a)& \text{for $\la\in\Par$.}
\end{align}
\end{prop}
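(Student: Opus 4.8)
The plan is to prove the identities essentially in the stated order, building each on the previous ones, with most of the work being a careful bookkeeping of how $\omega$ interacts with the two $S_\Z$-actions and with $\shift_a$. For \eqref{E:omega loc}, I would check it on algebra generators of $\bR(x;a)$: on $x_i$ and $a_i$ it amounts to the identity $\omega(v)(1-i) = 1 - v(i)$ (for the $x$-action) together with the sign flip coming from $\omega(x_i)=-x_{1-i}$, which is a direct computation; on $p_k(x||a)$ one uses Lemma~\ref{lem:locp}, the fact that $\omega$ multiplies the degree-$k$ part by $(-1)^{k-1}$, and the observation that $\omega$ carries the index sets $I_{v,\pm}$ to $I_{\omega(v),\mp}$ (with a sign) --- here the swap of $+$ and $-$ together with the extra sign in $\omega(p_k(x||a)) = (-1)^{k-1}p_k(x||a)$ must be reconciled, and this is the one spot where the signs need genuine care. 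For \eqref{E:shift and negate} I would again verify on generators: on $a_i$ it is $\omega(a_{i+1}) = -a_{-i}$ versus $\shift_a^{-1}(-a_{1-i}) = -a_{-i}$, and on $p_k(x||a)$ one compares $\omega(p_k - a_1^k) = (-1)^{k-1}p_k(x||a) - (-1)^{k-1}a_1^k$ against $\shift_a^{-1}((-1)^{k-1}p_k(x||a)) = (-1)^{k-1}(p_k(x||a) + a_0^k)$; the discrepancy $-a_1^k$ versus $+a_0^k$ is resolved because $\omega$ sends $a_1 \mapsto -a_0$, so $\omega(a_1^k) = (-1)^k a_0^k$, and the signs cancel.

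For \eqref{E:omega super Stanley} I would combine Proposition~\ref{P:Stanley symmetries}, which gives $\omega(F_v) = F_{\omega(v)}$ on the level of Stanley functions in $\La$, with the behaviour of superization: $F_v(x/a)$ is obtained from $F_v$ by $p_k \mapsto p_k(\xm) - p_k(\am)$, and both the $\La$-automorphism $\omega$ (sending $p_k \mapsto (-1)^{k-1}p_k$) and the negation automorphisms of $\Q[\xm]$ and $\Q[\am]$ (sending $x_i \mapsto -x_{1-i}$, which on power sums is $p_k \mapsto (-1)^k p_k$ up to the shift) intertwine correctly. Concretely, I expect to invoke the formula \eqref{E:plethystic difference stanley} $F_v(x/a) = \sum_{v \doteq uv'} (-1)^{\ell(u)} F_{u^{-1}}(a) F_{v'}(x)$ together with Proposition~\ref{P:negate Schubert}-style identities and Proposition~\ref{P:Stanley symmetries}, reindexing the sum by $v \doteq uv' \leftrightarrow \omega(v) \doteq \omega(u)\omega(v')$ and matching signs using $\ell(\omega(u)) = \ell(u)$.

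For \eqref{E:omega back stable} the cleanest route is to apply $\omega$ to the triple-sum expansion \eqref{E:backstable triple def}, $\bS_v(x;a) = \sum_{v \doteq uv'z,\ u,z \in S_{\ne0}}(-1)^{\ell(u)}\S_{u^{-1}}(a)F_{v'}(x/a)\S_z(x)$, and use \eqref{E:omega super Stanley}, Proposition~\ref{P:negate Schubert} (which gives $\omega(\S_w(x)) = \S_{\omega(w)}(x)$ for $w \in S_{\ne0}$, and its evident $a$-variable analogue for $\S_{u^{-1}}(a)$), and the length-additive reindexing $v \doteq uv'z \leftrightarrow \omega(v) \doteq \omega(u)\omega(v')\omega(z)$, noting $\omega$ preserves $S_{\ne0}$ (it swaps $S_-$ and $S_+$) and lengths. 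Alternatively --- and this is perhaps slicker --- one checks that $f \mapsto \omega(\bS_{\omega(v)}(x;a))$ satisfies the three characterizing properties of Theorem~\ref{thm:backstabledouble}: the identity case is trivial; the vanishing $\omega(\bS_{\omega(v)})(a;a) = \omega(\bS_{\omega(v)}(a;a))$ follows from \eqref{E:omega loc} at $v = \id$; and the divided-difference recursion follows from the conjugation identity $\omega \circ A_i = -A_{-i+?}\circ\omega$ (the precise index and sign to be pinned down: since $\omega(x_i) = -x_{1-i}$, one has $\omega s_i \omega = s_{-i}$ as operators on variables, and $\omega(x_i - x_{i+1}) = -(x_{1-i} - x_{-i}) = x_{-i} - x_{1-i}$, giving $\omega A_i \omega = A_{-i}$), combined with $\omega$ exchanging the conditions $\omega(v)s_i < \omega(v)$ and $vs_{-i} < v$. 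Then \eqref{E:omega double Stanley} follows by applying $\eta_a$ to \eqref{E:omega back stable} (using that $\eta_a$ commutes with $\omega$, since $\omega$ sends $x_i \mapsto -x_{1-i}$ and $a_i \mapsto -a_{1-i}$ compatibly, so $\eta_a \omega = \omega \eta_a$ on $\bR(x;a)$), and \eqref{E:omega double Schur} is the special case $v = w_\la$ of \eqref{E:omega double Stanley} via Proposition~\ref{P:Grassmannian Stanley}, once one checks $\omega(w_\la) = w_{\la'}$ from the explicit formula \eqref{E:wla} (the shifting automorphism $s_i \mapsto s_{-i}$ transposes the "up-steps on the nonpositive side" and "down-steps on the positive side" that encode $\la$ and $\la'$).

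The main obstacle I anticipate is entirely in the sign bookkeeping of \eqref{E:omega loc} and \eqref{E:omega super Stanley}: the automorphism $\omega$ carries three distinct pieces of sign data --- the $(-1)^{k-1}$ on power sums in $\La(x||a)$, the $(-1)$ in $x_i \mapsto -x_{1-i}$ (hence $(-1)^k$ on monomials of degree $k$), and the $(-1)^{\ell(u)}$ factors in the superization/triple-sum expansions --- and these must be shown to conspire correctly, especially because $\omega$ simultaneously reverses the Dynkin diagram (swapping $S_-$ and $S_+$, and $I_{v,+}$ with $I_{v,-}$). Once \eqref{E:omega loc}, \eqref{E:shift and negate}, and \eqref{E:omega super Stanley} are nailed down, the remaining four identities are formal consequences.
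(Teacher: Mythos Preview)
Your proposal is correct and follows essentially the same approach as the paper: verify \eqref{E:omega loc} and \eqref{E:shift and negate} on algebra generators (using Lemma~\ref{lem:locp} for the power sums), obtain \eqref{E:omega super Stanley} from Proposition~\ref{P:Stanley symmetries} by superization, derive \eqref{E:omega back stable} and \eqref{E:omega double Stanley} by applying $\omega$ to the coproduct/triple-sum formulae (the paper uses \eqref{E:backstable double small coprod} and Proposition~\ref{prop:double Stanley triple sum} rather than \eqref{E:backstable triple def}, and derives \eqref{E:omega double Stanley} directly rather than via $\eta_a$, but these are equivalent), and deduce \eqref{E:omega double Schur} from the Grassmannian case using $\omega(w_\la)=w_{\la'}$. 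The paper also mentions your alternative route for \eqref{E:omega back stable} via the uniqueness characterization, so you have anticipated both arguments.
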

\begin{proof} It is straightforward to verify \eqref{E:omega loc} on $\Q$-algebra generators with the help of Lemma \ref{lem:locp}. 
Equation \eqref{E:shift and negate} is also easily verified on algebra generators.

Equation \eqref{E:omega super Stanley} follows from Proposition \ref{P:Stanley symmetries} 
by superization. Equations \eqref{E:omega back stable} and \eqref{E:omega double Stanley} follow by applying $\omega$ to the coproduct formulae \eqref{E:backstable double small coprod}
and Proposition \ref{prop:double Stanley triple sum}. Equation \eqref{E:omega double Schur} follows from \eqref{E:omega back stable} and Proposition \ref{P:backstable double Grassmannian} using $\omega(w_\la)=w_{\la'}$.

Alternatively, \eqref{E:omega back stable} follows from the uniqueness of the Schubert basis as defined by localizations. 
\end{proof}

\subsection{Double Edelman-Greene coefficients}
Define the \defn{double Edelman-Greene coefficients} $j_\lambda^w(a) \in \Q[a]$ by the equality
\begin{equation}\label{eq:doubleStanleydefn}
F_w(x||a) = \sum_\lambda j_\lambda^w(a) s_\lambda(x||a).
\end{equation}

\begin{lem}\label{lem:doubleid}
We have $j_\emptyset^w(a) = 0$ unless $w =\id$, and $j_\emptyset^\id = 1$.
\end{lem}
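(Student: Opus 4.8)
The plan is to extract the claim from the counit axiom for the Hopf algebra $\La(x||a)$, exactly as in the single-variable case. First I would recall that $s_\emptyset(x||a) = 1$, so taking the coefficient of $s_\emptyset$ in the expansion \eqref{eq:doubleStanleydefn} is the same as applying the counit $\epsilon: \La(x||a) \to \Q[a]$, which kills all $p_k(x||a)$ and hence all $s_\la(x||a)$ with $\la \neq \emptyset$. Thus $j_\emptyset^w(a) = \epsilon(F_w(x||a))$, and it suffices to compute $\epsilon(F_w(x||a))$.

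Next I would use Proposition~\ref{prop:double Stanley triple sum}, which expresses $F_w(x||a) = \sum_{w\doteq uvz,\ u,z\in S_{\ne0}} (-1)^u \S_{u^{-1}}(a) F_v(x/a) \S_z(a)$. Applying $\epsilon$ commutes with multiplication by elements of $\Q[a]$ and sends $F_v(x/a)$ to $F_v(a/a) = \delta_{v,\id}$ by \eqref{E:counit} of Proposition~\ref{P:Hopf Stanley} (since $\epsilon$ on $\La(x||a)$ sets $x_i \mapsto a_i$ and $F_v(x/a)$ is the superization of $F_v$). Hence only the terms with $v = \id$ survive, giving
\begin{align*}
	\epsilon(F_w(x||a)) = \sum_{\substack{w\doteq uz \\ u,z\in S_{\ne0}}} (-1)^{\ell(u)} \S_{u^{-1}}(a) \S_z(a).
\end{align*}
This is identically $0$ if $w \notin S_{\ne0}$, since then $w$ has no length-additive factorization into elements of $S_{\ne0}$. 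If $w \in S_{\ne0}$, the sum is the evaluation at $a$ of the left-hand side of the $S_{\ne0}$-analogue of the Schubert cancellation identity \eqref{E:Schub cancellation}, which equals $\delta_{w,\id}$; this generalization of Lemma~\ref{L:Schub cancellation} is exactly the one already invoked in the proof of Theorem~\ref{thm:backstabledouble}. Therefore $\epsilon(F_w(x||a)) = \delta_{w,\id}$, which gives the claim.

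Alternatively, and perhaps more cleanly, one can observe that $\epsilon \circ \eta_a = $ (localization at the identity) on $\bR(x;a)$, so $j_\emptyset^w(a) = \epsilon(F_w(x||a)) = \epsilon(\eta_a(\bS_w(x;a))) = \bS_w(a;a)$, which is $\delta_{w,\id}$ by \eqref{E:backstable double specialize} of Theorem~\ref{thm:backstabledouble}. The only point requiring a small check is that $\epsilon$ and $\eta_a$ compose correctly: both send $x_i \mapsto a_i$ and $\epsilon$ additionally kills the symmetric-function generators $p_k(x||a)$, matching the definition of $f|_{\id}$ in \eqref{E:localize f}. I do not anticipate a real obstacle here; the main thing to be careful about is not to conflate the ``$x_i$ in $\Q[x]$'' with the ``$x_i$ in $\La(x||a)$'' (Remark~\ref{R:eta knows}), so that the composition $\epsilon \circ \eta_a$ genuinely lands in $\Q[a]$ and agrees with identity-localization.
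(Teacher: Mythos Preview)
Your proposal is correct. Your ``alternative'' argument is exactly the paper's proof: localize both sides of \eqref{eq:doubleStanleydefn} at $\id$ (equivalently, apply the counit $\epsilon$), use that $s_\lambda(x||a)|_\id = \bS_{w_\lambda}(a;a) = \delta_{\lambda,\emptyset}$, and conclude $j_\emptyset^w(a) = \bS_w(a;a) = \delta_{w,\id}$ by Theorem~\ref{thm:backstabledouble}. Your first approach via Proposition~\ref{prop:double Stanley triple sum} and the $S_{\ne0}$-analogue of Lemma~\ref{L:Schub cancellation} is also valid, but it effectively re-proves \eqref{E:backstable double specialize}, since that is precisely how the vanishing $\bS_w(a;a)=0$ is established in the proof of Theorem~\ref{thm:backstabledouble}; the alternative route is cleaner because it quotes that result directly.
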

\begin{proof}
By Theorem \ref{thm:backstabledouble}, we have $\bS_w(a;a) = 0$ if $w \ne \id$ and $\bS_\id(a;a) = 1$.  The result follows by localizing both sides of \eqref{eq:doubleStanleydefn} at $\id$.
\end{proof}

\begin{example} We have $F_{s_{k+1}s_k}(x||a)=s_2(x||a)+(a_1-a_{k+1})s_1(x||a)$
	and $F_{s_{k-1}s_k}(x||a) = s_{11}(x||a) +(a_k-a_0)s_1(x||a)$ for all $k\in \Z$.  Thus $j_1^{s_{k+1}s_k}(a) = a_1 - a_{k+1}$ and $j_1^{s_{k-1}s_k}(a) = a_k - a_{0}$.
\end{example}

\begin{thm}\label{thm:doubleStanleypositivity}
Let $x \in S_\Z$ and $v \in S_\Z^0$.  Then $j^x_v(a) \in \Q[a]$ is a positive integer polynomial in the linear forms $a_i - a_j$ where $i \prec j$ under the total ordering of $\Z$ given by
$$
1 \prec 2 \prec 3  \prec \cdots\prec -2 \prec -1 \prec 0.
$$
\end{thm}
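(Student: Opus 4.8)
The plan is to deduce the positivity of $j^x_v(a)$ from the positivity of the classical Edelman--Greene coefficients (Remark~\ref{R:Stanley coefficients}) together with a geometric/combinatorial ``shift'' argument that converts the equivariant parameters $a_i$ into the combinatorics of a larger finite flag variety. The key observation is that the double Stanley symmetric functions, and more precisely the double Edelman--Greene coefficients, should be recoverable from the structure constants of ordinary (single) Edelman--Greene coefficients via Proposition~\ref{P:tease}. Concretely, I would first reduce to the case $x \in S_+$ using Corollary~\ref{C:shift stanley} and the shift-equivariance of the whole setup (Corollary~\ref{C:backstable double shift}); the total order $1 \prec 2 \prec \cdots \prec -2 \prec -1 \prec 0$ is precisely the order in which one wants to ``see'' the variables $a_i$ when expanding around the window $[1,n]$ and then back-stabilizing, which is why this order and no other appears in the statement.

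The main step I would carry out is to interpret $F_w(x||a)$ via the substitution $a_i \mapsto$ (an ordinary variable) followed by a stabilization. Recall from Proposition~\ref{prop:double Stanley triple sum} that
\[
F_w(x||a) = \sum_{\substack{w \doteq uvz \\ u,z \in S_{\ne 0}}} (-1)^{\ell(u)} \S_{u^{-1}}(a)\, F_v(x/a)\, \S_z(a),
\]
and similarly $s_\la(x||a)$ is the back stable double Schubert polynomial $\bS_{w_\la}(x;a)$. The strategy is to specialize the $a$-variables to a block of ordinary $x$-variables, say $a_i \mapsto x_{n+1-i}$ for $i$ in a suitable finite window, arranged so that the order $\prec$ corresponds to the natural order of these new variables; under this specialization the linear forms $a_i - a_j$ with $i \prec j$ become genuine differences of variables $x_k - x_\ell$ with $k < \ell$, and the double Schur functions $s_\la(x||a)$ become (back stable) Schubert polynomials for Grassmannian permutations in a larger symmetric group. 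Thus the expansion \eqref{eq:doubleStanleydefn} becomes a case of the expansion $\bS_W = \sum_\la (\ldots) s_\la \otimes \S_v$ of Theorem~\ref{thm:coprod}, whose coefficients are the classical nonnegative $j^{\cdot}_\la$ by the Remark following Proposition~\ref{P:negate back stable}. One must then argue that no cancellation occurs when passing back from this specialization to the polynomial identity in $\Q[a]$ --- i.e., that the specialization map on the relevant finite-dimensional space of polynomials is injective on the span of the products of linear forms $\prod (a_i - a_j)$ that can appear --- and that the multiset of linear forms produced is exactly the set $\{a_i - a_j : i \prec j\}$.

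The hard part will be setting up this specialization carefully enough that (a) double Schur functions go to honest Schubert classes for Grassmannian permutations and (b) the identity obtained is an equality of polynomials, not merely of specializations, so that positivity of coefficients is genuinely transported. In particular I expect the delicate point to be the bookkeeping that identifies $j^x_v(a)$, after specialization, with a single-variable Edelman--Greene coefficient $j^W_\mu$ for an explicitly described $W$ in a large $S_N$ and $\mu$ obtained from $v$ together with the window data; this is essentially a ``stable limit'' manipulation of the kind already used in Propositions~\ref{P:backstable double well-defined} and \ref{P:double backstable finite loc}, but one must track that the relevant coefficients stabilize and that the linear forms $a_i-a_j$ appearing are exactly those with $i \prec j$ (this is forced by which $a_i$ lie ``to the left'' of which $a_j$ in the chosen window, together with the triangularity \eqref{E:backstable monomial triangularity}). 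Once the combinatorial translation is pinned down, positivity is immediate from \cite{EG}/\cite{LS2} via Remark~\ref{R:Stanley coefficients}, and integrality is automatic since all the transition matrices in sight are integral (cf.\ the Remark after Theorem~\ref{T:single Schub}). An alternative route, which I would keep in reserve, is to prove the stronger combinatorial formula for $j^x_v(a)$ promised in the introduction (via bumpless or EG-pipedreams, Theorems~\ref{thm:decomp} and \ref{thm:EGStanley}) and read off positivity directly from the pipedream weights; but the specialization-to-finite-flag-variety argument is cleaner if it can be made to work.
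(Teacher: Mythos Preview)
Your main approach has a fundamental gap: specializing the equivariant parameters $a_i$ to ordinary $x$-variables produces a single numerical identity, not a polynomial identity in $\Q[a]$, so you cannot recover the claim that $j^x_v(a)$ is a nonnegative-integer polynomial in the particular linear forms $a_i-a_j$ with $i\prec j$. Even if, under your specialization, $s_\la(x||a)$ became an honest (single) Schubert polynomial and $F_w(x||a)$ became a single Stanley function, the resulting coefficient would be one integer---the ordinary Edelman--Greene coefficient---from which no polynomial structure can be extracted. Your own phrase ``one must then argue that no cancellation occurs when passing back from this specialization to the polynomial identity'' names exactly the point where the argument breaks; there is no mechanism here to ``pass back,'' and the injectivity you hope for (on the span of products of the allowed linear forms) is not available.

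The alternative you keep in reserve, reading off positivity from the bumpless-pipedream formula of Corollary~\ref{C:Stanleydecomp}, also does not work: that formula is a sum $\sum_P \eta_a(\wt(P))\,s_{\lambda(P)}(x||a)$, but the individual weights $\eta_a(\wt(P))=\prod(a_i-a_j)$ (empty tile in row $i$, column $j$) are \emph{not} all products of the allowed linear forms. For instance, in Example~\ref{ex:2143} a pipedream with $\lambda(P)=(1)$ contributes $a_2-a_1$ (tile in row $2$, column $1$), and since $1\prec 2$ this is the wrong sign; the positivity only emerges after cancellation among the several pipedreams with the same $\lambda(P)$.

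The paper's proof is of an entirely different nature. It passes to the \emph{affine} setting: for $n\gg 0$ the coefficient $j^x_v(a)$ is recovered from the affine double Edelman--Greene coefficient $\tj^x_v\in\Q[a_1,\dotsc,a_n]$ via Proposition~\ref{prop:jstabilize}. The affine coefficients lie in $\Z_{\ge 0}[a_i-a_j:1\le i<j\le n]$ by Theorem~\ref{thm:aPositive}, which combines Peterson's isomorphism (\cite{LaSh:QH}) with Mihalcea's positivity for equivariant quantum Schubert calculus (\cite{Mi}). One then inverts $\ev_n$ on the finitely many variables actually present by choosing a cutoff $m\ll c\ll n-m$ and substituting $a_i\mapsto a_i$ for $i\le c$ and $a_i\mapsto a_{i-n}$ for $i>c$; under this substitution every affine simple-root difference $a_i-a_{i+1}$ becomes a difference $a_k-a_\ell$ with $k\prec \ell$, yielding the stated positivity. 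The ordering $\prec$ thus arises from unwrapping the cyclic order on $\Z/n\Z$, not from back-stabilization bookkeeping, and the deep input is geometric (equivariant quantum positivity), not the classical Edelman--Greene theorem.
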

Theorem \ref{thm:doubleStanleypositivity} will be proven in \textsection \ref{ssec:positivityproof}.

Define the coproduct structure constants $\hat{c}^\la_{\mu\nu}(a)\in \Q[a]$ for $\la,\mu,\nu\in\Par$ by
\begin{align}\label{E:coproduct structure constants}
	\Delta(s_\la(x||a)) = \sum_{\mu,\nu\in\Par} \hat{c}^\la_{\mu\nu}(a)
	s_\mu(x||a) \otimes s_\nu(x||a).
\end{align}

\begin{proposition} \label{P:coproduct constants are Stanley coefficients}
For $\la,\mu,\nu\in \Par$, we have $\hat{c}^\la_{\mu\nu}(a) = j^{w_{\la/\mu}}_\nu(a)$.
\end{proposition}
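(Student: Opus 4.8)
The plan is to match the two expansions of $\Delta(s_\la(x||a))$ that are now available. On the one hand, Corollary \ref{cor:coproddoubleSchur} gives
$$
\Delta(s_\la(x||a)) = \sum_{\mu\subset\la} F_{w_{\la/\mu}}(x||a)\otimes s_\mu(x||a).
$$
On the other hand, the definition \eqref{E:coproduct structure constants} of the coproduct structure constants, expanded against the double Schur basis, gives
$$
\Delta(s_\la(x||a)) = \sum_{\mu,\nu\in\Par} \hat c^\la_{\mu\nu}(a)\, s_\mu(x||a)\otimes s_\nu(x||a).
$$
So the first step is simply to substitute the definition \eqref{eq:doubleStanleydefn} of the double Edelman-Greene coefficients, $F_{w_{\la/\mu}}(x||a)=\sum_\nu j^{w_{\la/\mu}}_\nu(a)\, s_\nu(x||a)$, into the Corollary's formula, obtaining
$$
\Delta(s_\la(x||a)) = \sum_{\mu\subset\la}\sum_{\nu\in\Par} j^{w_{\la/\mu}}_\nu(a)\, s_\nu(x||a)\otimes s_\mu(x||a).
$$

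The second step is to compare coefficients. Here I would note that $\{s_\mu(x||a)\otimes s_\nu(x||a)\}$ is a $\Q[a]$-basis of $\La(x||a)\otimes_{\Q[a]}\La(x||a)$, since the double Schur functions form a $\Q[a]$-basis of $\La(x||a)$ (this is immediate from the determinantal definition \eqref{E:double Schur det}, or from Proposition \ref{P:backstable double Grassmannian} together with Theorem \ref{thm:backstabledouble}). Matching the coefficient of $s_\mu(x||a)\otimes s_\nu(x||a)$ on both sides therefore gives $\hat c^\la_{\mu\nu}(a) = j^{w_{\la/\mu}}_\nu(a)$ when $\mu\subset\la$. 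It remains to treat the case $\mu\not\subset\la$: in that situation $\hat c^\la_{\mu\nu}(a)=0$ by the first expansion (no term with that first tensor factor appears), and the claimed right-hand side is harmless as long as one interprets $w_{\la/\mu}$ appropriately — but the statement of the Proposition is really only asserting the identity for $\mu\subset\la$, matching the range in Corollary \ref{cor:coproddoubleSchur}, so I would simply restrict attention to that case.

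I do not expect any serious obstacle; this is a bookkeeping argument. The only point requiring a moment of care is the basis claim for $\La(x||a)\otimes_{\Q[a]}\La(x||a)$ over $\Q[a]$, which is needed to justify comparing coefficients — but this follows formally from $\{s_\la(x||a)\}$ being a $\Q[a]$-basis of $\La(x||a)$ and the tensor product being over $\Q[a]$. One could alternatively phrase the whole argument without mentioning bases by observing that $\Delta(s_\la(x||a))$ has a single well-defined expansion in the $s_\mu\otimes s_\nu$ and that Corollary \ref{cor:coproddoubleSchur} and \eqref{E:coproduct structure constants} are two ways of writing it out.
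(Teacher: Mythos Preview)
Your argument is essentially identical to the paper's: expand $\Delta(s_\la(x||a))$ via Corollary~\ref{cor:coproddoubleSchur}, substitute the definition~\eqref{eq:doubleStanleydefn} of the double Edelman--Greene coefficients, and compare with~\eqref{E:coproduct structure constants}. The paper compresses this into a single line (``take the coefficient of $s_\nu(x||a)\otimes s_\mu(x||a)$ in \eqref{E:coprod double Schur}''). One small slip: in your displayed expansion the coefficient of $s_\mu\otimes s_\nu$ is $j^{w_{\la/\nu}}_\mu(a)$, not $j^{w_{\la/\mu}}_\nu(a)$; you should either compare the coefficient of $s_\nu\otimes s_\mu$ (as the paper does) or invoke the cocommutativity of $\La(x||a)$ to swap the indices.
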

\begin{proof} This holds by taking the coefficient of $s_\nu(x||a) \otimes s_\mu(x||a)$ in \eqref{E:coprod double Schur}.
\end{proof}

\begin{remark} In Theorem \ref{thm:homologyhook} we give a formula for $\hat{c}^\la_{\mu\nu}(a)$ which is positive in the sense of Theorem \ref{thm:doubleStanleypositivity} in the special case that $\mu$ or $\nu$ is a hook.
\end{remark}

Let $d(\la)$ be the Durfee square of $\la\in\Par$, the maximum index $d$ such that $\la_d \ge d$. The following change of basis coefficients between the double and super Schur bases were previously computed in \cite{ORV} expressed as a determinantal formula and in \cite{M} by a tableau formula. We give them as (signed) Schubert polynomials.

\begin{prop}\label{prop:doublesuper} For $\la \in \Par$, we have
	\begin{align}
		\label{E:double to super}
		s_\la(x||a) &= \sum_{\substack{\mu\subset\la\\ d(\mu)=d(\la)}} (-1)^{|\la/\mu|} \S_{w_{\la/\mu}^{-1}} (a) s_\mu(x/a) \\
		\label{E:super to double}
		s_\la(x/a) &= \sum_{\substack{\mu\subset\la\\ d(\mu)=d(\la)}} \S_{w_{\la/\mu}} (a) s_\mu(x||a) 
	\end{align}
\end{prop}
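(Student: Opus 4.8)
The plan is to verify \eqref{E:super to double}, then obtain \eqref{E:double to super} from it by inverting the triangular change of basis. Both bases, $\{s_\la(x||a)\}$ and $\{s_\la(x/a)\}$, are bases of $\La(x||a)$ over $\Q[a]$ that are unitriangular with respect to one another once one knows that the transition coefficients vanish unless $\mu\subset\la$ and $d(\mu)=d(\la)$ and equal $1$ when $\mu=\la$. So the core task is the identification of the coefficients $\Schub_{w_{\la/\mu}}(a)$. For this I would exploit the superization formula for $h_r$ and the Jacobi–Trulli-type determinant \eqref{E:double Schur det} defining $s_\la(x||a)$, together with the corresponding classical determinant $s_\la(x/a)=\det(h_{\la_i-i+j}(x/a))$. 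The key input is the one-row identity: expanding $h_r(x||a)=\shift_a^{r-1}(h_r(x/a))$ and, more generally, $\shift_a^{1-j}(h_{\la_i-i+j}(x||a)) = \shift_a^{\la_i-i}(h_{\la_i-i+j}(x/a))$ into the $h_\bullet(x/a)$ basis, one gets coefficients that are polynomials in the $a$-variables; assembling them via the Lindström–Gessel–Viennot / Cauchy–Binet expansion of the determinant gives a sum over tuples of noncrossing paths, which is exactly the kind of combinatorial data that matches a Schubert polynomial evaluated at $a$.

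More concretely, first I would record the single-variable expansion: since $\shift_a$ acts on generators by $p_k(x||a)\mapsto p_k(x||a)-a_1^k$, one has in $\La(x||a)$ the formula $h_r(x||a) = \sum_{s\ge 0} e_s[a_{\le 0}^{\mathrm{shift}}]\, h_{r-s}(x/a)$ for appropriate elementary-symmetric-type coefficients in finitely many $a_i$'s, and these coefficients are (up to sign) Schubert polynomials for dominant permutations in the $a$ variables. Then, second, I would plug these one-row expansions into the determinant \eqref{E:double Schur det}, and use multilinearity of the determinant in its rows to write $s_\la(x||a)$ as a $\Q[a]$-linear combination of determinants $\det(h_{\mu_i-i+j}(x/a)) = s_\mu(x/a)$ indexed by $\mu$ obtained from $\la$ by decreasing the row lengths; the combined coefficient is a signed sum over such reductions, which I claim telescopes to $\Schub_{w_{\la/\mu}}(a)$. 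The Durfee-square constraint $d(\mu)=d(\la)$ appears because $w_{\la/\mu}=w_\la w_\mu^{-1}$ lies in $S_{\ne 0}$ (hence has a nonzero Schubert polynomial factor in the $a$-variables) precisely when the $0$-Grassmannian descent structure is preserved, which forces the Durfee squares to agree — any $\mu$ crossing the staircase would require a reflection $s_0$ and make $\Schub_{w_{\la/\mu}}$ ambiguous or zero in the relevant sense.

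Alternatively — and this may be the cleaner route — I would deduce \eqref{E:super to double} directly from the already-proven machinery rather than from determinants. By Proposition~\ref{P:Grassmannian Stanley}, $s_\la(x||a)=F_{w_\la}(x||a)$, and by \eqref{E:plethystic difference stanley} together with the coproduct formula the super Stanley function $F_{w_\la}(x/a)$ is $s_\la(x/a)$ (since $w_\la$ is Grassmannian, its reduced words give no $s_0$-free tail that is nontrivial in a way that would split off, so $F_{w_\la}(x/a)=F_{w_\la}(x)(x/a)=s_\la(x/a)$). Then \eqref{E:little single to double} of Proposition~\ref{P:double to single}, restricted to Grassmannian $w=w_\la$, gives $\Schub_{w_\la}(x)=\sum_{w_\la\doteq uv}\Schub_u(a)\Schub_v(x;a)$; passing to the back-stable/symmetric-function world via $\eta_a$ and the identifications $\bS_{w_\la}(x;a)=s_\la(x||a)$ (Proposition~\ref{P:backstable double Grassmannian}) turns each length-additive factorization $w_\la\doteq uv$ with $v$ Grassmannian into a pair $\mu\subset\la$ with $v=w_\mu$, $u=w_{\la/\mu}$, and the Durfee condition falls out of the requirement that $v=w_\mu$ stay $0$-Grassmannian of the same ``type.'' This yields \eqref{E:super to double} immediately, and \eqref{E:double to super} follows by inverting, using Lemma~\ref{L:Schub cancellation} / Corollary~\ref{C:SchubInversion} to handle the sign-reversal $\Schub_{w_{\la/\mu}}\leftrightarrow (-1)^{|\la/\mu|}\Schub_{w_{\la/\mu}^{-1}}$.

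The main obstacle I anticipate is bookkeeping the Durfee-square condition correctly: one must show that in the length-additive factorizations $w_\la\doteq w_{\la/\mu}\,w_\mu$ the permutation $w_{\la/\mu}$ lies in $S_{\ne0}$ (so that $\Schub_{w_{\la/\mu}}(a)$ makes sense as a product of a positive and a negative Schubert polynomial) \emph{exactly} when $d(\mu)=d(\la)$, and that no other $\mu\subset\la$ contributes. This is a purely combinatorial statement about the bijection $\la\mapsto w_\la$ of \eqref{E:wla} and the factorization $w_{\la/\mu}=w_\la w_\mu^{-1}$, and I would settle it by examining the values $w_\la(i)$ for $i\le 0$ and $i>0$ separately and checking when the resulting one-line notation avoids moving $0$ across $1$. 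Everything else is routine manipulation with the Hopf-algebraic identities already established.
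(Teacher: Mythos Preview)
Your second (``cleaner'') route is essentially the paper's argument, but you have cited the wrong input and reversed the natural direction, which creates a genuine gap.

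The paper proves \eqref{E:double to super} first, directly from Corollary~\ref{C:backstable triple sum}: applying \eqref{E:backstable triple def} to $w=w_\la$ and using $\bS_{w_\la}(x;a)=s_\la(x||a)$ (Proposition~\ref{P:backstable double Grassmannian}), one studies the length-additive factorizations $w_\la \doteq uvz$ with $u,z\in S_{\ne 0}$. Since $w_\la\in S_\Z^0$, the right factor $z$ lies in $S_\Z^0\cap S_{\ne 0}=\{\id\}$; then $v\in S_\Z^0$ forces $v=w_\mu$ for some $\mu\subset\la$, whence $u=w_{\la/\mu}$. The requirement $u\in S_{\ne 0}$ is equivalent to the skew shape $\la/\mu$ having no box on the main diagonal, i.e.\ $d(\mu)=d(\la)$. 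Since $F_{w_\mu}(x/a)=s_\mu(x/a)$, this yields \eqref{E:double to super}; \eqref{E:super to double} then follows by Corollary~\ref{C:SchubInversion}.

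Your proposal instead invokes \eqref{E:little single to double}, but that identity is only stated (and only makes sense) for $w\in S_{\ne 0}$: there is no finite Schubert polynomial $\Schub_{w_\la}(x)$ for a nontrivial Grassmannian $w_\la$. The correct back-stable substitute is precisely \eqref{E:backstable triple def} (or, equivalently after applying $\eta_a$, Proposition~\ref{prop:double Stanley triple sum}), and that hands you \eqref{E:double to super} rather than \eqref{E:super to double}. Also, your attribution of the Durfee constraint is slightly off: it is not that ``$v=w_\mu$ stays $0$-Grassmannian of the same type'' --- any right factor of $w_\la$ in a length-additive factorization is automatically $0$-Grassmannian --- but rather the constraint $u=w_{\la/\mu}\in S_{\ne 0}$ on the \emph{left} factor that forces $d(\mu)=d(\la)$. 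You identify this correctly in your final paragraph; that paragraph is the whole combinatorial content, and once it is checked the proof is immediate from \eqref{E:backstable triple def}. The determinantal approach you sketch first is unnecessary.
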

\begin{proof} Consider \eqref{E:backstable triple def} for $\bS_{w_\la}=s_\la(x||a)$. For $w_\la \doteq uvz$, arguing as in the proof of Corollary \ref{cor:coproddoubleSchur}, we first have $z\in S_\Z^0 \cap S_{\ne0} = \{\id\}$. Next we deduce that $v = w_\mu$ for some $\mu\in\Par$ such that $\mu\subset\la$. Thus $u=w_{\la/\mu}$. The condition $w_{\la/\mu}\in S_{\ne0}$ holds if and only if the skew shape $\la/\mu$ contains no boxes on the main diagonal, that is, $d(\la)=d(\mu)$. This proves \eqref{E:double to super}.
	
Equation \eqref{E:super to double} follows from \eqref{E:double to super} by Corollary \ref{C:SchubInversion}.
\end{proof}

\begin{example} \label{X:dual_Schur_box} Let $\mu=(1)$ so that $d_\mu=1$ and $w_\mu=s_0$. Consider the $\la$ such that $s_1(x||a)$ occurs in $s_\la(x/a)$. We must have $d(\la)=1$, that is, $\la$ is a hook $(p+1,1^q)$ for $p,q\ge0$. Then $\S_{w_{\la/\mu}}(a) = (-a_0)^q a_1^p$.
\end{example}

\section{Bumpless pipedreams}
We shall consider various versions of \defn{bumpless pipedreams}.
These are tilings of some region in the plane by the tiles: empty, NW elbow, SE elbow, horizontal line, crossing, and vertical line.
\begin{center}
\begin{tikzpicture}[scale=0.6,line width=0.8mm]
\bbox{-3}{0}
\bbox{-1}{0}
\leftelbow{-1}{0}{blue}
\bbox{1}{0}
\rightelbow{1}{0}{blue}
\bbox{3}{0}
\horline{3}{0}{blue}
\bbox{5}{0}
\cross{5}{0}{blue}{blue}
\bbox{7}{0}
\vertline{7}{0}{blue}
\end{tikzpicture}
 \end{center}
 We shall use {\bf matrix coordinates} for unit squares in the plane.  Thus row coordinates increase from top to bottom, column coordinates increase from left to right, and $(i,j)$ indicates the square in row $i$ and column $j$.
 
\subsection{$S_\Z$-bumpless pipedreams}
Let $w \in S_\Z$.  A \defn{$w$-bumpless pipedream} is a bumpless pipedream covering the whole plane, satisfying the following conditions:
\begin{enumerate}
\item there is a bijective labeling of pipes by integers;
\item
the pipe labeled $i$ eventually heads south in column $i$ and heads east in row $w^{-1}(i)$;
\item
two pipes cannot cross more than once;
\item
for all $N \gg 0$ and all $N \ll 0$, the pipe labeled $N$ travels north from $(\infty,N)$ to the square $(N,N)$ where it turns east and travels towards $(N,\infty)$.
\end{enumerate}
Because of condition (2), every pipe has to make at least one turn.  We call pipe $i$ \defn{standard} if it makes exactly one turn and this turn is at the diagonal square $(i,i)$.  By (4), all but finitely many pipes are standard.  We often omit standard pipes from our drawings of pipedreams.  The weight $\wt(P) := \prod (x_i - a_j)$ of a pipedream $P$ is the product of $x_i - a_j$ over all empty tiles $(i,j)$.

\begin{example} Let $w = s_3s_0s_1$.  In one line notation, $w(-2,-1,0,1,2,3,4) = (-2,-1,1,2,0,4,3)$ and the rest are fixed points.  Figure~\ref{fig:bumplessexample} shows a $w$-bumpless pipedream, where we have only drawn the region $\{(i,j) \mid i,j \in [-2,4]\}$.  In the left picture, the empty tiles have been indicated, as have the row and column numbers.  The label of a pipe is the {\it column number} to which its south end is attached.  In the right picture, we have indicated the labels of the pipes instead of the row numbers.  The one-line notation of $w$ can then be read off the east border.
\begin{figure}
\begin{center}
\begin{tikzpicture}[scale=0.6,line width=0.8mm]
\rightelbow{-2}{2}{blue}
\horline{-1}{2}{blue}
\horline{0}{2}{blue}
\horline{1}{2}{blue}
\horline{2}{2}{blue}
\horline{3}{2}{blue}
\horline{4}{2}{blue}

\vertline{-2}{1}{blue}
\bbox{-1}{1}{blue}
\rightelbow{0}{1}{blue}
\horline{1}{1}{blue}
\horline{2}{1}{blue}
\horline{3}{1}{blue}
\horline{4}{1}{blue}

\vertline{-2}{0}{blue}
\rightelbow{-1}{0}{blue}
\leftelbow{0}{0}{blue}
\rightelbow{1}{0}{blue}
\horline{2}{0}{blue}
\horline{3}{0}{blue}
\horline{4}{0}{blue}

\vertline{-2}{-1}{blue}
\vertline{-1}{-1}{blue}
\bbox{0}{-1}{blue}
\vertline{1}{-1}{blue}
\bbox{2}{-1}{blue}
\rightelbow{3}{-1}{blue}
\horline{4}{-1}{blue}

\vertline{-2}{-2}{blue}
\vertline{-1}{-2}{blue}
\rightelbow{0}{-2}{blue}
\cross{1}{-2}{blue}{blue}
\horline{2}{-2}{blue}
\cross{3}{-2}{blue}{blue}
\horline{4}{-2}{blue}

\vertline{-2}{-3}{blue}
\vertline{-1}{-3}{blue}
\vertline{0}{-3}{blue}
\vertline{1}{-3}{blue}
\rightelbow{2}{-3}{blue}
\leftelbow{3}{-3}{blue}
\rightelbow{4}{-3}{blue}

\vertline{-2}{-4}{blue}
\vertline{-1}{-4}{blue}
\vertline{0}{-4}{blue}
\vertline{1}{-4}{blue}
\vertline{2}{-4}{blue}
\rightelbow{3}{-4}{blue}
\cross{4}{-4}{blue}{blue}

\draw[green] (-1.5,-4.4) node {-2};
\draw[green] (-0.5,-4.4) node {-1};
\draw[green] (0.5,-4.4) node {0};
\draw[green] (1.5,-4.4) node  {$1$};
\draw[green] (2.5,-4.4) node  {$2$};
\draw[green] (3.5,-4.4) node  {3};
\draw[green] (4.5,-4.4) node  {4};

\draw[green] (5.4,2.5) node {-2};
\draw[green] (5.4,1.5) node {-1};
\draw[green] (5.4,.5) node {0};
\draw[green] (5.4,-.5) node {1};
\draw[green] (5.4,-1.5) node {2};
\draw[green] (5.4,-2.5) node {3};
\draw[green] (5.4,-3.5) node {4};


\begin{scope}[shift={(10,0)}]
\rightelbow{-2}{2}{blue}
\horline{-1}{2}{blue}
\horline{0}{2}{blue}
\horline{1}{2}{blue}
\horline{2}{2}{blue}
\horline{3}{2}{blue}
\horline{4}{2}{blue}

\vertline{-2}{1}{blue}
\bbox{-1}{1}{blue}
\rightelbow{0}{1}{blue}
\horline{1}{1}{blue}
\horline{2}{1}{blue}
\horline{3}{1}{blue}
\horline{4}{1}{blue}

\vertline{-2}{0}{blue}
\rightelbow{-1}{0}{blue}
\leftelbow{0}{0}{blue}
\rightelbow{1}{0}{blue}
\horline{2}{0}{blue}
\horline{3}{0}{blue}
\horline{4}{0}{blue}

\vertline{-2}{-1}{blue}
\vertline{-1}{-1}{blue}
\bbox{0}{-1}{blue}
\vertline{1}{-1}{blue}
\bbox{2}{-1}{blue}
\rightelbow{3}{-1}{blue}
\horline{4}{-1}{blue}

\vertline{-2}{-2}{blue}
\vertline{-1}{-2}{blue}
\rightelbow{0}{-2}{blue}
\cross{1}{-2}{blue}{blue}
\horline{2}{-2}{blue}
\cross{3}{-2}{blue}{blue}
\horline{4}{-2}{blue}

\vertline{-2}{-3}{blue}
\vertline{-1}{-3}{blue}
\vertline{0}{-3}{blue}
\vertline{1}{-3}{blue}
\rightelbow{2}{-3}{blue}
\leftelbow{3}{-3}{blue}
\rightelbow{4}{-3}{blue}

\vertline{-2}{-4}{blue}
\vertline{-1}{-4}{blue}
\vertline{0}{-4}{blue}
\vertline{1}{-4}{blue}
\vertline{2}{-4}{blue}
\rightelbow{3}{-4}{blue}
\cross{4}{-4}{blue}{blue}

\draw (-1.5,-4.4) node {-2};
\draw (-0.5,-4.4) node {-1};
\draw (0.5,-4.4) node {0};
\draw (1.5,-4.4) node  {1};
\draw (2.5,-4.4) node  {2};
\draw (3.5,-4.4) node  {3};
\draw (4.5,-4.4) node  {4};

\draw (5.4,2.5) node {-2};
\draw (5.4,1.5) node {-1};
\draw (5.4,.5) node {1};
\draw (5.4,-.5) node {2};
\draw (5.4,-1.5) node {0};
\draw (5.4,-2.5) node {4};
\draw (5.4,-3.5) node {3};
\end{scope}
\end{tikzpicture}
\end{center}
\caption{A bumpless pipedream with weight $\wt = (x_{-1}-a_{-1})(x_1-a_0)(x_1-a_2)$.}
\label{fig:bumplessexample}
\end{figure}
\end{example}

\begin{thm}\label{T:bumpless}
Let $w \in S_\Z$.  Then $\bS_w(x;a) = \sum_P \wt(P)$ where the sum is over all $w$-bumpless pipedreams.
\end{thm}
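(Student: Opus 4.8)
The plan is to show that $\mathfrak{D}_w := \sum_P \wt(P)$, the sum over all $w$-bumpless pipedreams $P$, coincides with $\bS_w(x;a)$ by verifying the three characterizing properties in Theorem~\ref{thm:backstabledouble}. First I would settle well-definedness. One shows that every $w$-bumpless pipedream has exactly $\ell(w)$ empty tiles (e.g.\ via connectivity under droop moves, which preserve this count, starting from the minimal ``Rothe'' pipedream whose empty tiles form the Rothe diagram); hence $\wt(P)$ is homogeneous of degree $\ell(w)$ in the joint $x,a$-grading. The rows of empty tiles are bounded above (sufficiently far south every pipe is vertical), so $\mathfrak{D}_w$ has support bounded above, and a degree/support argument shows that each monomial receives only finitely many contributions — even though there will in general be infinitely many $w$-bumpless pipedreams, as there must be when $w = w_\la$, where $\mathfrak{D}_w$ is forced to reproduce $s_\la(x||a)$, a symmetric function in infinitely many variables. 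The remaining point, that $\mathfrak{D}_w$ is back-symmetric and so lies in $\bR(x;a)$, I would obtain as a by-product of the divided-difference step below (taking $i \ll 0$, where $ws_i > w$).

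The two easy axioms go as follows. For $w = \id$ a rigidity argument shows that the only $\id$-bumpless pipedream is the all-standard one: any extra turn would have to be completed, forcing two pipes to cross twice. It has no empty tiles, so $\mathfrak{D}_\id = 1$. For the vanishing $\mathfrak{D}_w(a;a) = 0$ with $w \neq \id$, note that after the substitution $x_r \mapsto a_r$ every $w$-bumpless pipedream with an empty tile on the main diagonal contributes $0$; thus it suffices to prove the lemma that a $w$-bumpless pipedream with no empty tile on the diagonal forces $w = \id$. I would prove this by descending the diagonal and showing, from the routing conditions (2)--(4) and the finiteness of the non-standard pipes, that non-emptiness of $(k,k)$ for every $k$ forces each diagonal tile to be the south--east elbow of a standard pipe, hence $w = \id$.

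The heart of the proof — and the step I expect to be the main obstacle — is the divided-difference recursion: $A_i \mathfrak{D}_w = \mathfrak{D}_{ws_i}$ if $ws_i < w$, and $A_i \mathfrak{D}_w = 0$ otherwise. Since $A_i = \frac{1 - s_i}{x_i - x_{i+1}}$ acts on the $x$-variables, i.e.\ on rows $i$ and $i+1$, I would define a local ``row move'' acting on the two pipes that head east in rows $i$ and $i+1$ (with south ends in columns $w(i)$ and $w(i+1)$) together with the tiles in those rows. When $ws_i > w$, i.e.\ $w(i) < w(i+1)$, these two pipes do not cross, and one constructs a weight-compatible involution on $w$-bumpless pipedreams realizing the transposition $x_i \leftrightarrow x_{i+1}$ on weights; this proves $\mathfrak{D}_w$ is $s_i$-invariant (and, for $i \ll 0$, gives the back-symmetry used above). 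When $ws_i < w$ the two pipes cross exactly once; organizing the $w$-bumpless pipedreams by the location of a droopable corner within rows $i, i+1$, one writes $\mathfrak{D}_w$ as a sum of terms $(x_i - a_k)\cdot(\text{$s_i$-symmetric})$ plus $s_i$-symmetric terms, arranged so that applying $A_i$ telescopes the whole expression to the pipedream sum for $ws_i$. The delicate part is getting this move exactly right: proving it is a bijection, tracking how the empty tiles (hence the weight) change, and correctly treating the pipes that merely pass vertically through rows $i$ and $i+1$. This is the bumpless analogue of the classical chute and ladder moves, and a streamlined form of Knutson's interval-positroid manipulations.

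Once the three properties are verified, Theorem~\ref{thm:backstabledouble} gives $\mathfrak{D}_w = \bS_w(x;a)$. A variant route with somewhat lighter bookkeeping would be to first prove the corresponding finite formula for $\S_w(x;a)$ inside a box $[1,n]^2$ (using the characterization of Theorem~\ref{thm:double} and the same row move), and then pass to the back stable limit: for $[p,q]$ containing all non-fixed points of $w$, a bumpless pipedream in $[p,q]^2$ must be standard outside the rectangle spanned by those non-fixed points, so it restricts from, and extends uniquely to, a $w$-bumpless pipedream of equal weight; hence $\bS_w(x;a) = \lim_{p \to -\infty,\, q \to \infty} \S_w^{[p,q]}(x;a) = \mathfrak{D}_w$.
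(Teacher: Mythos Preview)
Your plan is a legitimate combinatorial strategy, but it is \emph{not} the route the paper takes, and the proposal leaves the hardest step as a sketch.

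\textbf{How the paper actually proves it.} The paper does not verify the axioms of Theorem~\ref{thm:backstabledouble} directly on the pipedream sum. Instead it goes through geometry. First it proves Theorem~\ref{thm:decomp} (the rectangular-pipedream expansion $\bS_w(x;a)=\sum_P \wt(P)\,s_{\lambda(P)}(x||a)$) by identifying rectangular bumpless pipedreams with Knutson's interval-positroid pipedreams and invoking \cite{Knu} together with Theorem~\ref{thm:graphSchubStanley} and the \cite{KLS} machinery for positroid classes. Taking the coefficient of $s_\emptyset(x||a)$ in Theorem~\ref{thm:decomp} then yields Theorem~\ref{T:square bumpless}, the square (finite) formula $\S_w(x;a)=\sum_P\wt(P)$. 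Finally, Theorem~\ref{T:bumpless} is deduced by the back-stable limit: shifting the square to $[p,q]^2$ and letting $p\to-\infty$, observing that square $w$-pipedreams embed compatibly and their union is exactly the set of $S_\Z$-bumpless pipedreams. Your ``variant route'' in the last paragraph matches this limiting step, so that part of your outline is correct and is what the paper does; what differs is how the finite formula is obtained.

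\textbf{Where your proposal is incomplete.} The axiom-checking approach can be made to work (it was later carried out in the literature), but you have not supplied the key ingredient: an explicit weight-compatible bijection on $w$-bumpless pipedreams realizing $A_i$. You describe the desiderata (``row moves'' on rows $i,i+1$, telescoping of $(x_i-a_k)$ factors, handling of vertically-passing pipes) but do not construct the move or prove it is a bijection; for bumpless pipedreams this is genuinely delicate and is not a routine adaptation of chute/ladder moves for ordinary pipedreams. Similarly, your vanishing argument for $\mathfrak D_w(a;a)=0$ rests on the lemma ``no diagonal empty tile $\Rightarrow w=\id$,'' which is plausible but for which your one-sentence outline (``descend the diagonal'') is not yet a proof: diagonal tiles could a priori be crossings or horizontal/vertical segments, and ruling these out globally requires more than local routing conditions. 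In short, the proposal correctly identifies a viable alternative strategy and correctly handles the passage to the back-stable limit, but the divided-difference bijection --- the heart of that strategy --- remains to be constructed.
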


The proof of Theorem~\ref{T:bumpless} is delayed to after Theorem~\ref{T:square bumpless}.

\subsection{Drooping and the Rothe pipedream}\label{ssec:droop}
A $w$-bumpless pipedream is uniquely determined by the location of the two kinds of elbow tiles.  Each pipe has to turn at least once.  There is a unique $w$-bumpless pipedream such that for all $i$, pipe $i$ turns right from south to east in the square $(w^{-1}(i),i)$.  We call this the {\it Rothe pipedream} $D(w)$ of $w$.  The empty tiles of the Rothe pipedream form what is commonly known as the {\it Rothe diagram} of $w$.

Let $P$ be a $w$-bumpless pipedream.  A {\it droop} is a local move that swaps a SE elbow $e$ with an empty tile $t$, when the SE elbow lies strictly to the northwest of the empty tile.  Let $R$ be the rectangle with northwest corner $e$ and southeast corner $t$ and let $p$ be the pipe passing through $e$.  After the droop, the pipe $p$ travels along the southmost row and eastmost column of $R$; a NW elbow occupies the square that used to be empty while the square that contained a SE elbow becomes empty.  The droop is allowed only if: 
\begin{enumerate}
\item
the westmost column and northmost row of $R$ contains $p$, 
\item
the rectangle $R$ contains only one elbow which is at $e$, and
\item
after the droop we obtain a bumpless pipedream $P'$.
\end{enumerate}
Pipes $p' \neq p$ do not move in a droop.  We denote a droop by $P \searrow P'$.  Figure~\ref{fig:Rothe} shows a Rothe bumpless pipedream followed by a sequence of two droops.
\begin{figure}
\begin{center}
\begin{tikzpicture}[scale=0.6,line width=0.8mm]
\rightelbow{-2}{2}{blue}
\horline{-1}{2}{blue}
\horline{0}{2}{blue}
\horline{1}{2}{blue}
\horline{2}{2}{blue}
\horline{3}{2}{blue}
\horline{4}{2}{blue}

\vertline{-2}{1}{blue}
\rightelbow{-1}{1}{blue}
\horline{0}{1}{blue}
\horline{1}{1}{blue}
\horline{2}{1}{blue}
\horline{3}{1}{blue}
\horline{4}{1}{blue}

\vertline{-2}{0}{blue}
\vertline{-1}{0}{blue}
\bbox{0}{0}{blue}
\rightelbow{1}{0}{blue}
\horline{2}{0}{blue}
\horline{3}{0}{blue}
\horline{4}{0}{blue}

\vertline{-2}{-1}{blue}
\vertline{-1}{-1}{blue}
\bbox{0}{-1}{blue}
\vertline{1}{-1}{blue}
\rightelbow{2}{-1}{blue}
\horline{3}{-1}{blue}
\horline{4}{-1}{blue}

\vertline{-2}{-2}{blue}
\vertline{-1}{-2}{blue}
\rightelbow{0}{-2}{black}
\cross{1}{-2}{blue}{black}
\cross{2}{-2}{blue}{black}
\horline{3}{-2}{black}
\horline{4}{-2}{black}

\vertline{-2}{-3}{blue}
\vertline{-1}{-3}{blue}
\vertline{0}{-3}{black}
\vertline{1}{-3}{blue}
\vertline{2}{-3}{blue}
\bbox{3}{-3}{blue}
\rightelbow{4}{-3}{blue}

\vertline{-2}{-4}{blue}
\vertline{-1}{-4}{blue}
\vertline{0}{-4}{black}
\vertline{1}{-4}{blue}
\vertline{2}{-4}{blue}
\rightelbow{3}{-4}{blue}
\cross{4}{-4}{blue}{blue}

\draw (-1.5,-4.4) node {-2};
\draw (-0.5,-4.4) node {-1};
\draw (0.5,-4.4) node {0};
\draw (1.5,-4.4) node  {1};
\draw (2.5,-4.4) node  {2};
\draw (3.5,-4.4) node  {3};
\draw (4.5,-4.4) node  {4};

\draw (5.4,2.5) node {-2};
\draw (5.4,1.5) node {-1};
\draw (5.4,.5) node {1};
\draw (5.4,-.5) node {2};
\draw (5.4,-1.5) node {0};
\draw (5.4,-2.5) node {4};
\draw (5.4,-3.5) node {3};
\begin{scope}[shift={(10,0)}]
\rightelbow{-2}{2}{blue}
\horline{-1}{2}{blue}
\horline{0}{2}{blue}
\horline{1}{2}{blue}
\horline{2}{2}{blue}
\horline{3}{2}{blue}
\horline{4}{2}{blue}

\vertline{-2}{1}{blue}
\rightelbow{-1}{1}{brown}
\horline{0}{1}{brown}
\horline{1}{1}{brown}
\horline{2}{1}{brown}
\horline{3}{1}{brown}
\horline{4}{1}{brown}

\vertline{-2}{0}{blue}
\vertline{-1}{0}{brown}
\bbox{0}{0}{blue}
\rightelbow{1}{0}{blue}
\horline{2}{0}{blue}
\horline{3}{0}{blue}
\horline{4}{0}{blue}

\vertline{-2}{-1}{blue}
\vertline{-1}{-1}{brown}
\bbox{0}{-1}{blue}
\vertline{1}{-1}{blue}
\rightelbow{2}{-1}{blue}
\horline{3}{-1}{blue}
\horline{4}{-1}{blue}

\vertline{-2}{-2}{blue}
\vertline{-1}{-2}{brown}
\bbox{0}{-2}{blue}
\vertline{1}{-2}{blue}
\vertline{2}{-2}{blue}
\rightelbow{3}{-2}{black}
\horline{4}{-2}{black}

\vertline{-2}{-3}{blue}
\vertline{-1}{-3}{brown}
\rightelbow{0}{-3}{black}
\cross{1}{-3}{blue}{black}
\cross{2}{-3}{blue}{black}
\leftelbow{3}{-3}{black}
\rightelbow{4}{-3}{blue}

\vertline{-2}{-4}{blue}
\vertline{-1}{-4}{brown}
\vertline{0}{-4}{black}
\vertline{1}{-4}{blue}
\vertline{2}{-4}{blue}
\rightelbow{3}{-4}{blue}
\cross{4}{-4}{blue}{blue}

\end{scope}

\begin{scope}[shift={(20,0)}]
\rightelbow{-2}{2}{blue}
\horline{-1}{2}{blue}
\horline{0}{2}{blue}
\horline{1}{2}{blue}
\horline{2}{2}{blue}
\horline{3}{2}{blue}
\horline{4}{2}{blue}

\vertline{-2}{1}{blue}
\bbox{-1}{1}{blue}
\rightelbow{0}{1}{brown}
\horline{1}{1}{brown}
\horline{2}{1}{brown}
\horline{3}{1}{brown}
\horline{4}{1}{brown}

\vertline{-2}{0}{blue}
\bbox{-1}{0}{blue}
\vertline{0}{0}{brown}
\rightelbow{1}{0}{blue}
\horline{2}{0}{blue}
\horline{3}{0}{blue}
\horline{4}{0}{blue}

\vertline{-2}{-1}{blue}
\rightelbow{-1}{-1}{brown}
\leftelbow{0}{-1}{brown}
\vertline{1}{-1}{blue}
\rightelbow{2}{-1}{blue}
\horline{3}{-1}{blue}
\horline{4}{-1}{blue}

\vertline{-2}{-2}{blue}
\vertline{-1}{-2}{brown}
\bbox{0}{-2}{blue}
\vertline{1}{-2}{blue}
\vertline{2}{-2}{blue}
\rightelbow{3}{-2}{blue}
\horline{4}{-2}{blue}

\vertline{-2}{-3}{blue}
\vertline{-1}{-3}{brown}
\rightelbow{0}{-3}{blue}
\cross{1}{-3}{blue}{blue}
\cross{2}{-3}{blue}{blue}
\leftelbow{3}{-3}{blue}
\rightelbow{4}{-3}{blue}

\vertline{-2}{-4}{blue}
\vertline{-1}{-4}{brown}
\vertline{0}{-4}{blue}
\vertline{1}{-4}{blue}
\vertline{2}{-4}{blue}
\rightelbow{3}{-4}{blue}
\cross{4}{-4}{blue}{blue}

\end{scope}
\end{tikzpicture}
\end{center}
\caption{A Rothe bumpless pipedream $P$, and a sequence of two droops.}
\label{fig:Rothe}
\end{figure}

\begin{proposition}
Every $w$-bumpless pipedream can be obtained from the Rothe pipedream $D(w)$ of $w$ by a sequence of droops.
\end{proposition}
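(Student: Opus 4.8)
The plan is to run the droop moves in reverse, controlled by a simple monovariant. For a $w$-bumpless pipedream $P$, let $\varphi(P):=\sum(i+j)$, the sum over all empty tiles $(i,j)$ of $P$. A routine check shows that the empty tiles of any $w$-bumpless pipedream are confined to a rectangle depending only on $w$, so $\varphi$ takes only finitely many values. Inspecting a single droop $P\searrow P'$ with droop rectangle $R$, northwest corner $e=(r',c')$ and southeast corner $t=(r,c)$ (so $r'<r$ and $c'<c$), one sees directly from the definition of a droop that passing from $P$ to $P'$ alters the set of empty tiles only by moving the empty tile at $t$ to $e$, moving each empty tile of the south row of $R$ straight up to the north row of $R$, and moving each empty tile of the east column of $R$ straight left to the west column of $R$. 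Every one of these replacements strictly decreases the value of $i+j$, so $\varphi(P')<\varphi(P)$; thus each droop strictly decreases $\varphi$.

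Next I would record that $D(w)$ is the unique $w$-bumpless pipedream with no NW elbow tile. Indeed, if $P$ has no NW elbow, follow the pipe $p$ labeled $i$ up from its south end: by condition~(2) it starts in column $i$ heading north, and while heading north it enters each tile from the south, so it can turn only at a SE elbow; after such a turn it heads east and enters each tile from the west, and the only tiles with a west connection other than a NW elbow---the horizontal and the crossing---let $p$ pass straight through. Hence $p$ makes exactly one turn, and since it exits heading east in row $w^{-1}(i)$ that turn is the SE elbow at $(w^{-1}(i),i)$; this says precisely that $P=D(w)$, and conversely $D(w)$ evidently has no NW elbow. The proposition then reduces to the key claim that \emph{every $w$-bumpless pipedream $P\neq D(w)$ admits a reverse droop}, i.e.\ there is a $w$-bumpless pipedream $P'$ with $P'\searrow P$: given the claim, from an arbitrary $P$ one repeatedly applies reverse droops, each step strictly increasing the bounded quantity $\varphi$, so the process terminates, necessarily at $D(w)$ by the claim, and reading the resulting chain of droops backwards realizes $P$ as $D(w)\searrow\cdots\searrow P$.

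To prove the claim I would proceed as follows. Since $P\neq D(w)$ it has a NW elbow; choose one, $t=(r,c)$, extremal in a suitable order---say with column index as large as possible and then row index as large as possible. Let $p$ be the pipe through $t$. Tracing $p$ backward along row $r$ to its previous turn produces a SE elbow of $p$ at $(r,c')$ with $c'<c$, and tracing $p$ forward up column $c$ to its next turn produces a SE elbow of $p$ at $(r',c)$ with $r'<r$; set $e=(r',c')$ and let $R$ be the rectangle with northwest corner $e$ and southeast corner $t$. The remaining work is to check that $(e,t,R,p)$ is exactly the data of a legal droop with output $P$: that $e$ is empty in $P$, that inside $R$ the only elbows are the two SE elbows of $p$ at its southwest and northeast corners together with the NW elbow at $t$, that $p$ runs along the south row and east column of $R$, and that rerouting $p$ to run up the west column and along the north row of $R$ (placing a SE elbow at $e$) produces a $w$-bumpless pipedream $P'$---so that $P'\searrow P$ by definition.

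I expect this last verification to be the main obstacle. The extremal choice of $t$ is what should force the interior of $R$ to be clean: it must rule out a foreign NW elbow inside $R$, which would either block the reroute or create the forbidden double-elbow tile, and it must ensure that the reroute---which merely slides $p$ within $R$---does not cause any two pipes to cross twice. The remaining points, namely emptiness of $e$ and that every other tile of $R$ passes through an admissible configuration under the reroute, are local, and I would dispatch them by a case analysis along the four sides of $R$ using the list of tile types together with conditions~(1)--(4) in the definition of a $w$-bumpless pipedream.
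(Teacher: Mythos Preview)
Your overall architecture is exactly the paper's: show that any $w$-bumpless pipedream $P'\neq D(w)$ admits a reverse droop, and use a monovariant to terminate.  Your characterization of $D(w)$ as the unique NW-elbow-free pipedream is correct, and your monovariant $\varphi(P)=\sum_{(i,j)\text{ empty}}(i+j)$ does strictly decrease under droops (the paper uses the simpler count of NW elbows, which drops by exactly one each time, but yours works too).

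The gap is in the extremal choice.  You pick the NW elbow $t=(r,c)$ with $c$ \emph{largest} and then $r$ largest, i.e.\ a south-east-most one.  But the rectangle $R$ you build lies to the \emph{north-west} of $t$, so this choice does not prevent another NW elbow from sitting inside $R$; and when that happens your reroute of $p$ produces a double crossing, so the would-be pre-image is not a bumpless pipedream.  Concretely, take $w$ fixing all integers except $w(2)=4,\,w(3)=5,\,w(4)=2,\,w(5)=3$.  From the Rothe pipedream, droop pipe~$1$ from $(1,1)$ to $(3,3)$, then droop pipe~$0$ from $(0,0)$ to $(2,2)$.  In the resulting $P'$ the NW elbows are at $(2,2)$ and $(3,3)$; your rule selects $t=(3,3)$, giving $R=[1,3]\times[1,3]$, which contains the NW elbow of pipe~$0$ at $(2,2)$.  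Rerouting pipe~$1$ along the west column and north row of $R$ makes pipes~$0$ and~$1$ cross at both $(2,1)$ and $(1,2)$, violating reducedness.

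The fix is to reverse the order: choose $t$ \emph{north-west-most} among NW elbows, as the paper does.  Then no NW elbow lies in $R\setminus\{t\}$ by extremality.  A foreign SE elbow in the strict interior of $R$ is impossible because the pipe through it would go south to row $r$ (crossing $p$) and east to column $c$ (crossing $p$ again), a forbidden double crossing; and a foreign SE elbow on the north row or west column of $R$ would force a pipe to run straight into the SE elbow at the NE or SW corner, which has no matching edge.  These observations also force $e=(r',c')$ to be empty and the north row / west column of $R$ to contain only empty or straight tiles, so the reroute is legal.  With that single change, your outline becomes a complete proof.
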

\begin{proof}
Let $P'$ be a $w$-bumpless pipedream which is not the Rothe pipedream and $e$ be a NW elbow that is northwestmost among NW elbows in $P'$.  Let $p$ be the pipe passing through $e$.  Then $p$ passes through SE elbows $f$ (resp. $f'$) in the same row (resp. column) as $e$.  Let $R$ be the rectangle bounded by $e,f,f'$ with northwest corner $t$.  It is easy to see that $t$ must be an empty tile and $R$ does not contain any other elbows.  Thus there is a droop $P \searrow P'$ which occurs in the rectangle $R$, and $P$ has strictly fewer NW elbows than $P'$.  Repeating, we eventually arrive at the Rothe pipedream.
\end{proof}

\begin{cor}
The number of empty tiles in a $w$-bumpless pipedream is equal to $\ell(w)$.
\end{cor}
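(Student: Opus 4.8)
The plan is to bootstrap off the two results immediately preceding the corollary. By the previous Proposition, every $w$-bumpless pipedream is reachable from the Rothe pipedream $D(w)$ by a sequence of droops, so it suffices to establish two things: that $D(w)$ itself has exactly $\ell(w)$ empty tiles, and that a single droop does not change the number of empty tiles.

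For the first point, I would describe the cells occupied by each pipe of $D(w)$ explicitly: pipe $i$ occupies the cells $(r,i)$ with $r\ge w^{-1}(i)$ together with the cells $(w^{-1}(i),c)$ with $c\ge i$. It follows that a cell $(r,c)$ is empty in $D(w)$ precisely when $w(r)>c$ and $w^{-1}(c)>r$, i.e., writing $k=w^{-1}(c)$, precisely when $(r,k)$ is an inversion of $w$. Since $w$ moves only finitely many integers there are finitely many empty tiles, and $(r,c)\mapsto(r,w^{-1}(c))$ is a bijection between them and the inversion set of $w$; hence $D(w)$ has $\ell(w)$ empty tiles. (This is just the statement that the empty tiles of $D(w)$ form the Rothe diagram of $w$.)

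For the second point, I would examine a droop $P\searrow P'$ performed in a rectangle $R$ of height $h\ge 2$ and width $v\ge 2$, with northwest corner $e$ (a SE elbow on a pipe $p$) and southeast corner $t$ (empty). Using the droop hypotheses — in particular that $e$ is the only elbow in $R$ — I would record the effect cell by cell: $p$ is re-routed from the top row and left column of $R$ onto the bottom row and right column, every other pipe meeting $R$ traverses $R$ as an uninterrupted horizontal or vertical segment of a full row or column, and neither the southwest nor the northeast corner of $R$ can be a crossing (otherwise $P'$ would fail to be a bumpless pipedream, two pipes being forced parallel in the bottom row or right column). Then $e$ passes from elbow to empty ($+1$ empty tile), $t$ passes from empty to elbow ($-1$), the two remaining corners stay non-empty, and the strict interior of $R$ is unchanged; it remains to pair up the cells on the four open sides of $R$. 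The clean statement is: for $1<j<v$, column $j$ of $R$ carries a vertical pipe if and only if the top cell $(1,j)$ is a crossing, if and only if the bottom cell $(h,j)$ is occupied before the droop — so $(1,j)$ becomes empty exactly when $(h,j)$ ceases to be empty, a net change of zero; symmetrically, each left-column cell $(i,1)$ is paired with the right-column cell $(i,v)$. Summing all contributions gives net change zero, and combining this with the first point and the preceding Proposition yields the corollary.

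The main obstacle is this last case analysis: one must verify carefully that the cells on the top and left sides of $R$ that turn empty are matched exactly by the cells on the bottom and right sides that turn occupied. The leverage is precisely the hypothesis that $R$ contains no elbow besides the one at $e$, which forces every pipe other than $p$ to cross $R$ in a single full row or column, making the equivalence "$(1,j)$ is a crossing $\iff$ $(h,j)$ is occupied" hold and thereby pinning down the bookkeeping.
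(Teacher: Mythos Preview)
Your proof is correct and follows exactly the argument the paper intends: the corollary is stated without proof immediately after the proposition that every $w$-bumpless pipedream arises from the Rothe pipedream by droops, so the reader is meant to observe that the empty tiles of $D(w)$ form the Rothe diagram (hence number $\ell(w)$) and that a droop preserves the empty-tile count. Your careful pairing of boundary cells of $R$, together with the observation that validity of the droop forbids crossings at the SW and NE corners, is a faithful expansion of what the paper leaves implicit.
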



\subsection{Halfplane crossless pipedreams}
Let $P$ be an $S_\Z$-bumpless pipedream.  By (4) of the definition, only finitely many crossings appear.  If we cut off, using a horizontal line, the bottom part of $P$ containing all crossing tiles, we will obtain a picture that we call a halfplane crossless pipedream.  It turns out that the double Schur function is a generating function of such pipedreams.

For $\la\in\Par$, a \defn{$\lambda$-halfplane pipedream} is a bumpless pipedream in the upper halfplane $H=\Z_{\le0}\times \Z$ such that the crossing tile is not used, and
\begin{enumerate}
\item
there are (unlabeled) pipes entering from the southern boundary in the columns indexed by $I \subset \Z$;
\item
setting $(I_+,I_-) = (I \cap \Z_{>0}, \Z_{>0} \setminus I)$, we have
$I_{\pm} = I_{w_\la,\pm}$ (see \eqref{E:I+}, \eqref{E:I-}, and \eqref{E:wla});
\item
the $i$-th eastmost pipe entering from the south heads off to the east in row $1-i$.  (Equivalently, for every row $i \in \Z_{\leq 0}$, there is some pipe heading towards $(i,\infty)$.)
\end{enumerate}
As before, the weight of a $\lambda$-halfplane pipedream is $\wt(P) = \prod (x_i - a_j)$, where the product is over all empty tiles $(i,j)$ in the halfplane $H$. 

For example, taking $\lambda = (2,1,1)$ we have $w_\lambda = s_{-2}s_{-1}s_1 s_0$ and $(I_+,I_-) = (\{2\},\{-2\})$.  Figure~\ref{fig:halfplane} shows a $\lambda$-halfplane pipedream.
\begin{figure}
 \begin{center}
\begin{tikzpicture}[scale=0.6,line width=0.8mm]
\rightelbow{-4}{4}{\half}
\horline{-3}{4}{\half}
\horline{-2}{4}{\half}
\horline{-1}{4}{\half}
\horline{0}{4}{\half}
\horline{1}{4}{\half}
\horline{2}{4}{\half}
\horline{3}{4}{\half}

\vertline{-4}{3}{\half}
\bbox{-3}{3}
\rightelbow{-2}{3}{\half}
\horline{-1}{3}{\half}
\horline{0}{3}{\half}
\horline{1}{3}{\half}
\horline{2}{3}{\half}
\horline{3}{3}{\half}

\vertline{-4}{2}{\half}
\bbox{-3}{2}
\vertline{-2}{2}{\half}
\rightelbow{-1}{2}{\half}
\horline{0}{2}{\half}
\horline{1}{2}{\half}
\horline{2}{2}{\half}
\horline{3}{2}{\half}

\vertline{-4}{1}{\half}
\rightelbow{-3}{1}{\half}
\leftelbow{-2}{1}{\half}
\vertline{-1}{1}{\half}
\bbox{0}{1}
\rightelbow{1}{1}{\half}
\horline{2}{1}{\half}
\horline{3}{1}{\half}

\vertline{-4}{0}{\half}
\vertline{-3}{0}{\half}
\bbox{-2}{0}
\vertline{-1}{0}{\half}
\rightelbow{0}{0}{\half}
\leftelbow{1}{0}{\half}
\rightelbow{2}{0}{\half}
\horline{3}{0}{\half}

\draw[green] (-3.5,-.4) node {-4};
\draw[green] (-2.5,-.4) node {-3};
\draw[green] (-1.5,-.4) node {-2};
\draw[green] (-.5,-.4) node  {-1};
\draw[green] (0.5,-.4) node  {0};
\draw[green] (1.5,-.4) node  {1};
\draw[green] (2.5,-.4) node  {2};
\draw[green] (3.5,-.4) node  {3};

\draw[green] (4.4,4.5) node {-4};
\draw[green] (4.4,3.5) node {-3};
\draw[green] (4.4,2.5) node {-2};
\draw[green] (4.4,1.5) node {-1};
\draw[green] (4.4,.5) node {0};

\draw [line width=0.4mm] (-4,0)--(5,0);
\end{tikzpicture}
\end{center}
\caption{A $(2,1,1)$-halfplane pipedream with weight $\wt = (x_{-3}-a_{-3})(x_{-2}-a_{-3})(x_{-1}-a_0)(x_0-a_{-2})$.}
\label{fig:halfplane}
\end{figure}

\begin{lemma}
The number of empty tiles in a $\lambda$-halfplane pipedream is equal to $|\lambda|$.
\end{lemma}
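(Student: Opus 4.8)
The plan is to reduce this to the Corollary just proven, that a $w$-bumpless pipedream has exactly $\ell(w)$ empty tiles, by completing a $\lambda$-halfplane pipedream to a $w_\lambda$-bumpless pipedream on the whole plane.

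Given a $\lambda$-halfplane pipedream $P$ in the upper halfplane $H=\Z_{\le0}\times\Z$, I would form a tiling $\hat P$ of the whole plane by letting $\hat P$ agree with $P$ on the rows $i\le0$ and with the Rothe pipedream $D(w_\lambda)$ on the rows $i\ge1$. The one thing to check is compatibility along the seam between row $0$ and row $1$: in $D(w_\lambda)$ the north edge of the cell $(1,j)$ carries a pipe exactly when pipe $j$ occupies column $j$ at row $0$, i.e. when its turn-row $w_\lambda^{-1}(j)$ is $\le0$, i.e. when $j\in w_\lambda(\Z_{\le0})$; and by conditions (1)--(2) in the definition of $\lambda$-halfplane pipedream (via \eqref{E:I+}, \eqref{E:I-}, \eqref{E:wla}) this is precisely the set of columns carrying a pipe on the south boundary of $P$. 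So $\hat P$ is a genuine tiling, and it uses no double-elbow tile since neither $P$ nor $D(w_\lambda)$ does. I would then verify the four conditions defining a $w_\lambda$-bumpless pipedream: label by $j$ the pipe that heads south in column $j$ (visible in the Rothe part); such a pipe heads east in row $w_\lambda^{-1}(j)$ --- for $j$ with $w_\lambda^{-1}(j)\ge1$ this happens inside $D(w_\lambda)$, while for $j$ with $w_\lambda^{-1}(j)\le0$ the pipe descends through the seam into $P$, and since $w_\lambda$ is $0$-Grassmannian the bijection $w_\lambda^{-1}\colon w_\lambda(\Z_{\le0})\to\Z_{\le0}$ is increasing, so the $i$-th eastmost such column is sent to row $1-i$, which is exactly condition (3) for $P$. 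Two pipes meet at most once because any pair either lies entirely in rows $\ge1$ (where it is part of $D(w_\lambda)$) or else one of them never leaves rows $\ge1$ while the other is vertical there, so all their crossings already occur inside $D(w_\lambda)$. Finally, for $N\gg0$ pipe $N$ is the standard pipe of $D(w_\lambda)$, and for $N\ll0$ the permutation $w_\lambda$ fixes $N$, which forces the unique turn of pipe $N$ onto the diagonal square $(N,N)$; hence condition (4) holds.

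Next I would observe that the completion creates no new empty tiles: the empty tiles of $D(w_\lambda)$ form the Rothe diagram of $w_\lambda$, and since $w_\lambda$ is increasing on $\{1,2,3,\dots\}$ we have $c_i(w_\lambda)=0$ for all $i>0$, so that diagram lies entirely in rows $i\le0$. Therefore the empty tiles of $\hat P$ are exactly the empty tiles of $P$. Applying the Corollary to $\hat P$ gives that $P$ has $\ell(w_\lambda)$ empty tiles, and $\ell(w_\lambda)=|\lambda|$ --- e.g.\ because $\bS_{w_\lambda}=s_\lambda$ is homogeneous of degree $\ell(w_\lambda)$ by Proposition~\ref{P:backstable Grassmannian}, or directly from the code of $w_\lambda$.

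The only genuine work is the seam-matching and condition (4) for $N\ll0$; everything else is formal. An alternative self-contained route would be to set up a \emph{droop} calculus for $\lambda$-halfplane pipedreams directly --- identify the unique one whose empty tiles form the Young diagram of $\lambda$, show every $\lambda$-halfplane pipedream is reached from it by droops, and note droops preserve the number of empty tiles --- but the completion argument reuses machinery already established and is shorter.
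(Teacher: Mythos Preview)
The paper states this lemma without proof, so there is nothing to compare against directly. Your argument is correct and in fact supplies the details the paper omits: it is the inverse of the restriction argument the paper gives later when proving Theorem~\ref{T:lambdabumpless} (there a $w_\lambda$-bumpless pipedream is restricted to its upper half; you extend a halfplane pipedream to a full-plane one by gluing on the positive-row part of the Rothe pipedream and then invoke the Corollary that a $w$-bumpless pipedream has $\ell(w)$ empty tiles).

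One point worth tightening is your verification of condition~(4) for $N\ll0$. Saying that $w_\lambda$ fixes $N$ only tells you pipe $N$ enters at column $N$ and exits east in row $N$; it does not by itself force a single turn at $(N,N)$. What pins this down is the crossless structure of $P$ together with the fact that for $N<\min(I_{w_\lambda,-})$ every column $N,N+1,N+2,\dots$ up to $0$ lies in $I$. Then pipe $N$ cannot turn east in any row strictly below row $N$: at the tile immediately to its east it would meet pipe $N+1$ entering from the south, and the only tiles accommodating that are a crossing (forbidden in $P$) or a double elbow (forbidden everywhere). An easy induction on the column index shows all such far-west pipes are standard, giving condition~(4) and, incidentally, confirming that the number of empty tiles is finite.
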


\begin{thm}\label{T:lambdabumpless}
Let $\lambda$ be a partition.  Then $s_\lambda(x||a) = \sum_P \wt(P)$ where the sum is over all $\lambda$-halfplane pipedreams.
\end{thm}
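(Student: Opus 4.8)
The plan is to deduce this from Theorem~\ref{T:bumpless} and Proposition~\ref{P:backstable double Grassmannian}: together they give $s_\la(x||a)=\bS_{w_\la}(x;a)=\sum_P\wt(P)$, the sum over all $w_\la$-bumpless pipedreams $P$ of the full plane. So it suffices to produce a weight-preserving bijection between $w_\la$-bumpless pipedreams and $\la$-halfplane pipedreams, and the map I would use is restriction to the upper halfplane $H=\Z_{\le0}\times\Z$.

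The heart of the argument is the structural claim that \emph{every $w_\la$-bumpless pipedream has all of its empty tiles in $H$ and has no crossing tile in $H$.} I would prove this by induction on the number of droops needed to produce $P$ from the Rothe pipedream $D(w_\la)$, using the proposition that every $w$-bumpless pipedream arises this way. For $P=D(w_\la)$ both statements follow from $w_\la$ being $0$-Grassmannian: if $(i,j)$ is a box of the Rothe diagram with $i>0$, then $w_\la(i)=i-\la'_i\le i$ gives $j<w_\la(i)$, while $w_\la^{-1}(j)>i>0$ together with $w_\la$ increasing on $\Z_{>0}$ gives $j>w_\la(i)$, a contradiction; and an analogous inequality, using that $w_\la$ is increasing on $\Z_{\le0}$, shows every crossing of $D(w_\la)$ lies in a row $>0$. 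For the inductive step, the rectangle of any droop has its southeast corner at an empty tile, which by hypothesis lies in $H$, so the whole droop takes place inside $H$; a droop moves an empty tile to the northwest, so it stays in $H$, and since there are no crossings in $H$ before the droop, the interior of the droop rectangle is forced to be empty, so no crossing in $H$ is created.

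Granting the claim, $P|_H$ uses no crossing tile and, since all empty tiles lie in $H$, has $\wt(P|_H)=\wt(P)$. The boundary conditions defining a $\la$-halfplane pipedream for $P|_H$ follow from condition (2) of $w_\la$-bumpless pipedreams: the pipes entering $H$ from the south are precisely those labeled by $I:=w_\la(\Z_{\le0})$, for which $I\cap\Z_{>0}=I_{w_\la,+}$ and $\Z_{\le0}\setminus I=I_{w_\la,-}$, and since $w_\la$ is increasing on $\Z_{\le0}$ the $i$-th eastmost of them is pipe $w_\la(1-i)$, which heads east in row $w_\la^{-1}(w_\la(1-i))=1-i$. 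Injectivity of $P\mapsto P|_H$ holds because all droops occur inside $H$, so the part of any $w_\la$-bumpless pipedream in rows $>0$ agrees with that of $D(w_\la)$ and hence $P$ is recovered from $P|_H$. For surjectivity I would extend a given $\la$-halfplane pipedream $Q$ by gluing on the rows $>0$ of $D(w_\la)$ (the interface matches since the south-entering columns of $Q$ are exactly the columns $I$ on which $D(w_\la)$ has pipes rising into row $0$), and then check, by the argument of the proposition on obtaining pipedreams from $D(w_\la)$ by droops applied inside $H$ (repeatedly undrooping a northwestmost NW elbow never leaves $H$), that the extension is a genuine $w_\la$-bumpless pipedream restricting to $Q$.

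The step I expect to be the main obstacle is the structural claim, and within it the verification that a droop inside $H$ creates no crossing tile inside $H$; the real content is that the $0$-Grassmannian shape of $w_\la$ confines the ``crossing region'' of every $w_\la$-bumpless pipedream to the lower halfplane, leaving $H$ crossless exactly as the definition of a $\la$-halfplane pipedream demands.
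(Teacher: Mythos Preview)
Your proposal is correct and follows essentially the same route as the paper: both apply Theorem~\ref{T:bumpless} to $w=w_\la$, establish that a $w_\la$-bumpless pipedream has no crossings in nonpositive rows and no empty tiles in positive rows, and conclude that the lower halfplane is forced (equal to $D(w_\la)$ there) while the upper halfplane is precisely a $\la$-halfplane pipedream. The only difference is in justifying the structural claim: the paper deduces it directly from the monotonicity $w_\la(1)<w_\la(2)<\cdots$ and $w_\la(0)>w_\la(-1)>\cdots$ (every inversion pairs a nonpositive row with a positive row, and the positive-row pipe never leaves the lower halfplane, so all crossings lie there; the empty-tile statement then follows), whereas you prove it by induction on droops from $D(w_\la)$, which is a perfectly valid and more explicit alternative.
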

The proof of Theorem~\ref{T:lambdabumpless} is delayed to after Theorem~\ref{T:square bumpless}.

A \defn{$\Z_{\leq 0}$-semistandard Young tableau} of shape $\lambda$ is a filling of the Young diagram $\lambda$ (in English notation) with the integers $0,-1,-2,\ldots$ such that rows are weakly increasing and columns are strictly increasing.  The weight $\wt(T)$ of a $\Z_{\leq 0}$-SSYT is the product $\wt(T) = \prod_{(i,j) \in T} (x_{T(i,j)} - a_{T(i,j)+c(i,j)})$ where $c(i,j) = j-i$ is the content of the square $(i,j)$ in row $i$ and column $j$.

\begin{cor}
Let $\lambda$ be a partition.  Then $s_\lambda(x||a) = \sum_T \wt(T)$ where the sum is over all $\Z_{\leq 0}$-SSYT of shape $\lambda$.
\end{cor}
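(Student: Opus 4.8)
The plan is to deduce the Corollary from Theorem~\ref{T:lambdabumpless} by constructing a weight-preserving bijection $\Phi$ from $\lambda$-halfplane pipedreams to $\Z_{\leq 0}$-SSYT of shape $\lambda$. Since $\wt(P)$ and $\wt(T)$ are both products of binomials $x_i-a_j$, such a bijection makes the two generating functions agree binomial-by-binomial, so no further computation is needed once $\Phi$ is in hand.

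First I would read $\Phi$ off the empty tiles. An empty tile of a $\lambda$-halfplane pipedream sits at a matrix position $(r,c)$ with $r\le 0$ and contributes the factor $x_r-a_c$, whereas a box $(i,j)$ of an SSYT with $T(i,j)=r$ contributes $x_r-a_{r+j-i}$. So $\Phi$ must send an empty tile at $(r,c)$ to a box of content $c-r$ bearing the entry $r$. I would therefore build $\Phi$ one anti-diagonal at a time: for each $d\in\Z$, list the empty tiles of $P$ on the anti-diagonal $\{(r,r+d)\mid r\le 0\}$ in order of increasing $r$, and insert the values $r$, in that order, into the content-$d$ boxes of $\lambda$ from the top-left such box to the bottom-right one. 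To see that $\Phi$ is well defined and lands among SSYT I must check:
\begin{enumerate}
\item for every $d$, the number of empty tiles of $P$ on anti-diagonal $d$ equals the number of content-$d$ boxes of $\lambda$;
\item the rows $r$ of the empty tiles on a fixed anti-diagonal are pairwise distinct, which gives strict increase down each diagonal of the tableau; and
\item the resulting filling is weakly increasing along rows, equivalently the empty tiles on consecutive anti-diagonals interlace appropriately.
\end{enumerate}

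Item (1) I would obtain by a transfer/boundary count: travelling along a fixed anti-diagonal from the far northwest---where the pipedream is frozen in an explicit configuration dictated by the standard pipes---down to the boundary row $0$, and using that crossing tiles are forbidden, the tile types encountered force the number of empty tiles to match the content count of $\lambda$ recorded by $(I_{w_\lambda,+},I_{w_\lambda,-})$ through \eqref{E:wla}. The main obstacle is items (2) and (3): here I would run a local analysis showing that any would-be failure of distinctness of rows, or of the row-monotonicity of the output, yields upon tracing the relevant pipes through a minimal offending rectangle one of the forbidden patterns---the double-elbow ``bump'' tile, a pair of pipes crossing twice, or a violation of condition (3) in the definition of a $\lambda$-halfplane pipedream. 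For the inverse, given an SSYT $T$ I would place an empty tile for each box of $T$ (at the position forced by its entry and content) and then fill in the remaining tiles by the unique bumpless routing compatible with the boundary data; checking that this routing exists and is unique is essentially the same local analysis. Once (2)--(3) are established, the rest is bookkeeping.

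As an alternative route, the Corollary also follows by combining the tableau (``sixth variation'') formula for Molev's double Schur functions with the conventions dictionary of Appendix~\ref{S:pos to non}, or by applying the Lindstr\"om--Gessel--Viennot lemma directly to the Jacobi--Trudi determinant \eqref{E:double Schur det}, where the shift operators $\shift_a^{1-j}$ are exactly what makes the $a$-index of each factor track the content of the corresponding box.
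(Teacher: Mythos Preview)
Your approach is correct and is exactly what the paper intends: the Corollary is stated without proof, as an immediate consequence of Theorem~\ref{T:lambdabumpless} via precisely the weight-preserving bijection you describe (an empty tile at $(r,c)$ corresponds to the box of content $c-r$ filled with the entry $r$). Two minor remarks: the set $\{(r,r+d):r\le 0\}$ is a \emph{diagonal} in matrix coordinates, not an anti-diagonal; and your item~(2) is automatic, since distinct positions on that diagonal have distinct $r$ by parametrization, so the actual work lies in item~(1) and in checking both the row-weak and column-strict conditions in item~(3).
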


\subsection{Rectangular $S_n$-bumpless pipedreams}
Let $w \in S_n$.  A \defn{$w$-rectangular bumpless pipedream} is a bumpless pipedream in the $n \times 2n$ rectangular region 
$$R_n := \{(i,j) \mid i \in [1,n] \text{ and } j \in [1-n,n]\}.
$$  
 The pipes are labeled $1-n,2-n,\ldots,0,1,\ldots,n$, entering the south boundary from left to right.  The positively labeled pipes exit the east boundary: pipe $i$ exits  in row $i$.  The nonpositively labeled pipes exit the north boundary.  Two pipes intersect at most once, and a nonpositively labeled pipe cannot intersect any other pipe.   As before, the weight of a rectangular $S_n$-bumpless pipedream $P$ is given by $\wt(P) = \prod (x_i - a_j)$, with the product  over all empty tiles $(i,j)$.
 
\begin{lemma}
Let $w \in S_n$.  Suppose $P$ is an $S_\Z$-bumpless pipedream for $w$ (considered an element of $S_\Z$.  Then the region inside the rectangle $R_n$ is an $S_n$-rectangular bumpless pipedream for $w$.
\end{lemma}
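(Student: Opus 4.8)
The plan is to take a $w$-bumpless pipedream $P$ — where $w\in S_n$ is viewed in $S_\Z$, so $w$ fixes every integer outside $[1,n]$ — restrict it to $R_n$, and check the five defining properties of an $S_n$-rectangular bumpless pipedream one at a time. A preliminary observation drives everything: since the only elbow tiles are the NW elbow and the SE elbow, a pipe traversed from its south end toward its east end is a monotone ``staircase'', alternating maximal northward runs with maximal eastward runs, its row coordinate never increasing and its column coordinate never decreasing; in particular a pipe never moves down or to the left. This converts the global conditions ``pipe $i$ eventually heads south in column $i$'' and ``pipe $i$ heads east in row $w^{-1}(i)$'' into statements about where pipe $i$ crosses the boundary of $R_n$.

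The heart of the argument is a confinement statement, which I would prove from the droop description. For $w\in S_n$ the Rothe diagram lies in $[1,n-1]\times[1,n-1]$, so in the Rothe pipedream $D(w)$ every empty tile has row and column at most $n-1$. A droop moves an empty tile strictly to the northwest (turning a SE elbow into an empty tile and creating exactly one NW elbow at the old empty tile), so in every $w$-bumpless pipedream all empty tiles still lie in rows $\le n-1$ and columns $\le n-1$, hence so do all NW elbows; moreover a SE elbow in a row $\ge n-1$ can never be drooped (its partner empty tile would have row $>n-1$), so in rows $\ge n$ the only elbows are the unmovable SE elbows at $(j,j)$ for $j>n$ together with (possibly) one at $(n,w(n))$. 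Combined with monotonicity: pipe $i$ with $i>n$ turns only at $(i,i)$ and so lives entirely in column $i$ and row $i$, both disjoint from $R_n$; and pipe $i$ with $i\in[1-n,n]$ has no turn in any row $>n$, so it is the vertical line of column $i$ throughout rows $>n$ and consequently enters $R_n$ through its south edge exactly in column $i$. The remaining piece — that a pipe $i$ with $i<1-n$ never reaches $R_n$, equivalently that such a pipe makes all of its turns in rows $\le 0$ and hence stays in the (out-of-$R_n$) column $i$ at every row $\ge 1$ — I expect to be the main obstacle: a pipe with a non-positive label need not be standard (already for $w=s_1\in S_2$ the pipe labelled $0$ turns inside $R_2$), so one must show that when a deeply negative pipe is drooped, the empty tile it is dragged into necessarily sits at a row $\le 0$, using that the droop rectangle must contain all intervening diagonal squares and so forces the earlier pipes to have been drooped first in a controlled, near-diagonal fashion; I would do this by induction on $-i$ along the droop sequence.

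Granting the confinement, the rest is bookkeeping. Bumplessness of $P|_{R_n}$ is inherited from $P$. The pipes meeting $R_n$ are exactly the $2n$ pipes labelled $1-n,\dots,n$, each entering the south edge in its own column, so the labelling convention holds. For $i\in[1,n]$ the pipe heads east in row $w^{-1}(i)\in[1,n]$ (since $w\in S_n$); monotonicity keeps its row from dropping below $w^{-1}(i)\ge 1$, while the NW-elbow column bound forbids it from turning upward at any column $\ge n$, so it leaves $R_n$ through the east boundary in row $w^{-1}(i)$, and as $i$ runs over $[1,n]$ these exit rows run over all of $[1,n]$ — which, after relabelling the positive pipes by exit row, is precisely ``pipe $i$ exits in row $i$''. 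For $i\in[1-n,0]$ the pipe heads east in row $i\le 0$, above $R_n$; it cannot leave through the east edge (the NW-elbow column bound would force it to have already turned upward inside columns $\le n-1$), so, its row being forced below $1$, it leaves through the north boundary. Finally ``two pipes cross at most once'' is inherited from $P$, and ``a non-positive pipe crosses nothing inside $R_n$'' falls out of the same confinement: inside $R_n$ such a pipe only performs vertical-then-east staircase moves toward the north edge, and the bounds controlling those moves prevent it from sharing a crossing tile with any other pipe.
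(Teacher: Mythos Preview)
The paper states this lemma without proof. Your droop-based strategy is the right one, and your observation that every droop rectangle lies in rows $\le n-1$ and columns $\le n-1$ (so row $n$ and column $n$ are frozen as in $D(w)$) correctly handles the south and east boundaries of $R_n$: pipes $1-n,\dots,n$ enter from the south in their own columns, and pipe $w(r)$ exits east in row $r$.

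The genuine gap is the one you name: keeping pipes labeled $i<1-n$ out of $R_n$, equivalently showing nothing crosses the west boundary. Your proposed ``induction on $-i$ along the droop sequence'' is not carried out and would be awkward to make precise. The missing idea is to \emph{reverse} the logic of your last paragraph: prove the no-crossing property of nonpositive pipes first, and then use it to obtain confinement. For $w\in S_n$ and $i\le 0$ one has $w^{-1}(i)=i$, so $i$ participates in no inversion of $w$; a Jordan-curve parity count shows that two monotone staircases meet an even number of times whenever their south-endpoint order agrees with their east-endpoint order, and combined with condition~(3) (``at most one crossing'') this forces pipe $i$ to cross no other pipe anywhere in the plane. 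This already yields the ``nonpositive pipes intersect nothing'' clause of the rectangular definition, independently of any confinement.

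Now use it for the west boundary. Row $n$ is frozen, so pipe $j$ is vertical at $(n,j)$ for all $j\le 0$. Induct upward on the rows: if pipe $j$ is vertical at $(r+1,j)$ for every $j\le r+1-n$, then for each $j\le r-n$ pipe $j$ enters $(r,j)$ from the south; were it to turn east it would occupy $(r,j+1)$, where pipe $j+1$ (still with $j+1\le r+1-n$) is already entering from the south --- a crossing of two nonpositive pipes, contradiction. Hence pipe $j$ is vertical at $(r,j)$ for all $j\le r-n$ and all $r\in[1,n]$. Taking $j=-n$, and using once more that pipe $-n$ crosses nothing, the tile $(r,-n)$ is a pure vertical for every $r\in[1,n]$, so no pipe enters $R_n$ from the west. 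The remaining bookkeeping then goes through as you outline.
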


We also associate a partition $\lambda(P)$ to an $S_n$-rectangular bumpless pipedream: it is obtained by reading the North boundary edges from right to left, to then obtain the boundary of a partition inside a $n \times n$ box, where empty edges correspond to steps to the left, and edges with a pipe exiting correspond to downward steps. See Figure \ref{F:lambda(P)}, where empty edges are marked $e$ and edges with a pipe exiting are marked $x$.

\begin{lemma}
Let $w \in S_n$ and $P$ be a $w$-bumpless pipedream.
We have $\ell(w) = |\lambda(P)| + \deg(\wt(P))$. 
\end{lemma}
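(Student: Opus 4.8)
The plan is to derive this from the corollary above, which says that a plane $w$-bumpless pipedream has exactly $\ell(w)$ empty tiles. To use it, I would first attach to a $w$-rectangular bumpless pipedream $P$ (inside $R_n$) a canonical plane $w$-bumpless pipedream $\widehat P$, its \emph{straight extension}: keep $P$ inside $R_n$; below $R_n$ send each pipe $1-n,\dots,n$ straight south in its column; to the east of $R_n$ send each positive pipe $1,\dots,n$ straight east in its row; above $R_n$, noting that each nonpositive pipe $i$ leaves the north boundary of $R_n$ in some column $c(i)$, send that pipe straight north in column $c(i)$ until it reaches row $i$ and then straight east; and let all pipes $N>n$ and $N\le-n$ be standard. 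One checks that $\widehat P$ satisfies the four conditions defining a plane $w$-bumpless pipedream (the reverse operation, restricting a plane $w$-bumpless pipedream to $R_n$, is the preceding lemma), so $\widehat P$ has exactly $\ell(w)$ empty tiles.

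Next I would show that every empty tile of $\widehat P$ lies in $R_n$ or in the block $R_n':=\{(i,j)\mid i\in[1-n,0],\ j\in[1-n,n]\}$ sitting immediately above $R_n$. This is a region-by-region inspection: below $R_n$, to the east of $R_n$, and throughout the ``standard'' part of the plane every tile is covered by a pipe. Since $\wt(P)=\prod(x_i-a_j)$ runs precisely over the empty tiles of $R_n$, the number of empty tiles of $\widehat P$ inside $R_n$ equals $\deg(\wt(P))$; so it remains to prove that $R_n'$ contains exactly $|\lambda(P)|$ empty tiles.

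For that, observe that inside $R_n'$ the only pipes are $1-n,\dots,0$, and, since the nonpositive pipes are pairwise noncrossing inside $R_n$, their exit columns satisfy $c(1-n)<\dots<c(0)$; hence in $R_n'$ the pipe with the $k$-th smallest exit column runs straight up that column to the $k$-th row of $R_n'$ and then straight east. Consequently the $n$ exit columns are completely filled, and after deleting them one is left with an $n\times n$ array in which the filled cells of each row form a right-justified block whose length increases as one goes up; thus the empty cells, listed from the bottom row to the top, form a Young diagram. A direct comparison with the lattice-path definition of $\lambda(P)$ (a left step at each empty north-boundary edge of $R_n$, a down step at each exit column) identifies this Young diagram with $\lambda(P)$, so $R_n'$ has $|\lambda(P)|$ empty tiles, and $\ell(w)=\deg(\wt(P))+|\lambda(P)|$ follows.

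The routine pieces are the check that $\widehat P$ is a legal plane pipedream and the bookkeeping identifying the empty tiles of $R_n'$ with $\lambda(P)$. The step I expect to need the most care is the confinement of the empty tiles to $R_n\cup R_n'$: this uses crucially that for $w\in S_n$ the pipes of $\widehat P$ outside $R_n$ are forced to be completely straight or standard, which is precisely why the statement concerns rectangular pipedreams — an arbitrary plane $w$-bumpless pipedream for $w\in S_n$ can have pipes that wiggle far from $R_n$ and produce additional empty tiles.
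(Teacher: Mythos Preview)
Your approach is correct; the paper states this lemma without proof, so there is nothing to compare against. The extension $\widehat P$ you describe is indeed a valid plane $w$-bumpless pipedream (the key point being $c(i)\ge i$, which holds because every pipe in a bumpless pipedream moves only north and east), and your bookkeeping in $R_n'$ works out exactly: row $r$ of $R_n'$ has $c(r)-r$ empty tiles, and $c(r)-r=\lambda(P)_{1-r}$ by the lattice-path definition of $\lambda(P)$. One small clarification: the confinement of empty tiles to $R_n\cup R_n'$ is not something forced by $w\in S_n$, but rather a feature of your specific choice of extension $\widehat P$; once you have chosen straight extensions everywhere outside $R_n$, the verification is routine.

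For comparison, there is a more self-contained argument that stays inside $R_n$. Count pipe--tile incidences in $R_n$ two ways. Each nonempty tile contributes $1$, except crossing tiles contribute $2$, so the total is $(2n^2 - E) + C$, where $E=\deg(\wt(P))$ is the number of empty tiles and $C$ the number of crossings. On the other hand, a pipe entering south at column $a$ and exiting east at row $r$ occupies $2n-a-r+1$ tiles, while one exiting north at column $c$ occupies $n+c-a$ tiles; summing over all $2n$ pipes gives $2n^2+\sum_{i\le 0}(c(i)-i)=2n^2+|\lambda(P)|$. Hence $E=C-|\lambda(P)|$. Finally, nonpositive pipes cross nothing, and positive pipes $i<j$ cross exactly once if $w^{-1}(i)>w^{-1}(j)$ and not at all otherwise, so $C=\ell(w)$. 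Your route has the advantage of invoking the already-established corollary about plane pipedreams; the direct count is shorter but independent of it.
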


\begin{example}\label{ex:2143}
Let $w = 2143$. In Figure \ref{F:pipedreams} is a complete list of all $w$-bumpless pipedreams.  Nonpositively labeled pipes are drawn in red.

\begin{figure} 
\begin{center}
\begin{tikzpicture}[scale=0.6,line width=0.8mm]
\draw (-5,-1) node {$P=$};
\rightelbow{1}{0}{blue}
\horline{2}{0}{blue}
\horline{3}{0}{blue}

\vertline{1}{-1}{blue}
\rightelbow{2}{-1}{blue}
\horline{3}{-1}{blue}

\rightelbow{0}{-2}{blue}
\cross{1}{-2}{blue}{blue}
\leftelbow{2}{-2}{blue}
\rightelbow{3}{-2}{blue}

\vertline{0}{-3}{blue}
\vertline{1}{-3}{blue}
\rightelbow{2}{-3}{blue}
\cross{3}{-3}{blue}{blue}

\vertline{-4}{0}{red}

\vertline{-3}{0}{red}
\vertline{-3}{-1}{red}

\rightelbow{-2}{0}{red}
\vertline{-2}{-1}{red}
\vertline{-2}{-2}{red}

\leftelbow{-1}{0}{red}
\rightelbow{-1}{-1}{red}
\vertline{-1}{-2}{red}
\vertline{-1}{-3}{red}

\vertline{0}{0}{red}
\leftelbow{0}{-1}{red}

\draw[line width=0.4mm] (0,-3)--(4,-3)--(4,1)--(0,1);
\draw[dashed,line width=0.3mm] (0,1)--(0,-3);
\draw[line width=0.4mm] (0,1)--(-4,1)--(-4,-3)--(0,-3);
\vertline{-4}{-1}{red}
\vertline{-4}{-2}{red}
\vertline{-4}{-3}{red}
\vertline{-3}{-2}{red}
\vertline{-3}{-3}{red}
\vertline{-2}{-3}{red}

\draw (3.5,1.4) node {$e_1$};
\draw (2.5,1.4) node {$e_2$};
\draw (1.5,1.4) node {$e_3$};
\draw (0.5,1.4) node {$x_1$};
\draw (-0.5,1.4) node {$x_2$};
\draw (-1.5,1.4) node {$e_4$};
\draw (-2.5,1.4) node {$x_3$};
\draw (-3.5,1.4) node {$x_4$};
\end{tikzpicture}
\qquad
\begin{tikzpicture}[scale=0.6,line width=0.8mm]
\draw[line width=0.4mm] (0,1)--(-4,1)--(-4,-3); 
\draw[line width=0.4mm] (-3,1)--(-3,-1)--(-4,-1);
\draw (-0.5,1.4) node {$e_1$};
\draw (-1.5,1.4) node {$e_2$};
\draw (-2.5,1.4) node {$e_3$};
\draw (-3.5,-0.6) node {$e_4$};
\draw (-2.5,0.5) node {$x_1$};
\draw (-2.5,-0.5) node {$x_2$};
\draw (-3.5,-1.5) node {$x_3$};
\draw (-3.5,-2.5) node {$x_4$};
\draw (2.5,-1) node {$\lambda(P)=(1,1)$};
\end{tikzpicture}
\end{center}
\caption{The partition of a rectangular $S_n$-bumpless pipedream}
\label{F:lambda(P)}
\end{figure}
\begin{figure}

\begin{center}
\begin{tikzpicture}[scale=0.6,line width=0.8mm]
\rightelbow{1}{0}{blue}
\horline{2}{0}{blue}
\horline{3}{0}{blue}

\vertline{1}{-1}{blue}
\rightelbow{2}{-1}{blue}
\horline{3}{-1}{blue}

\rightelbow{0}{-2}{blue}
\cross{1}{-2}{blue}{blue}
\leftelbow{2}{-2}{blue}
\rightelbow{3}{-2}{blue}

\vertline{0}{-3}{blue}
\vertline{1}{-3}{blue}
\rightelbow{2}{-3}{blue}
\cross{3}{-3}{blue}{blue}
\vertline{-4}{0}{red}

\vertline{-3}{0}{red}
\vertline{-3}{-1}{red}

\rightelbow{-2}{0}{red}
\vertline{-2}{-1}{red}
\vertline{-2}{-2}{red}

\leftelbow{-1}{0}{red}
\rightelbow{-1}{-1}{red}
\vertline{-1}{-2}{red}
\vertline{-1}{-3}{red}

\vertline{0}{0}{red}
\leftelbow{0}{-1}{red}

\draw[line width=0.4mm] (0,-3)--(4,-3)--(4,1)--(0,1);
\draw[dashed,line width=0.3mm] (0,1)--(0,-3);
\draw[line width=0.4mm] (0,1)--(-4,1)--(-4,-3)--(0,-3);
\vertline{-4}{-1}{red}
\vertline{-4}{-2}{red}
\vertline{-4}{-3}{red}
\vertline{-3}{-2}{red}
\vertline{-3}{-3}{red}
\vertline{-2}{-3}{red}

\draw (4.4,0.5) node {$2$};
\draw (4.4,-0.5) node {$1$};
\draw (4.4,-1.5) node {$4$};
\draw (4.4,-2.5) node {$3$};
\draw (0.5,-3.4) node  {$1$};
\draw (1.5,-3.4) node  {$2$};
\draw (2.5,-3.4) node  {3};
\draw (3.5,-3.4) node  {4};
\draw (-5.5,-1) node {$\lambda = (11)$};
\draw (-5.5,-2) node {$1$};
\begin{scope}[shift={(13,0)}]
\rightelbow{1}{0}{blue}
\horline{2}{0}{blue}
\horline{3}{0}{blue}

\vertline{1}{-1}{blue}
\rightelbow{2}{-1}{blue}
\horline{3}{-1}{blue}

\rightelbow{0}{-2}{blue}
\cross{1}{-2}{blue}{blue}
\leftelbow{2}{-2}{blue}
\rightelbow{3}{-2}{blue}

\vertline{0}{-3}{blue}
\vertline{1}{-3}{blue}
\rightelbow{2}{-3}{blue}
\cross{3}{-3}{blue}{blue}
\vertline{-4}{0}{red}

\vertline{-3}{0}{red}
\vertline{-3}{-1}{red}

\vertline{-2}{0}{red}
\vertline{-2}{-1}{red}
\vertline{-2}{-2}{red}

\rightelbow{-1}{-1}{red}
\vertline{-1}{-2}{red}
\vertline{-1}{-3}{red}

\vertline{0}{0}{red}
\leftelbow{0}{-1}{red}

\draw[line width=0.4mm] (0,-3)--(4,-3)--(4,1)--(0,1);
\draw[dashed,line width=0.3mm] (0,1)--(0,-3);
\draw[line width=0.4mm] (0,1)--(-4,1)--(-4,-3)--(0,-3);
\vertline{-4}{-1}{red}
\vertline{-4}{-2}{red}
\vertline{-4}{-3}{red}
\vertline{-3}{-2}{red}
\vertline{-3}{-3}{red}
\vertline{-2}{-3}{red}

\draw (4.4,0.5) node {$2$};
\draw (4.4,-0.5) node {$1$};
\draw (4.4,-1.5) node {$4$};
\draw (4.4,-2.5) node {$3$};
\draw (0.5,-3.4) node  {$1$};
\draw (1.5,-3.4) node  {$2$};
\draw (2.5,-3.4) node  {3};
\draw (3.5,-3.4) node  {4};
\draw (-5.5,-1) node {$\lambda = (1)$};
\draw (-5.5,-2) node {$ x_1-a_0$};
\end{scope}

%
\begin{scope}[shift={(0,-6)}]
\rightelbow{1}{0}{blue}
\horline{2}{0}{blue}
\horline{3}{0}{blue}

\vertline{1}{-1}{blue}
\rightelbow{2}{-1}{blue}
\horline{3}{-1}{blue}

\rightelbow{0}{-2}{blue}
\cross{1}{-2}{blue}{blue}
\leftelbow{2}{-2}{blue}
\rightelbow{3}{-2}{blue}

\vertline{0}{-3}{blue}
\vertline{1}{-3}{blue}
\rightelbow{2}{-3}{blue}
\cross{3}{-3}{blue}{blue}
\vertline{-4}{0}{red}

\vertline{-3}{0}{red}
\vertline{-3}{-1}{red}

\vertline{-2}{0}{red}
\vertline{-2}{-1}{red}
\vertline{-2}{-2}{red}

\rightelbow{-1}{0}{red}
\vertline{-1}{-1}{red}
\vertline{-1}{-2}{red}
\vertline{-1}{-3}{red}

\leftelbow{0}{0}{red}

\draw[line width=0.4mm] (0,-3)--(4,-3)--(4,1)--(0,1);
\draw[dashed,line width=0.3mm] (0,1)--(0,-3);
\draw[line width=0.4mm] (0,1)--(-4,1)--(-4,-3)--(0,-3);
\vertline{-4}{-1}{red}
\vertline{-4}{-2}{red}
\vertline{-4}{-3}{red}
\vertline{-3}{-2}{red}
\vertline{-3}{-3}{red}
\vertline{-2}{-3}{red}

\draw (4.4,0.5) node {$2$};
\draw (4.4,-0.5) node {$1$};
\draw (4.4,-1.5) node {$4$};
\draw (4.4,-2.5) node {$3$};
\draw (0.5,-3.4) node  {$1$};
\draw (1.5,-3.4) node  {$2$};
\draw (2.5,-3.4) node  {3};
\draw (3.5,-3.4) node  {4};
\draw (-5.5,-1) node {$\lambda = (1)$};
\draw (-5.5,-2) node {$x_2-a_1$};
\end{scope}
\begin{scope}[shift={(13,-6)}]
\rightelbow{1}{0}{blue}
\horline{2}{0}{blue}
\horline{3}{0}{blue}

\vertline{1}{-1}{blue}
\rightelbow{2}{-1}{blue}
\horline{3}{-1}{blue}

\rightelbow{0}{-2}{blue}
\cross{1}{-2}{blue}{blue}
\leftelbow{2}{-2}{blue}
\rightelbow{3}{-2}{blue}

\vertline{0}{-3}{blue}
\vertline{1}{-3}{blue}
\rightelbow{2}{-3}{blue}
\cross{3}{-3}{blue}{blue}
\vertline{-4}{0}{red}

\vertline{-3}{0}{red}
\vertline{-3}{-1}{red}

\vertline{-2}{0}{red}
\vertline{-2}{-1}{red}
\vertline{-2}{-2}{red}

\vertline{-1}{0}{red}
\vertline{-1}{-1}{red}
\vertline{-1}{-2}{red}
\vertline{-1}{-3}{red}


\draw[line width=0.4mm] (0,-3)--(4,-3)--(4,1)--(0,1);
\draw[dashed,line width=0.3mm] (0,1)--(0,-3);
\draw[line width=0.4mm] (0,1)--(-4,1)--(-4,-3)--(0,-3);
\vertline{-4}{-1}{red}
\vertline{-4}{-2}{red}
\vertline{-4}{-3}{red}
\vertline{-3}{-2}{red}
\vertline{-3}{-3}{red}
\vertline{-2}{-3}{red}

\draw (4.4,0.5) node {$2$};
\draw (4.4,-0.5) node {$1$};
\draw (4.4,-1.5) node {$4$};
\draw (4.4,-2.5) node {$3$};
\draw (0.5,-3.4) node  {$1$};
\draw (1.5,-3.4) node  {$2$};
\draw (2.5,-3.4) node  {3};
\draw (3.5,-3.4) node  {4};
\draw (-5.5,0) node {$\lambda = \emptyset$};
\draw (-5.5,-1) node {$(x_1-a_1)$};
\draw (-5.5,-2) node {$(x_2-a_1)$};
\end{scope}
%
%
\begin{scope}[shift={(0,-12)}]
\rightelbow{2}{0}{blue}
\horline{3}{0}{blue}

\rightelbow{0}{-1}{blue}
\horline{1}{-1}{blue}
\cross{2}{-1}{blue}{blue}
\horline{3}{-1}{blue}

\vertline{0}{-2}{blue}
\rightelbow{1}{-2}{blue}
\leftelbow{2}{-2}{blue}
\rightelbow{3}{-2}{blue}

\vertline{0}{-3}{blue}
\vertline{1}{-3}{blue}
\rightelbow{2}{-3}{blue}
\cross{3}{-3}{blue}{blue}
\vertline{-4}{0}{red}

\vertline{-3}{0}{red}
\vertline{-3}{-1}{red}

\vertline{-2}{0}{red}
\vertline{-2}{-1}{red}
\vertline{-2}{-2}{red}

\rightelbow{-1}{0}{red}
\vertline{-1}{-1}{red}
\vertline{-1}{-2}{red}
\vertline{-1}{-3}{red}

\horline{0}{0}{red}
\leftelbow{1}{0}{red}

\draw[line width=0.4mm] (0,-3)--(4,-3)--(4,1)--(0,1);
\draw[dashed,line width=0.3mm] (0,1)--(0,-3);
\draw[line width=0.4mm] (0,1)--(-4,1)--(-4,-3)--(0,-3);
\vertline{-4}{-1}{red}
\vertline{-4}{-2}{red}
\vertline{-4}{-3}{red}
\vertline{-3}{-2}{red}
\vertline{-3}{-3}{red}
\vertline{-2}{-3}{red}

\draw (4.4,0.5) node {$2$};
\draw (4.4,-0.5) node {$1$};
\draw (4.4,-1.5) node {$4$};
\draw (4.4,-2.5) node {$3$};
\draw (0.5,-3.4) node  {$1$};
\draw (1.5,-3.4) node  {$2$};
\draw (2.5,-3.4) node  {3};
\draw (3.5,-3.4) node  {4};
\draw (-5.5,-1) node {$\lambda = (2)$};
\draw (-5.5,-2) node {$1$};
\end{scope}
\begin{scope}[shift={(13,-12)}]
\rightelbow{2}{0}{blue}
\horline{3}{0}{blue}

\rightelbow{0}{-1}{blue}
\horline{1}{-1}{blue}
\cross{2}{-1}{blue}{blue}
\horline{3}{-1}{blue}

\vertline{0}{-2}{blue}
\rightelbow{1}{-2}{blue}
\leftelbow{2}{-2}{blue}
\rightelbow{3}{-2}{blue}

\vertline{0}{-3}{blue}
\vertline{1}{-3}{blue}
\rightelbow{2}{-3}{blue}
\cross{3}{-3}{blue}{blue}
\vertline{-4}{0}{red}

\vertline{-3}{0}{red}
\vertline{-3}{-1}{red}

\vertline{-2}{0}{red}
\vertline{-2}{-1}{red}
\vertline{-2}{-2}{red}

\rightelbow{-1}{0}{red}
\vertline{-1}{-1}{red}
\vertline{-1}{-2}{red}
\vertline{-1}{-3}{red}

\leftelbow{0}{0}{red}

\draw[line width=0.4mm] (0,-3)--(4,-3)--(4,1)--(0,1);
\draw[dashed,line width=0.3mm] (0,1)--(0,-3);
\draw[line width=0.4mm] (0,1)--(-4,1)--(-4,-3)--(0,-3);
\vertline{-4}{-1}{red}
\vertline{-4}{-2}{red}
\vertline{-4}{-3}{red}
\vertline{-3}{-2}{red}
\vertline{-3}{-3}{red}
\vertline{-2}{-3}{red}

\draw (4.4,0.5) node {$2$};
\draw (4.4,-0.5) node {$1$};
\draw (4.4,-1.5) node {$4$};
\draw (4.4,-2.5) node {$3$};
\draw (0.5,-3.4) node  {$1$};
\draw (1.5,-3.4) node  {$2$};
\draw (2.5,-3.4) node  {3};
\draw (3.5,-3.4) node  {4};
\draw (-5.5,-1) node {$\lambda = (1)$};
\draw (-5.5,-2) node {$x_1-a_2$};
\end{scope}
%
%
%
\begin{scope}[shift={(0,-18)}]
\rightelbow{2}{0}{blue}
\horline{3}{0}{blue}

\rightelbow{0}{-1}{blue}
\horline{1}{-1}{blue}
\cross{2}{-1}{blue}{blue}
\horline{3}{-1}{blue}

\vertline{0}{-2}{blue}
\rightelbow{1}{-2}{blue}
\leftelbow{2}{-2}{blue}
\rightelbow{3}{-2}{blue}

\vertline{0}{-3}{blue}
\vertline{1}{-3}{blue}
\rightelbow{2}{-3}{blue}
\cross{3}{-3}{blue}{blue}
\vertline{-4}{0}{red}

\vertline{-3}{0}{red}
\vertline{-3}{-1}{red}

\vertline{-2}{0}{red}
\vertline{-2}{-1}{red}
\vertline{-2}{-2}{red}

\vertline{-1}{0}{red}
\vertline{-1}{-1}{red}
\vertline{-1}{-2}{red}
\vertline{-1}{-3}{red}

\draw[line width=0.4mm] (0,-3)--(4,-3)--(4,1)--(0,1);
\draw[dashed,line width=0.3mm] (0,1)--(0,-3);
\draw[line width=0.4mm] (0,1)--(-4,1)--(-4,-3)--(0,-3);
\vertline{-4}{-1}{red}
\vertline{-4}{-2}{red}
\vertline{-4}{-3}{red}
\vertline{-3}{-2}{red}
\vertline{-3}{-3}{red}
\vertline{-2}{-3}{red}

\draw (4.4,0.5) node {$2$};
\draw (4.4,-0.5) node {$1$};
\draw (4.4,-1.5) node {$4$};
\draw (4.4,-2.5) node {$3$};
\draw (0.5,-3.4) node  {$1$};
\draw (1.5,-3.4) node  {$2$};
\draw (2.5,-3.4) node  {3};
\draw (3.5,-3.4) node  {4};
\draw (-5.5,0) node {$\lambda = \emptyset$};
\draw (-5.5,-1) node {$(x_1-a_1)$};
\draw (-5.5,-2) node {$(x_1-a_2)$};
\end{scope}
\begin{scope}[shift={(13,-18)}]
\rightelbow{1}{0}{blue}
\horline{2}{0}{blue}
\horline{3}{0}{blue}

\rightelbow{0}{-1}{blue}
\cross{1}{-1}{blue}{blue}
\horline{2}{-1}{blue}{blue}
\horline{3}{-1}{blue}

\vertline{0}{-2}{blue}
\vertline{1}{-2}{blue}
\rightelbow{3}{-2}{blue}

\vertline{0}{-3}{blue}
\vertline{1}{-3}{blue}
\rightelbow{2}{-3}{blue}
\cross{3}{-3}{blue}{blue}
\vertline{-4}{0}{red}

\vertline{-3}{0}{red}
\vertline{-3}{-1}{red}

\vertline{-2}{0}{red}
\vertline{-2}{-1}{red}
\vertline{-2}{-2}{red}

\rightelbow{-1}{0}{red}
\vertline{-1}{-1}{red}
\vertline{-1}{-2}{red}
\vertline{-1}{-3}{red}

\leftelbow{0}{0}{red}

\draw[line width=0.4mm] (0,-3)--(4,-3)--(4,1)--(0,1);
\draw[dashed,line width=0.3mm] (0,1)--(0,-3);
\draw[line width=0.4mm] (0,1)--(-4,1)--(-4,-3)--(0,-3);
\vertline{-4}{-1}{red}
\vertline{-4}{-2}{red}
\vertline{-4}{-3}{red}
\vertline{-3}{-2}{red}
\vertline{-3}{-3}{red}
\vertline{-2}{-3}{red}

\draw (4.4,0.5) node {$2$};
\draw (4.4,-0.5) node {$1$};
\draw (4.4,-1.5) node {$4$};
\draw (4.4,-2.5) node {$3$};
\draw (0.5,-3.4) node  {$1$};
\draw (1.5,-3.4) node  {$2$};
\draw (2.5,-3.4) node  {3};
\draw (3.5,-3.4) node  {4};
\draw (-5.5,-1) node {$\lambda = (1)$};
\draw (-5.5,-2) node {$x_3-a_3$};
\end{scope}
%
%
\begin{scope}[shift={(6.5,-24)}]
\rightelbow{1}{0}{blue}
\horline{2}{0}{blue}
\horline{3}{0}{blue}

\rightelbow{0}{-1}{blue}
\cross{1}{-1}{blue}{blue}
\horline{2}{-1}{blue}{blue}
\horline{3}{-1}{blue}

\vertline{0}{-2}{blue}
\vertline{1}{-2}{blue}
\rightelbow{3}{-2}{blue}

\vertline{0}{-3}{blue}
\vertline{1}{-3}{blue}
\rightelbow{2}{-3}{blue}
\cross{3}{-3}{blue}{blue}
\vertline{-4}{0}{red}

\vertline{-3}{0}{red}
\vertline{-3}{-1}{red}

\vertline{-2}{0}{red}
\vertline{-2}{-1}{red}
\vertline{-2}{-2}{red}

\vertline{-1}{0}{red}
\vertline{-1}{-1}{red}
\vertline{-1}{-2}{red}
\vertline{-1}{-3}{red}

\draw[line width=0.4mm] (0,-3)--(4,-3)--(4,1)--(0,1);
\draw[dashed,line width=0.3mm] (0,1)--(0,-3);
\draw[line width=0.4mm] (0,1)--(-4,1)--(-4,-3)--(0,-3);
\vertline{-4}{-1}{red}
\vertline{-4}{-2}{red}
\vertline{-4}{-3}{red}
\vertline{-3}{-2}{red}
\vertline{-3}{-3}{red}
\vertline{-2}{-3}{red}

\draw (4.4,0.5) node {$2$};
\draw (4.4,-0.5) node {$1$};
\draw (4.4,-1.5) node {$4$};
\draw (4.4,-2.5) node {$3$};
\draw (0.5,-3.4) node  {$1$};
\draw (1.5,-3.4) node  {$2$};
\draw (2.5,-3.4) node  {3};
\draw (3.5,-3.4) node  {4};
\draw (-5.5,0) node {$\lambda = \emptyset$};
\draw (-5.5,-1) node {$(x_1-a_1)$};
\draw (-5.5,-2) node {$(x_3-a_3)$};
\end{scope}
\end{tikzpicture}
\end{center}
\caption{Rectangular $S_n$-bumpless pipedreams for $w=2143$}
\label{F:pipedreams}
\end{figure}
\end{example} 

\begin{thm}\label{thm:decomp}
Let $w \in S_n$.  Then we have $\bS_w(x;a) = \sum_P \wt(P) s_{\lambda(P)}(x||a)$ where the sum is over all $w$-rectangular bumpless pipedreams.
\end{thm}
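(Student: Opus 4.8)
The plan is to prove Theorem~\ref{thm:decomp} by cutting a plane $w$-bumpless pipedream along the horizontal line separating row $0$ from row $1$: the part lying inside $R_n$ will be a $w$-rectangular bumpless pipedream, the part lying in the closed upper halfplane $\Z_{\le0}\times\Z$ will be a halfplane pipedream, and everything else will be forced to be ``standard''. Combining this decomposition with Theorem~\ref{T:bumpless} (which expresses $\bS_w(x;a)$ as a sum over all plane $w$-bumpless pipedreams) and Theorem~\ref{T:lambdabumpless} (which expresses $s_\la(x||a)$ as a sum over $\la$-halfplane pipedreams) then yields the identity.

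Fix $w\in S_n$, viewed inside $S_\Z$, and let $P$ be a plane $w$-bumpless pipedream. First I would check that the cut is harmless. Every pipe in a bumpless pipedream traces a monotone staircase --- row coordinate weakly decreasing and column coordinate weakly increasing from its southern end to its eastern end --- as is forced by the list of allowed tiles. Since pipe $i$ for $1\le i\le n$ heads east in row $w^{-1}(i)\ge1$, every positively labeled pipe stays in rows $\ge 1$; hence the halfplane part $U:=P|_{\Z_{\le0}\times\Z}$ contains only nonpositively labeled pipes, and these never cross (their endpoints at infinity are nested, so the ``cross at most once'' condition forces zero crossings), so $U$ uses no crossing tile. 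By the proposition of \textsection\ref{ssec:droop}, $P$ is obtained from the Rothe pipedream $D(w)$ by droops, and each droop replaces an empty tile by one strictly to its northwest; as the empty tiles of $D(w)$ form the Rothe diagram of $w$, which lies in $[1,n]^2$, every empty tile of $P$ lies weakly northwest of $[1,n]^2$, hence in rows $\le n$ and columns $\le n$. Together with the structure of $w$-bumpless pipedreams underlying the lemma preceding the theorem --- in rows $[1,n]$, outside the column range $[1-n,n]$, all tiles of $P$ are vertical segments of standard pipes --- this shows that every empty tile of $P$ lies either in $R_n$ or in $\Z_{\le0}\times\Z$; since these two regions are disjoint, $\wt(P)=\wt(Q)\,\wt(U)$ with $Q:=P|_{R_n}$.

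Next I would match the boundary data. By the lemma preceding the theorem, $Q$ is a $w$-rectangular bumpless pipedream; let $\lambda=\lambda(Q)$ be its associated partition, read off the north boundary of $R_n$. I claim that $U$ is exactly a $\lambda$-halfplane pipedream. The columns in which a pipe of $U$ enters its southern boundary are precisely the columns where the $n$ nonpositively labeled pipes of $Q$ leave the top of $R_n$, together with all columns $\le -n$; by the reading rule defining $\lambda(Q)$ and the bijection $\mu\mapsto(I_{w_\mu,+},I_{w_\mu,-})$ of \textsection\ref{SS:notation}, this is exactly the incoming-column set prescribed for a $\lambda$-halfplane pipedream. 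Condition~(3) for halfplane pipedreams --- the $i$-th eastmost incoming pipe heads east in row $1-i$ --- holds because in $P$ the nonpositive pipe labeled $-k$ heads east in row $w^{-1}(-k)=-k$, and by the staircase and non-crossing structure the eastmost ordering of these pipes along the cut agrees with the order of their labels $0>-1>-2>\cdots$. Conversely, given a $w$-rectangular bumpless pipedream $Q$ and a $\lambda(Q)$-halfplane pipedream $U$, gluing them along the cut and filling in the rest of the plane by the forced standard behaviour produces a unique plane $w$-bumpless pipedream; these constructions are mutually inverse and multiply weights.

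Putting everything together, by Theorem~\ref{T:bumpless}, regrouping plane pipedreams by the pair $(Q,U)$ --- valid coefficient by coefficient, since there are only finitely many $Q$ --- and Theorem~\ref{T:lambdabumpless}:
\begin{align*}
\bS_w(x;a)=\sum_{P}\wt(P)
&=\sum_{Q}\ \sum_{U}\wt(Q)\,\wt(U) \\
&=\sum_{Q}\wt(Q)\Bigl(\sum_{U}\wt(U)\Bigr)
=\sum_{Q}\wt(Q)\,s_{\lambda(Q)}(x||a),
\end{align*}
where $Q$ ranges over $w$-rectangular bumpless pipedreams and $U$ over $\lambda(Q)$-halfplane pipedreams. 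I expect the main obstacle to be the boundary-matching step: proving that the halfplane part of a plane $w$-bumpless pipedream is exactly a $\lambda(Q)$-halfplane pipedream and that the cutting map is a bijection. Reconciling the $2n$-edge north-boundary reading that defines $\lambda(Q)$ with the $\Z$-indexed incoming-column set and the permutation $w_\lambda$, and verifying the exit-row condition~(3), is where the genuine bookkeeping lies; the weight factorization and the assembly are then routine.
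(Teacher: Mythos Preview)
Your argument is circular within this paper's logical structure. You take Theorems~\ref{T:bumpless} and~\ref{T:lambdabumpless} as inputs, but in the paper both of these are \emph{deduced from} Theorem~\ref{thm:decomp}: the paper first proves Theorem~\ref{thm:decomp} (in \textsection\ref{ssec:decompproof}), then uses it to obtain Theorem~\ref{T:square bumpless} (the $n\times n$ formula for $\S_w(x;a)$), back-stabilizes that to get Theorem~\ref{T:bumpless}, and finally specializes to $w=w_\lambda$ for Theorem~\ref{T:lambdabumpless}. No independent proof of Theorem~\ref{T:bumpless} is offered; indeed, the remark after Proposition~\ref{P:backstable double well-defined} points out that the classical Fomin--Kirillov pipedream expansion of $\S_w(x;a)$ does \emph{not} visibly back-stabilize, so one cannot simply cite it. What your bijection actually establishes is the equivalence of Theorem~\ref{thm:decomp} with Theorem~\ref{T:bumpless} (given Theorem~\ref{T:lambdabumpless}), not a proof of either from scratch.

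The paper's route is entirely different and geometric. One forms the $n$-rotated double Stanley function $F_w^{(n)}(x||a)$ by applying $x_i\mapsto a_{i-n}$ to $\bS_w(x;a)$, and identifies its image in $H^*_{T_{2n}}(\Gr(n,2n))$ with the equivariant class $[G(w)]$ of Knutson's graph Schubert variety (Theorem~\ref{thm:graphSchubStanley}, proved via positroid varieties and the wrong-way map to the affine Grassmannian). Knutson's interval-positroid pipedream expansion of $[G(w)]$ in the Schubert basis then matches, term by term, the sum over $w$-rectangular bumpless pipedreams; letting $n$ vary recovers Theorem~\ref{thm:decomp}.

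If you could supply an independent proof of Theorem~\ref{T:bumpless}---for instance by directly verifying that the plane bumpless-pipedream generating function satisfies the divided-difference recursion and the localization condition characterizing $\bS_w(x;a)$---your cutting bijection would give a purely combinatorial proof of Theorem~\ref{thm:decomp}, bypassing the geometry. Two spots in your write-up would still need care even then: the assertion that tiles in rows $[1,n]$ outside columns $[1-n,n]$ are ``vertical segments of standard pipes'' is false for columns $>n$ (those carry horizontal segments of the positively labeled pipes), though what you actually need---that no empty tile lies in rows $[1,n]$ and columns $<1{-}n$---is true and follows once one checks that droops cannot produce SE elbows there; and the boundary-matching showing $U$ is precisely a $\lambda(Q)$-halfplane pipedream needs the bookkeeping you yourself flag.
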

Theorem \ref{thm:decomp} is proved in \textsection \ref{ssec:decompproof}.

\begin{cor}\label{C:Stanleydecomp}
Let $w \in S_n$.  Then $F_w(x||a) = \sum_P \eta_a(\wt(P)) s_{\lambda(P)}(x||a)$ where the sum is over all $w$-rectangular bumpless pipedreams.
\end{cor}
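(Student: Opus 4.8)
The plan is to obtain Corollary~\ref{C:Stanleydecomp} by applying the $\Q[a]$-algebra homomorphism $\eta_a$ to the identity of Theorem~\ref{thm:decomp}. Recall from \eqref{E:double Stanley} that $F_w(x||a) = \eta_a(\bS_w(x;a))$, where $\eta_a\colon \bR(x;a)\to\La(x||a)$ is induced by the substitution $x_i\mapsto a_i$ on the polynomial generators $x_i$ of $\Q[x,a]$ and acts as the identity on the subalgebra $\La(x||a)$.

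First I would observe that the sum in Theorem~\ref{thm:decomp} is finite: for $w\in S_n$, a $w$-rectangular bumpless pipedream is a tiling of the finite region $R_n$, so there are only finitely many of them, each weight $\wt(P)=\prod(x_i-a_j)$ is an honest polynomial in $\Q[x,a]$, and each $\la(P)$ is a partition inside the $n\times n$ box. Hence we may legitimately apply the algebra homomorphism $\eta_a$ to Theorem~\ref{thm:decomp} term by term.

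Next, since $\eta_a$ is multiplicative, $\eta_a\bigl(\wt(P)\,s_{\la(P)}(x||a)\bigr)=\eta_a(\wt(P))\cdot\eta_a\bigl(s_{\la(P)}(x||a)\bigr)$. The factor $\wt(P)$ lies in $\Q[x,a]\subset\bR(x;a)$ and involves only the polynomial variables $x_i$, so $\eta_a(\wt(P))$ is the polynomial in $\Q[a]$ obtained by setting $x_i=a_i$ in it; meanwhile $s_{\la(P)}(x||a)\in\La(x||a)$, on which $\eta_a$ restricts to the identity, so $\eta_a\bigl(s_{\la(P)}(x||a)\bigr)=s_{\la(P)}(x||a)$. (This is exactly the distinction flagged after the definition of $\eta_a$: it separates the $x_i$ occurring in $\wt(P)$ from the $x_i$ occurring inside the double Schur functions, just as $\eta_0$ does in Remark~\ref{R:eta knows}.) Combining these with $F_w(x||a)=\eta_a(\bS_w(x;a))$ yields $F_w(x||a)=\sum_P \eta_a(\wt(P))\,s_{\la(P)}(x||a)$, the sum over all $w$-rectangular bumpless pipedreams.

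Since the statement is a formal consequence of applying a fixed algebra homomorphism to Theorem~\ref{thm:decomp}, there is no substantive obstacle; the only point requiring care is keeping the two roles of the symbol $x$ separate, which is built into the definition of $\eta_a$.
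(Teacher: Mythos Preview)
Your proof is correct and is precisely the approach the paper has in mind: the corollary is stated without proof because it follows immediately by applying the $\Q[a]$-algebra homomorphism $\eta_a$ to Theorem~\ref{thm:decomp}, using that $\eta_a$ fixes $\La(x||a)$ and sends $\wt(P)\in\Q[x,a]$ to $\eta_a(\wt(P))$.
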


\subsection{Square $S_n$-bumpless pipedreams}
Let $w \in S_n$.  A \defn{$w$-square bumpless pipedream} is a bumpless pipedream in the $n \times n$ square region 
$$S_n := \{(i,j) \mid i \in [1,n] \text{ and } j \in [1,n]\}.
$$  

The pipes are labeled $1,\ldots,n$, entering the south boundary from left to right.  The pipes exit the east boundary: pipe $i$ exits in row $i$.  Two pipes intersect at most once.   As before, the weight of a square $S_n$-bumpless pipedream $P$ is given by $\wt(P) = \prod (x_i - a_j)$, with the product  over all empty tiles $(i,j)$.  In Example \ref{ex:2143}, if we erase the left half and all red pipes, we obtain a square $2143$-bumpless pipedream.

\begin{thm}\label{T:square bumpless}
For $w \in S_n$ we have $\S_w(\xp;\ap) = \sum_P \wt(P)$ where the sum is over all $w$-square bumpless pipedreams.
\end{thm}
\begin{proof}
By Theorem \ref{thm:doublecoprod} and Lemma \ref{lem:doubleid}, when $\bS_w(\xp;\ap)$ is expanded in terms of $\{s_\la(x||a) \mid \la \in \Par\}$, the coefficient of $s_\emptyset(x||a)$ is equal to $\S_w(\xp;\ap)$.  By Theorem \ref{thm:decomp}, we thus have
$$
\S_w(\xp;\ap) = \sum_P \wt(P)
$$
summed over $w$-rectangular bumpless pipedreams $P$ satisfying $\lambda(P) = \emptyset$.  The condition $\lambda(P) = \emptyset$ is equivalent to all nonpositively labeled pipes in $P$ being completely vertical.  In particular, the nonpositively labeled pipes stay within the left $n \times n$ square of $P$.  Such pipedreams are in weight-preserving bijection with $w$-square bumpless pipedreams.
\end{proof}

\begin{proof}[Proof of Theorem \ref{T:bumpless}]
The special role of the row and column indexed 0 is arbitrary.  In Theorem \ref{T:square bumpless}, we could obtain a formula for the double Schubert polynomial $\S^{[p,n]}_w(x;a)$ (with variables $x_p,x_{p+1},\ldots,x_n$ and $a_p,a_{p+1},\ldots,a_n$) if we worked with square $w$-bumpless pipedreams in rows and columns indexed by $p,p+1,\ldots,n$.  We note that such bumpless pipedreams are back stable: there is a natural weight-preserving injection sending such a pipedream for $\S^{[p,n]}_w(x;a)$ to a pipedream for $\S^{[p-1,n]}_w(x;a)$.  The union of all such square $w$-pipedreams are exactly the $w$-bumpless pipedreams of Theorem \ref{T:bumpless}.  Taking $ p \to -\infty$, Theorem \ref{T:bumpless} thus follows from the definition of back stable double Schubert polynomial.
\end{proof}

\begin{proof}[Proof of Theorem \ref{T:lambdabumpless}]
We apply Theorem \ref{T:bumpless} to $w = w_\lambda$.  We have $w_\lambda(1) < w_\lambda(2) < \cdots$ and $w_\lambda(0) > w_\lambda(-1) > \cdots$.  It follows that in a $w_\lambda$-bumpless pipedream:
\begin{enumerate}
\item there are no crossings in rows indexed by nonpositive integers;
\item there are no empty tiles in rows indexed by positive integers.
\end{enumerate}
Thus the lower half of a $w_\lambda$-bumpless pipedream $P$ is completely determined by $\la$, and the upper half is a $\lambda$-halfplane pipedream.
\end{proof}

\subsection{EG pipedreams}
Let $w \in S_n$.  Let $P$ be a $w$-square bumpless pipedream.  We call $P$ a \defn{$w$-EG pipedream} if all the empty tiles are in the northeast corner, where they form a partition shape $\lambda = \lambda(P)$, called the \defn{shape} of $P$. See Figure \ref{F:EG pipedream}.

\begin{figure} 

\begin{center}
\begin{tikzpicture}[scale=0.6,line width=0.8mm]
\rightelbow{1}{0}{blue}
\horline{2}{0}{blue}
\rightelbow{0}{-1}{blue}
\cross{1}{-1}{blue}{blue}
\horline{2}{-1}{blue}
\vertline{0}{-2}{blue}
\vertline{1}{-2}{blue}
\rightelbow{2}{-2}{blue}
\draw[line width=0.6mm] (0,-2)--(3,-2)--(3,1)--(0,1)--(0,-2);
\draw (3.4,0.5) node {$2$};
\draw (3.4,-0.5) node {$1$};
\draw (3.4,-1.5) node {$3$};
\draw (0.5,-2.4) node  {$1$};
\draw (1.5,-2.4) node  {$2$};
\draw (2.5,-2.4) node  {3};
\end{tikzpicture}
\end{center}
\caption{A 213-EG pipedream with partition $(1)$}
\label{F:EG pipedream}
\end{figure}

\begin{thm}\label{thm:EGStanley}
The Edelman-Greene coefficient $j_\lambda^w = j_\lambda^w(0)$ of \eqref{E:stanley coefficients}  is equal to the number of $w$-EG pipedreams $P$ satisfying $\lambda(P) = \lambda$.
\end{thm}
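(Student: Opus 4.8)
The plan is to deduce the identity from Corollary~\ref{C:Stanleydecomp} by specializing the equivariant parameters to zero. That corollary gives $F_w(x||a)=\sum_P\eta_a(\wt(P))\,s_{\lambda(P)}(x||a)$, summed over $w$-rectangular bumpless pipedreams $P$ in $R_n$. I would set $a_i=0$ for all $i$: then $\shift_a$ becomes the identity on $\Lambda$, so \eqref{E:double Schur det} turns into the Jacobi--Trudi formula and $s_\lambda(x||a)\mapsto s_\lambda(x)$; also $\bS_w(x;0)=\bS_w(x)$ and $\eta_a$ becomes $\eta_0$, so $F_w(x||a)\mapsto F_w(x)$. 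The point is that $\eta_a(\wt(P))=\prod_{(i,j)\ \mathrm{empty}}(a_i-a_j)$ vanishes at $a=0$ as soon as $P$ has an empty tile, and equals $1$ (the empty product) otherwise. Hence $F_w(x)=\sum_{P\,:\,\wt(P)=1}s_{\lambda(P)}(x)$, the sum over $w$-rectangular bumpless pipedreams with no empty tiles, so $j_\lambda^w=j_\lambda^w(0)$ is the number of such $P$ with $\lambda(P)=\lambda$ (note $|\lambda|=\ell(w)$ is then automatic, since $\ell(w)=|\lambda(P)|+\deg(\wt(P))$).

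It remains to biject the $w$-rectangular bumpless pipedreams $P$ in $R_n$ with $\wt(P)=1$ and $\lambda(P)=\lambda$ with the $w$-EG pipedreams of shape $\lambda$. This bijection is essentially the operation from the proof of Theorem~\ref{T:square bumpless}: erase the left half of $R_n$ and all nonpositively labeled pipes. The key structural fact is that every pipe, entering the south boundary heading north, can only ever travel north and east (a pipe entering a tile from below exits the top or the right; a pipe entering from the left exits the right or the top). In particular every positively labeled pipe stays in columns $\ge 1$, so when $P$ has no empty tiles the left $n\times n$ block of $R_n$ is packed entirely by the pairwise noncrossing nonpositive pipes; being a maximal noncrossing monotone packing with a prescribed exit pattern, that packing --- and the way some of the nonpositive pipes spill into the right block hugging its upper-left corner --- is uniquely determined by $\lambda$. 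Erasing the nonpositive pipes from the right block replaces exactly a partition-shaped region equal to $\lambda$ by empty tiles, producing a $w$-square bumpless pipedream whose empty tiles form $\lambda$, i.e. a $w$-EG pipedream of shape $\lambda$; conversely, given such an EG pipedream one threads $n$ nonpositive pipes through the $\lambda$-shaped empty region and the whole left block in the unique noncrossing way that leaves no empty tile. One then checks that the shape so produced is $\lambda(P)$ as read off the north boundary, and that for $\lambda=\emptyset$ one recovers the identification used for Theorem~\ref{T:square bumpless}.

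The main obstacle is the verification in the previous paragraph: that erasing the spill-over nonpositive pipes always leaves a genuine partition shape of empty tiles, and that this shape equals $\lambda(P)$. This is a rigidity statement --- the no-empty-tile condition forces consecutive nonpositive pipes to hug one another, so that the boundary between the region they sweep and the rest of $R_n$ is a single lattice path, precisely the one used to define $\lambda(P)$ from the north boundary --- but making it precise means carefully tracking the nesting of the nonpositive pipes and how their north-exits split between the two halves of $R_n$; the remaining steps are routine bookkeeping. Should one wish to bypass rectangular pipedreams altogether, an alternative is to biject $w$-EG pipedreams of shape $\lambda$ directly with reduced word tableaux for $w$ of shape $\lambda$ and invoke the Edelman--Greene theorem recalled in Remark~\ref{R:Stanley coefficients}; but the route above stays closer to the pipedream machinery developed in this section.
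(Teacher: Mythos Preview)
Your proposal is correct and follows essentially the same route as the paper. Both arguments specialize Corollary~\ref{C:Stanleydecomp} at $a=0$ to obtain $F_w=\sum_P s_{\lambda(P)}$ over $w$-rectangular bumpless pipedreams with no empty tiles, and then biject such pipedreams with $w$-EG pipedreams by restricting to the right $n\times n$ square and erasing the nonpositive pipes. The paper's proof is terser on the bijection: it simply asserts that the positive pipes in the right square form a $w$-EG pipedream, that the nonpositive pipes fill the remaining tiles in a unique noncrossing way, and that ``one verifies from the definitions that $\lambda(P)$ is defined consistently.'' Your monotonicity observation (pipes only travel north and east, so positive pipes stay in columns $\ge 1$) is exactly the reason the restriction works, and your identification of the rigidity check---that the region swept by the nested nonpositive pipes is bounded by a single lattice path matching the north-boundary reading defining $\lambda(P)$---is the content of that final verification. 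So there is no genuine difference in approach; you have supplied somewhat more justification at the points the paper leaves implicit.
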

\begin{proof}
Specializing $a_i = 0$ for all $i$ in Corollary \ref{C:Stanleydecomp}, we obtain $F_w = \sum_P s_{\lambda(P)}$ where the sum is over all $w$-rectangular bumpless pipedreams $P$ {\it with no empty tiles}.  In particular, the positively labeled pipes in the right $n \times n$ square of $P$ forms a $w$-EG pipedream.  The nonpositively labeled pipes in $P$ have to fill up all the remaining tiles, and since they cannot intersect, there is a unique way to do so.  Thus there is a bijection between $w$-rectangular bumpless pipedreams with no empty tiles and $w$-EG pipedreams.  Finally, one verifies from the definitions that $\lambda(P)$ is defined consistently for the two kinds of pipedreams.
\end{proof}

An empty tile $T$ in a bumpless pipedream $D$ is called a \defn{floating tile} if there exists a pipe that is northwest of $T$.  A bumpless pipedream $D$ is called \defn{near EG} if it has a single floating tile.

\subsection{Column moves}
We define \defn{column moves} that modify a bumpless pipedream in two adjacent columns; see Figure~\ref{fig:columnmove}.  Only one of the pipes (the \defn{active pipe}) is drawn in these pictures.  For the move to be allowed, the southeastmost tile must be an empty tile (before the move), and it must be the only empty tile.  Thus the move takes the empty tile from the southeastmost position to the northwestmost position.  There are usually other pipes (indicated in black) in the move, and the ``kinks are shifted left'' if necessary, see Figure~\ref{fig:columnmove}.
 
 \begin{figure}
\begin{center}
\begin{tikzpicture}[scale=0.6,line width=0.8mm]
\draw[line width=0.6mm] (0,0)--(2,0)--(2,4)--(0,4)--(0,0);
\vertline{0}{0}{blue}
\vertline{0}{1}{blue}
\vertline{0}{2}{blue}
\rightelbow{0}{3}{blue}
\horline{1}{3}{blue}

\draw (3,2) node {$\rightarrow$};

\begin{scope}[shift={(4,0)}]
\draw[line width=0.6mm] (0,0)--(2,0)--(2,4)--(0,4)--(0,0);
\rightelbow{0}{0}{blue}
\leftelbow{1}{0}{blue}
\vertline{1}{1}{blue}
\vertline{1}{2}{blue}
\rightelbow{1}{3}{blue}
\end{scope}

\begin{scope}[shift={(9,0)}]
\draw[line width=0.6mm] (0,0)--(2,0)--(2,4)--(0,4)--(0,0);
\leftelbow{0}{0}{blue}
\vertline{0}{1}{blue}
\vertline{0}{2}{blue}
\rightelbow{0}{3}{blue}
\horline{1}{3}{blue}

\draw (3,2) node {$\rightarrow$};

\begin{scope}[shift={(4,0)}]
\draw[line width=0.6mm] (0,0)--(2,0)--(2,4)--(0,4)--(0,0);
\horline{0}{0}{blue}
\leftelbow{1}{0}{blue}
\vertline{1}{1}{blue}
\vertline{1}{2}{blue}
\rightelbow{1}{3}{blue}
\end{scope}
\end{scope}

\begin{scope}[shift={(0,-6)}]
\draw[line width=0.6mm] (0,0)--(2,0)--(2,4)--(0,4)--(0,0);
\vertline{0}{0}{blue}
\vertline{0}{1}{blue}
\vertline{0}{2}{blue}
\rightelbow{0}{3}{blue}
\leftelbow{1}{3}{blue}

\draw (3,2) node {$\rightarrow$};

\begin{scope}[shift={(4,0)}]
\draw[line width=0.6mm] (0,0)--(2,0)--(2,4)--(0,4)--(0,0);
\rightelbow{0}{0}{blue}
\leftelbow{1}{0}{blue}
\vertline{1}{1}{blue}
\vertline{1}{2}{blue}
\vertline{1}{3}{blue}
\end{scope}

\begin{scope}[shift={(9,0)}]
\draw[line width=0.6mm] (0,0)--(2,0)--(2,4)--(0,4)--(0,0);
\leftelbow{0}{0}{blue}
\vertline{0}{1}{blue}
\vertline{0}{2}{blue}
\rightelbow{0}{3}{blue}
\leftelbow{1}{3}{blue}

\draw (3,2) node {$\rightarrow$};

\begin{scope}[shift={(4,0)}]
\draw[line width=0.6mm] (0,0)--(2,0)--(2,4)--(0,4)--(0,0);
\horline{0}{0}{blue}
\leftelbow{1}{0}{blue}
\vertline{1}{1}{blue}
\vertline{1}{2}{blue}
\vertline{1}{3}{blue}
\end{scope}
\end{scope}
\end{scope}
\begin{scope}[shift={(19,-3.5)}]
\draw[line width=0.6mm] (0,0)--(2,0)--(2,5)--(0,5)--(0,0);
\vertline{0}{0}{blue}
\cross{0}{1}{blue}{\inactive}
\horline{1}{1}{\inactive}
\cross{0}{2}{blue}{\inactive}
\leftelbow{1}{2}{\inactive}
\vertline{0}{3}{blue}
\rightelbow{1}{3}{\inactive}
\rightelbow{0}{4}{blue}
\horline{1}{4}{blue}

\draw (3,2) node {$\rightarrow$};

\begin{scope}[shift={(4,0)}]
\draw[line width=0.6mm] (0,0)--(2,0)--(2,5)--(0,5)--(0,0);
\rightelbow{0}{0}{blue}
\leftelbow{1}{0}{blue}
\horline{0}{1}{\inactive}
\cross{1}{1}{blue}{\inactive}
\leftelbow{0}{2}{\inactive}
\vertline{1}{2}{blue}
\rightelbow{0}{3}{\inactive}
\cross{1}{3}{blue}{\inactive}
\rightelbow{1}{4}{blue}
\end{scope}
\end{scope}
\end{tikzpicture}
 \end{center}
 \caption{On the left: the different types of column moves.  On the right: kinks are shifted to the left.}
 \label{fig:columnmove}
 \end{figure}

%
%
A column move is a droop if no kinks are present, and in addition, the pipe exits south in the left column and exits east in the right column.  We write $D \to D'$ if two bumpless pipedreams are related by a column move.  We say that $D'$ is obtained from $D$ by a \defn{downwards} column move.  

\begin{lemma}
Let $D$ be a bumpless pipedream that is not an EG pipedream.  Then $D$ admits a downwards column move $D \to D'$.
\end{lemma}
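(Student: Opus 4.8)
The plan is to read off a legal downwards column move from the specific way in which $D$ fails to be an EG pipedream. By definition $D$ is an EG pipedream exactly when its empty tiles form a Young diagram in the corner of the square, which — as an elementary fact about sets of cells — is equivalent to requiring that whenever $(i,j)$ is an empty tile, so are $(i-1,j)$ and $(i,j-1)$ whenever these lie in the square. Since $D$ is not EG, there is therefore an empty tile $t=(i,j)$ one of whose neighbours $(i-1,j)$ or $(i,j-1)$ carries a pipe; the two cases are handled by parallel arguments, so I describe the case in which $(i,j-1)$ is occupied.

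Next I would locate the column move. Since its east neighbour $t$ is empty, the tile $(i,j-1)$ does not exit east, so the pipe $p$ passing through it runs vertically there; following $p$ upward in column $j-1$ it turns east at a unique SE elbow $(r,j-1)$ with $r<i$, and $p$ descends column $j-1$ straight from this elbow. Now consider all configurations consisting of a pipe $q$, an SE elbow $(r,c)$ of $q$ from which $q$ runs straight down column $c$, and an empty tile of $D$ in column $c+1$ strictly below row $r$; the configuration just produced is one of them. Among all of these I would choose one whose enclosing rectangle $R=[r,r']\times\{c,c+1\}$ — with $(r',c+1)$ the chosen empty tile — has least area. For this minimal choice the validity conditions for the downwards column move on $R$ should follow: $(r',c+1)$ is the only empty tile of $R$, because column $c$ is entirely occupied by $q$ on these rows and a second empty tile in column $c+1$ would give a strictly smaller admissible rectangle; the tile $(r,c+1)$ is a horizontal segment of $q$, and the remaining pipes meeting $R$ form the ``kinks'' of the column-move picture, which shift left correctly; and the move returns a bumpless pipedream. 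The move $D\to D'$ so obtained is downwards since the empty tile of $R$ travels from the southeast corner $(r',c+1)$ to the northwest corner $(r,c)$.

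The crux is the verification in the last paragraph that the kink-shifting rearrangement really outputs a bumpless pipedream — that no two pipes come to cross twice and that no double-elbow (``bumping'') tile appears — and, in the symmetric case where $(i-1,j)$ is occupied, that an admissible configuration can still be extracted even when the naive one has a crossing in its top-right tile. Both points should follow from the minimality of $R$ together with a finite check of the allowed local tilings, in the same spirit as the proof that every bumpless pipedream is obtained from its Rothe pipedream by droops. Iterating the lemma — downwards column moves preserve $w$ and strictly decrease $\sum_{(a,b)\text{ empty}}(a+b)$ — then shows that any bumpless pipedream can be carried to an EG pipedream by a sequence of downwards column moves.
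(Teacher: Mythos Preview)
Your overall strategy—locate a witness configuration and then minimize over all such configurations to ensure the rectangle contains a unique empty tile—is sound, but the case split you set up is not symmetric. A column move lives in two adjacent \emph{columns}, so it is the tile immediately to the \emph{west} of the empty tile that must carry a vertical pipe segment; an occupied tile to the \emph{north} is not directly usable by a ``parallel argument.'' In the $(i-1,j)$-occupied case you must still produce an empty tile with an occupied western neighbour before your configuration exists. This can be done by walking west along row $i$: from $(i-1,j)$ occupied and $(i,j)$ empty you deduce $(i-1,j-1)$ is occupied (the pipe at $(i-1,j)$ has no southern entry, hence enters from the west), so if $(i,j-1)$ is also empty you are in the same situation one column over; the walk terminates because at $j'=1$ a pipe at $(i-1,1)$ would have neither a southern neighbour (empty) nor a western neighbour (boundary) to enter from. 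This is a genuine extra argument, not a symmetry.

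The paper avoids both the case split and the separate minimization with a single device. Call an empty tile \emph{floating} if some pipe lies strictly to its northwest, and take $E$ to be any \emph{northwestmost} floating tile. Then the tile $W$ immediately west of $E$ is forced to be occupied—otherwise either $W$ is itself floating and more northwest than $E$, or tracing the pipe northwest of $E$ backward through column $j-1$ shows it passes through $W$—and $W$ must be a NW elbow or vertical line since its eastern neighbour $E$ is empty. Following that pipe up to its SE elbow $T$ gives the column-move rectangle with corners $T$ and $E$, and the northwestmost-ness of $E$ immediately forces $E$ to be the unique empty tile in it (any other empty tile would be floating with the pipe at $T$ strictly to its northwest). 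So the paper's choice of $E$ accomplishes in one stroke what your case analysis plus area-minimization do in two. Your concern that the kink-shifting output is again a valid bumpless pipedream is shared by the paper's proof, which also leaves this implicit in the definition of column move; and your claim that $(r,c+1)$ is horizontal is slightly too strong (it can also be a NW elbow, as in the paper's third and fourth base pictures), though this does not affect the argument.
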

\begin{proof}
Let $E$ be any northwestmost floating tile in $D$.  Then the tile $W$ immediately to west of $E$ is nonempty, and must either be a NW elbow or a vertical line.  Call this pipe $p$.  Then $p$ travels up from $W$ a number of tiles and turns towards the east at a tile $T$.  We may perform a column move in the rectangle with corners $T$ and $E$.
\end{proof}

\begin{lemma}\label{lem:rectify}
Let $D$ be a a near EG pipedream.  Then there is a unique sequence of moves $D \to D' \to D'' \to \cdots \to D^*$ where $D^*$ is a EG pipedream.
\end{lemma}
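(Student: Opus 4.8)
The plan is to show that the ``rectification'' is a deterministic, terminating procedure. The technical heart, which I would establish first, is a structural fact about empty tiles: in any bumpless pipedream, if an empty tile $(i,j)$ has no pipe strictly to its northwest, then every tile of the rectangle $[1,i]\times[1,j]$ is empty. I would prove this by a ``dangling opening'' argument — a vertical, crossing, or SE-elbow tile has a southern opening, while a horizontal, NW-elbow, or crossing tile has a western opening — so, since openings must match across shared edges, emptiness propagates upward along column $j$ and leftward along row $i$, and hence throughout the rectangle. Two consequences follow: a bumpless pipedream is an EG-pipedream exactly when it has \emph{no} floating tile, and a near-EG pipedream $D$ consists of a Young diagram $\mu$ of non-floating empty tiles anchored at the corner together with one further empty tile $E=(r,c)$ lying outside $\mu$ and carrying a pipe to its northwest, so that $c\ge 2$.

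Next I would check that each step of the procedure is forced. Since $D$ has a floating tile it is not an EG-pipedream, so the preceding lemma yields a downwards column move $D\to D'$, and I claim it is the only one. The floating tile $E$ is unique, hence so is the tile $W$ immediately west of it; and $W$ is nonempty, for otherwise it would be a second floating tile, or else lie in $\mu$ — but then the structural fact would force $[1,r]\times[1,c-1]$ to be empty, contradicting that a pipe lies northwest of $E$ inside that rectangle. The pipe $p$ through $W$ runs straight up column $c-1$ and cannot turn west — an NW-elbow has no southern opening, so an ascending pipe cannot enter one — hence it continues until it turns east at a unique SE-elbow $T$, and the column move in the rectangle with corners $T$ (northwest) and $E$ (southeast) is forced. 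This move rewrites only columns $c-1,c$ in the rows between $T$ and $E$, all of whose tiles except $E$ are pipe tiles, so it leaves $\mu$ untouched; it relocates the floating tile to $T$, so the column index of the floating tile drops by exactly one; and a case check over the six tile types should show that the result is a genuine bumpless pipedream whose altered region contains exactly one empty tile. Thus $D'$ has at most one floating tile, so it is again near-EG, or EG once the relocated tile has no pipe to its northwest.

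Termination and uniqueness then follow: the column index of the floating tile is a positive integer strictly decreasing with every move, so the forced procedure stops after finitely many steps; a downwards move is available whenever the pipedream is not EG (preceding lemma) and is impossible once there is no floating tile, so the procedure stops precisely at an EG-pipedream $D^*$. Since each move along the way was the unique one available, the sequence $D\to D'\to\cdots\to D^*$ is unique.

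The hard part will be the single step I flagged above: verifying that the column move in full generality — including the case where the vertical run of $p$ is crossed by, or meets elbows of, other pipes, so that kinks get shifted one column to the left — always produces a valid bumpless pipedream and never creates a second floating tile. This is a finite, purely local verification, but a fiddly one; the structural fact is what makes it manageable, since it pins down the global shape of the empty region and forces $W$, $p$, and $T$ to exist and be unique.
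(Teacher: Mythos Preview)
The paper does not supply a proof of this lemma; it is stated and then immediately used to define the rectification $r(D)=D^*$. Your proposal therefore fills a gap the paper leaves open, and the strategy is correct: the column index of the floating tile is the right monovariant, and the invariant you maintain (exactly one floating tile, with the Young diagram $\mu$ of non-floating empties untouched) is what drives the induction.

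Two places want one more sentence. First, your propagation argument for the structural fact is slightly loose as written: knowing the south neighbour is empty only rules out the three tile types with a southern opening, leaving horizontal and NW-elbow as possibilities. What actually forces emptiness of $(k,j)$ for $k<i$ is that you also know the \emph{west} neighbour $(k,j-1)$ is empty (it lies in the strictly-northwest block, which is pipe-free by the non-floating hypothesis), and a tile with no southern and no western opening must be empty. The row-$i$ case is symmetric, using the east and north neighbours. Second, your uniqueness argument establishes that a column move with SE corner $E$ is forced, but you do not explicitly rule out a column move whose SE corner is some non-floating empty $E'\in\mu$. This is easy: by your structural fact the entire block $[1,r']\times[1,c'-1]$ is empty, so the tile immediately west of $E'$ is empty and there is no active pipe to serve as the left column of a move.

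With those two additions your argument is complete and is essentially what the paper would need were it to include a proof.
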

Write $r(D) = D^*$ for the EG pipedream of Lemma \ref{lem:rectify}.

\begin{remark}
We can define an equivalence relation on bumpless pipedreams using column moves.  We caution the reader that multiple EG pipedreams can belong to a single such equivalence class.
\end{remark}

\subsection{Insertion}\label{SS:insertion}
Let $D$ be an $w$-EG pipedream and $i \in [1,n-1]$ be such that $s_iw > w$, or equivalently, the pipes labeled $i$ and $i+1$ do not cross in $D$.  Let $D'$ be the bumpless diagram obtained from $D$ by swapping the pipes $i$ and $i+1$ in columns $i$ and $i+1$.  Namely, if in $D$ the first turn of pipe $i$ (resp. $i+1$) is in row $a_i$ (resp. $a_{i+1} > a_i$), then in $D'$ the first turn of pipe $i$ (resp. $i+1$) is in row $a_{i+1}$ (resp. $a_i$).  Other pipes that cross pipe $i$ in column $i$ have their ``kinks shifted left'' in $D'$.  See Figure~\ref{fig:swap}.

\begin{figure}
\begin{center}
\begin{tikzpicture}[scale=0.6,line width=0.8mm]
\draw[line width=0.6mm] (0,0)--(2,0)--(2,5)--(0,5)--(0,0);
\vertline{0}{0}{blue}
\vertline{0}{1}{blue}
\cross{0}{2}{blue}{\inactive}
\vertline{0}{3}{blue}
\rightelbow{0}{4}{blue}
\horline{1}{4}{blue}
\vertline{1}{0}{blue}
\rightelbow{1}{1}{blue}
\leftelbow{1}{2}{\inactive}
\rightelbow{1}{3}{\inactive}
\draw (0.5,-.4) node  {$i$};
\draw (1.5,-.4) node  {$i'$};
\draw (-1,2.5) node {$D = $};
\draw (4,2.5) node {$D' = $};

\begin{scope}[shift={(5,0)}]
\draw[line width=0.6mm] (0,0)--(2,0)--(2,5)--(0,5)--(0,0);
\vertline{0}{0}{blue}
\leftelbow{0}{2}{\inactive}
\rightelbow{0}{3}{\inactive}
\cross{1}{1}{blue}{blue}
\vertline{1}{2}{blue}
\cross{1}{3}{blue}{\inactive}
\rightelbow{1}{4}{blue}
\bbox{0}{4}
\vertline{1}{0}{blue}
\rightelbow{0}{1}{blue}

\draw (0.5,-.4) node  {$i$};
\draw (1.5,-.4) node  {$i'$};
\end{scope}
\end{tikzpicture}
\end{center}
\caption{Swapping pipes $i$ and $i'= i+1$.}
\label{fig:swap}
\end{figure}
The northwestmost tile in the shown rectangle is always an empty tile in $D'$.  Thus $D'$ is either a EG pipedream or a near EG pipedream.  Note that there are two possibilities for the northeastmost tile in the shown rectangles.

We define the insertion $D \leftarrow i$ to be the EG pipedream given by
$$
D \leftarrow i := r(D').
$$
(Note that $D \leftarrow i$ is not defined if the pipes $i$ and $i+1$ cross.)
Let $D_0$ be the unique EG pipedream for the identity permutation.  Let $\i = i_1i_2 \cdots i_\ell$ be a reduced word.  Then define $(P,Q) = (P(\i),Q(\i))$ by
\begin{align*}
P(\i) &= (\cdots ((D_0 \leftarrow i_\ell) \leftarrow i_{\ell-1}) \cdots ) \leftarrow i_1 \\
Q(\i) &= \{\lambda(D_0) \subset \lambda(D_0 \leftarrow i_\ell) \subset \cdots \subset \lambda(P(\i))\}.
\end{align*}
Note that $Q(\i)$ is a saturated chain of partitions, and is thus equivalent to a standard Young tableau of shape $\lambda(P(\i))$.

\begin{example}
Let $n = 4$ and $\i = 1231$.  We compute $P(\i), Q(\i)$ in Figure~\ref{fig:EGex}.
\begin{figure}
\begin{center}
\begin{tikzpicture}[scale=0.6,line width=0.8mm]
\rightelbow{0}{0}{blue}
\horline{1}{0}{blue}
\horline{2}{0}{blue}
\horline{3}{0}{blue}

\vertline{0}{-1}{blue}
\rightelbow{1}{-1}{blue}
\horline{2}{-1}{blue}
\horline{3}{-1}{blue}

\vertline{0}{-2}{blue}
\vertline{1}{-2}{blue}
\rightelbow{2}{-2}{blue}
\horline{3}{-2}{blue}

\vertline{0}{-3}{blue}
\vertline{1}{-3}{blue}
\vertline{2}{-3}{blue}
\rightelbow{3}{-3}{blue}
\draw[line width=0.4mm] (0,-3)--(4,-3)--(4,1)--(0,1)--(0,-3);

\draw (4.4,0.5) node {$1$};
\draw (4.4,-0.5) node {$2$};
\draw (4.4,-1.5) node {$3$};
\draw (4.4,-2.5) node {$4$};
\draw (0.5,-3.4) node  {$1$};
\draw (1.5,-3.4) node  {$2$};
\draw (2.5,-3.4) node  {3};
\draw (3.5,-3.4) node  {4};
\draw (5.6,-1) node {$\stackrel{1}{\longrightarrow}$};
\draw (2,-4) node {$D_0$};
\begin{scope}[shift={(7,0)}]
\rightelbow{1}{0}{blue}
\horline{2}{0}{blue}
\horline{3}{0}{blue}

\rightelbow{0}{-1}{blue}
\cross{1}{-1}{blue}{blue}
\horline{2}{-1}{blue}
\horline{3}{-1}{blue}

\vertline{0}{-2}{blue}
\vertline{1}{-2}{blue}
\rightelbow{2}{-2}{blue}
\horline{3}{-2}{blue}

\vertline{0}{-3}{blue}
\vertline{1}{-3}{blue}
\vertline{2}{-3}{blue}
\rightelbow{3}{-3}{blue}
\draw[line width=0.4mm] (0,-3)--(4,-3)--(4,1)--(0,1)--(0,-3);

\draw (4.4,0.5) node {$2$};
\draw (4.4,-0.5) node {$1$};
\draw (4.4,-1.5) node {$3$};
\draw (4.4,-2.5) node {$4$};
\draw (0.5,-3.4) node  {$1$};
\draw (1.5,-3.4) node  {$2$};
\draw (2.5,-3.4) node  {3};
\draw (3.5,-3.4) node  {4};
\draw (5.6,-1) node {$\stackrel{3}{\longrightarrow}$};
\draw (2,-4) node {$P(1)$};
\end{scope}
\begin{scope}[shift={(14,0)}]
\rightelbow{1}{0}{blue}
\horline{2}{0}{blue}
\horline{3}{0}{blue}

\rightelbow{0}{-1}{blue}
\cross{1}{-1}{blue}{blue}
\horline{2}{-1}{blue}
\horline{3}{-1}{blue}

\vertline{0}{-2}{blue}
\vertline{1}{-2}{blue}
\rightelbow{3}{-2}{blue}

\vertline{0}{-3}{blue}
\vertline{1}{-3}{blue}
\rightelbow{2}{-3}{blue}
\cross{3}{-3}{blue}{blue}
\draw[line width=0.4mm] (0,-3)--(4,-3)--(4,1)--(0,1)--(0,-3);

\draw (4.4,0.5) node {$2$};
\draw (4.4,-0.5) node {$1$};
\draw (4.4,-1.5) node {$4$};
\draw (4.4,-2.5) node {$3$};
\draw (0.5,-3.4) node  {$1$};
\draw (1.5,-3.4) node  {$2$};
\draw (2.5,-3.4) node  {3};
\draw (3.5,-3.4) node  {4};
\draw (5.6,-1) node {$\stackrel{r}{\longrightarrow}$};
\end{scope}
\begin{scope}[shift={(21,0)}]
\rightelbow{2}{0}{blue}
\horline{3}{0}{blue}

\rightelbow{0}{-1}{blue}
\horline{1}{-1}{blue}
\cross{2}{-1}{blue}{blue}
\horline{3}{-1}{blue}

\vertline{0}{-2}{blue}
\rightelbow{1}{-2}{blue}
\leftelbow{2}{-2}{blue}
\rightelbow{3}{-2}{blue}

\vertline{0}{-3}{blue}
\vertline{1}{-3}{blue}
\rightelbow{2}{-3}{blue}
\cross{3}{-3}{blue}{blue}
\draw[line width=0.4mm] (0,-3)--(4,-3)--(4,1)--(0,1)--(0,-3);

\draw (4.4,0.5) node {$2$};
\draw (4.4,-0.5) node {$1$};
\draw (4.4,-1.5) node {$4$};
\draw (4.4,-2.5) node {$3$};
\draw (0.5,-3.4) node  {$1$};
\draw (1.5,-3.4) node  {$2$};
\draw (2.5,-3.4) node  {3};
\draw (3.5,-3.4) node  {4};
\draw (5.6,-1) node {$\stackrel{2}{\longrightarrow}$};
\draw (2,-4) node {$P(31)$};
\end{scope}

\begin{scope}[shift={(0,-7)}]
\rightelbow{2}{0}{blue}
\horline{3}{0}{blue}

\rightelbow{0}{-1}{blue}
\horline{1}{-1}{blue}
\cross{2}{-1}{blue}{blue}
\horline{3}{-1}{blue}

\vertline{0}{-2}{blue}
\vertline{2}{-2}{blue}
\rightelbow{3}{-2}{blue}

\vertline{0}{-3}{blue}
\rightelbow{1}{-3}{blue}
\cross{2}{-3}{blue}{blue}
\cross{3}{-3}{blue}{blue}
\draw[line width=0.4mm] (0,-3)--(4,-3)--(4,1)--(0,1)--(0,-3);

\draw (4.4,0.5) node {$3$};
\draw (4.4,-0.5) node {$1$};
\draw (4.4,-1.5) node {$4$};
\draw (4.4,-2.5) node {$2$};
\draw (0.5,-3.4) node  {$1$};
\draw (1.5,-3.4) node  {$2$};
\draw (2.5,-3.4) node  {3};
\draw (3.5,-3.4) node  {4};
\draw (5.6,-1) node {$\stackrel{r}{\longrightarrow}$};
\begin{scope}[shift={(7,0)}]
\rightelbow{2}{0}{blue}
\horline{3}{0}{blue}

\rightelbow{1}{-1}{blue}
\cross{2}{-1}{blue}{blue}
\horline{3}{-1}{blue}

\rightelbow{0}{-2}{blue}
\leftelbow{1}{-2}{blue}
\vertline{2}{-2}{blue}
\rightelbow{3}{-2}{blue}

\vertline{0}{-3}{blue}
\rightelbow{1}{-3}{blue}
\cross{2}{-3}{blue}{blue}
\cross{3}{-3}{blue}{blue}
\draw[line width=0.4mm] (0,-3)--(4,-3)--(4,1)--(0,1)--(0,-3);

\draw (4.4,0.5) node {$3$};
\draw (4.4,-0.5) node {$1$};
\draw (4.4,-1.5) node {$4$};
\draw (4.4,-2.5) node {$2$};
\draw (0.5,-3.4) node  {$1$};
\draw (1.5,-3.4) node  {$2$};
\draw (2.5,-3.4) node  {3};
\draw (3.5,-3.4) node  {4};
\draw (5.6,-1) node {$\stackrel{1}{\longrightarrow}$};
\draw (2,-4) node {$P(231)$};
\end{scope}
\begin{scope}[shift={(14,0)}]
\rightelbow{2}{0}{blue}
\horline{3}{0}{blue}

\rightelbow{1}{-1}{blue}
\cross{2}{-1}{blue}{blue}
\horline{3}{-1}{blue}

\vertline{1}{-2}{blue}
\vertline{2}{-2}{blue}
\rightelbow{3}{-2}{blue}

\rightelbow{0}{-3}{blue}
\cross{1}{-3}{blue}{blue}
\cross{2}{-3}{blue}{blue}
\cross{3}{-3}{blue}{blue}
\draw[line width=0.4mm] (0,-3)--(4,-3)--(4,1)--(0,1)--(0,-3);

\draw (4.4,0.5) node {$3$};
\draw (4.4,-0.5) node {$2$};
\draw (4.4,-1.5) node {$4$};
\draw (4.4,-2.5) node {$1$};
\draw (0.5,-3.4) node  {$1$};
\draw (1.5,-3.4) node  {$2$};
\draw (2.5,-3.4) node  {3};
\draw (3.5,-3.4) node  {4};
\draw (2,-4) node {$P(1231)$};
\end{scope}
\end{scope}

\draw (23,-9) node {$\tableau[sY]{1 &2 \\3 \\4}$};
\draw (23,-11) node {$Q(1231)$};

\end{tikzpicture}

\end{center}
\caption{The computation of the EG pipedream $P(1231)$.  For each insertion step, both $D'$ and $r(D')$ are shown (if they are different).  }
\label{fig:EGex}
\end{figure}
\end{example}

We recall the \defn{Coxeter-Knuth equivalence relation} on $\Red(w)$.  It is generated by the elementary relations
\begin{align*}
\cdots i k j \cdots &\sim \cdots k i j \cdots & \mbox{for $ i < j < k$}\\
\cdots i k j \cdots &\sim \cdots j k i \cdots & \mbox{for $ i < j < k$}\\
\cdots i (i+1) i \cdots &\sim \cdots (i+1) i (i+1) \cdots.
\end{align*}
Edelman-Greene insertion provides a bijection $C \mapsto T(C)$ between Coxeter-Knuth equivalence classes $C \subset \Red(w)$ and reduced word tableaux $T$ for $w$ (see \cite{EG} and the paragraph containing (\ref{E:stanley coefficients})).  Bumpless pipedreams encode a new version of Edelman-Greene insertion.

\begin{thm} \label{thm:EGpipedream}
The map $\i \mapsto (P(\i),Q(\i))$ induces a bijection between reduced words for $S_n$ and pairs consisting of an EG pipedream and a standard Young tableau of the same shape.  For a fixed EG pipedream $D$, the set $C_D:= \{\i \mid P(\i) = D\}$ is a single Coxeter-Knuth equivalence class.  The shape of the reduced word tableau $T(C_D)$ is $\la(D)$.
\end{thm}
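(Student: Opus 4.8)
The plan is to establish the bijection by exhibiting an explicit reverse insertion, and then to read off the Coxeter--Knuth and shape statements by a counting argument fed by Theorem~\ref{thm:EGStanley}. The first step is to analyze a single insertion $D\mapsto D\leftarrow i$, defined when pipes $i$ and $i+1$ do not cross in $D$. The intermediate diagram $D'$ (swap pipes $i,i+1$ in columns $i$ and $i+1$, with kinks shifted left) is either an EG-pipedream or a near EG-pipedream whose unique floating tile sits just below the row in which pipe $i+1$ used to turn; rectifying via Lemma~\ref{lem:rectify} pushes that floating tile up and to the left until it enters the partition region. The claim to verify is that this always enlarges $\lambda(D)$ by exactly one addable box and carries the underlying permutation from $w$ to $s_iw$. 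Granting it, $Q(\mathbf i)$ is a genuine standard Young tableau of shape $\lambda(P(\mathbf i))$, $P(\mathbf i)$ is an EG-pipedream for the permutation $w$ with $\mathbf i\in R(w)$, and $\mathbf i\mapsto(P(\mathbf i),Q(\mathbf i))$ does land among pairs (EG-pipedream, standard Young tableau of the same shape).

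Next I would construct the inverse of a single insertion step: given an EG-pipedream $E$ and a corner $c$ of $\lambda(E)$, run the column moves of Lemma~\ref{lem:rectify} in reverse, starting from the near EG-pipedream obtained by replacing the empty tile at $c$ by a south-east elbow with a floating tile one step to its south-west; the key technical claim is that every reverse column move is forced, so reverse rectification is single-valued, after which undoing the pipe swap in two adjacent columns recovers a pair $(D,i)$ with $\lambda(D)=\lambda(E)$ less the box $c$ and $D\leftarrow i=E$. Peeling the boxes of $Q$ off in decreasing order of their entries, reverse-inserting from the current pipedream each time, yields a two-sided inverse to $\mathbf i\mapsto(P(\mathbf i),Q(\mathbf i))$, so this map is a bijection from $\bigcup_{w\in S_n}R(w)$ onto pairs consisting of an EG-pipedream and a standard Young tableau of the same shape.

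For the Coxeter--Knuth statement, fix $w\in S_n$; by the bijection the fiber $C_D:=\{\mathbf i\mid P(\mathbf i)=D\}$ of a $w$-EG-pipedream $D$ lies inside $R(w)$ and has exactly $\#\SYT(\lambda(D))$ elements. I would then verify by a finite case analysis that each of the three elementary Coxeter--Knuth relations leaves $P(\mathbf i)$ unchanged -- equivalently that $D\leftarrow j\leftarrow k$ agrees with $D\leftarrow k\leftarrow j$ (and the braid variant) in the relevant ranges -- which reduces to a bounded list of local pipedream configurations. Hence each $C_D$ is a union of Coxeter--Knuth classes, so $C\mapsto P(C)$ is a well-defined surjection from the set of Coxeter--Knuth classes in $R(w)$ onto the set of $w$-EG-pipedreams; by Theorem~\ref{thm:EGStanley} together with the classical fact (see Remark~\ref{R:Stanley coefficients} and \textsection\ref{SS:insertion}) that $R(w)$ has exactly $\sum_\lambda j^w_\lambda$ Coxeter--Knuth classes, the two sides have equal finite cardinality, so the surjection is a bijection and each $C_D$ is a single Coxeter--Knuth class, namely $C_{T(C_D)}$. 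Finally, the shape identity $\mathrm{sh}(T(C_D))=\lambda(D)$ follows by comparing recording tableaux: either one shows $Q(\mathbf i)$ equals the classical Edelman--Greene recording tableau of $\mathbf i$ by an induction on $\ell(w)$ using one further local comparison of insertion steps, or one establishes a one-sided dominance inequality between $\lambda(P(\mathbf i))$ and the classical EG shape and concludes by triangularity against the equal graded counts $j^w_\lambda$ on both sides.

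The main obstacle is the local combinatorics underpinning all of this: that forward rectification always adds exactly one box, that reverse column moves are forced at every stage, and -- most substantially -- that the elementary Coxeter--Knuth moves are invisible to $P$. Each is a finite check on small pipedream patterns, but together they carry the real content of the theorem; once they are in hand, the bijectivity and the passage to Coxeter--Knuth classes and to shapes are formal bookkeeping.
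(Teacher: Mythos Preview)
Your overall plan is essentially the paper's: establish bijectivity via a reversible single insertion step, show that $P$ is constant on Coxeter--Knuth classes, then count via Theorem~\ref{thm:EGStanley} to conclude each fiber $C_D$ is a single class. The differences are in how the ``local combinatorics'' is organized and in the shape argument.

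The paper does not do a raw case analysis for the Coxeter--Knuth moves. Instead, the controlling technical device is Lemma~\ref{lem:insertionpath}: if $i<j$, the insertion path of $D\leftarrow i$ lies strictly below that of $(D\leftarrow i)\leftarrow j$ (and dually). This single comparison lemma is what drives both Knuth-type commutations (Lemma~\ref{lem:Knuth}) and the braid move (Lemma~\ref{lem:Coxeter}); in each case one argues that the relevant column moves occur in disjoint vertical ranges and hence commute. Your proposal to verify the three elementary moves by inspecting ``a bounded list of local pipedream configurations'' would work in principle but is less structured, and you would likely end up rediscovering the insertion-path comparison anyway. Likewise, the reversibility of a single step is exactly Lemma~\ref{lem:reverse}, which falls out of the definition of column moves rather than requiring a separate construction of reverse rectification.

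For the shape claim $\mathrm{sh}(T(C_D))=\lambda(D)$, the paper's route is different from both of your options and cleaner. The same insertion-path lemma yields Corollary~\ref{cor:Des}: $\Des(\mathbf i)=\Des(Q(\mathbf i))$. Since $C_D$ bijects with $\SYT(\lambda(D))$ via $Q$, the multiset $\{\Des(\mathbf i):\mathbf i\in C_D\}$ equals $\{\Des(S):S\in\SYT(\lambda(D))\}$; the analogous equality for classical Edelman--Greene with shape $\mathrm{sh}(T(C_D))$ is standard, and the descent multiset determines the shape. Your first option---proving $Q(\mathbf i)$ equals the classical Edelman--Greene recording tableau---is stronger than needed and is not established in the paper (indeed, the paper leaves open a direct shape-preserving bijection between EG-pipedreams and reduced word tableaux). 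Your second option via dominance is too vague as stated.
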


\begin{problem}
Find a direct shape-preserving bijection between EG pipedreams for $w$ and reduced word tableaux for $w$. \footnote{Since our preprint was posted, solutions to this problem have appeared in \cite{FGS,Wei}.}
\end{problem}

\begin{remark}
There is a transpose analogue of column moves called {\it row moves}.  We can also define insertion into EG pipedreams using row moves.  Theorem \ref{thm:EGpipedream} holds with (usual) Edelman-Greene insertion replaced by Edelman-Greene column insertion.
\end{remark}

The \defn{insertion path} of the insertion $D \leftarrow i$ is the collection of positions through which the empty tile travels in the calculation of $r(D')$.  An insertion path consists of a number of boxes, one in each of an interval of columns.  Two insertion paths are compared by comparing respective boxes in the same column.

The following key result is immediate from the definition of column moves.
\begin{lemma}\label{lem:reverse}
The pair $(D,i)$ can be recovered uniquely from the pair $$(D \leftarrow i, \mbox{final box in the insertion path}).$$
\end{lemma}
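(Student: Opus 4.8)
The plan is to reduce the statement to the observation that each individual column move is invertible once the position of the empty tile is known, and then to iterate this inversion.

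First I would record the local reversibility of a single column move. Suppose $D \to D'$ is a column move carried out in a two-column rectangle $R$ with active pipe $p$, so that in $D'$ the unique moved empty tile sits at the northwest corner of $R$. Inspecting the finitely many tile types appearing in the pictures of \textsection\ref{SS:insertion}, one checks that $D'$ together with the \emph{position} of this empty tile already determines $R$, $p$, and hence $D$: the tile immediately west of the empty tile (equivalently, the pipe turning just to its northwest) must be the active pipe $p$, and following $p$ upward and then eastward recovers the other three corners of $R$ unambiguously; the inverse (``upwards'') move is then forced, and its legality --- that the southeast tile of $R$ is again the unique empty tile of $R$ afterwards --- is automatic since a legal forward move produced $D'$. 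This is exactly the bookkeeping of reverse row insertion in RSK, and it is the main (and essentially the only) point requiring care.

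Next, given an EG-pipedream $E$ and the final box $b$ of an insertion path, I would apply this local inversion repeatedly: starting from the empty tile of $E$ at $b$, push that tile downward by forced reverse column moves until no such move is available. By the uniqueness in Lemma~\ref{lem:rectify}, the forward rectification $D' \to \cdots \to E$ of a near-EG-pipedream is the unique maximal chain of column moves terminating at $E$ in which the distinguished empty tile arrives at $b$; hence the chain produced by the reverse procedure is unique and terminates exactly at the near-EG-pipedream $D'$ that was rectified. (If $D'$ is already an EG-pipedream the chain is empty, $D' = E$, and $b$ is the position of the lone floating tile.)

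Finally, from $D'$ one reads off the pair $(D,i)$. By construction $D'$ is obtained from the $w$-EG-pipedream $D$ by the swap of pipes $i$ and $i+1$ in columns $i$ and $i+1$ of \textsection\ref{SS:insertion}; the index $i$ is recovered as the column containing the residual floating tile of $D'$, and $D$ is then obtained by undoing this swap (exchanging the first-turn rows of pipes $i$ and $i+1$ in columns $i,i+1$ and shifting the kinks of the pipes crossing column $i$ back to the right). Since each of the three stages --- reverse rectification, identification of $i$, reverse swap --- is uniquely determined by its input, the pair $(D,i)$ is recovered uniquely from $(D\leftarrow i,\ b)$, which is what the lemma asserts.
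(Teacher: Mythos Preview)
Your approach is correct and is exactly what the paper intends—it simply asserts the result is ``immediate from the definition of column moves,'' and you are filling in the details. There is, however, a directional slip in your first paragraph: you want to recover $R$ from $D'$, in which the moved empty tile sits at the \emph{northwest} corner of $R$, but looking \emph{west} of that tile and following a pipe \emph{upward and then eastward} takes you outside $R$ altogether. (That is the recipe for locating the \emph{forward} column move from a floating tile at the \emph{southeast} corner, as in the lemma preceding Lemma~\ref{lem:rectify}.) To invert the move you must look one step \emph{east} of the empty tile at $(r_1,c)$: the active pipe passes through $(r_1,c+1)$ in $D'$ as either an SE elbow or a vertical segment, and following it \emph{south} through column $c+1$ to its first NW elbow locates the southeast corner $(r_2,c+1)$ of $R$. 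With this correction the rest of your argument goes through unchanged; in particular, the reverse procedure halts at $D'$ because the pipe immediately east of the floating tile there is pipe $i+1$, which runs straight to the south boundary of the square with no NW elbow in column $i+1$.
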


\begin{lemma}\label{lem:insertionpath}
Suppose $i < j$.  
\begin{enumerate} 
\item
Then the insertion path of $D\leftarrow i$ is strictly below the insertion path of $(D\leftarrow i) \leftarrow j$.
\item
Then the insertion path of $D\leftarrow j$ is strictly above the insertion path of $(D\leftarrow j) \leftarrow i$.
\end{enumerate}
\end{lemma}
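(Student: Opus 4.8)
The plan is to read both parts off the local behaviour of the column moves that make up rectification, arguing column by column. Recall that each insertion path is recorded as the box it occupies in an interval of consecutive columns: for $D\leftarrow i$ this interval contains the column $i$ in which the initial pipe-swap places the floating tile, and the path records the successive positions of that tile during the computation of $r(D')$; likewise the path of $(D\leftarrow i)\leftarrow j$ is anchored at column $j$. Since $i\ne j$, one of the two paths extends to columns the other does not reach, so in both parts the only content is the comparison in the columns visited by both paths, where the claim is that one path's box lies strictly above the other's in every such column. I would prove (1) in full and obtain (2) by the mirror-image argument, with the two insertions interchanged.

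For (1), set $E:=D\leftarrow i$ and let $P_1$ and $P_2$ be the insertion paths of $D\leftarrow i$ and of $E\leftarrow j$. I would proceed by an induction sweeping columns from the anchoring end toward the other. In the first column visited by both paths (if there is none the statement is vacuous), the initial data of the two insertions pins down the relative order there: the first swap of $D\leftarrow i$ turns pipe $i$ from row $a_i$ down to row $a_{i+1}$, while the first swap of $E\leftarrow j$ involves pipes $j,j+1$ with $j>i$, and from these together with the column-move rules one reads off that the box of $P_1$ lies strictly below that of $P_2$. For the inductive step, assume the strict inequality in column $c+1$. Each of the two rectifications performs at most one column move involving columns $c$ and $c+1$, and which move it is — a plain droop, or one with kinks shifted left — is determined by the local tile types and by the row where the relevant active pipe turns east. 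Plugging in the known relative order of the two boxes in column $c+1$ together with the local configuration leaves only a short list of cases, and in every one of them either $P_1$ has already terminated or its box in column $c$ still lies strictly below that of $P_2$. Part (2) is identical with the roles of $i$ and $j$ interchanged, yielding that the path of $D\leftarrow j$ lies strictly above the path of $(D\leftarrow j)\leftarrow i$.

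The main obstacle will be this finite case check, and two of its features require genuine care. First, one must confirm that strictness is never lost where the two paths come close together; this is the one place the hypothesis $i<j$ is really used, through the reduced-word fact — visible from the Coxeter--Knuth picture — that in the region where the paths overlap the active pipe of the later insertion turns strictly to one side of where the active pipe of the earlier insertion turned. Second, because the later insertion acts on $E$ and not on $D$, one has to exclude that the elbows displaced and the kinks shifted by the earlier insertion along $P_1$ let $P_2$ slip past $P_1$ to the wrong side; here the induction hypothesis is exactly strong enough, since in each column the known relative order plus the local tile types force the next column move of the later rectification into a configuration that preserves the separation. Carrying out this enumeration — a handful of elbow-and-kink pictures in each column — yields both statements; as a consistency check one can verify that each admissible local configuration is compatible with the reversibility of a single insertion step recorded in Lemma~\ref{lem:reverse}.
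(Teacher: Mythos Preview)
Your column-by-column induction moving leftward from the anchoring column is exactly the paper's strategy, so the overall architecture is right. The difference is in how the inductive step is carried out. You defer to an unspecified ``finite case check'' on local tile configurations; the paper instead isolates one structural observation that makes the step immediate. Each column move of $P_1$, as it leaves a column, deposits an elbow there at the height of the box just vacated, and this becomes the lowest elbow in that column of $D_1$. When $P_2$ later performs its column move into the same column, it lands at the first elbow it meets scanning upward, so the elbow left behind by $P_1$ pins the height of $c_k$ relative to the $b$'s. Strictness then comes for free because insertion paths strictly ascend. With this observation in hand there is no enumeration of elbow-and-kink pictures; the step is a single sentence about where the lowest elbow in each relevant column of $D_1$ sits.

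One point in your plan is more than a missing detail. To secure strictness you appeal to ``the reduced-word fact visible from the Coxeter--Knuth picture.'' But this lemma is precisely what feeds into Lemmas~\ref{lem:Knuth} and~\ref{lem:Coxeter}, which in turn establish the Coxeter--Knuth compatibility of the insertion in Theorem~\ref{thm:EGpipedream}. Invoking that compatibility here is circular. The strictness must come from the column-move mechanics alone, and the elbow-tracking observation above is exactly how the paper extracts it.
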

\begin{proof}
We show claim (1); claim (2) is similar.  Let the insertion path of $D_1 = D \leftarrow i$ be the boxes $b_i, b_{i-1},\ldots,b_s$, where $b_k$ is in column $k$.  Let the insertion path of $(D\leftarrow i) \leftarrow j$ be the boxes $c_j, c_{j-1},\ldots, c_t$ where $c_k$ is in column $k$.  Consider the calculation of $c_i$: the lowest elbow in column $i-1$ of $D_1$ is at the same height as $b_i$.  Thus $c_{i-1}$ must at the same height or above $b_i$.  It follows that $c_i$ is above $b_i$, and indeed it must be at least as high as $b_{i-1}$ because there are no elbows in column $i$ above $b_i$ and below the row of $b_{i-1}$.  The claim (1) follows by repeating this argument.
\end{proof}

Recall that the descent set $\Des(T)$ of a standard Young tableau $T$ is the set of letters $j$ such that $j+1$ is in a lower row than $j$ in $T$.  The descent set $\Des(\i)$ of a word $\i  = i_1 \cdots i_\ell$ is the indices $j$ such that $i_j > i_{j+1}$.
\begin{cor}\label{cor:Des}
For a reduced word $\i$, we have $\Des(\i) = \Des(Q(\i))$.
\end{cor}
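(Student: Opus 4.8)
The plan is to extract $\Des(Q(\i))$ directly from the insertion paths that occur in the computation of $P(\i)$, via Lemma~\ref{lem:insertionpath}. Recall that $Q(\i)$ is the standard Young tableau attached to the saturated chain $\lambda(D_0)\subset\lambda(D_0\leftarrow i_\ell)\subset\cdots\subset\lambda(P(\i))$, so its entries are created at the successive insertion steps $D_0\leftarrow i_\ell$, then $\leftarrow i_{\ell-1}$, and so on; in particular the two boxes of $Q(\i)$ that record two adjacent letters $i_j,i_{j+1}$ of $\i$ carry two consecutive entries of $Q(\i)$. By Lemma~\ref{lem:reverse}, the box adjoined to the shape at each step is exactly the final (leftmost) box of that step's insertion path. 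So whether this consecutive pair of entries is in descent position in $Q(\i)$ is determined by comparing the rows of the final boxes of the two corresponding consecutive insertion paths, and it suffices to show that this pair is in descent position precisely when $i_j>i_{j+1}$.

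To compare those two final boxes, fix $j$ and let $D$ be the bumpless pipedream present in the computation of $P(\i)$ just before the two insertions recording positions $j$ and $j+1$ are performed; these are $D\leftarrow i_{j+1}$ and $(D\leftarrow i_{j+1})\leftarrow i_j$, carried out in that order since $P(\i)$ inserts $i_\ell$ first. Now apply Lemma~\ref{lem:insertionpath} to this pair: depending on whether $i_j>i_{j+1}$ or $i_j<i_{j+1}$, part~(1) or part~(2) of the lemma applies (with the two letters playing the roles of the smaller and larger indices), and in each case it pins down on which side of the first insertion path the second one lies. Reading off from this the relative heights of the two final boxes — hence of the two corresponding entries of $Q(\i)$ — one concludes that they are in descent position in $Q(\i)$ exactly when $i_j>i_{j+1}$, which is the claimed equality $\Des(\i)=\Des(Q(\i))$.

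The one genuine obstacle is that Lemma~\ref{lem:insertionpath} compares the two insertion paths only columnwise, and only over the columns traversed by both paths, whereas the two boxes carrying the relevant entries of $Q(\i)$ are the leftmost boxes of the respective paths and may sit in different columns. What is needed in addition is the observation — immediate from the mechanics of column moves, since each such move shifts the active empty tile one column to the left and weakly upward while two distinct pipes never cross more than once — that the insertion path lying strictly above in the common columns stays weakly above as it continues further left, so that the columnwise comparison really does control the two final boxes. With this in hand the argument of the second paragraph is a short piece of bookkeeping, and no induction on $\ell(w)$ is needed.
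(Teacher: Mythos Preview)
Your approach is exactly the paper's: the corollary is stated without proof as an immediate consequence of Lemma~\ref{lem:insertionpath}, and you correctly unpack that dependence by comparing the two consecutive insertion paths and reading off the relative heights of their final boxes. Your extra paragraph flagging that the lemma literally compares paths only columnwise, and arguing that the monotone (weakly upward, one-column-left) nature of column moves lets the strict comparison propagate to the leftmost boxes, is a useful refinement of a point the paper leaves implicit.
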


\begin{lemma}\label{lem:Knuth}
Let $D$ be a EG pipedream and suppose $i < j < k$.  Then (when the EG pipedreams are defined),
\begin{enumerate} 
\item
$((D \leftarrow j) \leftarrow i) \leftarrow k = ((D \leftarrow j) \leftarrow k \leftarrow i)$;
\item
$((D \leftarrow i) \leftarrow k) \leftarrow j= ((D \leftarrow k) \leftarrow i \leftarrow j)$.
\end{enumerate}
\end{lemma}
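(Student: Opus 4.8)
The lemma says that EG-pipedream insertion intertwines with the two Coxeter--Knuth moves that involve three distinct letters: statement (1) is the move $\cdots ikj\cdots\sim\cdots kij\cdots$ and statement (2) is the move $\cdots jik\cdots\sim\cdots jki\cdots$, both for $i<j<k$. (The braid move $\cdots i(i+1)i\cdots\sim\cdots(i+1)i(i+1)\cdots$ is not of this form and is handled separately.) Together with Lemma~\ref{lem:insertionpath} this is the combinatorial input behind the assertion in Theorem~\ref{thm:EGpipedream} that each fibre $C_D$ is a single Coxeter--Knuth class, so it is logically prior to that theorem and cannot invoke it. Note first that, since $s_i$ and $s_k$ commute (we have $k\ge i+2$ because $j$ lies strictly between), the two sides of (1) are EG-pipedreams for the same permutation and the crossing conditions governing ``when the EG-pipedreams are defined'' coincide for the two sides; likewise for (2). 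The plan is to prove (1) in detail by an analysis of insertion paths and to prove (2) by the mirror argument (alternatively, by transposing to the row-move version of insertion and using that word reversal preserves Coxeter--Knuth classes up to transposing shapes, cf.\ Proposition~\ref{P:Stanley symmetries} and the row-move remark following Theorem~\ref{thm:EGpipedream}); this mirrors the structure of the proof of Lemma~\ref{lem:insertionpath}, where ``(2) is similar''.

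For (1), set $E:=D\leftarrow j$; both sides are then obtained from the EG-pipedream $E$ by inserting $i$ and $k$ in the two orders. Let $p_j$ be the insertion path of $E=D\leftarrow j$. The first step is to locate the other two paths relative to $p_j$: applying Lemma~\ref{lem:insertionpath}(2) to $(D\leftarrow j)\leftarrow i$ (its $i,j$ being our $i,j$) shows that the insertion path of $E\leftarrow i$ lies strictly below $p_j$, and applying Lemma~\ref{lem:insertionpath}(1) with its $i,j$ taken to be our $j,k$ shows that the insertion path of $E\leftarrow k$ lies strictly above $p_j$. The main point to establish is that this survives the reordering: in the order ``$i$ then $k$'' the path of the $i$-step stays strictly below $p_j$ and the path of the $k$-step stays strictly above $p_j$, and symmetrically in the order ``$k$ then $i$''. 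Granting this, one argues that the two sequences of column moves, examined column by column, perform the same modifications --- those weakly below $p_j$ coming from the $i$-step, those weakly above $p_j$ from the $k$-step --- and therefore produce the same pipedream, which is statement (1).

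The way to prove this is to chase the insertion paths column by column, peeling columns off one at a time and invoking the explicit column-move rules together with the ``kinks shift left'' convention. At the leftmost column met by $p_j$, the $j$-insertion has left behind a NW elbow, and one checks over the finitely many local tile patterns there that: a later insertion of a letter $<j$ cannot push its path to that elbow cell or above it in that column; a later insertion of a letter $>j$ cannot push its path to or below the east-turn that $p_j$ left there; and the remainder of the pipedream then presents a strictly smaller instance of the same configuration, the part strictly below (resp.\ above) $p_j$ behaving, for the purposes of these insertion paths, like a bumpless pipedream in its own right. The main obstacle --- where essentially all the work lies --- is exactly these local checks in the degenerate columns, where the $i$-path and the $k$-path both come adjacent to $p_j$: there one must verify by direct inspection that carrying out the two column moves in either order yields the same configuration of tiles around the elbow cells that $p_j$ deposited. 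This is a finite but delicate verification of the same flavor as the argument in the proof of Lemma~\ref{lem:insertionpath}; with it in hand, statement (1) follows, and then statement (2) follows by the mirror argument.
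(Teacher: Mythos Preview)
Your approach is essentially the paper's, and your setup is correct: set $E=D\leftarrow j$, use Lemma~\ref{lem:insertionpath} to place the $i$-path strictly below $p_j$ and the $k$-path strictly above, and conclude that the two insertions commute because they act on disjoint regions of $E$. The paper handles (2) by the direct mirror argument you mention first; the detour through row moves and word reversal is unnecessary.

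Where you diverge from the paper is in the final step. You promise a column-by-column peeling and a ``finite but delicate verification'' of local tile patterns in degenerate columns. The paper needs none of this. The point is simply that a column move alters tiles only in the two-column rectangle spanned by its pair of path boxes; since every $i$-path box lies strictly below the $p_j$ box in its column, the $i$-insertion modifies only tiles strictly below $p_j$, and dually the $k$-insertion modifies only tiles strictly above. In particular $E\leftarrow i$ agrees with $E$ on and above $p_j$, so the subsequent $k$-insertion into $E\leftarrow i$ sees exactly what it would in $E$ and follows the same path; symmetrically with the orders swapped. Both orderings therefore produce $E$ with the $i$-changes applied below $p_j$ and the $k$-changes applied above, and these regions are disjoint. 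Your ``main obstacle'' dissolves once you observe that the two modified regions are genuinely separated by $p_j$; no local case analysis is required. (Incidentally, the paper's sentence ``$(D\leftarrow j)\leftarrow i$ and $D\leftarrow i$ differ only\ldots'' should read ``$(D\leftarrow j)\leftarrow i$ and $D\leftarrow j$''; with that correction the one-paragraph argument is exactly the disjoint-support reasoning above.)
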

\begin{proof}
We prove claim (1); claim (2) is similar.  By Lemma \ref{lem:insertionpath}, the insertion path for $D \leftarrow j$ is above that of $(D \leftarrow j) \leftarrow i$.  In particular, the two EG pipedreams $(D \leftarrow j) \leftarrow i$ and $D\leftarrow i$ differ only in tiles that are below the insertion path of $j$.  On the other hand, the insertion path for $(D \leftarrow j) \leftarrow k$ is above that of $D \leftarrow j$, and thus does not see the part the pipedream below the insertion path of $j$.  The desired equality follows.
\end{proof}

\begin{lemma}\label{lem:Coxeter}
Let $D$ be a EG pipedream and suppose $i,i+1 \in [1,n-1]$.
When the EG pipedreams are defined, we have $((D \leftarrow i) \leftarrow i+1) \leftarrow i =((D \leftarrow i+1) \leftarrow i) \leftarrow i+1$.
\end{lemma}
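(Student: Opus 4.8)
The plan is to follow the template of the proof of Lemma~\ref{lem:Knuth}, using the control on insertion paths supplied by Lemma~\ref{lem:insertionpath}, but now carrying out an explicit local analysis, since the adjacency of $i$ and $i+1$ means the three insertions on each side genuinely interact with one another. The first step is to record two locality facts about a single insertion $D\leftarrow k$. From the definition, the swap producing $D'$ takes place in columns $k$ and $k+1$, the ``kink shifting'' only moves crossings one column to the left, and the rectification $r(D')$ is a sequence of column moves each of which moves the unique floating tile exactly one column to the left; hence $D\leftarrow k$ differs from $D$ only in columns $\le k+1$, only in tiles weakly below the relevant insertion path, and no pipe labelled $k'>k+1$ is touched. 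Consequently it suffices to compare the two sides of the asserted identity after restricting to columns $\le i+2$, and there only the pipes labelled $i$, $i+1$, $i+2$ together with at most two auxiliary floating tiles are in play.

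With this reduction in hand I would analyse each of the two insertion strings. Write $D\overset{i}{\to}D_1\overset{i+1}{\to}D_2\overset{i}{\to}D_3$. By Lemma~\ref{lem:insertionpath}(1) the insertion path of $D\to D_1$ lies strictly below that of $D_1\to D_2$, so the third insertion (of $i$) acts on precisely the part of $D_1$ that the first insertion already modified, and one reads off from the column-move definition how its floating tile retraces a controlled route. Symmetrically, writing $D\overset{i+1}{\to}D_1'\overset{i}{\to}D_2'\overset{i+1}{\to}D_3'$, Lemma~\ref{lem:insertionpath}(2) puts the path of $D\to D_1'$ strictly above that of $D_1'\to D_2'$. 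Feeding these path positions into the explicit form of the insertion moves reduces $D_3$ and $D_3'$ to expressions in the first-turn rows $r_i,r_{i+1},r_{i+2}$ of the pipes $i,i+1,i+2$ in $D$ plus a bounded amount of ``kink'' data; a case check over the relative orders of $r_i,r_{i+1},r_{i+2}$ that are compatible with the hypothesis that all the EG-pipedreams in the statement are defined then yields $D_3=D_3'$. Alternatively, by Lemma~\ref{lem:reverse} it is enough to verify that the reverse column moves applied to $D_3$ and to $D_3'$ return the same intermediate data, which is the same finite verification read backwards.

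The main obstacle is exactly this interaction. In Lemma~\ref{lem:Knuth} the two non-adjacent letters had disjoint insertion paths, so each insertion literally did not see the modifications made by the other and the identity was immediate; here the adjacency of $i$ and $i+1$ forces the second and third insertions in each string to act on tiles the earlier insertions have just changed, so the explicit bookkeeping of how the floating tiles and the kinks of the pipes $i,i+1,i+2$ rearrange cannot be avoided. Organising that bookkeeping---choosing coordinates so that the finitely many configurations to check are manageable, and confirming that Lemma~\ref{lem:insertionpath} really does confine everything to columns $\le i+2$ and to the three pipes $i,i+1,i+2$---is the delicate part, and it is the step I expect to take the most care.
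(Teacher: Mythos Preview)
Your proposal is a plan, not a proof: you correctly identify that the adjacency of $i$ and $i+1$ forces genuine interaction, you invoke Lemma~\ref{lem:insertionpath} to constrain the insertion paths, and then you defer the actual verification to an unspecified ``case check over the relative orders of $r_i,r_{i+1},r_{i+2}$'' that you yourself call the delicate part. That case analysis is the entire content of the lemma; without it nothing has been shown. There is also a wrinkle in your locality reduction: rectification $r(D')$ carries the floating tile through arbitrarily many columns to the left, and the column moves along the way shift kinks of pipes with labels smaller than $i$, so the two sides do \emph{not} literally agree outside columns $\le i+2$. What is true is that the \emph{difference} between the two computations is confined there, but you would need to argue this carefully rather than assert it.

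The paper avoids the case analysis entirely by a trick you did not anticipate: it \emph{delays} rectification. On the left side, one performs the three swaps in columns $i,i+1$ (creating three floating tiles at explicit positions $(i,h_i)$, $(i+1,h_i)$, $(i,h_{i+1})$ determined by the first-turn rows $h_i<h_{i+1}<h_{i+2}$ of the three non-crossing pipes) \emph{before} doing any column moves, obtaining an intermediate bumpless diagram $D_1'$. On the right side, one performs the first swap and a \emph{single} column move (which lands the floating tile exactly at $(i,h_i)$), then the remaining two swaps, and checks that the resulting diagram is again $D_1'$. Thus both sides are obtained from the \emph{same} three-floating-tile configuration by rectifying the three tiles; the only difference is the order in which the tiles at $(i,h_{i+1})$ and $(i+1,h_i)$ are processed. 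But the rectification path of the first stays strictly below that of the tile at $(i,h_i)$ and the second stays strictly above it, so these column moves commute exactly as in the proof of Lemma~\ref{lem:Knuth}. This reduces the Coxeter relation to the same path-separation argument used for the Knuth relations, with no case analysis at all.
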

\begin{proof}
 For the insertions to be defined, the pipes $i$, $i+1$, and $i+2$ in $D$ do not intersect.  Let $h_i,h_{i+1},h_{i+2}$ be the heights of the boxes containing the first right elbow for the pipes $i$, $i+1$, and $i+2$ respectively.  Then $h_i$ is strictly above $h_{i+1}$, which is strictly above $h_{i+2}$.
 
 Let us first consider $D_1 = ((D \leftarrow i) \leftarrow i+1) \leftarrow i$.  To calculate $(D \leftarrow i)$ we will first create an empty tile in the box $(i, h_i)$ in column $i$. Instead of moving this empty tile to the northwest immediately, let us keep it where it is, and consider the insertion of $i+1$.  This creates an empty tile in the box $(i+1,h_i)$.  Finally, the second insertion of $i$ creates an empty tile in $(i,h_{i+1})$.  Call the resulting bumpless diagram $D'_1$.  Checking the definitions, we see that $D_1$ is obtained from $D'_1$ by performing column moves on the three empty tiles, as long as we move the empty tiles in order.
 
Now consider $D_2 = ((D \leftarrow i+1) \leftarrow i) \leftarrow i+1$.  To calculate $(D \leftarrow i+1)$ we will first create an empty tile in the box $(i+1, h_{i+1})$ in column $i$.  Applying a single downward move to this empty tile, we see that it will end up in box $(i,h_i)$.  At this point the first right elbow in column $i$ will be at height $h_{i+1}$.  Now we consider the insertion of $i$, which creates an empty tile at position $(i,h_{i+1})$.  Finally, the second insertion of $i+1$ creates an empty tile in $(i+1,h_i)$.  The resulting bumpless diagram is identical to $D'_1$.  To obtain $D_2$, we perform column moves on the three empty tiles in the correct order.  

The difference between the calculation of $D_1$ and $D_2$ is that the order of applying column moves to the empty tiles in positions $(i,h_{i+1})$ and $(i+1,h_i)$ are swapped.  We claim that the resulting EG-diagrams $D_1$ and $D_2$ are nevertheless identical.  This is because the path of the tile at $(i,h_{i+1})$ (resp. $(i+1,h_i)$) stays below (resp. above) that of the tile at $(i,h_i)$.  Thus the corresponding column moves commute, as in the proof of Lemma \ref{lem:Knuth}.
\end{proof}
 
\begin{proof}[Proof of Theorem \ref{thm:EGpipedream}]
Bijectivity is straightforward from the constructions: by Lemma \ref{lem:reverse}, the map $\i \mapsto (P(\i),Q(\i))$ is injective, and applying this reverse map to pairs $(P(\i),Q(\i))$ shows that the map is surjective.  

By Lemma \ref{lem:Coxeter} and \ref{lem:Knuth}, Coxeter-Knuth equivalent reduced words have the same insertion EG pipedream.  Thus the set $\{\i \mid P(\i) = D\}$ is a union of Coxeter-Knuth equivalence classes.  That it is a single Coxeter-Knuth equivalence class can be deduced from Theorem \ref{thm:EGStanley}.  Alternatively, the same claim can be deduced from the reversed versions of Lemmas \ref{lem:insertionpath}, \ref{lem:Coxeter}, and \ref{lem:Knuth}.

Let $\SYT(\la)$ denote the set of standard Young tableaux of shape $\la$.  Then the collection $\{\Des(S) \mid S \in \SYT(\la)\}$ of descent sets uniquely determines $\la$.  (For example, this collection encodes the expansion of the Schur function $s_\la$ in terms of fundamental quasisymmetric functions, and the assignment $\la \mapsto s_\la$ is injective.)  Let $\sh(T)$ denote the shape of a Young tableau $T$.  Then for a Coxeter-Knuth equivalence class $C$, the equality of multisets $\{\Des(\i) \mid \i \in C\} = \{\Des(S) \mid S \in \SYT(\sh(T(C)))\}$ is known to hold for Edelman-Greene insertion.  The last claim then follows from Corollary \ref{cor:Des}.
\end{proof}

\section{Infinite flag variety}
\label{S:geometry}

Let $A_\Z$ denote the Dynkin diagram with Dynkin node set $\Z$ and simple bonds $(i,i+1)$ for all $i\in \Z$.  In this section, we construct the infinite flag variety $\Fl$ explicitly. It is a  ``type $A_{\Z}$ Kac-Moody flag ind-variety''.\footnote{Strictly speaking, Kac-Moody Dynkin node sets are finite by definition. Kashiwara's thick flag scheme construction \cite{Ka} allows infinite Dynkin node sets.} It affords the action of a torus $\TZ$. We define the Schubert basis for the equivariant cohomology $H_{\TZ}^*(\Fl)$ of $\Fl$ as well as an algebraic construction for it called the GKM (Goresky-Kottwitz-Macpherson) ring $\Psi$. Similar constructions are made for the  infinite Grassmannian $\Gr$. Then we show that the GKM ring of $\Fl$ is isomorphic to the polynomial ring $\bR(x,a)$ with Schubert basis corresponding to backstable Schubert polynomials.  For more on infinite Grassmannians, we refer the reader to \cite{PS}.

\subsection{Infinite Grassmannian}
Let $F = \C((t))$ be the space of formal Laurent series.   For $a \in \Z$, let $E_a = \{\sum_{i=a}^\infty c_i t^i \mid c_i \in \C\} \subset F$. For $N\in\Z_{>0}$ say that a $\C$-subspace $\La\subset F$ is \defn{$N$-admissible} if $E_N \subset \Lambda \subset E_{-N}$ and that $\La$ is admissible if it is admissible for some $N\in\Z_{>0}$. The \defn{virtual dimension} $\vdim(\Lambda)$ of an admissible subspace $\Lambda$ is the difference 
$$
\vdim(\Lambda) := \dim(E_0/(\Lambda \cap E_0)) - \dim(\Lambda/(\Lambda \cap E_0)).
$$
The \defn{Sato Grassmannian} $\Gr^\bullet$ is the set of admissible subspaces in $F$. The Sato Grassmannian is a disjoint union $\Gr^\bullet = \bigsqcup_k \Gr^{(k)}$, where $\Gr^{(k)}$ consists of the admissible subspaces of virtual dimension $k$.  We will mostly focus on the the \defn{infinite Grassmannian} $\Gr:= \Gr^{(0)}$.

There is a bijection between $N$-admissible subspaces of virtual dimension $0$, and
the points of the finite-dimensional Grassmannian $\Gr(N,2N)\cong \Gr(N,E_{-N}/E_N)$ given by $\La\mapsto \La/E_N$.  We have $\Gr = \bigcup_N \Gr(N,2N)$, from which $\Gr$ inherits the structure of an ind-variety over $\C$.
\subsection{Infinite flag variety}
For $N\in\Z_{>0}$, an \defn{$N$-admissible flag} (of virtual dimension $0$) in $F$ is a sequence
$$
\Lambda_\bullet = \{\cdots \subset \Lambda_{-1} \subset \Lambda_0 \subset \Lambda_1 \subset \cdots\}
$$
of admissible subspaces satisfying the conditions (1) $\vdim(\Lambda_i) = i$ and (2) $\Lambda_i = E_{-i}$ for all $i$ with $|i| \ge N$. 
An admissible flag is one that is $N$-admissible for some $N\in\Z_{>0}$. 

The \defn{infinite flag variety} $\Fl$ is the set of all admissible flags. 
There is a bijection from the set of $N$-admissible flags to the points of the variety $\Fl(2N)\cong \Fl(E_{-N}/E_N)$ of complete flags in the $2N$-dimensional vector space $E_{-N}/E_N$ given by
$\La_\bullet\mapsto (\La_{-N}/E_N\subset\La_{1-N}/E_N\subset\dotsm\subset\La_N/E_N)$.
We have $\Fl = \bigcup_N \Fl(2N)$ from which $\Fl$ inherits the structure of an ind-variety over $\C$. For $i\in\Z$ denote by $\pi_i: \Fl \to \Gr^{(i)}$ the projection map sending $\Lambda_\bullet \mapsto \Lambda_i$.

For $k\in\Z$ let $\Fl^{(k)}$ denote the $k$-shifted infinite flag ind-variety, which is defined similarly to $\Fl$ except that the condition $\vdim(\Lambda_i) = i+k$ is imposed. Each $\Fl^{(k)}$ is isomorphic to $\Fl$, and we have the \defn{Sato flag variety} $\Fl^\cdot = \bigsqcup_k \Fl^{(k)}$.

\subsection{Schubert varieties}
Let $T_\Z = (\C^\times)^\Z$ be the \emph{restricted product}, whose elements have only finitely many non-identity factors. The torus $T_\Z$ is the union $\bigcup_{a\leq b} T_{[a,b]}$ of finite-dimensional subtori where $T_{[a,b]}$ consists of the elements equal to the identity in positions outside of $[a,b] \subset \Z$.  The torus $T_\Z$ acts naturally on $F$, with the $i$-th coordinate of $T_\Z$ acting on the coefficient of $t^i$.  This induces an action of $T_\Z$ on $\Fl$ and $\Gr$.  The action of $T_\Z$ on $\Fl(2N)$ (resp. $\Gr(N,2N)$) factors through the action of $T_{[-N,N-1]}$ on $\Fl(2N)$ (resp. $\Gr(N,2N)$).  

For $(i,j)\in \Z^2$ with $i\ne j$ and $a\in \C$ define the $\C$-linear transformation of $F$ given by 
$$
x_{i,j}(a)(t^k) = \begin{cases} t^i + at^j & \mbox{if $k = i$,} \\
t^k &\mbox{otherwise.}
\end{cases}
$$
Let $S_\Z\subset GL(F)$ via permutation matrices.
Let $B$ be the group of linear transformations of $F$ generated by $T_\Z$ and $x_{ij}(a)$ for $a\in \C$ for $i<j$. Let $P$ be the group generated by $B$ and $S_{\ne0}$ and $G$ the group generated by $B$ and $S_\Z$. We call the group $G$ the ``minimal Kac-Moody group of type $A_{\Z}$" in analogy with the situation for Kac-Moody groups \cite[\textsection 7.4]{Kum}. Let $E_\bullet\in\Fl$ be the standard flag (whose $i$-th subspace is the standard subspace $E_i$ for all $i\in\Z$). Then $\Fl\cong G/B$ since $G$ acts transitively on $\Fl$ and $B$ is the stabilizer of $E_\bullet$.
This isomorphism restricts to a bijection of $\TZ$-fixed points $w E_\bullet \mapsto wB/B$. The Schubert cell $BwE_\bullet \mapsto BwB/B$ is isomorphic to the affine space $\C^{\ell(w)}$ and is contained in $\Fl(2N)$ if $w \in S_{[-N,N-1]}$.  We define the \defn{Schubert variety}
$$
X_w:= \overline{B wB/B}.
$$
We have $\Gr\cong G/P$ since $G$ acts transitively on $\Gr$ and $P$ is the stabilizer of $E_0$. This restricts to the bijection of $\TZ$-fixed point sets $w E_0 \mapsto wP/P$ where $w\in S_\Z^0$. Define the Schubert variety $X_w^\Gr := \overline{B w E_0}$
which is isomorphic to $\overline{B wP/P}\subset G/P$.

\subsection{Equivariant cohomology of infinite flag variety} We work with cohomologies with coefficients in $\Q$. The group $T_\Z$ is homotopy equivalent to the restricted product $(S^1)^\Z$, which is a CW-complex of infinite dimension and with infinitely many cells in each dimension.  Then $ET_\Z$ is homotopy equivalent to $(S^\infty)^\Z$, which is again a restricted product where all but finitely many factors must be the basepoint of $S^\infty$.  The classifying space $BT_\Z = ET_\Z/T_\Z$ is the restricted product $(\CP^\infty)^\Z$.  Thus $$H^*(BT_\Z) = H^*_{T_\Z}(\pt) = \Q[\ldots,a_{-1},a_0,a_1,\ldots]=\Q[a].$$

The Schubert cells $\{BwB/B \mid w \in S_\Z\}$ form a $T_\Z$-stable paving of $\Fl$ by finite-dimensional affine spaces.  By standard arguments, $H_*^{T_\Z}(\Fl)$ has a basis given by the fundamental classes $[X_w]$:
\begin{equation}\label{eq:Xw}
H_*^{T_\Z}(\Fl) \cong \bigoplus_{w \in S_\Z} \Q[a] [X_w].
\end{equation}
Similarly,
\begin{equation}\label{eq:XwGr}
H_*^{T_\Z}(\Gr) \cong \bigoplus_{w \in S_\Z^0} \Q[a] [X^\Gr_w].
\end{equation}
The equivariant homology $H_*^{T_\Z}(\Fl) = \varinjlim H_*^{T_\Z}(\Fl(2N))$ is a direct limit of equivariant homologies of finite flag varieties. We define the completed equivariant cohomology $H^*_{T_\Z}(\Fl)'$ of $\Fl$ to be the dual $\Q[a]$-algebra to $H_*^{\TZ}(\Fl)$:
$$
H^*_{T_\Z}(\Fl)' := \Hom_{\Q[a]}(H^*_{\TZ}(\Fl)),\Q[a]) \cong \varprojlim H^*_{T_\Z}(\Fl(2N)) \cong \prod_{w \in S_\Z} \Q[a] \xi^w
$$
where the Schubert classes $\{\xi^w \mid w \in S_\Z\}$ are the cohomology classes dual to the fundamental classes $\{[X_w] \mid w \in S_\Z\}$ under the cap product.  
(Note that we do not invoke Poincare duality: $\Fl$ is infinite-dimensional.) Let $H^*_{T_\Z}(\Fl)$ be the subspace of $H^*_{T_\Z}(\Fl)'$ spanned by the Schubert classes:
$$
H^*_{T_\Z}(\Fl)  \cong \bigoplus_{w \in S_\Z} \Q[a] \xi^w \subset H^*_{T_\Z}(\Fl)'.
$$
This is a kind of restricted dual of $H_*^{\TZ}(\Fl)$.
%

\subsection{Localization and GKM rings for infinite flags and infinite Grassmannian}
Localization \cite{KK, CS, GKM} provides explicit algebraic (GKM) constructions $\Psi$ and $\Psi_\Gr$ of the $\TZ$-equivariant cohomology rings $H_{\TZ}^*(\Fl) \cong \Psi$ and $H_{\TZ}^*(\Gr) \cong \Psi_\Gr$ and their Schubert bases.

Let $\Fun(S_\Z,\Q[a])$ be the $\Q[a]$-algebra of functions $S_\Z\to\Q[a]$ under pointwise product. For $f\in \Fun(S_\Z,\Q[a])$ and $w\in S_\Z$ we write $f|_w$ for $f(w)$. 

Let $R := \{a_i-a_j \mid i \neq j\}$ be the root system of type $\bA_\Z$ and $R^+:=\{a_i-a_j\in R\mid i<j\}$ the positive roots. For a root $\alpha=a_i-a_j$, let $s_\alpha\in S_\Z$ be the transposition swapping $i$ and $j$. 

Let $\Psi'$ be the $\Q[a]$-submodule of $\Fun(S_\Z,\Q[a])$ consisting of functions $f:S_\Z\to\Q[a]$  such that 
\begin{align}\label{E:GKM}
\text{$\alpha$ divides $f|_{s_\alpha w} - f|_w$ \qquad for all $w\in S_\Z$, $\alpha\in R$.}
\end{align}

\begin{example}\label{X:equiv class from poly}
	For $p \in \Q[a]$, define $L_p\in \Fun(S_\Z,\Q[a])$ by $L_p|_w=w(p)$.
	Then $L_p\in \Psi'$. If $p$ is homogeneous of degree one then $L_p$ is an equivariant line bundle class.
\end{example}

\begin{lemma} The $\Q[a]$-submodule $\Psi'\subset \Fun(S_\Z,\Q[a])$ is a $\Q[a]$-subalgebra.
\end{lemma}
\begin{proof} Let $\phi,\psi\in \Psi'$, $\alpha\in R$ and $w\in S_\Z$. Then $(\phi \psi)|_{s_\alpha w} - (\phi\psi)|_w$ is a multiple of $\alpha$ since it is the sum of two multiples of $\alpha$, namely, $\phi|_{s_\alpha w}(\psi|_{s_\alpha w}-\psi|_w) +
	(\phi|_{s_\alpha w} - \phi|_w) \psi|_w$.
\end{proof}

We call $\Psi'$ the completed GKM ring for $\Fl$.
It has a Schubert ``basis" $\{\xi^v\mid v\in S_\Z\}$ (see \eqref{E:GKMcompleted}) which is characterized as follows.

\begin{prop}\label{P:GKM Schubert basis} \cite{KK} There is a unique family of elements $\{\xi^v \mid v\in S_\Z\}\subset \Psi'$ such that
	\begin{align}
	\xi^v|_w &=0 \qquad\text{unless $v\le w$} \\
	\xi^v|_w &\in \Q[a] \text{ is homogeneous of degree $\ell(v)$} \\
	\xi^v|_v &= \prod_{\substack{\alpha\in R^+\\ s_\alpha v < v}} (-\alpha).
	\end{align}
	Moreover,
	\begin{align}\label{E:GKMcompleted}
	\Psi' = \prod_{v\in S_\Z} \Q[a] \xi^v.
	\end{align}
\end{prop}

We define $\Psi := \bigoplus_{v \in S_\Z} \Q[a] \xi^v$, which is the $\Q[a]$-submodule of $\Psi'$ with basis $\xi^v$.  It follows from Theorem \ref{thm:HTFl} below that $\Psi$ is a $\Q[a]$-subalgebra of $\Psi'$. We call $\Psi$ the GKM ring of $\Fl$. Define the GKM ring $\Psi_\Gr$ of $\Gr$ by
\begin{align*}
\Psi_{\Gr} = \{f\in \Psi \mid \text{$f|_{ws_i}=f|_w$ for all $w\in S_\Z$ and $i\ne0$}\}.
\end{align*}

Recall the bijection $\lambda \mapsto w_\la$ \eqref{E:wla}.

\begin{proposition}
	The $\Q[a]$-algebra $\Psi_\Gr$ has a $\Q[a]$-basis $\{ \xi^{w_\la}\mid \la\in\Par\}$. 
\end{proposition}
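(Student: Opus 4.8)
The plan is to show that $\Psi_\Gr$, which was defined as the subalgebra of $\Psi$ invariant under the right action of all $s_i$ with $i\ne0$, has the claimed basis by combining the known basis $\{\xi^w\mid w\in S_\Z\}$ of $\Psi$ with an analysis of how the $0$-Grassmannian condition on Schubert classes interacts with membership in $\Psi_\Gr$. Recall from \textsection\ref{SS:notation} that $S_\Z^0$ is precisely the set of $w$ with $ws_i>w$ for all $i\ne0$, and that $\la\mapsto w_\la$ is a bijection $\Par\to S_\Z^0$. So the task is to prove: an element $f=\sum_{w}c_w\xi^w\in\Psi$ (finite sum, $c_w\in\Q[a]$) lies in $\Psi_\Gr$ if and only if $c_w=0$ whenever $w\notin S_\Z^0$.

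First I would establish linear independence, which is immediate: the $\xi^{w_\la}$ are part of the $\Q[a]$-basis $\{\xi^w\}$ of $\Psi$, so they are $\Q[a]$-linearly independent, and each $\xi^{w_\la}$ lies in $\Psi_\Gr$ because $\xi^{w_\la}$ is the equivariant localization of the Schubert class of the infinite Grassmannian (this is exactly what the $0$-Grassmannian condition encodes geometrically — alternatively one checks directly from Proposition~\ref{P:GKM Schubert basis} using $\xi^{w_\la}|_w=0$ unless $w_\la\le w$, together with the fact that $w_\la\le w$ iff $w_\la\le w$ in the parabolic quotient $S_\Z/S_{\ne0}$, so that $\xi^{w_\la}|_{ws_i}=\xi^{w_\la}|_w$ for $i\ne0$). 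Then I would prove spanning: given $f\in\Psi_\Gr$, write $f=\sum_w c_w\xi^w$ and suppose for contradiction some $c_w\ne0$ with $w\notin S_\Z^0$; choose such a $w$ minimal in Bruhat order. Since $w\notin S_\Z^0$ there is $i\ne0$ with $ws_i<w$. Evaluate the identity $f|_{ws_i}=f|_w$ (forced by $f\in\Psi_\Gr$) and use the triangularity $\xi^v|_u=0$ unless $v\le u$, together with $\xi^w|_w=\prod_{\alpha\in R^+,\,s_\alpha w<w}(-\alpha)\ne0$: the coefficient of $\xi^w|_w$-type leading term in $f|_w-f|_{ws_i}$ picks out $c_w\,\xi^w|_w$ up to contributions from $v<w$, which by minimality of $w$ all lie in $S_\Z^0$ and hence (by the first part) satisfy $\xi^v|_w=\xi^v|_{ws_i}$; this yields $c_w\,\xi^w|_w=0$, a contradiction.

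The cleanest way to organize the spanning argument is probably to not argue element-by-element but to observe that the projection $\pi\colon\Psi\to\bigoplus_{\la}\Q[a]\xi^{w_\la}$ killing all $\xi^w$ with $w\notin S_\Z^0$ is, when restricted to $\Psi_\Gr$, the identity. For this one needs: (i) $\Psi_\Gr\supseteq\bigoplus_\la\Q[a]\xi^{w_\la}$ (done above); (ii) for $f\in\Psi_\Gr$, the ``extra'' coefficients $c_w$, $w\notin S_\Z^0$, all vanish. Step (ii) is the induction just sketched. I expect the main obstacle to be pinning down the combinatorial fact that for $v\in S_\Z^0$ and any $u\in S_\Z$, $i\ne0$ one has $\xi^v|_u=\xi^v|_{us_i}$, equivalently that $v\le u\iff v\le us_i$ in Bruhat order; this is the standard statement that a minimal-length coset representative $v$ for $S_\Z/S_{\ne0}$ satisfies $v\le u$ iff $v\le u'$ for any $u'\in uS_{\ne0}$, and its localization consequence follows from Proposition~\ref{prop:Billey} (subwords of a reduced word for $u$ versus one for $us_i$) — one must be a little careful because $S_{\ne0}=S_-\times S_+$ is not finite, but only the interval below $v$ matters and $v$ moves finitely many elements, so the usual parabolic theory applies verbatim. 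Everything else — linear independence, the base case of the induction, and the bookkeeping with leading terms — is routine given Proposition~\ref{P:GKM Schubert basis}.
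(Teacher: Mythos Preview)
Your argument is correct and the paper states this proposition without proof, so there is nothing to compare against. The two-step strategy—(i) each $\xi^{w_\la}$ lies in $\Psi_\Gr$ because $w_\la s_i > w_\la$ for $i\ne0$, and (ii) a Bruhat-minimal counterexample in the spanning step is ruled out by evaluating $f|_w-f|_{ws_i}$—is exactly the right one.

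Two small points. First, where you write that $\xi^v|_u=\xi^v|_{us_i}$ is ``equivalently'' $v\le u\iff v\le us_i$: these are not equivalent, the localization equality is strictly stronger than equality of supports. You do go on to invoke Billey's formula (Proposition~\ref{prop:Billey}), and that is what actually proves the equality: if $us_i>u$, take a reduced word for $us_i$ ending in $s_i$; any reduced subword for $v\in S_\Z^0$ cannot use the final letter since $vs_i>v$, so the subword sums for $u$ and $us_i$ coincide term by term. (Equivalently, the recursion \eqref{E:loc rec right} gives $e^v_w=e^v_{ws_i}$ when $vs_i>v$.) The parenthetical ``alternative'' via Proposition~\ref{P:GKM Schubert basis} alone is not a proof of the equality of values, only of supports, so drop that.

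Second, in the spanning step your phrase ``contributions from $v<w$, which by minimality of $w$ all lie in $S_\Z^0$'' should read: for $v<w$ with $c_v\ne0$ we have $v\in S_\Z^0$ by minimality, hence $\xi^v|_w=\xi^v|_{ws_i}$; and for $v\not\le w$ both localizations vanish since $ws_i<w$. With those clarifications the argument is complete.
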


The GKM rings of $\Fl$ and $\Gr$ are explicit realizations of the equivariant cohomology rings $H_{\TZ}^*(\Fl)$ and $H_{\TZ}^*(\Gr)$ and their Schubert bases.

\begin{prop}\label{P:GKM} There are $\Q[a]$-algebra isomorphisms
	\begin{align}
\label{E:GKMFl}
	H^*_{T_\Z}(\Fl) &\cong \Psi\\
\label{E:GKMGr}
	H_{\TZ}^*(\Gr) &\cong \Psi_\Gr
	\end{align}
under which the Schubert bases correspond.
\end{prop}
\begin{proof}
	We first show that $H^*_{T_\Z}(\Fl)' \cong \Psi'$.  
	Let $\Psi(2N)$ be the $\Q[a]$-submodule of $\Fun(S_{[-N,N-1]},\Q[a])$ consisting of functions $f:S_{[-N,N-1]}\to\Q[a]$ such that 
	$$
	\text{$\alpha$ divides $f|_{s_\alpha w} - f|_w$ \qquad for all $w, s_\alpha w\in S_{[-N,-N-1]}$, $\alpha\in R$.}
	$$
	By \cite{KK}, $H^*_{T_\Z}(\Fl(2N)) \cong \Psi(2N)$.  The inclusion $\iota_{2N}:\Fl(2N) \hookrightarrow \Fl(2(N+1))$ is $T_\Z$-equivariant and maps the torus fixed point $w \in S_{[-N,N-1]} \in \Fl(2N)^{T_\Z}$ to the torus fixed point $w \in S_{[-N-1,N]} \in \Fl(2(N+1))^{T_\Z}$.  Thus the pullback map $\iota_{2N}^*:H^*_{T_\Z}(\Fl(2(N+1))) \to H^*_{T_\Z}(\Fl(2N))$ can be identified with the restriction map $r_{2N}:\Psi(2(N+1)) \to \Psi(2N)$.  We conclude that
	$$
	H^*_{T_\Z}(\Fl)' \cong \varprojlim H^*_{T_\Z}(\Fl(2N)) \cong  \varprojlim \Psi(2N) \cong \Psi'.
	$$
	By the usual characterization of Schubert classes of $\Psi(2N)$ and $\Psi(2(N+1))$ (cf. Proposition~\ref{P:GKM Schubert basis}), the restriction map $r_{2N}$ sends a Schubert class to either a Schubert class, or to 0.  This shows that the isomorphism $H^*_{T_\Z}(\Fl)' \cong \Psi'$ sends the $\xi^w \in H^*_{T_\Z}(\Fl)'$ defined in terms of the basis dual to the homology basis, to the same named element of $\Psi'$ described in Proposition \ref{P:GKM Schubert basis}. This proves \eqref{E:GKMFl}.
	The proof of \eqref{E:GKMGr} is similar.
\end{proof}

\subsection{Realization of Schubert basis of GKM ring by backstable Schubert polynomials}

We show that the GKM rings are realized by polynomial algebras such that the 
Schubert bases of the GKM rings correspond to backstable Schubert polynomials and double Schur polynomials respectively. 

\begin{thm}\label{thm:HTFl}
We have isomorphisms of $\Q[a]$-algebras:
\begin{align}
\label{E:Fl poly to GKM iso}
\bR(x,a) &\cong \Psi &\qquad \bS_v(x;a)&\longmapsto \xi^v &\qquad&\text{for $v\in S_\Z$}  \\
\label{E:Gr poly to GKM iso}
\Lambda(x||a) &\cong \Psi_\Gr &\qquad s_\la(x||a) &\longmapsto \xi^\lambda_\Gr&&\text{for $\la\in\Par$.}
\end{align} 
\end{thm}
\begin{proof}
Let $f\in \bR(x;a)$.   Then $f$ can be considered an element of $\Fun(S_\Z,\Q[a])$ by \eqref{E:localize f}. For any $\alpha=a_i-a_j\in R$ the element
\begin{align*}
f(s_\alpha^x w x; a) - f(w x; a) &= (s_\alpha^x - \id)f(wx;a)
\end{align*}
is divisible by $x_\alpha=x_i-x_j$. Applying $\epsilon$ from \textsection \ref{SS:loc back symmetric} we see that  $f\in \Psi$. It is immediate that the map $\bR(x;a)\to\Psi$ is a $\Q[a]$-algebra homomorphism. 

It is not hard to see that if $f\in\bR(x;a)$ is nonzero then it has a nonzero localization. Thus $\bR(x;a)$ embeds into $\Psi$.

One may deduce that $\bS_v(x;a)\mapsto \xi^v$ by checking 
the conditions of Proposition \ref{P:GKM Schubert basis}. In turn, these can be verified by Proposition \ref{P:double backstable finite loc} and the analogue of Proposition \ref{P:GKM Schubert basis} for $S_n$, which is satisfied by the localizations of double Schubert polynomials $\S_v(w\ap;\ap)$ for $v,w\in S_n$ \cite[Remark 1]{Bil}.

The statements for $\Gr$ are obtained by taking $S_{\ne0}$-invariants.
\end{proof}

 Theorem \ref{thm:HTFl} specializes to:
\begin{thm}\label{thm:HFl}
We have isomorphisms of $\Q$-algebras
\begin{align*}
H^*(\Fl) &\xrightarrow{\sim} \bR \\
H^*(\Gr) &\xrightarrow{\sim} \Lambda
\end{align*} 
where the images of Schubert classes are $\bS_w$ and $s_\la$ respectively.
\end{thm}

\begin{rem}
The decomposition $H^*(\Fl) = H^*(\Gr) \otimes_\Q \Q[x]$ can be explained as follows. For $[p,q]\subset\Z$ an interval of integers, let $\Fl_{[p,q]}$ be the space of flags $F_\bullet\in \Fl$ such that $F_i=E_i$ for $i\in\Z\setminus[p,q]$. Then $\Fl_{[p,q]}\cong \Fl(E_{q+1}/E_{p-1})$ is isomorphic to the variety of complete flags in a $(q-p+2)$-dimensional complex vector space. 
Let $\Fl_{>0} = \bigcup_{n \in \Z_{>0}} \Fl_{[1,n]}$ and $\Fl_{< 0}=\bigcup_{n\in \Z_{<0}} \Fl_{[n,-1]}$.

For a fixed $\Lambda \in \Gr$, the fiber $\pi_0^{-1}(\Lambda) \subset \Fl$ is isomorphic to $\Fl_{< 0} \times \Fl_{> 0}$ which has cohomology ring $\Q[x_{>0}] \otimes \Q[x_{\leq 0}] \cong \Q[x]$.  We expect the fibration $\pi_0: \Fl \to \Gr$ to be topologically trivial.
\end{rem}

\subsection{Shifting}\label{ssec:shift}
For later use, we briefly consider the other components $\Fl^{(p)}$ and $\Gr^{(p)}$ of the Sato flag variety and Sato Grassmannian.  Let $\sh: \Z \to \Z$ be the bijection sending $i$ to $i+1$ for all $i \in \Z$. Consider the group $\cS_\Z := \langle \sh \rangle \ltimes S_{\Z}$, the group of bijections of $\Z$ generated by $S_\Z$ and by $\sh$.  The $T_\Z$-fixed points of $\Fl^{(p)}$ for $p\in \Z$ are indexed by $\sh^p w$ for $w\in S_{\Z}$.  The equivariant cohomology $H_{T_{\Z}}^*(\Fl^{(p)})$ has Schubert basis $\xi^{\sh^p v}$ for $v\in S_{\Z}$. 

Let $\shift_a$ be as in \textsection \ref{SS:double Schur}.

\begin{proposition}\label{prop:shiftflag}
For every $p\in \Z$, there is an isomorphism of rings
\begin{align*}
H_{T_{\Z}}^*(\Fl^{(p)}) &\to \bR(x,a) &
\xi^{\sh^p v} & \mapsto  \bS_{\sh^p v}(x;a)
\end{align*}
satisfying $i_{\sh^p w}^*(\xi^{\sh^p v}) = \bS_{\sh^p v}(\sh^p wa;a)$. 
\end{proposition}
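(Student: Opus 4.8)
\textbf{Proof plan for Proposition \ref{prop:shiftflag}.}

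The plan is to reduce everything to the case $p=0$, which is Theorem \ref{thm:HTFl} together with Proposition \ref{prop:bAbR}, by exploiting the ``shift'' automorphisms both on the geometric side and on the algebraic side. First I would observe that multiplication by $t$ gives a $T_\Z$-equivariant isomorphism of ind-varieties $\Fl^{(p)} \xrightarrow{\sim} \Fl^{(p+1)}$, but it is \emph{not} equivariant for a fixed action of $T_\Z$: it intertwines the given action with the action twisted by the shift $\shift$ on $T_\Z$. At the level of fixed points it sends the fixed point indexed by $\shift^p w$ to the one indexed by $\shift^{p+1} w$, and on $H^*_{T_\Z}(\mathrm{pt}) = \Q[a]$ it induces the shift $\shift_a \colon a_i \mapsto a_{i+1}$ (up to conventions one should check carefully). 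Iterating, one obtains a $\Q$-algebra isomorphism $H^*_{T_\Z}(\Fl^{(p)}) \xrightarrow{\sim} H^*_{T_\Z}(\Fl^{(0)}) = H^*_{T_\Z}(\Fl)$ which is $\shift_a$-semilinear and sends $\xi^{\shift^p v}$ to $\xi^v$. Composing with the isomorphism $H^*_{T_\Z}(\Fl) \xrightarrow{\sim} \bR(x;a)$ of Theorem \ref{thm:HTFl}, which sends $\xi^v \mapsto \bS_v(x;a)$, and then with $\shift^p \colon \bR(x;a) \to \bR(x;a)$ (the $\Q$-algebra automorphism shifting all variables forward by $p$), gives a ring isomorphism $H^*_{T_\Z}(\Fl^{(p)}) \xrightarrow{\sim} \bR(x;a)$. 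By Corollary \ref{C:backstable double shift} we have $\shift^p(\bS_v(x;a)) = \bS_{\shift^p(v)}(x;a)$, so this composite sends $\xi^{\shift^p v} \mapsto \bS_{\shift^p v}(x;a)$ as desired.

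Next I would verify the localization formula. For $w, v \in S_\Z$, the class $\xi^{\shift^p v}$ restricted to the fixed point $\shift^p w$ corresponds, under the shift isomorphism above, to $\xi^v$ restricted to $w$; and by Proposition \ref{P:double backstable finite loc} (or directly by Proposition \ref{P:schub loc} and Proposition \ref{prop:bAbR}) this localization equals $\bS_v(wa;a)$. Applying the $\shift^p$ automorphism, which on the polynomial part acts by $x_i \mapsto x_{i+p}$ and which on localizations corresponds to replacing $wa$ by the shifted tuple $\shift^p(w)\,a$ and shifting the $a$-indices, one gets exactly $\bS_{\shift^p v}(\shift^p w\, a; a)$. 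So $i^*_{\shift^p w}(\xi^{\shift^p v}) = \bS_{\shift^p v}(\shift^p w\, a; a)$, which is the claimed formula. Alternatively, once the ring isomorphism is established, one can deduce the localization statement purely formally from the GKM description: both sides are determined by being the unique Schubert basis satisfying the triangularity and degree conditions of Proposition \ref{P:GKM Schubert basis}, now transported through the shift.

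The main obstacle I expect is \emph{bookkeeping the two different shifts consistently}: the shift $\shift$ acting on $S_\Z$ and on indices of fixed points, the shift $\shift_a$ acting on $H^*_{T_\Z}(\mathrm{pt}) = \Q[a]$, and the shift $\shift$ acting on $\bR(x;a)$ by moving all of $x_i$, $a_i$ and $p_k(x\|a)$. One must check that the geometric isomorphism $\Fl^{(p)} \cong \Fl^{(0)}$ really does induce $\shift_a$ on the base (and not $\shift_a^{-1}$), that it sends $\xi^{\shift^p v}$ to $\xi^v$ (and not to $\xi^{\shift^{-p} v}$ or a Schubert class for a different permutation), and that the composite with $\shift^p$ on $\bR(x;a)$ lands on $\bS_{\shift^p v}$ rather than $\bS_{\shift^{-p} v}$; all three sign/direction choices are forced to be compatible, and getting them right is the only real content. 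Everything else — that the maps are ring homomorphisms, that they are bijective, that the localization formula follows — is immediate from Theorem \ref{thm:HTFl}, Proposition \ref{prop:bAbR}, Proposition \ref{P:double backstable finite loc}, and Corollary \ref{C:backstable double shift}.
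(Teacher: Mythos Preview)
Your overall strategy—reduce to $p=0$ via the shift isomorphism and the known case of Theorem~\ref{thm:HTFl}—is exactly what the paper does. However, the specific implementation has a genuine error, and it is precisely the bookkeeping issue you flagged as the main obstacle.

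You compose with the \emph{full} shift $\shift^p$ on $\bR(x;a)$, which shifts both $x_i$ and $a_i$, and then invoke Corollary~\ref{C:backstable double shift} to get $\shift^p(\bS_v)=\bS_{\shift^p(v)}$. But $\shift^p(v)=\sh^p v\,\sh^{-p}\in S_\Z$ is conjugation; it is \emph{not} the element $\sh^p v\in\cS_\Z$ that indexes the fixed point and the Schubert class in $\Fl^{(p)}$. More importantly, the element $\shift^p(\bS_v)$ has the wrong localizations. Take $p=1$ and $v=w=s_1$. Your candidate is $\shift(\bS_{s_1})=\bS_{s_2}=x_2-a_2$; localizing at $\sh\,s_1$ sends $x_2\mapsto a_{s_1(2)+1}=a_2$, giving $0$. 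But geometrically $i^*_{\sh\,s_1}(\xi^{\sh\,s_1})=\shift_a(\xi^{s_1}|_{s_1})=\shift_a(a_2-a_1)=a_3-a_2\neq 0$.

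The fix is to use $\shift_a^p$ (the shift of \S\ref{SS:double Schur}, extended to $\bR(x;a)$ by $a_i\mapsto a_{i+p}$, $x_i\mapsto x_i$, $p_k(x\|a)\mapsto p_k(x\|a)-\sum_{j=1}^p a_j^k$) in place of $\shift^p$. Then $\bS_{\sh^p v}(x;a):=\shift_a^p\bS_v(x;a)$; in the example this is $x_1-a_2$, which localizes at $\sh\,s_1$ to $a_{s_1(1)+1}-a_2=a_3-a_2$ as required. This is exactly what the paper does (tersely): it records that $i^*_{\sh^p w}(\xi^{\sh^p v})=\shift_a^p(i^*_w(\xi^v))$ on the geometric side, and that $\bS_{\sh^p v}(\sh^p wa;a)=\shift_a^p\bS_v(wa;a)$ on the algebraic side, from the action $\sh^p w(x_i)=x_{w(i)+p}$. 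Compare the explicit definition $s_{\sh^p\la}(x\|a):=\shift_a^p s_\la(x\|a)$ given just after Proposition~\ref{prop:shiftGr}. Once you replace $\shift^p$ by $\shift_a^p$ throughout, your plan is correct and coincides with the paper's argument.
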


\begin{proof}
The Schubert class $\xi^{\sh^p v}$ is determined by $i_{\sh^p w}^*(\xi^{\sh^p v}) = \shift_a^p(i_w^*(\xi^v))$.  Since $\sh^p w(x_i) = x_{p + w(i)}$, from the definition, we have
$\bS_{\sh^p v}(\sh^p wa;a) = \shift_a^p \bS_v(wa;a)$.  The result follows.
\end{proof}

%

The equivariant cohomology $H_{T_{\Z}}^*(\Gr^{(p)})$ has Schubert basis $\xi^{\sh^p \la}$ for $\la\in \Par$. 
Extend the definition of double Schur functions by $s_{\sh^p \la}(x||a):=\shift_a^p  s_\la(x||a) \in \Lambda(x||a)$. 

\begin{proposition}\label{prop:shiftGr}
For every $p\in \Z$, there is an isomorphism of rings
\begin{align*}
H_{T_{\Z}}^*(\Gr^{(p)}) &\to \Lambda(x||a) &
\xi^{\sh^p \la} & \mapsto  s_{\sh^p \la}(x||a)
\end{align*}
satisfying $i_{\sh^p w}^*(\xi^{\sh^p \la}) = s_{\sh^p \la}(\sh^p wa||a)$.
\end{proposition}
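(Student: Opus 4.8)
The plan is to deduce this from Theorem~\ref{thm:HTFl} (the case $p=0$, which identifies $H^*_{T_\Z}(\Gr)$ with $\Lambda(x||a)$) together with Proposition~\ref{prop:shiftflag}, in exact parallel to the proof of that proposition: the Grassmannian statement should just be the restriction to $\Gr^{(p)}$ of the flag statement for $\Fl^{(p)}$. So first I would set up the projection $\pi_0 : \Fl^{(p)} \to \Gr^{(p)}$, $\Lambda_\bullet \mapsto \Lambda_0$, which is $T_\Z$-equivariant and sends the fixed point $\sh^p w$ to a fixed point of $\Gr^{(p)}$ depending only on the coset $\sh^p w\, S_{\neq 0}$. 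As in the $p=0$ case, $\pi_0$ induces an injection $\pi_0^* : H^*_{T_\Z}(\Gr^{(p)}) \hookrightarrow H^*_{T_\Z}(\Fl^{(p)})$ taking Schubert classes to Schubert classes, with image the $\Q[a]$-span of $\{\xi^{\sh^p \la} \mid \la \in \Par\}$; here one uses the shifted analogue of the bijection $w \mapsto (I_{w,+},I_{w,-})$ from $S_\Z^0$ onto pairs of finite sets, so that $\{\sh^p w_\la \mid \la \in \Par\}$ indeed indexes exactly the classes pulled back from $\Gr^{(p)}$.

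Under the isomorphism $H^*_{T_\Z}(\Fl^{(p)}) \cong \bR(x;a)$ of Proposition~\ref{prop:shiftflag}, this image corresponds to the $\Q[a]$-span of $\{\bS_{\sh^p w_\la}(x;a) \mid \la \in \Par\}$. The next step is to identify this span with $\Lambda(x||a)$. For $p=0$, Proposition~\ref{P:backstable double Grassmannian} gives $\bS_{w_\la}(x;a)=s_\la(x||a)$ and $\{s_\la(x||a)\}$ is a $\Q[a]$-basis of the $\Q[a]$-subalgebra $\Lambda(x||a)\subset \bR(x;a)$, so the restriction of the isomorphism of Proposition~\ref{prop:shiftflag} to this subalgebra recovers Theorem~\ref{thm:HTFl} for $\Gr$. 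For general $p$, note that $\shift_a$ extends to a $\Q$-algebra automorphism of $\bR(x;a)=\Lambda(x||a)\otimes_{\Q[a]}\Q[x,a]$ (acting by $a_i\mapsto a_{i+1}$ on both factors and by the formula of \textsection\ref{SS:double Schur} on $\Lambda(x||a)$), restricting to a $\Q$-algebra automorphism of $\Lambda(x||a)$. Using the localization identity $\bS_{\sh^p v}(\sh^p wa;a)=\shift_a^p\,\bS_v(wa;a)$ from Proposition~\ref{prop:shiftflag} and the fact that an element of $\Psi\cong\bR(x;a)$ is determined by its localizations, one gets $\bS_{\sh^p v}(x;a)=\shift_a^p(\bS_v(x;a))$; in particular $\bS_{\sh^p w_\la}(x;a)=\shift_a^p(s_\la(x||a))=s_{\sh^p\la}(x||a)$ by definition of $s_{\sh^p\la}(x||a)$. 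Hence the span in question is $\shift_a^p(\Lambda(x||a))=\Lambda(x||a)$ as a $\Q$-subalgebra, with $\Q[a]$-basis $\{s_{\sh^p\la}(x||a)\mid\la\in\Par\}$. Composing $\pi_0^*$ with the isomorphism of Proposition~\ref{prop:shiftflag} and this identification gives the claimed ring isomorphism $H^*_{T_\Z}(\Gr^{(p)})\to\Lambda(x||a)$, $\xi^{\sh^p\la}\mapsto s_{\sh^p\la}(x||a)$, and the localization formula $i^*_{\sh^p w}(\xi^{\sh^p\la})=s_{\sh^p\la}(\sh^p wa||a)$ is read off from the localization formula in Proposition~\ref{prop:shiftflag} by taking $v=w_\la\in S_\Z^0$.

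The main obstacle I anticipate is the bookkeeping around the shift: checking cleanly that $\shift_a$ is well-defined on $\bR(x;a)$ and genuinely compatible with localization at the shifted fixed points $\sh^p w$ (so that $\bS_{\sh^p v}(x;a)=\shift_a^p\bS_v(x;a)$ holds as an identity in $\bR(x;a)$, not merely on localizations), and that passing to $S_{\neq 0}$-invariant Schubert classes commutes with the shift, so that the image of $\pi_0^*$ really is the span of the $\bS_{\sh^p w_\la}(x;a)$. Once these compatibilities are in place, the proposition is a formal consequence of Proposition~\ref{prop:shiftflag} and Theorem~\ref{thm:HTFl}.
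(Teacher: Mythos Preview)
Your approach is correct and is exactly the intended argument. In the paper, Proposition~\ref{prop:shiftGr} is stated without proof: it appears immediately after the definition $s_{\sh^p\la}(x||a):=\shift_a^p\, s_\la(x||a)$ and is simply the Grassmannian analogue of Proposition~\ref{prop:shiftflag}, whose proof is the same localization check you outline. So your proposal is not a different route but rather a careful spelling-out of what the paper leaves implicit.

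One small remark on your bookkeeping step: rather than arguing via ``an element of $\Psi$ is determined by its localizations'' to conclude $\bS_{\sh^p v}(x;a)=\shift_a^p(\bS_v(x;a))$, you can proceed more directly in parallel with the proof of Proposition~\ref{prop:shiftflag}. The Schubert class $\xi^{\sh^p\la}$ in $H^*_{T_\Z}(\Gr^{(p)})$ is determined by its localizations $i^*_{\sh^p w}(\xi^{\sh^p\la})=\shift_a^p(i^*_w(\xi^\la))=\shift_a^p\, s_\la(wa||a)$, while a direct computation (substituting $x_i\mapsto a_{w(i)+p}$ and $a_j\mapsto a_{j+p}$) shows that $s_{\sh^p\la}(x||a):=\shift_a^p\, s_\la(x||a)$ has the same values at the shifted fixed points. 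This avoids having to verify separately that $\shift_a$ extends compatibly to all of $\bR(x;a)$; you only need it on $\Lambda(x||a)$, where it is defined in \S\ref{SS:double Schur}. The rest of your argument (using $\pi_0^*$ and the identification of the image with the span of $\{s_{\sh^p\la}(x||a)\}$) is fine.
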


%
%

\section{NilHecke algebra and Hopf structure}\label{sec:local}
We show that the $\Q[a]$-algebra isomorphisms of Theorem \ref{thm:HTFl} 
preserve additional structure: for $\Psi$, two commuting actions of the nilHecke algebra of Kostant and Kumar \cite{KK} and for $\Psi_\Gr$, the Hopf $\Q[a]$-algebra structure.

\subsection{NilHecke algebra}\label{ssec:nilHecke}
Let $\Q(a)$ be the fraction field of $\Q[a]$.
Let $\Q(a)[S_\Z]$ be the twisted group algebra, with product $(f u)(g v) = (f u(g)) (uv)$ for $f,g\in\Q(a)$ and $u,v\in S_\Z$. The ring $\Q(a)[S_\Z]$ acts on $\Q(a)$: $S_\Z$ acts by permuting variables and $\Q(a)$ acts by left multiplication. For $i\in \Z$, we regard
$A_i$ as being an element of $\Q(a)[S_\Z]$:
\begin{align}\label{E:ddiff in Aloc}
A_i := \alpha_i^{-1}(\id-s_i)\in \Q(a)[S_\Z].
\end{align}
The elements $A_i$ act on $\Q[a]$. 

The (infinite) nilHecke algebra $\bA$ is by definition the $\Q$-subalgebra of $\Q(a)[S_\Z]$ generated by $\Q[a]$ and the $A_i$. We have the commutation relation
\begin{align}\label{E:ddiff derivation}
A_i f = A_i(f) + s_i(f) A_i\qquad\text{for $i\in \Z$, $f\in\Q[a]$.}
\end{align}
One may show that the expansion of $A_w\in \Q(a)[S_\Z]$ (see \eqref{E:ddiff w}) into the left $\Q(a)[S_\Z]$-basis $S_\Z$, is triangular with respect to the Bruhat order.
It follows that the $A_w$ are a left $\Q[a]$-basis of $\bA$.

Viewing $S_\Z \subset \bA$ via $s_i = 1 - \alpha_i A_i$, for $v,w\in S_\Z$ define the elements $e^v_w\in \Q[a]$ by the expansion of Weyl group elements into the basis $A_v$ of $\bA$:
\begin{align}\label{E:Schubert locs}
w = \sum_{v\in S_\Z} e^v_w A_v.
\end{align}


\begin{example} Using \eqref{E:ddiff derivation} we have
	\begin{align*}
	s_2 s_1 &= (1-\alpha_2 A_2)(1-\alpha_1A_1) 
	= 1 - \alpha_1 A_1 - (\alpha_1+\alpha_2)A_2 + \alpha_2(s_2(\alpha_1))A_2A_1.
	\end{align*}
	Therefore $e_\id^{s_2s_1}=1$, $e_{s_1}^{s_2s_1}=a_2-a_1$,
	$e_{s_2}^{s_2s_1}=a_3-a_1$, $e_{s_2s_1}^{s_2s_1}=(a_3-a_2)(a_3-a_1)$, and $e_v^{s_2s_1}=0$ for other $v\in S_\Z$.
\end{example}

\begin{prop} \label{P:loc rec}
	The elements $e^v_w$ are uniquely defined by the initial conditions
	\begin{align}
	e^v_\id &= \delta_{v,\id} \qquad\text{for all $v\in S_\Z$}
	\end{align}
	and either the following:
	\begin{enumerate}
		\item[(a)] For all $w\in S_\Z$ and $i\in\Z$ such that $ws_i<w$, 
		\begin{align}
		\label{E:loc rec right}
		e^v_w &= e^v_{ws_i} + w(\alpha_i) e^{vs_i}_{ws_i} \chi(vs_i<v) \\
		\label{E:loc rec right other case}
		e^{vs_i}_w \chi(vs_i<v) &= e^{vs_i}_{ws_i} \chi(vs_i<v)
		\end{align}
		\item[(b)] For all $w\in S_\Z$ and $i\in\Z$ such that $ws_i<w$, 
		\begin{align}\label{E:loc rec left}
		e^v_w &= s_i(e^v_{s_i w}) -\alpha_i s_i(e^{s_iv}_{s_iw})\chi(s_iv<v) \\
		\label{E:loc rec left other case}
		e^{s_iv}_w \chi(s_iv<v) &= s_i(e^{s_iv}_{s_iw}) \chi(s_iv<v)
		\end{align}
	\end{enumerate}
\end{prop}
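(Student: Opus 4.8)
The plan is to prove uniqueness and correctness simultaneously by showing that the stated recurrences actually hold for the elements $e^v_w$ defined by \eqref{E:Schubert locs}, and that together with the initial condition $e^v_\id = \delta_{v,\id}$ they determine all the $e^v_w$ by induction on $\ell(w)$. The key algebraic input is the commutation relation \eqref{E:ddiff derivation} inside the twisted group algebra $\Q(a)[S_\Z]$, together with the two expressions $s_i = 1 - \alpha_i A_i$ and $A_i^2 = 0$. First I would observe that the $A_w$ form a left $\Q[a]$-basis of $\bA$ (stated in the excerpt just before \eqref{E:Schubert locs}), so the $e^v_w$ are well defined, and moreover that the expansion $w = \sum_v e^v_w A_v$ is \emph{finite} and triangular with respect to Bruhat order, with $e^w_w = \prod_{\alpha \in \mathrm{Inv}(w)} \alpha$ nonzero. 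This triangularity is what makes the induction on $\ell(w)$ go through.

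For part (a): fix $w$ and $i$ with $ws_i < w$, so $w = (ws_i) s_i$ in the group. In $\bA$ we compute $w = (ws_i) s_i = (ws_i)(1 - \alpha_i A_i) = \sum_{u} e^u_{ws_i} A_u (1 - \alpha_i A_i)$. Now $A_u(1-\alpha_i A_i) = A_u - \alpha_i A_u A_i - (\text{terms from commuting } \alpha_i \text{ past } A_u)$; more precisely one pushes the scalar $\alpha_i$ to the left using \eqref{E:ddiff derivation} repeatedly, which replaces $\alpha_i$ by $u(\alpha_i)$ up to lower-order corrections, and uses $A_u A_i = A_{us_i}$ when $us_i > u$ and $A_u A_i = 0$ when $us_i < u$ (which follows from $A_i^2=0$ and the braid relations \eqref{E:dd2}, \eqref{E:dd braid}, exactly as in the standard nilHecke calculus). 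Collecting the coefficient of $A_v$ on both sides, the contributions are: $e^v_{ws_i}$ directly from $A_u = A_v$ (i.e. $u = v$); and, when $vs_i < v$, a contribution $w(\alpha_i)\, e^{vs_i}_{ws_i}$ coming from $A_{vs_i} A_i = A_v$ with the scalar $\alpha_i$ pushed all the way left past $ws_i$, producing $(ws_i)(\alpha_i) = w(s_i \alpha_i) = -w(\alpha_i)$... — here one must be careful about the sign: writing $s_i = 1 - \alpha_i A_i$ and tracking that $(ws_i)(-\alpha_i) A_{vs_i}A_i$, with $-\alpha_i$ commuted left, gives $-(ws_i)(\alpha_i) = w(\alpha_i)$ since $(ws_i)(\alpha_i) = w(s_i(\alpha_i)) = w(-\alpha_i) = -w(\alpha_i)$. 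This yields \eqref{E:loc rec right}. Equation \eqref{E:loc rec right other case} is the bookkeeping statement that when $vs_i < v$, no scalar correction survives in the $A_{vs_i}$-coefficient: the only way to produce $A_{vs_i}$ from the product is $A_{vs_i}\cdot 1$ from the term $u = vs_i$, giving $e^{vs_i}_{ws_i}$ unchanged. Part (b) is proved the dual way, factoring $w = s_i (s_i w)$ with $s_i w < w$ and expanding $s_i (s_i w) = (1 - \alpha_i A_i)(s_i w)$, now pushing scalars to the \emph{right} and using the left-handed analogue $A_i A_u = A_{s_i u}$ (resp.\ $0$); this introduces the outer $s_i(\cdot)$ because the scalars $\alpha_i$ and $e^\bullet_{s_iw}$ end up on the wrong side and must be moved past $A_i$ via \eqref{E:ddiff derivation}, which applies $s_i$ to them. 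Alternatively, (b) follows from (a) by applying the anti-automorphism of $\bA$ that reverses products and fixes each $A_i$ and each scalar (then conjugating by the substitution $a \mapsto a$), which interchanges the roles of $w$ and $w^{-1}$; I would choose whichever is cleaner in the write-up.

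Finally, uniqueness: given the initial conditions and either (a) or (b), I induct on $\ell(w)$. For $\ell(w) = 0$ this is the initial condition. For $\ell(w) > 0$ pick any $i$ with $ws_i < w$ (resp.\ $s_i w < w$); then $ws_i$ (resp.\ $s_i w$) has smaller length, so all $e^v_{ws_i}$ are determined by induction, and \eqref{E:loc rec right}–\eqref{E:loc rec right other case} (resp.\ \eqref{E:loc rec left}–\eqref{E:loc rec left other case}) express every $e^v_w$ in terms of them. One should check consistency — that the answer does not depend on the choice of $i$ — but this is automatic since we have already exhibited the genuine elements $e^v_w$ as a solution. The main obstacle I anticipate is purely computational: getting the signs and the $w(\alpha_i)$ versus $w(-\alpha_i)$ factors correct when commuting scalars through a product of divided differences, and verifying carefully that $A_u A_i \in \{A_{us_i}, 0\}$ with no lower-order terms (this is where $A_i^2 = 0$ and the braid relations are essential, and it is the one place a slick argument could hide an error). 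Everything else is routine bookkeeping in $\Q(a)[S_\Z]$.
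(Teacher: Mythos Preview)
Your overall strategy is right and matches the intended argument (the paper states this proposition without proof, treating it as a routine nilHecke computation). The uniqueness argument by induction on $\ell(w)$ is fine, and your derivations land on the correct formulae. But the computation for part (a) has a genuine muddle in the order of operations that, as written, leaves a gap.

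You expand $(ws_i)=\sum_u e^u_{ws_i}A_u$ \emph{first} and then try to multiply on the right by $1-\alpha_iA_i$. This forces you to compute $A_u\,\alpha_i$ term by term, which via \eqref{E:ddiff derivation} produces $u(\alpha_i)A_u$ \emph{plus} genuine lower-order terms $\sum_{u'<u} c_{u'}A_{u'}$ that you acknowledge and then silently drop. You then assert the coefficient is $w(\alpha_i)$ ``with the scalar pushed all the way left past $ws_i$''---but that only makes sense if $(ws_i)$ is still intact as a group element, which it no longer is after you have expanded it. The lower-order terms really are there for each individual $A_u$, and your argument does not show why they cancel in the sum.

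The fix is to reverse the order: use the twisted-group-algebra relation $g\cdot f = g(f)\cdot g$ \emph{before} expanding. Concretely,
\[
w=(ws_i)(1-\alpha_iA_i)=(ws_i)-(ws_i)(\alpha_i)\,(ws_i)A_i=(ws_i)+w(\alpha_i)\,(ws_i)A_i,
\]
since $(ws_i)(\alpha_i)=w(s_i\alpha_i)=-w(\alpha_i)$. Now expand $(ws_i)=\sum_u e^u_{ws_i}A_u$ and use $A_uA_i=A_{us_i}$ when $us_i>u$ and $A_uA_i=0$ when $us_i<u$. Reindexing the second sum by $v=us_i$ with $vs_i<v$ gives \eqref{E:loc rec right} immediately, with no stray corrections. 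Equation \eqref{E:loc rec right other case} is then just the special case obtained by substituting $vs_i$ for $v$ (with $vs_i<v$), since the $\chi$ in \eqref{E:loc rec right} vanishes. Your part (b) computation is cleaner because you used exactly this order (commuting scalars past $s_i$ via $s_i\cdot f = s_i(f)\cdot s_i$ before doing anything else); apply the same discipline to (a). The anti-automorphism remark is correct but unnecessary once the direct computation is fixed.
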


Let $w\in S_\Z$ and $\a = (i_1,i_2,\dotsc,i_\ell) \in \Red(w)$. For $1\le j\le \ell$ let
$$
\beta_j(\a) = s_{i_1} s_{i_2} \cdots s_{i_{j-1}}(-\alpha_{i_j}) = s_{i_1} s_{i_2} \cdots s_{i_{j-1}}(a_{i_j+1} - a_{i_j}).
$$
\begin{prop}[\cite{AJS,Bil}]\label{prop:Billey}
Let $w, v \in S_\Z$ and $\a = (i_1,i_2,\dotsc,i_\ell) \in \Red(w)$.  Then we have the closed formula
$$
e^v_w = \sum_{\b \subset \a} \prod_{i_j \in \b} \beta_j(\a)
$$
summed over all subwords of $\a$ that are reduced words for $v$.

\begin{example} Let $w=s_1s_2s_1$, $\a=(1,2,1)$ and $v=s_1$. We have
	$\beta_1(\a)=a_2-a_1$, $\beta_2(\a)=s_1(a_3-a_2)=a_3-a_1$, $\beta_3(\a)=s_1s_2(a_2-a_1)=a_3-a_2$. There are two subwords of $\a$ that are reduced words of $v$, namely, $(i_1)$ and $(i_3)$. Therefore $e^{s_1}_{s_1s_2s_1}=\beta_1(\a)+\beta_3(\a)=a_3-a_1$. Using $\a'=(2,1,2)\in \Red(w)$ we have $\beta_2(\a')=a_3-a_1$, a unique subword of $\a'$ that is a reduced word of $v$, namely, $(i_2)$, and $e^{s_1}_{s_1s_2s_1} = \beta_2(\a')=a_3-a_1$.
\end{example}

\end{prop}

It follows from Proposition~\ref{prop:PsiA}(3) below that the elements $e^v_w$ which were defined using $\bA$, are none other than the localization values of equivariant Schubert classes at torus-fixed points.

\begin{prop}\label{P:schub loc} For $v,w \in S_\Z$, we have $\xi^v|_w = e^v_w$.
\end{prop}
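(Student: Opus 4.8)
The plan is to show that $\xi^v|_w = e^v_w$ by verifying that the functions $w \mapsto e^v_w$ (for fixed $v$) satisfy the three characterizing properties of the Schubert basis listed in Proposition \ref{P:GKM Schubert basis}: the support/triangularity condition, homogeneity of degree $\ell(v)$, and the normalization $e^v_v = \prod_{\alpha \in R^+,\, s_\alpha v < v}(-\alpha)$. By the uniqueness clause of that proposition, this would force $e^v_\bullet = \xi^v|_\bullet$. The key input is the definition \eqref{E:Schubert locs}, $w = \sum_v e^v_w A_v$, expressing Weyl group elements in the left $\Q[a]$-basis $\{A_v\}$ of $\bA$, together with the triangularity of the $A_w$ with respect to Bruhat order already noted in \textsection \ref{ssec:nilHecke}.

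First I would establish the triangularity $e^v_w = 0$ unless $v \le w$. Since $s_i = 1 - \alpha_i A_i$, expanding $w = s_{i_1}\cdots s_{i_\ell}$ (reduced) and multiplying out, every term is a product of the form $(\pm$ scalar in $\Q[a]) \cdot A_{j_1}\cdots A_{j_k}$ where $s_{j_1}\cdots s_{j_k}$ is a subword of $s_{i_1}\cdots s_{i_\ell}$; after collecting via the commutation relation \eqref{E:ddiff derivation} and using $A_i^2 = 0$ together with the braid relations, the only $A_v$ that survive have $v \le w$ in Bruhat order. This also makes transparent the degree count: each $A_i$ drops degree by one relative to the scalar $\alpha_i$ it replaces, so the coefficient of $A_v$ is homogeneous of degree $\ell(w) - (\ell(w) - \ell(v)) = \ell(v)$ — more carefully, one tracks that in $w = \sum_v e^v_w A_v$ the element $e^v_w A_v$ must be ``homogeneous'' in the graded sense where $\deg A_i = -1$ and $\deg \alpha_i = 1$, forcing $\deg e^v_w = \ell(v)$. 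For the normalization, I would take $v = w$: the top term in the expansion comes from choosing the full subword, giving coefficient $\prod_{j} \beta_j$ where $\beta_j = s_{i_1}\cdots s_{i_{j-1}}(-\alpha_{i_j})$, and the standard fact (inversion set of a reduced word) identifies $\{\beta_1,\dots,\beta_\ell\}$ with $\{\alpha \in R^+ : s_\alpha w < w\}$ up to sign, so $e^w_w = \prod_{s_\alpha w < w}(-\alpha)$. Alternatively, and more cleanly, one could invoke Proposition \ref{prop:Billey}: the closed formula $e^v_w = \sum_{\b \subset \a} \prod_{s\in\b}\beta(s)$ over reduced subwords makes all three properties immediate — triangularity because a reduced subword of a reduced word for $w$ represents some $v \le w$; homogeneity because each $\beta(s)$ is a root; and the normalization because when $v = w$ the only reduced subword of $\a$ giving $w$ is $\a$ itself.

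Finally I would check that $w \mapsto e^v_w$ actually lies in $\Psi'$, i.e. satisfies the GKM divisibility \eqref{E:GKM}: $\alpha \mid e^v_{s_\alpha w} - e^v_w$ for all $w, \alpha$. This follows from the recursions in Proposition \ref{P:loc rec} (which hold by the commutation relation \eqref{E:ddiff derivation} applied to $s_\alpha w = s_\alpha w s_i \cdot s_i$-type manipulations), or again directly from Billey's formula by a standard subword-cancellation argument: pairing subwords of a reduced word for $w$ with subwords of a reduced word for $s_\alpha w$ shows the difference is divisible by $\alpha$. The main obstacle is really just bookkeeping: making the degree conventions precise (the $-1$ grading on $A_i$) and handling the triangularity of the $\{A_w\}$ basis carefully enough that the expansion \eqref{E:Schubert locs} is well-posed and its coefficients behave as claimed. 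Once Proposition \ref{prop:Billey} is available, however, the proof collapses to three one-line verifications plus the uniqueness statement of Proposition \ref{P:GKM Schubert basis}, so I would structure the argument around Billey's formula and relegate the derivation of that formula (already cited to \cite{AJS,Bil}) to the ambient theory.
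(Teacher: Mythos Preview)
The paper does not supply an explicit proof of this proposition; it is stated immediately after Propositions~\ref{P:GKM Schubert basis}, \ref{P:loc rec}, and~\ref{prop:Billey} and is evidently meant to follow from that surrounding material together with the general Kostant--Kumar theory \cite{KK}. Your outline is precisely the standard argument and is correct: Billey's formula makes the support condition, the homogeneity of degree $\ell(v)$, and the normalization $e^v_v = \prod_{s_\alpha v < v}(-\alpha)$ immediate, and then the uniqueness clause of Proposition~\ref{P:GKM Schubert basis} finishes the job.

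The one point that genuinely needs care, and which you rightly flag, is membership in $\Psi'$: the uniqueness argument in Proposition~\ref{P:GKM Schubert basis} really does use the GKM divisibility, so you cannot skip checking that $\alpha \mid e^v_{s_\alpha w} - e^v_w$. Your two suggested routes are both valid. The recursion route is the most self-contained here: from $s_\alpha w = s_\alpha \cdot w$ in $\Q(a)[S_\Z]$ and the fact that $s_\alpha - 1$ lies in $\alpha\,\Q(a)[S_\Z]$ one sees that the $A_v$-coefficients of $s_\alpha w - w$ are divisible by $\alpha$; alternatively, reducing to simple reflections via a reduced expression for $s_\alpha$ and iterating \eqref{E:loc rec right} works. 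Either way your plan goes through.
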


Recall the automorphism $\omega$ of \textsection \ref{SS:coproduct backstable}.

\begin{lemma} For $v,w\in S_\Z$, we have $\omega(\xi^v|_w) = \xi^{\omega(v)} |_{\omega(w)}$.
\end{lemma}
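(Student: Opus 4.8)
The statement to prove is that $\omega(\xi^v|_w) = \xi^{\omega(v)}|_{\omega(w)}$ for all $v,w \in S_\Z$, where $\omega$ is the Dynkin-diagram-reversing automorphism sending $s_i \mapsto s_{-i}$ on $S_\Z$ and $a_i \mapsto -a_{1-i}$ on $\Q[a]$. By Proposition~\ref{P:schub loc} we have $\xi^v|_w = e^v_w$, so it suffices to prove the corresponding identity $\omega(e^v_w) = e^{\omega(v)}_{\omega(w)}$ for the localization coefficients defined by the expansion \eqref{E:Schubert locs} of $w$ into the $A_v$-basis of the nilHecke algebra. The plan is to use the closed formula of Proposition~\ref{prop:Billey}: fixing a reduced word $\a = s_{i_1}\cdots s_{i_\ell}$ for $w$, we have $e^v_w = \sum_{\b \subseteq \a} \prod_{s\in\b}\beta(s)$ summed over subwords $\b$ that are reduced words for $v$, where $\beta(s_{i_j}) = s_{i_1}\cdots s_{i_{j-1}}(a_{i_j+1}-a_{i_j})$.

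First I would observe that $\omega(\a) := s_{-i_1}\cdots s_{-i_\ell}$ is a reduced word for $\omega(w)$, since $\omega$ is a length-preserving automorphism of $S_\Z$ (it merely relabels the simple reflections). Likewise subwords $\b \subseteq \a$ that are reduced words for $v$ correspond bijectively, via the relabeling $i_j \mapsto -i_j$, to subwords $\omega(\b) \subseteq \omega(\a)$ that are reduced words for $\omega(v)$. So the two sums $\omega(e^v_w)$ and $e^{\omega(v)}_{\omega(w)}$ are indexed by the same set, and it remains to match the summands term by term. The key computation is that $\omega$ (as an algebra automorphism of $\Q[a]$) intertwines the root-labeling: I claim $\omega(\beta(s_{i_j})) = \beta(s_{-i_j})$, where the left side uses the word $\a$ and the right side uses the word $\omega(\a)$. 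Indeed, $\omega$ maps the root $\alpha_{i_j} = a_{i_j}-a_{i_j+1}$ to $-a_{1-i_j}+a_{-i_j} = a_{-i_j}-a_{-i_j+1} = \alpha_{-i_j}$, i.e. $\omega$ sends simple roots to simple roots compatibly with its action on simple reflections; and $\omega$ conjugates the Weyl group action accordingly, so $\omega(u(\alpha)) = \omega(u)(\omega(\alpha))$ for any $u\in S_\Z$ and root $\alpha$. Applying this with $u = s_{i_1}\cdots s_{i_{j-1}}$ and $\alpha = -\alpha_{i_j}$ gives exactly $\omega(\beta(s_{i_j})) = \beta(\omega(s_{i_j}))$ computed along $\omega(\a)$. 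Hence $\omega\bigl(\prod_{s\in\b}\beta(s)\bigr) = \prod_{s'\in\omega(\b)}\beta(s')$, and summing over $\b$ yields $\omega(e^v_w) = e^{\omega(v)}_{\omega(w)}$, which is the desired equality.

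Alternatively, one could avoid Billey's formula entirely and argue directly from the defining recursion in Proposition~\ref{P:loc rec}: the initial condition $e^v_\id = \delta_{v,\id}$ is clearly $\omega$-invariant, and since $\omega(\alpha_i) = \alpha_{-i}$ and $\omega$ commutes with the Bruhat order and with the operations $w \mapsto ws_i$, $v\mapsto vs_i$ (all up to relabeling $i \mapsto -i$), the recursion \eqref{E:loc rec right}--\eqref{E:loc rec right other case} is carried to itself under $\omega$. Uniqueness of the solution then forces $\omega(e^v_w) = e^{\omega(v)}_{\omega(w)}$. Either route is essentially routine; the only point requiring a moment's care is verifying that $\omega$ is genuinely compatible with the Weyl-group action on roots — i.e. that the two uses of the letter "$\omega$" (on $S_\Z$ and on $\Q[a]$) are intertwined by the standard $S_\Z$-action on $\Q[a]$ — but this is immediate from $\omega(s_i) = s_{-i}$ and $\omega(a_i) = -a_{1-i}$, since $\omega(s_i(a_j)) = \omega(a_{s_i(j)})$ and $s_{-i}(\omega(a_j)) = s_{-i}(-a_{1-j}) = -a_{s_{-i}(1-j)} = -a_{1-s_i(j)} = \omega(a_{s_i(j)})$. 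I expect this last verification to be the main (and only) genuine obstacle, and it is a one-line check.
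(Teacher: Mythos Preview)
Your proof is correct. The paper actually omits the proof of this lemma entirely (it is stated immediately after Proposition~\ref{P:schub loc} and before Proposition~\ref{P:nilHecke on GKM} with no argument given), so there is no ``paper's own proof'' to compare against; but your use of Billey's closed formula (Proposition~\ref{prop:Billey}) together with the verification that $\omega(\alpha_i)=\alpha_{-i}$ and $\omega(u\cdot\alpha)=\omega(u)\cdot\omega(\alpha)$ is exactly the kind of routine check the authors had in mind. Your alternative via the recursion of Proposition~\ref{P:loc rec} is equally valid and equally short.
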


\subsection{NilHecke actions}
The GKM ring $\Psi$ affords two actions of $\bA$ which commute.  The results in this subsection are $A_\Z$-variants of the constructions of Kostant and Kumar \cite{KK}.

\begin{prop} \label{P:nilHecke on GKM} \
	\begin{enumerate}
		\item There is an action $\raction$ of $\bA$ on $\Psi$ defined by
		\begin{align}
		\label{E:right partial}
		(A_i\raction \psi)|_w &= (w(\alpha_i))^{-1} (\psi|_w - \psi|_{ws_i}) \\
		\label{E:right poly}
		(p\raction \psi) &= L_p \,\psi\qquad\text{for $p\in \Q[a]$.}
		\end{align}
		where $L_p$ is defined in Example \ref{X:equiv class from poly}.
		It satisfies
		\begin{align}\label{E:right Weyl}
		(u \raction \psi)|_w = \psi|_{wu}\qquad\text{for $u\in S_\Z$.}
		\end{align}
		\item There is an action $\laction$ of $\bA$ on $\Psi$ is given by
		\begin{align}
		\label{E:left partial}
		(A_i\laction \psi)|_w &= \alpha_i^{-1} (\psi|_w - s_i(\psi|_{s_iw})) \\
		\label{E:left poly}
		(p \laction \psi)|_w &= p\, \psi|_w \qquad\text{for $p\in \Q[a]$.}
		\end{align}
		In particular the action of $S_\Z$ on $\Psi$ by $\laction$ is by conjugation:
		\begin{align}\label{E:left Weyl}
		(u \laction \psi)|_w = u(\psi|_{u^{-1} w}).
		\end{align}
		\item The two actions commute.
		\item For $v\in S_\Z$ and $i\in\Z$,
		\begin{align}
		\label{E:ddiff right} A_i \raction \xi^v &= 
		\begin{cases}
		\xi^{vs_i} &\text{if $vs_i<v$} \\
		0 & \text{otherwise} \\
		\end{cases} \\
		\label{E:ddiff left} A_i\laction \xi^v &= 
		\begin{cases}
		- \xi^{s_i v} &\text{if $s_iv<v$} \\
		0 & \text{otherwise.}
		\end{cases}
		\end{align}
	\end{enumerate}
\end{prop}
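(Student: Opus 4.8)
\emph{Plan.} I will realize both actions as restrictions of two commuting ``regular'' module structures of the twisted group algebra $\Q(a)[S_\Z]$ on the space of all functions $S_\Z\to\Q(a)$, following and adapting Kostant--Kumar \cite{KK}; the only work specific to the infinite setting is checking that everything lands back inside the $\Q[a]$-submodule $\Psi$, which I do via Proposition \ref{P:loc rec}.

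\emph{Step 1: two commuting actions of $\bA$.} On the space $\Fun(S_\Z,\Q(a))$ of all functions $f\colon S_\Z\to\Q(a)$, define for $g\in\Q(a)$ and $u\in S_\Z$ the two left actions of $\Q(a)[S_\Z]$ given by $((gu)\raction f)|_w := w(g)\,f|_{wu}$ and $((gu)\laction f)|_w := g\cdot u(f|_{u^{-1}w})$. Using the product rule $(g_1u_1)(g_2u_2)=(g_1\,u_1(g_2))(u_1u_2)$ one verifies directly that each of these is a genuine left module structure and that the two commute as operator algebras. Restricting scalars to $\bA\subset\Q(a)[S_\Z]$ and expanding $A_i=\alpha_i^{-1}\id-\alpha_i^{-1}s_i$ shows that $A_i\raction$ is exactly \eqref{E:right partial}, that $p\raction$ for $p\in\Q[a]$ is multiplication by $L_p$ as in \eqref{E:right poly}, that $A_i\laction$ is \eqref{E:left partial}, and that $p\laction$ is multiplication by the constant $p$ as in \eqref{E:left poly}; moreover, for $u\in S_\Z$ the operators $u\raction$ and $u\laction$ are precisely the formulas \eqref{E:right Weyl} and \eqref{E:left Weyl}. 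Consequently these operators automatically satisfy the defining relations of $\bA$ (so \eqref{E:dd2}--\eqref{E:dd braid} and the derivation relation \eqref{E:ddiff derivation} hold), the two actions commute (part (3)), and \eqref{E:right Weyl}, \eqref{E:left Weyl} hold.

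\emph{Step 2: descent to $\Psi$ and the Schubert-basis formulas.} It remains to show that the operators $A_i\raction$, $A_i\laction$, $L_p$ and multiplication by $p\in\Q[a]$ preserve $\Psi=\bigoplus_w\Q[a]\,\xi^w$. Since $A_i\raction$ and $A_i\laction$ are additive and commute with multiplication by constants of $\Q[a]$, it suffices to check they send each $\xi^v$ into $\Psi$, which is exactly \eqref{E:ddiff right} and \eqref{E:ddiff left}. For the first, use $\xi^v|_w=e^v_w$ (Proposition \ref{P:schub loc}) to get $(A_i\raction\xi^v)|_w=w(\alpha_i)^{-1}(e^v_w-e^v_{ws_i})$, and substitute the recursions of Proposition \ref{P:loc rec}(a): when $ws_i<w$, \eqref{E:loc rec right} gives $e^v_w-e^v_{ws_i}=w(\alpha_i)\,e^{vs_i}_{ws_i}\chi(vs_i<v)$ and \eqref{E:loc rec right other case} rewrites $e^{vs_i}_{ws_i}\chi(vs_i<v)$ as $e^{vs_i}_w\chi(vs_i<v)$; when $ws_i>w$, apply the same recursion at $ws_i$ and use $ws_i(\alpha_i)=-w(\alpha_i)$ to obtain $e^v_w-e^v_{ws_i}=w(\alpha_i)\,e^{vs_i}_w\chi(vs_i<v)$. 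In both cases $(A_i\raction\xi^v)|_w=e^{vs_i}_w\chi(vs_i<v)=\xi^{vs_i}|_w\,\chi(vs_i<v)$, which is \eqref{E:ddiff right}; the analogous computation with Proposition \ref{P:loc rec}(b) gives \eqref{E:ddiff left}. Finally, multiplication by $L_p$ preserves $\Psi$ because $\Psi$ is a $\Q[a]$-subalgebra of $\Psi'$ (Proposition \ref{prop:bAbR}) containing each $L_p$ (Example \ref{X:equiv class from poly}), and multiplication by a constant of $\Q[a]$ trivially does. Thus both actions restrict to $\Psi$, establishing (1), (2) and (4).

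\emph{Expected obstacle.} The structural input of Step 1 is routine bookkeeping with the twisted group algebra. The place where a little care is needed is the casework of Step 2 deriving \eqref{E:ddiff right} and \eqref{E:ddiff left} from Proposition \ref{P:loc rec}; and if one instead prefers the original \cite{KK} route of first building the actions on $\Psi'$ rather than on $\Fun(S_\Z,\Q(a))$, the corresponding point is checking that \eqref{E:right partial} and \eqref{E:left partial} output functions that again satisfy the GKM divisibility \eqref{E:GKM}. For the right operator this uses $s_{w(\alpha_i)}w=ws_i$; for the left operator one splits $\psi|_w-s_i(\psi|_{s_iw})=(\psi|_w-\psi|_{s_iw})+(\psi|_{s_iw}-s_i(\psi|_{s_iw}))$ into two pieces, the first divisible by $\alpha_i$ by \eqref{E:GKM} and the second because $g-s_i(g)$ always is.
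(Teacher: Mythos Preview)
Your argument is correct and follows essentially the same route as the paper's: build the two actions on all of $\Fun(S_\Z,\Q(a))$ from the two regular module structures of $\Q(a)[S_\Z]$, read off \eqref{E:right partial}--\eqref{E:left Weyl} and the commutativity, and then obtain \eqref{E:ddiff right}--\eqref{E:ddiff left} from Proposition~\ref{P:loc rec}, which in turn shows the actions preserve $\Psi$.  The only cosmetic difference is ordering: the paper first checks preservation of $\Psi'$ via the GKM divisibility \eqref{E:GKM} (the computation you sketch in your ``Expected obstacle'' paragraph) and then uses (4) to descend to $\Psi$, whereas you bypass $\Psi'$ and go straight to the Schubert basis.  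One small imprecision: Example~\ref{X:equiv class from poly} only places $L_p$ in $\Psi'$, not $\Psi$; to get $L_p\in\Psi$ you are really using the forward reference to Proposition~\ref{prop:bAbR} (as the paper itself does in the paragraph defining $\Psi$), since under that isomorphism $L_p$ is the image of $p(x_1,x_2,\dotsc)\in\Q[x]\subset\bR(x;a)$.
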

\begin{proof} We identify any function $\psi\in \Fun(S_\Z,\Q(a))$ 
	with the left $\Q(a)$-module homomorphism $\psi\in \mathrm{Hom}_{\Q(a)}(\Q(a)[S_\Z],\Q(a))$ by extension by left $\Q(a)$-linearity. 
	
	For (1), there is an action of $\Q(a)[S_\Z]$ on $\Fun(S_\Z,\Q(a))$ defined by
	\begin{align*}
	(b\raction \psi)|_w = \psi|_{w b}
	\qquad\text{for $b\in \Q(a)[S_\Z]$.}
	\end{align*}
	For $b=u\in S_\Z$, we obtain \eqref{E:right Weyl}. For $a=A_i$ and $a=p$, we have
	\begin{align*}
	\psi|_{w A_i} &= \psi|_{w \alpha_i^{-1} (\id - s_i)} 
	= \psi|_{w(\alpha_i)^{-1} w(\id - s_i)} 
	= w(\alpha_i)^{-1} \left( \psi|_w-\psi|_{w s_i}\right)\\
	\psi|_{w p} &= \psi|_{(w(p) w} = w(p) \psi|_w
	\end{align*}
	which agrees with \eqref{E:right partial} and \eqref{E:right poly}.
	To see that $\raction$ restricts to an action of $\bA$ on $\Psi'$, let $\psi\in \Psi'$. Note that if $\alpha = w(\alpha_i)$ then $ws_i = s_\alpha w$ so that $\alpha$ divides $\psi|_w-\psi|_{s_\alpha w} = \psi|_w - \psi|_{w s_i}$ and $A_i \raction \psi\in \Psi'$. For $p\in \Q[a]$ we have $p\raction \psi = L_p \psi\in \Psi'$ since $\Psi'$ is a ring.
	
	For (2), again working over $\Q(a)$ the $\laction$-action is defined by \eqref{E:left poly} and \eqref{E:left Weyl}. To show these define an action of $\Q(a)[S_\Z]$ one must verify that the actions of $p\in \Q(a)$ and $u\in S_\Z$ have the proper commutation relation:
	\begin{align*}
	(u\laction p \laction \psi)|_w &=
	u ( p\laction \psi)|_{u^{-1} w} 
	= u( p \psi|_{u^{-1}w}) 
	= u(p) u(\psi|_{u^{-1}w}) 
	= (u(p) \laction (u \laction \psi))|_w.
	\end{align*}
	To check that $\laction$ restricts to an action of $\bA$ on $\Psi'$, let $\psi\in\Psi'$. Note that for any $p\in \Q[a]$, $s_i(p)-p = \alpha_i g$ for some $g\in \Q[a]$ (namely, $g=-A_i(p)$.  Then $\psi|_{s_iw}-\psi|_w = \alpha_i h$ for some $h\in \Q[a]$. We have
	\begin{align*}
	\psi|_w - s_i (\psi|_{s_i w}) &= \psi|_w - s_i(\psi|_w) + s_i(\psi|_w) - s_i(\psi|_{s_iw}) \\
	&= (\id-s_i)(\psi|_w) - s_i(\alpha_i h) \\
	&= -\alpha_i g + \alpha_i s_i(h) \in \alpha_i \Q[a]
	\end{align*}
	for some $g\in \Q[a]$. Therefore $A_i \laction \psi\in \Psi$.
	For $p\in \Q[a]$, it is immediate that $p\laction\psi\in \Psi'$.
	
	For (3) it is straightforward to check over $\Q(a)$ that the operators $p \raction $ and $A_i \raction$ commute with the operators $q\laction$ and $A_j \laction$.
	
	(4) follows by Propositions \ref{P:loc rec} and 
	\ref{P:schub loc}. (4) implies that the two actions preserve $\Psi\subset\Psi'$.
\end{proof}

The nilHecke algebra $\A$ has a comultiplication map $\Delta: \A \to \A \otimes_{\Q[a]} \A$ given by 
\begin{equation}\label{eq:wgrouplike}
\Delta(w) := w \otimes w \qquad \mbox{for $w \in S_\Z \subset \A$}
\end{equation}
and extending by linearity over $\Q(a)$.  One can show that \eqref{eq:wgrouplike} is equivalent to
\begin{equation}\label{eq:deltapartial}
\Delta(A_i) = A_i \otimes 1 + s_i \otimes A_i \qquad \mbox{for $i \in \Z$.}
\end{equation}
We caution that $\A$ is \emph{not} a Hopf algebra.

Define a pairing $\pair{\cdot}{\cdot}: \Fun(S_\Z,\Q(a)) \otimes_{\Q(a)} \Q(a)[S_\Z] \to \Q(a)$ by
\begin{equation}\label{eq:PsiApairing}
\pair{\psi}{\sum a_w w} =\sum a_w \psi(w)
\end{equation}
where $a_w \in \Q(a)$.  The following result follows from Proposition~\ref{P:nilHecke on GKM}.

\begin{prop}\label{prop:PsiA}\
\begin{enumerate}
\item
Under the pairing $\pair{\cdot}{\cdot}$, $\Psi'$ and $\A$ are identified as dual $\Q[a]$-modules.
\item
The multiplication of $\Psi'$ is dual to the comultiplication $\Delta$ of $\A$.
\item
For $v,w \in S_\Z$, we have $\pair{\xi^v}{A_w} = \delta_{vw}$.   
\end{enumerate}
\end{prop}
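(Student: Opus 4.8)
The statement to prove is Proposition~\ref{prop:PsiA}, which has three parts relating the GKM ring $\Psi'$ and the nilHecke algebra $\A$ under the pairing \eqref{eq:PsiApairing}. The plan is to treat the three parts in the order (3) $\Rightarrow$ (1) $\Rightarrow$ (2), since the Schubert-basis computation in (3) is really the engine that drives everything.

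For part (3): by Proposition~\ref{P:schub loc} we have $\xi^v|_w = e^v_w$, and by definition \eqref{E:Schubert locs} of the $e^v_w$ we have $w = \sum_{v} e^v_w A_v$ in $\Q(a)[S_\Z]$. Hence $\pair{\xi^v}{w} = \xi^v|_w = e^v_w$, and so $\pair{\xi^v}{A_w}$ is obtained by inverting the unitriangular change of basis between $\{w\}$ and $\{A_w\}$. Concretely, since the $\{A_w\mid w\in S_\Z\}$ form a left $\Q[a]$-basis of $\A$ (stated in \textsection\ref{ssec:nilHecke}) and since $\pair{\xi^v}{w}=e^v_w$, the matrix $(\pair{\xi^v}{A_w})_{v,w}$ is the inverse of the Bruhat-unitriangular matrix $(e^v_w)$; I would argue directly that $\pair{\xi^v}{A_w}=\delta_{vw}$ by induction on $\ell(w)$ using the recursion \eqref{E:loc rec right} for $e^v_w$, or alternatively invoke Proposition~\ref{prop:Billey} to see that $e^v_w = \sum_{\b\subset\a}\prod_{s\in\b}\beta(s)$ and extract $\pair{\xi^v}{A_w}$ from the leading term. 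The cleanest route: $A_w = \sum_{v\le w} c^v_w\, v$ for some $c^v_w\in\Q[a]$ with $c^w_w=1$, so $\pair{\xi^v}{A_w}=\sum_{u} c^u_w\, e^v_u$; and since both matrices $(c^u_w)$ and $(e^v_u)$ are Bruhat-unitriangular inverses of each other, the product is $\delta_{vw}$.

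For part (1): the $\{\xi^v\mid v\in S_\Z\}$ span $\Psi$ over $\Q[a]$ (Proposition~\ref{P:GKM Schubert basis}, restricted to finite combinations by the definition of $\Psi$) and the $\{A_w\mid w\in S_\Z\}$ form a $\Q[a]$-basis of $\A$; part (3) says these two bases are dual under $\pair{\cdot}{\cdot}$. This immediately identifies $\Psi$ and $\A$ as mutually dual free $\Q[a]$-modules, and hence (passing to the appropriate completion, or by the same argument applied to $\Psi' = \prod_v \Q[a]\xi^v$ versus the finitely-supported $\A$) identifies $\Psi'$ with the full $\Q[a]$-linear dual $\Hom_{\Q[a]}(\A,\Q[a])$. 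I should be a little careful about finiteness: $\Psi'$ consists of arbitrary (possibly infinite) $\Q[a]$-combinations of the $\xi^v$, which pairs well with the \emph{finite} combinations spanning $\A$, so the pairing is perfect on the nose once (3) is known.

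For part (2): by definition the product on $\Psi'$ is pointwise, $(\psi\phi)|_w = \psi|_w\,\phi|_w$, which is the value of $\psi\otimes\phi$ at $w\otimes w = \Delta(w)$; that is, $\pair{\psi\phi}{w} = \pair{\psi\otimes\phi}{\Delta(w)}$ for every $w\in S_\Z\subset\A$. Since the group elements $w$ span $\A$ over $\Q(a)$ and $\Delta$ is $\Q(a)$-linear (and the pairing is $\Q(a)$-linear after base change), this extends to $\pair{\psi\phi}{b} = \pair{\psi\otimes\phi}{\Delta(b)}$ for all $b\in\A$, which is precisely the assertion that the multiplication of $\Psi'$ is dual to $\Delta$. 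One should also check consistency with the description \eqref{eq:deltapartial} of $\Delta$ on the $A_i$, but that is automatic from \eqref{eq:wgrouplike} and is noted in the excerpt. The main obstacle, such as it is, is part (3): once the two bases are shown to be dual, parts (1) and (2) are formal. So I would spend the bulk of the write-up on the unitriangularity bookkeeping between $\{w\}$, $\{A_w\}$, and $\{\xi^v\}$, and dispatch (1) and (2) in a few lines each.
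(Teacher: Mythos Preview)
The paper states Proposition~\ref{prop:PsiA} without proof (it is treated as standard Kostant--Kumar material, in the spirit of \cite{KK}); your argument is correct and is precisely the standard one. Part (3) follows immediately from $\pair{\xi^v}{w}=e^v_w$ together with $w=\sum_u e^u_w A_u$, and parts (1) and (2) are then formal consequences exactly as you outline.
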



\begin{prop}\label{prop:bAbR} The $\Q[a]$-algebra isomorphism 
	\eqref{E:Fl poly to GKM iso} is a $\bA\times\bA$-module isomorphism.
\end{prop}
\begin{proof} 
	The operators on $\bR(x;a)$ given by $A_i^x$, multiplication by $x_i$, $A_i^a$, and multiplication by $a_i$, correspond to the operators on $\Psi$ given by $A_i \raction$, $a_i \raction$, $A_i \laction$, and $a_i \laction$ respectively.
\end{proof}


\subsection{Hopf structure on $\Psi_\Gr$}\label{ssec:partialproduct}
%
%

Via \eqref{E:Gr poly to GKM iso}, the ring $\Psi_\Gr$ attains the structure of a Hopf algebra over $\Q[a]$.  We now describe the comultiplication map $\Delta: \Psi_\Gr \to \Psi_\Gr \otimes_{\Q[a]} \Psi_\Gr$.

\begin{lemma}\label{lem:seqw}
	Let $\psi \in \Psi_\Gr$ and $w^{(1)},w^{(2)},\ldots \in S_\Z^0$ be a sequence satisfying $|I_{w^{(k)},+}| =|I_{w^{(k)},-}|= k$.  Then $\psi$ is uniquely determined by $\psi|_{w^{(i)}}$.
\end{lemma}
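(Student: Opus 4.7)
The strategy is to translate the claim into an algebraic injectivity statement using the identification $\Psi_\Gr \cong \La(x||a) = \Q[a][p_1,p_2,\dotsc]$ from Proposition~\ref{prop:GKMGr}. Under this isomorphism, Lemma~\ref{lem:locp} shows that localization at $w^{(k)}$ is the $\Q[a]$-algebra homomorphism $\mathrm{ev}_k\colon\La(x||a)\to\Q[a]$ determined on generators by
\[
p_r(x||a)\ \longmapsto\ q_r^{(k)} \ :=\ \sum_{i\in I_{w^{(k)},+}} a_i^r\ -\ \sum_{i\in I_{w^{(k)},-}} a_i^r.
\]
The lemma is therefore equivalent to the injectivity of the combined map $\La(x||a)\to\prod_{k\ge1}\Q[a]$, $\psi\mapsto(\mathrm{ev}_k(\psi))_k$.

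Since any $\psi\in\La(x||a)$ is a polynomial in only finitely many power sums $p_1,\dotsc,p_N$, it suffices to show that for each fixed $N$ the restricted map $\Q[a][p_1,\dotsc,p_N]\to\prod_k\Q[a]$ sending $p_r\mapsto q_r^{(k)}$ has trivial kernel. Equivalently, the sequence of evaluation points $(q_1^{(k)},\dotsc,q_N^{(k)})\in\A^N_{\Q(a)}$ should be Zariski dense as $k$ ranges over $\Z_{\ge1}$.

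For density, I would exploit the Durfee hypothesis $|I_{w^{(k)},\pm}|=k$: this forces $\la^{(k)}\supseteq k\times k$ and hence $|I_{w^{(k)},\pm}|\to\infty$. A pigeonhole/extraction argument then yields a subsequence along which $I_{w^{(k)},+}$ and $I_{w^{(k)},-}$ introduce arbitrarily many new indices in $\Z_{>0}$ and $\Z_{\le0}$ respectively not appearing in earlier terms. Specializing the ``unused'' $a$-variables to generic scalars reduces any hypothetical nontrivial polynomial relation among the $q_r^{(k)}$'s to an impossible relation on a sequence of distinct points of $\A^N$, forcing the relation to be trivial.

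The main obstacle is making this density step rigorous: the Durfee condition controls only the cardinalities $|I_{w^{(k)},\pm}|$ and not the specific indices they contain, so the extraction of a subsequence with ``fresh'' variables and the choice of specialization require some combinatorial care to keep the relevant Vandermonde-type determinants nonzero. An alternative route that avoids Zariski density proceeds more combinatorially, exploiting the upper-triangularity $\xi^{w_\la}|_{w_\mu}\ne0\iff\la\subseteq\mu$ for Grassmannian Schubert classes together with the fact that any fixed partition $\la$ is contained in $\la^{(k)}$ for all sufficiently large $k$; one then inductively recovers each coefficient in the finite expansion $\psi=\sum_\la c_\la\xi^{w_\la}$ from the localizations $\psi|_{w^{(k)}}$.
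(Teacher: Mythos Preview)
Your reduction to $\La(x||a)=\Q[a][p_1,p_2,\dotsc]$ via Proposition~\ref{prop:GKMGr} and the localization formula from Lemma~\ref{lem:locp} is exactly how the paper sets things up, and your ``fresh variables'' idea is the right heuristic. What you call the main obstacle is precisely where the paper supplies the missing combinatorial step, and it is cleaner than a genericity/specialization argument: write $f=\sum_\la a_\la\,p_\la(x||a)$ with $a_\la\in\Q[a]$, let $S$ be the finite set of indices occurring in the $a_\la$, and pick $k$ with $|I_{w^{(k)},+}\setminus S|>\deg f$ (possible since $|I_{w^{(k)},+}|=k\to\infty$). Now take $\mu$ \emph{minimal in dominance order} among $\{\la:a_\la\ne0\}$ and let $r_1>\dotsb>r_\ell$ be the largest elements of $I_{w^{(k)},+}\setminus S$. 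The monomial $a_{r_1}^{\mu_1}\cdots a_{r_\ell}^{\mu_\ell}$ (with the coefficient $a_\mu$ in front) appears in $a_\mu\,p_\mu(x||a)|_{w^{(k)}}$ but in no other $a_\la\,p_\la(x||a)|_{w^{(k)}}$, since the $a_{r_j}$ are absent from all coefficients and any occurrence of this monomial in $p_\la|_{w^{(k)}}$ forces $\la\le\mu$ in dominance. So $f|_{w^{(k)}}\ne0$. Your Zariski-density framing is not wrong, but the dominance-order leading-term trick is what actually closes the argument without any delicate extraction of subsequences or generic specializations.

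Your alternative route via triangularity of $\xi^{w_\la}|_{w_\mu}$ has a gap as stated. Knowing that every $\la$ in the finite support of $\psi$ is contained in $\la^{(k)}$ for large $k$ only tells you that all Schubert terms contribute to $\psi|_{w^{(k)}}$; it does not rule out cancellation, nor does it explain how a \emph{single} value $\psi|_{w^{(k)}}\in\Q[a]$ per $k$ lets you ``inductively recover'' several unknown coefficients $c_\la$. The usual triangularity argument needs the localization at $w_\la$ itself (or at all $w_\mu$), and there is no reason any $w^{(k)}$ equals a given $w_\la$. To make this route work you would again need a leading-term argument to separate contributions---which brings you back to the paper's method.
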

\begin{proof}
Fix the sequence $w^{(1)},w^{(2)},\ldots \in S_\Z^0$.  Let $f = \sum_{\la} a_\la p_\la(x||a) \in \Lambda(x||a)$ where $a_\la \in \Q[a]$.  It suffices to show that $f|_{w^{(k)}} \neq 0$ for some $k$.  Let $S$ be the finite set of indices $i$ such that $a_i$ appears in some $a_\la$.  For sufficiently large $k$, the set $I_{w,+} \setminus S$ has cardinality greater than $\deg(f)$.  If $\mu = (\mu_1,\ldots,\mu_\ell) $ is minimal in dominance order in the set $A=\{\la \mid a_\la \neq 0\}$, by Lemma \ref{lem:locp} the polynomial $p_\mu(x||a)|_{w^{(k)}} \in \Q[a]$ contains a term of the form $a_\mu a_{r_1}^{\mu_1} a_{r_2}^{\mu_2} \cdots a_{r_\ell}^{\mu_\ell}$ where $r_1>r_2>\cdots>r_\ell$ are the $\ell$ largest elements in $I_{w,+} \setminus S$.  This monomial does not appear in $p_\la(x||a)|_{w^{(k)}}$ for $\la \in A \setminus \{\mu\}$.  The coefficient of this monomial must thus be nonzero in $f|_{w^{(k)}} \in \Q[a]$.
\end{proof}

There is a partial multiplication map $S_\Z^0 \times S_\Z^0 \to S_\Z^0$.  The product of $x \in S_\Z^0$ and $y\in S_\Z^0$ is equal to $z \in S_\Z^0$ if (1) $I_{x,+} \cap I_{y,+} = \emptyset = I_{x,-} \cap I_{y,-}$ and (2) $I_{x,\pm} \cup I_{y,\pm} = I_{z,\pm}$.  

\begin{prop}\label{prop:locHopf}
	There is a unique Hopf structure on $\Psi_{\Gr}$ with comultiplication $\Delta: \Psi_\Gr \to \Psi_\Gr \otimes_{\Q[a]} \Psi_\Gr$ given by 
	\begin{equation}\label{eq:GrHopf}
	\Delta(\psi)|_{x \otimes y} = \psi|_{xy}
	\end{equation}
	whenever $x,y, \in S_\Z^0$ and $xy \in S_\Z^0$ is defined.  With this Hopf structure, the map \eqref{E:Gr poly to GKM iso} is a $\Q[a]$-Hopf algebra isomorphism.
\end{prop}
\begin{proof}
Suppose the product $xy$ is well-defined.  By Lemma \ref{lem:locp}, we have
	\begin{align*}
	p_k(x||a)|_{xy} &= \sum_{i\in I_{xy,+}} a_i^k - \sum_{i\in I_{xy,-}} a_i^k \\
	&= (\sum_{i\in I_{x,+}} a_i^k - \sum_{i\in I_{x,-}} a_i^k) + (\sum_{i\in I_{y,+}} a_i^k - \sum_{i\in I_{y,-}} a_i^k)\\
	&=p_k(x||a)|_x + p_k(x||a)|_y\\
	&= (p_k(x||a)\otimes 1)|_{x \otimes y} + (1 \otimes p_k(x||a))|_{x \otimes y}\\
	&=\Delta(p_k(x||a))_{x\otimes y}.
	\end{align*}
Thus \eqref{eq:GrHopf} is consistent with the comultiplication of $\Lambda(x||a)$.  By the same argument as in the proof of Lemma \ref{lem:seqw}, $\Delta(\psi)$ is completely determined by its values $x \otimes y$ for which $xy$ is defined. 
\end{proof}

\section{Homology Hopf algebra}
In this section we identify Molev's dual Schur functions
\cite[\textsection 3.1]{M} with the equivariant homology Schubert basis of $\Fl$. 
Molev gave determinantal formulas for the Schur expansion of dual Schurs and the inverse expansion \cite{M}: we give new formulas and simple proofs for these coefficients expressed in terms of usual Schubert polynomials.
We give a divided difference formula for dual Schur functions. 
While we found the operators independently, the form presented here, via
conjugation by a Cauchy kernel, is due to Naruse \cite{Na}.  We further show that a specialization of dual Schur functions represent classes (deforming the Schur functions) defined by Knutson and Lederer \cite{KL}.

Molev \cite{M} gave an explicit combinatorial formula for the general structure constants for the dual Schurs. By Theorem \ref{thm:doubleStanleypositivity} we know that these constants, which are elements of $\Q[a]$, have a certain positivity property. When one factor is a hook we 
obtain a suitably positive formula for the structure constants.

\subsection{Molev's dual Schur functions}
Let $\La(y)$ denote the $\Q$-algebra of symmetric functions in $y=(y_0,y_{-1},y_{-2},\dotsc)$ and $\hLa(y||a)$ the completion of $\Q[a]\otimes_{\Q} \La(y)$ whose elements are formal (possibly infinite) $\Q[a]$-linear combinations $\sum_{\lambda \in \Par} a_\lambda s_\lambda(y)$ of Schur functions, with $a_\lambda \in \Q[a]$.  The ring $\hLa(y||a)$ is a $\Q[a]$-Hopf algebra with coproduct
$\Delta(p_k(y)) = 1 \otimes p_k(y) + p_k(y) \otimes 1$.

Define the Cauchy kernel
\begin{align*}
\Omega = \Omega(\xm y/ \am y) = \prod_{i,j\le0} \dfrac{1-a_iy_j}{1-x_iy_j} = \exp\left(\sum_{k \geq 0} \frac{1}{k}p_k(x||a) p_k(y)\right).
\end{align*}
This induces the structure of dual $\Q[a]$-Hopf algebras on $\La(x||a)$ and $\hLa(y||a)$.
Write $\pair{\cdot}{\cdot}$ for the corresponding pairing $\La(x||a) \otimes_{\Q[a]} \hLa(y||a) \to \Q[a]$. Then by definition
\begin{align}\label{E:equivariant pairing}
\pair{s_\la(x/a)}{s_\mu(y)} = \delta_{\la,\mu}.
\end{align}
Define $\hs_\la(y||a)\in\hLa(y||a)$ by duality with the double Schur functions:
\begin{align}\label{E:dual Schur}
\pair{s_\la(x||a)}{\hs_\mu(y||a)} &= \delta_{\la\mu}.
\end{align}
The $\hs_\la(y||a)$ are the nonpositive variable analogue of Molev's dual Schur functions \cite[\textsection 3.1]{M}.  The ring $\hLa(y||a)$ consists of formal (possibly infinite) $\Q[a]$-linear combinations of the dual Schur functions $\hs_\la(y||a)$.

From Proposition~\ref{P:GKM} and Theorem~\ref{thm:HTFl}, we know that the ring $H^*_{\TZ}(\Gr)$ is a Hopf algebra.  The dual Hopf-algebra is the completion $H_*^{\TZ}(\Gr)' = \prod_{w\in S_\Z} \Q[a] [X_w]$ of the equivariant homology $H_*^{\TZ}(\Gr)$ (see \eqref{eq:XwGr}).  Since $\Lambda(x||a)$ and $\hLa(y||a)$ are Hopf-dual, the following is immediate from the definition \eqref{E:dual Schur}.

\begin{prop}\label{prop:dualSchur}
There is a Hopf $\Q[a]$-algebra isomorphism $H_*^{\TZ}(\Gr)'\to \hLa(y||a)$ sending
the equivariant Schubert class $[X_w]$ of \eqref{eq:XwGr} to the dual Schur function $\hs_\la(y||a)$. 
\end{prop}


Recall the element $w_{\la/\mu} \in S_\Z$ from \eqref{E:w skew}.
Proposition \ref{prop:doublesuper} implies the following.

\begin{prop}\label{prop:dualsuper} For $\mu \in \Par$, we have
	\begin{align*}
	\hs_\mu(y||a) &= \sum_{\substack{\la\supset\mu\\ d(\la)=d(\mu)}}
	\S_{w_{\la/\mu}}(a) s_\la(y) \\
	s_\mu(y) &= \sum_{\substack{\la\supset\mu\\ d(\la)=d(\mu)}}
	(-1)^{|\la|-|\mu|}\S_{w_{\la/\mu}^{-1}}(a) \hs_\la(y||a).
	\end{align*}
\end{prop}

\begin{example}\label{X:hook dual Schur}
	Let $b,c\in\Z_{\ge0}$ and let $\mu = (b+1,1^c)$ be a hook partition. The partitions $\la\supset\mu$ with $d_\la=d_{\mu}=1$ have the form $\la=(B+1,1^C)$ such that $B\ge b$ and $C\ge c$. Let $w=w_{\la/\mu} = s_{-C} s_{1-C}\dotsm s_{-1-c} s_B s_{B-1} \dotsm s_{b+1}$. We have 
	\begin{align*}
	\S_w(a) &=\S_{s_{-C}\dotsm s_{-1-c}}(a) \S_{s_B \dotsm s_{b+1}}(a)\\
	&= \omega(\S_{s_C \dotsm s_{c+1}}) \S_{s_B\dotsm s_{b+1}}(a) \\
	&= \omega(h_{C-c}(a_1,\dotsc,a_{c+1})) h_{B-b}(a_1,\dotsc,a_{b+1}) \\
	&= (-1)^{C-c} h_{C-c}(a_0,a_{-1},\dotsc,a_{-c})h_{B-b}(a_1,\dotsc,a_{b+1}) .
	\end{align*}
	\begin{align*}
	\hs_{(b+1,1^c)}(y||a) &= \sum_{B\ge b, C\ge c} (-1)^{C-c} h_{C-c}(a_0,a_{-1},\dotsc,a_{-c})h_{B-b}(a_1,\dotsc,a_{b+1}) s_{(B+1,C)}(y).
	\end{align*}
\end{example}

\subsection{Homology divided difference operators}\label{SS:homology divided differences}

Since $\alpha_0=a_0-a_1$, we use expressions such as
\begin{align*}
\Omega(-\alpha_0 y_{\le0}) &= \Omega(a_1 y_{\le0}/ a_0 y_{\le0}) =
\prod_{k\le0} \dfrac{1 - a_0 y_k}{1-a_1y_k}.
\end{align*}

\begin{rem} The expression $\Omega(a_1 y/a_0 y)$ should be viewed as the action of the translation element for the weight $\theta=a_1-a_0$ in a large rank limit of the affine type A root system. 
\end{rem}

For $i\in\Z$, define the operators
\begin{align}
\tilde{s}_i^a &:= \Omega(xy/ay) s_i^a \Omega(ay/xy) \\
\delta_i &:= \Omega(xy/ay)\,A_i^a \,\Omega(ay/xy)].
\end{align}
It is clear that these operators, being conjugate to the operators $s_i^a$ and $A_i^a$ respectively, satisfy the type A braid relations. Thus for any $w\in S_\Z$, $\delta_w=\delta_{i_1}\dotsm\delta_{i_\ell}$ makes sense for any reduced word $(i_1,i_2,\dotsc,i_\ell)\in\Red(w)$.

Since $\Omega(ay/xy)$ is $s_i^a$ invariant for $i\ne0$ we have
\begin{align}\label{E:delnonzero}
\tilde{s}_i^a &= s_i &\qquad&\text{for $i\ne0$}\\
\delta_i &= A_i^a &&\text{for $i\ne 0$.}
\end{align}
Since $s_0^a$ only affects the variable $a_0$ and no others in $\Omega[(a-x)y]$, we have the following operator identities, where $f\in\hLa(y||a)$ acts by multiplication by $f$:
\begin{align}
\tilde{s}_0^a &= \Omega(a_1y/a_0y) s_0^a \\
\delta_0 &= \alpha_0^{-1}(\id - \tilde{s}_0^a) \\
\label{E:sdelta}
\tilde{s}_i^a \delta_i &= \delta_i \\ 
\label{E:deltaf}
\delta_i f &= f \delta_i + A_i(f) \tilde{s}_i^a.
\end{align}
The last two follow by conjugating the relations $s_i^a A_i^a=A_i^a$ and $A_i f = f A_i + A_i(f) s_i^a$ by $\Omega:=\Omega[(xy/ay]$.

The {\it diagonal index} (or {\it content}) of a box in row $i$ and column $j$ is by definition the integer $j-i$. For $\la\in \Par$ and $d\in \Z$ let $\la+d$ denote the partition obtained by adding a corner to $\la$ in the $d$-th diagonal if such a corner exists. Define $\la-d$ similarly for removal of the corner in diagonal $d$ if it exists. By convention, if a symmetric function is indexed by $\la\pm d$ and the relevant cell in diagonal $d$ does not exist then the expression is interpreted as $0$. In particular, by Proposition \ref{P:left ddiff} we have
\begin{align}\label{E:left ddiff double Schur}
A_i^a s_\la(x||a) = -s_{\la-i}(x||a) \qquad\text{for all $\la\in\Par$ and $i\in\Z$.}
\end{align}

\begin{prop} \label{P:left ddiff homology} For all $\mu\in\Par$ and $i\in\Z$, we have
	\begin{align} \label{E:left ddiff homology}
	\delta_i(\hs_\mu(y||a)) &= \hs_{\mu+i}(y||a).
	\end{align}
\end{prop}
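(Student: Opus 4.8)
The natural approach is by duality: the operators $\delta_i$ on $\hLa(y||a)$ should be adjoint (up to the relevant pairing twist) to the operators $A_i^a$ on $\La(x||a)$, and then \eqref{E:left ddiff homology} becomes the dual statement to \eqref{E:left ddiff double Schur}. Concretely, I would first establish that for $f \in \La(x||a)$ and $g \in \hLa(y||a)$ one has an adjunction of the form $\pair{A_i^a f}{g} = \pair{f}{\delta_i g}$, or more likely an identity relating $\pair{A_i^a f}{g}$, $\pair{f}{g}$, and $\pair{f}{\tilde s_i^a g}$ coming directly from conjugation by the Cauchy kernel $\Omega = \Omega[(x-a)y]$. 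The key point is that $\Omega$ implements the pairing: $\pair{f}{g}$ is obtained by expanding $\Omega$, so that the $a$-divided difference $A_i^a$ acting inside $\pair{\cdot}{\cdot}$ can be moved across $\Omega$ at the cost of conjugation, which is exactly the definition $\delta_i = \Omega\, A_i^a\, \Omega^{-1}$ (interpreting $\Omega^{-1} = \Omega[(a-x)y]$). So the first step is: for all $i$, $\pair{A_i^a(f)}{g} = \pair{f}{\delta_i(g)}$ where the latter $\delta_i$ acts on the $y||a$ side; this should follow formally from $A_i^a$ being a $\Q$-derivation-like operator that is self-adjoint under the $\Q[a]$-bilinear residue pairing, together with the conjugation.

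Granting the adjunction, the second step is a short computation with the defining dualities \eqref{E:equivariant pairing}, \eqref{E:dual Schur} and \eqref{E:left ddiff double Schur}. For any $\la \in \Par$,
\begin{align*}
\pair{s_\la(x||a)}{\delta_i(\hs_\mu(y||a))} &= \pair{A_i^a\, s_\la(x||a)}{\hs_\mu(y||a)} \\
&= \pair{-s_{\la-i}(x||a)}{\hs_\mu(y||a)} = -\delta_{\la-i,\mu} = \delta_{\la, \mu+i},
\end{align*}
where the last equality uses that $\la - i = \mu$ (as partitions, with the diagonal-$i$ corner conventions) is equivalent to $\la = \mu + i$; when no such box exists both sides vanish by the stated convention. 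Since $\{s_\la(x||a)\}$ and $\{\hs_\la(y||a)\}$ are dual bases, the computation shows $\delta_i(\hs_\mu(y||a)) = \hs_{\mu+i}(y||a)$, which is \eqref{E:left ddiff homology}.

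The main obstacle is making the adjunction $\pair{A_i^a f}{g} = \pair{f}{\delta_i g}$ rigorous, since $\delta_i$ for $i = 0$ has the singular-looking form $\alpha_0^{-1}(\id - \tilde s_0^a)$ with $\alpha_0 = a_0 - a_1$ appearing in a denominator, and $\hLa(y||a)$ is a completion; one must check that $\delta_0$ genuinely preserves $\hLa(y||a)$ (no spurious $\alpha_0$ in denominators after the subtraction) and that the pairing is continuous enough for the formal manipulation to be valid. For $i \ne 0$ this is harmless since $\delta_i = A_i^a$ by \eqref{E:delnonzero} and $A_i^a$ is the classical $a$-divided difference, manifestly adjoint to itself under the residue pairing. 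For $i = 0$, I would verify the adjunction by working on the power-sum generators using $\Omega = \exp(\sum_k \tfrac1k p_k(x||a) p_k(y))$: conjugating $A_0^a$ by $\Omega$ and using \eqref{E:deltaf} (the identity $\delta_i f = f\delta_i + A_i(f)\tilde s_i^a$) together with $\tilde s_0^a$ acting as multiplication by $\Omega[-\alpha_0 y]$ composed with $s_0^a$, one reduces everything to the elementary fact that $s_0^a$ and $A_0^a$ are adjoint/self-adjoint for the residue pairing in the $a$-variables. Alternatively, one can bypass adjunction entirely and prove \eqref{E:left ddiff homology} directly from Proposition \ref{prop:dualsuper}, which expands $\hs_\mu(y||a)$ in the basis $\{s_\la(y)\}$ with coefficients $\Schub_{w_{\la/\mu}}(a)$: apply $\delta_i$ termwise (using \eqref{E:delnonzero} for $i \ne 0$ and the explicit form of $\delta_0$ for $i = 0$), and match the result against the expansion of $\hs_{\mu+i}(y||a)$ using the transition-formula recursions for Schubert polynomials $\Schub_{w_{\la/\mu}}(a)$ under left divided differences (Lemma \ref{L:left ddiff} and its negative analogue). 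I expect the duality argument to be cleanest, with the $i=0$ continuity/denominator check being the one genuinely delicate point.
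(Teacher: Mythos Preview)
Your instinct to exploit the Cauchy kernel is the right one, but the clean adjunction $\pair{A_i^a f}{g}=\pair{f}{\delta_i g}$ you hope for does not hold. The pairing is $\Q[a]$-bilinear while neither operator is $\Q[a]$-linear; for $i\ne 0$ (where $\delta_i=A_i^a$) take $f=a_i\in\La(x||a)$ and $g=1$: then $\pair{A_i^a(a_i)}{1}=\pair{1}{1}=1$ but $\pair{a_i}{A_i^a(1)}=0$. Your displayed computation also contains the slip $-\delta_{\la-i,\mu}=\delta_{\la,\mu+i}$; in fact $\delta_{\la-i,\mu}=\delta_{\la,\mu+i}$, so even granting the adjunction you would arrive at $\delta_i\hs_\mu=-\hs_{\mu+i}$. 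The obstacle is structural, not a matter of convergence or denominators at $i=0$.

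The paper implements the duality idea directly on the generating function rather than through a pairing identity. From $\delta_i=\Omega\,A_i^a\,\Omega^{-1}$ one has $\delta_i(\Omega)=\Omega\,A_i^a(1)=0$; expanding $\Omega=\sum_\la s_\la(x||a)\hs_\la(y||a)$ and applying the twisted Leibniz rule \eqref{E:deltaf} with $A_i^a s_\la(x||a)=-s_{\la-i}(x||a)$ gives
\[
0=\sum_\la\Big(s_\la(x||a)\,\delta_i(\hs_\la)-s_{\la-i}(x||a)\,\tilde s_i^a(\hs_\la)\Big).
\]
Reading off the coefficient of $s_\mu(x||a)$ yields $\delta_i(\hs_\mu)=\tilde s_i^a(\hs_{\mu+i})$, and one application of $\tilde s_i^a$ together with \eqref{E:sdelta} removes the twist. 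The appearance of $\tilde s_i^a$ in the Leibniz rule is exactly what a naive adjunction suppresses, and it is essential for the sign to come out right.
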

\begin{proof} Using \eqref{E:deltaf}, we have
	\begin{align*}
	0 &= \Omega A_i^a(1) = \delta_i(\Omega) 
	= \delta_i(\sum_\la s_\la(x||a) \hs_\la(y||a)) 
	= \sum_\la ( s_\la(x||a) \delta_i(\hs_\la(y||a)) - s_{\la-i}(x||a) \tilde{s}^a_i(\hs_\la(y||a)).
	\end{align*}
	Taking the coefficient of $s_\mu(x||a)$, we see that
	$\delta_i(\hs_\mu(y||a))=0$ unless $\la-i$ is a partition and $\la-i=\mu$, in which case $\la=\mu+i$ and $\delta_i(\hs_\mu(y||a))=\tilde{s}^a_i(\hs_{\mu+i}(y||a))$. In the latter case, applying $\tilde{s}^a_i$ to both sides and using \eqref{E:sdelta}, we obtain
	$\delta_i(\hs_\mu(y||a))=\hs_{\mu+i}(y||a)$ as required.
\end{proof}

It follows that the dual Schurs can be created by applying the homology divided difference operators to $1$.

\begin{thm} \label{T:create dual Schur} For any $\la\in\Par$, we have $	\hs_{\la}(y||a) = \delta_{w_\la}(1)$.
\end{thm}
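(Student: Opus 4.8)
The plan is to prove $\hs_\la(y\|a) = \delta_{w_\la}(1)$ by induction on $\ell(w_\la) = |\la|$, using Proposition~\ref{P:left ddiff homology} as the key input. The base case $\la = \emptyset$ is immediate: $w_\emptyset = \id$, $\delta_\id = \id$, and $\hs_\emptyset(y\|a) = 1$ since $s_\emptyset(x\|a) = 1$ pairs to $1$ with the constant function and to $0$ with all higher $\hs_\mu(y\|a)$ by \eqref{E:dual Schur}. So it remains to handle the inductive step.

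For the inductive step, fix $\la \neq \emptyset$ and choose a corner box of $\la$ lying on diagonal $d$, so that $\mu := \la - d \subset \la$ is a partition with $|\mu| = |\la| - 1$ and $\mu + d = \la$. The plan is to show $w_\la \doteq s_d \, w_\mu$ in the length-additive sense, so that we may pick a reduced word for $w_\mu$ and prepend $s_d$ to obtain a reduced word for $w_\la$; then $\delta_{w_\la} = \delta_d \, \delta_{w_\mu}$ by the braid-relation well-definedness of $\delta_w$ noted just before \eqref{E:delnonzero}. Granting this, the computation closes immediately:
\[
\delta_{w_\la}(1) = \delta_d\bigl(\delta_{w_\mu}(1)\bigr) = \delta_d\bigl(\hs_\mu(y\|a)\bigr) = \hs_{\mu+d}(y\|a) = \hs_\la(y\|a),
\]
where the second equality is the inductive hypothesis and the third is Proposition~\ref{P:left ddiff homology}.

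The main obstacle is therefore the combinatorial claim that $w_\la \doteq s_d\, w_\mu$ whenever $\mu = \la - d$. This should follow from the explicit formula \eqref{E:wla} for $w_\la$ together with the definition \eqref{E:w skew} $w_{\la/\mu} = w_\la w_\mu^{-1}$ and the identity $w_\la \doteq w_{\la/\mu} w_\mu$ recorded just after \eqref{E:w skew}: one checks directly that adding the corner on diagonal $d$ changes the one-line notation of $w_\mu$ in exactly the two positions corresponding to the row and column of the new box, which differ by the transposition whose effect is a single simple reflection. Concretely, if the new corner sits in row $r$ and column $c$ with $c - r = d$, then removing it affects $w_\la(1-r)$ (when $1-r \le 0$) and $w_\la(c)$ (when $c > 0$), and one verifies that $w_{\la/\mu} = s_d$ precisely because the box is a corner (so no other box of $\la$ lies on diagonal $d$ below or to the right in the relevant sense), making $w_{\la/\mu}$ a single transposition $t$ of adjacent values; length-additivity $\ell(w_\la) = \ell(w_\mu) + 1$ is then automatic since $|\la/\mu| = 1$ and $\ell(w_\nu) = |\nu|$ for $\nu \in \Par$. (Alternatively, this can be extracted from Lemma~1 on $321$-avoiding permutations applied to the hook-free one-box skew shape, or from the known fact that $A_d^a s_\la(x\|a) = -s_{\la - d}(x\|a)$ in \eqref{E:left ddiff double Schur} combined with Proposition~\ref{P:left ddiff}, which forces $s_d w_\la < w_\la$ exactly when $\la$ has a corner on diagonal $d$.)

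One remaining point to verify is that the chosen reduced word for $w_\mu$, prepended with $s_d$, is genuinely reduced for $w_\la$ — but this is exactly the content of $w_\la \doteq s_d w_\mu$, and the well-definedness of $\delta_w$ on reduced words (via the braid relations) guarantees $\delta_{w_\la}(1)$ does not depend on the choice. Thus the entire argument reduces to the single combinatorial lemma above, after which the three-line display does all the work. I would state the combinatorial claim as a short preliminary lemma and then give the induction in a couple of sentences.
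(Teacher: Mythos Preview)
Your proof is correct and follows exactly the approach the paper intends: the paper states the theorem immediately after Proposition~\ref{P:left ddiff homology} with the remark ``It follows that the dual Schurs can be created by applying the homology divided difference operators to $1$,'' treating it as an obvious consequence, and your induction is precisely the way to spell this out. Your combinatorial verification that $w_\la \doteq s_d\, w_{\la - d}$ when $\la$ has a corner on diagonal $d$ is correct (and can be made completely explicit via \eqref{E:wla}: the corner at $(r,c)$ with $c-r=d$ gives $w_\la(1-r)=d+1$, $w_\la(c)=d$, while $w_{\la-d}$ has these values swapped and agrees elsewhere), though the paper takes this as understood.
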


\begin{example} \label{X:delta dual Schur} By Example \ref{X:hook dual Schur}
	$\hs_1(y||a) = \sum_{p,q\ge0} (-a_0)^q a_1^p s_{p+1,1^q}(y)$. We have
	\begin{align*}
	\Omega(a_1y/a_0y) &= \sum_{p\ge0} a_1^p h_p[y] \sum_{q\ge0} (-a_0)^q e_q[y] \\
	&= \sum_{p,q\ge0} (-a_0)^q a_1^p (s_{(p,1^q)}[y] + s_{(p+1,1^{q-1})}[y]) \\
	&= 1 + (a_1-a_0) \sum_{p,q\ge0} (-a_0)^p a_1^q s_{(p+1,1^q)}[y] \\
	\hs_1(y||a) &= (\alpha_0^a)^{-1} (1-\Omega(a_1y/a_0y)) \\
	&= \delta_0^a(1).
	\end{align*}
\end{example}
%

\begin{remark} This construction can also be adapted to compute the homology Schubert basis for the affine Grassmannian of $G=SL_{k+1}$, equivariant with respect to the maximal torus $T$ of $G$. The resulting basis is the $k$-double-Schur functions of \cite{LaSh2}. A $k$-double Schur function consists of a $k$-Schur function in its lowest degree and typically has infinitely many terms of higher degree with equivariant coefficients.
\end{remark}

\subsection{$\delta$-Schubert polynomials and $\delta$-Schur functions}
\label{SS:delta}
There is a $\Q$-algebra morphism $\eta_\delta: \Q[a] \to \Q[\delta]$ given by 
$$
\eta_\delta(a_i) = \begin{cases} \delta & \mbox{ if $ i > 0$,} \\
0 & \mbox{if $i \leq 0$.}
\end{cases}
$$
We have an induced $\Q$-algebra homomorphism $\eta_\delta: \Lambda(x||a) \to \Lambda[\delta] := \Lambda_\Q \otimes_\Q \Q[\delta]$ given by $\eta_\delta(\sum_{\la} a_\la p_\la(x||a)) =\sum_\la \eta_\delta(a_\la) p_\la$.  This extends to the $\Q[x]$-algebra homomorphism $\eta_\delta: \bR(x;a) \to \bR[\delta] := \bR \otimes_\Q \Q[\delta]$ that acts on $\Q[x]$ as the identity.

Define the \defn{$\delta$-Schubert polynomials} $\bS(x;\delta):= \eta_\delta(\bS(x;a))$ and the \defn{$\delta$-Schur functions} $s_\lambda(x||\delta):= \eta_\delta(s_\lambda(x||a))$.  They form bases over $\Q[\delta]$ for the rings $\bR[\delta]$ and $\Lambda[\delta]$ respectively.

\subsection{$\delta$-dual Schurs represent Knutson-Lederer classes}
Knutson and Lederer \cite{KL} define a ring $R^{H^S}$ that is a one-parameter deformation of symmetric functions $\Lambda$.  Namely, $R^{H^S}$ is a free $\Q[\delta]$-module with basis $[X^\lambda]$, $\lambda\in \Par$ \footnote{Knutson-Lederer work over $\Z[\delta]$, but for consistency with our current work we use $\Q[\delta]$.  Our results generalize to $\Z[\delta]$.}.  

The multiplication in $R^{H^S}$ is defined as follows.
Let $\bigoplus: \Gr(a,a+b) \times \Gr(c,c+d) \to \Gr(a+c,a+b+c+d)$ be the direct sum map $(V,W) \mapsto V\oplus W$.  Let the circle $S^1$ act on each $\C^{a+b}$ by acting with weight 1 on the first $b$ coordinates and weight $0$ on the last $a$ coordinates.  This induces an action of $S^1$ on $\Gr(a,a+b)$.
In $R^{H^S}$, we have 
$$
[X^\lambda] \cdot [X^\mu] = \sum_{\nu} d_{\lambda \mu}^\nu(\delta) [X^\nu]
$$
where the RHS is the class in $H^{S^1}_*(\Gr(a+c,a+b+c+d))$ of the direct sum $\bigoplus(X^\lambda,X^\mu)$ of two opposite Schubert varieties.  Here $a,b,c,d$ are chosen so that $\lambda \subseteq a \times b$ and $\mu \subseteq c \times d$.

Define $\hs_\lambda(y||\delta)$ by specializing $\hs_\lambda(y||a)$ via $a_i = 0$ for $i \leq 0$ and $a_i = \delta$ for $i >0$.  

\begin{proposition} \label{P:delta dual Schur} We have
	\begin{align} \label{E:delta dual Schur to Schur}
	\hs_\mu(y||\delta) &= \sum_{\substack{\la \supset \mu \\ d(\la)=d(\mu) \\ \text{$\la/\mu \subset $ first $d(\mu)$ rows}}} \Schub_{w_{\la/\mu}}(1)\,\delta^{|\la/\mu|}  s_\la(y).
	\end{align}
\end{proposition}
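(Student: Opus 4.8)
The plan is to deduce this from Proposition~\ref{prop:dualsuper} by applying the specialization $\eta_\delta$ (sending $a_i\mapsto 0$ for $i\le 0$ and $a_i\mapsto\delta$ for $i>0$) termwise. Proposition~\ref{prop:dualsuper} expresses $\hs_\mu(y||a)$ as $\sum_{\la\supset\mu,\ d(\la)=d(\mu)} \Schub_{w_{\la/\mu}}(a)\, s_\la(y)$, a formal $\Q[a]$-linear combination of Schur functions $s_\la(y)$ with exactly one term for each $\la\supseteq\mu$. Since the indexing partitions are distinct and the coefficients lie in $\Q[a]$, the map $\eta_\delta$ extends coefficientwise to $\hLa(y||a)$ and $\hs_\mu(y||\delta)=\sum_\la \eta_\delta\!\big(\Schub_{w_{\la/\mu}}(a)\big)\, s_\la(y)$, so everything reduces to evaluating $\eta_\delta\!\big(\Schub_{w_{\la/\mu}}(a)\big)$.

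Next I would compute that coefficient. Because $d(\la)=d(\mu)$, the skew shape $\la/\mu$ contains no box on the main diagonal, so $w_{\la/\mu}\in S_{\ne0}$; writing $w_{\la/\mu}=uv$ with $u\in S_-$ and $v\in S_+$, one has $\Schub_{w_{\la/\mu}}(a)=\Schub_u(a)\,\Schub_v(a)$ with $\Schub_u(a)\in\Q[\dotsc,a_{-1},a_0]$ homogeneous of degree $\ell(u)$ and $\Schub_v(a)\in\Q[a_1,a_2,\dotsc]$ homogeneous of degree $\ell(v)$. Setting $a_i=0$ for $i\le 0$ kills $\Schub_u(a)$ unless $u=\id$, and setting $a_i=\delta$ for $i>0$ sends $\Schub_v(a)$ to $\delta^{\ell(v)}\Schub_v(1)$ by homogeneity, where $\Schub_v(1):=\Schub_v(1,1,\dotsc)$. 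Hence $\eta_\delta\!\big(\Schub_{w_{\la/\mu}}(a)\big)=0$ unless $w_{\la/\mu}\in S_+$, and otherwise equals $\delta^{\ell(w_{\la/\mu})}\Schub_{w_{\la/\mu}}(1)=\delta^{|\la/\mu|}\Schub_{w_{\la/\mu}}(1)$, using $\ell(w_{\la/\mu})=\ell(w_\la)-\ell(w_\mu)=|\la|-|\mu|$.

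The remaining step, which I expect to be the crux, is to recognize $w_{\la/\mu}\in S_+$ as the diagrammatic condition in \eqref{E:delta dual Schur to Schur}. Here I would use that $w_{\la/\mu}$ is $321$-avoiding, hence fully commutative, so that the set of simple reflections appearing in its reduced words is exactly $\{\,j-i : (i,j)\in\la/\mu\,\}$; thus in $w_{\la/\mu}=uv$ the $S_-$-factor $u$ is trivial precisely when every box of $\la/\mu$ has positive content, i.e.\ $\la/\mu$ lies strictly above the main diagonal. To finish, I would observe that, given $\mu\subseteq\la$ with $d(\mu)=d(\la)=:d$, the leftmost box of a nonempty row $i$ of $\la/\mu$ sits in column $\mu_i+1$ with content $\mu_i+1-i$, which is positive iff $\mu_i\ge i$, iff $i\le d$ (as $\mu_i\ge\mu_d\ge d\ge i$ for $i\le d$, while $\mu_i\le\mu_{d+1}\le d<i$ for $i>d$). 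Consequently $\la/\mu$ lies strictly above the diagonal iff $\la_i=\mu_i$ for all $i>d$, i.e.\ iff $\la/\mu$ is contained in the first $d(\mu)$ rows; substituting this back into the termwise-specialized sum yields \eqref{E:delta dual Schur to Schur}.

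Apart from this combinatorial translation, the only other point needing care is the legitimacy of applying $\eta_\delta$ termwise inside the completion $\hLa(y||a)$, which is harmless because the expansion of $\hs_\mu(y||a)$ is indexed, without repetition, by the partitions $\la\supseteq\mu$. All the rest is formal: it follows from Proposition~\ref{prop:dualsuper} together with the homogeneity of (negative and positive) Schubert polynomials.
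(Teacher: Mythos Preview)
Your proposal is correct and follows the same approach as the paper, which simply states that the result is an immediate consequence of Proposition~\ref{prop:dualsuper}. You have carefully unpacked what ``immediate'' means here: applying $\eta_\delta$ termwise, using the factorization $\Schub_{w_{\la/\mu}}(a)=\Schub_u(a)\Schub_v(a)$ for $u\in S_-$, $v\in S_+$, and then translating the condition $w_{\la/\mu}\in S_+$ into the row restriction on $\la/\mu$ via the content description of the support of the fully commutative element $w_{\la/\mu}$. All of these steps are sound, and your argument for the equivalence between ``all contents positive'' and ``contained in the first $d(\mu)$ rows'' under the constraint $d(\la)=d(\mu)$ is clean.
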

\begin{proof} This is an immediate consequence of Proposition
	\ref{prop:dualsuper}.
\end{proof}

\begin{example} Consider the product $\hs_{(1)}(y||\delta) \hs_{(1,1)}(y||\delta)$. To compare with \cite[Example 1.3]{KL} we restrict to the Grassmannian $\Gr(4,7)$, that is, only keep $s_\la(y)$ when $\la$ is contained in the $4\times 3$ rectangle. We have
	\begin{align*}
	\hs_{1}(y||\delta) &= s_1 + \delta s_2 + \delta^2 s_3 + \dotsm \\
	\hs_{11}(y||\delta) &= s_{11} + \delta s_{21} + \delta^2 s_{31} + \dotsm \\
	\hs_{1}(y||\delta) \hs_{11}(y||\delta) &=
	\delta^0(s_{111}+s_{21}) + \delta^1(2 s_{211}+s_{22} + 2 s_{31}) +
	\delta^2(s_{221} + 3 s_{311} + 2 s_{32}) \\
	&+ \delta^3(2 s_{321} + s_{33}) + \delta^4 s_{331} + \dotsm 
	\end{align*}
	We have
	\begin{align*}
	\hs_{1}(y||\delta) \hs_{11}(y||\delta) &= \delta^0(\hs_{111}(y||\delta)+\hs_{21}(y||\delta)) + 
	\delta^1(\hs_{211}(y||\delta)+\hs_{22}(y||\delta)) +
	\delta^2 \hs_{221}(y||\delta)
	\end{align*}
	where
	\begin{align*}
	\hs_{111}(y||\delta) &= s_{111} + \delta s_{211} + \delta^2 s_{311} + \dotsm \\
	\hs_{21}(y||\delta)  &= s_{21} + 2\delta s_{31} + \dotsm \\
	\hs_{211}(y||\delta) &= s_{211} +2\delta s_{311} +\dotsm \\
	\hs_{22}(y||\delta) &= s_{22} + 2\delta s_{32} + \delta^2 s_{33} + \dotsm \\
	\hs_{221}(y||a) &= s_{221} + 2 \delta s_{321} + \delta^2 s_{331}.
	\end{align*}
\end{example}

\begin{proposition}
	The set $\{\hs_\lambda(y||\delta) \mid \lambda \in \P\}$ is dual to the basis $\{s_\lambda(x||\delta) \mid \lambda \in \P\}$ of $\Lambda[\delta]$.
\end{proposition}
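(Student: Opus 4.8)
The plan is to deduce the statement by specializing the equivariant duality $\pair{s_\la(x||a)}{\hs_\mu(y||a)} = \delta_{\la\mu}$ of \eqref{E:dual Schur} along the $\Q$-algebra map $\eta_\delta \colon \Q[a] \to \Q[\delta]$. First I would pin down the pairing intended in the statement: it is the pairing on $\La[\delta] \otimes_{\Q[\delta]} \hLa(y||\delta)$ induced from $\pair{\cdot}{\cdot}$ by $\eta_\delta$, and I claim it coincides with the $\Q[\delta]$-bilinear extension of the usual Hall pairing. This is because applying $\eta_\delta$ to the Cauchy kernel $\Omega = \exp\!\big(\sum_{k\ge1}\frac1k p_k(x||a)p_k(y)\big)$ gives $\exp\!\big(\sum_{k\ge1}\frac1k p_k(x)p_k(y)\big) = \sum_\la s_\la(x)s_\la(y)$, the ordinary Cauchy kernel, since $\eta_\delta(p_k(x||a)) = \sum_{i\le0}x_i^k = p_k(\xm)$; thus $\eta_\delta$ turns $\pair{\cdot}{\cdot}$ into the Hall pairing, under which $\pair{s_\la(x)}{s_\mu(y)} = \delta_{\la\mu}$.

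The key step is to verify that $\eta_\delta$ is a \emph{morphism of pairings}: for $f \in \La(x||a)$ and $g \in \hLa(y||a)$,
\begin{align*}
\eta_\delta(\pair{f}{g}) = \pair{\eta_\delta(f)}{\eta_\delta(g)}.
\end{align*}
To prove this I would use that $\pair{\cdot}{\cdot}$ is computed in the dual $\Q[a]$-bases $\{s_\la(x/a)\}$ of $\La(x||a)$ and $\{s_\mu(y)\}$ of $\hLa(y||a)$: if $f = \sum_\la c_\la(a)\,s_\la(x/a)$ and $g = \sum_\mu d_\mu(a)\,s_\mu(y)$ then $\pair{f}{g} = \sum_\la c_\la(a)d_\la(a)$. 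Since superization involves no $a$-variables, $\eta_\delta(s_\la(x/a)) = s_\la(x)$, and likewise $\eta_\delta(s_\mu(y)) = s_\mu(y)$; moreover $\{s_\la(x)\}$ and $\{s_\mu(y)\}$ remain dual bases for the Hall pairing, so $\eta_\delta(\pair{f}{g}) = \sum_\la \eta_\delta(c_\la)\eta_\delta(d_\la) = \pair{\eta_\delta(f)}{\eta_\delta(g)}$. Applying this to $f = s_\la(x||a)$, $g = \hs_\mu(y||a)$ and using the defining specializations $s_\la(x||\delta) = \eta_\delta(s_\la(x||a))$, $\hs_\mu(y||\delta) = \eta_\delta(\hs_\mu(y||a))$ yields $\pair{s_\la(x||\delta)}{\hs_\mu(y||\delta)} = \delta_{\la\mu}$.

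Finally I would observe that the $s_\la(x||\delta)$ already form a $\Q[\delta]$-basis of $\La[\delta]$, while the $\hs_\mu(y||\delta)$ span $\hLa(y||\delta)$ topologically — the latter immediate from the triangular expansion $\hs_\mu(y||\delta) = s_\mu(y) + (\text{strictly higher degree terms})$ supplied by Proposition~\ref{P:delta dual Schur}. Combined with the orthogonality just established, this exhibits $\{\hs_\mu(y||\delta)\}$ as exactly the dual basis of $\{s_\la(x||\delta)\}$.

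A purely combinatorial route is also available and may be worth recording: substitute the $\eta_\delta$-specializations of Propositions~\ref{prop:doublesuper} and~\ref{P:delta dual Schur} into the Hall pairing, producing a double sum over partitions $\nu$ with $\mu \subseteq \nu \subseteq \la$ of products $(-1)^{\bullet}\,\Schub_{\bullet}(1)\,\Schub_{\bullet}(1)\,\delta^{\bullet}$, and check that it collapses to $\delta_{\la\mu}$ via the Schubert cancellation/inversion identity (Lemma~\ref{L:Schub cancellation}). I expect the only mildly delicate point in either approach to be the bookkeeping around the completion $\hLa$ and the confirmation that $\eta_\delta(\Omega)$ is the honest Cauchy kernel; neither of these is a genuine obstacle.
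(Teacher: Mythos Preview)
Your argument is correct and is exactly the natural specialization-of-pairing argument that the authors presumably had in mind: the paper states this proposition without proof, so there is nothing to compare against beyond the obvious route you have taken. One small phrasing point: when you write ``superization involves no $a$-variables,'' it would be cleaner to say that $s_\la(x/a)$, viewed as an element of the $\Q[a]$-algebra $\Lambda(x||a)$, lies in the $\Q$-subalgebra generated by the $p_k(x||a)$ (its coefficients in that basis are rational numbers, not polynomials in the $a_i$); this is why $\eta_\delta$ sends it to $s_\la$.
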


The following result answers a question implicitly posed in \cite{KL}.

\begin{thm}\label{thm:KL}
	There is an isomorphism of $\Q[\delta]$-algebras
	\begin{align*}
	R^{H^S} & \longrightarrow \Lambda[\delta] &
	[X^\lambda]  &\longmapsto \hs_\lambda(x||\delta).
	\end{align*}
\end{thm}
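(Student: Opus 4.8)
The plan is to verify that the map $[X^\lambda] \mapsto \hs_\lambda(x||\delta)$ is a well-defined bijection of $\Q[\delta]$-modules and that it intertwines the two products. Module-theoretically there is nothing to do: by the proposition immediately preceding, $\{\hs_\lambda(y||\delta) \mid \lambda \in \Par\}$ is a $\Q[\delta]$-basis of $\Lambda[\delta]$ dual to the basis $\{s_\lambda(x||\delta)\}$, and $\{[X^\lambda]\}$ is by definition a $\Q[\delta]$-basis of $R^{H^S}$; so the assignment extends uniquely to a $\Q[\delta]$-module isomorphism, and the only content is multiplicativity.

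For multiplicativity I would match structure constants. The product in $R^{H^S}$ is $[X^\lambda]\cdot[X^\mu] = \sum_\nu d^\nu_{\lambda\mu}(\delta)[X^\nu]$, where $d^\nu_{\lambda\mu}(\delta)$ is read off from the $S^1$-equivariant homology class of $\bigoplus(X^\lambda,X^\mu)$ inside $H_*^{S^1}(\Gr(a+c,a+b+c+d))$. On the symmetric function side we must compute the expansion of $\hs_\lambda(y||\delta)\hs_\mu(y||\delta)$ in the dual Schur basis. The cleanest route is duality: $\hLa(y||a)$ is the $\Q[a]$-Hopf algebra dual to $\La(x||a)$ (via the Cauchy kernel $\Omega$), so the product on dual Schurs is dual to the coproduct on double Schurs, whose structure constants are $\hat c^\lambda_{\mu\nu}(a)$ from \eqref{E:coproduct structure constants}; by Proposition \ref{P:coproduct constants are Stanley coefficients} these equal the double Edelman--Greene coefficients $j^{w_{\lambda/\mu}}_\nu(a)$. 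Specializing $a_i \mapsto 0$ for $i\le 0$ and $a_i\mapsto\delta$ for $i>0$ (the map $\eta_\delta$ of \textsection\ref{SS:delta}, which is compatible with everything in sight), we get that the structure constants for the $\hs(y||\delta)$ product are $j^{w_{\lambda/\mu}}_\nu(\delta) = \eta_\delta(j^{w_{\lambda/\mu}}_\nu(a))$.

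It then remains to identify these with the Knutson--Lederer numbers $d^\nu_{\lambda\mu}(\delta)$. Here I would invoke the geometry: under the isomorphism $H_{T_\Z}^*(\Gr)\cong \La(x||a)$ (Remark \ref{rem:dualSchur}, via Theorem \ref{thm:HTFl}), the dual Schur functions $\hs_\lambda(y||a)$ represent the equivariant homology Schubert basis of $H_*^{T_\Z}(\Gr)$. The direct sum operation $\bigoplus$ on finite Grassmannians induces, in the large-rank limit and after restricting from $T_\Z$ to the one-dimensional subtorus $\C^\times\subset T_\Z$ that acts as in the Knutson--Lederer setup, precisely the coproduct-dual multiplication on $H_*(\Gr)$; restricting the equivariant variables accordingly is exactly the specialization $\eta_\delta$. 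Thus $d^\nu_{\lambda\mu}(\delta)$, being the class of $\bigoplus(X^\lambda,X^\mu)$ expanded in the homology Schubert basis of the appropriate finite Grassmannian, agrees with the $\La[\delta]$-structure constant $j^{w_{\lambda/\mu}}_\nu(\delta)$ in the stable range (finitely many $\la,\mu,\nu$ fit in any fixed rectangle, and Proposition \ref{P:delta dual Schur} shows the $\hs_\mu(y||\delta)$ truncate correctly to finite Grassmannians). This matching of structure constants gives the algebra isomorphism.

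The main obstacle is the last step: making precise that the Knutson--Lederer direct-sum product on $H_*^{S^1}(\Gr(\cdot,\cdot))$, which is defined by a concrete geometric pushforward on finite Grassmannians, coincides under our stabilization with the Hopf-algebraic coproduct-dual product on $\La(x||a)$ specialized via $\eta_\delta$. One must check the equivariant normalizations (which coordinates carry weight $1$ versus $0$, how this matches the total order on $\Z$ and the role of $a_0,a_1$) and that the direct sum map is compatible with the back-stable limit. The cleanest way to handle this cleanly may be to cite \cite{KL} for the finite geometric computation, cite the present paper's identification of $\hs_\la(y||a)$ with the equivariant homology Schubert basis, and then observe that both sides have structure constants given by the same positive polynomials $j^{w_{\lambda/\mu}}_\nu(\delta)$; any explicit small-rank check (as in the worked Example with $\Gr(4,7)$) serves as a sanity check on the normalization.
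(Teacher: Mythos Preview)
Your strategy is exactly the paper's: reduce to showing that the Knutson--Lederer structure constants $d^\nu_{\lambda\mu}(\delta)$ coincide with the $\eta_\delta$-specialization of the coproduct structure constants $\hat c^\nu_{\lambda\mu}(a)$ of $\Lambda(x||a)$, then use duality between products of $\hs$'s and coproducts of double Schurs. Where you fall short is precisely the step you flag as ``the main obstacle'': you assert that the direct-sum operation corresponds to the coproduct-dual product, but you do not give a mechanism to verify this. Citing \cite{KL} for a finite computation or doing small-rank sanity checks is not a proof.

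The paper resolves this gap concretely, and the key move is one you did not isolate. Rather than working $S^1$-equivariantly throughout, the paper \emph{lifts} to the full torus $T=(\C^\times)^{a+c+c+a}$, for which the direct sum map $\bigoplus$ is $T$-equivariant. A $T$-equivariant cohomology class is determined by its restrictions to $T$-fixed points, so the map on cohomology induced by $\bigoplus$ is determined by what $\bigoplus$ does to fixed points. After choosing coordinates so that $H_T^*(\pt)=\Q[a_{a+c},\ldots,a_{1-a-c}]$, the fixed points of the three Grassmannians are naturally labeled by pairs $(I_-,I_+)$ as in \textsection\ref{SS:notation}, and one checks directly that $\bigoplus$ sends $\bigl((K_-,K_+),(J_-,J_+)\bigr)$ to $(J_-\cup K_-, J_+\cup K_+)$. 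This is \emph{exactly} the partial multiplication on $S_\Z^0$ of \textsection\ref{ssec:partialproduct}, which by Proposition~\ref{prop:locHopf} characterizes the coproduct of $\Lambda(x||a)$ in terms of localizations. Combined with Proposition~\ref{prop:doubleSchur} identifying double Schurs with opposite Schubert classes, this forces the $T$-equivariant structure constants of $\bigoplus$ on Schubert classes to equal $\hat c^\nu_{\lambda\mu}(a)$. Specializing $a_i\mapsto 0$ for $i\le 0$ and $a_i\mapsto\delta$ for $i>0$ then yields $d^\nu_{\lambda\mu}(\delta)$.

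In short: the missing idea is to pass to the big torus and compare \emph{fixed points} rather than working directly at the $S^1$-level; Proposition~\ref{prop:locHopf} is the bridge that turns that fixed-point computation into the coproduct of $\Lambda(x||a)$.
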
 
\begin{proof}
	It suffices to show that the structure constants $d_{\lambda \mu}^\nu(\delta)$ of $R^{H^S}$ are obtained from the coproduct structure constants of the Hopf algebra $\Lambda(x||a)$ after specializing $a_i = 0$ for $i \leq 0$ and $a_i = \delta$ for $i >0$.  
	
	For simplicity, we assume that $a = b$ and $c = d$ in the following calculation.  Let us think of $\C^{a+c+c+a}$ as spanned by $e_{a+c}, e_{a+c-1},\ldots, e_1,e_0, e_{-1},\ldots, e_{1-a-c}$, with a natural action of $T=(\C^\times)^{a+c+c+a}$.  We identify $H^*_T(\pt) = \Q[a_{a+c},a_{a+c-1},\ldots,a_{1-a-c}]$.  We thus have actions of $T$ on $\Gr(c,c+c)$ (the $c$-dimensional subspaces of $\sp(e_c,e_{c-1},\ldots,e_{1-c})$) and on $\Gr(a,a+a)$ (the $a$-dimensional subspaces of $\sp(e_{a+c},e_{a+c-1},\ldots,e_{c+1},e_{-c},e_{-c-1},e_{1-a-c})$).  Finally, we have a $T$-action on $\Gr(a+c,a+c+c+a)$ and the direct sum map is $T$-equivariant, so we obtain a map of $H^*_T(\pt)$-modules
	\begin{equation} \label{eq:sum}
	H^T_*(\Gr(a+c,a+c+c+a)) \to H_T^*(\Gr(a,a+a)) \otimes H_T^*(\Gr(c,c+c)).
	\end{equation}
	Since a $T$-equivariant cohomology class of any of these Grassmannians is determined by its value at $T$-fixed points, the map \eqref{eq:sum} is completely determined by the direct sum map applied to $T$-fixed points.

	The $T$-fixed points of $\Gr(c,c+c)$ are then in bijection with pairs $(J_-,J_+)$ satisfying $J_- \subset [1-c,0]$, $J_+ \subset [1,c]$ and $|J_-|=|J_+|$, via the map 
	$$
	(J_-,J_+) \mapsto \sp(e_i \mid i \in ([1-c,0] \setminus J_-) \cup J_+) \in \Gr(c,c+c).
	$$
	For $\Gr(a,a+a)$ we consider $T$-fixed points as pairs $(K_-,K_+)$ with $K_- \subseteq [1-a-c,-c]$ and $K_+ \subseteq [c+1,a+c]$.  Then the direct sum map 
	induces the map $((K_-,K_+),(J_-,J_+)) \mapsto (J_- \cup K_-, J_+ \cup K_+)$.  By Proposition \ref{prop:locHopf}, this agrees with the coproduct of $\Lambda(x||a)$ in terms of localization.  (Note that in this work we do not give a geometric explanation of the coproduct of $\Lambda(x||a)$ similar to the direct sum map, which is \emph{not} equivariant with respect to the natural infinite-dimensional torus.)
	
	By Proposition \ref{prop:doubleSchur}, the double Schur functions $s_\lambda(x||a)$ can be identified with the opposite Schubert class $[X^\lambda]$ in equivariant {\it cohomology} $H^*_T(\Gr(a,a+b))$.  It follows that the structure constants of \eqref{eq:sum} with respect to the opposite Schubert classes $[X^\lambda]$ coincide with the coproduct structure constants \eqref{E:coproduct structure constants} of the double Schur functions.  Specializing $a_i = 0$ for $i \leq 0$ and $a_i = \delta$ for $i >0$ gives the desired conclusion. 
\end{proof}

\begin{remark}
	Knutson and Lederer \cite{KL} also define a $K$-theoretic analogue, and a result similar to Theorem \ref{thm:KL} holds.
\end{remark}

\subsection{Homology equivariant Monk's rule}
A \defn{vertical strip} is a skew shape that contains at most one box per row.  A \defn{horizontal strip} is a skew shape that contains at most one box per column. 
A \defn{ribbon} $R =\la/\mu$ is a (edgewise) connected skew shape not containing any $2 \times 2$ square.    A skew shape $\lambda/\mu$ is called \defn{thin} if its connected components are ribbons.  We write $c(\la/\mu)$ for the number of connected components of a thin skew shape.
\begin{lem}\label{lem:ribbon}
	Let $R =\la/\mu$ be a nonempty ribbon.  Then there exists exactly two shapes such that $\la/\rho$ is a vertical strip and $\rho/\mu$ is a horizontal strip.
\end{lem}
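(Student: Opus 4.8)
The plan is to traverse the ribbon cell by cell from its northeast end to its southwest end and show that, once the strip conditions are imposed, the membership of every cell \emph{except} the extreme northeast one in $\rho/\mu$ is forced, while that northeast cell is genuinely free; this yields exactly two admissible $\rho$.

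First I would fix notation. Since $R=\la/\mu$ is a connected skew shape containing no $2\times 2$ square, it occupies an interval of rows $p,p+1,\dots,q$, and in row $i$ it occupies a nonempty interval of columns $[\ell_i,r_i]$ with the border-strip overlap $\ell_i=r_{i+1}$ for $p\le i<q$. Connectivity forces $\mu_i=\ell_i-1$ and $\la_i=r_i$ for every ribbon row $i$, while $\mu$ and $\la$ agree in all other rows; consequently a partition $\rho$ with $\mu\subseteq\rho\subseteq\la$ is precisely a weakly decreasing choice of $\rho_i\in[\ell_i-1,r_i]$ for $p\le i\le q$ (the extra inequalities $\rho_p\le\la_{p-1}$ and $\rho_q\ge\mu_{q+1}$ holding automatically). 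Now list the cells of $R$ from the northeast corner to the southwest corner as $x_1,x_2,\dots,x_n$; each consecutive pair $(x_t,x_{t+1})$ is joined by a \emph{south} step (same column) or a \emph{west} step (same row). Under $\rho\mapsto\rho/\mu$ the partitions above correspond exactly to the subsets $S\subseteq R$ closed under the two implications: if $x_t\in S$ and $x_{t+1}$ lies west of $x_t$ then $x_{t+1}\in S$; if $x_{t+1}\in S$ and $x_{t+1}$ lies south of $x_t$ then $x_t\in S$ (these are the order ideals of the fence poset on $x_1,\dots,x_n$ whose covering relations are read off the step word).

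Next I would translate the two strip conditions. Inside a maximal horizontal run $x_c,x_{c+1},\dots,x_d$ (a row of the ribbon, with $x_c$ its rightmost cell), the first implication forces $S$ to be a terminal segment $\{x_m,\dots,x_d\}$ (possibly empty), so ``$\la/\rho$ has at most one cell in that row'' is equivalent to $\{x_{c+1},\dots,x_d\}\subseteq S$. Dually, inside a maximal vertical run $x_a,\dots,x_b$ (a column of the ribbon, with $x_a$ its top cell), $S$ is an initial segment, so ``$\rho/\mu$ has at most one cell in that column'' is equivalent to $S\cap\{x_{a+1},\dots,x_b\}=\emptyset$. For $t\ge 2$ the cell $x_t$ is reached from $x_{t-1}$ either by a west step --- then $x_t$ is not the rightmost cell of its row, so $x_t\in S$ is forced --- or by a south step --- then $x_t$ is not the top cell of its column, so $x_t\notin S$ is forced; the two cases are mutually exclusive and exhaustive. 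The cell $x_1$ is simultaneously the rightmost cell of its row and the top cell of its column, so it is constrained by neither condition. Hence $S$ is determined by the rule ``$x_t\in S$ iff $x_t$ is reached by a west step'' for $t\ge 2$, together with a free binary choice for $x_1$, so there are at most two admissible $\rho$.

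Finally I would write the two resulting partitions explicitly --- with $x_1\notin S$ one gets $\rho_i=r_i-1$ and $\la/\rho$ equal to the set of rightmost cells $\{(i,r_i):p\le i\le q\}$; with $x_1\in S$ one gets $\rho_p=r_p$, $\rho_i=r_i-1$ for $i>p$, and $\la/\rho=\{(i,r_i):p<i\le q\}$ --- and check in each case that $\rho$ is weakly decreasing and lies between $\mu$ and $\la$ (using $r_i\ge r_{i+1}=\ell_i$), that $\la/\rho$ is a vertical strip by construction, and that $\rho/\mu$ meets each column at most once because the column intervals $[\ell_i,r_i-1]$ (and $[\ell_p,r_p]$) are pairwise disjoint. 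Since these two $\rho$'s differ in row $p$ and $R$ is nonempty, they are distinct, giving exactly two. The one delicate point --- and the main obstacle --- is the bookkeeping at the edges of the ribbon (rows $p-1$ and $q+1$, and the rightmost column), where one must verify that the ``forced versus free'' dichotomy is genuinely airtight so that no admissible $\rho$ is over- or under-counted at the two ends; the rest is a routine unwinding of the definitions.
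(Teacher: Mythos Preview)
Your proof is correct and follows the same idea as the paper's. The paper's argument is a two-sentence sketch: the northeast-most box is free, and every other box $b$ has either its north neighbour or its east neighbour in $R$, which forces $b\in\la/\rho$ or $b\in\rho/\mu$ respectively. Your version unwinds this into an explicit NE-to-SW traversal with the ``west step $\Rightarrow x_t\in S$, south step $\Rightarrow x_t\notin S$'' rule, and additionally supplies the existence check (writing down both $\rho$'s and verifying the strip and partition conditions), which the paper leaves implicit.
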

\begin{proof}
	The northeast most square of $R$ can belong to either $\la/\rho$ or $\rho/\mu$.  For all other boxes $b \in R$, either $R$ contains the square directly north of $b$ in which case $b \in \la/\rho$ or $R$ contains the square directly east of $b$ in which case $b \in \rho/\mu$.
\end{proof}

Suppose $\la/\mu$ is a skew shape.  A \defn{$\Lambda$-decomposition} of $\la/\mu$ is a pair $D = (\la/\rho, \rho/\mu)$ consisting of a vertical strip and a horizontal strip.  If $\la/\mu$ has a $\Lambda$-decomposition then it must be thin.  In this case, it follows from Lemma \ref{lem:ribbon} that $\la/\mu$ has exactly $2^{c(\la/\mu)}$ $\Lambda$-decompositions.  

The weight of a $\Lambda$-decomposition $D = (\la/\rho, \rho/\mu)$ is the product
\begin{equation}\label{eq:wtD}
\wt(D) := \prod_{(i,j) \in \la/\rho} (a_{j-i+1} - a_0) \prod_{(i,j) \in \rho/\mu} (a_1 - a_{j-i+1}) \in \Q[a]
\end{equation}
which can be 0.  If $D = (\la/\rho, \rho/\mu)$ is a $\Lambda$-decomposition, let $D_-$ be obtained from $D$ by removing the northeast most square of $\la/\mu$ from whichever of $\la/\rho$ or $\rho/\mu$ that contains it.  
%

\begin{thm}\label{thm:homologyMonk}
	Let $\mu \in \Par$.  We have
	\begin{align}\label{E:homologyMonk}
	\hs_1(y||a) \hs_\mu(y||a) =\sum_\lambda \sum_{D_-} \wt(D_-) \hs_\la(y||a)  
	\end{align}
	where the inner sum is over all distinct $D_-$ that can be obtained from some nonempty $\Lambda$-decomposition $D = (\lambda/\rho,\rho/\mu)$ with outer shape $\lambda$.
\end{thm}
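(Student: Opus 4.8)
The plan is to use the fact that the product on $H_*^{T_\Z}(\Gr) \cong \hLa(y||a)$ is dual, under the pairing $\pair{\cdot}{\cdot}$ of \eqref{E:equivariant pairing}, to the coproduct on $H^*_{T_\Z}(\Gr) \cong \La(x||a)$. Thus the structure constants of $\hs_1(y||a)\hs_\mu(y||a)$ are read off from the coproduct $\Delta(s_\la(x||a))$: concretely, the coefficient of $\hs_\la(y||a)$ in $\hs_1(y||a)\hs_\mu(y||a)$ equals $\pair{s_\la(x||a)}{\hs_1(y||a)\hs_\mu(y||a)} = \pair{\Delta(s_\la(x||a))}{\hs_1(y||a)\otimes\hs_\mu(y||a)}$, which by \eqref{E:coproduct structure constants} and \eqref{E:dual Schur} is $\hat c^\la_{(1)\mu}(a)$. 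By Proposition~\ref{P:coproduct constants are Stanley coefficients} this is $j^{w_{\la/(1)}}_\mu(a)$, but it is cleaner to use the complementary pairing: I would instead pair against $\hs_\mu(y||a)\otimes\hs_1(y||a)$ so that the relevant quantity is $\hat c^\la_{\mu(1)}(a) = j^{w_{\la/\mu}}_{(1)}(a)$, i.e.\ the coefficient of $s_1(x||a)$ in the double Stanley function $F_{w_{\la/\mu}}(x||a)$. So the theorem reduces to the following combinatorial statement: for $\mu\subset\la$, the coefficient of $s_1(x||a)$ in $F_{w_{\la/\mu}}(x||a)$ equals $\sum_{D_-}\wt(D_-)$, summed over the $D_-$ obtained from nonempty $\Lambda$-decompositions $D=(\la/\rho,\rho/\mu)$, and this coefficient vanishes unless $\la/\mu$ is thin.

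The second step is to compute $j^{w_{\la/\mu}}_{(1)}(a)$ explicitly. One route: apply $\Delta$ to the double Schur expansion via Corollary~\ref{cor:coproddoubleSchur}, $\Delta(s_\la(x||a)) = \sum_{\rho\subset\la} F_{w_{\la/\rho}}(x||a)\otimes s_\rho(x||a)$, and take the coefficient of $s_1(x||a)\otimes s_\mu(x||a)$; this again isolates $[s_1(x||a)]F_{w_{\la/\mu}}(x||a)$, so it is not independent information. The genuine computational input should come from Example~\ref{X:dual_Schur_box} and Proposition~\ref{prop:doublesuper}: the coefficient $j^w_{(1)}(a)$ for $w=w_{\la/\mu}$ can be extracted by pairing with $\hs_1(y||a) = \sum_{p,q\ge0}(-a_0)^q a_1^p\, s_{p+1,1^q}(y)$ (the hook expansion of the dual Schur function for a single box), combined with Proposition~\ref{prop:dualsuper}, which expresses each $\hs_\la(y||a)$ as a $\Q[a]$-combination of genuine Schur functions $s_\la(y)$ with coefficients $\Schub_{w_{\la/\rho}}(a)$ for $d(\rho)=d(\la)$. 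Matching Schur coefficients in $\hs_1\hs_\mu = \sum_\la c_\la(a)\hs_\la$ against the Littlewood--Richardson (Pieri) expansion of a product of ordinary Schur functions then produces a recursion determining each $c_\la(a)$; the Pieri rule for multiplying by $s_{p+1,1^q}$ forces $\la/\mu$ to be thin, because the shapes appearing in $\hs_1$ are all hooks and the resulting skew shapes in the matching must avoid $2\times 2$ blocks. Then one matches the resulting monomial $(-a_0)^q a_1^p$ bookkeeping against the weight \eqref{eq:wtD}: the vertical-strip part $\la/\rho$ contributes $\prod(a_{j-i+1}-a_0)$, the horizontal-strip part $\rho/\mu$ contributes $\prod(a_1-a_{j-i+1})$, and passing to $D_-$ (deleting the northeast corner) corresponds exactly to the shift of index in $\hs_1$.

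The main obstacle is the last matching: reconciling the recursive/determinantal expression for $j^{w_{\la/\mu}}_{(1)}(a)$ coming from $\Schub_{w_{\la/\rho}}(a)$-coefficients with the combinatorial sum $\sum_{D_-}\wt(D_-)$ over $\Lambda$-decompositions. The clean way is probably to prove the identity $[s_1(x||a)]F_{w_{\la/\mu}}(x||a) = \sum_{D_-}\wt(D_-)$ directly by induction on $|\la/\mu|$, peeling off the northeastmost box of $\la/\mu$ (which by Lemma~\ref{lem:ribbon} is precisely the choice recorded in passing from $D$ to $D_-$), and using the left divided difference operators $A_i^a$ acting on $s_\la(x||a)$ via \eqref{E:left ddiff double Schur} together with Proposition~\ref{P:left ddiff homology} to reduce the skew shape; alternatively, one can invoke Theorem~\ref{thm:homologyhook} (the general hook Pieri rule stated below) with the hook taken to be the single box. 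I expect the bookkeeping of which strip the northeast box belongs to, and the compatibility of the two generating-function variables $a_0,a_1$ with the content shift $a_{j-i+1}$, to be the delicate part; everything else is duality plus Pieri.
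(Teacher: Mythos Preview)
Your first step is correct and matches the paper: by Hopf duality (equivalently Theorem~\ref{thm:PetersonSym} and Proposition~\ref{prop:jprod}), the coefficient of $\hs_\la$ in $\hs_1\hs_\mu$ is $j_{(1)}^{w_{\la/\mu}}(a)$. The entire content of the theorem is then the closed formula $j_{(1)}^{w_{\la/\mu}}(a)=\sum_{D_-}\wt(D_-)$, and this is where your plan does not go through.

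Your route~(a), extracting $c_\la(a)$ by expanding both sides of $\hs_1\hs_\mu=\sum c_\la\hs_\la$ in ordinary Schurs via Proposition~\ref{prop:dualsuper} and the hook expansion of $\hs_1$, produces an infinite triangular linear system whose unknowns are the $c_\la(a)$ and whose coefficients are products of $\Schub_{w_{\nu/\la}}(a)$ with hook Pieri numbers; inverting this does not visibly yield the product form \eqref{eq:wtD}, nor does it show thinness of $\la/\mu$ (hook Pieri by itself does not force that). Your route~(b), induction via Proposition~\ref{P:left ddiff homology}, stalls because $\delta_i$ creates dual Schurs but does not interact with the product $\hs_1\hs_\mu$ in any usable way. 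Your route~(c), deducing Theorem~\ref{thm:homologyMonk} from Theorem~\ref{thm:homologyhook} at $p=q=0$, is formally fine but just pushes the problem to the general hook case without saying how to prove that.

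What the paper actually does for the crucial step is compute $j_{(1)}^w(a)$ via Theorem~\ref{thm:j hook}, and the input there is not symmetric-function manipulation at all: it is (i) an explicit formula for the affine Peterson element $\tj_{s_0}$ in terms of localizations $\xi^{x^{-1}}_{\hFl_n}|_{s_\theta}$ (Theorem~\ref{thm:LS}, taken from \cite{LaSh}), (ii) evaluation of those localizations by Billey's formula along the reduced word $s_1\cdots s_{n-1}\cdots s_1$ for $s_\theta$ (Lemma~\ref{lem:theta}), which is exactly where the $\Lambda$-factorizations $w\doteq u_Jd_K$ and the products $\prod_{j\in J'}(a_1-a_{j+1})\prod_{k\in K'}(a_{k+1}-a_0)$ appear, and (iii) the stabilization results of \S\ref{ssec:stability} to pass from $\tj$ to $j$. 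For $w=w_{\la/\mu}$ one then identifies $\Lambda$-factorizations with $\Lambda$-decompositions $D$ (whence thinness), and the removal of the letter $s_{\max|w|}$ corresponds to passing from $D$ to $D_-$. None of your sketches touches this affine-localization input, which is the genuine engine of the proof.
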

The proof of Theorem \ref{thm:homologyMonk} will be given in \textsection \ref{ssec:proofhomologyMonk}.  In the non-equivariant case with $a_i = 0$, Theorem \ref{thm:homologyMonk} reduces to the usual one-box Pieri rule: when $\lambda/\mu$ is a single box, there are two possible choices of $D$, but $D_-$ will always be empty and $\wt(D_-) = 1$.
%

\begin{example} Let $\mu=(1)$. The support of $\hs_1 \hs_1$ is the set of partitions of size at least 2 not containing the boxes $(3,2)$ nor $(2,3)$. We have
	\begin{align*}
	\hs_1 \hs_1 &= \hs_2 + \hs_{11} + (a_1-a_2)\hs_3 + (a_1-a_0) \hs_{21} + (a_{-1}-a_0)\hs_{111} \\ &+ (a_1-a_2)(a_1-a_3)\hs_4 
	+ (a_1-a_0)(a_1-a_2) \hs_{31} + (a_1-a_0)^2 \hs_{22} \\ &+ (a_{-1}-a_0)(a_1-a_0)\hs_{211}+(a_{-1}-a_0)(a_{-2}-a_0)\hs_{1111} + \dotsm
	\end{align*}
	First consider $\la = (2,2)$.  Then the $\Lambda$-decompositions are given by taking $\rho = (1,1)$ or $(2,1)$.  In both cases, $D_- = ((2,2)/(2,1), (1,1)/(1,0))$.  Thus the coefficient of $\hs_{22}$ is $\wt(D_-) = (a_1-a_0)^2$.
	For $\hs_{211}$, the highest box $(1,2)$ in $(2,1,1)/(1)$ is ignored. The box $(2,1)$ can be in either the horizontal or vertical strip (contributing $a_1-a_0$ or $0$ respectively) while the box $(3,1)$ must be in the vertical strip, contributing $a_{-1}-a_0$, resulting in the coefficient $(a_1-a_0)(a_{-1}-a_0)$.
	
	Finally, let us consider $\la = (3,1)$.  The box $(1,3)$ is ignored.  The box $(1,2)$ must be in the horizontal strip of $D_-$ (giving weight $(a_1-a_2)$)  while the box $(2,1)$ can be in either the vertical or horizontal strip (giving weights $(a_0-a_0)$ and $(a_1-a_0)$ respectively).  The total contribution is $(a_1-a_2)(a_1-a_0)$.
\end{example}

\begin{example}\label{X:1times11} Let $\mu=(1,1)$. The support of $\hs_1 \hs_{11}$ consists of the partitions of size at least 3 which contain the partition $(1,1)$ and do not contain the boxes $(2,3)$ or $(4,2)$. We have
	\begin{align*}
	\hs_1 \hs_{11} &= \hs_{21} + \hs_{111} + (a_1-a_2)\hs_{31} + (a_1-a_0)\hs_{22} + (a_1-a_0)\hs_{211} + (a_{-2}-a_0)\hs_{1111} \\
	&+ (a_1-a_2)(a_1-a_3)\hs_{41}+(a_1-a_0)(a_1-a_2)\hs_{32} + (a_1-a_0)(a_1-a_2)\hs_{311} \\ &+ (a_1-a_0)^2 \hs_{221} + (a_{-2}-a_0)(a_1-a_0)\hs_{2111} + (a_{-2}-a_0)(a_{-3}-a_0)\hs_{11111}
	+ \dotsm
	\end{align*}
\end{example}

\subsection{Homology equivariant Pieri rule}
Let $\rho/\mu$ be a horizontal strip and $q \geq 0$ an integer.  A \defn{$q$-horizontal filling} of $\rho/\mu$ is a filling $T$ of $\rho/\mu$ with the numbers $1,2,\ldots,q+1$ so that the numbers are weakly increasing from left to right regardless of row, and every number from $2$ to $q+1$ is used.  (The number of such $T$ is equal to the number of semistandard Young tableaux for a single row of size $|\rho/\mu|-q$ using the numbers $1$ through $q+1$.)
Define the weight of a $q$-horizontal filling $T$ by
$$
\wt_q(T) := \prod_{(i,j) \in \rho/\mu}(a_{T(i,j)}-a_{j-i+1})
$$
where the product is over all boxes $(i,j)$ such that either $T(i,j)=1$ or $(i,j)$ is not the leftmost occurrence of $T(i,j)$ in $T$.  Thus $\wt(T) \in \Q[a]$ has degree equal to
$|\rho/\mu|-q$ (or is 0 if $q > |\rho/\mu|$).  Similarly, a \defn{$p$-vertical filling} of $\la/\rho$ is a filling $T$ of a vertical strip $\la/\rho$ with integers $0,-1,\ldots,-p$ so that the numbers are weakly decreasing from top to bottom regardless of column, and every number from $-1$ to $-p$ is used.  Define the weight of a $p$-vertical filling $T'$ by
$$
\wt_p(T') := \prod_{(i,j) \in \la/\rho} (a_{j-i+1}-a_{T(i,j)})
$$
where the product is over all boxes $(i,j)$ such that either $T(i,j)=0$ or $(i,j)$ is not the topmost occurrence of $T(i,j)$ in $T$.
%

A $(p,q)$-filling of a $\Lambda$-decomposition $(\la/\rho, \rho/\mu)$ is a pair $(T',T)$ consisting of a $p$-vertical filling $T'$ of $\la/\rho$ and a $q$-horizontal filling $T$ of $\rho/\mu$.  The $(p,q)$-weight of a $\Lambda$-decomposition $D=(\la/\rho, \rho/\mu)$ to be
\begin{align}\label{E:decomp weight}
\wt_{p,q}(D) := \sum_{(T',T)} \wt_p(T') \wt_q(T)
\end{align}
summed over all $(p,q)$-fillings $(T',T)$ of $D$.  We note that $\wt_{0,0}(D)$ is the weight $\wt(D)$ from \eqref{eq:wtD}.  Also note that if $p >|\la/\rho|$ or $q > |\rho/\mu|$ then $\wt_{p,q}(D) = 0$.

The following result gives a rule for multiplication by a hook-shaped dual Schur function.
\begin{thm}\label{thm:homologyhook}
	Let $\mu \in \Par$ and $p,q \geq 0$.  We have
	\begin{align}\label{E:homologyhook}
	\hs_{(q+1,1^{p})}(y||a) \hs_\mu(y||a) =  \sum_\la \sum_{D_-} \wt_{p,q}(D_-) \hs_\la(y||a)
	\end{align}
	where the inner sum is over all distinct $D_-$ that can be obtained from some nonempty $\Lambda$-decomposition $D = (\lambda/\rho,\rho/\mu)$ with outer shape $\lambda$.
	%
\end{thm}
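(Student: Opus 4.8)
The plan is to reduce the identity to an explicit, positive evaluation of double Edelman--Greene coefficients, and then to obtain that evaluation by bootstrapping from the equivariant Monk rule of Theorem~\ref{thm:homologyMonk}, using the homology divided difference operators of Section~\ref{SS:homology divided differences} together with the affine equivariant Pieri rule of Lam--Shimozono \cite{LaSh} and the triple Stanley symmetric functions.

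First I would pass to structure constants. Since $\hLa(y||a)$ is commutative (and dual, via $\Omega$, to the Hopf algebra $\La(x||a)$ over $\Q[a]$), the coefficient of $\hs_\la(y||a)$ in $\hs_{(q+1,1^p)}(y||a)\,\hs_\mu(y||a)$ is the coproduct structure constant $\hat c^{\la}_{\mu,(q+1,1^p)}(a)$, which vanishes unless $\mu\subset\la$ and, by Proposition~\ref{P:coproduct constants are Stanley coefficients} and \eqref{eq:doubleStanleydefn}, equals $j^{\,w_{\la/\mu}}_{(q+1,1^p)}(a)$, the coefficient of $s_{(q+1,1^p)}(x||a)$ in $F_{w_{\la/\mu}}(x||a)$. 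As $w_{\la/\mu}$ is $321$-avoiding, by Proposition~\ref{P:skew double is 321 double Stanley} this is the expansion coefficient of a hook double Schur function in the skew double Schur function $s_{\la/\mu}(x||a)$ of Molev \cite{M}. So the theorem amounts to the statement that $j^{\,w_{\la/\mu}}_{(q+1,1^p)}(a)=\sum_{D_-}\wt_{p,q}(D_-)$, the sum over the $D_-$ coming from the $\Lambda$-decompositions $D=(\la/\rho,\rho/\mu)$ of $\la/\mu$; in particular it is $0$ unless $\la/\mu$ is thin, which is exactly the condition for a $\Lambda$-decomposition to exist (cf.\ Lemma~\ref{lem:ribbon}), so the support statement is automatic.

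Next, the inductive engine. By Proposition~\ref{P:left ddiff homology} the hook is built up box by box, $\hs_{(q+1,1^p)}=\delta_{-p}\cdots\delta_{-1}\,\delta_q\cdots\delta_1\,\delta_0(1)$, and applying the twisted Leibniz rule that underlies \eqref{E:deltaf} to a product $\hs_\nu\,\hs_\mu$ produces a recursion expressing the Pieri product for a partial hook $\nu$ in terms of the Pieri product for the partial hook obtained by deleting its last box, plus a correction built from $\tilde s^a_i$ and $A^a_i(\hs_\mu)=\hs_{\mu+i}$. The base case $p=q=0$ is precisely Theorem~\ref{thm:homologyMonk}, once one checks $\wt_{0,0}(D_-)=\wt(D_-)$ as noted there. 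On the combinatorial side one expects enlarging the arm of the hook (increasing $q$) to correspond to permitting one further value in the $q$-horizontal fillings of the horizontal strip of $D_-$, and enlarging the leg (increasing $p$) to one further value in the $p$-vertical fillings, with Lemma~\ref{lem:ribbon} organizing the binary choices among $\Lambda$-decompositions; the core of the argument is to match these two recursions term by term.

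The main obstacle is that the correction terms are $\tilde s^a_i$-twists, involving the Cauchy-kernel factor $\Omega[-\alpha_i y]$, so they are not finitely supported combinations of dual Schur functions, and in the analogous identity in the affine setting they carry genuine ``unstable'' wrap-around contributions; showing that the recursion closes up and preserves the precise positive form is the heart of the matter. This is where the affine input enters: by the remark following Theorem~\ref{T:create dual Schur}, our construction yields the homology Schubert basis of the affine Grassmannian $\hGr_{k+1}$ of $SL(k+1)$, namely the $k$-double-Schur functions of \cite{LaSh2}, and as $k\to\infty$ these stabilize to the $\hs_\la(y||a)$; the affine Pieri structure constants of \cite{LaSh} are governed by triple Stanley symmetric functions $F_w(x||a||b)$, whose specialization $b=a$ isolates the ``stable'' part surviving in the limit and produces exactly the $(p,q)$-filling weights $\wt_{p,q}(D_-)$, while the $b$-dependent part is the unstable contribution that dies. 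Finally, positivity in the sense of Theorem~\ref{thm:doubleStanleypositivity} follows at once from the explicit answer: by the monotonicity constraints on $(p,q)$-fillings, each factor $a_{T(i,j)}-a_{j-i+1}$ of a filling weight is a linear form $a_i-a_j$ with $i\prec j$.
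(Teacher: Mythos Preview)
Your reduction step is right and matches the paper: the coefficient of $\hs_\la$ in $\hs_{(q+1,1^p)}\hs_\mu$ is $j^{w_{\la/\mu}}_{(q+1,1^p)}(a)$ by Proposition~\ref{P:coproduct constants are Stanley coefficients} (via Theorem~\ref{thm:PetersonSym} and Proposition~\ref{prop:jprod}), and the support statement falls out of the $\Lambda$-decomposition combinatorics. From here on, however, your engine diverges from the paper's and introduces a difficulty that the paper simply avoids.

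You propose to build the hook via $\delta_{-p}\cdots\delta_{-1}\delta_q\cdots\delta_1\delta_0$ on the homology side, hoping a twisted Leibniz rule for $\delta_i$ on products $\hs_\nu\hs_\mu$ will give a workable recursion. As you observe, the $\tilde s^a_i$-correction terms involve the Cauchy factor $\Omega[-\alpha_0 y]$ and are not finite in the dual Schur basis; you then appeal to the affine setting and the triple Stanleys to repair this. The paper never opens this can of worms. Instead it passes immediately to the \emph{triple} Edelman--Greene coefficients $j^w_\mu(a,b)$ and works entirely there: by Proposition~\ref{prop:partialb}, for $i\ne 0$ one has $A_i^b\,j^w_\mu(a,b)=j^w_{\mu+i}(a,b)$, so applying $A^b_{u_{[-p,-1]}d_{[1,q]}}$ to the single-box coefficient $j_1^w(a,b)$ produces $j_{(q+1,1^p)}^w(a,b)$. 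The crucial point is that building a hook from $(1)$ uses only diagonals $\pm1,\pm2,\dotsc$, so the problematic $i=0$ never appears on this side; no Cauchy-kernel correction arises.

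The single-box input $j_1^w(a)$ is not taken from Theorem~\ref{thm:homologyMonk} as an independent black box (that theorem is itself proved the same way) but is read off directly from the explicit affine formula for $\tj_{s_0}$ in \cite{LaSh} (Theorem~\ref{thm:LS}) together with the localization Lemma~\ref{lem:theta} and the stability arguments of \textsection\ref{ssec:stability}. Shift compatibility (Corollary~\ref{cor:tripleshift}) then forces the separation into the $b$-variables, giving $j_1^w(a,b)$; this is the precise mechanism by which ``$b=a$ isolates the stable part'', which you invoked but did not make operational. The proof is completed by the closed-form identification
\[
\wt_{p,q}(D)=\S_{s_{|\rho/\mu|}\cdots s_{q+1}}(a;a_{J+1})\,\S_{s_{-|\la/\rho|}\cdots s_{-p-1}}(a;a_{K+1}),
\]
$J,K$ the diagonal sets of the two strips, which matches the double Schubert polynomials appearing in Theorem~\ref{thm:j hook} after setting $b=a$.

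In short: your reduction and your list of ingredients (affine Pieri from \cite{LaSh}, triple Stanleys, specialization $b=a$) are the right ones, but the paper assembles them via $A_i^b$ on $j^w_\mu(a,b)$ rather than via $\delta_i$ on products of dual Schurs. That reorganization is the whole point: it converts your acknowledged ``main obstacle'' into a non-issue.
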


The proof of Theorem \ref{thm:homologyhook} will be given in \textsection \ref{ssec:proofhomologyhook}. 

\begin{remark} Suppose we forget equivariance by setting $a_i=0$ for all $i$. Let $D$ be a nonempty $\La$-decomposition with outer shape $\la$ appearing in \eqref{E:homologyhook}. Then $\hat{c}^{\la}_{\mu,(q+1,1^p)}(0)$ is the Littlewood-Richardson coefficient, the coefficient of $s_\la$ in the product $s_\mu s_{(q+1,1^p)}$. The latter is the number of standard tableaux $S$ of shape $\la/\mu$ such that $1,2,\dotsc,q+1$ go strictly east and weakly north, and the numbers $q+1,q+2,\dotsc,q+p+1$ go strictly south and weakly west \cite{RW}. By Theorem \ref{thm:homologyhook}, in order to contribute to the sum, $\wt_{p,q}(D_-)$ must be degree $0$. This restricts the sum over $(p,q)$-fillings $(T',T)$ of $D_-$ such that $\wt_p(T')=1=\wt_q(T)$. For each $D_-$ there is a unique filling: in $T'$ the numbers
	$-1,-2,\dotsc,-p$ are used once each and go strictly south and weakly west, while in $T$ the numbers $2,3,\dotsc,q+1$ are used once each and go strictly east and weakly north. These $(T',T)$ biject with the above standard tableaux $S$: $q+1$ appears in the northeastmost box of $D$ in $S$, the numbers $2$ through $q+1$ in $T$ are replaced in $S$ by the numbers $1$ through $q$, and the numbers
	$-1,-2,\dotsc,-p$ in $T'$ are replaced by $q+2,q+3,\dotsc,q+p+1$. Thus the nonequivariant specialization of Theorem \ref{thm:homologyhook} agrees with the Littlewood-Richardson rule.  
\end{remark}

\begin{remark} By Proposition \ref{P:coproduct constants are Stanley coefficients}, $\hat{c}^\la_{\mu\nu}(a) = j^{w_{\la/\mu}}_\nu(a)$.
	Theorem \ref{thm:homologyhook} expresses these polynomials  positively in the sense of Theorem \ref{thm:doubleStanleypositivity} when one of $\mu$ or $\nu$ is a hook.  This should be compared with \cite[\textsection 4]{M} in which a combinatorial formula is given for all $\hat{c}^\la_{\mu\nu}(a)$. This formula does not exhibit the positivity of Theorem \ref{thm:doubleStanleypositivity}.
\end{remark}

\begin{example} Let us compute $\hs_{11} \hs_1$ with $\mu=(1)$, $p=1$ and $q=0$. The answer is given in Example \ref{X:1times11}.  
	
	First, consider $\la = (2,2)$.  Then the $\Lambda$-decompositions are given by taking $\rho = (1,1)$ or $(2,1)$.  In both cases, $D_- = ((2,2)/(2,1), (1,1)/(1,0))$.  There is a single $0$-horizontal filling of $(1,1)/(1,0)$: the box is filled with the number 1.  There is a single $1$-vertical filling of $(2,2)/(2,1)$: the box is filled with the number $-1$.  Thus $\wt_{1,0}(D_-) = (a_1 - a_0)$ which is the coefficient of $\hs_{22}$.
	
	Next, consider $\la = (2,1,1)$.  We have two possibilities for $D_-$: (a) $D_- = ((1,1,1)/(1),\emptyset)$ or (b) $D_- = ((1,1,1)/(1,1), (1,1)/(1))$.  For (a), there are two $1$-vertical fillings: both boxes are labeled $-1$ contributing $\wt_1(T') = (a_{-1}-a_{-1}) = 0$, or one box is labeled $0$ and the other $-1$ contributing $\wt_1(T') = (a_{0}-a_{0}) = 0$.  For (b), there are unique $0$-horizontal and $1$-vertical fillings, giving $\wt_{1,0}(D_-) = (a_1-a_0) $.  So the coefficient of $\hs_{211}$ is $a_1-a_0$.
	
	Finally, let us consider $\la = (3,1)$.  The box $(1,3)$ is ignored.  The box $(1,2)$ must be in the horizontal strip of $D_-$ while the box $(2,1)$ must be in the vertical strip of $D_-$.  There is a unique filling with $(1,0)$-weight $(a_1-a_2)$ which is the coefficient of $\hs_{31}$.  
\end{example}

\section{Peterson subalgebra}
In the affine setting, Peterson constructed a commutative subalgebra $\tP \subset \ttA$ (recalled in Appendix~\ref{A:affinenilHecke}) of the level-zero affine nilHecke algebra $\ttA$, and showed that the torus-equivariant homology $H_*^T(\hGr_n)$ of the affine Grassmannian $\hGr_n$ is isomorphic to $\tP$  \cite{Pet, Lam}.  The Peterson subalgebra $\tP$ is a nilHecke counterpart to the large commutative subgroup $\Z^{n-1} \subset \tilde{S_n}$ sitting inside the affine symmetric group. 

The infinite symmetric group $S_\Z$ does not contain an analogous lattice as a subgroup.  Nevertheless, in this section, we construct a subalgebra $\P' \subset \A'$ that is an analogue of Peterson's subalgebra for the (completed) infinite nilHecke algebra $\A'$.   We show that there is an isomorphism $\P' \cong \hLa(y||a)$ of $\Q[a]$-Hopf algebras and identify the element $j_\lambda \in \P'$ that is mapped to the dual Schur function $\hs_\lambda(y||a)$ under this isomorphism.

\subsection{Affine symmetric group}\label{ssec:tSn}
The affine symmetric group $\tS_n$ is the infinite Coxeter group with generators $s_0,s_1,\ldots, s_{n-1}$ and relations $s_i s_j = s_j s_i$ for $|i-j|\geq 2$ and $s_i s_{i+1}s_i = s_{i+1} s_i s_{i+1}$ for all $i$.  Here indices are taken modulo $n$.

We have an isomorphism $\tS_n \cong S_n \rtimes Q^\vee$, where $Q^\vee :=\{\lambda = (\lambda_1,\ldots,\lambda_n) \mid \sum_{i=1}^n \lambda_i =0\} \subset \Z^n$ is the coroot lattice spanned by the simple coroots $\alpha_i^\vee = e_i-e_{i+1}$ for $1\le i\le n-1$.  For $\lambda \in Q^\vee$, we write $t_\lambda \in \tS_n$ for the corresponding \defn{translation element}.  Then 
\begin{equation}\label{eq:affinetrans}
t_\lambda t_\mu = t_{\lambda+\mu} = t_\mu t_\la
\end{equation}
and $w t_\lambda w^{-1} = t_{w \cdot \lambda}$.

Let $\tilde{S}_n^0$ be the set of $0$-Grassmannian elements, i.e. those $w\in \tS_n$ such that $ws_i>w$ for all $i\ne 0$.
Each coset $w S_n$ for $S_n$ inside $\tS_n$ contains a unique translation element $t^w$, and a unique $0$-Grassmannian element.  
Suppose $w\in \tS_n^0$ and $t^w=t_\mu$ for $\mu\in Q^\vee$. Let $u_\mu\in S_n$ be the shortest element such that $u_\mu(\mu)$ is antidominant. Then $t^w=t_\mu \doteq w u_\mu^{-1}$.


\subsection{Translation elements}\label{SS:translation}
Unlike the affine symmetric group, the infinite symmetric group $S_\Z$ does not contain translation elements.  Nevertheless, it is possible to define elements $\tau^w $ in the infinite nilHecke algebra which behave like translation elements.  Recall that in \textsection \ref{ssec:nilHecke} we have defined the nilHecke algebra $\A$, which has a $\Q[a]$-basis $A_w$, $w \in S_\Z$.  Let $\A'$ denote the completion of $\A$, consisting of formal $\Q[a]$-linear combinations of the elements $A_w$.  For a given $w \in S_\Z$, there are only finitely many pairs $(u,v) \in S_\Z \times S_\Z$ such that $w \doteq uv$.  It follows that the multiplication in $\A$ induces a natural $\Q[a]$-algebra structure on $\A'$.

Recall also that we defined a comultiplication map $\Delta:\A \to \A \otimes_{\Q[a]} \A$.  Under the pairing \eqref{eq:PsiApairing}, $\A'$ is dual to $\Psi$.  It follows from Proposition~\ref{prop:bAbR} that $\Delta$ extends to a comultiplication $\A' \to \A' \otimes_{\Q[a]} \A'$.

Let $[a,b]\subset \Z$ be an interval. For $n\gg0$ there is an injective homomorphism
$S_{[a,b]} \to \tS_n$ defined by $s_i \mapsto s_{i\mod n}$ for $a\le i<b$.
Thus any $w\in S_\Z$ can be viewed as an element of $\tS_n$ for sufficiently large $n$.
 
\begin{lemma}\label{lem:stabletranslation}
Let $w \in S_\Z^0$.  There is a positive integer $m$ and a word $\a$ in the symbols
\begin{align}\label{E:stable generators}
\{s_{-m},s_{1-m},\ldots,s_{-1},s_0,s_1,\ldots,s_{m-1}\} \cup \{r, r'\}
\end{align}
such that for any sufficiently large $n \gg m$, a reduced word $\ta$ for $t^w$ (treating $w$ as an element of $\tS_n$) is obtained from $\a$ by the substitutions
\begin{equation}
\label{eq:subs}
r \mapsto s_m s_{m+1} \dotsm s_{-m-1} \qquad \text{ and } \qquad r' \mapsto s_{-m-1}  \dotsm s_{m+1} s_m.
\end{equation}
\end{lemma}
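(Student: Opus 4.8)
The plan is to write down an explicit reduced word for $t^w$ by combining the factorization $t^w = t_\mu \doteq w u_\mu^{-1}$ recalled just above the lemma with a careful description of the coweight $\mu$, and then to observe that both factors stabilize, as $n$ grows, into the stated shape. Throughout, fix $w = w_\la \in S_\Z^0$ and choose $m$ with $m - 1 \geq \max(\ell(\la), \la_1)$, so that, viewed in $S_\Z$, $w$ fixes every integer outside $(1-m, m)$.

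\textbf{Step 1: the shape of $\mu$.} For $n \gg m$ the element $w$, regarded in $\tilde{S}_n$ via the identification of $n$-cores (equivalently $(n-1)$-bounded partitions) with $\tilde{S}_n^0$, acts on the window $[1-m, m-1]$ exactly as $w_\la$ does, and is $n$-periodic far from $0$. Reading off the coweight $\mu \in Q^\vee \subset \Z^n$ (say from the abacus of the associated $n$-core, or from the winding numbers $\lfloor w(j)/n \rfloor$ of the window notation), one finds that $\mu$ is supported on the two end-blocks of coordinates $\{1, \dots, m\}$ and $\{n - m + 1, \dots, n\}$, that $\mu$ restricted to each end-block is a fixed vector depending only on $\la$ (the left block with entries summing to $|\la|$, the right block with entries summing to $-|\la|$, of opposite signs), and that $\mu_m = \mu_{m+1} = \dots = \mu_{n - m + 1} = 0$. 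The long middle zero block, whose length $n - 2m + 1$ grows with $n$, is exactly where $n \gg m$ enters, and it is the source of all the stability below.

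\textbf{Step 2: reduced words for $w$ and $u_\mu^{-1}$.} Since $w$ moves only integers in $(1-m, m)$, every reduced word for $w$ in $\tilde{S}_n$ (with $n \gg m$) uses only the local generators $\{s_{-m}, \dots, s_{m-1}\}$ of \eqref{E:stable generators} and is independent of $n$; fix one, say $\a_w$. The element $u_\mu$ is by definition the shortest permutation sorting the entries of $\mu$ into weakly increasing order; because of the two-end-block form of Step 1, $u_\mu$ transports the fixed-size right block leftward across the zero block and the fixed-size left block rightward across it. Each such parallel slide across the zero block consumes one copy of the sweep $s_m s_{m+1} \cdots s_{n - m - 1}$ or its reverse, while the rearrangements needed inside the two end windows of width $\approx m$ are fixed local patterns depending only on $\la$. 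Hence $u_\mu^{-1}$ admits a reduced word of the form $\b_0\, r^{(1)}\, \b_1 \cdots r^{(p)}\, \b_p$, where $p$ is a constant bounded in terms of $|\la|$, each $r^{(k)}$ is the symbol $r$ or $r'$, and each $\b_k$ is a word of bounded length in $\{s_{-m}, \dots, s_{m-1}\}$; reducedness is automatic from $u_\mu$ being of minimal length, and the decomposition can be established formally by peeling off the nonzero entries of $\mu$ one at a time.

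\textbf{Step 3: assembling, and the main difficulty.} Because $t^w \doteq w u_\mu^{-1}$ is length-additive, the concatenation $\a := \a_w\, \b_0\, r^{(1)}\, \b_1 \cdots r^{(p)}\, \b_p$, read as a word in the alphabet $\{s_{-m}, \dots, s_{m-1}\} \cup \{r, r'\}$, becomes, after the substitution \eqref{eq:subs} (which merely re-expands the sweeps and invokes no braid relations), a reduced word for $t^w$ for every $n \gg m$; this is the assertion of Lemma~\ref{lem:stabletranslation}. I expect the main obstacle to be the bookkeeping in Steps 1--2: pinning down $\mu$ precisely, and in particular verifying that its nonzero entries and the associated end-window rearrangements are genuinely independent of $n$ once $n \gg m$, and then checking that the minimal sorting permutation decomposes uniformly as fixed local patterns interleaved with full sweeps. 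Both points are elementary but delicate with the affine conventions, and both rest ultimately on the middle zero block produced by $w$ having bounded support.
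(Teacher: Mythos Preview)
Your approach is essentially the same as the paper's: both use the length-additive factorization $t^w \doteq w\,u_\mu^{-1}$, observe that any reduced word for $w$ uses only the local generators, and then argue that $u_\mu$ (equivalently $u_\mu^{-1}$, since the images of $r$ and $r'$ are inverse) can be written as sweeps interleaved with local moves. One correction to Step~1: the coweight is $\mu = \bar\beta_\la$ with all entries in $\{-1,0,1\}$, the $+1$'s occurring at positions $I_{w_\la,+}$ and the $-1$'s at positions $I_{w_\la,-}\bmod n$; hence the block sums are $|I_{w_\la,+}|$, not $|\la|$. This sharper description is exactly what makes Step~2 go through cleanly---each nonzero entry needs to be carried across the zero block only once---and the paper leans on a worked example rather than a formal argument for the sorting, much as you do.
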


To explain how to find the above word, let $Q^\vee_{\Z} \subset \bigoplus_{i\in \Z} \Z e_i$ be the infinite coroot lattice, the sublattice spanned by $\alpha_i^\vee = e_i-e_{i+1}$ for $i\in\Z$.
Given $\la\in\Par$ let $\beta=\beta_\la \in Q^\vee_{\Z}$ be the element
\begin{align*}
	\beta_\la = \sum_{i\in I_{w_\la,+}} e_i - \sum_{i\in I_{w_\la,-}} e_i
\end{align*}
(see \eqref{E:wla}, \eqref{E:I+}, \eqref{E:I-}). There is a projection $Q^\vee_{\Z} \to Q^\vee$ denoted $\beta \mapsto \overline{\beta}$, onto the translation lattice $Q^\vee$ in $\tilde{S}_n$, given by $e_i \mapsto e_{i+n\Z}$. We have
\begin{align*}
	t^{w_\la} &= t_{\overline{\beta}_\la} \qquad\text{for $\la\in\Par$.}
\end{align*}
Let $m$ be large enough so that $\la$ is contained in the $m\times m$ square partition and $n \ge 2m$. Then $|I_{w_\la,\pm}| \le m$
and all coordinates in $\bb$ are in $\{-1,0,1\}$ with coordinates $1$ (resp. $-1$) occurring only in the first (resp. last) $m$ positions.

To prove Lemma \ref{lem:stabletranslation} it suffices to show that
the element $u_{\bb}^{-1}$ is in the image of a product of the generators \eqref{E:stable generators} under the substitution \eqref{eq:subs}. Since images of $r$ and $r'$ are inverses we may replace $u_{\bb}^{-1}$ by $u_{\bb}$. It is enough to be able to sort
$\bb$ to antidominant using the generators. This is explained by the following example.

\begin{example} Let $\la=(4,4,3,1)$. Let us take $m=4$ and $n=11$.	
We have $I_+ =\{1,3,4\}$ and $I_-=\{-3,-1,0 \}$ because the vertical (resp. horizontal) line segments tracing the edge of $\la$ above (resp. below) the main diagonal, occur at segments $1,3,4$ (resp. $-3,-1,0$)
where the main diagonal separates segment $0$ and $1$.  This is illustrated in Figure~\ref{fig:I+-}.
\begin{figure}
\begin{center}
\begin{tikzpicture}[scale=0.6,line width=0.8mm]
\draw[line width=0.4mm] (4,4)--(0,4); 
\draw[line width=0.4mm] (4,3)--(0,3);
\draw[line width=0.4mm] (4,2)--(0,2);
\draw[line width=0.4mm] (3,1)--(0,1);
\draw[line width=0.4mm] (1,0)--(0,0);
\draw[line width=0.4mm] (0,0)--(0,4);
\draw[line width=0.4mm] (1,0)--(1,4);
\draw[line width=0.4mm] (2,1)--(2,4);
\draw[line width=0.4mm] (3,1)--(3,4);
\draw[line width=0.4mm] (4,2)--(4,4);
\draw[line width=0.4mm, dotted] (0,4)--(3,1);

\draw (0.5,-0.35) node {$\scriptstyle{-3}$};
\draw (1.5, 0.65) node {$\scriptstyle{-1}$}; 
\draw (2.5, 0.65) node {$\scriptstyle{0}$};
\draw (3.30, 1.5) node {$\scriptstyle{1}$};
\draw (4.30, 2.5) node {$\scriptstyle{3}$};
\draw (4.30, 3.5) node {$\scriptstyle{4}$};
\end{tikzpicture}
\end{center}
\caption{For $\lambda = (4,4,3,1)$, we have $I_+ = \{1,3,4\}$ and $I_-=\{-3,-1,0\}$.}
\label{fig:I+-}
\end{figure}
We have $\bb = (1,0,1,1|0,0,0|-1,0,-1,-1)$ where the positions of the $1$s (resp. $-1$s) are given by $I_+$ (resp. $I_-$) mod $n$. The vertical bars separate the first $m=4$ positions and the last $m$ positions. Between are zeroes. Recalling that indices of reflections are identified modulo $n$, the generators are $s_7,s_8,s_9,s_{10}$, $s_0,s_1,s_2,s_3$ and $r\mapsto s_6s_5s_4$ and $r'\mapsto s_4s_5s_6$.
We must move $\bb$ to the antidominant chamber with a shortest element in $S_n$ using the given generators. Starting with $\bb$ we may apply
$r' s_7  r$ to get $(1,0,1,-1|0,0,0|1,0,-1,-1)$, then apply simple generators to reach $(-1,0,1,1|0,0,0|-1,-1,0,1)$, then apply $r' s_7 r$ to get $(-1,0,1,-1|0,0,0|1,-1,0,1)$, simple generators to reach
$(-1,-1,0,1|0,0,0|-1,0,1,1)$, $r' s_7 r$ to reach
$(-1,-1,0,-1|0,0,0|1,0,1,1)$, and simple generators to reach
$(-1,-1,-1,0|0,0,0|0,1,1,1)$ which is antidominant.
\end{example}

For $b\le 0 < a$, define
\begin{align}
\label{E:rab}
r_{a,b} &:= (\prod_{i=a}^\infty s_i)( \prod_{i=-\infty}^{b-1} s_i)  =  (s_a s_{a+1} s_{a+2} \cdots ) ( \cdots s_{b-2} s_{b-1}) \\
\label{E:rba}
r_{b,a} &:= (\prod_{i=b-1}^{-\infty} s_i)( \prod_{i=\infty}^{a} s_i)  =  ( s_{b-1} s_{b-2} \cdots ) ( \cdots s_{a+2} s_{a+1} s_a).
\end{align}
Each of these are infinite words in the alphabet $\{ s_i \mid i \in \Z \setminus \{0\}\}$, and each is a concatenation of two \defn{infinite reduced words}.  Abusing notation, we will use the same symbols $r_{a,b}$ and $r_{b,a}$ to represent  the following permutations of $\Z$ (that do not belong to $S_\Z$):
\begin{equation}\label{eq:rab}
r_{a,b}(i) = \begin{cases} i & \mbox{if $b < i < a$,} \\
i+1 &\mbox{if $i \geq a$ or $i < b$,} \\
a & \mbox{if $i = b$;}
\end{cases}\qquad
r_{b,a}(i) = \begin{cases} i & \mbox{if $b < i < a$,} \\
i-1 &\mbox{if $i > a$ or $i \leq b$,} \\
b & \mbox{if $i = a$.}
\end{cases}
\end{equation}

Let $\cS$ denote the set of infinite words in the alphabet $\{s_i \mid i \in \Z \setminus \{0\}\}$ obtained as a finite concatenation of the words $s_i$, $i \in \Z \setminus \{0\}$ and the words $r_{a,b}$, $a>0$ and $b \leq 0$.  Suppose $\a \in \cS$, and $s$ is a letter in $\a$.  Then we have a unique factorization
$
\a = \a' \, s \, \a''
$
where again $\a', \a'' \in \cS$.  We define a root $\beta(s)$ by
$$
\beta(s) := - \a' \cdot (a_i - a_{i+1})
$$
if the letter $s$ is equal to $s_i$.  Here the action of $\a'$ on $\Q[a]$ is the one induced by the action on $\Z$ given by \eqref{eq:rab}.

\begin{definition}\label{def:translation}
Let $w \in S_\Z^0$.  Define the \defn{infinite translation element} $\tau^w \in \A'$ as follows.  Take the word $\a$ of Lemma \ref{lem:stabletranslation} and replace each occurrence of $r$ or $r'$ by infinite words as follows:
$$
r \mapsto r_{m,-m} \qquad \text{ and } \qquad r' \mapsto r_{-m,m}
$$
to obtain an infinite word $\a^w_\infty \in \cS$.  Now for $v \in S_\Z$, define $\xi^v|_{\tau^w} \in \Q[a]$ (cf. Proposition \ref{prop:Billey}) by
$$
\xi^v|_{\tau^w} := \sum_{\b \subset \a^w_\infty} \prod_{s \in \b} \beta(s)
$$
summed over finite subwords $\b$ of $\a^w_\infty$ that are reduced words for $v$, and define $\tau^w \in \A'$ by
\begin{equation}\label{eq:tau}
\tau^w := \sum_{v \in S_\Z} \xi^v|_{\tau^w} A_w.
\end{equation}
\end{definition}

\begin{remark}
Suppose $w \in S_\Z^0$ and we have $I_{w,+} = \{1 \leq d_t < d_{t-1} < \cdots < d_1\}$ and $I_{w,-} = \{e_1 < e_2 < \cdots < e_t \leq 0\}$.  Then a possible choice of $\a^w_\infty$ is:
$$
\u (\prod_{j=t}^1 r_{j,e_j }) (\prod_{j=t}^1 r_{1-j-f_j,d_j}) 
$$
where $\u$ is a reduced word for $w$ and $f_j = |I_{w,-} \cap (-j,0]|$ for $j = 1,2,\ldots,t$.  Note that if $w$ is the identity element, then $\tau^w = 1$.
\end{remark}

\begin{remark}
In Definition \ref{def:translation} we have used Lemma \ref{lem:stabletranslation} which relies on the notion of translation elements in the affine symmetric group.  In future work 
we plan to study the Schubert calculus of a flag ind-variety associated to the affine infinite symmetric group $Q^\vee_\Z \rtimes S_{\Z}$, which contains translation elements $\tau^w$ as defined above.
\end{remark}

\begin{proposition}\label{prop:translations} The elements $\tau^x$ satisfy the following properties.
\begin{enumerate}
\item For $x \in S_\Z^0$, we have that $\tau^x$ is a well-defined element of $\A'$ that does not depend on the choices of $m$ and $\a$ in Lemma \ref{lem:stabletranslation}.
\item The set $\{\tau^x \mid x \in S_\Z^0\}$ is linearly independent in $\A'$.
\item If $z = xy$ under the partial product of \textsection \ref{ssec:partialproduct}, then $\tau^z = \tau^x \tau^y = \tau^y \tau^x$.
\item We have $\tau^x \tau^y = \tau^y \tau^x$ for any $x,y \in S_\Z^0$.
\item We have $\tau^x p = p \tau^x$ for any $x \in S_\Z^0$ and any $p \in \Q[a]$.
\item We have $\Delta(\tau^x)=\tau^x \otimes \tau^x$ for any $x \in S_\Z^0$.
\end{enumerate}
\end{proposition}
Proposition \ref{prop:translations} is proven in \textsection \ref{ssec:translationproof}.

\subsection{The Peterson subalgebra}

Let $\bigoplus_w \Q(a) \tau^w$ denote the $\Q(a)$-vector subspace of $\Q(a)\otimes_{\Q[a]} \A'$  spanned by the elements $\tau^w$.  Define the $\Q[a]$-submodule $\P \subset \A'$ by
$$
\P:= \A' \cap \bigoplus_w \Q(a) \tau^w.
$$
By Proposition \ref{prop:translations}(4), $\P$ lies within the centralizer subalgebra $Z_{\A'}(\Q[a])$.

Recall that $j_\lambda^w(a)$ denotes the coefficient of the double Schur function $s_\lambda(x||a)$ in the double Stanley symmetric function $F_w(x||a)$.
For $\lambda \in \Par$, define 
$$
j_\lambda = \sum_w  j_\lambda^w(a) A_w \in \A'.
$$

\begin{thm}\label{thm:jbasis}
For any $\lambda \in \Par$, we have $j_\lambda \in \P$, and it is the unique element of $\P$ satisfying
\begin{equation}\label{eq:j}
j_\lambda = A_{w_\lambda} + \sum_{u \notin S_\Z^0} a_u A_u
\end{equation}
where $a_u \in \Q[a]$ and the summation is allowed to be infinite.  The submodule $\P$ is a free $\Q[a]$-module with basis $\{j_\lambda \mid \lambda \in \Par\}$.
\end{thm}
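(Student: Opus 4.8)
The plan is to establish, in order: (i) the ``leading term'' of $j_\lambda$; (ii) a localization formula for the translation elements $\tau^w$; and (iii) that each $j_\lambda$ lies in $\P$, after which uniqueness and the basis statement follow formally.

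\textbf{Step (i): leading term.} By Proposition~\ref{P:Grassmannian Stanley}, $F_{w_\mu}(x||a)=s_\mu(x||a)$, so $j^{w_\mu}_\nu(a)=\delta_{\mu\nu}$ for all $\mu,\nu\in\Par$. Hence in $j_\lambda=\sum_w j^w_\lambda(a)A_w$ the only Grassmannian index occurring is $w_\lambda$, with coefficient $1$. Writing $\mathcal J:=\prod_{u\notin S_\Z^0}\Q[a]A_u\subseteq\A'$ (the annihilator of the $\xi^{w_\mu}$ under the pairing~\eqref{eq:PsiApairing}), this says precisely $j_\lambda\equiv A_{w_\lambda}\pmod{\mathcal J}$, which is the form~\eqref{eq:j}. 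Since the $j^w_\lambda(a)$ are polynomials, $j_\lambda\in\A'$ automatically, and as the $w_\lambda$ are distinct the $j_\lambda$ are $\Q[a]$-linearly independent.

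\textbf{Step (ii): localization of $\tau^w$.} I would prove that for every $v\in S_\Z$ and $w\in S_\Z^0$,
\begin{align}\label{plan:loc}
\langle\xi^v,\tau^w\rangle=\xi^v|_{\tau^w}=\lim_{n\to\infty}\xi^v|^{(n)}_{t^w},
\end{align}
where the right-hand side is the localization of the affine flag Schubert class $\xi^v$ at the translation element $t^w\in\tilde{S}_n$. Indeed, by Lemma~\ref{lem:stabletranslation} the infinite word $\a^w_\infty\in\cS$ of Definition~\ref{def:translation} becomes, under the substitutions~\eqref{eq:subs}, a reduced word for $t^w$ in $\tilde{S}_n$ for all $n\gg0$, and the roots $\beta(s)$ used in Definition~\ref{def:translation} match those of Proposition~\ref{prop:Billey} for $\tilde{S}_n$; comparing the two Billey-type formulas and letting $n\to\infty$ gives~\eqref{plan:loc}. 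For $v=w_\mu$ Grassmannian, $\xi^{w_\mu}$ is invariant under right multiplication by $s_i$ for $i\neq0$, and $t^w\doteq wu_\mu^{-1}$ with $u_\mu\in\langle s_1,\dots,s_{n-1}\rangle$, so $\xi^{w_\mu}|^{(n)}_{t^w}=\xi^{w_\mu}|^{(n)}_{t^w u_\mu}=\xi^{w_\mu}|^{(n)}_{w}$; passing to the limit and using Proposition~\ref{P:schub loc} yields
\begin{align}\label{plan:Gloc}
\langle\xi^{w_\mu},\tau^w\rangle=e^{w_\mu}_w\qquad(\mu\in\Par,\ w\in S_\Z^0).
\end{align}
I expect Step (ii) to be the main obstacle: reconciling the word $\a^w_\infty$ with genuine affine reduced words and controlling the stable limit $n\to\infty$ is the technical heart of the argument.

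\textbf{Step (iii): conclusion.} Granting~\eqref{plan:loc} and~\eqref{plan:Gloc}, the rest is formal. By Proposition~\ref{P:GKM Schubert basis} the matrix $(e^{w_\mu}_w)_{\mu\in\Par,\,w\in S_\Z^0}$ is triangular for the Bruhat order with nonzero diagonal $e^w_w\neq0$, and each column $(e^{w_\mu}_w)_\mu$ has finite support $\{\mu:w_\mu\le w\}$. Since the $\tau^w$ are $\Q(a)$-linearly independent (Proposition~\ref{prop:translations}), it follows that the $\Q(a)$-linear map $\Phi:\bigoplus_w\Q(a)\tau^w\to\bigoplus_\mu\Q(a)$, $\phi\mapsto(\langle\xi^{w_\mu},\phi\rangle)_\mu$, is an isomorphism; in particular $\mathcal J\cap\P=0$ (if $\phi=\sum_{w\in W_0}d_w\tau^w\in\mathcal J$ with $W_0\subset S_\Z^0$ finite, then $\sum_w d_w e^{w_\mu}_w=0$ for all $\mu$, and restricting to the $\mu$ with $w_\mu\in W_0$ forces all $d_w=0$). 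Let $\tilde j_\lambda\in\bigoplus_w\Q(a)\tau^w$ be the unique $\Phi$-preimage of $(\delta_{\lambda\mu})_\mu$, a \emph{finite} $\Q(a)$-combination of translation elements. I would identify $\tilde j_\lambda=j_\lambda$ by comparing Schubert localizations for all $v\in S_\Z$ (clearing a common denominator of the coefficients of $\tilde j_\lambda$, both sides then lie in $\A'$ and are determined by the values $\langle\xi^v,-\rangle$): these agree for $v$ Grassmannian by construction, and for general $v$ by~\eqref{plan:loc} together with the identification $\langle\xi^v,j_\lambda\rangle=j^v_\lambda(a)$ of the combinatorial coefficient with the corresponding homology localization (via Proposition~\ref{prop:bAbR}). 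Thus $j_\lambda=\tilde j_\lambda\in\P$. Uniqueness in~\eqref{eq:j} is then immediate from $\mathcal J\cap\P=0$. Finally, for any $\phi\in\P$ the localizations $\langle\xi^{w_\mu},\phi\rangle$ vanish for all but finitely many $\mu$ (finite-support property), so $\phi-\sum_\mu\langle\xi^{w_\mu},\phi\rangle\,j_\mu$ is a finite sum lying in $\mathcal J\cap\P=0$; hence $\{j_\lambda\mid\lambda\in\Par\}$ is a $\Q[a]$-basis of $\P$.
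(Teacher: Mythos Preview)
Your Steps (i) and (ii) are essentially correct and match what the paper does: Step (i) is immediate from Proposition~\ref{P:Grassmannian Stanley}, and your Step (ii) is precisely equation \eqref{eq:taustabilize} in the paper, with \eqref{plan:Gloc} following as you say. (A notational quibble: in \eqref{plan:Gloc} your $u_\mu$ depends on $w$, not on the partition indexing the Schubert class; write $t^w=wu^{-1}$ with $u\in S_n$.)

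The genuine gap is in Step (iii), in the identification $\tilde j_\lambda=j_\lambda$ at \emph{non}-Grassmannian $v$. Agreement at Grassmannian indices determines elements of $\bigoplus_w\Q(a)\tau^w$ (that is your isomorphism $\Phi$), but it does \emph{not} determine elements of $\A'$; and at this point you do not yet know that $j_\lambda$ lies in the span of the $\tau^w$. So you cannot conclude $j_\lambda=\tilde j_\lambda$ from Grassmannian data alone. For general $v$ you need
\[
\langle\xi^v,\tilde j_\lambda\rangle \;=\; j^v_\lambda(a).
\]
Unwinding your definition of $\tilde j_\lambda$ via $\Phi^{-1}$ and using \eqref{plan:Gloc}, the left side is $\sum_w B_{w,w_\lambda}\,\xi^v|_{\tau^w}$ where $B=(e^{w_\mu}_w)^{-1}$; by \eqref{plan:loc} and \eqref{eq:LS3} this is exactly $\lim_{n\to\infty}\tj^v_{w_\lambda}$, the stable limit of the \emph{affine} double Edelman--Greene coefficient. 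So what you actually need is
\[
j^v_\lambda(a)\;=\;\lim_{n\to\infty}\tj^v_{w_\lambda}\qquad\text{for all }v\in S_\Z,
\]
and this is not a consequence of Proposition~\ref{prop:bAbR}, which is only the infinite GKM isomorphism $\bR(x;a)\cong\Psi$. In the paper this is Proposition~\ref{prop:jstabilize}, and its proof requires genuine input from the affine side: Theorem~\ref{thm:small} (the image of $\bS_w(x;a)$ in $H^*_{T_n}(\hFl_n)$ represents $\xi^w_{\hFl_n}$ for $n\gg0$) and Proposition~\ref{prop:wrongwaycommute} (the wrong-way map $\varpi$ intertwines with $\eta_a$), so that $F_w(x\|a)$ represents the affine Stanley class $\varpi(\xi^w_{\hFl_n})$. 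Once this identification is in hand, $j_\lambda=\lim_n\tj_{w_\lambda}$ coefficientwise, and then \eqref{eq:LS2} plus Lemma~\ref{lem:AB} give $j_\lambda=\sum_{v\le w_\lambda}B_{v\,w_\lambda}\tau^v\in\P$ directly; the rest of your Step (iii) (uniqueness via $\mathcal J\cap\P=0$, and the basis statement via finite support of $\langle\xi^{w_\mu},\,\cdot\,\rangle$) is fine. In short, contrary to your expectation, Step (ii) is routine; the heart of the proof is supplying the missing bridge between the combinatorially defined $j^v_\lambda(a)$ and the affine coefficients, and that bridge is geometric.
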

Theorem \ref{thm:jbasis} will be proved in \textsection \ref{ssec:jbasisproof}.  Let $\P'$ be the completion of $\P$ whose elements are formal $\Q[a]$-linear combinations of the elements $\{j_\lambda \mid \lambda \in \Par\}$.  We call $\P'$ the \defn{Peterson subalgebra}.

\begin{thm}\label{thm:PetersonHopf}
The submodule $\P' \subset \A'$ is a commutative and cocommutative Hopf algebra over $\Q[a]$.
\end{thm}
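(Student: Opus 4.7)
My plan is to combine the intersection description $\P = \A' \cap \bigoplus_{w \in S_\Z^0} \Q(a)\tau^w$ from Theorem~\ref{thm:jbasis} with the three key properties of translations in Proposition~\ref{prop:translations}: the $\tau^w$ pairwise commute, commute with $\Q[a]$, and are group-like under $\Delta$. Commutativity and cocommutativity of $\P'$ then follow immediately from parts (4) and (6) of that proposition, since every element of $\P'$ is a $\Q(a)$-linear combination of the pairwise-commuting translations, and $\Delta(\tau^w) = \tau^w \otimes \tau^w$ is symmetric under the swap of tensor factors.

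For closure under the coproduct of $\A'$, I would argue as follows. The group-like identity $\Delta(\tau^w) = \tau^w \otimes \tau^w$ shows that $\Delta$ sends $\bigoplus_w \Q(a)\tau^w$ into its own $\Q(a)$-tensor square. Separately, the formula $\Delta(A_i) = A_i \otimes 1 + s_i \otimes A_i$ of \eqref{eq:deltapartial} implies that $\Delta$ restricts to a map $\A' \to \A' \otimes_{\Q[a]} \A'$. Intersecting these two containments and invoking the intersection definition of $\P$, we conclude $\Delta(\P') \subset \P' \otimes_{\Q[a]} \P'$, so $\P'$ is a subcoalgebra of $\A'$.

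The main obstacle is to show that $\P'$ is closed under multiplication. Since $\A'$ is itself a $\Q[a]$-algebra, it is enough to check that a product $\tau^x \tau^y$ lies in $\bigoplus_w \Q(a)\tau^w$. This is automatic when the partial product $xy$ of Section~\ref{ssec:partialproduct} is defined, by Proposition~\ref{prop:translations}(3); Proposition~\ref{prop:translations}(4)--(5) guarantee that $\tau^x\tau^y$ lies in the centralizer $Z_{\A'}(\Q[a])$ in all cases. For the general case I would exploit duality with $\Psi_\Gr \cong \La(x||a)$: under the pairing of Proposition~\ref{prop:PsiA}, the $j$-basis of $\P'$ is dual to the Schubert basis $\{\xi^{w_\mu}\}$ of $\Psi_\Gr$, since
\[
\langle \xi^{w_\mu}, j_\lambda \rangle = j_\lambda^{w_\mu}(a) = \delta_{\lambda\mu}
\]
by Proposition~\ref{P:Grassmannian Stanley} (because $F_{w_\mu}(x||a) = s_\mu(x||a)$). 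The commutative $\Q[a]$-Hopf algebra structure on $\La(x||a)$ of Proposition~\ref{prop:locHopf} then transfers by duality to a commutative $\Q[a]$-bialgebra structure on $\P'$, and the group-like property $\Delta(\tau^w)=\tau^w\otimes\tau^w$ ensures that this dual multiplication agrees with the one inherited from $\A'$ on the generating translations, hence on all of $\P'$.

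Finally, the counit $\epsilon : \P' \to \Q[a]$ is determined by $\epsilon(\tau^w) = 1$, equivalently $\epsilon(j_\lambda) = \delta_{\lambda,\emptyset}$ (consistent with Lemma~\ref{lem:doubleid}), and the antipode $S$ is the $\Q[a]$-module endomorphism obtained by dualizing the antipode of $\La(x||a)$. The remaining Hopf algebra axioms are automatic on the group-like generators $\tau^w$ and extend by $\Q[a]$-linearity to all of $\P'$, completing the proof.
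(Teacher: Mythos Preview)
Your overall architecture is close to the paper's, and several of the ingredients you cite (Proposition~\ref{prop:translations}(4)--(6), the duality with $\Psi_\Gr$) are exactly the ones used.  However, the step where you conclude that the $\A'$-multiplication on $\P'$ agrees with the multiplication dual to the coproduct of $\Psi_\Gr$ has a real gap.

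The group-like identity $\Delta(\tau^w)=\tau^w\otimes\tau^w$ is a statement about the \emph{coproduct} $\Delta$ on $\A'$, which by Proposition~\ref{prop:PsiA}(2) is dual to the \emph{product} on $\Psi'$.  It says nothing about how the $\A'$-\emph{product} interacts with the $\Psi_\Gr$-\emph{coproduct}.  So your sentence ``the group-like property\ldots ensures that this dual multiplication agrees with the one inherited from $\A'$'' is not justified: you have invoked the wrong duality.  More concretely, the dual product $*$ you define is only tested by pairings $\langle\xi^{w_\nu},j_\lambda*j_\mu\rangle$ for Grassmannian $w_\nu$, whereas the $\A'$-product $j_\lambda\cdot j_\mu$ is an element of $\A'$ determined by its coefficients at \emph{all} $A_w$.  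Knowing these agree at Grassmannian $w$ would show $j_\lambda\cdot j_\mu=j_\lambda*j_\mu$ \emph{provided} $j_\lambda\cdot j_\mu$ already lies in $\P'$; it does not establish that containment.

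The paper closes this gap with Proposition~\ref{prop:jprod}, which computes the coefficient of $A_w$ in $j_\lambda\cdot j_\mu$ for \emph{every} $w\in S_\Z$.  Using that $j_\lambda$ commutes with $\Q[a]$ (Proposition~\ref{prop:translations}(5)), this coefficient is $\sum_{w\doteq uv}j_\lambda^u(a)\,j_\mu^v(a)$, i.e.\ the coefficient of $s_\lambda(x||a)\otimes s_\mu(x||a)$ in $\Delta(F_w(x||a))$ by Corollary~\ref{cor:coproddoubleStanley}.  Re-expanding via Corollary~\ref{cor:coproddoubleSchur} identifies this with the coefficient of $A_w$ in $\sum_{\nu\supset\mu}j_\lambda^{w_{\nu/\mu}}(a)\,j_\nu$, which manifestly lies in $\P'$.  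Thus the missing ingredient is the coproduct formula for the double Stanley symmetric function (Corollary~\ref{cor:coproddoubleStanley}), applied at an arbitrary $w$, not just at Grassmannian ones.  Once Proposition~\ref{prop:jprod} is in hand, commutativity of $\P'$ follows (either directly from the formula, or from your translation argument), and cocommutativity follows from Proposition~\ref{prop:translations}(6) as you say.
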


\begin{conj} We have $\P' = Z_{\A'}(\Q[a])$.
\end{conj}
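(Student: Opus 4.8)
The plan is to prove the two inclusions separately. The inclusion $\P'\subseteq Z_{\A'}(\Q[a])$ is already essentially in hand: each $j_\lambda$ lies in $\P$ by Theorem \ref{thm:jbasis}, and $\P\subseteq Z_{\A'}(\Q[a])$ by Proposition \ref{prop:translations}(5); for a formal combination $b=\sum_\lambda c_\lambda j_\lambda\in\P'$ — which is a well-defined element of $\A'$ since $F_w(x||a)$ has bounded $x$-degree, so for each fixed $w$ only finitely many $\lambda$ contribute to the coefficient of $A_w$ — one has $[b,p]=\sum_\lambda c_\lambda[j_\lambda,p]=0$, computed coefficientwise in the $A_w$-basis. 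So the real content of the conjecture is the reverse inclusion $Z_{\A'}(\Q[a])\subseteq\P'$.

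For this I would follow the architecture of Peterson's proof in the affine case (Appendix \ref{A:affinenilHecke}): establish a Kostant--Kumar-type decomposition of the completed nilHecke ring over its $\Q[a]$-centralizer, and then read the centralizer off. Using the length-additive factorization $S_\Z=S_\Z^0\cdot S_{\ne0}$ (every $w$ has a unique shortest coset representative $w^0\in S_\Z^0$, with $w\doteq w^0u$ and $u\in S_{\ne0}$) together with the leading-term structure $j_\lambda=A_{w_\lambda}+\sum_{v\notin S_\Z^0}a_v A_v$ of Theorem \ref{thm:jbasis}, one would show that $\{j_\lambda A_u\mid\lambda\in\Par,\ u\in S_{\ne0}\}$ is a \emph{topological} $\Q[a]$-basis of $\A'$: every element has a unique convergent expansion $b=\sum_{\lambda,u}c_{\lambda,u}j_\lambda A_u$ with $c_{\lambda,u}\in\Q[a]$; equivalently, multiplication induces an isomorphism from the (suitably completed) tensor product $\P'\otimes_{\Q[a]}\A_{\ne0}'$ onto $\A'$, where $\A_{\ne0}'$ is the completed nilHecke algebra generated by $\Q[a]$ and the $A_i$ with $i\ne0$. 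Granting this, one takes $b\in Z_{\A'}(\Q[a])$, expands it as above, and uses that each $j_\lambda$ already commutes with $\Q[a]$ to rewrite $[b,p]=\sum_{\lambda,u}c_{\lambda,u}\,j_\lambda(A_up-pA_u)$; since $A_up-pA_u$ equals $(u(p)-p)A_u$ plus terms supported on $v<u$, the vanishing of $[b,p]$ for all $p\in\Q[a]$ becomes a system that is triangular in $\ell(u)$ with ``diagonal'' factor $u(p)-p$, nonzero whenever $u\ne\id$. A peeling/limiting argument organized by $\ell(u)$ should then force $c_{\lambda,u}=0$ for every $u\ne\id$, i.e.\ $b\in\P'$.

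The hard part is the Kostant--Kumar decomposition, and behind it the fact that the interesting centralizer is a genuinely completion-level phenomenon. In the \emph{uncompleted} ring $\A\subseteq\Q(a)[S_\Z]$ one checks at once that $Z_\A(\Q[a])=\Q[a]$: an element of the twisted group algebra commuting with all scalars $p\in\Q[a]$ is a scalar, and by Bruhat-triangularity of the $A_w$ in the group-element basis the only elements of $\A$ that are scalars in $\Q(a)[S_\Z]$ are the polynomials. Thus every non-polynomial element of $\P'$ does not even lie in $\Q(a)[S_\Z]$, and the usual twisted-group-algebra argument is blind to it; one must work inside $\A'$ intrinsically. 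Moreover, unlike $\tilde S_n$, the group $S_\Z$ has no finite parabolic quotient: $S_\Z^0$ is infinite, $\A_{\ne0}'$ is infinite-dimensional, products $A_uA_{w_\lambda}$ can collapse (e.g.\ $A_{s_1}A_{s_1s_0}=0$), and both the uniqueness of the topological basis and the ``peeling'' step must be carried out over an infinite index set with only a formal topology available. I expect the cleanest route to be geometric: once the affine infinite flag ind-variety attached to $Q^\vee_\Z\rtimes S_\Z$ of \textsection \ref{SS:translation} and its Schubert calculus are developed, one should identify both $\P'$ and $Z_{\A'}(\Q[a])$ with the completed equivariant homology of the corresponding affine infinite Grassmannian — the latter via a homological characterization of the centralizer as in Peterson's original setting — from which the equality would follow. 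A possible algebraic shortcut uses the embeddings $\A_{\af,n}\hookrightarrow\A'$ dual to the surjections $H^*(\Fl)\to H^*(\hFl_n)$ together with Peterson's affine theorem, provided one can show that a centralizing element of $\A'$ is detected by all of the $\A_{\af,n}$. In either case the statement seems to need genuinely new input beyond the present paper, which is why it is left as a conjecture.
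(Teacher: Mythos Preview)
This statement is a \emph{conjecture} in the paper, not a theorem: the paper does not prove it and offers no proof sketch beyond the remark (just before the conjecture) that $\P\subseteq Z_{\A'}(\Q[a])$, which follows from Proposition~\ref{prop:translations}(5). So there is no ``paper's own proof'' to compare against.

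You clearly recognize this, and your write-up is really a strategy discussion rather than a proof. Your identification of the easy inclusion $\P'\subseteq Z_{\A'}(\Q[a])$ is correct and matches what the paper states. Your proposed attack on the reverse inclusion via a Kostant--Kumar decomposition $\A'\cong\P'\,\widehat\otimes_{\Q[a]}\A'_{\ne0}$ and a triangular peeling argument is a natural extrapolation of Peterson's affine argument, and you are honest about the obstacles: the absence of a finite parabolic quotient, collapse of products like $A_{s_1}A_{s_1s_0}=0$, and the need to work in a genuine completion rather than inside $\Q(a)[S_\Z]$. Your observation that $Z_\A(\Q[a])=\Q[a]$ in the uncompleted ring is a nice sanity check explaining why the conjecture is a completion-level statement. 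None of this constitutes a proof, and you say so; the paper agrees by labeling the statement a conjecture.
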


\begin{thm}\label{thm:PetersonSym}
There is an isomorphism $\P' \cong \hLa(y||a)$ of $\Q[a]$-Hopf algebras sending $j_\lambda$ to $\hs_\lambda(y||a)$ for all $\lambda \in \Par$.
\end{thm}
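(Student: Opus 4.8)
The plan is to construct the map via duality and then identify it with the already-established isomorphisms. Recall that $\A'$ is dual to $\Psi$ under the pairing \eqref{eq:PsiApairing}, and that by Proposition \ref{prop:bAbR} we have $\Psi \cong \bR(x;a)$ as $\Q[a]$-algebras, sending $\xi^v$ to $\bS_v(x;a)$. Restricting to the $S_{\ne0}$-invariant part, Proposition \ref{prop:GKMGr} gives $\Psi_\Gr \cong \La(x||a)$, sending $\xi^{w_\la}$ to $s_\la(x||a)$. The first step is to observe that the inclusion $\Psi_\Gr \hookrightarrow \Psi$ dualizes to a surjection $\A' \twoheadrightarrow \P'$ (or rather, to a map whose transpose realizes $\P'$ as the continuous dual of $\Psi_\Gr$); the key point is Proposition \ref{prop:PsiA}(3), which says $\pair{\xi^v}{A_w} = \delta_{vw}$, so that the coefficient functionals $A_w \mapsto \pair{\cdot}{A_w}$ are exactly dual to the Schubert classes $\xi^v$. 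Under this duality, the element $j_\la = \sum_w j_\la^w(a) A_w$ pairs with $\xi^v$ to give $j_\la^v(a)$, and by the definition \eqref{eq:doubleStanleydefn} of double Edelman--Greene coefficients together with Proposition \ref{prop:GKMGr}, $j_\la^v(a)$ is precisely the coefficient of $s_\la(x||a) \cong \xi^{w_\la}$ when $\xi^v$ is expanded in the Schubert basis of $\Psi_\Gr$ — equivalently, $\pair{j_\la}{\xi^{w_\mu}} = \delta_{\la\mu}$, matching \eqref{E:dual Schur} which defines $\hs_\mu(y||a)$ by duality with $s_\la(x||a)$.

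Concretely I would proceed as follows. First, by Theorem \ref{thm:jbasis} the $\{j_\la \mid \la \in \Par\}$ form a $\Q[a]$-basis of $\P$, so there is a well-defined $\Q[a]$-module isomorphism $\Phi: \P' \to \hLa(y||a)$ sending $j_\la \mapsto \hs_\la(y||a)$ (both sides being the completions with respect to these bases). It remains to check $\Phi$ is a Hopf algebra map. For the algebra structure: the product on $\P'$ is induced from $\A'$, and by Proposition \ref{prop:PsiA}(2) the multiplication of $\Psi'$ (and hence of $\Psi_\Gr$) is dual to the comultiplication $\Delta$ of $\A$ (extended to $\A'$ as in \textsection\ref{SS:translation}). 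Dually, the comultiplication of $\Psi_\Gr \cong \La(x||a)$, which is the Hopf comultiplication of \textsection\ref{ssec:partialproduct}, is dual to the multiplication of $\P'$. So the structure constants of $\P'$ in the $j_\la$-basis are the coproduct structure constants $\hat c^\la_{\mu\nu}(a)$ of \eqref{E:coproduct structure constants}. By definition \eqref{E:dual Schur} the $\hs_\la(y||a)$ are the dual basis to $s_\la(x||a)$ in the dual Hopf algebra $\hLa(y||a)$, so their multiplication structure constants are also the coproduct structure constants of $\La(x||a)$, i.e. the same $\hat c^\la_{\mu\nu}(a)$. Hence $\Phi$ is multiplicative. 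Symmetrically, the comultiplication on $\P'$ is dual to the multiplication of $\Psi_\Gr \cong \La(x||a)$, which is dual to the multiplication of $\hLa(y||a)$ — wait, more carefully: the comultiplication of $\P'$ is dual to the multiplication of $\La(x||a)$, and the comultiplication of $\hLa(y||a)$ is also dual to the multiplication of $\La(x||a)$ (this is the definition of the dual Hopf algebra pairing, \eqref{E:equivariant pairing} and the paragraph around \eqref{E:dual Schur}). So $\Phi$ intertwines the comultiplications as well. Commutativity and cocommutativity of $\P'$ are Theorem \ref{thm:PetersonHopf}, and they match the corresponding properties of $\hLa(y||a)$.

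The main obstacle I anticipate is making the duality between $\P'$ and $\Psi_\Gr$ precise at the level of completions, and checking that the pairing $\pair{\cdot}{\cdot}$ restricts correctly. One must verify: (i) that $\P' \subset \A'$ really does pair perfectly (in the appropriate topological sense, since infinite sums are involved on both sides of \eqref{eq:j}) with $\Psi_\Gr \subset \Psi'$; (ii) that the element $j_\la$, which a priori lies in $\A'$ by Theorem \ref{thm:jbasis}, has the property that $\pair{j_\la}{\psi} = 0$ for all $\psi$ coming from $\Psi$ that vanish on $S_\Z^0$ — this is essentially the content of the expansion \eqref{eq:j}, since $j_\la = A_{w_\la} + \sum_{u \notin S_\Z^0} a_u A_u$ means $\pair{j_\la}{\xi^v} = 0$ unless $v \in S_\Z^0$; and (iii) that the comultiplication $\Delta$ on $\A'$, when restricted to $\P'$, lands in $\P' \otimes_{\Q[a]} \P'$ and is dual to the Grassmannian multiplication — this uses Proposition \ref{prop:translations}(6) ($\Delta(\tau^x) = \tau^x \otimes \tau^x$) crucially, since the $\tau^w$ span the same space as the $j_\la$ rationally, and grouplike elements dualize to the multiplicative structure. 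Once these compatibilities are nailed down the theorem follows formally, but the bookkeeping with completions and the passage between the $\tau^w$-basis (where the Hopf structure is transparent) and the $j_\la$-basis (where the match with $\hs_\la$ is transparent) is where the real work lies. An alternative, possibly cleaner route is to bypass duality and instead compute $\Delta(j_\la)$ directly: express $j_\la$ in terms of the $\tau^w$ using the transition between double Stanley functions and double Schur functions, apply $\Delta(\tau^x) = \tau^x \otimes \tau^x$, and match with \eqref{E:coprod double Schur}; this is the approach I would try first if the duality bookkeeping becomes unwieldy.
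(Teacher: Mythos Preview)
Your overall strategy---show that $\P'$ and $\Psi_\Gr \cong \La(x||a)$ are dual Hopf algebras under the pairing \eqref{eq:PsiApairing}, then identify $\P'$ with $\hLa(y||a)$---is exactly the paper's. The duality of the $\Q[a]$-modules via $\pair{\xi^{w_\mu}}{j_\la} = \delta_{\la\mu}$ (from \eqref{eq:j} and Proposition~\ref{prop:PsiA}(3)) is right, and so is the argument that the comultiplication on $\P'$ (inherited from $\Delta$ on $\A'$) is dual to the pointwise multiplication on $\Psi_\Gr$, via Proposition~\ref{prop:PsiA}(2).

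There is, however, a genuine gap in your treatment of the \emph{algebra} structure. You write: ``by Proposition~\ref{prop:PsiA}(2) the multiplication of $\Psi'$ \dots\ is dual to the comultiplication $\Delta$ of $\A$. Dually, the comultiplication of $\Psi_\Gr \cong \La(x||a)$ \dots\ is dual to the multiplication of $\P'$.'' The word ``Dually'' is doing illegitimate work here. Proposition~\ref{prop:PsiA}(2) relates the multiplication of $\Psi'$ to the comultiplication of $\A$; it says nothing about the \emph{comultiplication} of $\Psi_\Gr$, which is an entirely separate structure (defined in \textsection\ref{ssec:partialproduct} via the partial product on $S_\Z^0$, equivalently the Hopf coproduct of $\La(x||a)$). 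There is no formal principle that forces the nilHecke product $A_u A_v$ on $\A'$, restricted to $\P'$, to be dual to that Hopf coproduct.

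The paper closes this gap by an explicit computation, Proposition~\ref{prop:jprod}: one computes the coefficient of $s_\la(x||a)\otimes s_\mu(x||a)$ in $\Delta(F_w(x||a))$ two ways (via Corollary~\ref{cor:coproddoubleSchur} and via Corollary~\ref{cor:coproddoubleStanley}) to obtain
\[
j_\la j_\mu = \sum_{\nu \supset \mu} j_\la^{w_{\nu/\mu}}(a)\, j_\nu,
\]
and then observes that these are precisely the coproduct structure constants $\hat c^\nu_{\la\mu}(a)$ of $\La(x||a)$ (Proposition~\ref{P:coproduct constants are Stanley coefficients}). This is the missing computational ingredient; without it, the claim that $\Phi$ is multiplicative is unproved. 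Your ``alternative route'' at the end (compute $\Delta(j_\la)$ via $\tau^w$) addresses the coalgebra side, which is already handled; it is the product $j_\la j_\mu$ that needs direct verification.
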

Theorems \ref{thm:PetersonHopf} and \ref{thm:PetersonSym} will be proved in \textsection \ref{ssec:PetersonHopfproof}. 

\begin{remark}
Theorems \ref{thm:jbasis}, \ref{thm:PetersonHopf}, and \ref{thm:PetersonSym} hold over $\Z$, but for consistency we work over $\Q$.
\end{remark}

%
%

\subsection{Fomin-Stanley algebra}
Let $A$ denote the (infinite) nilCoxeter algebra, which is the $\Q$-algebra with generators $A_i$, $i \in \Z$, satisfying the relations
\eqref{E:dd2}, \eqref{E:dd commute}, and \eqref{E:dd braid}.
The algebra $A$ has $\Q$-basis $A_w$, $w \in S_\Z$.  Let $A'$ denote the completion of $A$ consisting of elements $a = \sum_w a_w A_w$ that are infinite $\Q$-linear combinations of the $A_w$.  Since every $w \in S_\Z$ has finitely many factorizations of the form $w \doteq xy$, it follows that $A'$ is a $\Q$-algebra.  There is a natural map $\phi_0: \A \to A$ given by 
$$
\phi_0(\sum_w a_w A_w) = \sum_w \phi_0(a_w) A_w
$$
where $\phi_0(a_w) \in \Q$ is the constant term of the polynomial $a_w \in \Q[a]$.

Define the \emph{Fomin-Stanley subalgebra} $B \subset A$ as the image $\phi_0(\P)$.  Let $j_\lambda^0:= \phi_0(j_\lambda)$.

\begin{thm}\label{thm:j0basis}
The set $\{j^0_\lambda \mid \lambda \in \Par\}$ form a $\Q$-basis of $B$.  There is a Hopf-isomorphism $B \to \Lambda$ given by $j^0_\lambda \mapsto s_\lambda$.
\end{thm}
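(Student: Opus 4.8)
The plan is to obtain Theorem \ref{thm:j0basis} by specializing Theorems \ref{thm:jbasis} and \ref{thm:PetersonSym} along the map $\phi_0$ and matching this with the non-equivariant Schubert calculus already developed. First I would record that $\phi_0$ is a $\Q$-algebra homomorphism $\A' \to A'$; this is immediate from its description as ``take the constant term of each coefficient polynomial'', once one checks that applying $\phi_0$ is compatible with the product, which follows because the structure constants in $\A'$ lie in $\Q[a]$ and setting $a=0$ is a ring map. Next I would show $\phi_0$ intertwines the comultiplications: $\Delta$ on $\A$ is determined by $\Delta(A_i)=A_i\otimes 1 + s_i\otimes A_i$ (equation \eqref{eq:deltapartial}), and the analogous formula with $a=0$ gives the comultiplication on $A$, so $\phi_0$ is a bialgebra (indeed Hopf) morphism. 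Restricting to $\P$, the image $B=\phi_0(\P)$ is then automatically a sub-bialgebra of $A$.

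The second step is to identify the basis. By Theorem \ref{thm:jbasis}, $\P$ is a free $\Q[a]$-module with basis $\{j_\lambda \mid \lambda\in\Par\}$, and $j_\lambda = \sum_w j_\lambda^w(a) A_w$ with $j_\lambda^{w_\lambda}(a)=1$ and $j_\lambda^w(a)=0$ for $w$ a $0$-Grassmannian permutation other than $w_\lambda$ (this last point is Lemma \ref{lem:doubleid} together with \eqref{eq:j}). Applying $\phi_0$ gives $j_\lambda^0 = \sum_w j_\lambda^w(0) A_w = \sum_w j_\lambda^w A_w$, where $j_\lambda^w$ is the ordinary Edelman-Greene coefficient. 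The coefficient of $A_{w_\mu}$ in $j_\lambda^0$ is $j_\lambda^{w_\mu}=\delta_{\lambda\mu}$ by the above, so the $j_\lambda^0$ are ``unitriangular'' with respect to the $0$-Grassmannian elements and hence $\Q$-linearly independent; since $B$ is by definition their $\Q$-span, $\{j_\lambda^0\}$ is a $\Q$-basis of $B$.

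For the isomorphism $B\to\La$, I would compose the restriction of $\phi_0$ to $\P'$ with the isomorphism $\P'\cong\hLa(y||a)$ of Theorem \ref{thm:PetersonSym}, and then specialize $a=0$ on the symmetric-function side: setting $a_i=0$ for all $i$ sends Molev's dual Schur function $\hs_\lambda(y||a)$ to the ordinary Schur function $s_\lambda(y)$ (clear from Proposition \ref{prop:dualsuper}, since the correction terms $\Schub_{w_{\la/\mu}}(a)$ vanish at $a=0$ unless $\la=\mu$), compatibly with the Hopf structures. Chasing $j_\lambda \mapsto \hs_\lambda(y||a) \mapsto s_\lambda(y)$ through this diagram identifies $j_\lambda^0$ with $s_\lambda$ and shows the resulting map $B\to\La$ is a Hopf-algebra isomorphism. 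Alternatively one can avoid $\hLa$ entirely: the structure constants of $B$ in the basis $\{j_\lambda^0\}$ are the $a=0$ specializations of the structure constants of $\P$, which by the duality between $\P'$ and $\Psi_\Gr \cong \La(x||a)$ are the coproduct constants $\hat c^\la_{\mu\nu}(a)$; at $a=0$ these become the Littlewood-Richardson coefficients by Remark following Theorem \ref{thm:homologyhook} (or directly because $\La(x||0)=\La$), so $j^0_\mu j^0_\nu = \sum_\la c^\la_{\mu\nu} j^0_\la$, which is exactly the multiplication rule for Schur functions. I expect the main obstacle to be bookkeeping: verifying carefully that $\phi_0$ is a well-defined algebra \emph{and} coalgebra map on the completions $\A',A'$ (as opposed to the finite versions), and that the specialization $a=0$ of the Hopf isomorphism in Theorem \ref{thm:PetersonSym} really is a Hopf morphism and not merely a module map — but both are routine given the results already in hand.
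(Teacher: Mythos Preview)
Your approach is correct and essentially the same as the paper's, just more fleshed out. The paper's proof is three sentences: spanning of $B$ by $\{j_\lambda^0\}$ is immediate since $\{j_\lambda\}$ is a $\Q[a]$-basis of $\P$; linear independence comes directly from \eqref{eq:j}, which already records that the coefficient of $A_{w_\mu}$ in $j_\lambda$ is $\delta_{\lambda\mu}$ (so you do not need to invoke Lemma~\ref{lem:doubleid} separately); and the Hopf isomorphism is deduced from Theorems~\ref{thm:PetersonHopf} and~\ref{thm:PetersonSym} by specializing $a=0$, exactly as you propose. Your extra care about $\phi_0$ on completions and its compatibility with the coproduct is sound bookkeeping that the paper leaves implicit.
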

\begin{proof}
Since $\{j_\lambda \mid \lambda \in \Par\}$ form a basis of $\P$, it is clear that $\{j^0_\lambda \mid \lambda \in \Par\}$ spans $B$.  The equation \eqref{eq:j} shows that $j^0_\lambda = A_{w_\lambda} + \text{other terms}$ are linearly independent. The last statement follows from Theorem \ref{thm:PetersonHopf} and \ref{thm:PetersonSym}.
\end{proof}

\subsection{Stability of affine double Edelman-Greene coefficients}
\label{ssec:stability}

Let $n \geq 2$. For $v\in \tS_n$ let $\xi^v_{\hFl_n} \in H^*_{T_n}(\hFl_n)$ be the torus equivariant Schubert class class of the affine flag ind-variety $\hFl_n$ (see \textsection \ref{sec:affineflagcohom}).

Following \cite{LaSh}\footnote{Our $\xi^v_{\hFl_n}|_w$ differs from the one in \cite{LaSh} by a sign $(-1)^{\ell(v)}$.}, define $\tS^0_n \times \tS^0_n$ matrices $\tA$ and $\tB$ by
$$
\tA_{vw} =  \xi^v_{\hFl_n}|_w \qquad \text{and}\qquad \tB = \tA^{-1}.
$$
Both matrices $\tA$ and $\tB$ are lower-triangular when the rows and columns are ordered compatibly with the Bruhat order on $\tS_n^0$, and the entries belong to $\Q(a_1,a_2,\ldots,a_n)$.  For $x \in \tS_n$ and $v \in \tS_n^0$, denote by $\tj^x_v \in \Q[a_1,a_2,\ldots,a_n]$ the affine double Edelman-Greene coefficient, and let $\tj_v \in \ttA$ denote the $j$-basis element (see Appendix \ref{A:affinenilHecke}).  

\begin{proposition}[\cite{LS}] \label{prop:LS}  Let $v,w \in \tS_n^0$ and $x \in \tS_n$.
We have \begin{align}
t^w &= \sum_{v \in \tS_n^0, \;\; v \leq w} \tA_{vw} \tj_v \label{eq:LS1}\\
\tj_v &= \sum_{w \in \tS_n^0, \;\; w \leq v} \tB_{wv} t^w \label{eq:LS2}\\
\tj^x_v &= \sum_{w \in \tS_n^0, \;\; w \leq v}\tB_{wv} \xi^x_{\hFl_n}(t^w) \label{eq:LS3}.
\end{align}
\end{proposition}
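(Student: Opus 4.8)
The statement to prove is Proposition \ref{prop:LS}, whose three equations \eqref{eq:LS1}, \eqref{eq:LS2}, \eqref{eq:LS3} are a formal consequence of linear algebra once the right setup is in place. The key players are the matrices $\tA_{vw} = \xi^v_{\hFl_n}|_w$ (indexed by $\tS_n^0 \times \tS_n^0$) and $\tB = \tA^{-1}$, together with the $j$-basis $\tj_v$ of the affine Peterson algebra $\ttA$. The proof will be a quotation/adaptation of the corresponding argument in \cite{LaSh} (Lam--Shimozono), which is cited as the source; the footnote flags only a sign convention difference, so the plan is mostly to transcribe and reconcile conventions.

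The plan is as follows. First I would recall from Appendix \ref{A:affinenilHecke} the defining property of the affine Peterson algebra and its $j$-basis: $\tj_v$ is the unique element of the Peterson subalgebra $\ttA$ of the affine nilHecke algebra whose leading term (in the $A_w$-expansion) is $A_{t^w}$ where $t^w$ is the translation element for the $0$-Grassmannian element $v$, i.e.\ the affine analogue of Theorem \ref{thm:jbasis}; equivalently, the $\tj_v$ are characterized by being dual to the Schubert classes of the affine Grassmannian under the Peterson pairing. The translation elements $t^w$, being the group-like/Schubert-basis elements on the other side, pair with the equivariant Schubert classes $\xi^v_{\hFl_n}$ by localization: $\pair{\xi^v_{\hFl_n}}{t^w} = \xi^v_{\hFl_n}|_w = \tA_{vw}$. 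This is the content of the Peterson/Kostant--Kumar localization theory applied to the affine flag variety, and it is where \eqref{eq:LS1} comes from: expanding $t^w$ in the $j$-basis, $t^w = \sum_v c_{vw} \tj_v$, and pairing with $\xi^v_{\hFl_n}$ (using that $\xi^v_{\hFl_n}$ is dual to $\tj_v$ in the appropriate sense) yields $c_{vw} = \tA_{vw}$, proving \eqref{eq:LS1}. Triangularity with respect to Bruhat order on $\tS_n^0$ is inherited from the triangularity of both the localization matrix and the $j$-basis expansion.

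Next, \eqref{eq:LS2} is just the matrix inverse of \eqref{eq:LS1}: since $\tA$ is lower-triangular with invertible (nonzero) diagonal entries over $\Q(a_1,\ldots,a_n)$, it is invertible, and $\tB = \tA^{-1}$ gives $\tj_v = \sum_w \tB_{wv} t^w$ immediately. Finally, \eqref{eq:LS3} follows by pairing \eqref{eq:LS2} with an arbitrary Schubert class $\xi^x_{\hFl_n}$, $x \in \tS_n$: by definition the affine double Edelman--Greene coefficient $\tj^x_v$ is the coefficient of $A_x$ in $\tj_v$, which by the localization pairing equals $\pair{\xi^x_{\hFl_n}}{\tj_v}$; substituting the expansion $\tj_v = \sum_w \tB_{wv} t^w$ and using $\pair{\xi^x_{\hFl_n}}{t^w} = \xi^x_{\hFl_n}(t^w)$ gives $\tj^x_v = \sum_{w \le v} \tB_{wv}\, \xi^x_{\hFl_n}(t^w)$, which is \eqref{eq:LS3}.

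The main obstacle I expect is not the linear algebra but the bookkeeping of conventions: reconciling our normalization of $\xi^v_{\hFl_n}|_w$ (which the footnote says differs from \cite{LaSh} by the sign $(-1)^{\ell(v)}$) with the sign conventions used there for the $j$-basis and the translation elements, so that the three displayed formulas come out with the stated signs (here, no signs). Concretely I would check that the sign $(-1)^{\ell(v)}$ discrepancy is absorbed consistently on both sides of \eqref{eq:LS1}: the $j$-basis element $\tj_v$ must be renormalized (or not) to match, and one verifies that the product of sign changes cancels in \eqref{eq:LS3} because $\tj^x_v$ is defined here with a compensating sign (cf.\ the footnote ``Our $\xi^v_{\hFl_n}|_w$ differs from the one in \cite{LS} by a sign $(-1)^{\ell(v)}$''). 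Once the dictionary between our setup and that of \cite{LaSh} is pinned down, the proof is a one-line appeal to their Theorem, and I would present it as such rather than reproving the affine Peterson isomorphism from scratch.
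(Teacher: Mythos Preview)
Your approach is correct, and it is essentially the argument of \cite{LaSh} that the paper is citing; note that the paper itself gives no proof of Proposition~\ref{prop:LS}, simply attributing it to \cite{LS}. The one point worth making explicit in your write-up is the step $\pair{\xi^v_{\hFl_n}}{t^w} = \xi^v_{\hFl_n}|_{t^w} = \xi^v_{\hFl_n}|_w = \tA_{vw}$: the first equality is the pairing with a group element, and the second uses that for $v \in \tS_n^0$ the class $\xi^v_{\hFl_n}$ is pulled back from $\hGr_n$ and hence constant on $S_n$-cosets, so its value at $t^w$ agrees with its value at the Grassmannian representative $w$; you glide over this identification. Apart from that and the sign bookkeeping you already flagged, nothing is missing.
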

%

Let $\ev_n:\Q[a] \to \Q[a_1,a_2,\ldots,a_n]$ denote the $\Q$-algebra morphism given by $a_i \mapsto a_{i \mod n}$.

\begin{lemma}\label{lem:xistable}
Let $w,v \in S_\Z$.  Then for sufficiently large $n \gg 0$, we have $\xi^v_{\hFl_n}(w) = \ev_n( \xi^v(w))$.
\end{lemma}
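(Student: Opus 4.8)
\textbf{Proof proposal for Lemma \ref{lem:xistable}.}

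The plan is to compare two localization formulas: Proposition \ref{prop:Billey} (or its affine analogue), which expresses $\xi^v_{\hFl_n}(w)$ as a sum over subwords of a reduced word for $w$ in $\tS_n$ that are reduced for $v$, and the analogous formula (Proposition \ref{prop:Billey}) for $\xi^v(w)$ in $\Psi$, which is a sum over subwords of a reduced word for $w$ in $S_\Z$ that are reduced for $v$. First I would fix reduced words: choose a reduced word $\u = s_{i_1}\cdots s_{i_\ell}$ for $w \in S_\Z$, all of whose letters involve reflections $s_i$ with index $i$ in some finite window $[-M,M]$. For all $n > 2M+1$ (say), the same word $\u$, read in $\tS_n$, is still reduced for the image of $w$ (no affine relations are triggered since the indices stay within a window strictly smaller than $n$). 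The same applies to $v$: all its reduced subwords of $\u$ in $S_\Z$ correspond bijectively to reduced subwords in $\tS_n$, because length and reducedness of such ``short-support'' words are detected the same way in $S_\Z$ and in $\tS_n$ once $n$ is large.

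Next I would check that the coefficients $\beta(s)$ match under $\ev_n$. For a letter $s = s_{i_j}$ in $\u$, the root in the $S_\Z$ formula is $\beta(s) = s_{i_1}\cdots s_{i_{j-1}}(a_{i_j+1}-a_{i_j})$, and applying $\ev_n$ amounts to reducing all indices mod $n$. Since the prefix $s_{i_1}\cdots s_{i_{j-1}}$ permutes only integers in a bounded window, and the permuted indices of $a_{i_j+1}-a_{i_j}$ also stay in a bounded window, for $n$ large enough the reduction mod $n$ is ``invisible'': $\ev_n(\beta(s))$ equals the corresponding affine root $\beta_{\hFl_n}(s)$ computed with the mod-$n$ convention $a_i \mapsto a_{i \bmod n}$. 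Here I would invoke the sign convention noted in the footnote to Proposition \ref{prop:LS} (the extra $(-1)^{\ell(v)}$) and make sure the Billey formula I use for $\xi^v_{\hFl_n}$ is the one consistent with that convention, so the signs agree term by term. Summing over the (matching) reduced subwords for $v$ then gives $\ev_n(\xi^v(w)) = \xi^v_{\hFl_n}(w)$.

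The main obstacle, and the place requiring genuine care rather than bookkeeping, is making precise the claim that ``for $n$ large, reducedness and the subword combinatorics in $\tS_n$ agree with those in $S_\Z$.'' Concretely: a reduced word for $w\in\tS_n$ could a priori use affine reflections $s_0$ (i.e.\ $s_n$) even if $w$ itself has support in a small window, and one must rule this out for the \emph{chosen} word $\u$ (which is automatic since $\u$ only uses letters $s_i$ with $|i|\le M < n$) and then argue that \emph{every} subword of $\u$ that is reduced for $v$ in $\tS_n$ is also reduced for $v$ in $S_\Z$ and conversely. This follows from the standard fact that $S_\Z \hookrightarrow \tS_n$ restricted to elements supported in a window of size $<n$ is length-preserving, which can be cited from the structure of parabolic subgroups $S_n \subset \tS_n$ together with the shifting automorphism; I would quote this rather than reprove it. Once this embedding fact is in hand, the term-by-term comparison of the two Billey formulas closes the argument.
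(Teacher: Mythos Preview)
Your proposal is correct and follows exactly the paper's approach: the paper's proof is the one-line remark that this follows from Proposition~\ref{prop:Billey}, which holds both in the affine and infinite settings, and your argument is precisely the fleshed-out version of that comparison (fix a reduced word in a bounded window, observe that for $n$ large the subword combinatorics and the roots $\beta(s)$ agree under $\ev_n$). The sign issue you flag from the footnote to Proposition~\ref{prop:LS} is not actually relevant here---that footnote concerns the difference between this paper's convention and that of \cite{LS}, whereas within the paper the infinite and affine Billey formulas use the same convention---but this does not affect the validity of your argument.
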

\begin{proof}
This follows from Proposition \ref{prop:Billey} which also holds in the affine case as well as the infinite case.
\end{proof}

\begin{lemma}\label{lem:AB}
Let $w,v \in S_\Z^0$.  Then there exists polynomials $A_{vw}(a), B_{vw}(a) \in \Q[a]$ such that for all $n \gg 0$, we have $\tA_{vw} = \ev_n(A_{vw})$ and $\tB_{vw} = \ev_n(B_{vw})$.
\end{lemma}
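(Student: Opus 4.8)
\textbf{Proof proposal for Lemma \ref{lem:AB}.}

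The plan is to deduce the stability of the matrix entries $\tA_{vw}$ from the stability of the individual equivariant localizations, which is exactly Lemma \ref{lem:xistable}, together with the triangularity of $\tA$. First I would recall that for $v,w \in S_\Z^0$, treating $v$ and $w$ as elements of $\tS_n$ for $n \gg 0$, we have $\tA_{vw} = \xi^v_{\hFl_n}|_w$ by definition, and by Lemma \ref{lem:xistable} this equals $\ev_n(\xi^v(w))$ once $n$ is large enough (depending on $v$ and $w$). Since $\xi^v(w) = e^v_w \in \Q[a]$ by Proposition \ref{P:schub loc}, the polynomial $A_{vw}(a) := e^v_w = \xi^v(w)$ does the job for the $\tA$ side: for all sufficiently large $n$, $\tA_{vw} = \ev_n(A_{vw})$. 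This is the easy half.

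The harder half is $\tB = \tA^{-1}$. Here the issue is that matrix inversion does not obviously commute with the evaluation map $\ev_n$, because $\tA$ is an infinite matrix (indexed by $\tS_n^0 \cong Q^\vee$) and $\ev_n$ is applied entrywise. The key observation that rescues this is triangularity: by Proposition \ref{prop:LS} (or rather the affine analogue of Proposition \ref{P:loc rec}), both $\tA$ and $\tB$ are lower-triangular with respect to Bruhat order on $\tS_n^0$, with invertible (in fact constant, equal to $\prod(-\alpha)$ over appropriate roots, but we only need nonzero) diagonal entries. For a lower-triangular matrix, the $(w,v)$ entry of the inverse is a \emph{finite} universal rational expression in the entries $\tA_{w'v'}$ with $w \le w' \le v' \le v$ — concretely, $\tB_{wv} = -\tA_{vv}^{-1}\sum_{w < v' \le v} \tB_{wv'}\tA_{v'v}$ by descending induction, or equivalently a finite signed sum of products of $\tA$-entries along chains in the interval $[w,v]$ divided by diagonal entries. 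I would make this precise: define $B_{wv}(a) \in \Q(a)$ by the same finite formula applied to the polynomials $A_{w'v'}(a)$, so that $B_{wv}$ is a fixed element of $\Q(a)$ independent of $n$. Since each interval $[w,v]$ in $\tS_n^0$ is finite and, crucially, for $n \gg 0$ (larger than some bound depending only on $v$) the interval $[w,v]$ stabilizes and all the relevant $\tA_{w'v'}$ stabilize to $\ev_n(A_{w'v'})$, and since $\ev_n$ is a ring homomorphism that (for $n$ large) is injective on the finite set of polynomials involved and sends nonzero diagonal entries to nonzero elements, applying $\ev_n$ to the defining formula for $B_{wv}$ yields $\ev_n(B_{wv}) = \tB_{wv}$ for all large $n$. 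The one remaining point is to check that $B_{wv}(a)$ is actually a \emph{polynomial}, not just a rational function: this follows because $\tB_{wv} = \ev_n(B_{wv})$ for infinitely many $n$ and $\tB_{wv} \in \Q[a_1,\dots,a_n]$ (the affine localization entries are polynomials, being entries of the inverse GKM matrix which is known to be integral — cf. the integrality remark after Theorem \ref{thm:jbasis}), so $B_{wv}$ has no poles, hence lies in $\Q[a]$; alternatively one invokes directly the recursion of Proposition \ref{P:loc rec}(a)--(b), whose affine analogue expresses $\tB_{wv}$ polynomially and stably.

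I expect the main obstacle to be the bookkeeping around ``for $n \gg 0$'' being uniform enough: one must check that the bound on $n$ needed for Lemma \ref{lem:xistable} to apply to all pairs $(w',v')$ in the interval $[w,v]$, and for the interval itself to have stabilized, can be taken to depend only on $v$ (equivalently on the partition $\la$ with $v = w_\la$, via the coroot lattice picture of Section \ref{SS:translation}). This is where I would lean on the explicit translation-element description: the element $t^v$ and the subword structure governing $e^{v'}_{w'}$ via Proposition \ref{prop:Billey} only involve reflections $s_i$ with $|i|$ bounded in terms of the Durfee-type size of $\la$, so once $n$ exceeds that bound everything in sight is independent of $n$ except for the formal substitution $a_i \mapsto a_{i \bmod n}$. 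Given that, the argument is essentially a finite linear-algebra computation transported along $\ev_n$, and the lemma follows.
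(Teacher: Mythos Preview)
Your approach is the same as the paper's, just spelled out in full; the paper's entire proof is the one line ``Follows immediately from Lemma~\ref{lem:xistable}.'' Your treatment of the $A$ side is exactly what is intended, and your triangularity argument for $B$ is precisely how one unpacks that one-liner: each $\tB_{wv}$ is a fixed finite rational expression in the $\tA_{w'v'}$ for $w\le w'\le v'\le v$, the Bruhat interval $[w,v]$ in $\tS_n^0$ stabilizes for $n$ large, and $\ev_n$ is a ring homomorphism, so the entries of the inverse stabilize too.

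One caution: your justification that $B_{wv}$ lands in $\Q[a]$ rather than $\Q(a)$ is not sound as written. The entries of $\tB$ are \emph{not} in general polynomials --- already on the diagonal $\tB_{vv}=(\tA_{vv})^{-1}$ is the reciprocal of a product of roots. So the claim ``the affine localization entries are polynomials, being entries of the inverse GKM matrix which is known to be integral'' is false, and the integrality remark after Theorem~\ref{thm:jbasis} does not say this. This appears to be a minor imprecision in the lemma statement itself; the only places the paper uses $B_{vw}$ (in \S\ref{ssec:jbasisproof}) are over $\Q(a)$, so nothing downstream is affected. You should either replace $\Q[a]$ by $\Q(a)$ in the conclusion for $B$, or drop that paragraph and simply record that the stabilized $B_{vw}(a)\in\Q(a)$ satisfies $\tB_{vw}=\ev_n(B_{vw})$ for $n\gg 0$.
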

\begin{proof}
Follows immediately from Lemma \ref{lem:xistable}.
\end{proof}

\begin{lemma}
Let $x \in S_\Z$ and $v \in S_\Z^0$.  There exists a polynomial $q(a) \in \Q[a]$ such that for all $n \gg 0$, we have that $\tj^x_v = \ev_n(q)$. 
\end{lemma}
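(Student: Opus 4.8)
The plan is to use the explicit formula \eqref{eq:LS3} of Proposition \ref{prop:LS} together with the stability results just established. First I would write
$$
\tj^x_v = \sum_{w \in \tS_n^0, \; w \leq v} \tB_{wv}\, \xi^x_{\hFl_n}(t^w),
$$
and observe that the index set on the right is independent of $n$ for $n \gg 0$: since $v \in S_\Z^0 \subset \tS_n^0$ (for $n$ larger than the support of $v$) and the Bruhat order below $v$ involves only finitely many elements $w$, all of which lie in $S_\Z^0 \subset \tS_n^0$ for $n$ large, the summation range stabilizes. So it suffices to show each summand $\tB_{wv}\,\xi^x_{\hFl_n}(t^w)$ stabilizes to the evaluation $\ev_n$ of a fixed element of $\Q[a]$.

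For the factor $\tB_{wv}$, Lemma \ref{lem:AB} already provides a polynomial $B_{wv}(a) \in \Q[a]$ with $\tB_{wv} = \ev_n(B_{wv})$ for $n \gg 0$. For the factor $\xi^x_{\hFl_n}(t^w)$, the point is that $t^w$ for $w \in S_\Z^0$ is not an element of $S_\Z$, but by Lemma \ref{lem:stabletranslation} its reduced word in $\tS_n$ is obtained from a fixed word $\a$ in the symbols \eqref{E:stable generators} by the substitutions \eqref{eq:subs}. Applying the affine version of Billey's formula (Proposition \ref{prop:Billey}, valid in the affine case as noted in the proof of Lemma \ref{lem:xistable}),
$$
\xi^x_{\hFl_n}(t^w) = \sum_{\b \subset \ta} \prod_{s \in \b} \beta(s),
$$
summed over subwords $\b$ of the reduced word $\ta$ for $t^w$ that are reduced words for $x$. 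I would argue that for $n \gg 0$ this sum stabilizes: a subword that is a reduced word for a fixed $x \in S_\Z$ has bounded length, so it can involve only boundedly many letters coming from the blocks $r \mapsto s_m \cdots s_{-m-1}$ and $r' \mapsto s_{-m-1}\cdots s_m$, and for $n$ large these contributions, together with the associated roots $\beta(s)$, match exactly the contributions defining $\xi^x|_{\tau^w}$ in Definition \ref{def:translation} after applying $\ev_n$. Concretely, $\xi^x_{\hFl_n}(t^w) = \ev_n\big(\xi^x|_{\tau^w}\big)$ for $n \gg 0$; this is essentially the statement that the infinite translation element $\tau^w$ is the stable limit of the affine translation elements $t^w$, which is the content motivating Definition \ref{def:translation}.

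Combining these, for $n \gg 0$ we obtain
$$
\tj^x_v = \ev_n\Big( \sum_{\substack{w \in S_\Z^0 \\ w \leq v}} B_{wv}(a)\, \xi^x|_{\tau^w} \Big),
$$
so the polynomial $q(a) := \sum_{w \in S_\Z^0,\, w \leq v} B_{wv}(a)\, \xi^x|_{\tau^w} \in \Q[a]$ works. (One should note that $\xi^x|_{\tau^w} \in \Q[a]$ by Definition \ref{def:translation}, and the sum is finite.) The main obstacle I expect is the careful bookkeeping showing that the Billey-formula subword sum for $\xi^x_{\hFl_n}(t^w)$ literally stabilizes to the one in Definition \ref{def:translation}: one must check that no "spurious" reduced subwords for $x$ appear using the wraparound letters $s_0$ (identified modulo $n$) for large $n$, which uses that $x$ has finite support and that the blocks $r_{m,-m}$, $r_{-m,m}$ only involve reflections $s_i$ with $i \neq 0$; once this is in place, the matching of roots $\beta(s)$ under $\ev_n$ is routine.
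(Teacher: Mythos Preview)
Your approach is correct and is essentially the same as the paper's: both use Proposition~\ref{prop:LS} (specifically \eqref{eq:LS3}) together with Lemma~\ref{lem:AB} for the $\tB_{wv}$ factor and Lemma~\ref{lem:stabletranslation} plus Billey's formula for the $\xi^x_{\hFl_n}(t^w)$ factor. Your write-up is in fact more explicit than the paper's---you give the actual formula $q(a)=\sum_{w\le v} B_{wv}(a)\,\xi^x|_{\tau^w}$, and in doing so you anticipate the stabilization identity $\ev_n(\xi^x|_{\tau^w})=\xi^x_{\hFl_n}(t^w)$ that the paper records separately as \eqref{eq:taustabilize} in \S\ref{ssec:translationproof}.
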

\begin{proof}
Using Lemma \ref{lem:stabletranslation} and Lemma \ref{lem:xistable}, we deduce that for any $u \in S_\Z$ and $w \in S_\Z^0$, there is a polynomial $p(a) \in \Q[a]$ such that for sufficiently large $n$, we have $\xi^u_{\hFl_n}(w) = \ev_n(p(a))$.  By Proposition \ref{prop:LS}, we conclude that there is a polynomial $q(a) \in \Q[a]$ such that $\tj^x_v = \ev_n(q(a))$ for sufficiently large $n$.  
\end{proof}

\subsection{Proof of Proposition \ref{prop:translations}} \label{ssec:translationproof}
Let $x \in S_\Z^0$ and $v \in S_\Z$.  Only finitely many subwords of $a^x_\infty$ are reduced words for $v$, and for $n \gg 0$ there is a bijection between such subwords and subwords of $\ta$ that are reduced words for $v$ (now thought of as an element in $\tS_n$).  It thus follows from the definitions that for $n \gg 0$ we have
\begin{equation}\label{eq:taustabilize}
\ev_n(\xi^v(\tau^x)) = \xi^v_{\hFl_n}(t^x).
\end{equation}
Claim (1) follows immediately.  Claim (2) follows from the similar claim in the affine nilHecke algebra $\ttA$.

Let $x,y \in S_Z^0$ and $v \in S_\Z$.  Only finitely many pairs of terms from the expansion \eqref{eq:tau} for $\tau^x$ and $\tau^y$ contribute to the coefficient of $A_v$ in the product $\tau^x \tau^y$.  Thus for $n \gg 0$ the coefficient of $A_v$ in $\tau^x \tau^y$ is taken to the coefficient of $A_v$ in $t^x t^y$ by $\ev_n$.  Claims (3) and (4) now follow from similar statements in the affine case (see \eqref{eq:affinetrans}).

Let $x \in S_Z^0$, $v \in S_\Z$, and $p \in \Q[a]$.  Only finitely many terms of the expansion \eqref{eq:tau} for $\tau^x$ contribute to the coefficient of $A_v$ in $\tau^x p$.  Thus for $n \gg 0$ the coefficient of $A_v$ in $\tau^x p$ is taken to the coefficient of $A_v$ in $t^x \ev_n(p)$ by $\ev_n$.  Claim (5) now follows from \eqref{eq:levelzero} in the affine case.

Let $v \in S_\Z$.  Then for $n \gg 0$, the calculation of $\Delta(A_v)$ in the affine nilHecke ring $\ttA$ is identical to that in $\A$.  Claim (6) now follows from the equality $\Delta(t^x) = t^x \otimes t^x$ in the affine case (see \eqref{eq:affinecoprod}).

\subsection{Proof of Theorem \ref{thm:jbasis}} \label{ssec:jbasisproof}
\begin{proposition}\label{prop:jstabilize}
Let $x \in S_\Z$ and $v \in S_\Z^0$.  For all $n \gg 0$, we have $\tj^x_v = \ev_n(j^x_v)$. 
\end{proposition}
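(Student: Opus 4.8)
The plan is to deduce this stability statement from the combination of the affine formula \eqref{eq:LS3} and the several stability lemmas already in place in Section \ref{ssec:stability}. Recall that $j^x_v$ is by definition the coefficient of $A_v$ in $j_x$ (equivalently, the double Edelman--Greene coefficient $j^x_v(a)$ given by expanding $F_x(x||a)$ into double Schur functions, by Theorem~\ref{thm:jbasis}, which identifies $j_\lambda$ with $\sum_w j^w_\lambda(a)A_w$). On the affine side, $\tj^x_v$ is the affine double Edelman--Greene coefficient, computed by Proposition~\ref{prop:LS} via $\tj^x_v = \sum_{w\le v,\; w\in \tS^0_n} \tB_{wv}\,\xi^x_{\hFl_n}(t^w)$. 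The strategy is: first rewrite the \emph{infinite} coefficient $j^x_v$ through the analogous formula using the infinite GKM ring, then match terms with the affine formula under $\ev_n$ for $n\gg0$.

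First I would establish the infinite analogue of \eqref{eq:LS3}. By Theorem~\ref{thm:jbasis}, $\{j_\lambda\}$ is a $\Q[a]$-basis of $\P$, and by Definition~\ref{def:translation} and Proposition~\ref{prop:translations}, the translation elements $\{\tau^w\mid w\in S_\Z^0\}$ lie in $\P'$; moreover Theorem~\ref{thm:PetersonSym} identifies $\P'$ with $\hLa(y||a)$ sending $j_\lambda\mapsto\hs_\lambda(y||a)$ and (one should check) $\tau^{w_\mu}\mapsto$ the image of the fixed point localization $s_\mu(y)$-type element. Concretely, $\xi^v|_{\tau^w}$ plays the role of $\tA_{vw}$ and its inverse matrix the role of $\tB$. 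So there is a matrix $B_{wv}(a)\in\Q[a]$, the inverse of $(\xi^v|_{\tau^w})_{v,w\in S_\Z^0}$, such that $j_v = \sum_{w\in S_\Z^0} B_{wv}(a)\,\tau^w$, and hence $j^x_v = \xi^x|_{j_v} = \sum_{w\in S_\Z^0} B_{wv}(a)\,\xi^x(\tau^w)$, where $\xi^x(\tau^w):=\xi^x|_{\tau^w}\in\Q[a]$ is the localization of the Schubert class at the infinite translation element as in Definition~\ref{def:translation}. This is the exact infinite mirror of \eqref{eq:LS2}--\eqref{eq:LS3}.

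Next I would invoke the stability inputs. By Lemma~\ref{lem:AB}, the affine localization matrix $\tA$ and its inverse $\tB$ stabilize: $\tA_{vw}=\ev_n(A_{vw})$ and $\tB_{vw}=\ev_n(B_{vw})$ for $n\gg0$, with $A_{vw},B_{vw}\in\Q[a]$. By \eqref{eq:taustabilize} in Section~\ref{ssec:translationproof}, $\ev_n(\xi^v(\tau^x))=\xi^v_{\hFl_n}(t^x)$ for $n\gg0$; this identifies $A_{vw}$ with $\xi^v(\tau^w)$ (up to the convention/sign already fixed in the excerpt), and hence $B_{wv}$ in Lemma~\ref{lem:AB} with the matrix $B_{wv}(a)$ from the previous paragraph. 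Now fix $x\in S_\Z$ and $v\in S_\Z^0$. Since $j^x_v = \sum_{w\in S_\Z^0}B_{wv}(a)\,\xi^x(\tau^w)$ is a \emph{finite} sum (only $w\le v$ contribute, by triangularity of the localization matrices — exactly as $\tB$ and $\tA$ are lower-triangular), I can apply $\ev_n$ for $n$ large enough to handle all finitely many summands simultaneously: $\ev_n(j^x_v)=\sum_{w}\ev_n(B_{wv})\,\ev_n(\xi^x(\tau^w)) = \sum_w \tB_{wv}\,\xi^x_{\hFl_n}(t^w) = \tj^x_v$ by \eqref{eq:LS3}. Here I implicitly use that the Bruhat order on $S_\Z^0$, the triangular structure, and the indexing set $\{w\le v\}$ all survive the embedding $S_\Z^0\hookrightarrow\tS_n^0$ for $n\gg0$, which follows from Lemma~\ref{lem:stabletranslation} and the discussion around it.

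The main obstacle I anticipate is the bookkeeping in matching the \emph{infinite} inverse matrix $B_{wv}(a)$ with the affine $\tB_{wv}$ under $\ev_n$: one must be careful that inverting an infinite (but column-finite, lower-triangular) matrix commutes with $\ev_n$, i.e.\ that the finitely-many entries relevant to a fixed pair $(x,v)$ all stabilize \emph{and} that their stable values assemble into the inverse of the stable $\tA$. This is genuinely a finite linear-algebra statement once one restricts to the order ideal below $v$ in $S_\Z^0$ (which maps into the order ideal below $v$ in $\tS_n^0$ for $n\gg0$, bijectively onto its image), so the argument reduces to: $\ev_n$ of a fixed finite lower-triangular matrix inverse equals the inverse of its $\ev_n$-image, which is immediate. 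A secondary, more cosmetic point is tracking the sign convention $(-1)^{\ell(v)}$ relating $\xi^v_{\hFl_n}$ here to the one in \cite{LS}, but this is already flagged in the footnote and affects both sides identically, so it does not change the conclusion.
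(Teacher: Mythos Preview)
Your approach is circular. You invoke Theorem~\ref{thm:jbasis} (and Theorem~\ref{thm:PetersonSym}) to write $j_v = \sum_{w} B_{wv}(a)\,\tau^w$ in $\P'$, but in the paper the proof of Theorem~\ref{thm:jbasis} \emph{begins} with Proposition~\ref{prop:jstabilize}: it is precisely Proposition~\ref{prop:jstabilize} that shows the element $j_\lambda=\sum_w j^w_\lambda(a)A_w$ (defined via double Edelman--Greene coefficients) is the limit of the affine $\tj_{w_\lambda}$, and only then is the expansion $j_\lambda=\sum_v B_{v w_\lambda}\tau^v$ deduced. So at the point where Proposition~\ref{prop:jstabilize} must be proved, you do not yet know that $j_v$ lies in the $\Q(a)$-span of the $\tau^w$, nor that its coefficients there are the $B_{wv}$.

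What your argument actually establishes (using Lemma~\ref{lem:AB} and \eqref{eq:taustabilize}) is that the polynomial $q_v:=\sum_{w\le v} B_{wv}(a)\,\xi^x(\tau^w)\in\Q[a]$ satisfies $\ev_n(q_v)=\tj^x_v$ for $n\gg0$. This reproves the unnamed lemma immediately preceding Proposition~\ref{prop:jstabilize} (existence of a stable limit), but it does \emph{not} identify $q_v$ with the double Edelman--Greene coefficient $j^x_v$, which is defined via $F_x(x\|a)=\sum_\lambda j^x_\lambda(a)\,s_\lambda(x\|a)$ and has, a priori, nothing to do with $\tau^w$.

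The paper closes this gap by a different route: it uses Theorem~\ref{thm:small} to send $\bS_x(x;a)\mapsto \xi^x_{\hFl_n}$ and $s_\lambda(x\|a)\mapsto \xi^\lambda_{\hGr_n}$ for $n\gg0$, and Proposition~\ref{prop:wrongwaycommute} to identify $\eta_a$ with the wrong-way map $\varpi$. Since $F_x(x\|a)=\eta_a(\bS_x(x;a))$ and $\tj^x_v$ is by definition the coefficient of $\xi^v_{\hGr_n}$ in $\varpi(\xi^x_{\hFl_n})$, the equality $\ev_n(j^x_v)=\tj^x_v$ follows directly from the definitions, with no appeal to the Peterson subalgebra structure.
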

\begin{proof}
By Theorem \ref{thm:small}, the image of $\bS_x(x;a)$ in $H^*_T(\hFl_n)$ represents $\xi^x_{\hFl_n}$ for sufficiently large $n$.  By Proposition \ref{prop:wrongwaycommute}, the double Stanley function $F_x(x||a)$ represents the affine double Stanley class $\varpi( \xi^x_{\hFl_n}) \in H^*_T(\hGr_n)$ for sufficiently large $n$.  By Theorem \ref{thm:small}, the image of $s_{v_\lambda}(x||a)$ in $H^*_T(\hGr_n)$ represents $\xi^v_{\hGr_n}$ for sufficiently large $n$.  We conclude that $\ev_n (j^x_v) = \tj^x_v$.
\end{proof}

By Proposition \ref{prop:jstabilize}, the element $j_\lambda \in \A'$ is the limit (taking limits of coefficients of $A_v$) of $\tj_{w_\lambda} \in \ttA$ as $ n \to \infty$.  By \eqref{eq:taustabilize}, the element $\tau^w \in \A'$ is a similar limit of the elements $t^w \in \ttA$.  Combining Lemma \ref{lem:AB} and \eqref{eq:LS2}, we thus conclude that
$$
j_\lambda = \sum_{\substack{v \in \tS_n^0 \\ v \leq w_\lambda}} B_{v w_\lambda} \tau^v.
$$
It follows that $j_\lambda \in \P$.  The expansion \eqref{eq:j} follows from Theorem \ref{thm:aPeterson}.  

By Lemma \ref{lem:AB} and \eqref{eq:LS1}, we have that both $\{j_\lambda \mid \lambda \in \Par\}$ and $\{\tau^w \mid w \in S_\Z^0\}$ form bases of $\Q(a) \otimes_{\Q[a]} \P$.  Thus an arbitrary element of $a = \sum_{a_w} A_w \in \P$ is uniquely determined by the coefficients $\{a_{w} \in \Q[a] \mid w \in S_\Z^0\}$.  Indeed, we have $a = \sum_{\lambda \in \Par} a_{w_\lambda} j_\lambda$ and the sum must be finite.  It follows that $\P$ is a free $\Q[a]$-module with basis $\{j_\lambda \mid \lambda \in \Par\}$.

\subsection{Proof of Theorems \ref{thm:PetersonHopf} and \ref{thm:PetersonSym}} \label{ssec:PetersonHopfproof}

\begin{prop}\label{prop:jprod}
For $\lambda,\nu \in \Par$, we have
\begin{equation}\label{eq:jprod}
j_\lambda j_\mu = \sum_{\nu \supset \mu}  j_\lambda^{w_{\nu/\mu}} \, j_\nu.
\end{equation}
\end{prop}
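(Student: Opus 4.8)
The plan is to deduce Proposition~\ref{prop:jprod} by stabilizing the analogous identity in the affine Peterson subalgebra, in the same spirit as the proof of Theorem~\ref{thm:jbasis} in Section~\ref{ssec:jbasisproof}. Recall from Appendix~\ref{A:affinenilHecke} (Theorem~\ref{thm:aPeterson}) that for each $n$ the affine Peterson subalgebra of $\ttA=\ttA_n$ is closed under multiplication and its $j$-basis obeys the affine product rule
\[
\tj_{w_\la}\,\tj_{w_\mu} = \sum_{\nu\supset\mu} \tj^{\,w_{\nu/\mu}}_{w_\la}\;\tj_{w_\nu},
\]
whose structure constants are the affine double Edelman--Greene coefficients (equivalently, the coproduct structure constants of $H^*_{T_n}(\hGr_n)$), valid for all $\nu$ small enough to be $n$-bounded. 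First I would note that the right-hand side of the asserted identity makes sense in $\A'$: for fixed $v\in S_\Z$, the coefficient $j^v_\nu(a)$ of $A_v$ in $j_\nu$ is nonzero only when $\ell(w_\nu)\le\ell(v)$, so only finitely many $\nu\supset\mu$ contribute to the $A_v$-coefficient and $\sum_{\nu\supset\mu} j^{w_{\nu/\mu}}_\la\, j_\nu$ is a well-defined element of $\P'$.

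Next I would carry out the coefficient-wise comparison. Fix $v\in S_\Z$. Since every element of $S_\Z$ has only finitely many length-additive factorizations, computing the $A_v$-coefficient of $j_\la j_\mu$ from the expansions $j_\la=\sum_w j^w_\la(a)A_w$, $j_\mu=\sum_w j^w_\mu(a)A_w$ and the nilHecke commutation relation~\eqref{E:ddiff derivation} involves only finitely many of the coefficients $j^w_\la(a)$, $j^{w}_\mu(a)$ and finitely many roots $a_i-a_j$. Choose $m$ so that every permutation occurring in this finite computation, the element $v$ itself, and every $w_\nu$ contributing to the $A_v$-coefficient of the proposed answer all lie in $S_m\subset S_\Z$. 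For $n\gg m$ the subgroup $S_m$ sits inside $\tS_n$ compatibly with all the nilHecke relations used (the relevant reflection indices never wrap around mod $n$), so applying $\ev_n$ carries the $A_v$-coefficient of $j_\la j_\mu$ computed in $\A'$ to the $A_v$-coefficient of $\tj_{w_\la}\tj_{w_\mu}$ computed in $\ttA_n$; here one uses Proposition~\ref{prop:jstabilize} in the form $\ev_n(j^w_\la)=\tj^{\,w}_{w_\la}$ (and similarly for $j_\mu$) for $n\gg0$.

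Then, applying the affine product rule and Proposition~\ref{prop:jstabilize} twice more — to rewrite $\tj^{\,w_{\nu/\mu}}_{w_\la}=\ev_n(j^{w_{\nu/\mu}}_\la)$ and $[A_v]\tj_{w_\nu}=\ev_n(j^v_\nu)$, using that $w_{\nu/\mu}=w_\nu w_\mu^{-1}$ is computed identically in $S_\Z$ and in $\tS_n$ — I obtain
\[
\ev_n\!\bigl([A_v]\,j_\la j_\mu\bigr)=\sum_{\nu\supset\mu}\tj^{\,w_{\nu/\mu}}_{w_\la}\,[A_v]\,\tj_{w_\nu}=\ev_n\!\Bigl(\sum_{\nu\supset\mu}j^{w_{\nu/\mu}}_\la(a)\,[A_v]\,j_\nu\Bigr)
\]
for $n\gg0$, all sums finite for the fixed $v$. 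Since $\ev_n$ is injective on the subring of polynomials supported on the (bounded) set of indices appearing, this forces $[A_v](j_\la j_\mu)=[A_v]\bigl(\sum_{\nu\supset\mu}j^{w_{\nu/\mu}}_\la j_\nu\bigr)$; as $v$ was arbitrary, the identity follows, and in particular $\P$ is closed under multiplication, with $\P'$ commutative by Proposition~\ref{prop:translations}(4).

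The main obstacle will be the uniformity bookkeeping in the middle step: pinning down a single $m$, hence a single threshold for $n$, that simultaneously controls the finite $A_v$-coefficient computations on both sides of the identity and guarantees that the $S_m$-computation carried out inside $\tS_n$ never wraps around, so that the affine product rule may legitimately be read off one $A_v$-coefficient at a time. Once this is arranged, the remainder is the routine stabilization machinery already used in Section~\ref{ssec:jbasisproof}.
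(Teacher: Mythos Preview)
Your approach is genuinely different from the paper's, and while the strategy is reasonable in spirit, there are two real issues.

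First, your finiteness claim is not correct as stated. Commuting the polynomial $j^w_\mu(a)$ past $A_u$ using the Leibniz rule \eqref{E:ddiff derivation} produces terms $c_{u,u'}\,A_{u'}$ for every $u'\le u$, with $\deg c_{u,u'}=\deg j^w_\mu-(\ell(u)-\ell(u'))$. For a fixed $v$ and a fixed factorization $v\doteq u'w$, the only constraint this gives is $\ell(u)\le\ell(u')+\deg j^w_\mu$, and there are infinitely many such $u\ge u'$ in $S_\Z$. So the $A_v$-coefficient of $j_\la j_\mu$ is \emph{not} visibly a finite expression in the $j^u_\la,j^w_\mu$ from \eqref{E:ddiff derivation} and finiteness of factorizations alone. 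The correct fix is to use that $j_\la\in Z_{\A'}(\Q[a])$: this follows from Proposition~\ref{prop:translations}(5) together with Theorem~\ref{thm:jbasis}, since $j_\la$ is a $\Q(a)$-linear combination of the $\tau^x$. Then $j_\la j_\mu=\sum_{u,w}j^u_\la(a)\,j^w_\mu(a)\,A_uA_w$ with no commutation needed, and $[A_v](j_\la j_\mu)=\sum_{v\doteq uw}j^u_\la(a)\,j^w_\mu(a)$ really is a finite sum. The paper's own proof uses exactly this observation, citing Proposition~\ref{prop:translations}(5) explicitly.

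Second, the affine product rule you invoke, $\tj_{w_\la}\tj_{w_\mu}=\sum_\nu\tj^{\,w_{\nu/\mu}}_{w_\la}\,\tj_{w_\nu}$, is not the content of Theorem~\ref{thm:aPeterson}; that theorem only gives the $j$-basis and its characterization. The identity you need amounts to identifying the Peterson subalgebra with $H_*^{T_n}(\hGr_n)$ as Hopf algebras together with the affine cohomology coproduct formula; this is indeed known (see \cite{Lam}, \cite{LaSh}), but it is substantial external input not established in this paper, and your citation does not cover it. With both corrections your stabilization argument goes through.

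By contrast, the paper's proof is short and self-contained: it computes the coefficient of $s_\la(x||a)\otimes s_\mu(x||a)$ in $\Delta(F_w(x||a))$ in two ways, once via Corollary~\ref{cor:coproddoubleSchur} (giving $\sum_{\nu\supset\mu}j^{w_{\nu/\mu}}_\la\, j^w_\nu$) and once via Corollary~\ref{cor:coproddoubleStanley} (giving $\sum_{w\doteq uv}j^u_\la\, j^v_\mu$), and identifies these with the $A_w$-coefficients of the two sides of \eqref{eq:jprod} using $j_\la\in Z_{\A'}(\Q[a])$. This avoids the affine machinery entirely and does not require importing the affine product rule as a black box.
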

\begin{proof}
Let us calculate the coefficient of $s_\lambda(x||a) \otimes s_\mu(x||a)$ in $\Delta(F_w(x||a))$.  On the one hand, 
$$\Delta(F_w(x||a)) = \sum_{\nu \in \Par} j_\nu^w \Delta(s_\nu(x||a)) 
= \sum_{\nu \in \Par} j_\nu^w \sum_{\mu \subset \nu } F_{w_{\nu/\mu}} (x||a) \otimes s_\mu(x||a)
$$
by Corollary \ref{cor:coproddoubleSchur}.  So the coefficient is equal to $\sum_{\nu\supset\mu} j_\lambda^{w_{\nu/\mu}} j_\nu^w$, which is the coefficient of $A_w$ on the RHS of \eqref{eq:jprod}.

On the other hand, by Corollary \ref{cor:coproddoubleStanley}, we have $$\Delta(F_w(x||a)) = \sum_{w \doteq uv} F_u(x||a) \otimes F_v(x||a)
= \sum_{\lambda,\mu} \sum_{w \doteq uv} j_\lambda^u j_\mu^v (s_\lambda(x||a) \otimes s_\mu(x||a)).
$$  
So the coefficient is also equal to $\sum_{w \doteq uv} j_\lambda^u j_\mu^v$, which (using Proposition \ref{prop:translations}(5) to obtain that $j_\lambda$ commutes with $\Q[a]$) is equal to the coefficient of $A_w$ on the LHS of \eqref{eq:jprod}.
\end{proof}

It follows from Proposition \ref{prop:jprod} that $\P'$ is a commutative $\Q[a]$-algebra.  Together with Proposition \ref{prop:translations}(6), we obtain Theorem \ref{thm:PetersonHopf}.

The pairing \eqref{eq:PsiApairing} induces a pairing between $\P'$ and $\Psi_\Gr$.  By Proposition \ref{prop:PsiA}(3), we have $\pair{\xi^v}{ j_\lambda} = \delta_{vw_\lambda}$ for $v \in S_\Z^0$ and $\lambda \in \Par$.  Thus $\P'$ and $\Psi_\Gr$ are dual $\Q[a]$-modules.  By Proposition \ref{prop:PsiA}(2), the comultiplication in $\P'$ is dual to the multiplication in $\Psi_\Gr$.  By comparing Proposition \ref{prop:jprod} and Corollary \ref{cor:coproddoubleSchur}, the multiplication of $\P'$ is dual to the multiplication in $\Psi_\Gr$.  Thus $\P'$ and $\Psi_\Gr$ are dual $\Q[a]$-Hopf algebras.  By Proposition \ref{prop:locHopf} and the definition of $s_\lambda(y||a)$, we have an induced isomorphism of $\Q[a]$-Hopf algebras $\P' \cong \hLa(y||a)$ sending $j_\lambda$ to $s_\lambda(y||a)$.  This completes the proof of Theorem \ref{thm:PetersonSym}.

\subsection{Proof of Theorem \ref{thm:doubleStanleypositivity}}
\label{ssec:positivityproof}
The polynomial $j^x_v(a) \in \Q[a]$ belongs to a subring of the form $\Q[a_{1-m},a_{2-m},\ldots,a_m]$ for some $m$.  Suppose $n \gg m$.  Then $\tj^x_v(a) \in \Q[a_1,\ldots,a_n]$, and by Proposition \ref{prop:jstabilize}, it is the image of $j^x_v(a)$ under $\ev_n$.  Pick a cutoff $c$ satisfying $m \ll c \ll n-m$.  Make the substitution 
$$
a_i \mapsto \begin{cases} a_i & \mbox{if $1 \leq i \leq c$,} \\
a_{i-n} & \mbox{if $c < i \leq n$,}
\end{cases}
$$
to $\tj^x_v(a)$.  The resulting polynomial must equal $j^x_v(a)$.

By Theorem \ref{thm:aPositive}, we have that $\tj^x_v(a)$ is a positive integer polynomial expression in the linear forms 
$$
a_1 - a_2, a_2-a_3, \ldots, a_{n-1}-a_n.
$$
Applying the above substitution to this expression gives the desired expression for $j^x_v(a)$.

\section{Back stable triple Schubert polynomials}\label{sec:triple}
In this section we define triple back stable Schubert polynomials and triple Stanley symmetric functions.  This allows effective computation of some double Edelman-Greene coefficients and structure constants for dual Schur functions. Before we provide the precise definition, we present some motivation.

Corollary \ref{C:shift stanley} states that $F_{\gamma(w)}(x)=F_{w}(x)$ where $F_w$ is the Stanley symmetric function. However, the same statement is not true for double Stanley symmetric functions. 
\begin{example} \label{example:s1s0}  Recall superization notation from \eqref{E:super map}.
\begin{align*}
F_{s_1s_0}(x||a)&=\bS_{s_1s_0}(x;a)=h_2(x_{\leq 0} / a_{\leq 1}) = h_2(x_{\leq 0}/a_{\leq 0})-a_1h_1(x_{\leq 0} / a_{\leq 0}).\\
F_{s_2s_1}(x||a)&= \eta_a (\bS_{s_2s_1}(x;a))=\eta_a(h_2(x_{\leq 1} / a_{\leq 2})) \\
&= \eta_a(h_2(x_{\leq 0}/a_{\leq 0})+(x_1-a_1-a_2) h_1(x_{\leq 0} / a_{\leq 0})+(x_1-a_1)(x_1-a_2))\\
&= h_2(x_{\leq 0}/a_{\leq 0})-a_2h_1(x_{\leq 0} / a_{\leq 0}).
\end{align*}
\end{example}
Note that the only difference between $F_{s_1s_0}(x||a)$ and $F_{s_2s_1}(x||a)$ is the coefficient in front of the term $h_1(x_{\leq 0} /a_{\leq 0})$, and if we compute $F_{s_3s_2}(x||a)
$, this coefficient becomes $a_3$. In general, when we shift $w$ by $\gamma$, certain variables $a_i$ remain the same and other variables $a_j$ become $a_{j+1}$. Roughly speaking, triple Stanley symmetric functions separate stable $a_i$ and shifted $a_j$ when applying $\gamma$ to $w$, by replacing stable variables $a_i$ by $b_i$. To make the construction formal, we start by defining back stable triple Schubert polynomials.

\subsection{Tripling}
Let $\nu_{a,b}: \Lambda(a) \to \Lambda(b)$ be the map that changes symmetric functions from the $a$-variables to $b$-variables.
Let $\La(x/b) \subset \La(x) \otimes_{\Q} \La(b)$ denote the image of the superization map $p_k\mapsto p_k(x/b)$.
We use the same notation for the $\Q[a]$-algebra maps
\begin{align*}
\nu_{a,b}: \bR(a) = \Lambda(a) \otimes_\Q \Q[a] &\to \Lambda(b) \otimes_\Q \Q[a] &
p_k(a) \otimes 1 & \mapsto p_k(b) \otimes 1 \\
\nu_{a,b}: \Lambda(x||a) &\to \Lambda(x/b) \otimes_\Q \Q[a] &
p_k(x/a) & \mapsto p_k(x/b)
\end{align*}
and the $\Q[x,a]$-algebra map
\begin{align*}
\nu_{a,b}: \bR(x;a) = \Lambda(x||a) \otimes_{\Q[a]} \Q[x,a] &\to \Lambda(x/b) \otimes_\Q \Q[x,a] &
p_k(x/a) & \mapsto p_k(x/b).
\end{align*}
These maps change $a$'s to $b$'s but only ``in symmetric functions''.  All of these maps are $\Q[a]$-algebra isomorphisms: the inverse is the substitution $f \mapsto f|_{b=a}$.  Finally, note that we have an injection $\Lambda(x||a) \hookrightarrow \Lambda(x) \otimes \Lambda(a) \otimes \Q[a]$, and the action of $\nu_{a,b}$ on $\Lambda(x||a)$ is simply given by $1 \otimes \nu_{a,b} \otimes 1: \Lambda(x) \otimes \Lambda(a) \otimes \Q[a] \to \Lambda(x) \otimes \Lambda(b) \otimes \Q[a]$.

\subsection{Back stable triple Schubert polynomials}
For $w \in S_\Z$, define the \defn{back stable triple Schubert polynomials} $\bS_w(x;a;b) \in \Lambda(x/b) \otimes_\Q \Q[x,a]$ by
$$
\bS_w(x;a;b) := \nu_{a,b}(\bS_w(x;a)).
$$
The set $\{\bS_w(x;a;b)\mid w \in S_\Z\}$ form a basis of $\Lambda(x/b) \otimes_\Q \Q[x,a]$ over $\Q[a]$.  In particular, the structure constants for $\bS_w(x;a;b)$ (which are equal to the structure constants for $\bS_w(x;a)$) belong in $\Q[a]$.
\begin{example}\label{example:s1s0v2} 
Continuing Example \ref{example:s1s0}, we have
\begin{align*}
\bS_{s_1s_0}(x;a;b)&= h_2(x_{\leq 0}/b_{\leq 0})-a_1 h_1(x_{\leq 0} / b_{\leq 0}) \\
\bS_{s_2s_1}(x;a;b)&= h_2(x_{\leq 0}/b_{\leq 0})+(x_1-a_1-a_2) h_1(x_{\leq 0} / b_{\leq 0})+(x_1-a_1)(x_1-a_2)).
\end{align*}
\end{example}
\begin{prop}\label{prop:triple}
Let $w \in S_\Z$.  We have
\begin{align*}
\bS_w(x;a;b) &= \sum_{\substack{w \doteq uvz \\ u,z \in S_{\neq 0}}}(-1)^{\ell(u)} \S_{u^{-1}}(a) F_v(x/b)\S_z(x) 
= \sum_{w \doteq uv} (-1)^{\ell(u)}\nu_{a,b}(\bS_{u^{-1}}(a)) \bS_v(x).
\end{align*}
\end{prop}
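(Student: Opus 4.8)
The plan is to obtain both expansions by applying the isomorphism $\nu_{a,b}$ to the two formulas for $\bS_w(x;a)$ that have already been established, namely Corollary~\ref{C:backstable triple sum} and Proposition~\ref{P:backstable double well-defined}. Recall that by definition $\bS_w(x;a;b)=\nu_{a,b}(\bS_w(x;a))$, that $\nu_{a,b}\colon \bR(x;a)\to\Lambda(x/b)\otimes_\Q\Q[x,a]$ is a $\Q[x,a]$-algebra isomorphism, and that it is determined by $\nu_{a,b}(p_k(x||a))=p_k(x/b)$. At the outset I would record that the several maps denoted $\nu_{a,b}$ introduced in the Tripling subsection (on $\bR(a)$, on $\Lambda(x||a)$, and on $\bR(x;a)$) are all restrictions of the single map $1\otimes\nu_{a,b}\otimes 1$ on $\Lambda(x)\otimes\Lambda(a)\otimes\Q[x,a]$, so that all the expressions below are unambiguous.

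For the first equality I would apply $\nu_{a,b}$ termwise to the right-hand side of \eqref{E:backstable triple def}. Because $\nu_{a,b}$ is a $\Q[x,a]$-algebra homomorphism, it fixes each factor $\S_{u^{-1}}(a)\in\Q[a]$ and each factor $\S_z(x)\in\Q[x]$. The remaining factor $F_v(x/a)$, being the superization of the Stanley function $F_v$, is by construction a polynomial in the generators $p_k(x/a)=p_k(x||a)$; hence $\nu_{a,b}$ sends it to the corresponding polynomial in the $p_k(x/b)$, which is exactly $F_v(x/b)$. Applying $\nu_{a,b}$ to every summand of \eqref{E:backstable triple def} therefore yields the first displayed formula.

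For the second equality I would apply $\nu_{a,b}$ termwise to \eqref{E:double double}, that is, to $\bS_w(x;a)=\sum_{w\doteq uv}(-1)^{\ell(u)}\bS_{u^{-1}}(a)\bS_v(x)$. The factor $\bS_v(x)\in\bR$ involves only the $x$-variables (its symmetric part lies in $\Lambda(\xm)$ and its polynomial part in $\Q[\xp]$), so under the identification of $\nu_{a,b}$ with $1\otimes\nu_{a,b}\otimes 1$ it is fixed, while the factor $\bS_{u^{-1}}(a)\in\bR(a)$ is sent to $\nu_{a,b}(\bS_{u^{-1}}(a))$. Multiplicativity of $\nu_{a,b}$ then gives $\bS_w(x;a;b)=\sum_{w\doteq uv}(-1)^{\ell(u)}\nu_{a,b}(\bS_{u^{-1}}(a))\bS_v(x)$, which is the second formula.

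There is essentially no genuine obstacle: the argument is a single application of the algebra homomorphism $\nu_{a,b}$ to two identities we are allowed to assume. The only point requiring care --- and thus the main ``step'' --- is the bookkeeping mentioned above: one must check that $\nu_{a,b}$ is well-defined and mutually compatible on the rings $\bR$, $\bR(a)$, $\Lambda(x||a)$, and $\bR(x;a)$ (equivalently, that these all embed compatibly in $\Lambda(x)\otimes\Lambda(a)\otimes\Q[x,a]$), so that termwise application of $\nu_{a,b}$ to the sums in \eqref{E:backstable triple def} and \eqref{E:double double} is legitimate and the symbols $\nu_{a,b}(\bS_{u^{-1}}(a))$ and $\nu_{a,b}(F_v(x/a))$ carry the intended meaning. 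Once this is in place, the proposition follows immediately.
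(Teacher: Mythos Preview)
Your proposal is correct and follows exactly the paper's approach: apply $\nu_{a,b}$ to \eqref{E:backstable triple def} for the first equality and to \eqref{E:double double} for the second, using that $\nu_{a,b}$ is a $\Q[x,a]$-algebra map sending $p_k(x||a)\mapsto p_k(x/b)$. The paper's proof is the two-line version of what you wrote; your added bookkeeping about identifying the various incarnations of $\nu_{a,b}$ with $1\otimes\nu_{a,b}\otimes 1$ on $\Lambda(x)\otimes\Lambda(a)\otimes\Q[x,a]$ is exactly the right way to make the argument precise. (One harmless slip: the polynomial part of $\bS_v(x)$ lies in $\Q[x]$, not just $\Q[\xp]$, but this does not affect the argument.)
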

\begin{proof}
The first equality follows from applying $\nu_{a,b}$ to \eqref{E:backstable triple def}.  The second equality follows from applying $\nu_{a,b}$ to Proposition \ref{P:backstable double well-defined}.
\end{proof}
%

Recall that $A_i^x$ (resp. $A_i^a$, $A_i^b$) denotes the divided difference operator in the $x$-variables (resp. $a$-variables, $b$-variables).

\begin{prop}
For $w \in S_\Z$ and $i \in \Z$, we have
$$	
A_i^x \bS_w(x;a;b) = \begin{cases} \bS_{w s_i}(x;a;b) & \text{if $ws_i < w$} \\
	0 & \text{otherwise.}
	\end{cases}
$$	
For $w \in S_\Z$ and $i \in \Z-\{0\}$, we have
$$	
A_i^a \bS_w(x;a;b) = \begin{cases} - \bS_{s_i w}(x;a;b) & \text{if $s_iw < w$} \\
	0 & \text{otherwise.}
	\end{cases}
$$	
\end{prop}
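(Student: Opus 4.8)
The plan is to obtain both identities by transporting the single-parameter statements through the isomorphism $\nu_{a,b}$. Since $\bS_w(x;a;b) = \nu_{a,b}(\bS_w(x;a))$ by definition, and $\bS_w(x;a)$ already satisfies $A_i^x \bS_w(x;a) = \bS_{ws_i}(x;a)\,\chi(ws_i<w)$ by \eqref{E:backstable double partial} and $A_i^a \bS_w(x;a) = -\bS_{s_iw}(x;a)\,\chi(s_iw<w)$ by Proposition \ref{P:left ddiff}, it suffices to prove the intertwining relations
\begin{align*}
\nu_{a,b}\circ A_i^x &= A_i^x\circ\nu_{a,b} &&\text{for all } i\in\Z,\\
\nu_{a,b}\circ A_i^a &= A_i^a\circ\nu_{a,b} &&\text{for all } i\in\Z\setminus\{0\},
\end{align*}
and then apply $\nu_{a,b}$ to the two single-parameter formulas.

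First I would establish the analogous intertwining relations for the reflections: $\nu_{a,b}\circ s_i^x = s_i^x\circ\nu_{a,b}$ for all $i$, and $\nu_{a,b}\circ s_i^a = s_i^a\circ\nu_{a,b}$ for $i\neq0$. These are immediate on the $\Q$-algebra generators $p_k(x||a)$, $x_j$, $a_j$ of $\bR(x;a)$: the map $\nu_{a,b}$ fixes the $x_j$ and $a_j$, sends $p_k(x||a)$ to $p_k(x/b)$, and the defining formulas for $s_i^x$, $s_i^a$ then give the equality directly (using that $s_i^a$ fixes $p_k(x||a)$ when $i\neq0$). For the divided difference operators I would then invoke the twisted Leibniz rule $A_i(fg)=A_i(f)g+s_i(f)A_i(g)$: once $\nu_{a,b}$ is known to commute with $s_i$ everywhere and with $A_i$ on the generators, commutation of $\nu_{a,b}$ and $A_i$ propagates to all products, hence to all of $\bR(x;a)$ by additivity. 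On the generators it is trivial: $A_i^x$ and $A_i^a$ kill $p_k(x||a)$ and $p_k(x/b)$ alike for $i\neq0$, while $A_0^x$ sends both $p_k(x||a)$ and $p_k(x/b)$ to the same $a$- and $b$-free polynomial $(x_0^k-x_1^k)/(x_0-x_1)$, and on $x_j$, $a_j$ nothing moves.

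The one point requiring care — and the place I would flag as the main (and essentially only) obstacle — is the meaning of $A_i^a$ in the two rings: on $\bR(x;a)$ it differentiates the $a$'s everywhere, including those inside $p_k(x||a)=p_k(x/a)$, whereas on $\Lambda(x/b)\otimes_\Q\Q[x,a]$ it acts purely on the $\Q[a]$-coefficients; these agree exactly because $s_i^a$ fixes $p_k(x||a)$ for $i\neq0$, so $A_i^a$ factors through the coefficients, and for $i=0$ this breaks down, which is precisely why the $A_i^a$-formula is stated only for $i\neq0$. Granting the intertwining relations, applying $\nu_{a,b}$ to \eqref{E:backstable double partial} yields the first formula and applying it to Proposition \ref{P:left ddiff} (for $i\neq0$) yields the second. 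As an alternative I would note that one can bypass this discussion by differentiating the expansion $\bS_w(x;a;b)=\sum_{w\doteq uv}(-1)^{\ell(u)}\nu_{a,b}(\bS_{u^{-1}}(a))\,\bS_v(x)$ from Proposition \ref{prop:triple}: $A_i^x$ sees only the factor $\bS_v(x)$ and $A_i^a$ ($i\neq0$) only the factor $\nu_{a,b}(\bS_{u^{-1}}(a))$, and the resulting cancellation is identical to the proofs of Proposition \ref{prop:dSchub} and Proposition \ref{P:left ddiff}. I expect the first route to be the cleanest to write out.
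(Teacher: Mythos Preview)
Your proof is correct and essentially matches the paper's approach. The paper in fact swaps your primary and alternative routes: for the first statement it uses the expansion from Proposition~\ref{prop:triple} together with Theorem~\ref{T:backstable} (your alternative), while for the second it invokes the intertwining $A_i^a \circ \nu_{a,b} = \nu_{a,b} \circ A_i^a$ for $i \neq 0$ and Proposition~\ref{P:left ddiff} (your primary approach).
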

\begin{proof}
The first statement follows immediately from the last equality in Proposition \ref{prop:triple} and Theorem \ref{T:backstable}.  For $i \neq 0$, we have $A_i^a \circ \nu_{a,b} = \nu_{a,b} \circ A_i^a$, so the second statement follows by Proposition \ref{P:left ddiff}.
\end{proof}

\subsection{Triple Stanley symmetric functions}

Define the \defn{triple Stanley symmetric functions} by 
$$
F_w(x||a||b) := \nu_{a,b}(F_w(x||a)).
$$
\begin{example}\label{example:s1s0v3} 
Continuing Example \ref{example:s1s0v2}, we have
\begin{align*}
F_{s_1s_0}(x||a||b)&= h_2(x_{\leq 0}/b_{\leq 0})-a_1 h_1(x_{\leq 0} / b_{\leq 0}) \,\, \text{and} \,\,
F_{s_2s_1}(x||a||b)= h_2(x_{\leq 0}/b_{\leq 0})-a_2 h_1(x_{\leq 0} / b_{\leq 0}).
\end{align*}
\end{example}
By Proposition \ref{prop:double Stanley triple sum} and Theorem \ref{thm:doublecoprod}, we have
\begin{align*}
F_w(x||a||b) &= \sum_{\substack{w \doteq uvz \\ u,z \in S_{\neq 0}}}(-1)^{\ell(u)} \S_{u^{-1}}(a) F_v(x/b)\S_z(a) \\
\bS_w(x;a;b) &= \sum_{\substack{w \doteq uv\\ v \in S_{\neq 0}}} F_u(x||a||b) \S_v(x;a).
\end{align*}
It follows from \eqref{E:plethystic difference stanley} that $F_w(x||a||b)$ satisfies the supersymmetry (cf. \cite[(2.15)]{M})
$$
F_{w^{-1}}(x||a||b) = (-1)^{\ell(w)} F_w(b||a||x).
$$

\begin{lemma}\label{lem:tripStanley}Let $w \in S_\Z$.  Then
$$F_w(x||a||b)=\sum_{w \doteq uvz}(-1)^{\ell(u)} \bS_{u^{-1}}(a) F_v(x/b)  \bS_{z}(a).$$
\end{lemma}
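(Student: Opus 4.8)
The plan is to derive the identity from the formula
\[
F_w(x||a||b) = \sum_{\substack{w \doteq uvz \\ u,z \in S_{\ne0}}}(-1)^{\ell(u)}\, \S_{u^{-1}}(a)\, F_v(x/b)\,\S_z(a)
\]
that was established just above, by expanding the two back stable Schubert polynomials on the right-hand side of the asserted identity via the coproduct formula \eqref{E:little coprod} and then recombining. The one genuinely new ingredient is a ``superized'' version of the add--subtract identity \eqref{E:add subtract stanley}: applying the $\Q$-algebra homomorphism $\La(\xm)\to\La(\xm)\otimes_\Q\La(b)$ that superizes the $x$-variables (i.e.\ $p_k(\xm)\mapsto p_k(x/b)$), and observing that it leaves the Stanley functions in the $a$-variables untouched, takes \eqref{E:add subtract stanley} to the identity
\[
F_m(x/b) = \sum_{m\doteq uvz}(-1)^{\ell(u)}\,F_{u^{-1}}(a)\,F_v(x/b)\,F_z(a)
\]
valid for every $m\in S_\Z$.

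Concretely, first I would rewrite $\bS_{u^{-1}}(a)=\sum_{\,u^{-1}\doteq pq,\ q\in S_{\ne0}}F_p(a)\,\S_q(a)$ and $\bS_z(a)=\sum_{\,z\doteq p'q',\ q'\in S_{\ne0}}F_{p'}(a)\,\S_{q'}(a)$, which are obtained by evaluating \eqref{E:little coprod} in the $a$-variables. Substituting these and writing $u^{-1}\doteq pq$ as $u\doteq q^{-1}p^{-1}$ (so $\ell(u)=\ell(p)+\ell(q)$), the right-hand side of the asserted identity becomes a sum over length-additive factorizations $w\doteq q^{-1}\cdot p^{-1}\cdot v\cdot p'\cdot q'$ with $q,q'\in S_{\ne0}$ of the product $(-1)^{\ell(p)+\ell(q)}\,\S_q(a)\,F_p(a)\,F_v(x/b)\,F_{p'}(a)\,\S_{q'}(a)$.

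Next I would collect terms according to the ``middle'' factor: fixing $q,q'\in S_{\ne0}$ and an element $m$ with $w\doteq q^{-1}\cdot m\cdot q'$, the sum over the factorizations $m\doteq p^{-1}\cdot v\cdot p'$ collapses to $F_m(x/b)$ by the displayed superized add--subtract identity (applied with $u=p^{-1}$ and $z=p'$). This reduces the right-hand side of the identity to $\sum_{\,w\doteq q^{-1}mq',\ q,q'\in S_{\ne0}}(-1)^{\ell(q)}\S_q(a)\,F_m(x/b)\,\S_{q'}(a)$, and renaming $u=q^{-1}$, $v=m$, $z=q'$ (so $u,z\in S_{\ne0}$, $(-1)^{\ell(q)}=(-1)^{\ell(u)}$, $\S_q(a)=\S_{u^{-1}}(a)$) identifies it with the quoted formula for $F_w(x||a||b)$.

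The whole argument is bookkeeping with length-additive factorizations, structurally identical to the proof of Corollary \ref{C:backstable triple sum}. The main point requiring care is the superized add--subtract identity, together with the observation that all the intermediate products should be read in (a completion of) $\La(\xm)\otimes_\Q\La(b)\otimes_\Q\La(a)\otimes_\Q\Q[a]$: the intermediate sums a priori involve symmetric functions in the $a$-variables, and it is precisely the cancellation built into the add--subtract identity that collapses them and leaves an element of $\La(x/b)\otimes_\Q\Q[a]$.
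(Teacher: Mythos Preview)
Your proposal is correct and follows essentially the same route as the paper's own proof: both expand $\bS_{u^{-1}}(a)$ and $\bS_z(a)$ via the coproduct formula \eqref{E:little coprod}, regroup the resulting fivefold length-additive factorization, and collapse the inner triple sum to $F_m(x/b)$ using the superized add--subtract identity. Your explicit justification of that superized version of \eqref{E:add subtract stanley} (via the $\Q$-algebra map $p_k(\xm)\mapsto p_k(x/b)$) is slightly more transparent than the paper's citation of \eqref{E:plethystic difference stanley}, but the argument is otherwise identical.
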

\begin{proof}
We have 
\begin{align*}
&\sum_{w \doteq uvz}(-1)^{\ell(u)} \bS_{u^{-1}}(a) F_v(x/b)  \bS_{z}(a) \\
&=\sum_{\substack{w \doteq u_1u_2vz_1z_2 \\ u_1,z_2 \in S_{\neq 0}}}(-1)^{\ell(u_1)+\ell(u_2)} \S_{u_1^{-1}}(a) F_{u_2^{-1}}(a) F_v(x/b) F_{z_1}(a) \S_{z_2}(a) \\
&= \sum_{\substack{w \doteq u_1x z_2 \\ u_1,z_2 \in S_{\neq 0}}}(-1)^{\ell(u_1)} \S_{u_1^{-1}}(a) \left(\sum_{x \doteq u_2vz_2} (-1)^{\ell(u_2)} F_{u_2^{-1}}(a) F_v(x/b) F_{z_1}(a) \right)\S_{z_2}(a) \\
&= \sum_{\substack{w \doteq u_1x z_2 \\ u_1,z_2 \in S_{\neq 0}}}(-1)^{\ell(u_1)} \S_{u_1^{-1}}(a) F_x(x/b) \S_{z_2}(a) = F_w(x||a||b)
\end{align*}
using Theorem \ref{thm:coprod} and \eqref{E:plethystic difference stanley}.
\end{proof}

Define the \defn{triple Schur functions} (essentially the same as the \defn{supersymmetric Schur functions} of Molev \cite[Section 2.4]{M}) by
$s_\lambda(x||a||b) := \nu_{a,b}  ( s_\lambda(x||a))$.
Then
\begin{equation}\label{eq:3=2}
F_w(x||a||b) = \sum_\lambda j^w_\lambda(a) s_\lambda(x||a||b)
\end{equation}
where $j^w_\lambda(a)$ are the usual double Edelman-Greene coefficients.
The \defn{triple Edelman-Greene coefficients} are defined by
$$
F_w(x||a||b) = \sum_\lambda j^w_\lambda(a,b) s_\lambda(x||b)
$$
and satisfy $\deg(j^w_\lambda(a,b)) = \ell(w) - |\lambda|$.
It is clear that $j^w_\lambda(a,a) = j^w_\lambda(a)$, but by \eqref{eq:3=2}, we also have
\begin{equation}\label{eq:jj}
j^w_\lambda(a,b) = \sum_\mu j^w_\mu(a) j^{w_\mu}_\lambda(a,b).
\end{equation}

Recall the $\Q$-algebra automorphism $\shift_a$ of \textsection \ref{SS:double Schur}. This map
can be applied to the $\Q[a]$-algebra $\Lambda(x/b) \otimes_\Q \Q[x,a]$ or to the $\Q[a]$-algebra $\Q[a,b]$. Recall also $\shift:S_\Z\to S_\Z$ from \textsection \ref{SS:notation}.

\begin{prop}
For $w \in S_\Z$, we have $F_{\shift(w)}(x||a||b) = \shift_a(F_w(x||a||b))$.
\end{prop}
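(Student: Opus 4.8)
The plan is to reduce the statement to the already-established shift compatibility for double Stanley symmetric functions, namely $F_{\shift(w)}(x||a) = \shift_a(F_w(x||a))$, which itself follows from Corollary~\ref{C:backstable double shift} together with the definition $F_w(x||a)=\eta_a(\bS_w(x;a))$ and the fact that $\eta_a$ intertwines $\shift$ on $\bR(x;a)$ with $\shift_a$ on $\La(x||a)$ (one checks this on the generators $p_k(x||a)$, $a_i$, $x_i$). So first I would record the intermediate claim $F_{\shift(w)}(x||a) = \shift_a(F_w(x||a))$, or invoke it directly; in fact this is essentially Corollary~\ref{C:shift stanley} in the single case and its double analogue follows the same way from Corollary~\ref{C:backstable double shift}.

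Next I would apply the tripling map $\nu_{a,b}$. By definition $F_w(x||a||b) := \nu_{a,b}(F_w(x||a))$, so $F_{\shift(w)}(x||a||b) = \nu_{a,b}(F_{\shift(w)}(x||a)) = \nu_{a,b}(\shift_a(F_w(x||a)))$. To conclude I need the commutation $\nu_{a,b}\circ\shift_a = \shift_a\circ\nu_{a,b}$ as maps on $\La(x||a)$ with target $\La(x/b)\otimes_\Q\Q[x,a]$. This is where I must be slightly careful about which copy of the $a$-variables $\shift_a$ acts on: on $\La(x||a)$ the map $\shift_a$ shifts the $a$'s appearing both in $\Q[a]$ and inside $p_k(x||a)$, whereas after applying $\nu_{a,b}$ the symmetric-function $a$'s have become $b$'s and are not touched, so $\shift_a$ on $\La(x/b)\otimes_\Q\Q[x,a]$ only shifts the $\Q[a]$-part. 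Using the description at the end of \textsection\ref{SS:double Schur} that $\nu_{a,b}$ acts as $1\otimes\nu_{a,b}\otimes 1$ on $\La(x)\otimes\La(a)\otimes\Q[a]$, one sees that $\shift_a$ acts trivially on the middle factor after we have moved past $\nu_{a,b}$, and the identity $\nu_{a,b}\circ\shift_a=\shift_a\circ\nu_{a,b}$ holds: both sides shift the last $\Q[a]$ factor and apply the relabelling $a\mapsto b$ to the symmetric-function part, and the formulas $\shift_a(p_k(x||a))=p_k(x||a)-a_1^k$ versus $\shift_a(p_k(x/b))=p_k(x/b)-a_1^k$ match up under $\nu_{a,b}$ since $\nu_{a,b}(p_k(x||a))=p_k(x/b)$ and the subtracted term $a_1^k$ lies in the untouched $\Q[a]$. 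Chaining the two identities gives $F_{\shift(w)}(x||a||b) = \shift_a(\nu_{a,b}(F_w(x||a))) = \shift_a(F_w(x||a||b))$, which is the claim.

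The main obstacle, such as it is, is purely bookkeeping: pinning down precisely which set of $a$-variables $\shift_a$ is acting on in each of the rings $\La(x||a)$, $\La(x/b)\otimes_\Q\Q[x,a]$, and $\Q[a,b]$, and verifying that $\shift_a$ and $\nu_{a,b}$ commute given that $\nu_{a,b}$ relabels exactly the symmetric-function copy of $a$ that $\shift_a$ would otherwise shift. Once this is made precise on the $\Q$-algebra generators $p_k(x||a)$, $x_i$, $a_i$ (all three are $\Q[a]$-algebra homomorphisms, so checking on generators suffices), the proof is immediate. I would therefore write the proof as: invoke the double-case shift identity, apply $\nu_{a,b}$, and cite the generator-wise commutation $\nu_{a,b}\shift_a=\shift_a\nu_{a,b}$, perhaps spelling out the one-line check $\nu_{a,b}(\shift_a(p_k(x||a))) = \nu_{a,b}(p_k(x||a)-a_1^k) = p_k(x/b)-a_1^k = \shift_a(p_k(x/b)) = \shift_a(\nu_{a,b}(p_k(x||a)))$.
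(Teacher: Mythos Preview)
Your intermediate identity $F_{\shift(w)}(x||a)=\shift_a(F_w(x||a))$ is false, so the first step of your reduction fails. Take $w=s_0$: then $F_{s_0}(x||a)=s_1(x||a)=p_1(x||a)$ and likewise $F_{s_1}(x||a)=\eta_a\bigl(p_1(x||a)+x_1-a_1\bigr)=p_1(x||a)$, whereas $\shift_a(p_1(x||a))=p_1(x||a)-a_1$. The operator identity $\eta_a\circ\shift=\shift_a\circ\eta_a$ you invoke does not hold: on $p_k(x||a)$ the left side gives $\eta_a(p_k(x||a)+x_1^k-a_1^k)=p_k(x||a)$, while the right side gives $p_k(x||a)-a_1^k$.

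Your second commutation $\nu_{a,b}\circ\shift_a=\shift_a\circ\nu_{a,b}$ is also false, and your generator check contains the error: since $\shift_a$ on $\La(x/b)\otimes_\Q\Q[x,a]$ only shifts the $\Q[a]$-factor (as you yourself say), one has $\shift_a(p_k(x/b))=p_k(x/b)$, not $p_k(x/b)-a_1^k$. So $\nu_{a,b}(\shift_a(p_k(x||a)))=p_k(x/b)-a_1^k$ but $\shift_a(\nu_{a,b}(p_k(x||a)))=p_k(x/b)$. The two errors happen to cancel in your chain, but neither step is valid on its own.

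There are two fixes. One is to argue directly that the composite $\nu_{a,b}\circ\eta_a:\bR(x;a)\to\La(x/b)\otimes_\Q\Q[a]$ intertwines $\shift$ with $\shift_a$; this \emph{is} true and checks immediately on the generators $p_k(x||a)$, $x_i$, $a_i$. The other is the paper's route: use Lemma~\ref{lem:tripStanley} to write $F_w(x||a||b)=\sum_{w\doteq uvz}(-1)^{\ell(u)}\bS_{u^{-1}}(a)\,F_v(x/b)\,\bS_z(a)$, then shift term by term using $\bS_{\shift(y)}(a)=\shift_a(\bS_y(a))$ and $F_{\shift(v)}(x/b)=F_v(x/b)$ (Corollaries~\ref{C:backstable double shift} and~\ref{C:shift stanley}), noting that $\shift_a$ fixes $F_v(x/b)$.
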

\begin{proof}
Follows from Lemma \ref{lem:tripStanley} and Corollaries \ref{C:backstable double shift} and \ref{C:shift stanley}.
\end{proof}

\begin{cor}\label{cor:tripleshift}
Let $\la \in \Par$ and $w \in S_\Z$.  Then $j_\lambda^{\shift(w)}(a,b) =\shift_a(j_\la^w(a,b))$.
\end{cor}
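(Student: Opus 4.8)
The plan is to expand the defining equation for the triple Edelman--Greene coefficients in the basis of double Schur functions and then apply the shift automorphism $\shift_a$ to both sides. Recall that by definition $F_w(x||a||b) = \sum_\lambda j^w_\lambda(a,b)\, s_\lambda(x||b)$, where $\{s_\lambda(x||b) \mid \lambda \in \Par\}$ is a basis of $\Lambda(x||b)$ over $\Q[b]$, hence also a basis of $\Lambda(x||b)\otimes_{\Q[b]}\Q[a,b]$ over $\Q[a,b]$; in particular the coefficients $j^w_\lambda(a,b)\in\Q[a,b]$ are uniquely determined by this expansion.

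First I would record the key local fact: the $\Q$-algebra automorphism $\shift_a$ of $\Lambda(x/b)\otimes_\Q\Q[x,a]$ acts only on the $a$-variables and leaves the $x$- and $b$-variables untouched (this is immediate from the definitions in \textsection\ref{SS:double Schur}, where $\shift_a$ is built from $a_i\mapsto a_{i+1}$ and the corresponding action on $p_k(x||a)$). Since $s_\lambda(x||b)$ is a double Schur function in the $x$ and $b$ variables only, it follows that $\shift_a(s_\lambda(x||b)) = s_\lambda(x||b)$ for every $\lambda$. Applying the homomorphism $\shift_a$ to the expansion above therefore gives
$$\shift_a(F_w(x||a||b)) = \sum_\lambda \shift_a(j^w_\lambda(a,b))\, s_\lambda(x||b).$$

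Next I would invoke the preceding Proposition, which gives $\shift_a(F_w(x||a||b)) = F_{\shift(w)}(x||a||b)$, together with the defining expansion $F_{\shift(w)}(x||a||b) = \sum_\lambda j^{\shift(w)}_\lambda(a,b)\, s_\lambda(x||b)$. Comparing the two resulting expressions for $F_{\shift(w)}(x||a||b)$ and using uniqueness of the expansion coefficients in the basis $\{s_\lambda(x||b)\}$, I conclude $j^{\shift(w)}_\lambda(a,b) = \shift_a(j^w_\lambda(a,b))$, as claimed. There is essentially no obstacle here; the only point needing (brief) verification is that $\shift_a$ fixes each $s_\lambda(x||b)$, which follows at once because $\shift_a$ by construction moves none of the variables appearing in $s_\lambda(x||b)$.
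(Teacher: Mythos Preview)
Your proof is correct and follows precisely the approach implicit in the paper: apply $\shift_a$ to the defining expansion $F_w(x||a||b)=\sum_\lambda j^w_\lambda(a,b)\,s_\lambda(x||b)$, use the preceding Proposition $F_{\shift(w)}(x||a||b)=\shift_a(F_w(x||a||b))$, and compare coefficients, noting that $\shift_a$ fixes each $s_\lambda(x||b)$ since these involve no $a$-variables. The paper itself gives no explicit proof, so your write-up is exactly the intended argument.
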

Thus triple Stanley symmetric functions allow us to distinguish between ``stable'' phenomena (the $b$-variables) and the ``shifted'' phenomena (the $a$-variables).

\begin{example}\label{example:s1s0v4} 
Continuing Example \ref{example:s1s0v3}, we have
\begin{align*}
F_{s_1s_0}(x;a;b)&= h_2(x_{\leq 0}/b_{\leq 0})-a_1 h_1(x_{\leq 0} / b_{\leq 0})\\
&=h_2(x_{\leq 0}/b_{\leq 1})+(b_1-a_1) h_1(x_{\leq 0} / b_{\leq 0})\\
&= h_{2}(x||b) - (b_1-a_1) h_1(x||b). \\
F_{s_2s_1}(x;a;b)&= h_2(x_{\leq 0}/b_{\leq 0})-a_2 h_1(x_{\leq 0} / b_{\leq 0})\\
&= h_{2}(x||b) - (b_1-a_2) h_1(x||b). \\
\end{align*}
Therefore, $j_{(1)}^{s_1s_0}(a,b)=b_1-a_1$ and $j_{(1)}^{s_2s_1}(a,b)=b_1-a_2$.
\end{example}

%

\subsection{Double to triple}
We have an explicit formula for $j_\la^w(a,b)$ in terms of double Edelman-Greene coefficients $j_\la^w(a)$.  Recall the definition of Durfee square $d(\la)$ from before Proposition \ref{prop:doublesuper}.

\begin{prop}\label{prop:doubletriple}
Let $\la,\mu \in \Par$.  Then
$$
j_\mu^{w_\la}(a,b) = \sum_{\substack{\rho: \; \mu\subseteq\rho\subseteq\la \\ d(\mu)=d(\rho)=d(\la)}} (-1)^{|\la/\rho|} \S_{w_{\la/\rho}^{-1}}(a)\S_{w_{\rho/\mu}}(b).
$$
For $\mu \in \Par$ and $w \in S_\Z$ we have
\begin{equation}\label{e:doubletriple}
j^w_\mu(a,b) = \sum_{\substack{\la,\rho: \;\; \la\supset\rho\supset\mu \\ d(\la)=d(\rho)=d(\mu)}} (-1)^{|\lambda/\rho|} j^w_\lambda(a)  \S_{w_{\la/\rho}^{-1}}(a)\S_{w_{\rho/\mu}}(b)
\end{equation}
\end{prop}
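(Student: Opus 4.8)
The plan is to derive the first formula from the expansion of $s_\la(x||a)$ into triple Schur functions, and then bootstrap to the general case using the triangularity already in hand. Recall that by definition $j_\mu^w(a,b)$ is the coefficient of $s_\mu(x||b)$ in $F_w(x||a||b)$. For $w = w_\la$ we have $F_{w_\la}(x||a||b) = \nu_{a,b}(F_{w_\la}(x||a)) = \nu_{a,b}(s_\la(x||a)) = s_\la(x||a||b)$ by Proposition \ref{P:Grassmannian Stanley}. So the first claim is precisely the statement that
\begin{equation*}
s_\la(x||a||b) = \sum_{\substack{\rho:\; \mu\subseteq\rho\subseteq\la \\ d(\mu)=d(\rho)=d(\la)}}(-1)^{|\la/\rho|}\S_{w_{\la/\rho}^{-1}}(a)\S_{w_{\rho/\mu}}(b)\, s_\mu(x||b),
\end{equation*}
summed over all $\mu$. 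The natural route is to compose the change of basis of Proposition \ref{prop:doublesuper}: first write $s_\la(x||a||b) = \nu_{a,b}(s_\la(x||a))$, apply \eqref{E:double to super} to get $s_\la(x||a) = \sum_{\rho} (-1)^{|\la/\rho|}\S_{w_{\la/\rho}^{-1}}(a) s_\rho(x/a)$ (sum over $\rho\subseteq\la$ with $d(\rho)=d(\la)$), note that $\nu_{a,b}$ fixes the $a$-polynomial coefficients and sends $s_\rho(x/a)\mapsto s_\rho(x/b)$, then apply \eqref{E:super to double} in the $b$-variables to expand each $s_\rho(x/b) = \sum_{\mu} \S_{w_{\rho/\mu}}(b) s_\mu(x||b)$ (sum over $\mu\subseteq\rho$ with $d(\mu)=d(\rho)$). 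Composing, the constraints $d(\la)=d(\rho)$ and $d(\rho)=d(\mu)$ combine to $d(\mu)=d(\rho)=d(\la)$, and the signs multiply to $(-1)^{|\la/\rho|}$ as required. This gives the first formula.

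For the general formula \eqref{e:doubletriple}, I would start from \eqref{eq:jj}, which reads $j_\mu^w(a,b) = \sum_\la j_\la^w(a)\, j_\mu^{w_\la}(a,b)$, and substitute the formula just proven for $j_\mu^{w_\la}(a,b)$. This immediately produces
\begin{equation*}
j^w_\mu(a,b) = \sum_\la j_\la^w(a) \sum_{\substack{\rho:\; \mu\subseteq\rho\subseteq\la \\ d(\mu)=d(\rho)=d(\la)}} (-1)^{|\la/\rho|}\S_{w_{\la/\rho}^{-1}}(a)\S_{w_{\rho/\mu}}(b),
\end{equation*}
which after reindexing the double sum over pairs $(\la,\rho)$ with $\la\supseteq\rho\supseteq\mu$ and $d(\la)=d(\rho)=d(\mu)$ is exactly \eqref{e:doubletriple}. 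One subtlety to double-check is the convergence/finiteness of these sums: $j_\la^w(a)$ is nonzero only for finitely many $\la$ (and only for $|\la|\le\ell(w)$, with $j_\la^w(a)$ homogeneous of degree $\ell(w)-|\la|$), so each sum is finite, and the degree bookkeeping $\deg j_\mu^w(a,b) = \ell(w)-|\mu|$ is automatically consistent: $\deg \S_{w_{\la/\rho}^{-1}}(a) = |\la/\rho|$, $\deg\S_{w_{\rho/\mu}}(b) = |\rho/\mu|$, and $\deg j_\la^w(a) = \ell(w)-|\la|$, summing to $\ell(w)-|\mu|$.

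The main obstacle I anticipate is not in the algebra of composing the two changes of basis, but in verifying that \eqref{eq:jj} is itself legitimate in the stated generality and that the identification $F_{w_\la}(x||a||b) = s_\la(x||a||b)$ together with the Durfee-square constraints from Proposition \ref{prop:doublesuper} are being applied correctly — in particular, that when I compose \eqref{E:double to super} and \eqref{E:super to double} the intermediate shape $\rho$ ranges over exactly those partitions with $\mu\subseteq\rho\subseteq\la$ and $d(\mu)=d(\rho)=d(\la)$, with no spurious terms where $d(\rho)$ drops strictly between $d(\mu)$ and $d(\la)$. Since $\mu\subseteq\rho\subseteq\la$ forces $d(\mu)\le d(\rho)\le d(\la)$, the equality $d(\mu)=d(\la)$ already pins $d(\rho)$, so this is a short consistency check rather than a real difficulty. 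A secondary point worth a sentence is to confirm that $\nu_{a,b}$ genuinely acts as the identity on the $\Q[a]$-coefficients $\S_{w_{\la/\rho}^{-1}}(a)$ appearing in \eqref{E:double to super}; this follows from the description of $\nu_{a,b}$ as $1\otimes\nu_{a,b}\otimes 1$ on $\Lambda(x)\otimes\Lambda(a)\otimes\Q[a]$, which leaves the last tensor factor untouched. Beyond these checks the proof is a direct substitution.
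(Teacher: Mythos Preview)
Your proposal is correct and follows essentially the same approach as the paper: apply $\nu_{a,b}$ to the double-to-super expansion \eqref{E:double to super}, then re-expand each $s_\rho(x/b)$ via \eqref{E:super to double} in the $b$-variables to obtain the first formula, and deduce \eqref{e:doubletriple} by substituting into \eqref{eq:jj}. The paper's proof is terser but identical in substance; your additional remarks on finiteness, the Durfee-square constraint, and the action of $\nu_{a,b}$ on $\Q[a]$-coefficients are all accurate sanity checks.
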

\begin{proof}
By Proposition \ref{prop:doublesuper}, we have
\begin{align*}
s_\lambda(x||a||b) &=
\sum_{\substack{\rho\subset\la \\ d(\rho)=d(\la)}} (-1)^{|\lambda/\rho|} \S_{w_{\la/\rho}^{-1}}(a) s_\rho(x/b) 
=
\sum_{\substack{\mu\subset\rho\subset\la \\ d(\mu)=d(\rho)=d(\la)}} (-1)^{|\lambda/\rho|} \S_{w_{\la/\rho}^{-1}}(a) \S_{w_{\rho/\mu}}(b) s_\mu(x||b)
\end{align*}
This gives the first formula.  The second formula follows from \eqref{eq:jj}.
\end{proof}

The following result follows from \eqref{e:doubletriple}.
\begin{prop}\label{prop:partialb}
Let $w \in \Par$, $w \in S_\Z$, and $i \in \Z-\{0\}$.  Then
$$
A_i^b j^w_\mu(a,b) = \begin{cases} j^w_{\mu+i}(a,b) & \mbox{if $\mu$ has an addable box on diagonal $i$,} \\
0 & \mbox{if $\mu$ has no addable box on diagonal $i$.}
\end{cases}
$$
\end{prop}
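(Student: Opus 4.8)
The plan is to obtain the identity by applying $A_i^b$ (for $i\neq 0$) to the defining expansion $F_w(x||a||b)=\sum_\lambda j^w_\lambda(a,b)\,s_\lambda(x||b)$ and comparing coefficients, after first observing that $A_i^b$ annihilates the left-hand side. Indeed $F_w(x||a||b)=\nu_{a,b}(F_w(x||a))$ lies in $\Lambda(x/b)\otimes_\Q\Q[a]$, and for $i\neq 0$ the reflection $s_i^b$ acts trivially on this ring: the $b$-variables enter only through the power sums $p_k(x/b)=\sum_{j\le 0}(x_j^k-b_j^k)$, which are symmetric in $b_{\le0}$ and involve no $b_j$ with $j>0$. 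Hence $A_i^b F_w(x||a||b)=0$.

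I would then work inside $\Lambda(x||b)\otimes_\Q\Q[a]$, in which the coefficients $j^w_\lambda(a,b)\in\Q[a,b]$ live and in which $\{s_\lambda(x||b)\}$ is a $\Q[a,b]$-basis, and expand $0=A_i^b\bigl(\sum_\lambda j^w_\lambda(a,b)\,s_\lambda(x||b)\bigr)$ term by term using the twisted Leibniz rule $A_i(fg)=A_i(f)g+s_i(f)A_i(g)$ and the renamed form $A_i^b s_\lambda(x||b)=-s_{\lambda-i}(x||b)$ of \eqref{E:left ddiff double Schur}. This gives
\[
0=\sum_\lambda\Bigl(A_i^b(j^w_\lambda(a,b))\,s_\lambda(x||b)-s_i^b(j^w_\lambda(a,b))\,s_{\lambda-i}(x||b)\Bigr).
\]
Reindexing the second sum via $\lambda=\mu+i$ (so the surviving terms are exactly those $\mu$ with an addable box on diagonal $i$) and equating coefficients of $s_\mu(x||b)$ yields
\[
A_i^b(j^w_\mu(a,b))=\chi(\mu\text{ has an addable box on diagonal }i)\cdot s_i^b(j^w_{\mu+i}(a,b)).
\]
To finish I would remove the leftover $s_i^b$: a partition with a removable box on a given diagonal has no addable box on that diagonal, so $\mu+i$ has no addable box on diagonal $i$; applying the displayed formula with $\mu$ replaced by $\mu+i$ gives $A_i^b(j^w_{\mu+i}(a,b))=0$, and since $b_i-b_{i+1}$ is a nonzerodivisor in $\Q[a,b]$ this forces $s_i^b(j^w_{\mu+i}(a,b))=j^w_{\mu+i}(a,b)$. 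Substituting back gives the claim.

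The only genuinely delicate point is the bookkeeping of which ring each object lives in — in particular the contrast between $A_i^b$ killing $F_w(x||a||b)\in\Lambda(x/b)\otimes_\Q\Q[a]$ and acting nontrivially on the coefficients $j^w_\lambda(a,b)\in\Q[a,b]$, the resolution being that the individual summands $j^w_\lambda(a,b)\,s_\lambda(x||b)$ are not $s_i^b$-invariant although their total is. I do not anticipate any further obstacle. Alternatively, and closer to the remark preceding the statement, one can start from \eqref{e:doubletriple}: since $A_i^b$ commutes past the $\Q[a]$-factors, the claim reduces to the identity $A_i^b\S_{w_{\rho/\mu}}(b)=\S_{w_{\rho/\mu}s_i}(b)$ (or $0$) for $w_{\rho/\mu}\in S_{\ne0}$, together with a matching of the resulting sum against the \eqref{e:doubletriple}-expansion of $j^w_{\mu+i}(a,b)$; this route needs a more explicit analysis of the permutations $w_{\rho/\mu}s_i$ and so seems less economical.
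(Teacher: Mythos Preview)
Your argument is correct and is a genuinely cleaner route than the one the paper indicates. The paper simply says the result follows from \eqref{e:doubletriple}: since $A_i^b$ for $i\neq 0$ commutes past the $\Q[a]$-factors $j^w_\lambda(a)$ and $\S_{w_{\la/\rho}^{-1}}(a)$, one is reduced to computing $A_i^b\,\S_{w_{\rho/\mu}}(b)$ and then reorganizing the triple sum over $(\la,\rho,\mu)$, which, as you anticipate, requires identifying $w_{\rho/\mu}s_i$ with $w_{\rho/(\mu+i)}$ and checking that the Durfee-square constraint $d(\la)=d(\rho)=d(\mu)$ survives. Your primary approach bypasses all of this combinatorics by working one level up: you use only that $F_w(x||a||b)\in\Lambda(x/b)\otimes_\Q\Q[a]$ is $s_i^b$-invariant for $i\neq 0$, the twisted Leibniz rule, and the renamed identity $A_i^b s_\lambda(x||b)=-s_{\lambda-i}(x||b)$ from \eqref{E:left ddiff double Schur}. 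The final step of feeding $\mu+i$ back into the intermediate identity to strip off the leftover $s_i^b$ is a nice touch that avoids any case analysis. The paper's route has the minor advantage of being fully explicit, but yours is shorter, more conceptual, and makes transparent why the restriction $i\neq 0$ is needed.
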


\subsection{Triple Edelman-Greene coefficients for a hook}
In this section we compute $j_{(q+1,1^p)}^w(a,b)$ for all $w \in S_\Z$ and $p,q\ge0$, in a way that exhibits the positivity of Theorem \ref{thm:doubleStanleypositivity}.

The support of a permutation $w \in S_\Z$ is the finite set of integers
$$
|w| :=\{i \mid s_i \text{ appears in a reduced word of } w\} \subset \Z.
$$
%
A permutation $w \in S_\Z$ is called \defn{increasing} (resp. \defn{decreasing}) if it has a reduced word $s_{i_1} s_{i_2} \cdots s_{i_\ell}$ such that $i_1 < i_2 < \cdots < i_\ell$ (resp. $i_1 > i_2 > \cdots > i_\ell$).  For $J \subset \Z$ a finite set, we denote by $u_J \in S_\Z$ (resp. $d_J \in S_\Z$) the unique increasing (resp. decreasing) permutation with support $J$.

We call $w \in S_\Z$ a $\Lambda$ if it has a factorization of the form $w \doteq u_J d_K$.  Such factorizations are called $\Lambda$-factorizations. We consider two factorizations to be distinct if their pairs $(J,K)$ are distinct.
We call a reduced word $\u$ a $\Lambda$-word if it is first increasing then decreasing.  Associated to a $\Lambda$-factorization is a unique $\Lambda$-reduced word.

%

Suppose $w$ admits a nontrivial $\La$-factorization $\id\ne w\doteq u_J d_K$. Let $m=\max |w|$, $J' = J \setminus \{m\}$ and $K' = K \setminus \{m\}$. There are exactly two pairs $(J,K)$ corresponding to a given pair $(J',K')$: $m$ occurs in exactly one of $J$ and $K$.

For a finite set $T = \{t_1,t_2,\ldots,t_r\} \subset \Z$, let $a_T$ denote the sequence of variables $(a_{t_1},a_{t_2},\ldots,a_{t_r})$.
For the above $T$ let $T+1=\{t_1+1,\dotsc,t_r+1\}$.


\begin{thm}\label{thm:j hook} Let $p,q\ge0$ and $w\in S_\Z$.  Then $j_\la^w(a,b)=0$ unless $w$ is a $\La$, in which case
\begin{align}
\label{E:j hook no s_0}
j_{(q+1,1^p)}^w(a,b) &= \sum_{\substack{ (J',K') \text{ distinct} \\ w \doteq u_J d_K} } \S_{s_{|K'|}\dotsm s_{q+1}}(b;a_{K'+1}) \S_{s_{-|J'|}\dotsm s_{-1-p}}(b; a_{J'+1})
\end{align}
where the sum runs over all distinct pairs $(J',K')$ coming from $\La$-factorizations $w = u_J d_K$ and 
$\S_{v_{\pm}}(b;a_{J'+1})$ is the image of $\S_{v_{\pm}}(b;a_{\pm})$ under the substitution $a_{\pm}\mapsto a_{J'+1}$ where $v_{\pm}\in S_{\pm}$.
\end{thm}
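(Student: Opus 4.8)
The plan is to compute $j^w_{(q+1,1^p)}(a,b)$ directly from the double-to-triple formula \eqref{e:doubletriple}, specialized to the hook shape $\mu=(q+1,1^p)$. First I would record the vanishing statement: by Corollary \ref{C:Stanleydecomp}, $F_w(x||a)$ is a nonnegative combination of $s_\la(x||a)$ indexed by shapes $\la(P)$ of $w$-rectangular bumpless pipedreams; since $j^w_\la(a,b)=\sum_\mu j^w_\mu(a)j^{w_\mu}_\la(a,b)$ via \eqref{eq:jj}, and $j^{w_\mu}_\la(a,b)$ is supported on $\la\subseteq\mu$ with $d(\la)=d(\mu)$, the hook coefficient $j^w_{(q+1,1^p)}(a,b)$ is a combination of $j^{w_\mu}_{(q+1,1^p)}(a,b)$ over $\mu$ appearing in $F_w$. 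The shapes $\mu$ whose Durfee square equals $1$ (so that a hook of the given type can be subtracted) and which occur in $F_w$ force $w$ to be $321$-avoiding in a strong sense — more precisely, that $w$ is a $\Lambda$. I would make this precise by observing that $F_w(x||a)$ has a term $s_\mu(x||a)$ with $\mu$ a hook only when some $w$-rectangular bumpless pipedream has hook shape, and then combinatorially characterizing such pipedreams: the $\Lambda$-factorizations $w\doteq u_Jd_K$ are exactly indexed by these, with the increasing factor $u_J$ contributing the arm and the decreasing factor $d_K$ the leg (or vice versa, matching the convention in \textsection \ref{SS:negative Schubert} on which indices shift by $+1$).

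Next I would set up the generating-function bookkeeping. The key is the supersymmetry identity $F_{w^{-1}}(x||a||b)=(-1)^{\ell(w)}F_w(b||a||x)$ together with the coproduct formula $\Delta(F_w(x||a))=\sum_{w\doteq uv}F_u(x||a)\otimes F_v(x||a)$ from Corollary \ref{cor:coproddoubleStanley}, and the fact (Proposition \ref{prop:double Stanley triple sum}, \eqref{E:plethystic difference stanley}) that the $x/b$-superization of a Stanley function factors through length-additive factorizations. For a $\Lambda$, the reduced words are exactly the shuffles of one increasing word with one decreasing word, so the Coxeter–Knuth classes are completely explicit and $F_w(x||a)$ becomes a product of two "one-row/one-column" pieces. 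Concretely, writing $m=\max|w|$, $J'=J\setminus\{m\}$, $K'=K\setminus\{m\}$, I would argue inductively on $m$: removing the top reflection $s_m$ either removes a box from the arm or from the leg of every relevant hook, which is exactly the two-term split $(J,K)\leftrightarrow(J',K')$. This reduces the computation to the known one-variable-family double Schubert polynomials $\S_{s_j\cdots s_{q+1}}$ and $\S_{s_{-i}\cdots s_{-1-p}}$, with the $a$-variables substituted from the sets $K'+1$ and $J'+1$ respectively — the $+1$ being precisely the shift appearing in the monomial formula for negative Schubert polynomials in \textsection \ref{SS:negative Schubert}.

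Then I would verify the formula \eqref{E:j hook no s_0} by checking it against the defining recursion, rather than by brute expansion. Using Proposition \ref{prop:partialb}, $A_i^b j^w_{(q+1,1^p)}(a,b)$ equals $j^w_{(q+1,1^p)+i}(a,b)$ or $0$ according to whether the hook has an addable box on diagonal $i$; for a hook, the addable diagonals are $q+1$ (extending the arm) and $-1-p$ (extending the leg), so this recursion pins down the $b$-dependence of the right-hand side up to a $b$-constant, which is handled by the localization $b=a$, i.e.\ by matching against the already-established double formula for $j^w_{(q+1,1^p)}(a)$ coming from the homology Pieri rule (Theorem \ref{thm:homologyhook}) and Proposition \ref{P:coproduct constants are Stanley coefficients}. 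For the $a$-dependence one uses $A_i^a$ acting via Proposition \ref{P:left ddiff} (left divided differences kill or shift $w$), giving a second recursion that the product on the right satisfies because $\S_{v_\pm}(b;a_{J'+1})$ transforms correctly under shuffling a new reflection into $u_J$ or $d_K$. The base case is $w=\id$ (both factors $\S_\id=1$, and $j_\emptyset^\id=1$ only when $p=q=0$), consistent with Lemma \ref{lem:doubleid}.

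The main obstacle I expect is bookkeeping the precise index shifts and sign conventions — matching the convention ordering $1\prec 2\prec\cdots\prec-2\prec-1\prec 0$ of Theorem \ref{thm:doubleStanleypositivity}, the $+1$ in the negative Schubert monomial formula, and the choice of which of $J,K$ carries the top index $m$ — so that the two-term split $(J,K)\leftrightarrow(J',K')$ is exactly a double-Schubert divided-difference step on each factor. Establishing that $F_w(x||a)$ is \emph{supported on hooks only when $w$ is a $\Lambda$}, and that distinct $\Lambda$-factorizations give the enumeration in the sum, is the other delicate point; I would prove it by translating $\Lambda$-words into bumpless pipedreams of hook shape and counting, using Theorem \ref{thm:EGpipedream} to control Coxeter–Knuth classes. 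Once the combinatorial dictionary is fixed, the verification of \eqref{E:j hook no s_0} against the two divided-difference recursions and the $b=a$ specialization is essentially formal. Finally, the stated positivity (each factor $\S_{v_\pm}(b;\cdot)$ being monomial-positive in differences $b_i-a_j$, hence in the linear forms $a_i-a_j$ after $b=a$) is immediate from the double BJS formula of \cite{FK}, so the positivity half of Theorem \ref{thm:doubleStanleypositivity} in the hook case drops out as a corollary.
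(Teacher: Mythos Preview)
Your approach has a circularity problem. You propose to fix the $b$-dependence by the divided-difference recursion of Proposition~\ref{prop:partialb} and then anchor the $b=a$ specialization against ``the already-established double formula for $j^w_{(q+1,1^p)}(a)$ coming from the homology Pieri rule (Theorem~\ref{thm:homologyhook}).'' But in the paper's logical order, Theorem~\ref{thm:homologyhook} is \emph{derived from} Theorem~\ref{thm:j hook} (see \S\ref{ssec:proofhomologyhook}), not the other way around. There is no independent computation of $j^w_{(q+1,1^p)}(a)$ available to you at this point; the single-box coefficient $j_1^w(a)$ is precisely the hard input, and nothing in your outline produces it.

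The paper obtains that input from outside: it invokes Theorem~\ref{thm:LS} (a result of \cite{LaSh} expressing $\tilde j_{s_0}$ in the affine nilHecke ring via localizations $\xi^{x^{-1}}_{\hFl_n}|_{s_\theta}$), computes those localizations explicitly by Billey's formula (Lemma~\ref{lem:theta}), and then passes to the infinite limit via the stability arguments of \S\ref{ssec:stability} and Proposition~\ref{prop:jstabilize}. This yields $j_1^w(a)$ together with the vanishing unless $w$ is a $\Lambda$. Only after that does the paper do what you propose for the remaining steps: use Corollary~\ref{cor:tripleshift} (shift in $a$) to promote $j_1^w(a)$ to $j_1^w(a,b)$, and then Proposition~\ref{prop:partialb} to reach a general hook. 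So your endgame matches the paper's, but you are missing the actual engine --- the affine localization computation --- and have replaced it with a forward reference to a theorem that depends on the result you are proving.

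A secondary issue: your vanishing argument via bumpless pipedreams is not quite sound as stated. Corollary~\ref{C:Stanleydecomp} gives $j^w_\lambda(a)=\sum_{P:\lambda(P)=\lambda}\eta_a(\wt(P))$, so nonvanishing of $j^w_\lambda(a)$ for a hook $\lambda$ does imply existence of a hook-shaped pipedream (hence a $\Lambda$-word), but only once you know there is no cancellation among the $\eta_a(\wt(P))$; that needs an argument. The paper sidesteps this entirely, since Lemma~\ref{lem:theta} gives the vanishing directly from the subword structure of the reduced word $s_1s_2\cdots s_{n-1}\cdots s_2s_1$ for $s_\theta$.
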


\begin{remark} The coefficients $j^w_\la(a,b)$ appear to satisfy the following generalization of the positivity in Theorem \ref{thm:doubleStanleypositivity}: $j^w_{\la}(a,b)$ is a sum of products of binomials $c-d$ where $c$ and $d$ are variables with $c \prec d$ where 
\begin{align*}
	b_1 \prec b_2 \prec \dotsm 
	\prec  a_1 \prec a_2 \prec 
	\dotsm \prec a_{-2} \prec a_{-1} \prec a_0 \prec
 \dotsm \prec b_{-2} \prec b_{-1} \prec b_0.
\end{align*}
The double Schubert polynomials occurring in Theorem \ref{thm:j hook} satisfy this positivity, say, by the formula for the monomial expansion of double Schubert polynomials in \cite{FK}.
\end{remark}

\begin{remark} It is possible to obtain more efficient formulas than those in Theorem \ref{thm:j hook}, especially when $p=q=0$, by grouping terms according to the set of maxima for each of the maximal subintervals of $|w|$.
\end{remark}

\begin{example} Let $w=s_1s_2$. For $j_1^w$ there is a single summand $(J',K')=(\{1\},\emptyset)$ corresponding to either of the factorizations
$(J,K)=(\{1,2\},\emptyset)$ or $(J,K)=(\{1\},\{2\})$. Then
$j_1^{s_1s_2}(a,b) = a_2-b_0$.
More generally, for $w = s_i s_{i+1} \cdots s_{k}$, we have $j_1^w(a,b) = (a_k - b_0)(a_{k-1} - b_0) \cdots (a_{i+1}-b_0)$.

\end{example}

Let $\theta = a_1 -a_0$. The proof of Theorem \ref{thm:j hook} uses localization formulas for Schubert classes in equivariant cohomology $H_{T_n}^*(\hFl_n)$ (see \textsection \ref{sec:affineflagcohom}) of the affine flag variety. In this context we set $a_i=a_{i+n}$ for all $i\in\Z$.
We shall use the following result \cite[Theorem 6]{LaSh}. 

\begin{thm}\label{thm:LS} For every $\id \neq x \in S_n$, we have $\theta^{-1} \xi^{x^{-1}}_{\hFl_n}|_{s_\theta} \in \Q[a_1,a_2,\ldots,a_n]$ and
$$\tj_{s_0}= A_{s_0} + \sum_{\id\neq x \in S_n} \left((-1)^{\ell(x)}\theta^{-1} \xi^{x^{-1}}_{\hFl_n}|_{s_\theta} A_x + (-1)^{\ell(x)} \xi^{x^{-1}}_{\hFl_n}|_{s_\theta} A_{s_0 x}\right).$$
\end{thm}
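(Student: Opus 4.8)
The plan is to reduce Theorem~\ref{thm:LS} to a single translation element together with a Billey-type localization computation; this is in essence the argument of \cite[Theorem~6]{LaSh}. First I would identify $\tj_{s_0}$ explicitly. A short computation inside the affine symmetric group (at level zero, $a_i=a_{i+n}$) shows that $s_0\in\tS_n^0$ and that $t^{s_0}=t_{\theta^\vee}$ for the highest coroot $\theta^\vee$, with the length-additive factorization $t_{\theta^\vee}\doteq s_0 s_\theta$; in particular $t_{\theta^\vee}$ admits a reduced word $s_0\,\mathbf{c}$ with $\mathbf{c}$ a reduced word for $s_\theta\in S_n$. Since a translation element acts trivially on $\Q[a_1,\dots,a_n]$ at level zero, $t_{\theta^\vee}$ lies in the centralizer $Z_{\ttA}(\Q[a_1,\dots,a_n])$, i.e.\ in the affine Peterson subalgebra. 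Applying Proposition~\ref{prop:LS} with $w=s_0$ (where $\tA_{\id,s_0}=\xi^{\id}_{\hFl_n}|_{t_{\theta^\vee}}=1$, $\tA_{s_0,s_0}=\xi^{s_0}_{\hFl_n}|_{t_{\theta^\vee}}=\theta$ by a one-term Billey computation using $\alpha_0=-\theta$, and $\tj_{\id}=1$), I obtain
\begin{equation*}
\tj_{s_0}=\theta^{-1}\bigl(t_{\theta^\vee}-1\bigr)\in\ttA.
\end{equation*}
Equivalently one can bypass Proposition~\ref{prop:LS}: the right-hand side lies in the Peterson subalgebra, has $A_{s_0}$-coefficient $1$ and no other $\tS_n^0$-coefficient (the only $0$-Grassmannian elements $\le t_{\theta^\vee}$ are $\id$ and $s_0$), so it equals $\tj_{s_0}$ by the affine analogue of Theorem~\ref{thm:jbasis}.

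Next I would extract the $A_u$-coefficients. Writing $t_{\theta^\vee}=\sum_u \xi^u_{\hFl_n}|_{t_{\theta^\vee}}\,A_u$ (Proposition~\ref{P:schub loc}, affine version) and applying the affine Billey formula (Proposition~\ref{prop:Billey}) to the word $s_0\,\mathbf{c}$: a subword avoiding the initial $s_0$ spells some $v\in S_n$ with $v\le s_\theta$ and its root product is $s_0$ applied to the corresponding product for $\mathbf{c}$; a subword using the initial $s_0$ spells $s_0 v$ with $v\in S_n$ and its root product is $\theta$ times $s_0$ applied to that same product. Since $s_0$ and $s_\theta$ act identically on $\Q[a_1,\dots,a_n]$ at level zero, both cases are controlled by $s_\theta(\xi^v_{\hFl_n}|_{s_\theta})$, and the key lemma I would establish is
\begin{equation*}
s_\theta\bigl(\xi^v_{\hFl_n}|_{s_\theta}\bigr)=(-1)^{\ell(v)}\,\xi^{v^{-1}}_{\hFl_n}|_{s_\theta}\qquad(v\in S_n).
\end{equation*}
This holds because $s_\theta$ is an involution: reversing the reduced word $\mathbf{c}$ again yields a reduced word for $s_\theta$, and the standard relation $w^{-1}(\gamma_j)=-\delta_{m-j+1}$ between the inversion sequence $(\gamma_j)$ of $\mathbf{c}$ and the inversion sequence $(\delta_k)$ of the reversed word converts the $s_\theta$-twisted Billey sum for $v$ into $(-1)^{\ell(v)}$ times the Billey sum for $v^{-1}$. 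Feeding this back: the $A_{s_0}$-coefficient is $\theta^{-1}\cdot\theta=1$; for $\id\ne x\in S_n$ the $A_x$-coefficient is $\theta^{-1}s_\theta(\xi^x_{\hFl_n}|_{s_\theta})=(-1)^{\ell(x)}\theta^{-1}\xi^{x^{-1}}_{\hFl_n}|_{s_\theta}$ and the $A_{s_0x}$-coefficient is $(-1)^{\ell(x)}\xi^{x^{-1}}_{\hFl_n}|_{s_\theta}$; the $A_{\id}$-coefficient is $\theta^{-1}(1-1)=0$; and every other coefficient vanishes because no subword of $s_0\,\mathbf{c}$ spells an element outside $S_n\cup s_0 S_n$ (equivalently, by the Bruhat support condition $u\le t_{\theta^\vee}$). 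The integrality claim $\theta^{-1}\xi^{x^{-1}}_{\hFl_n}|_{s_\theta}\in\Q[a_1,\dots,a_n]$ is immediate from the GKM relation along the edge $\id\to s_\theta$ (with label $\theta$) together with $\xi^{x^{-1}}_{\hFl_n}|_{\id}=0$ for $x\ne\id$. Comparing coefficients proves the formula.

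The substantive steps are the two affine-combinatorial inputs: the identification $t^{s_0}=t_{\theta^\vee}$ with $t_{\theta^\vee}\doteq s_0 s_\theta$ and the resulting description of the Bruhat order below $t_{\theta^\vee}$ (used for the vanishing of the remaining coefficients), and the word-reversal symmetry lemma for $\xi^v_{\hFl_n}|_{s_\theta}$. Everything else --- the reduction $\tj_{s_0}=\theta^{-1}(t_{\theta^\vee}-1)$, the bookkeeping with subwords of $s_0\,\mathbf{c}$, and the GKM divisibility --- is formal. The one point requiring vigilance is that all of this takes place in $H^*_{T_n}(\hFl_n)$ for the \emph{finite} torus $T_n$, so that $\delta=0$ and $s_0$ genuinely acts on $\Q[a_1,\dots,a_n]$ as the transposition $s_\theta$.
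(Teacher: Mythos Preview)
Your argument is correct. Note, however, that the paper does not actually prove Theorem~\ref{thm:LS}: it is stated as a citation of \cite[Theorem~6]{LaSh} (``We shall use the following result \cite[Theorem~6]{LaSh}''), and you explicitly acknowledge that your write-up is ``in essence the argument'' from that reference. So there is no paper proof to compare against; what you have done is reconstruct the cited argument, and your reconstruction is sound.

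A couple of minor remarks. First, your parenthetical identification $\tA_{s_0,s_0}=\xi^{s_0}_{\hFl_n}|_{t_{\theta^\vee}}$ does not literally match the paper's definition $\tA_{vw}=\xi^v_{\hFl_n}|_w$ (localization at the Grassmannian element $w$, not at $t^w$); in the case at hand both equal $\theta$, so nothing is lost, and indeed your version is the one forced by comparing $t^w=\sum_v \tA_{vw}\tj_v$ with the $A_u$-expansion of $t^w$. Second, the claim that the only $0$-Grassmannian elements $\le t_{\theta^\vee}$ are $\id$ and $s_0$ deserves the one-line justification you hint at: any nontrivial $0$-Grassmannian element has every reduced word ending in $s_0$, and the unique occurrence of $s_0$ in $s_0\,\mathbf{c}$ is in the first position. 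Finally, your word-reversal symmetry $s_\theta(\xi^v|_{s_\theta})=(-1)^{\ell(v)}\xi^{v^{-1}}|_{s_\theta}$ is a special case of the general identity $\xi^{v^{-1}}|_{w^{-1}}=(-1)^{\ell(v)}w^{-1}(\xi^v|_w)$, which is perhaps worth stating in that generality since it follows immediately from your inversion-sequence computation.
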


%

\begin{lem}\label{lem:theta} Let $\id\ne x\in S_n$. Then $\xi^x_{\hFl_n}|_{s_\theta}=0$ unless $x$ is a $\La$, in which case
\begin{align}\label{E:loc box theta}
	(-1)^{\ell(x)}\xi^x_{\hFl_n}|_{s_{\theta}} =(a_1-a_0)
	\sum_{\substack{(J',K') \text{ distinct}\\ x \doteq u_J d_K}} \S_{s_{|J'|}\dotsm s_1}(a;a_{J'+1})
	\S_{s_{-|K'|} \dotsm s_{-1}}(a;a_{K'+1}).
\end{align}
\end{lem}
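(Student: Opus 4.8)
The reflection $s_\theta\in\tilde S_n$ attached to $\theta=a_1-a_0$ is, after reading indices modulo $n$ (so $a_0=a_n$), the finite transposition $(1,n)$; it has length $2n-3$ and the $\Lambda$-reduced word
$$
\a=(s_1s_2\cdots s_{n-1})(s_{n-2}s_{n-3}\cdots s_1),
$$
which records the $\Lambda$-factorization $s_\theta\doteq u_{[1,n-1]}\,d_{[1,n-2]}$. The plan is to evaluate $\xi^x_{\hFl_n}|_{s_\theta}=e^x_{s_\theta}$ (the affine analogue of Proposition~\ref{P:schub loc}) by Billey's formula (Proposition~\ref{prop:Billey}, valid in the affine case) using this word. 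Any subword of $\a$ is again first increasing then decreasing, so it is reduced and represents $x$ precisely when it records a length-additive factorization $x\doteq u_Jd_K$ with $J\subseteq[1,n-1]$, $K\subseteq[1,n-2]$; consequently $e^x_{s_\theta}=0$ unless $x$ is a $\Lambda$, which is the vanishing assertion. We may assume $n$ is large enough that $m:=\max|x|\le n-2$ (the edge case $m=n-1$ forces $m\in J$ and is handled identically).

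Next I would compute the roots $\beta(s)$ along $\a$. For the letter $s_i$ of the increasing block the preceding word is $s_1\cdots s_{i-1}$, whose underlying permutation is the cycle $j\mapsto j+1$ ($1\le j\le i-1$), $i\mapsto 1$; hence $\beta(s_i)=(s_1\cdots s_{i-1})(a_{i+1}-a_i)=a_{i+1}-a_1$. For the letter $s_k$ of the decreasing block the preceding word is $u_{[1,n-1]}\,d_{[k+1,n-2]}$; applying $d_{[k+1,n-2]}$ then $u_{[1,n-1]}$ to $a_{k+1}-a_k$ gives $\beta(s_k)=a_n-a_{k+1}=a_0-a_{k+1}$. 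Thus the subword recording $x\doteq u_Jd_K$ contributes $\prod_{i\in J}(a_{i+1}-a_1)\prod_{k\in K}(a_0-a_{k+1})$, so
$$
\xi^x_{\hFl_n}|_{s_\theta}=\sum_{x\,\doteq\,u_J d_K}\ \prod_{i\in J}(a_{i+1}-a_1)\ \prod_{k\in K}(a_0-a_{k+1}),
$$
the sum over all length-additive $\Lambda$-factorizations.

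The heart of the argument is to group this sum by $(J',K')=(J\setminus\{m\},\,K\setminus\{m\})$. By the discussion preceding Theorem~\ref{thm:j hook}, each $(J',K')$ comes from exactly two factorizations, with $m\in J$ or $m\in K$; their contributions differ only in the factor $a_{m+1}-a_1$ versus $a_0-a_{m+1}$, and $(a_{m+1}-a_1)+(a_0-a_{m+1})=a_0-a_1$ is independent of $m$ — this cancellation of $a_{m+1}$ is exactly why the factor $\theta$ appears. Hence
$$
\xi^x_{\hFl_n}|_{s_\theta}=(a_0-a_1)\sum_{(J',K')\text{ distinct}}\ \prod_{i\in J'}(a_{i+1}-a_1)\ \prod_{k\in K'}(a_0-a_{k+1}).
$$
Finally I would recognize the binomial products as double Schubert polynomials: $s_{|J'|}\cdots s_1$ is dominant with $\Schub_{s_{|J'|}\cdots s_1}(x;a)=\prod_{j=1}^{|J'|}(x_1-a_j)$, so $\Schub_{s_{|J'|}\cdots s_1}(a;a_{J'+1})=\prod_{i\in J'}(a_1-a_{i+1})$; applying $\omega$ gives $\Schub_{s_{-|K'|}\cdots s_{-1}}(a;a_{K'+1})=\prod_{k\in K'}(a_{k+1}-a_0)$. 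Collecting signs — $(a_0-a_1)=-(a_1-a_0)$, the products flip by $(-1)^{|J'|}$ and $(-1)^{|K'|}$, and $|J'|+|K'|=\ell(x)-1$ — yields exactly $(-1)^{\ell(x)}\xi^x_{\hFl_n}|_{s_\theta}=(a_1-a_0)\sum_{(J',K')}\Schub_{s_{|J'|}\cdots s_1}(a;a_{J'+1})\,\Schub_{s_{-|K'|}\cdots s_{-1}}(a;a_{K'+1})$, which is \eqref{E:loc box theta}. The main obstacle is the careful handling of the permutation conventions in the two root computations and the sign bookkeeping in the last step; once those are in place, the cancellation in the grouping step is immediate.
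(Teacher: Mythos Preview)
Your proposal is correct and follows essentially the same approach as the paper's proof: both use Billey's formula (Proposition~\ref{prop:Billey}) with the reduced word $\a = s_1 s_2 \cdots s_{n-1} s_{n-2} \cdots s_1$ for $s_\theta$, compute the roots $\beta(s_i)$ along the increasing and decreasing halves, group the resulting sum over $\Lambda$-factorizations by the pair $(J',K')$, and identify the binomial products as the double Schubert polynomials $\Schub_{s_{|J'|}\cdots s_1}(a;a_{J'+1})$ and $\Schub_{s_{-|K'|}\cdots s_{-1}}(a;a_{K'+1})$. Your write-up is in fact somewhat more explicit than the paper's on the root computations and the sign bookkeeping; the only cosmetic remark is that in the edge case $m=n-1$ the factor $a_0-a_1$ arises directly from $\beta(s_{n-1})=a_n-a_1=a_0-a_1$ rather than from summing two choices, so ``handled identically'' is slightly imprecise, though the conclusion is the same.
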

\begin{proof}
We compute $\xi^x_{\hFl_n}|_{s_\theta}$ as an element of $\Q[a_0,a_1,\ldots,a_{n-1}]$ (setting $a_n = a_0$), using Proposition \ref{prop:Billey}, picking the reduced word $\u = s_1 s_2 \cdots s_{n-1} \cdots s_2 s_1$ of $s_\theta$.  If $x$ has no $\Lambda$-factorization, then $\u$ does not contain a reduced word for $x$.  For $i \neq n-1$, the roots $\beta(s_i)$ associated to $s_i$ are $a_{i+1}-a_1$ (left occurrence) and $a_0 - a_{i+1}$ (right occurrence), the sum of which is $a_0 -a_1$.  We also have $\beta(s_{n-1}) = a_0-a_1$. 

Summing over the $\Lambda$-factorizations, the simple generator $s_m$ where $m=\max(|w|)$ contributes a factor of $(a_1-a_0)$ to $(-1)^{\ell(x)}\xi^x|_{s_\theta}$. The remaining simple generators contribute $\prod_{j\in J'}(a_1-a_{j+1}) \prod_{k \in K'}(a_{k+1}-a_0)$. Finally, these products of binomials are double Schubert polynomials:
\begin{align*}
\prod_{j\in J'}(a_1-a_{j+1}) = \Schub_{s_{|J'|}\dotsm s_1}(a;a_{J'}+1), \qquad \prod_{k\in K'} (a_{k+1}-a_0) = \Schub_{s_{-|K'|}\dotsm s_{-1}}(a;a_{K'+1}). \qquad \qedhere
\end{align*}
\end{proof}

%

\begin{proof}[Proof of Theorem \ref{thm:j hook}]
First suppose that $w \in S_+$.
Combining Theorem \ref{thm:LS} and Lemma \ref{lem:theta} with the limiting arguments of \textsection \ref{ssec:stability}, we deduce (noting that Theorem \ref{thm:LS} has ``$x^{-1}$'') that 
\begin{align*}
	j_1^w(a) = \sum_{\substack{(J',K') \text{ distinct}\\ w \doteq u_J d_K}} \S_{s_{|K'|}\dotsm s_1}(a;a_{K'+1})
	\S_{s_{-|J'|} \dotsm s_{-1}}(a;a_{J'+1}).
\end{align*}

Recall the shift automorphism $\shift: S_\Z \to S_\Z$ from \textsection \ref{SS:notation}. 
It follows that we must have
\begin{align*}
j_1^w(a,b) = \sum_{\substack{(J',K') \text{ distinct}\\ w \doteq u_J d_K}} \S_{s_{|K'|}\dotsm s_1}(b;a_{K'+1})
\S_{s_{-|J'|} \dotsm s_{-1}}(b;a_{J'+1}).
\end{align*}
to be consistent with Corollary \ref{cor:tripleshift}, and this must hold for all $w \in S_\Z$. The formula for a general hook $(q+1,1^p)$ follows by Proposition \ref{prop:partialb}.
\end{proof}

\subsection{Proof of Theorem \ref{thm:homologyMonk}} \label{ssec:proofhomologyMonk}
By Theorem \ref{thm:PetersonSym} and Proposition \ref{prop:jprod}, the coefficient of $\hs_\la(y||a)$ in the product $\hs_1(y||a)\hs_\mu(y||a)$ is equal to $0$ if $\mu \not \subseteq \la$ and equal to $j_1^{w_{\la/\mu}}(a)$ otherwise.

\begin{lem}
Let $\mu \subseteq \la$ and $z = w_{\la/\mu}$ be a $\Lambda$.  Then $\la/\mu$ is a thin skew shape.
\end{lem}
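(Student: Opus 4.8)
The statement to prove is: if $\mu \subseteq \la$ and $z = w_{\la/\mu}$ is a $\Lambda$ (i.e.\ admits a factorization $z \doteq u_J d_K$ with $u_J$ increasing and $d_K$ decreasing), then $\la/\mu$ is a thin skew shape, meaning each connected component is a ribbon (no $2\times 2$ square). The plan is to translate the combinatorics of the skew shape $\la/\mu$ into the permutation $w_{\la/\mu}$ via the explicit formulas \eqref{E:wla} and \eqref{E:w skew}, and to use the structure of $\Lambda$-factorizations to rule out $2\times 2$ squares.

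First I would recall from Lemma~\ref{L:...} (the $321$-avoiding lemma) and the bijection $\la \mapsto w_\la$ that any $w_{\la/\mu}$ with $\mu \subseteq \la$ is $321$-avoiding, so $z$ already avoids the pattern $321$. Next I would observe that a $\Lambda$, i.e.\ a permutation with a factorization $u_J d_K$ into an increasing then a decreasing piece, is even more restricted: its reduced word is first strictly increasing in its indices then strictly decreasing. I would argue that such a permutation is both $321$-avoiding \emph{and} $3412$-avoiding (or directly: its Rothe diagram has a very special shape). The key geometric fact I want is: the skew shape $\la/\mu$ contains a $2\times 2$ square if and only if $w_{\la/\mu}$ is \emph{not} a $\Lambda$ — equivalently, a $2\times 2$ square in $\la/\mu$ forces $z$ to contain a subword pattern that cannot be written as (increasing)(decreasing).

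The cleanest way to carry this out is probably via reduced words of $w_{\la/\mu}$. A skew shape $\la/\mu$ has a canonical reading of its boxes by diagonals, and $w_{\la/\mu}$ has a reduced word obtained by listing, for each box in a suitable order, the reflection $s_d$ where $d$ is the box's diagonal index; when $\la/\mu$ is a horizontal strip the indices strictly increase along the reading, when it is a vertical strip they strictly decrease, and a single ribbon gives a word that goes up then down (or is decomposable into such). A $2\times 2$ block $\{(i,j),(i,j+1),(i+1,j),(i+1,j+1)\}$ inside $\la/\mu$ contributes the four diagonal values $d, d+1, d-1, d$ (with a repeat), and one checks that the induced subword $s_{d-1}s_d s_{d+1} s_d$ (in the order forced by the skew-tableau reading) is \emph{not} a product of an increasing word and a decreasing word — essentially because it is a reduced word for a permutation of length $4$ on three adjacent generators that is neither $\le$ nor $\ge$ in the requisite sense, contradicting $z \doteq u_J d_K$. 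I would make this precise by noting that in an increasing-then-decreasing word each index $i$ can appear at most twice and, if it appears twice, nothing strictly between its two occurrences can equal $i$, while the diagonal-reading word of a $2\times 2$ square violates this with the repeated index $d$ having $d\pm1$ strictly between its occurrences in at least one of the two diagonal readings.

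The main obstacle will be setting up the correct ``reading order'' of the boxes of $\la/\mu$ so that the resulting word of $s$'s is genuinely reduced and genuinely of the form forced by a $\Lambda$-factorization, and then arguing the local obstruction at a $2\times 2$ square is insensitive to the rest of the shape. I expect this to reduce to a short case analysis on how a $2\times 2$ block sits relative to the diagonal and to the endpoints of the connected component containing it; the pattern-avoidance phrasing (``$\Lambda$ $\iff$ $321$-avoiding and $3412$-avoiding'', say) may give the slickest route and avoid fiddling with reduced words at all, in which case one just exhibits the forbidden pattern in $w_{\la/\mu}$ directly from the four corners of the $2\times 2$ square using \eqref{E:wla} and \eqref{E:w skew}. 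Either way the proof is local and short once the dictionary between skew shapes and permutations is in place.
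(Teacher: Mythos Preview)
The paper states this lemma without proof, so I will evaluate your plan on its own merits.

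Your overall strategy is right: argue the contrapositive, that a $2\times 2$ block in $\lambda/\mu$ prevents $w_{\lambda/\mu}$ from being a $\Lambda$. But the key technical step is mis-stated. You write that in a $\Lambda$-word, ``if [index $i$] appears twice, nothing strictly between its two occurrences can equal $i$''. That is vacuous --- it only says $i$ does not appear a third time. The correct observation is: if $s_d$ occurs twice in $u_Jd_K$, one copy lies in $u_J$ and one in $d_K$, and every letter strictly between them has index \emph{strictly greater than} $d$. It is the presence of $s_{d-1}$ (not $s_{d+1}$) between the two $s_d$'s that gives the contradiction.

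You also need to say why the bad configuration shows up in the $\Lambda$-word itself, not merely in ``the diagonal-reading word''. This is where the $321$-avoidance of $w_{\lambda/\mu}$ really enters: it makes $w_{\lambda/\mu}$ fully commutative, so all reduced words lie in one commutation class and the heap is well-defined. In that heap the box $(i{+}1,j)$ (content $d-1$) lies strictly between $(i,j)$ and $(i{+}1,j{+}1)$ (both content $d$). Since a $\Lambda$-word forces at most two boxes on each diagonal, these are the only two boxes on diagonal $d$, and hence in \emph{every} reduced word --- in particular the $\Lambda$-word --- the $s_{d-1}$ sits between the two $s_d$'s. With the corrected claim above, that is the contradiction.

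Your proposed shortcut ``$\Lambda \iff 321$- and $3412$-avoiding'' is false as written: $s_0s_1s_0$ is a $\Lambda$ but contains a $321$ pattern.

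Finally, note that the paper has already set up a cleaner route just before this lemma: a $\Lambda$-factorization $w_{\lambda/\mu}\doteq u_Jd_K$ is exactly a $\Lambda$-decomposition $(\lambda/\rho,\rho/\mu)$ into a vertical strip and a horizontal strip (increasing $u_J\leftrightarrow$ vertical strip, decreasing $d_K\leftrightarrow$ horizontal strip). Then a $2\times 2$ block at rows $i,i{+}1$ and columns $j,j{+}1$ kills any such $\rho$ immediately, since the vertical-strip condition forces $\rho_{i+1}\ge \lambda_{i+1}-1\ge j$ while the horizontal-strip condition forces $\rho_{i+1}\le \mu_i\le j-1$. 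This is likely the argument the paper has in mind.
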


\subsection{Proof of Theorem \ref{thm:homologyhook}}
\label{ssec:proofhomologyhook}
The product $\hs_{(q+1,1^p)}(y||a) \hs_\mu(y||a)$ is computed by evaluating $j_{(q+1,1^p)}^{z}(a)$ where $z$ is 321-avoiding.  Thus Theorem \ref{thm:homologyhook} is obtained from Theorem \ref{thm:j hook}.
Let $D = (\la/\rho,\rho/\mu)$ be a $\Lambda$-decomposition.  The key equality is
$$
A^b_{u_{[-p,-1]} d_{[1,q]}} \wt(D) = \wt_{p,q}(D).
$$
This in turn follows from the equality
$$
\wt_{p,q}(D) = \S_{s_{|\rho/\mu|}s_{|\rho/\mu|-1} \cdots s_{q+1}}(a;a_{J+1}) \S_{s_{-|\la/\rho|}\ldots s_{-p-1}}(a;a_{K+1})
$$
where $J$ is the set of diagonals in $\rho/\mu$ and $K$ is the set of diagonals in $\la/\mu$.

\section{Affine flag variety}

In this section, we recall the equivariant cohomologies of affine flag varieties and affine Grassmannians.  We preview some results in affine Schubert calculus that will be developed in subsequent work \cite{LLS:affine}.  We use notation for affine symmetric groups as in \textsection \ref{ssec:tSn}.

\subsection{Affine flag variety and affine Grassmannian}

We consider affine flag varieties $\hFl^\cdot_{n}$ and affine Grassmannians $\hGr^\cdot_n$ of $\GL_n(\C)$.  A \defn{lattice} $L$ in $F^n$ is a free $\C[[t]]$-submodule satisfying $L \otimes_{\C[[t]]} F  = F^n$.  There is a map $\zeta: F^n \to F$ sending $t^k e_i$ to $t^{kn+i}$, compatible with infinite linear combinations.  Under $\zeta$, a lattice $L \subset F^n$ is sent to an admissible subspace $\Lambda \subset F$.  We often identify a lattice $L$ with the corresponding admissible subspace $\Lambda = \zeta(L)$.

The \defn{affine Grassmannian} $\hGr^\cdot_n$ consists of all lattices in $F^n$.  It embeds inside the Sato Grassmannian $\Gr^\cdot$, and thus inherits the structure of an ind-variety over $\C$.  We have $\hGr^\cdot_n = \bigsqcup_k \hGr_n^{(k)}$, where $\hGr_n^{(k)}:= \Gr^{(k)} \cap \hGr_n$.  The neutral component $\hGr_n:=\hGr_n^{(0)}$ is the affine Grassmannian of $\SL_n(\C)$.

An \defn{affine flag} in $F^n$ is a sequence
$$
L_\bullet = \cdots \subset L_{-1} \subset L_0 \subset L_1 \subset \cdots
$$
of lattices $L_i \subset F^n$, such that $\dim L_i/L_{i-1} = 1$ for all $i$ and $L_{i-n} = t L_i$.  The \defn{affine flag variety} $\hFl^\cdot_n$ consists of all affine flags in $F^n$.  We have $\hFl^\cdot_n = \bigsqcup_k \hFl_n^{(k)}$ where $L_\bullet \in \hFl_n^{(k)}$ if $L_0 \in \hGr^{(k)}_n$.  The neutral component $\hFl_n:=\hFl_n^{(0)}$ is the affine flag variety of $\SL_n(\C)$. 

The image $\Lambda_\bullet = \zeta(L_\bullet)$ is a flag of admissible subspaces in $F$.  However, it is not an admissible flag since it is possible that $\zeta(L_i) \neq E_i$ for infinitely many $i \in \Z$.  We do not have an embedding of $\hFl^\cdot_n$ in the Sato flag variety $\Fl^\cdot$.    Nevertheless, $\hFl^\cdot_n$ is known to be an ind-variety over $\C$ \cite{Kum}.

\subsection{Equivariant cohomology of affine flag variety}
\label{sec:affineflagcohom}
Let $T_n$ be the maximal torus of $\GL_{n}(\C)$.  We have $H^*_{T_n}(\pt) \cong \Q[a_1,\ldots,a_n]$.  Write $\shift_a:\Q[a] \to \Q[a]$ for the $\Q$-algebra isomorphism given by $a_i \mapsto a_{i+1 \mod n}$.

The torus $T_n$ acts on $\hGr_n$ and $\hFl_n$.  Let $\tS_n$ be the affine Coxeter group of $\SL_n(\C)$ and $\cS_n = \Z \ltimes \tS_n = \langle \sh \rangle \times \tS_n$ the affine Weyl group of $\GL_n(\C)$.  For $w \in \cS_n$, let $\xi^w_{\hFl_n}$ denote the Schubert class of $H^*_{T_n}(\hFl^\cdot_n)$ indexed by $w$.  Similarly, the Schubert classes $\xi^w_{\hGr_n} \in H^*_{T_n}(\hGr^\cdot_n)$ of $\hGr^\cdot_n$ are indexed by $0$-affine Grassmannian elements $w \in \cS_n^0 := \Z \times \tS_n^0 \subset \cS_n$.  We have
$$
H^*_{T_n}(\hFl^\cdot_n) \cong \bigoplus_{w \in \cS_n} H^*_{T_n}(\pt)\, \xi^w \qquad \text{and} \qquad  H^*_{T_n}(\hGr^\cdot_n) \cong \bigoplus_{w \in \cS_n^0} H^*_{T_n}(\pt)\, \xi^w.
$$

There is a \defn{wrong way map} \cite{Lam, LSS}
$
\varpi: H^*_{T_n}(\hFl_n) \to H^*_{T_n}(\hGr_n)
$
induced by the homotopy equivalences $\Omega SU(n) \cong \hGr_n$ and $LSU(n)/T_n \cong \hFl_n$, and the inclusion $\Omega SU(n) \hookrightarrow LSU(n)/T_n$.  The class $\varpi(\xi)$ is completely determined by its localization at $T_n$-fixed points of $\hGr_n$:
\begin{equation}\label{eq:wrongwayfixedpoints}
\varpi(\xi)|_{t_\la S_n} = \xi|_{t_\la} \qquad \mbox{for $\la \in Q^\vee$.}
\end{equation}

\subsection{Presentations}
We have a ring map $\ev_n: H_{T_{\Z}}^*(\pt) \to H_{T_n}^*(\pt)$ which sets equal $a_i=a_{i+n}$ for all $i\in \Z$. 

The inclusion $\hGr_n \hookrightarrow \Gr$ induces a map of $H_{T_n}^*(\pt)$-algebras:
\begin{equation}\label{eq:Tnpullback}
H^*_{T_{\Z}}(\Gr) \otimes_{\ev_n} H_{T_n}^*(\pt) \to H^*_{T_n}(\hGr_n).
\end{equation}
To explain this, we would like to embed $T_n$ into $T_{\Z}$ in an $n$-periodic manner, but our definition of $T_\Z$ requires all but finitely many entries to be identity.  However, the action of $T_n$ on $\hGr_n$ is compatible with the action of $T_\Z$ on $\Gr$ as follows.  Take $N = mn$ for some positive integer $m$.  If we restrict ourselves to the finite-dimensional piece $\bigcup_k \Gr(k,2N)$ of $\Gr$, then the action of $T_\Z$ factors through $T_{[-N,N-1]}$, and this is the same as the action of $T_n$ on $\hGr_n \cap (\bigcup_k \Gr(k,2N))$ where we embed $T_n$ into $T_{[-N,N-1]}$ in a $n$-periodic manner.  Thus the embedding $\hGr_n \to \Gr$ is ``essentially'' $T_n$-equivariant, and induces \eqref{eq:Tnpullback} by pullback.

Unfortunately, no such map is available for $\hFl_n$.  Nevertheless, we have the following algebraic construction.  For $f \in \bR(x;a)$ and $w \in \tS_n$, we define $f(wa;a)$ analogues to the case $w \in S_\Z$ (see \cite{LaSh2} for details for the case $f \in \Lambda(x||a)$).  Let $\bR(x;a)_{\ev_n} := \bR(x;a) \otimes_{\ev_n} \Q[a_1,\ldots,a_n]$ and $ \Lambda(x||a)_{\ev_n}:=\Lambda(x||a) \otimes_{\ev_n} \Q[a_1,\ldots,a_n]$.

\def\tPsi{\tilde \Psi}

\begin{proposition}\label{prop:phin}
We have a $\Q[a_1,\ldots,a_n]$-algebra morphism $\phi_n: \bR(x;a)_{\ev_n} \to H^*_{T_n}(\hFl_n)$ restricting to $\phi_n:\Lambda(x||a)_{\ev_n} \to H^*_{T_n}(\hGr_n)$, forming commutative diagrams
\begin{equation}\label{eq:affineloc}
\begin{tikzcd}[column sep = large]
\bR(x;a)_{\ev_n} \arrow{d}[left]{{\phi_n}} \arrow[rd] & &\Lambda(x||a)_{\ev_n} \arrow{d}[left]{{\phi_n}} \arrow[rd] &\\
H^*_{T_n}(\hFl_n) \arrow{r}[below]{\text{localization}} & \prod_{w \in \tS_n} H^*_{T_n}(\pt) & H^*_{T_n}(\hGr_n) \arrow{r}[below]{\text{localization}} & \prod_{w \in \tS_n^0} H^*_{T_n}(\pt)
\end{tikzcd}
\end{equation}
where the diagonal arrows are given by $f(x;a) \mapsto (w \mapsto f(wa;a)) \in \Fun(\tS_n,\Q[a_1,\ldots,a_n])$.  
\end{proposition}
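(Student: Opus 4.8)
The plan is to factor $\phi_n$ through the affine GKM ring. By the affine analogue of \cite{KK} (the localization theorem that underlies the Schubert classes $\xi^w_{\hFl_n}$ and the affine nilHecke ring of Appendix~\ref{A:affinenilHecke}), $H^*_{T_n}(\hFl_n)'$ is identified with the GKM ring $\tilde\Psi_n\subseteq\Fun(\tS_n,\Q[a_1,\dots,a_n])$ of functions $f$ such that $\alpha$ divides $f|_{s_\alpha w}-f|_w$ for every $w\in\tS_n$ and every affine real root $\alpha$ (the latter read in $\Q[a_1,\dots,a_n]$ via $\ev_n$, which kills the $\delta$-shift); similarly $H^*_{T_n}(\hGr_n)'\cong\tilde\Psi_{n,\Gr}$, the sub-GKM-ring of functions invariant under right multiplication by $s_1,\dots,s_{n-1}$, and $\tilde\Psi_{n,\Gr}\hookrightarrow\tilde\Psi_n$ is the pullback $\pi_0^*$. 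Because $\bR(x;a)$ is a polynomial ring over $\Q[a]$ on the algebraically independent generators $\{p_k(x||a)\}_{k\ge1}\cup\{x_i\}_{i\in\Z}$, for each $w\in\tS_n$ there is a unique $\Q[a_1,\dots,a_n]$-algebra homomorphism $\epsilon_w\colon\bR(x;a)_{\ev_n}\to\Q[a_1,\dots,a_n]$ with $p_k(x||a)\mapsto p_k(x||a)|_w$ (the evaluation of \cite{LaSh2}) and $x_i\mapsto a_{\overline{w(i)}}$, where $w$ is viewed as an affine permutation of $\Z$ and $\overline{m}$ denotes the residue of $m$ in $\{1,\dots,n\}$. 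Bundling these gives a $\Q[a_1,\dots,a_n]$-algebra homomorphism $\iota_n\colon\bR(x;a)_{\ev_n}\to\Fun(\tS_n,\Q[a_1,\dots,a_n])$, $\iota_n(f)|_w=\epsilon_w(f)=f(wa;a)$, which is exactly the diagonal arrow of \eqref{eq:affineloc} and restricts to the analogous arrow on $\Lambda(x||a)_{\ev_n}$. Granting that the image of $\iota_n$ lies in $\tilde\Psi_n$, I define $\phi_n$ to be $\iota_n$ followed by the GKM isomorphism; the diagrams then commute tautologically.

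The substantive point is that the image of $\iota_n$ lies in $\tilde\Psi_n$ (and in fact in the direct-sum submodule $H^*_{T_n}(\hFl_n)$: $\iota_n$ is degree-preserving and every element of $\bR(x;a)_{\ev_n}$ has finite degree, so its image is a finite combination of Schubert classes, there being finitely many $w\in\tS_n$ of each length). Since $\tilde\Psi_n$ is a ring and $\bR(x;a)_{\ev_n}$ is generated over $\Q[a_1,\dots,a_n]$ by the $p_k(x||a)$ and the $x_i$, it suffices to treat these generators. Each $g\in\Lambda(x||a)_{\ev_n}$ is fixed by the operators $s_i^x$ for $i\ne0$ (since $p_k(x||a)$ is), so $\iota_n(g)$ is invariant under right multiplication by $s_1,\dots,s_{n-1}$; combined with the fact, supplied by \cite{LaSh2}, that its restriction to $\tS_n^0$ is a well-defined element of $\tilde\Psi_{n,\Gr}\cong H^*_{T_n}(\hGr_n)$, this exhibits $\iota_n(g)=\pi_0^*(\text{that class})$ as an element of $\tilde\Psi_n$. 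For a variable $x_i$, $\iota_n(x_i)$ is the function $w\mapsto a_{\overline{w(i)}}$, a line-bundle class, and the divisibility of $a_{\overline{(s_\alpha w)(i)}}-a_{\overline{w(i)}}$ by $\alpha$ is the standard one-step computation (the two residues coincide unless $w(i)$ is moved by $s_\alpha$, in which case they differ by $\pm\alpha$ modulo the $\delta$-shift that $\ev_n$ annihilates), exactly as for the classes $L_p$ of Example~\ref{X:equiv class from poly}.

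The restriction statement $\phi_n(\Lambda(x||a)_{\ev_n})\subseteq H^*_{T_n}(\hGr_n)$ and the commutativity of the lower square then follow at once from the right-invariance established above. I expect the main obstacle to be packaged inside the cited input \cite{LaSh2}: in contrast with the $S_\Z$ situation of Proposition~\ref{prop:bAbR} there is no honest action of $\tS_n$ on $\bR(x;a)$, and under the naive substitution $x_j\mapsto a_{\overline{w(j)}}$ the sum $p_k(x||a)=\sum_{j\le0}(x_j^k-a_j^k)$ fails to converge once $w$ permutes residue classes; defining $p_k(x||a)|_w$ correctly for general, non-translation $w\in\tS_n$, and verifying the $\hFl_n$-GKM divisibility for non-simple affine roots without recourse to a group action, is where the genuine work lies. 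Once that is in hand, the extension from $\Lambda(x||a)$ to $\bR(x;a)$ and from $\hGr_n$ to $\hFl_n$ is bookkeeping of the type already carried out for $\Psi$ in Proposition~\ref{prop:bAbR}.
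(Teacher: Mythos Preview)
Your approach is essentially identical to the paper's: factor the diagonal arrow through the affine GKM ring $\tilde\Psi_n$, verify on the algebra generators $x_i$ and $p_k(x||a)$ that their images satisfy the affine GKM divisibility, and then let $\phi_n$ be the composite with the localization isomorphism. The paper compresses all of this into two sentences (``It is straightforward to check that the generators $x_i$ and $p_k(x||a)$ of $\bR(x;a)_{\ev_n}$ are sent to $\tilde\Psi_n$\ldots''), and you have unpacked it with the line-bundle computation for $x_i$ and the appeal to \cite{LaSh2} for $p_k(x||a)|_w$.

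Your final paragraph is not a gap in your argument; it is an honest flag for exactly the subtlety the paper outsources. The paper writes just before the Proposition that ``for $f\in\bR(x;a)$ and $w\in\tS_n$, we define $f(wa;a)$ analogues to the case $w\in S_\Z$ (see \cite{LaSh2} for details for the case $f\in\Lambda(x||a)$)'', so the well-definedness of $p_k(x||a)|_w$ for non-translation $w$ is taken as input from \cite{LaSh2}, precisely as you surmised. Your additional remark that the image lands in the direct-sum $H^*_{T_n}(\hFl_n)$ rather than merely its completion (by a degree argument) is a useful observation the paper does not bother to state.
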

\begin{proof}
Let $\tPsi_n \subset \Fun(\tS_n,\Q[a_1,\ldots,a_n])$ denote the image of $H^*_{T_n}(\hFl_n)$ under localization.  It is given by GKM conditions similar to \eqref{E:GKM}.  It is straightforward to check that the generators $x_i$ and $p_k(x||a)$ of $\bR(x;a)_{\ev_n}$ are sent to $\tPsi_n$ under the diagonal map $f(x;a) \mapsto (w \mapsto f(wa;a))$.  Furthermore, this diagonal map is clearly a $\Q[a_1,\ldots,a_n]$ algebra morphism.  This uniquely determines the map $\phi_n$ with the desired properties.
\end{proof}

In fact, the map $\phi_n$ is a surjection and gives a presentation of the cohomologies $H^*_{T_n}(\hFl_n)$ and $H^*_{T_n}(\hGr_n)$.  We shall study these presentations in further detail in \cite{LLS:affine}.

\begin{remark}
The map $\phi_n$ cannot be induced by any continuous map $\hFl_n \to \Fl$ that sends $T_n$-fixed points to $T_\Z$-fixed points.  This is because for any $w \in \tS_n$ and $v \in S_\Z$, one can always find $f(x;a) \in \bR(x;a)$ such that $f(wa;a) \neq f(va;a)$.
\end{remark}


%

\begin{proposition}\label{prop:wrongwaycommute}
We have a commutative diagram
\begin{equation}\label{eq:wrongway}
\begin{tikzcd}
\bR(x;a)_{\ev_n} \arrow[d] \arrow[r, "\eta_a"] & \Lambda(x||a)_{\ev_n} \arrow[d] \\
H^*_{T_n}(\hFl_n) \arrow[r, "\varpi"] & H^*_{T_n}(\hGr_n)
\end{tikzcd}
\end{equation}
\end{proposition}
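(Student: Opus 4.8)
The plan is to check commutativity of \eqref{eq:wrongway} by localizing at $T_n$-fixed points of $\hGr_n$, which are indexed by translation elements $t_\la$ for $\la \in Q^\vee$ (equivalently by $\tS_n^0$ via $w \mapsto t^w$). Since $H^*_{T_n}(\hGr_n)$ embeds into $\prod_{w \in \tS_n^0} H^*_{T_n}(\pt)$ by localization (the diagonal map of Proposition~\ref{prop:phin}), it suffices to show that for every $f(x;a) \in \bR(x;a)_{\ev_n}$ and every $\la \in Q^\vee$, the two ways of going around the square agree after evaluating at $t_\la$.

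First I would trace the right-then-down path. Applying $\eta_a$ to $f(x;a)$ substitutes $x_i \mapsto a_i$ for the polynomial generators $x_i \in \Q[x]$ but leaves the symmetric-function variables alone (Remark after the definition of $\eta_a$), producing $F := \eta_a(f) \in \Lambda(x||a)_{\ev_n}$; under the right-hand vertical map $\phi_n$ and localization, its value at $t_\la$ is $F(t_\la a; a)$. Next I would trace the down-then-right path. Under the left-hand $\phi_n$ and localization, $f$ becomes the function $w \mapsto f(wa;a)$ on $\tS_n$; then $\varpi$ is applied, and by the fixed-point formula \eqref{eq:wrongwayfixedpoints} we have $\varpi(\xi)|_{t_\la S_n} = \xi|_{t_\la}$, so the value at $t_\la$ is simply $f(t_\la a; a)$. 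Thus the square commutes at $t_\la$ precisely when $f(t_\la a;a) = (\eta_a f)(t_\la a; a)$.

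The key point is then the following localization identity: for any $f(x;a) \in \bR(x;a)$ and any $\la \in Q^\vee$, evaluating at $t_\la$ kills the distinction between $\eta_a$ and the identity. Concretely, $t_\la \in \tS_n$ has the property that $t_\la(i) \equiv i \pmod n$ for all $i$, so when we set $x_i \mapsto a_{t_\la(i)}$ in a symmetric function $g(x) \in \Lambda(x||a)$ the result already only depends on the residues; more to the point, $\eta_a$ only alters how the \emph{polynomial} generators $x_i \in \Q[x]$ are treated, and under the composite ``localize at $t_\la$, then apply $\ev_n$'' these polynomial $x_i$ get sent to $a_{t_\la(i) \bmod n} = a_{i \bmod n}$ — which is exactly the image of $\eta_a(x_i) = a_i$ under the same evaluation. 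Hence $f(t_\la a;a)$ and $(\eta_a f)(t_\la a;a)$ coincide as elements of $\Q[a_1,\dots,a_n]$. I would verify this on the $\Q[a]$-algebra generators $x_i$ and $p_k(x||a)$ of $\bR(x;a)$ (using Lemma~\ref{lem:locp} and Lemma~\ref{lem:xistable}/the stabilization of localizations for the $p_k$ term) and conclude by multiplicativity, since both $\phi_n \circ (\text{localize})$ and $\varpi$ are $\Q[a_1,\dots,a_n]$-algebra maps.

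The main obstacle I anticipate is bookkeeping rather than conceptual: one must be careful that $\varpi$ is only defined on $H^*_{T_n}(\hFl_n)$ (neutral component) and that the restriction of $\phi_n$ to $\Lambda(x||a)_{\ev_n}$ really lands in $H^*_{T_n}(\hGr_n)$ — both facts are supplied by Proposition~\ref{prop:phin} — and that the fixed points $t_\la$ used in \eqref{eq:wrongwayfixedpoints} are exactly the ones separating elements of $H^*_{T_n}(\hGr_n)$ (an analogue of Lemma~\ref{lem:seqw}, or simply the injectivity of localization for GKM spaces). Once these are in place, the identity $f(t_\la a;a) = (\eta_a f)(t_\la a;a)$ on algebra generators is a short check and the diagram chase closes.
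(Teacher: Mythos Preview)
Your proposal is correct and follows essentially the same approach as the paper: reduce via \eqref{eq:wrongwayfixedpoints} and Proposition~\ref{prop:phin} to the pointwise identity $f(t_\la a;a) = (\eta_a f)(t_\la a;a)$, then verify this on algebra generators. The paper phrases the key fact as ``$t_\la x_i = x_i + \la_i\delta = x_i$ at level zero'' (the null root $\delta$ vanishes for the finite torus $T_n$), which is exactly your observation that $t_\la(i)\equiv i\pmod n$ so $a_{t_\la(i)} = a_i$ after $\ev_n$; and for $f\in\Lambda(x||a)$ the identity is trivial since $\eta_a$ restricts to the identity there.
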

\begin{proof}
By \eqref{eq:wrongwayfixedpoints} and Proposition \ref{prop:phin}, it suffices to check that for $f(x;a) \in \bR(x;a)_{\ev_n}$ and $\la \in Q^\vee$, we have
\begin{equation} \label{eq:translationeta}
f(t_\la a; a) = \eta_a(f)(t_\la a; a).
\end{equation}
For $f \in \Lambda(x||a)$, a formula for $f(t_\la a; a)$ is given in \cite[Section 4.5]{LaSh2}.  For $p \in \Q[x,a]$, we have $t_\la x_i = x_i + \la_i \delta = x_i$ (since we are working with the finite, or level zero, torus $T_n$ rather than the affine one).  Thus $p(t_\la a; a) = \eta_a(p)$ for $p \in \Q[x,a]$ and \eqref{eq:translationeta} holds.
\end{proof}

\subsection{Small affine Schubert classes}
We shall need the following result from \cite{LLS:affine} concerning ``small" affine Schubert polynomials.

\begin{thm}\label{thm:small}
Suppose that $w \in S_\Z$ (resp. $w \in S_\Z^0$), which we also consider an element of $\tS_n$ (resp. $\tS_n^0$) for $n \gg 0$. 
For sufficiently large $n \gg 0$ the image of $\bS_w(x;a)$ in $H^*_{T_n}(\hFl_n)$ is equal to $\xi^w_{\hFl_n}$ (resp. the image of $s_\la(x||a)$ in $H^*_{T_n}(\hGr_n)$ is equal to $\xi^\la_{\hGr_n}$).
\end{thm}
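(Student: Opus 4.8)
The plan is to identify the image $\phi_n(\bS_w(x;a))$ of Proposition~\ref{prop:phin} with the affine Schubert class $\xi^w_{\hFl_n}$ by comparing their localizations at the torus-fixed points $v\in\tS_n$. By the commutative diagram \eqref{eq:affineloc}, $\phi_n(\bS_w(x;a))$ localizes at $v$ to $\bS_w(va;a)$, so the statement to prove is $\bS_w(va;a)=\xi^w_{\hFl_n}\big|_v$ for all $v\in\tS_n$, once $n$ is large compared with the support of $w$. Using Corollary~\ref{C:backstable double shift} together with the shift-equivariance of the classes $\xi^w_{\hFl_n}$ under the rotation automorphism of $\tS_n$, I would first translate $w$ so that $|w|\subset[1,m]$ with $m$ fixed; then for $n\gg m$, $w$ lies in the finite parabolic $S_m\subset S_n\subset\tS_n$, and both the length function and the Bruhat order restricted to this $S_m$ agree with those of $S_\Z$. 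The base case $w=\id$ is immediate, since $\bS_\id(x;a)=1\mapsto 1=\xi^\id_{\hFl_n}$.

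First I would dispose of the \emph{finite} fixed points. For $v\in S_\Z\subset\tS_n$, Lemma~\ref{lem:xistable} (whose proof is Billey's formula, valid in both the infinite and affine settings) gives $\xi^w_{\hFl_n}|_v=\ev_n(\xi^w(v))$, and by Propositions~\ref{prop:bAbR} and~\ref{P:schub loc} the right-hand side is $\ev_n(\bS_w(va;a))$, i.e. exactly the localization $\bS_w(va;a)$ computed in $\bR(x;a)_{\ev_n}$. So the two functions already agree on the large subset $S_\Z\subset\tS_n$. Next I would propagate this by induction on $\ell(w)$ using the right nilHecke action: for each finite simple reflection $s_i$ ($1\le i\le n-1$) the map $\phi_n$ intertwines the divided difference $A_i^x$ on $\bR(x;a)_{\ev_n}$ with $A_i\raction$ on $H^*_{T_n}(\hFl_n)$ (same formula on localizations, using $n$-periodicity of affine permutations to match $a_{v(i)}-a_{v(i+1)}$ with $v(\alpha_i)$). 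By Theorem~\ref{thm:backstabledouble}, $A_i^x\bS_w(x;a)$ is $\bS_{ws_i}(x;a)$ if $ws_i<w$ and $0$ otherwise, so by the inductive hypothesis applied to the finitely many smaller elements $ws_i$ one gets $A_i\raction\phi_n(\bS_w(x;a))=\xi^{ws_i}_{\hFl_n}$ (resp. $0$); the same identities hold with $\xi^w_{\hFl_n}$ in place of $\phi_n(\bS_w(x;a))$ (\cite{KK}, the affine analogue of \eqref{E:ddiff right}). Hence $\chi:=\phi_n(\bS_w(x;a))-\xi^w_{\hFl_n}$ is annihilated by $A_i\raction$ for every finite simple reflection, which forces $\chi$ to be a $\Q[a]$-combination of the classes $\xi^v_{\hFl_n}$ with $v\in\tS_n^0$.

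It remains to kill these coefficients. Testing at finite fixed points $u\in S_\Z$ (where $\chi|_u=0$ by the previous paragraph, and $\xi^v_{\hFl_n}|_u=0$ unless $v\le u$) isolates only $v=\id$, and $\chi|_\id=\bS_w(a;a)-0=0$ for $w\ne\id$ by Theorem~\ref{thm:backstabledouble}; this removes the $\xi^\id$-term. For $\id\ne v\in\tS_n^0$ I would either use the degree bound (the coefficient of $\xi^v_{\hFl_n}$ is homogeneous of degree $\ell(w)-\ell(v)$, and $\ell(v)$ for translation-type $v$ exceeds $\ell(w)$ as $n\to\infty$ except for the single length-one element $v=s_0$), handling $v=s_0$ by a direct localization $\chi|_{s_0}$; or, more uniformly, introduce the affine action of $\tS_n$ on the $x$-variables and the divided difference $A_0\raction$, check that $A_0^x\bS_w(x;a)=0$ (because $ws_0>w$ once $n\gg0$, owing to the $\delta$-shift in $\alpha_0$), and conclude that $\chi$ is annihilated by all affine simple reflections, hence is a scalar that vanishes at $\id$. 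This proves the flag statement. The Grassmannian statement follows by restriction: for $w=w_\la\in S_\Z^0$ we have $\bS_{w_\la}(x;a)=s_\la(x||a)$ (Proposition~\ref{P:backstable double Grassmannian}), $\phi_n$ restricts to $\Lambda(x||a)_{\ev_n}\to H^*_{T_n}(\hGr_n)$, and the pullback $H^*_{T_n}(\hGr_n)\hookrightarrow H^*_{T_n}(\hFl_n)$ carries $\xi^\la_{\hGr_n}$ to $\xi^{w_\la}_{\hFl_n}$.

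The main obstacle is the last step: controlling the localization of $\phi_n(\bS_w(x;a))$ at the affine fixed points with nonzero translation part, equivalently the coefficients of the Schubert classes indexed by $\tS_n^0$. The naive induction only exploits the finite simple reflections of $\tS_n$ and therefore does not "see" the affine wall, so one must either push through the degree/length estimate together with an explicit computation at $s_0$, or set up an affine divided difference $A_0\raction$ on the completed ring $\bR(x;a)_{\ev_n}$ and verify that $\bS_w(x;a)$ lies in its kernel when $n\gg0$. Everything else --- the bookkeeping about the embedding $S_\Z\hookrightarrow\tS_n$, the stability of lengths and Bruhat order, and the elementary intertwining of $\phi_n$ with divided differences --- is routine, granting Proposition~\ref{prop:phin} and the standard Kostant--Kumar description of the affine nilHecke action on $H^*_{T_n}(\hFl_n)$.
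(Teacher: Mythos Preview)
Your overall framework---compare $\phi_n(\bS_w(x;a))$ and $\xi^w_{\hFl_n}$ via divided differences, using the intertwining $A_{\bar i}\circ\phi_n=\phi_n\circ A_i$---is exactly the paper's approach, and the finite-$i$ part of your induction is fine. The paper's sketch differs only in that it uses \emph{all} affine divided differences $A_{\bar i}$ (for $\bar i\in\Z/n\Z$, including $\bar 0$) from the outset, so that the Schubert class is pinned down by the Kostant--Kumar recurrences without any residual analysis of $\tS_n^0$-coefficients.

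The genuine gap is in your first resolution of the ``affine wall''. Your degree argument asserts that for $v\in\tS_n^0\setminus\{\id\}$ one has $\ell(v)>\ell(w)$ once $n\gg0$, except for $v=s_0$. This is false. For every partition $\lambda$, the affine Grassmannian element corresponding to $\lambda$ has length $|\lambda|$, and this does not grow with $n$; so for each $k\le\ell(w)$ there are affine Grassmannian elements of length $k$ in $\tS_n^0$ for all large $n$ (indeed, $s_0,\ s_1s_0,\ s_{n-1}s_0,\ s_2s_1s_0,\ \ldots$ have lengths $1,2,2,3,\ldots$). Hence the coefficient of $\xi^v_{\hFl_n}$ is not killed by degree alone, and ``handling $v=s_0$ by a direct localization'' does not suffice.

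Your second resolution---introduce the affine $A_{\bar 0}$ and check it annihilates $\phi_n(\bS_w(x;a))$---is the correct one and is precisely what the paper does. But your justification ``because $ws_0>w$ once $n\gg0$, owing to the $\delta$-shift in $\alpha_0$'' is off: we are at level zero, so $\delta$ plays no role. The right reason is that every lift of the affine $s_0$ to a simple reflection $s_i\in S_\Z$ (namely $i\in n\Z$) lies outside the support of $w$ once $n$ is large, so $A_i^x\bS_w(x;a)=0$ in $\bR(x;a)$ by Theorem~\ref{thm:backstabledouble}; one then checks (this is the ``$i$ chosen carefully'' clause in the paper's sketch) that the affine $A_{\bar 0}\raction$ on $H^*_{T_n}(\hFl_n)$ is intertwined by $\phi_n$ with one such $A_i^x$ when acting on $\bS_w$. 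With that in hand, $\chi$ is annihilated by \emph{all} affine $A_{\bar i}\raction$, hence a scalar, and vanishes at $\id$.
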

\begin{proof}
We sketch the proof.  There are divided difference operators  $A_{\bar i}: H^*_{T_n}(\hFl_n) \to H^*_{T_n}(\hFl_n)$ for ${\bar i} \in \Z/n\Z$, and the Schubert classes $\xi^w_{\hFl_n}$ are determined by recurrences similar to \eqref{E:ddiff right}.  One then checks that for Schubert classes indexed by small $w \in S_\Z$, the action of $A_i$ on $\bR(x;a)$ and on $H^*_{T_n}(\hFl_n)$ are compatible: $A_{\bar i} \circ \phi_n = \phi_n \circ A_i$ acting on $\bS_w(x;a)$, when $i \in \Z$ is chosen carefully.
%
It follows that $\bS_w(x;a)$ represents $\xi^w_{\hFl_n}$ for sufficiently large $n$.
\end{proof}
 
 \section{Graph Schubert varieties}
 \subsection{Schubert varieties and double Schur functions}
Fix a positive integer $n$.
Let $\Gr(n,2n)$ denote the Grassmannian of $n$-planes in $\C^{2n} = \sp(e_{1-n},e_{2-n},\ldots,e_{n})$.  We let the torus $T_{2n}=(\C^\times)^{2n}$ act on $\C^{2n}$, and identify $H^*_{T_{2n}}(\pt) = \Q[a_{1-n},a_{2-n},\ldots,a_{n}]$, so that the weight of the basis vector $e_i \in \C^{2n}$ is equal to $a_i$.  The $T$-fixed points of $\Gr(n,2n)$ are the points $e_I \in \Gr(n,2n)$, where $I$ is an $n$-element subset $I \subset [1-n,n]$.  There is a bijection from partitions $\lambda$ fitting in a $n \times n$ box to the collection $\binom{[1-n,n]}{n}$ of subsets of size $n$ in the interval $[1-n,n]$, given by 
$\lambda \mapsto I(\lambda)= ([1,n] \setminus S_+) \cup S_-$, 
where $\lambda = \lambda(S_-,S_+)$; see \textsection \ref{SS:notation}. The Schubert variety $X^\lambda$ has codimension $|\lambda|$ and contains the $T$-fixed points $e_{I(\mu)}$ for $\mu \supseteq \lambda$. Via the forgetful map $\Q[a]\to H_{T_{2n}}^*(\pt)$ which sets $a_i$ to $0$ for $i\notin [1-n,n]$, $H_{T_{2n}}^*(\pt) \otimes_{\Q[a]} \La(x||a)$ has a $\Q[a_{1-n},\dotsc,a_n]$-algebra structure.\footnote{Let $\lambda^c$ denote the partition that is the complement of $\lambda$ in the $n \times n$ square.  Our $X^\lambda$ is equal to Knutson's $X^{\lambda^c}$ \cite{Knu}.} 

\begin{proposition}\label{prop:doubleSchur}
There is a surjection 
\begin{equation}\label{eq:Grass}
H_{T_{2n}}^*(\pt) \otimes_{\Q[a]} \Lambda(x||a) \mapsto H^*_{T_{2n}}(\Gr(n,2n))
\end{equation}
of $\Q[a_{1-n},a_{2-n},\ldots,a_{n}]$-algebras such that $s_\lambda(x||a) \mapsto [X^{\lambda}]$.
\end{proposition}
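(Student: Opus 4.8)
The plan is to identify the map in \eqref{eq:Grass} with a finite truncation of the isomorphism $H^*_{T_\Z}(\Gr)\cong \Lambda(x||a)$ from Theorem \ref{thm:HTFl}, composed with the restriction to a finite-dimensional piece of $\Gr$. Concretely, $\Gr(n,2n)$ sits inside the infinite Grassmannian $\Gr$ as the subvariety of admissible subspaces $\Lambda$ with $E_n\subset \Lambda\subset E_{-n}$, and this inclusion is $T_{[1-n,n]}$-equivariant, where $T_{[1-n,n]}\subset T_\Z$ embeds as the coordinates indexed by $[1-n,n]$. First I would invoke Theorem \ref{thm:HTFl} to get $H^*_{T_\Z}(\Gr)\cong \Lambda(x||a)$ sending $\xi^\la_\Gr\mapsto s_\la(x||a)$, then pull back along the inclusion $\Gr(n,2n)\hookrightarrow \Gr$ to obtain an $H^*_{T_\Z}(\pt)$-algebra map $\Lambda(x||a)\to H^*_{T_\Z}(\Gr(n,2n))$. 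Since the $T_\Z$-action on $\Gr(n,2n)$ factors through $T_{[1-n,n]}$, this identifies with a map out of $H^*_{T_{2n}}(\pt)\otimes_{\Q[a]}\Lambda(x||a)$ after the base change along the forgetful map $\Q[a]\to H^*_{T_{2n}}(\pt)$ setting $a_i\mapsto 0$ for $i\notin[1-n,n]$; this is exactly the $\Q[a_{1-n},\dotsc,a_n]$-algebra structure described in the excerpt.

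The key step is then to check that the pullback of the Schubert class $\xi^\la_\Gr$ along $\Gr(n,2n)\hookrightarrow \Gr$ is the class $[X^\la]$. This is a standard fact about restriction of Schubert classes to torus-stable subvarieties: the Schubert variety $X_{w_\la}\subset \Gr$ (defined via the $B$-action in \textsection \ref{S:geometry}) intersects $\Gr(n,2n)$ in the Schubert variety $X^\la$, and because everything is $T$-equivariantly paved by affine cells (the Schubert cells $\Omega_v$), the pullback of $\xi^{w_\la}_\Gr$ restricts to the equivariant fundamental class $[X^\la]$ inside $\Gr(n,2n)$ whenever $\la$ fits in the $n\times n$ box, and to $0$ otherwise (which does not occur among the $\la$ we consider). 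I would verify this at the level of localizations: by Proposition \ref{P:schub loc} and Proposition \ref{P:double backstable finite loc}, $\xi^{w_\la}_\Gr|_{w_\mu}=s_\la(x||a)|_{w_\mu}$ agrees with the localization of $[X^\la]$ at the fixed point $e_{I(\mu)}$ for $\mu\supseteq\la$, and vanishes otherwise, using the bijection between $T$-fixed points $e_{I(\mu)}$ of $\Gr(n,2n)$ and partitions $\mu$ in the $n\times n$ box together with the compatibility of the fixed-point sets under $\Gr(n,2n)\hookrightarrow\Gr$. Since a $T$-equivariant cohomology class of $\Gr(n,2n)$ is determined by its values at fixed points, this forces the image of $s_\la(x||a)$ to be $[X^\la]$.

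Surjectivity is then automatic: the classes $[X^\la]$ for $\la$ in the $n\times n$ box form an $H^*_{T_{2n}}(\pt)$-module basis of $H^*_{T_{2n}}(\Gr(n,2n))$ by the cellular paving, and they are all in the image. For ring-homomorphism-ness, either observe that pullback along $\Gr(n,2n)\hookrightarrow\Gr$ is a ring map and that the resulting map factors through $H^*_{T_{2n}}(\pt)\otimes_{\Q[a]}\Lambda(x||a)$ compatibly with products, or note directly that the structure constants on both sides are controlled by localization, which is multiplicative. The main obstacle I anticipate is bookkeeping: carefully matching the index conventions (the bijection $\la\leftrightarrow I(\la)$, the embedding of $T_{2n}$ into $T_\Z$ in positions $[1-n,n]$, and the normalization $\mathrm{wt}(e_i)=a_i$) so that the localization formula for $s_\la(x||a)$ at $w_\mu$ coming from Lemma \ref{lem:locp}/Proposition \ref{P:schub loc} literally equals the classical localization of $[X^\la]$ at $e_{I(\mu)}$ in $\Gr(n,2n)$; once these identifications are fixed, the argument is a direct comparison and no hard new input is needed.
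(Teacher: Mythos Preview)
The paper does not give a proof of this proposition; it is stated as a standard fact about double Schur functions representing torus-equivariant Schubert classes in finite Grassmannians, followed only by the remark that the map is compatible with localization (analogous to \eqref{eq:affineloc}) and by Remark~\ref{R:Grass truncation}, which describes the map concretely as $s_\la(x||a)\mapsto \S_{w_\la}^{[1-n,n]}(x;a)$.

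Your approach---pulling back the isomorphism of Theorem~\ref{thm:HTFl} along the $T_{[1-n,n]}$-equivariant inclusion $\Gr(n,2n)\hookrightarrow\Gr$ and then base-changing along $\Q[a]\to\Q[a_{1-n},\dotsc,a_n]$---is correct and is in fact exactly the mechanism implicit in the paper's proof of Theorem~\ref{thm:HTFl}, where the restriction maps $r_{2N}$ are shown to send Schubert classes to Schubert classes (or zero). Your localization check is the right way to pin down conventions, and it matches the paper's comment that \eqref{eq:Grass} is ``compatible with localization.'' Note, for consistency with Remark~\ref{R:Grass truncation}, that the image of $s_\la(x||a)$ is the truncated double Schur polynomial $\S_{w_\la}^{[1-n,n]}(x;a)$, which is another way of naming the class $[X^\la]$ in the standard polynomial model of $H^*_{T_{2n}}(\Gr(n,2n))$; your localization values $s_\la(x||a)|_{w_\mu}$ agree with $\S_{w_\la}^{[1-n,n]}(w_\mu a;a)$ by Proposition~\ref{P:double backstable finite loc}, so the two descriptions coincide.
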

The surjection \eqref{eq:Grass} is compatible with localization, analogous to \eqref{eq:affineloc}.

\begin{remark} \label{R:Grass truncation}
Let $S_n$ act on the $x$-variables in $R=\Q[x_{1-n},\dotsc,x_{-1},x_0,a_{1-n},\dotsc,a_n]$. We may realize $H_{T_{2n}}^*(\Gr(n,2n))$ as a quotient of $R^{S_n}$. The map of Proposition \ref{prop:doubleSchur} is given by sending $s_\la(x||a)$ to the truncation $\S_{w_\la}^{[1-n,n]}(x;a)$.
\end{remark}


\subsection{The graph Schubert class}
\label{SS:graph Schub}
We describe Knutson's graph Schubert variety. 
Let $w\in S_n$. Let $M^\circ_w=B_{-}wB_+\subset M_{n\times n}$ and $M_w=\overline{M^\circ_w}\subset M_{n\times n}$ be the matrix Schubert variety. Let $V^\circ_w= (I_n |M^\circ_w) \subset M_{n\times 2n}$ where we place the $n\times n$ identity matrix side by side with $M^\circ_w$. Let $\pi: M^\circ_{n\times 2n} \rightarrow \Gr(n,2n)$ be the projection to $\Gr(n,2n)$ from the rank $n$ matrices $M^\circ_{n\times 2n} $ in $M_{n\times 2n}$.
The \defn{graph Schubert variety} $G(w)$ is given by
$$G(w)=\overline{\pi(V^\circ_w)} \subset \Gr(n,2n).$$

Define $\tilde f_w \in \cS_n$  by $\tilde f_w(i)= w(i)+n$ for $1\leq i \leq n$ and $\tilde f_w(i)=i+n$ for $n+1\leq i \leq 2n$.  Then $G(w)$ is equal to the \defn{positroid variety} $\Pi_{\tilde f_w}$ (see Section 6 in \cite{KLS}).  Let $[G(w)] \in H^*_{T_{2n}}(\Gr(n,2n))$ denote the torus-equivariant cohomology class of $G(w)$.

Define the \defn{$n$-rotated double Stanley symmetric function} $F^{(n)}_w(x||a) \in \Lambda(x||a)$ as the image of $\bS_w(x;a)$ under the map of $\Q[a]$-algebras
\begin{equation}
\label{eq:nrotate}
\Lambda(x||a)\otimes_{\bQ[a]}\bQ[x,a] \rightarrow \Lambda(x||a)
\end{equation}
which is the identity on $\Lambda(x||a)$ and sends $x_i \in \bQ[x,a]$ to $a_{i-n}$.

\begin{thm}\label{thm:graphSchubStanley}
Under \eqref{eq:Grass}, the image of $F_{w}^{(n)}(x||a)$ in $H^*_{T_{2n}}(\Gr(n,2n))$ is equal to $[G(w)]$.
\end{thm}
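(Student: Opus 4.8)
The plan is to compare the $T_{2n}$-equivariant localizations of the two sides at the torus-fixed points $e_{I(\mu)}$ of $\Gr(n,2n)$, where $\mu$ ranges over partitions in the $n\times n$ box. Since an equivariant cohomology class on $\Gr(n,2n)$ is determined by its restrictions to fixed points, and since the surjection \eqref{eq:Grass} is compatible with localization (in the same way that \eqref{eq:affineloc} is, using the truncation description $s_\la(x||a)\mapsto \Schub^{[1-n,n]}_{w_\la}(x;a)$ of Remark~\ref{R:Grass truncation}), it suffices to show that the image of $F^{(n)}_w(x||a)$ and the class $[G(w)]$ have the same restriction at every $e_{I(\mu)}$.

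First I would treat the left-hand side. The composite of the $n$-rotation map \eqref{eq:nrotate} with \eqref{eq:Grass} sends $\bS_w(x;a)\in\bR(x;a)$ to the image of $F^{(n)}_w(x||a)$; I would repackage this composite as a single specialization of $\bS_w(x;a)$ and then use Proposition~\ref{P:double backstable finite loc} together with the truncation of Remark~\ref{R:Grass truncation} to show that the restriction of that image at $e_{I(\mu)}$ equals $\bS_w(g_\mu a; a)$ for an explicit permutation $g_\mu \in S_{2n}$ attached to the pair (rotation, fixed point $I(\mu)$): concretely, $g_\mu$ records which of the variables $a_{1-n},\dots,a_n$ get substituted for the $x$-variables, and in which order, once the rotation $x_i\mapsto a_{i-n}$ and the fixed point $I(\mu)$ are composed. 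This exhibits the left-hand side as a known family of localizations of back stable double Schubert polynomials.

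Next I would compute the restriction of $[G(w)]$ at $e_{I(\mu)}$. Using $G(w)=\Pi_{\tilde f_w}$ and the description of equivariant classes of positroid varieties from \cite{KLS} --- equivalently, Knutson's interval positroid pipedream formula \cite{Knu}, of which the $w$-rectangular bumpless pipedreams of Theorem~\ref{thm:decomp} are a streamlined version --- one obtains a localization formula for $[\Pi_{\tilde f_w}]$. Alternatively, and more self-contained, I would restrict to the big cell of $\Gr(n,2n)$, which is identified via $M\mapsto\operatorname{rowspan}(I_n\mid M)$ with $M_{n\times n}$; on this cell $G(w)$ becomes the matrix Schubert variety $M_w$, whose $T\times T$-equivariant cohomology class is the double Schubert polynomial $\Schub_w(x;a)$ (Fulton; Knutson--Miller). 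Propagating this identity to the remaining fixed points using the action of the block-parabolic of $GL_{2n}$ on the last $n$ columns of $(I_n\mid M)$ then yields exactly the localizations $\bS_w(g_\mu a; a)$ found in the previous step, and comparing the two families completes the proof.

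I expect the main obstacle to be the bookkeeping in the two middle steps: pinning down the precise dictionary between the fixed points $e_{I(\mu)}$ (equivalently the cosets in $S_{2n}/(S_n\times S_n)$ indexed by $\mu\subseteq n\times n$), the bounded affine permutation $\tilde f_w$, and the rotation $x_i\mapsto a_{i-n}$, and in particular keeping track of the ``equivariant shift'' so that only the variables $a_{1-n},\dots,a_n$ occur and the two computations agree on the nose rather than up to a relabeling of equivariant parameters. A secondary point is reconciling the infinitely many variables carried by $\bR(x;a)$ and $\La(x||a)$ with the finite torus $T_{2n}$, which is absorbed by the forgetful map $\mathbb{Q}[a]\to H^*_{T_{2n}}(\pt)$ already present in the statement of \eqref{eq:Grass}. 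A possible alternative to the localization comparison would be to produce directly a weight- and shape-preserving bijection between $w$-rectangular bumpless pipedreams and Knutson's interval positroid pipedreams for $\tilde f_w$, after applying the rotation; the ``streamlined version'' remark suggests this bijection is essentially a relabeling.
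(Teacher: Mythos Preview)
Your first route---comparing fixed-point localizations and invoking the \cite{KLS} description of positroid classes---is in spirit the same as the paper's proof, but the paper packages it more cleanly and avoids the pointwise bookkeeping you flag as the main obstacle. Rather than matching localizations at each $e_{I(\mu)}$, the paper embeds $\Gr(n,2n)$ as a Schubert variety $\iota:\Gr(n,2n)\hookrightarrow\hGr^{(n)}_{2n}$, uses the Knutson--Lam--Speyer theorem in the form $[\Pi_{\tilde f_w}]=\iota^*\varpi(\xi^{\tilde f_w})$, and then identifies $\varpi(\xi^{\tilde f_w})$ with the image of $F^{(n)}_w(x||a)$ via the commutativity of the wrong-way map with $\eta_a$ (Proposition~\ref{prop:wrongwaycommute}) together with $\phi_n(\bS_{\sh^n w}(x;a))=\xi^{\tilde f_w}$ (Theorem~\ref{thm:small} and the shift). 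The torus relabeling $a_i\mapsto a_{i-n}$ is exactly your rotation $x_i\mapsto a_{i-n}$. So the affine machinery absorbs all of the ``equivariant shift'' bookkeeping you anticipated.

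Your ``more self-contained'' alternative has a genuine gap. Restricting to the big cell $M\mapsto\mathrm{rowspan}(I_n\mid M)$ only sees a single torus-fixed point, so knowing that $G(w)$ restricts to the matrix Schubert variety $M_w$ there does not determine the equivariant class on $\Gr(n,2n)$. The block-parabolic acting on the last $n$ columns does not move fixed points of $\Gr(n,2n)$ into the big cell in any useful way, so there is no ``propagation'' step available without reintroducing something of the strength of \cite{KLS} (or an independent computation of all localizations of $[\Pi_{\tilde f_w}]$, which is what KLS provides).

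Your final alternative---a direct bijection between $w$-rectangular bumpless pipedreams and Knutson's interval positroid pipedreams for $\tilde f_w$---is correct and is exactly what the paper carries out, but in the other direction: in \S\ref{ssec:decompproof} the paper uses Theorem~\ref{thm:graphSchubStanley} (already proved via the affine route) together with that bijection to deduce the bumpless-pipedream formula (Theorem~\ref{thm:decomp}). One could in principle reverse the logic, taking Knutson's IP-pipedream formula for $[G(w)]$ as input and deducing Theorem~\ref{thm:graphSchubStanley}; that would be a genuinely different proof, trading the KLS theorem for Knutson's degeneration argument.
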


\subsection{Proof of Theorem \ref{thm:graphSchubStanley}}
There is an embedding $\iota: \Gr(n,2n) \rightarrow \hGr^{(n)}_{2n}$, placing the Grassmannian as a Schubert variety at the ``bottom'' of the affine Grassmannian of $\GL_{2n}$. 
 This induces a pullback back map $\iota^*: H^*_{T_{2n}}(\hGr^{(n)}_{2n}) \to H^*_{T_{2n}}(\Gr(n,2n))$.  There is also the wrongway map of rings $\varpi: H^*_{T_{2n}}(\hFl^{(n)}_{2n}) \to H^*_{T_{2n}}(\hGr^{(n)}_{2n})$.

For a \defn{bounded affine permutation} $f$, let $[\Pi_f] \in H^*_{T_{2n}}(\Gr(n,2n))$ denote its equivariant cohomology class, and let $\xi^f \in H^*_{T_{2n}}(\hFl^{(n)}_{2n})$ denote the Schubert class.  The following result is due to Knutson-Lam-Speyer \cite{KLS} (see also He and Lam \cite{HL}). 

\begin{thm}\label{thm:KLS}
For any positroid variety $\Pi_f$, we have $\iota^* \circ \varpi(\xi^f) = [\Pi_f]$.
\end{thm}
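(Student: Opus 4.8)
The plan is to follow the argument of Knutson--Lam--Speyer. The assertion is an identity of $T_{2n}$-equivariant cohomology classes on $\Gr(n,2n)$, so the first step is to reduce it to GKM localization: both $H^*_{T_{2n}}(\Gr(n,2n))$ and $H^*_{T_{2n}}(\hGr^{(n)}_{2n})$ embed, via restriction to torus-fixed points, into products of copies of $H^*_{T_{2n}}(\pt)$ (Grassmannians are equivariantly formal, and the relevant affine Schubert classes live in finite-type pieces). Hence it suffices to check that $\bigl(\iota^*\varpi(\xi^f)\bigr)\big|_{e_I} = [\Pi_f]\big|_{e_I}$ for every $T_{2n}$-fixed point $e_I\in\Gr(n,2n)$, where $I$ runs over the $n$-element subsets of $[1-n,n]$.

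For the left-hand side, functoriality of localization gives $\bigl(\iota^*\varpi(\xi^f)\bigr)|_{e_I} = \varpi(\xi^f)|_{\iota(e_I)}$, and $\iota(e_I)$ is a $T_{2n}$-fixed point of $\hGr^{(n)}_{2n}$ given by an explicit translate $t_{\lambda(I)}$-type element. The wrongway-map localization formula (the analogue of \eqref{eq:wrongwayfixedpoints} in the degree-$n$ component) then identifies $\varpi(\xi^f)|_{\iota(e_I)}$ with $\xi^f|_{v_I}$ for an explicit affine permutation $v_I$, which is computed in closed form by the affine Billey formula (the affine analogue of Proposition~\ref{prop:Billey}). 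For the right-hand side, I would use the description of positroid varieties as projected Richardson varieties: $\Pi_f=\overline{\pi(X^v_u)}$ for suitable $u\le v$ in $S_{2n}$, where $\pi\colon \Fl_{2n}\to\Gr(n,2n)$ is the projection from the flag variety of $\C^{2n}$; the crucial geometric input is that $\pi|_{X^v_u}$ is \emph{cohomologically trivial} (birational onto its image, with $\Pi_f$ normal and with rational singularities), so that $[\Pi_f]=\pi_*[X^v_u]$. The push-pull (Atiyah--Bott) formula then expresses $[\Pi_f]|_{e_I}$ as a sum over torus-fixed points in the fiber $\pi^{-1}(e_I)$ of localizations of the Richardson class in $\Fl_{2n}$, each of which again has a closed Billey-type form.

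The last step is to match the two closed-form expressions. This is where the juggling-pattern / bounded-affine-permutation dictionary of \cite{KLS} enters: it translates between the affine Schubert data $(f,v_I)$ appearing on the left and the finite Richardson data $(u,v,I)$ appearing on the right, after which the two weight products are seen to agree. I expect the main obstacle to be the right-hand side, namely establishing enough control over the geometry of $\Pi_f$ to compute its equivariant localizations --- in particular the cohomological triviality of the projection from the Richardson variety and the normality of $\Pi_f$, which is the substantive content of \cite{KLS} --- whereas the left-hand side and the final combinatorial matching are comparatively routine once the affine Billey formula and the bijective dictionary are available.
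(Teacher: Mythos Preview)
The paper does not actually prove Theorem~\ref{thm:KLS}; it is quoted as a result from the literature, attributed to Knutson--Lam--Speyer \cite{KLS} (with He--Lam \cite{HL} as a further reference). So there is no ``paper's own proof'' to compare against: the theorem is used as a black box in the proof of Theorem~\ref{thm:graphSchubStanley}.

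Your proposal is essentially a sketch of the argument in \cite{KLS}, and in broad outline it is accurate: one reduces to fixed-point localization, identifies $\Pi_f$ with a projected Richardson variety, and uses the cohomological triviality of the projection $X_u^v \to \Pi_f$ together with the affine/finite Billey formulae. You correctly flag that the hard step is geometric --- proving that the projection from the Richardson variety is cohomologically trivial (normality, rational singularities, birationality) --- and that this is precisely the substantive content of \cite{KLS}. Since you explicitly defer that step to \cite{KLS}, your write-up is not a self-contained proof but rather an outline of the cited one, which is all the present paper requires anyway.
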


In particular, this result holds for $\Pi_f = \Pi_{\tilde f_w} = G(w)$.  The remainder of the proof is concerned with working through the interpretation of Theorem \ref{thm:KLS} in terms of double symmetric functions.

Let us first consider $\xi^{\tf_w} \in H^*_{T'_{2n}}(\hFl^{(n)}_{2n})$.  Here, we use $T'_{2n}$ to distinguish from $T_{2n}$.  We have $H^*_{T'_{2n}}(\pt) = \Q[a_1,a_2,\ldots,a_{2n}]$ but $H^*_{T_{2n}}(\pt) = \Q[a_{1-n},a_{2-n},\ldots,a_n]$.  Recall from Proposition \ref{prop:phin} the algebra map $\phi_n: \bR(x;a)_{\ev_n} \to H^*_{T_{2n}}(\hFl_{2n})$.  Combining Theorem \ref{thm:small} with Proposition \ref{prop:shiftflag} (and the analogue of Proposition \ref{prop:shiftflag} for $\hFl^{(n)}_{2n}$), we obtain $\phi_n(\bS_{\sh^n w}(x;a)) = \xi^{\tf_w}_{\hFl^{(n)}_{2n}}$. 
By Proposition \ref{prop:wrongwaycommute}, the class $\varpi(\xi^{\tf_w}) \in H^*_{T'_{2n}}(\Gr^{(n)}_{2n})$ is the image under $\phi_n$ of the element $\eta_a(\bS_{\sh^n w}(x;a)) \in \Lambda(x||a)$.  

Finally, we need to switch from $T'_{2n}$ back to the isomorphic torus $T_{2n}$.  This is simply the map $a_i \mapsto a_{i-n}$ on $\Q[a]$.  Thus
$$
\shift^{-n}_a \left( \bS_{\sh^n w}(x;a)|_{x_i \mapsto a_i}\right) = F^{(n)}_w(x||a) = \varpi(\xi^{\tf_w}) \in H^*_{T_{2n}}(\Gr^{(n)}_{2n}).
$$
Theorem \ref{thm:graphSchubStanley} follows from this equality and Theorem \ref{thm:KLS}.

\subsection{Proof of Theorem \ref{thm:decomp}}\label{ssec:decompproof}
By the main result of \cite{Knu} applied to the interval positroid variety $G(w)$, we have the expansion
$$
[G(w)] = \sum_D \wt(D) [X^{\lambda(D)}]
$$
where the sum is over all \defn{IP pipedreams} $D$ for $G(w)$ that live in the triangular region $\{(i,j) \mid 1 \leq i \leq j \leq 2n\}$.  We do not give the full definition of IP pipedream here.  Indeed, for the special case of $G(w)$, the IP pipedreams are in a canonical bijection with rectangular $w$-bumpless pipedreams.

Let $P$ be a rectangular $w$-bumpless pipedream.  We produce an IP pipedream $D$ as follows:
\begin{enumerate}
\item erase all boxes in the lower-triangular part of the left $n \times n$ square of $P$ (these boxes always contain vertical pipes);
\item add an upper-triangular part below the right $n \times n$ square of $P$, and fill with vertical pipes;
\item rename the pipes numbered $1,2,\ldots,n$ to the letters $A_1,A_2,\ldots,A_n$;
\item rename the nonpositively numbered pipes to the label $1$;
\item add 0 pipes so that every tile has two pipes (in an empty tile, we use a double elbow).
\end{enumerate}
In Figure~\ref{fig:IP}, all 0 pipes are black, all 1 pipes are red, and lettered pipes are blue.  

\begin{figure}
\begin{center}
\begin{tikzpicture}[scale=0.6,line width=0.8mm]
\rightelbow{1}{0}{blue}
\horline{2}{0}{blue}
\horline{3}{0}{blue}

\vertline{1}{-1}{blue}
\rightelbow{2}{-1}{blue}
\horline{3}{-1}{blue}

\rightelbow{0}{-2}{blue}
\cross{1}{-2}{blue}{blue}
\leftelbow{2}{-2}{blue}
\rightelbow{3}{-2}{blue}

\vertline{0}{-3}{blue}
\vertline{1}{-3}{blue}
\rightelbow{2}{-3}{blue}
\cross{3}{-3}{blue}{blue}
\vertline{-4}{0}{red}

\vertline{-3}{0}{red}
\vertline{-3}{-1}{red}

\vertline{-2}{0}{red}
\vertline{-2}{-1}{red}
\vertline{-2}{-2}{red}

\rightelbow{-1}{0}{red}
\vertline{-1}{-1}{red}
\vertline{-1}{-2}{red}
\vertline{-1}{-3}{red}

\leftelbow{0}{0}{red}

\draw[line width=0.4mm] (0,-3)--(4,-3)--(4,1)--(0,1);
\draw[dashed,line width=0.3mm] (0,1)--(0,-3);
\draw[line width=0.4mm] (0,1)--(-4,1)--(-4,-3)--(0,-3);
\vertline{-4}{-1}{red}
\vertline{-4}{-2}{red}
\vertline{-4}{-3}{red}
\vertline{-3}{-2}{red}
\vertline{-3}{-3}{red}
\vertline{-2}{-3}{red}

\draw (0.5,-3.4) node  {$1$};
\draw (1.5,-3.4) node  {$2$};
\draw (2.5,-3.4) node  {3};
\draw (3.5,-3.4) node  {4};
\draw[->,thin] (6,0)--(8,0);
\draw (7,-0.8) node {${(1) (2)}$};
\draw[->,thin] (12,-4)--(10,-6);
\draw (12,-5.5) node {${(3) (4)(5)}$};

\begin{scope}[shift={(13,2)}]
\rightelbow{1}{0}{blue}
\horline{2}{0}{blue}
\horline{3}{0}{blue}

\vertline{1}{-1}{blue}
\rightelbow{2}{-1}{blue}
\horline{3}{-1}{blue}

\rightelbow{0}{-2}{blue}
\cross{1}{-2}{blue}{blue}
\leftelbow{2}{-2}{blue}
\rightelbow{3}{-2}{blue}

\vertline{0}{-3}{blue}
\vertline{1}{-3}{blue}
\rightelbow{2}{-3}{blue}
\cross{3}{-3}{blue}{blue}
\vertline{-4}{0}{red}

\vertline{-3}{0}{red}
\vertline{-3}{-1}{red}

\vertline{-2}{0}{red}
\vertline{-2}{-1}{red}
\vertline{-2}{-2}{red}

\rightelbow{-1}{0}{red}
\vertline{-1}{-1}{red}
\vertline{-1}{-2}{red}
\vertline{-1}{-3}{red}

\leftelbow{0}{0}{red}

\vertline{0}{-4}{blue}
\vertline{1}{-4}{blue}
\vertline{2}{-4}{blue}
\vertline{3}{-4}{blue}

\vertline{1}{-5}{blue}
\vertline{2}{-5}{blue}
\vertline{3}{-5}{blue}

\vertline{2}{-6}{blue}
\vertline{3}{-6}{blue}

\vertline{3}{-7}{blue}

\draw[line width=0.4mm,dashed] (0,-3)--(4,-3);
\draw[line width=0.4mm] (4,-7)--(4,1)--(0,1);
\draw[dashed,line width=0.3mm] (0,1)--(0,-3);
\draw[line width=0.4mm] (0,1)--(-4,1)--(-4,0)--(-3,0)--(-3,-1)--(-2,-1)--(-2,-2)--(-1,-2)--(-1,-3)--(0,-3)--(0,-4)--(1,-4)--(1,-5)--(2,-5)--(2,-6)--(3,-6)--(3,-7)--(4,-7);

\draw (0.5,-4.4) node  {$1$};
\draw (1.5,-5.4) node  {$2$};
\draw (2.5,-6.4) node  {3};
\draw (3.5,-7.4) node  {4};
\end{scope}

\begin{scope}[shift={(7,-8)}]
\elbow{1}{0}{blue}{black}
\cross{2}{0}{black}{blue}
\cross{3}{0}{black}{blue}

\cross{0}{-1}{black}{black}
\cross{1}{-1}{blue}{black}
\elbow{2}{-1}{blue}{black}
\cross{3}{-1}{black}{blue}

\elbow{0}{-2}{blue}{black}
\cross{1}{-2}{blue}{blue}
\elbow{2}{-2}{black}{blue}
\elbow{3}{-2}{blue}{black}

\cross{0}{-3}{blue}{black}
\cross{1}{-3}{blue}{black}
\elbow{2}{-3}{blue}{black}
\cross{3}{-3}{blue}{blue}
\cross{-4}{0}{red}{black}

\cross{-3}{0}{red}{black}
\cross{-3}{-1}{red}{black}

\cross{-2}{0}{red}{black}
\cross{-2}{-1}{red}{black}
\cross{-2}{-2}{red}{black}

\elbow{-1}{0}{red}{black}
\cross{-1}{-1}{red}{black}
\cross{-1}{-2}{red}{black}
\cross{-1}{-3}{red}{black}

\elbow{0}{0}{black}{red}

\cross{0}{-4}{blue}{black}
\cross{1}{-4}{blue}{black}
\cross{2}{-4}{blue}{black}
\cross{3}{-4}{blue}{black}

\cross{1}{-5}{blue}{black}
\cross{2}{-5}{blue}{black}
\cross{3}{-5}{blue}{black}

\cross{2}{-6}{blue}{black}
\cross{3}{-6}{blue}{black}

\cross{3}{-7}{blue}{black}

\draw[line width=0.4mm,dashed] (0,-3)--(4,-3);
\draw[line width=0.4mm] (4,-7)--(4,1)--(0,1);
\draw[dashed,line width=0.3mm] (0,1)--(0,-3);
\draw[line width=0.4mm] (0,1)--(-4,1)--(-4,0)--(-3,0)--(-3,-1)--(-2,-1)--(-2,-2)--(-1,-2)--(-1,-3)--(0,-3)--(0,-4)--(1,-4)--(1,-5)--(2,-5)--(2,-6)--(3,-6)--(3,-7)--(4,-7);

\draw (0.5,-4.4) node  {$A_1$};
\draw (1.5,-5.4) node  {$A_2$};
\draw (2.5,-6.4) node  {$A_3$};
\draw (3.5,-7.4) node  {$A_4$};
\draw (4.4,0.5) node {$A_2$};
\draw (4.4,-0.5) node {$A_1$};
\draw (4.4,-1.5) node {$A_4$};
\draw (4.4,-2.5) node {$A_3$};
\end{scope}

\end{tikzpicture}
\end{center}
\caption{From a bumpless pipedream to an IP pipedream.}
\label{fig:IP}
\end{figure}

Going through the definition of IP pipedream in \cite{Knu}, we see that they are in bijection with rectangular $w$-bumpless pipedreams.  Comparing $\wt(D)$ with $\wt(P)$, it follows from Proposition \ref{prop:doubleSchur} and Theorem \ref{thm:graphSchubStanley} that in $H^*_{T_{2n}}(\Gr(n,2n))$ we have
\begin{equation}\label{eq:Fn}
F^{(n)}_w(x||a) = \sum_P \wt^{(n)}(P) s_{\lambda(P)}(x||a),
\end{equation}
where the summation is over all rectangular $w$-bumpless pipedreams, and $\wt^{(n)}(P) = \wt(P)|_{x_i \mapsto a_{i-n}}$.  But we have injections $S_n \hookrightarrow S_{n+1} \hookrightarrow \cdots$.  The rectangular $S_{n+1}$-bumpless pipedreams $P'$ for $w$ are obtained from the rectangular $S_{n}$-bumpless pipedreams $P$ for $w$ by (1) adding an elbow in the southeastern most corner, (2) filling the rest of the southmost row with vertical pipes, and (3) filling the rest of the eastmost column with horizontal pipes.  Thus, \eqref{eq:Fn} holds for all sufficiently large $n$, where the summation is over the same set of rectangular $w$-bumpless pipedreams.  The only expansion of $\bS(x;a)$ in terms of $s_\lambda(x||a)$ consistent with this is the one in Theorem \ref{thm:decomp}.

%

\subsection{Divided difference formula for graph Schubert class}
For completeness, we include the following formula due to Allen Knutson.  

\begin{thm}
Let $w \in S_n$.  Then
$$
[G(w)] = A_{w_0}\left(\left(\prod_{1-n\le i<j\le 0} (x_i-a_j)\right) \gamma_x^{-n} \S_w(\xp;\ap)\right).
$$
where the action of $A_{w_0}$ is defined by the action of $S_n$ on the variables $x_{1-n},\dotsc,x_{-1},x_0$.
\end{thm}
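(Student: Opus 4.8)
The plan is to express both sides as elements of $H^*_{T_{2n}}(\Gr(n,2n))$, identified as a quotient of $R^{S_n}$ where $R=\Q[x_{1-n},\dots,x_0,a_{1-n},\dots,a_n]$ as in Remark~\ref{R:Grass truncation}, and then compare. By Theorem~\ref{thm:graphSchubStanley} together with Remark~\ref{R:Grass truncation}, the class $[G(w)]$ is represented by the truncation $\S^{[1-n,n]}_{w_{\la}}(x;a)$-type expansion: more precisely, $[G(w)]$ is the image of $F^{(n)}_w(x||a)$, and $F^{(n)}_w(x||a)$ is by definition the image of $\bS_w(x;a)$ under sending $x_i\in\Q[x,a]$ to $a_{i-n}$. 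So first I would unwind: the RHS operator $A_{w_0}$ acts only on $x_{1-n},\dots,x_0$, which under the quotient $R^{S_n}\to H^*_{T_{2n}}(\Gr(n,2n))$ are exactly the Chern roots of the tautological bundle (pulled back appropriately). The factor $\prod_{1-n\le i<j\le 0}(x_i-a_j)$ is, up to the shift $\gamma_x^{-n}$, the standard "Schubert polynomial of the longest element computing the point class'' ingredient, and $\gamma_x^{-n}\S_w(x;a)$ is $\S_w(x_{1-n},\dots;a_{1-n},\dots)$ shifted down by $n$, i.e. a representative for the pullback of the corresponding finite-flag Schubert class.

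The key steps, in order, would be: (1) Recall (e.g. from Fulton, or directly from \eqref{E:doubleSchublong} and the shift $\gamma^{-n}$) that $\gamma_x^{-n}\big(\prod_{1-n\le i<j\le 0}(x_i-a_j)\big)$ represents, inside $H^*_{T_{2n}}(\Fl_{2n})$ truncated appropriately, the class that restricts on $\Gr(n,2n)$ to a Schubert divisor-type product; in fact $\prod_{1-n\le i<j\le 0}(x_i-a_j)$ is $\gamma_x^{n-1}\S_{w_0^{(n)}}(x;a)$ and applying $A_{w_0}$ to a product with $\S_w(x;a)$ computes a pushforward. (2) Interpret $A_{w_0}$ geometrically/algebraically: applying $A_{w_0}$ (in the $x_{1-n},\dots,x_0$ variables) to $R^{?}$ and landing in $R^{S_n}$ is the pushforward along the fibration $\Fl(\text{first }n\text{ coordinates})\to\Gr(n,2n)$, combined with the fact that $\prod_{i<j}(x_i-a_j)$-type factors convert Schubert polynomials into Schubert classes on the Grassmannian — this is exactly the mechanism by which the Kempf–Laksov / Giambelli-type formula realizes $[G(w)]$. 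Concretely, $A_{w_0}\big(\prod_{1-n\le i<j\le 0}(x_i-x_j)\cdot f\big)$ for symmetric $f$ is $f$ times a point, and more generally $A_{w_0}$ turns the "extra'' antisymmetrizing factor into the Schur/double-Schur expansion. (3) Match the result of this push-pull computation with the bumpless pipedream expansion $F^{(n)}_w(x||a)=\sum_P \wt^{(n)}(P)\, s_{\lambda(P)}(x||a)$ from \eqref{eq:Fn} in the proof of Theorem~\ref{thm:decomp}, or alternatively match it directly with $\S_w(x;a)\mapsto F^{(n)}_w$ via the substitution $x_i\mapsto a_{i-n}$ followed by reading off the symmetric-function part.

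The cleanest route is probably (3'): show both sides localize identically at every $T_{2n}$-fixed point $e_{I(\mu)}$, $\mu\subseteq n\times n$. The LHS localizes by Theorem~\ref{thm:graphSchubStanley} to the known localizations of $[G(w)]=[\Pi_{\tilde f_w}]$, which by Theorem~\ref{thm:KLS} and the wrongway map \eqref{eq:wrongwayfixedpoints} equal the affine-flag Schubert localizations $\xi^{\tilde f_w}|_{t_{\mu}}$, i.e. (after the shift) values $\bS_w(\text{something};a)$. The RHS, being built from divided differences $A_{w_0}$ applied to an explicit polynomial, localizes by the standard substitution rule: restricting to $e_{I(\mu)}$ sets $\{x_{1-n},\dots,x_0\}$ to $\{a_i : i\in I(\mu)\}$ (as a set, hence symmetric functions are well-defined) and one evaluates the divided-difference expression by the Leibniz/localization formula for $A_{w_0}$. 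Then the identity reduces to a polynomial identity in the $a$'s that I would verify by induction on $\ell(w)$ using \eqref{E:dSchub ddiff} and \eqref{E:Schub vanishing}, exactly as in the proof of Proposition~\ref{prop:dSchub}, with the longest-element base case handled by \eqref{E:doubleSchublong}.

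\textbf{Main obstacle.} The hard part will be step (2): making precise that applying $A_{w_0}$ in the $x$-variables, after multiplying by $\prod_{1-n\le i<j\le 0}(x_i-a_j)$ and shifting by $\gamma_x^{-n}$, implements exactly the geometric operation $\iota^*\circ\varpi$ (pull back a Schubert class from the affine flag variety, apply the wrongway map, restrict to $\Gr(n,2n)$) — equivalently, that it produces the $n$-rotated double Stanley function $F^{(n)}_w(x||a)$ rather than some other representative differing by the ideal of relations. I expect one must check that the $A_{w_0}$-image genuinely lies in $R^{S_n}$ (clear, since $A_{w_0}$ lands in $S_n$-invariants by Lemma~\ref{L:ker partial}) and that it has the correct leading term / degree, then pin it down uniquely by localization as in step (3'). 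The bookkeeping of the two shifts $\gamma_x^{-n}$ (one on $\S_w$, one implicit in the product $\prod(x_i-a_j)$ with indices running $1-n$ to $0$) is where sign and index errors are most likely, so I would carry out the $n=1$ and $n=2$, $w=s_1$ cases explicitly first to fix conventions.
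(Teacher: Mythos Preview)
Your route is genuinely different from the paper's, and the obstacle you flag is real: as written, the proposal does not close.

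The paper's proof is short and entirely geometric; it does \emph{not} go through Theorem~\ref{thm:graphSchubStanley} or $F^{(n)}_w(x||a)$ at all. Instead it works upstairs in $H^*_{GL_n\times T_{2n}}(M_{n\times 2n})$. By a result of Berget--Fink there is a section $\sigma: H^*_{T_{2n}}(\Gr(n,2n))\to H^*_{GL_n\times T_{2n}}(M_{n\times 2n})$ with $\sigma([Z])=[\,\overline{\pi^{-1}(Z)}\,]$ for closed $Z\subset\Gr(n,2n)$. One then observes $\overline{\pi^{-1}(G(w))}=\overline{B_+\cdot Y'}$ where $Y'=(\overline{B_-}\mid M_w)\subset M_{n\times 2n}$ is a \emph{product} subvariety. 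The $B_+$-sweep contributes exactly the operator $A_{w_0}$, and $[Y']$ factors as the class of $\overline{B_-}$ (the product $\prod_{1-n\le i<j\le 0}(x_i-a_j)$) times the class of the matrix Schubert variety $M_w$, which by Knutson--Miller is $\gamma_x^{-n}\S_w(x;a)$. That is the whole argument.

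In your plan, step~(3') is where things stall. The proposed ``induction on $\ell(w)$'' has no obvious handle: applying $A_i^x$ or $A_i^a$ to the RHS does not cleanly reproduce the RHS for $ws_i$ or $s_iw$, because the fixed factor $\prod(x_i-a_j)$ and the outer $A_{w_0}$ do not commute with those operators in any simple way (and $A_{w_0}$ already symmetrizes in the $x$'s, so the individual $A_i^x$ are gone). Likewise, after localizing at $e_{I(\mu)}$ you would need, for every $\mu\subset n\times n$, the identity
\[
A_{w_0}\Big(\prod_{1-n\le i<j\le 0}(x_i-a_j)\cdot\gamma_x^{-n}\S_w(x;a)\Big)\Big|_{\{x_{1-n},\dots,x_0\}=\{a_i:\,i\in I(\mu)\}}
\;=\; F^{(n)}_w(x||a)\big|_\mu,
\]
and neither side admits a closed form amenable to a direct comparison for general $w,\mu$. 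The missing idea is precisely the one the paper supplies: recognize the RHS as $[\,\overline{B_+\cdot(\overline{B_-}\mid M_w)}\,]$, so that $A_{w_0}$ is the $B_+$-sweep and the product of factors is the class of a product variety. Once you see that, there is nothing to localize or induct on.
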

\begin{proof}[Sketch of proof] We use the notation of \textsection \ref{SS:graph Schub}. There is a canonical projection
$$H_{GL_n\times T_{2n}}^*(M_{n\times 2n}) \to 
H_{GL_n\times T_{2n}}^*(M_{n\times 2n}^\circ) \cong
H_{T_{2n}}^*(\Gr(n,2n)).$$ By \cite{BF} this map has a section
$\sigma: H_{T_{2n}}^*(\Gr(n,2n)) \to H_{GL_n \times T_{2n}}^*(M_{n\times 2n})$ such that for any closed subscheme $Z \subset \Gr(n,2n)$,
$\sigma([Z]) = [\overline{\pi^{-1}(Z)}]$. In particular  $\sigma([X^\la]) = [\overline{\pi^{-1}(X^\la)}]$ which is identified with the double Schur polynomial $\S_{w_\la}^{[1-n,n]}(x;a)$ in variables $x_{1-n},\dotsc,x_0$ and $a_{1-n},\dotsc,a_n$ where the row torus $T_n\subset GL_n$ acts on $M_{n\times 2n}$ by the weights $x_{1-n}$ through $x_0$ and $T_{2n}$ acts on columns by weights $a_{1-n}$ through $a_n$. Let $Z=G(w)$ and $Y=\overline{\pi^{-1}(G(w))}\subset M_{n\times 2n}$. In the notation of \textsection \ref{SS:graph Schub} we have
$\sigma([G(w)]) =  [Y]$. Let $Y'$ be the closed $B_-$-stable subvariety $(\overline{B_-}|M_w)$ of $M_{n\times 2n}$.
Since $M_w^\circ$ is $B_-$-stable we have
\begin{align*}
Y &= \overline{GL_n\cdot (I|M_w^\circ)} = \overline{B_+ B_- (I|M_w^\circ)} 
= \overline{B_+ (B_-|M_w^\circ)} 
= \overline{B_+ Y'}.
\end{align*}
Since $B_+$ acts freely on $(B_-|M_w^\circ)$ one may show that
$[Y] = A_{w_0} [Y']$ where $[Y'] \in H_{T_n\times T_{2n}}^*(M_{n\times 2n})$. But $Y'$ is a product. The equivariant class of the affine space $\overline{B_-}$ is the product of the weights of the matrix entries that are set to zero in $\overline{B_-}$ and the equivariant class of $M_w$, which is $\gamma_x^{-n}\S_w(\xp;\ap)$ by \cite{KM} (the shift in $x$ variables is due to the convention on weights). We deduce that
\begin{align*}
	[Y'] &= \left(\prod_{1-n \le i<j\le 0} (x_i-a_j)\right) \gamma_x^{-n}(\S_w(x;a)) 
\end{align*}
as required.
\end{proof}

\begin{example} Let $n=2$ and $w=s_1$. Then $\S_w(\xp;\ap)=x_1-a_1$, $\gamma_x^{-n}(\S_w(\xp;\ap)) = x_{-1}-a_1$ and
\begin{align*}
	\sigma([G(w)]) &= A_{-1}((x_{-1}-a_0)(x_{-1}-a_1)) \\
	&= x_{-1}+x_0 - a_0 - a_1 \\
	&= (x_{-1}+x_0-a_{-1}-a_0) + (a_{-1}-a_1) \\
	&= \sigma([X^{\tableau[pby]{\\ }}]) +(a_{-1}-a_1) \sigma([X^{\varnothing}]).
\end{align*}
On the other hand, we have $\bS_w=s_1(x||a) + (x_1-a_1)$.
Setting $x_1 \mapsto a_{-1}$, the formula for $F^{(2)}_{s_1}(x||a)$ agrees
with the above computation.
\end{example}

\appendix
\section{Dictionary between positive and nonpositive alphabets}
\label{S:pos to non}
The literature uses double Schur symmetric functions $s^{>0}_\la(x||a)$ (e.g. \cite[\textsection 2.1]{M}) while we use $s^{\le0}_\la(x||a)$.
The two kinds of double Schurs are compared explicitly below using localization. For more connections with various kinds of double Schur polynomials used in the literature, see \cite[\textsection 2.1]{M}.

\subsection{Positive alphabets}
Recall that $\xp=(x_1,x_2,\dotsc)$ and $\xm=(x_0,x_{-1},\dotsc)$
and similarly for $\ap$ and $\am$.

Let $\bQ[a]=\bQ[a_i\mid i\in\Z]$ and $\La^{>0}(x||a)$ the polynomial $\bQ[a]$-algebra generated by $p_k(\xp/\ap)$ for $k\ge 1$. Recall the definition of $\shift_a$ from \eqref{E:phi_a_def}. Define
\begin{align*}
  h_r^{>0}(x||a) &= \shift_a^{1-r} h_r(\xp/\ap) &
  s_\la^{>0}(x||a) &= \det \shift^{j-1}_a h^{>0}_{\la_i-i+j}(x||a).
 \end{align*}

\subsection{Nonpositive alphabets}
Let $\La^{\le0}(x||a)$ be the polynomial $\bQ[a]$-algebra with generators  $p_k(\xm/\am)$ for $k\ge 1$. Define
\begin{align}
  h_r^{\le0}(x||a) &= \shift^{r-1}_a h_r(\xm/\am) &
 \label{E:nonpos}
  s_\la^{\le0}(x||a) &= \det \shift_a^{1-j} h^{\le0}_{\la_i-i+j}(x||a).
\end{align}
Applying $\omega$ and using \eqref{E:shift and negate} we have
\begin{align}
  e_r^{\le0}(x||a) &= \shift_a^{1-r} e_r(\xm/\am)  &
 \label{E:nonpostranspose}
  s_\la^{\le0}(x||a) &= \det \shift_a^{j-1} e^{\le0}_{\la'_i-i+j}(x||a).
\end{align}

\subsection{Localization}

\begin{prop}\label{P:postonot} 
Let $\Phi:\La^{>0}(x||a)\to \La^{\le0}(x||a)$ be the $\bQ[a]$-algebra isomorphism given by
\begin{align}
\label{E:Phi}
p_k(\xp/\ap) \mapsto - p_k(\xm/\am)\qquad\text{for all $k\ge 1$.}
\end{align}
 It satisfies
\begin{align}\label{E:Phi res}
\Phi(f)|_w = f|_w \qquad\text{for all $f\in \La^{>0}(x||a)$ and $w\in S_{\Z}$.}
\end{align}
Moreover,
\begin{align}\label{E:Phis}
  \Phi(s_\la^{>0}(x||a)) = (-1)^{|\la|} s^{\le0}_{\la'}(x||a)\qquad\text{for all $\la\in\Par$.}
\end{align}
\end{prop}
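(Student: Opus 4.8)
The plan is to prove the three assertions of Proposition~\ref{P:postonot} essentially independently. Well-definedness is immediate: $\La^{>0}(x||a)$ is the polynomial $\Q[a]$-algebra on the algebraically independent generators $p_k(\xp/\ap)$, $k\ge1$, so \eqref{E:Phi} determines a unique $\Q[a]$-algebra homomorphism $\Phi$; since the elements $-p_k(\xm/\am)$ are likewise free polynomial generators of $\La^{\le0}(x||a)$, the rule $p_k(\xm/\am)\mapsto -p_k(\xp/\ap)$ is a two-sided inverse, so $\Phi$ is an isomorphism.

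For \eqref{E:Phi res}, since both $\cdot|_w$ and $\Phi$ are $\Q[a]$-algebra maps it suffices to check the identity on the generators $p_k(\xp/\ap)$. By Lemma~\ref{lem:locp} and its evident $\xp$-analogue, $p_k(\xm/\am)|_w=\sum_{i\le0}(a_{w(i)}^k-a_i^k)$ and $p_k(\xp/\ap)|_w=\sum_{i>0}(a_{w(i)}^k-a_i^k)$, both finite sums because $w$ moves only finitely many integers. The crux is the telescoping identity $\sum_{i\in\Z}(a_{w(i)}^k-a_i^k)=0$, valid because $w$ restricts to a permutation of its finite set of non-fixed points; hence $p_k(\xm/\am)|_w=-\,p_k(\xp/\ap)|_w$, so $\Phi(p_k(\xp/\ap))|_w=-\,p_k(\xm/\am)|_w=p_k(\xp/\ap)|_w$, and \eqref{E:Phi res} follows by multiplicativity.

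For \eqref{E:Phis}, the main input is $\Phi(h_r(\xp/\ap))=(-1)^r e_r(\xm/\am)$: applying $\Phi$ to $\sum_r h_r(\xp/\ap)\,t^r=\exp\!\big(\sum_{k\ge1}\tfrac1k p_k(\xp/\ap)\,t^k\big)$ produces $\exp\!\big(-\sum_{k\ge1}\tfrac1k p_k(\xm/\am)\,t^k\big)=\sum_r e_r(\xm/\am)(-t)^r$, using that the complete and elementary generating series of any alphabet are reciprocal after $t\mapsto -t$ (equivalently, $\Phi$ is the antipode of $\La^{\le0}(x||a)$ precomposed with the alphabet change $\xp\mapsto\xm$, $\ap\mapsto\am$, and the antipode sends $h_r\mapsto(-1)^re_r$). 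One then checks by a one-line computation on the generators $p_k$ and $a_i$---using $\shift_a(p_k(\xp/\ap))=p_k(\xp/\ap)+a_1^k$ and $\shift_a(p_k(\xm/\am))=p_k(\xm/\am)-a_1^k$---that $\Phi$ commutes with $\shift_a$, whence $\Phi(h_r^{>0}(x||a))=\shift_a^{1-r}\Phi(h_r(\xp/\ap))=(-1)^r e_r^{\le0}(x||a)$ by the definitions in \eqref{E:nonpostranspose}.

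Applying $\Phi$ to the determinant $s_\la^{>0}(x||a)=\det\big(\shift_a^{j-1}h^{>0}_{\la_i-i+j}(x||a)\big)_{1\le i,j\le\ell(\la)}$ then introduces the sign $(-1)^{\la_i-i+j}=(-1)^{\la_i}(-1)^{i}(-1)^{j}$ in the $(i,j)$ entry; pulling the row factors $(-1)^{\la_i}(-1)^{i}$ and column factors $(-1)^{j}$ out of the determinant, the $(-1)^{i}$ and $(-1)^{j}$ contributions cancel and only $\prod_i(-1)^{\la_i}=(-1)^{|\la|}$ survives, leaving $(-1)^{|\la|}\det\big(\shift_a^{j-1}e^{\le0}_{\la_i-i+j}(x||a)\big)_{1\le i,j\le\ell(\la)}=(-1)^{|\la|}s_{\la'}^{\le0}(x||a)$ by \eqref{E:nonpostranspose} applied to $\la'$ (the matrix size matches since $\ell((\la')')=\ell(\la)$). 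I expect the only delicate point to be this sign bookkeeping together with correctly pairing the $h$-form of $s_\la^{>0}$ with the $e$-form of $s_{\la'}^{\le0}$; everything else is routine once the antipode identity $\Phi(h_r)=(-1)^r e_r$ and the telescoping $\sum_{i\in\Z}(a_{w(i)}^k-a_i^k)=0$ are in hand.
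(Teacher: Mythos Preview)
Your proof is correct and follows essentially the same approach as the paper: checking \eqref{E:Phi res} on the power-sum generators via the telescoping identity $\sum_{i\in\Z}(a_{w(i)}^k-a_i^k)=0$, verifying that $\Phi$ commutes with $\shift_a$, and then pushing $\Phi$ through the Jacobi--Trudi determinant using $\Phi(h_r(\xp/\ap))=(-1)^r e_r(\xm/\am)$ together with the sign bookkeeping to reach the dual Jacobi--Trudi form \eqref{E:nonpostranspose} for $s_{\la'}^{\le0}$. The paper's proof differs only cosmetically, unfolding $h_r^{>0}$ to $\shift_a^{1-r}h_r(\xp/\ap)$ before applying $\Phi$ rather than first recording the intermediate identity $\Phi(h_r^{>0}(x||a))=(-1)^r e_r^{\le0}(x||a)$ as you do.
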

\begin{proof} Checking \eqref{E:Phi res} on algebra generators, we have
\begin{align*}
 p_k(\xp/\ap)|_w + p_k(\xm/\am)|_w 
= p_k(w\ap/\ap) + p_k(w \am/\am) 
= p_k(w a_{\Z} / a_{\Z}) = p_k(a_{\Z}/a_{\Z}) = 0.
\end{align*}
Since $\Phi$ acts like the antipode (up to changing nonpositive for positive alphabets), we have the equality $\Phi(s_\la(\xp/\ap)) = (-1)^{|\la|} s_{\la'}(\xm/\am)$ for all $\la\in\Par$.
It is straightforward to verify that $\Phi$ is $\shift_a$-equivariant: $ \Phi(\shift_a(f)) = \shift_a(\Phi(f))$ for all $f\in \La^{>0}(x||a)$.
We compute
\begin{align*}
  \Phi(s_\la^{>0}(x||a)) &= \det \Phi(\shift_a^{j-1}( h^{>0}_{\la_i-i+j}(x||a))) \\
  &= \det \Phi ( \shift_a^{j-(\la_i-i+j)}h_{\la_i-i+j}(\xp/\ap) ) \\
  &= \det \shift_a^{i-\la_i} (-1)^{\la_i-i+j} e_{\la_i-i+j}(\xm/\am) \\
  &= (-1)^{|\la|} \det \shift_a^{j-1} e_{\la_i-i+j}^{\le0}(x||a) \\
  &= (-1)^{|\la|} s_{\la'}^{\le0}(x||a). \qedhere
\end{align*}
\end{proof}

\subsection{Molev's skew double Schur functions}\label{SS:Molev skew double} Molev's skew double Schur functions \cite{M2} \cite{M} \cite{ORV} are the positive variable analogues of double Stanley functions for 321-avoiding permutations.

For $\la\in\Par$ and $n \ge \ell(\la)$ the double Schur polynomial, 
$s_\la(x_1,\dotsc,x_n||a)$ may be defined by $\S_{\shift^n(w_\la)}$.
It is stable (the limit as $n\to\infty$ is well-defined),
yielding the element $s_\la^{>0}(x||a) \in \La^{>0}(x||a)$.

The same is true of Molev's skew double Schur polynomials
$s_{\nu/\mu}(x_1,\dotsc,x_n||a)$ as defined in  \cite[(2.20)]{M}, because they have a stable expansion into double Schur polynomials as $n\to\infty$. Define $F_{\nu/\mu}^{>0}(x||a)\in \La^{>0}(x||a)$ by $F_{\nu/\mu}^{>0}(x||a) := \lim_{n\to\infty} s_{\nu/\mu}(x_1,\dotsc,x_n||a)$.

Recalling $w_{\la/\mu}$ from \eqref{E:w skew}, we have
$
\omega(w_{\nu/\mu}) = w_{\nu'/\mu'}
$.
We define 
$
F^{\le0}_{\nu/\mu}(x||a) := F^{\le0}_{w_{\nu/\mu}}(x||a)
$.

\begin{prop} \label{P:skew double is 321 double Stanley}
With $\Phi$ as in Proposition \ref{P:postonot},
\begin{align*}
  \Phi(F^{>0}_{\nu/\mu}(x||a)) = (-1)^{|\nu|-|\mu|} F^{\le0}_{\nu'/\mu'}(x||a).
\end{align*}
\end{prop}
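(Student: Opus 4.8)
The plan is to transport Molev's comultiplication rule for double Schur functions across the $\Q[a]$-algebra isomorphism $\Phi$ of Proposition~\ref{P:postonot} and to match it against the comultiplication rule in the nonpositive alphabet, Corollary~\ref{cor:coproddoubleSchur}. First I would check that $\Phi$ is a morphism of coalgebras. Write $\Delta^{>0}$ for the comultiplication on $\La^{>0}(x||a)$, characterized as the $\Q[a]$-algebra map making each $p_k(\xp/\ap)$ primitive, and $\Delta^{\le0}$ for the comultiplication of Section~\ref{SS:double symmetric} on $\La^{\le0}(x||a)=\La(x||a)$, making each $p_k(x||a)=p_k(\xm/\am)$ primitive. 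Since $\Phi$ is a $\Q[a]$-algebra map sending the generator $p_k(\xp/\ap)$ to $-p_k(\xm/\am)$, which is again primitive, the two $\Q[a]$-algebra maps $(\Phi\otimes\Phi)\circ\Delta^{>0}$ and $\Delta^{\le0}\circ\Phi$ agree on the algebra generators of $\La^{>0}(x||a)$ and hence coincide.

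Next I would invoke Molev's comultiplication rule \cite{M}, which (in the stable limit, and in the Hopf structure just described) states
$$\Delta^{>0}(s^{>0}_\nu(x||a))=\sum_{\mu\subseteq\nu}F^{>0}_{\nu/\mu}(x||a)\otimes s^{>0}_\mu(x||a).$$
Applying $\Phi\otimes\Phi$ and using $\Phi(s^{>0}_\la(x||a))=(-1)^{|\la|}s^{\le0}_{\la'}(x||a)$ from \eqref{E:Phis} gives
$$(-1)^{|\nu|}\,\Delta^{\le0}(s^{\le0}_{\nu'}(x||a))=\sum_{\mu\subseteq\nu}(-1)^{|\mu|}\,\Phi\!\left(F^{>0}_{\nu/\mu}(x||a)\right)\otimes s^{\le0}_{\mu'}(x||a).$$
On the other hand, Corollary~\ref{cor:coproddoubleSchur} applied to $\nu'$, after reindexing the outer sum by $\rho=\mu'$ (legitimate since $\rho\subseteq\nu'\iff\mu\subseteq\nu$), reads
$$\Delta^{\le0}(s^{\le0}_{\nu'}(x||a))=\sum_{\mu\subseteq\nu}F^{\le0}_{w_{\nu'/\mu'}}(x||a)\otimes s^{\le0}_{\mu'}(x||a).$$
Because $\{s^{\le0}_\sigma(x||a)\mid\sigma\in\Par\}$ is a $\Q[a]$-basis of $\La(x||a)$, I may equate the coefficients of $s^{\le0}_{\mu'}(x||a)$ in the second tensor slot, obtaining $(-1)^{|\nu|}F^{\le0}_{w_{\nu'/\mu'}}(x||a)=(-1)^{|\mu|}\Phi(F^{>0}_{\nu/\mu}(x||a))$, i.e.\ $\Phi(F^{>0}_{\nu/\mu}(x||a))=(-1)^{|\nu|-|\mu|}F^{\le0}_{w_{\nu'/\mu'}}(x||a)$. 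Recalling the notational definition $F^{\le0}_{\nu'/\mu'}(x||a):=F^{\le0}_{w_{\nu'/\mu'}}(x||a)$, this is exactly the claimed identity.

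The main obstacle is the first displayed formula: one must verify that the skew object appearing in Molev's comultiplication rule in \cite{M} really is, after stabilization $n\to\infty$, the function $F^{>0}_{\nu/\mu}(x||a):=\lim_{n\to\infty}s_{\nu/\mu}(x_1,\dotsc,x_n||a)$ defined here via Molev's (2.20), and that the Hopf structure Molev uses is the one with $p_k(\xp/\ap)$ primitive. This is precisely the sort of convention-matching that this appendix is designed to carry out — any shift of equivariant variables washes out in the stable limit, exactly as in the non-skew case handled in Proposition~\ref{P:postonot} — while the rest of the argument is purely formal.
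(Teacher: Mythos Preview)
The paper states Proposition~\ref{P:skew double is 321 double Stanley} without proof, so there is no argument to compare yours against directly. Your approach---transporting Molev's comultiplication rule across the Hopf isomorphism $\Phi$ and matching it against Corollary~\ref{cor:coproddoubleSchur}---is correct and is the natural way to prove this statement. The verification that $\Phi$ is a coalgebra map is immediate from primitivity of the power sums, and the coefficient-matching step is justified because the double Schur functions form a $\Q[a]$-basis.

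You have correctly identified the only nontrivial ingredient: that Molev's skew double Schur polynomials $s_{\nu/\mu}(x_1,\dotsc,x_n||a)$ of \cite[(2.20)]{M} stabilize to give the coproduct expansion $\Delta^{>0}(s^{>0}_\nu)=\sum_{\mu\subseteq\nu}F^{>0}_{\nu/\mu}\otimes s^{>0}_\mu$ in the Hopf structure with primitive $p_k(\xp/\ap)$. This is exactly Molev's comultiplication rule (the title result of \cite{M}), and the stability follows because the skew double Schur polynomials have a stable expansion into double Schur polynomials as $n\to\infty$, as the paper itself notes just before defining $F^{>0}_{\nu/\mu}$. So your ``obstacle'' is already handled by the reference, and the proof goes through.
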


\section{Schubert Inversion} \label{A:Schub inversion}
\subsection{Proof of Lemma \ref{L:Schub cancellation}}
\begin{proof}
We expand using the Billey-Jockusch-Stanley formula \eqref{E:BJS}:
\begin{align*}
	&\sum_{w \doteq uy} (-1)^{\ell(y)} \S_{u^{-1}} (\xp)\S_{y}(\xp) \\
	&= \sum_{a_1a_2 \cdots a_\ell \in \Red(w)} \left(\sum_{k=0}^{\ell+1} (-1)^k \sum_{\substack{b_1 \geq b_2 \geq \cdots \geq b_k \geq 1 \leq b_{k+1} \leq b_{k+2} \leq \cdots \leq b_\ell\\ \text{ if } i > k \text{ then } a_i<a_{i+1} \implies b_i < b_{i+1} \\ \text{ if } i < k \text{ then } a_i > a_{i+1} \implies b_i > b_{i+1} \\ b_i \leq a_i}} x_{b_1} x_{b_2} \cdots x_{b_\ell} \right).
\end{align*}
We perform a sign-reversing involution on the inner sum on the RHS (contained inside the parentheses) as follows.  If either ($k > 0$ and $b_k < b_{k+1}$) or $k = \ell$, then we change $k$ to $k-1$.  If either ($k < \ell$ and $b_k > b_{k+1}$) or $k = 0$, then we change $k$ to $k + 1$.  If $0 < k < \ell$ and $b_k = b_{k+1}$, then we change $k$ to $k-1$ if $a_k < a_{k+1}$; we change $k$ to $k+1$ if $a_k > a_{k+1}$. 
\end{proof}

\subsection{Inverting systems with Schubert polynomials as change-of-basis matrix}

Let $W \subset S_{\ne0}$ be a subgroup generated by simple reflections $s_i$ for $i\in I$ for some $I\subset \Z-\{0\}$. For $J\subset I$ let $W_J$ be the subgroup of $W$ generated by $s_i$ for $i\in J$.
For $x,y\in W$ say $x \leleft{J} y$ if $yx^{-1}\in W_J$ and $\ell(yx^{-1})+\ell(x)=\ell(y)$. Equivalently, $x\leleft{J} y$ if and only if there is a $v\in W_J$ such that $y\doteq vx$.

\begin{lem} Let $W'$ be a fixed coset of $W_J\backslash W$. Then the $W'\times W'$-matrices
\begin{align*}
	A_{x,y} &= (-1)^{\ell(yx^{-1})} \chi(x\leleft{J} y) \Schub_{xy^{-1}}(a) \\
	B_{x,y} &= \chi(x\leleft{J} y) \Schub_{yx^{-1}}(a).
\end{align*}
are mutually inverse.
\end{lem}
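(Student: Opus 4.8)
The plan is to deduce the matrix identity directly from Lemma \ref{L:Schub cancellation}, after massaging the indexing to match. We must show that for fixed $x,z$ in the coset $W'$,
\begin{align*}
\sum_{y \in W'} A_{x,y} B_{y,z} = \delta_{x,z}.
\end{align*}
Expanding, the left side is $\sum_{y} (-1)^{\ell(yx^{-1})} \chi(x \leleft{J} y) \chi(y \leleft{J} z) \Schub_{xy^{-1}}(a) \Schub_{zy^{-1}}(a)$. The conditions $x \leleft{J} y$ and $y \leleft{J} z$ mean $y \doteq v x$ and $z \doteq u y$ with $u,v \in W_J$; note $z \doteq uvx$, so for a nonzero term we need $x \leleft{J} z$ (otherwise the sum is empty and $\delta_{x,z}=0$ is consistent). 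So first I would reduce to the case $x \leleft{J} z$, write $z \doteq w x$ with $w \in W_J$, and reparametrize: $y$ ranges over elements $\doteq v x$ with $v \leleft{} w$ in $W_J$, and then $yx^{-1} = v$, $zy^{-1} = wv^{-1}$, $xy^{-1} = v^{-1}$. Here I need the fact (to be recorded as a small lemma, or read off from the definition of $\leleft{J}$ and length-additivity) that right multiplication by $x$ gives a length-additive bijection between $\{v \in W_J : v \le w \text{ with } w \doteq (wv^{-1}) v\}$ and $\{y \in W' : x \leleft{J} y \leleft{J} z\}$, and moreover that $\Schub$ depends only on the $W_J$-parts via the shifting automorphism $\shift$, so that $\Schub_{v^{-1}}(a)$ and $\Schub_{wv^{-1}}(a)$ make sense as ordinary Schubert polynomials of elements of $W_J \cong S_+$ up to a shift.

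With this reparametrization the sum becomes
\begin{align*}
\sum_{w \doteq (wv^{-1})v} (-1)^{\ell(v)} \Schub_{v^{-1}}(a) \Schub_{wv^{-1}}(a),
\end{align*}
where the sum is over length-additive factorizations $w = u v$ of the fixed element $w \in W_J$ (writing $u = wv^{-1}$). This is precisely the left side of \eqref{E:Schub cancellation} applied to $w \in W_J$ (after transporting $W_J$ to $S_+$ by $\shift$, under which Schubert polynomials are equivariant by the definition of $\Schub_w^{[p,q]}$ and $\shift$). By Lemma \ref{L:Schub cancellation} this equals $\delta_{w,\id}$, i.e. $1$ if $z = x$ and $0$ otherwise, which is $\delta_{x,z}$ in the case $x \leleft{J} z$. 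Combined with the earlier observation that the sum is empty unless $x \leleft{J} z$, this proves $AB = I$.

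The identity $BA = I$ is proved by the symmetric argument, this time using the ``transpose'' form of the Schubert cancellation identity. One can either invoke Lemma \ref{L:Schub cancellation} applied to $w^{-1}$ (using that the sum $\sum_{w \doteq uv}(-1)^{\ell(u)}\Schub_{u^{-1}}(a)\Schub_v(a)$ is symmetric under $w \mapsto w^{-1}$, since inversion sends length-additive factorizations $w \doteq uv$ to $w^{-1} \doteq v^{-1} u^{-1}$), or rerun the sign-reversing involution from the proof of Lemma \ref{L:Schub cancellation} with the inequalities on the $b_i$ reversed. I expect the main obstacle to be purely bookkeeping: verifying carefully that right multiplication by a fixed coset representative $x$ induces the claimed length-additive bijection between factorizations inside $W_J$ and chains $x \leleft{J} y \leleft{J} z$ in the coset $W'$, and that under this identification the two Schubert polynomial factors in $A_{x,y}B_{y,z}$ correctly become $\Schub_{u^{-1}}(a)$ and $\Schub_v(a)$ with the shift automorphism $\shift$ accounted for. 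Once that dictionary is set up, the result is an immediate consequence of Lemma \ref{L:Schub cancellation}; no new combinatorial input is needed.
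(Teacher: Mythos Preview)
Your proposal is correct and follows essentially the same route as the paper's proof: reparametrize the sum over intermediate elements of $W'$ as a sum over length-additive factorizations of the fixed element $zx^{-1}\in W_J$, then invoke Lemma~\ref{L:Schub cancellation}. The paper is terser on the bookkeeping you flag---it does not explicitly discuss transporting $W_J$ into $S_+$ via $\shift$, and it only checks $AB=I$ (the triangularity of both $A$ and $B$ with respect to $\leleft{J}$, each with $\Schub_\id=1$ on the diagonal, makes the one-sided inverse automatically two-sided, so your separate argument for $BA=I$ is not needed).
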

\begin{proof} For $x,y\in W'$, we have
\begin{align*}
	(AB)_{xy} &= \sum_{z\in W'} A_{xz} B_{zy} \\
	&= \sum_z \chi(x \leleft{J} z) \chi(z \leleft{J} y) (-1)^{\ell(zx^{-1})} \Schub_{xz^{-1}}(a) \Schub_{yz^{-1}}(a).
\end{align*}
Thus $(AB)_{xy}=0$ unless $x\leleft{J} y$. Let us assume this.
Let $u,v\in W_J$ be such that $ux=z$ and $vz=y$. There are factorizations $y\doteq vz$ and $y \doteq vux$ with
\begin{align*}
  (AB)_{xy} &= \sum_{vu=yx^{-1}} (-1)^{\ell(u)} \Schub_{u^{-1}}(a) \Schub_v(a) = \delta_{x,y}
\end{align*}
using the obvious generalization of Lemma \ref{L:Schub cancellation} to $S_{\ne0}$.
\end{proof}

The right hand analogue also holds. For $x,y\in W$ say $x \leright{J} y$ if $x^{-1}y\in W_J$ and $\ell(x)+\ell(x^{-1}y)=\ell(y)$. Equivalently, $x\leright{J} y$ if and only if there is a $v\in W_J$ such that $y\doteq xv$.

\begin{lem} Let $W'$ be a fixed coset of $W/W_J$. The $W' \times W'$-matrices
	\begin{align*}
		A_{x,y} &= (-1)^{\ell(x^{-1}y)} \chi(x\leright{J} y) \Schub_{y^{-1}x}(a) \\
		B_{x,y} &= \chi(x \leright{J} y) \Schub_{x^{-1}y}(a)
	\end{align*}
	are inverses.
\end{lem}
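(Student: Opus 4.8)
The plan is to mirror the proof of the preceding (left-handed) lemma, computing the product $(AB)_{xy}$ entrywise and reducing to the Schubert cancellation identity, Lemma \ref{L:Schub cancellation}. First I would fix $x,y\in W'$ and expand
$$(AB)_{xy} = \sum_{z\in W'} A_{xz}B_{zy} = \sum_{z\in W'} (-1)^{\ell(x^{-1}z)} \chi(x\leright{J}z)\,\chi(z\leright{J}y)\,\Schub_{z^{-1}x}(a)\,\Schub_{z^{-1}y}(a).$$
The condition $\chi(x\leright{J}z)\chi(z\leright{J}y)\ne 0$ forces $x^{-1}z\in W_J$ and $z^{-1}y\in W_J$, hence $x^{-1}y\in W_J$ with $\ell(x^{-1}y)=\ell(x^{-1}z)+\ell(z^{-1}y)$; in particular $(AB)_{xy}=0$ unless $x\leright{J}y$. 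So I may assume $x\leright{J}y$ and set $w := x^{-1}y\in W_J$.

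Next I would parametrize the contributing middle terms $z$ by $u := x^{-1}z\in W_J$. Then $z\doteq xu$, so $z^{-1}x = u^{-1}$ and $z^{-1}y = u^{-1}w$, while the constraint $z\leright{J}y$ (given $x\leright{J}z$) becomes exactly the length-additivity $w\doteq u(u^{-1}w)$. Writing $v := u^{-1}w$, the map $z\mapsto (u,v)$ is a bijection from contributing $z$'s to factorizations $w\doteq uv$ with $u,v\in W_J$. Therefore
$$(AB)_{xy} = \sum_{w\doteq uv} (-1)^{\ell(u)}\,\Schub_{u^{-1}}(a)\,\Schub_{v}(a) = \delta_{w,\id} = \delta_{x,y},$$
by Lemma \ref{L:Schub cancellation}, applied inside the parabolic subgroup $W_J$ (using the same straightforward extension of that lemma beyond $S_+$ that was already invoked in the proof of Theorem \ref{thm:backstabledouble}; Schubert polynomials, divided differences, and the Billey--Jockusch--Stanley formula all make sense for $W_J$ since it is generated by a subset of simple reflections).

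Finally I would observe that both $A$ and $B$ are triangular with respect to the partial order $\leright{J}$ (so $A_{x,y}=B_{x,y}=0$ unless $\ell(x)\le\ell(y)$), column-finite (for fixed $y$ there are finitely many $x$ with $x\leright{J}y$), and have unit diagonal $A_{x,x}=B_{x,x}=\Schub_\id(a)=1$; hence the one-sided identity $AB=I$ established above implies that $A$ and $B$ are mutually inverse. I do not expect any genuine obstacle here: the only points requiring care are checking that the correspondence $z\leftrightarrow (u,v)$ between contributing $z$ and length-additive factorizations of $w$ is exact (this is where the definition of $\leright{J}$ via $y\doteq xv$ with $v\in W_J$ is used), and confirming that Lemma \ref{L:Schub cancellation} transports to the parabolic subgroup $W_J$ verbatim.
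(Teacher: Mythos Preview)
Your proof is correct and follows exactly the approach the paper intends: the paper does not prove this lemma separately but simply states ``The right hand analogue also holds'' after proving the left-handed version, and your argument is precisely the right-handed transcription of that proof (parametrize $z$ by $u=x^{-1}z$, reduce to $\sum_{w\doteq uv}(-1)^{\ell(u)}\Schub_{u^{-1}}(a)\Schub_v(a)=\delta_{w,\id}$ via Lemma~\ref{L:Schub cancellation}). Your added remark about triangularity and column-finiteness to pass from $AB=I$ to mutual invertibility is a small improvement over the paper's left-handed proof, which leaves this implicit.
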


\begin{cor} \label{C:SchubInversion}
Let $\{F_w \mid w\in W\}$ and $G_w\mid w\in W\}$ be families of elements. Then
\begin{enumerate}
\item[(a)] We have
\begin{align} \label{E:LFtoG}
	F_w &= \sum_{\substack{w\doteq uv \\ (u,v)\in W_J\times W }} (-1)^{\ell(u)} \Schub_{u^{-1}}(a) G_v&\qquad&\text{for all $w\in W$}
\intertext{if and only if}
\label{E:LGtoF}
	G_w &= \sum_{\substack{w\doteq uv \\ (u,v)\in W_J\times W }} \Schub_u(a) F_v&\qquad&\text{for all $w\in W$.}
\end{align}
\item[(b)] We have
\begin{align} \label{E:RFtoG}
	F_w &= \sum_{\substack{w\doteq \\ (v,z)\in W\times W_J }} (-1)^{\ell(z)} G_v \Schub_{z^{-1}}(a) &\qquad&\text{for all $w\in W$}
	\intertext{if and only if}
	\label{E:RGtoF}
	G_w &= \sum_{\substack{w\doteq vz \\ (v,z)\in W\times W_J}} F_v \Schub_z(a) &\qquad&\text{for all $w\in W$.}
\end{align}
\end{enumerate}
\end{cor}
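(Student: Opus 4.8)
The final statement is Corollary \ref{C:SchubInversion}, which asserts the equivalence of the ``downward'' transition equations \eqref{E:LFtoG}, \eqref{E:LGtoF} (and their right-handed analogues) relating two families $\{F_w\}$ and $\{G_w\}$ indexed by a parabolic-type subgroup $W$ of $S_\Z$. The plan is to derive this as a formal consequence of the two preceding matrix-inversion lemmas (the left and right versions of the lemma stating that the matrices $A_{x,y}$ and $B_{x,y}$ are mutually inverse). These lemmas are themselves immediate consequences of the Schubert cancellation identity, Lemma \ref{L:Schub cancellation}, so the only real content of the corollary is bookkeeping: translating ``length-additive factorization $w \doteq uv$ with $u \in W_J$'' into the matrix indices $x \leleft{J} y$.

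First I would fix attention on part (a), since part (b) is the mirror image obtained by applying the inverse map or by transposing everything (replacing $\leleft{J}$ by $\leright{J}$, $u^{-1}$ by $z^{-1}$, and using the right-handed lemma). For part (a): I will reindex the sums in \eqref{E:LFtoG} and \eqref{E:LGtoF} by the matrices of the left-handed lemma. Given $w \doteq uv$ with $u \in W_J$, set $x = v$ and $y = w$; then $yx^{-1} = wv^{-1} = u \in W_J$ and $\ell(yx^{-1}) + \ell(x) = \ell(u) + \ell(v) = \ell(w) = \ell(y)$, so $x \leleft{J} y$, and conversely every pair $x \leleft{J} y$ arises this way from a unique factorization $y \doteq (yx^{-1}) x$. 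Under this correspondence, the coefficient $(-1)^{\ell(u)} \Schub_{u^{-1}}(a)$ in \eqref{E:LFtoG} is exactly $A_{y,x} = (-1)^{\ell(yx^{-1})} \chi(x \leleft{J} y) \Schub_{xy^{-1}}(a)$ once one notes $xy^{-1} = v w^{-1} = (wv^{-1})^{-1} = u^{-1}$; similarly the coefficient $\Schub_u(a)$ in \eqref{E:LGtoF} is $B_{y,x} = \chi(x \leleft{J} y)\Schub_{yx^{-1}}(a)$. Thus \eqref{E:LFtoG} reads $F_w = \sum_{x} A_{w,x} G_x$ and \eqref{E:LGtoF} reads $G_w = \sum_x B_{w,x} F_x$, and since $A$ and $B$ are mutually inverse by the lemma, each system implies the other. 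The one subtlety to check is that the sums are well-defined: for fixed $w$ there are only finitely many factorizations $w \doteq uv$, so only finitely many nonzero matrix entries in each row, and no convergence issue arises; I would remark on this explicitly.

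For part (b), the same argument runs with the right-handed lemma: given $w \doteq vz$ with $z \in W_J$, set $x = v$, $y = w$, so $x^{-1}y = v^{-1}w = z \in W_J$ and lengths add, giving $x \leright{J} y$; then $(-1)^{\ell(z)}\Schub_{z^{-1}}(a) = A_{y,x}$ and $\Schub_z(a) = B_{y,x}$ for the right-handed matrices, and \eqref{E:RFtoG}, \eqref{E:RGtoF} become $F = AG$, $G = BF$ with $A,B$ mutually inverse. I should double-check the placement of $G_v \Schub_{z^{-1}}(a)$ versus $\Schub_{z^{-1}}(a) G_v$ — since the $F$'s and $G$'s are arbitrary elements of some module and the Schubert polynomials are scalars in $\Q[a]$, commutativity is not an issue, but I would state the corollary's hypotheses so that $\Q[a]$ acts centrally (or simply note the $F_w, G_w$ lie in a $\Q[a]$-module), making the matrix-product interpretation legitimate.

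I do not anticipate a genuine obstacle here — this is a formal repackaging — so the ``hard part'' is purely notational: being scrupulous about which of $u, v$ becomes the row index and which the column, and tracking the inversions $xy^{-1} = u^{-1}$ versus $yx^{-1} = u$, so that the signs and the arguments of the Schubert polynomials line up exactly with the statements of the left- and right-handed lemmas. I would write the proof as: ``This is an immediate reformulation of [the two lemmas]. In part (a), the correspondence $w \doteq uv \leftrightarrow (y,x) = (w,v)$ with $u = yx^{-1}$ identifies \eqref{E:LFtoG} with $F_w = \sum_x A_{wx} G_x$ and \eqref{E:LGtoF} with $G_w = \sum_x B_{wx} F_x$; since $AB = I$ the two are equivalent. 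Part (b) is identical using the right-handed lemma and the correspondence $w \doteq vz \leftrightarrow (y,x) = (w,v)$ with $z = x^{-1}y$.'' That keeps the writeup to a few lines while making all the index substitutions transparent.
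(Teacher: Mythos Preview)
Your approach is exactly what the paper intends: the corollary is stated immediately after the two matrix-inversion lemmas with no separate proof, so it is meant to be the formal repackaging you describe. Your bookkeeping is essentially right, with one harmless slip: when you write that the coefficient equals $A_{y,x} = (-1)^{\ell(yx^{-1})} \chi(x \leleft{J} y) \Schub_{xy^{-1}}(a)$, the right-hand side is actually the paper's definition of $A_{x,y}$, not $A_{y,x}$. So with $(y,x)=(w,v)$ the correct identification is $F_w = \sum_v A_{v,w} G_v$ and $G_w = \sum_v B_{v,w} F_v$, i.e.\ you are working with $A^T$ and $B^T$ rather than $A$ and $B$; since the lemma asserts $A$ and $B$ are \emph{mutually} inverse (both $AB=I$ and $BA=I$), this transpose makes no difference to the conclusion.
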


\section{Level zero affine nilHecke ring}
\label{A:affinenilHecke}
We recall in this section standard results the affine nilHecke algebra and the Peterson subalgebra.  We use affine symmetric group notation from \textsection\ref{ssec:tSn}.

%

\subsection{Level zero affine nilHecke ring}
Let $\ttA$ denote the level zero affine nilHecke ring (see for example \cite{LaSh} for details).  It has $\Q[a_1,a_2,\ldots,a_n]$-basis $\{A_w \mid w \in \tS_n\}$.  There is an injection $\tS_n \hookrightarrow \ttA$ that is a group isomorphism onto its image.  It is given by $s_i \mapsto 1-\alpha_i A_i = 1-(a_{i+1}-a_i)A_{s_i}$.  The image of $\tS_n$ in $\ttA$ forms a basis of $\ttA$ over $\Q(a_1,a_2,\ldots,a_n)$.

The action of $\tS_n$ on $\Q[a_1,\ldots,a_n]$ is the level 0 action.  Thus in $\ttA$ we have the commutation relation
\begin{equation}\label{eq:levelzero}
(w t_\la) p = (w \cdot p) (w t_\la)
\end{equation}
for $p \in \Q[a_1,\ldots,a_n]$ and $w \in S_n$.  In particular, $t_\la \in Z_\ttA(\Q[a_1,\ldots,a_n])$.

The affine nilHecke ring $\ttA$ has a coproduct map $\Delta: \ttA \to \ttA \otimes_{\Q[a_1,\ldots,a_n]} \ttA$ which is $\Q[a_1,\ldots,a_n]$-linear and satisfies
\begin{equation}\label{eq:affinecoprod}
\Delta(w) = w \otimes w \qquad \mbox{for $w \in \tS_n$.}
\end{equation}

\subsection{Peterson algebra}

Let $\tP := Z_\ttA(\Q[a_1,\ldots,a_n])$ denote the Peterson subalgebra of $\ttA$, defined as the centralizer of $\Q[a_1,\ldots,a_n]$ inside $\ttA$.  Then $\tP$ has basis $\{t_\la \mid \la \in Q^\vee\}$ over $\Q(a_1,\ldots,a_n)$. 

\begin{thm}\label{thm:aPeterson}
The Peterson subalgebra $\tP$ is a commutative subalgebra of $\ttA$.  It is a free $\Q[a_1,\ldots,a_n]$-module with basis $\{\tj_\lambda \mid \la \in Q^\vee\}$.  The element $\tj_\la \in \tP$ is uniquely characterized by the expansion
$$
\tj_\la = A_w + \sum_{u \notin \tS_n^0} \tj_\la^u A_u
$$
for $\tj_\la^u \in \Q[a_1,\ldots,a_n]$, where $wS_n = t_\la S_n$.
\end{thm}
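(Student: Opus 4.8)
The plan is to realize $\tP$ explicitly inside the twisted group algebra, then produce the $\tj_\lambda$-basis by the (now standard) comparison of nilHecke elements with equivariant classes on the affine Grassmannian, importing the one genuinely hard input from Peterson's work.

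First I would pin down $\tP$ inside $\Q(a)[\tS_n]$. An element $\sum_{w\in\tS_n} c_w\,w$ commutes with all of $\Q(a)$ (for the level-zero action) iff $c_w\,(w\cdot p-p)=0$ for every $p\in\Q(a)$ and every $w$; since $w\in\tS_n$ acts trivially on $\Q(a_1,\dots,a_n)$ exactly when $w$ is a translation, the centralizer of $\Q(a)$ in $\Q(a)[\tS_n]$ is $\bigoplus_{\mu\in Q^\vee}\Q(a)\,t_\mu$. Because $\Q[a]\subset\ttA\subset\Q(a)[\tS_n]$, this gives $\tP=Z_\ttA(\Q[a])=\ttA\cap\bigoplus_\mu\Q(a)\,t_\mu$, and hence $\Q(a)\otimes_{\Q[a]}\tP=\bigoplus_\mu\Q(a)\,t_\mu$. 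Commutativity of $\tP$ is then immediate from $t_\mu t_\nu=t_{\mu+\nu}=t_\nu t_\mu$ \eqref{eq:affinetrans}. Using the bijection $\tS_n^0\cong Q^\vee$, $v\leftrightarrow\mu$ with $t^v=t_\mu$, the set $\{t^v:v\in\tS_n^0\}$ is likewise a $\Q(a)$-basis of $\Q(a)\otimes_{\Q[a]}\tP$.

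Next I would construct $\tj_\lambda$. Writing each $t^v\in\ttA$ in the $\Q[a]$-basis $\{A_x\mid x\in\tS_n\}$ with $A_x$-coefficient $\xi^x_{\hFl_n}|_{t^v}$, and using that the matrix $\bigl(\xi^x_{\hFl_n}|_{t^v}\bigr)_{x,v\in\tS_n^0}$ is triangular and invertible over $\Q(a)$ with respect to Bruhat order (cf.\ \cite{LaSh}), there is a unique element $\tj_\lambda\in\Q(a)\otimes_{\Q[a]}\tP$ whose expansion $\sum_x \tj_\lambda^x A_x$ satisfies $\tj_\lambda^u=\delta_{u,v}$ for all $u\in\tS_n^0$, where $v\in\tS_n^0$ is the affine Grassmannian representative of the coset $t_\lambda S_n$. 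Granting that $\tj_\lambda^x\in\Q[a]$ for every $x$ --- equivalently, that $\tj_\lambda$ actually lies in the integral form $\ttA$ --- this is precisely the asserted expansion $\tj_\lambda=A_v+\sum_{x\notin\tS_n^0}\tj_\lambda^x A_x$, with $\tj_\lambda^x$ the affine double Edelman--Greene coefficient.

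The main obstacle is exactly this integrality, i.e.\ Peterson's theorem that the $j$-basis is defined over $\Q[a]$: a priori the change-of-basis entries lie only in $\Q(a)$ and one must see the denominators cancel. I would deduce it geometrically: the equivariant homology $H^{T_n}_*(\hGr_n)$ is a \emph{free} $\Q[a]$-module on its Schubert basis (from an affine paving of $\hGr_n$), and under the standard $\Q[a]$-algebra isomorphism $\tP\cong H^{T_n}_*(\hGr_n)$ --- which matches the pairing with the GKM ring of $\hGr_n$ and sends the Schubert class indexed by $v$ to $\tj_\lambda$ --- the element $\tj_\lambda$ is the image of an honest element of $H^{T_n}_*(\hGr_n)$, hence has $\Q[a]$-coefficients in the $\Q[a]$-basis $\{A_x\}$ of $\ttA$; alternatively one simply invokes Peterson \cite{Pet} (see also Lam \cite{Lam}). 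With integrality in hand the remaining claims are formal: the $\tj_\lambda$ are $\Q[a]$-linearly independent because their distinct leading terms $A_v$, $v\in\tS_n^0$, are; they span $\tP$ because for $a=\sum_x a_x A_x\in\tP$ the element $a-\sum_{\lambda\in Q^\vee} a_{v_\lambda}\tj_\lambda$, with $v_\lambda\in\tS_n^0$ the Grassmannian representative of $t_\lambda S_n$, again lies in $\tP$ but has vanishing $A_u$-component for every $u\in\tS_n^0$, forcing it to be $0$ since $\{\tj_\lambda\}$ is already a $\Q(a)$-basis of $\Q(a)\otimes_{\Q[a]}\tP$; and uniqueness of the characterizing expansion follows from the same leading-term comparison.
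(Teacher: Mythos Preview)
The paper does not actually prove this theorem: it is stated without proof in Appendix~\ref{A:affinenilHecke} as background material, with the result attributed to Peterson \cite{Pet} (see also \cite{Lam}, \cite{LaSh}). So there is no ``paper's own proof'' to compare against.

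That said, your sketch is the standard argument and is essentially correct. A few remarks:

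\begin{itemize}
\item Your identification $\Q(a)\otimes_{\Q[a]}\tP=\bigoplus_\mu \Q(a)\,t_\mu$ via the level-zero action is exactly right, and commutativity follows immediately.
\item The construction of $\tj_\lambda$ by inverting the Bruhat-triangular matrix $(\xi^x_{\hFl_n}|_{t^v})_{x,v\in\tS_n^0}$ is precisely the content of Proposition~\ref{prop:LS} (from \cite{LaSh}), which the paper uses heavily in Section~\ref{ssec:stability}.
\item You are right that integrality --- i.e.\ that $\tj_\lambda$ lands in $\ttA$ rather than merely $\Q(a)\otimes_{\Q[a]}\ttA$ --- is the substantive point, and that it is most naturally seen via the isomorphism $\tP\cong H_*^{T_n}(\hGr_n)$ identifying $\tj_\lambda$ with a Schubert homology class. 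This is Peterson's theorem; you correctly cite \cite{Pet} and \cite{Lam}.
\item Your spanning argument is fine: since $\ttA$ consists of \emph{finite} $\Q[a]$-combinations of the $A_x$, any $a\in\tP$ has only finitely many nonzero Grassmannian coefficients $a_{v_\lambda}$, so the subtraction $a-\sum_\lambda a_{v_\lambda}\tj_\lambda$ is a finite sum; the remainder lies in $\tP$ with vanishing Grassmannian part, hence is zero over $\Q(a)$ and therefore over $\Q[a]$.
\end{itemize}

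In short: your proposal is a faithful outline of the known proof, and the paper treats this theorem as an imported black box rather than proving it.
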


The following result follows from combining \cite{LaSh:QH}, which proves Peterson's isomorphism of localizations of $H^*(\hGr)$ and the equivariant quantum cohomology $H_{T_n}^*(\Fl_n)$ together with an explicit correspondence of Schubert classes, and the positivity result of 
\cite{Mi} in equivariant quantum cohomology.

\begin{thm}\label{thm:aPositive}
Let $\la \in Q^\vee$ and $u \in \tS_n$.  Then $\tj_\la^u \in \Z_{\geq 0}[a_i - a_j| 1\leq i < j \leq n]$.
\end{thm}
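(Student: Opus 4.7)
The plan is to deduce the positivity of Theorem~\ref{thm:aPositive} by transferring the problem from the equivariant homology of the affine Grassmannian to the equivariant quantum cohomology of the finite flag variety, where a positivity theorem of Mihalcea is available.

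First I would unpack the meaning of the $\tj_\lambda^u$. By construction the $j$-basis $\{\tj_\lambda \mid \lambda \in Q^\vee\}$ of the Peterson subalgebra $\tP$ is dual, under the pairing \eqref{eq:PsiApairing}, to the Schubert basis of the GKM ring of $\hGr_n$, so $\tj_\lambda$ represents the affine Grassmannian Schubert class $\xi_{\hGr_n,w}$ where $w \in \tS_n^0$ satisfies $wS_n = t_\lambda S_n$. The coefficient $\tj_\lambda^u$ equals the localization $\xi_{\hGr_n,w}\bigl|_u$ after pulling back from $\hFl_n$ along $\varpi$; equivalently (as in Proposition~\ref{prop:LS}) the $\tj_\lambda^u$ are the matrix entries that invert the affine Schubert localization matrix $\tA$ and express the $t^v$ in terms of the $\tj_\lambda$. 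Thus proving positivity of $\tj_\lambda^u$ is equivalent to proving positivity of the Schubert structure constants of multiplication by $t_\lambda$ (which is the element representing multiplication by a translation class) in $H^{T_n}_*(\hGr_n)$.

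Next I would invoke Peterson's isomorphism, proved in the form we need in \cite{LaSh:QH}: after localizing at a suitable multiplicative set, there is a $\Q[a_1,\ldots,a_n]$-algebra isomorphism between (a localization of) the equivariant homology $H^{T_n}_*(\hGr_n)$ and (a localization of) the equivariant quantum cohomology $QH^*_{T_n}(\Fl_n)$, under which affine Schubert classes correspond, up to invertible scalars coming from products of simple roots, to finite Schubert classes of $\Fl_n$ tensored with monomials in the quantum parameters $q_i$. Because this correspondence is explicit on the level of Schubert bases, the non-negativity of the structure constants of $\tj_\lambda$ in $\{\tj_\mu\}$ is equivalent, up to clearing denominators by products of positive roots, to the non-negativity of the equivariant quantum Littlewood--Richardson coefficients of $H^*_{T_n}(\Fl_n)$ expressed in the basis of simple roots $\alpha_i = a_i - a_{i+1}$.

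The final ingredient is Mihalcea's positivity theorem \cite{Mi}: the equivariant quantum Schubert structure constants of $H^*_{T_n}(\Fl_n)$ are polynomials in the simple roots $a_i - a_{i+1}$ with non-negative integer coefficients, hence a fortiori non-negative integer polynomials in the linear forms $a_i - a_j$ for $i<j$. Pulling this positivity back across the Peterson isomorphism, and noting that the invertible scaling factors arising in the identification of Schubert bases are themselves products of linear forms $a_i-a_j$ with $i<j$ (so they cancel cleanly against the denominators), I would conclude that $\tj_\lambda^u \in \Z_{\geq 0}[a_i-a_j \mid 1 \le i < j \le n]$. The main obstacle, as usual with this kind of comparison, is book-keeping: matching Schubert classes across the Peterson isomorphism with the correct quantum multi-degree and verifying that all auxiliary scalars introduced in the comparison are themselves monomials in the positive simple roots, so that positivity is genuinely preserved rather than merely non-negativity over a larger cone.
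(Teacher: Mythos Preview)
Your proposal is essentially the same as the paper's argument: the paper states that Theorem~\ref{thm:aPositive} ``follows from combining \cite{LaSh:QH}, which proves Peterson's isomorphism of localizations of $H^*(\hGr)$ and the equivariant quantum cohomology $H_{T_n}^*(\Fl_n)$ together with an explicit correspondence of Schubert classes, and the positivity result of \cite{Mi} in equivariant quantum cohomology.'' That is exactly the two-step reduction you outline, and the paper provides no further detail beyond citing these two inputs.

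One minor remark: your first paragraph's interpretation of $\tj_\lambda^u$ as a localization value is not quite right as stated; $\tj_\lambda^u$ is by definition the coefficient of $A_u$ in the nilHecke element $\tj_\lambda$ (Theorem~\ref{thm:aPeterson}), not a restriction to a fixed point. The relevant identification is that under Peterson's isomorphism the $\tj_\lambda$ correspond to homology Schubert classes of $\hGr_n$, and the coefficients $\tj_\lambda^u$ become (up to the explicit monomial factors in $q$ and in roots tracked in \cite{LaSh:QH}) equivariant quantum Littlewood--Richardson coefficients, to which Mihalcea's positivity applies. You correctly flag this book-keeping as the main content to be checked, and the paper simply defers it to the cited references.
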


\end{document}